\newcommand{\myauthor}{}
\newcommand{\mytitle}{Scheiderer motives}
\title{Scheiderer motives and equivariant higher topos theory}
\author{}
 \author{Elden Elmanto, Jay Shah}
\date{\today}
\newcommand\myrot[1]{\mathrel{\rotatebox[origin=c]{#1}{$\Rightarrow$}}}
\newcommand\NEarrow{\myrot{45}}
\newcommand\SWarrow{\myrot{-135}}
\newcommand\SEarrow{\myrot{-45}}
\newcommand{\df}[1]{{\bf #1} \index{#1}}
\definecolor{todo}{rgb}{1,0,0}
\definecolor{conditional}{rgb}{0,1,0}
\definecolor{e-mail}{rgb}{0,.40,.80}
\definecolor{reference}{rgb}{.20,.60,.22}
\definecolor{mrnumber}{rgb}{.80,.40,0}
\definecolor{citation}{rgb}{0,.40,.80}
\let\overlinedmarginpar\marginpar
\renewcommand\marginpar[1]{\-\overlinedmarginpar[\raggedleft\footnotesize #1]%
{\raggedright\footnotesize #1}}
\newcommand{\xRightarrow}[2][]{\ext@arrow 0359\Rightarrowfill@{#1}{#2}}
\newcommand{\res}{\mathrm{res}}
\newcommand{\leftnat}[1]{\vphantom{#1}_{\natural}\mskip-1mu{#1}}
\newcommand{\DK}{\mathrm{DK}}
\renewcommand{\big}{\mathrm{big}}
\newcommand{\Ascr}{\mathcal{A}}
\newcommand{\Escr}{\mathcal{E}}
\newcommand{\Fscr}{\mathcal{F}}
\newcommand{\Oscr}{\mathcal{O}}
\newcommand{\Xscr}{\mathcal{X}}
\newcommand{\Yscr}{\mathcal{Y}}
\newcommand{\Uscr}{\mathcal{U}}
\newcommand{\Zscr}{\mathcal{Z}}
\newcommand{\Lscr}{\mathcal{L}}
\newcommand{\Kscr}{\mathcal{K}}
\newcommand{\A}{\mathrm{A}}
\newcommand{\B}{\mathrm{B}}
\newcommand{\C}{\mathrm{C}}
\newcommand{\D}{\mathrm{D}}
\renewcommand{\H}{\mathrm{H}}
\renewcommand{\1}{\mathds{1}}
\renewcommand{\AA}{\mathds{A}}
\newcommand{\CC}{\mathds{C}}
\newcommand{\FF}{\mathds{F}}
\newcommand{\GG}{\mathds{G}}
\newcommand{\NN}{\mathds{N}}
\newcommand{\PP}{\mathds{P}}
\newcommand{\QQ}{\mathds{Q}}
\newcommand{\RR}{\mathds{R}}
\newcommand{\TT}{\mathds{T}}
\newcommand{\ZZ}{\mathds{Z}}
\newcommand{\Fr}{\mathrm{Fr}}
\newcommand{\triv}{\mathrm{triv}}
\newcommand{\op}{\mathrm{op}}
\newcommand{\ex}{\mathrm{ex}}
\newcommand{\lex}{\mathrm{lex}}
\newcommand{\fib}{\mathrm{fib}}
\newcommand{\Sing}{\mathrm{Sing}}
\newcommand{\cart}{\mathrm{cart}}
\newcommand{\inv}[1]{[\tfrac{1}{#1}]}
\newcommand{\sad}{\mathrm{sad}}
\newcommand{\cocart}{\mathrm{cocart}}
\newcommand{\pinv}{\inv{p}}
\newcommand{\Spc}{\mathrm{Spc}}
\newcommand{\Sp}{\mathrm{Sp}}
\newcommand{\Fin}{\mathrm{Fin}}
\newcommand{\FinGpd}{\mathrm{FinGpd}}
\newcommand{\CMon}{\mathrm{CMon}}
\newcommand{\Mack}{\mathrm{Mack}}
\newcommand{\Exc}{\mathrm{Exc}}
\newcommand{\Sets}{\mathrm{Sets}}
\newcommand{\Gpd}{\mathrm{Gpd}}
\newcommand{\Top}{\mathrm{Top}}
\newcommand{\Nm}{\mathrm{Nm}}
\newcommand{\xto}{\xrightarrow}
\newcommand{\ev}{\mathrm{ev}}
\DeclareMathOperator{\Gal}{Gal}
\DeclareMathOperator{\id}{id}
\newcommand{\cd}{\mathrm{cd}}
\renewcommand{\geq}{\geqslant}
\renewcommand{\leq}{\leqslant}
\newcommand{\stab}{\mathrm{stab}}
\newcommand{\Span}{\mathrm{Span}}
\newcommand{\pt}{\mathrm{pt}}
\newcommand{\Sect}{\mathrm{Sect}}
\newcommand{\Be}{\mathrm{Be}}
\newcommand{\fin}{\mathrm{fin}}
\newcommand{\Tate}{\mathrm{Tate}}
\newcommand{\Set}{\mathrm{Set}}
\newcommand{\Cat}{\mathrm{Cat}}
\newcommand{\Topoi}{\mathscr{T}\mathrm{op}}
\newcommand{\RTop}{\Topoi^R}
\newcommand{\LTop}{\Topoi^L}
\newcommand{\PrL}{\mathrm{Pr}^{\mathrm{L}}}
\newcommand{\comp}{{}^{{\kern -.5pt}\wedge}_{p}}
\newcommand{\comtwo}{{}^{{\kern -.5pt}\wedge}_{2}}
\newcommand{\comfin}{{}^{{\kern -.5pt}\wedge}}
\newcommand{\comrho}{{}^{{\kern -.5pt}\wedge}_{\rho}}
\newcommand{\mot}{\mathrm{mot}}
\DeclareMathOperator{\Fun}{Fun}
\DeclareMathOperator{\Hom}{Hom}
\newcommand{\Map}{\mathrm{Map}}
\DeclareMathOperator{\Pre}{PShv}
\DeclareMathOperator{\Shv}{Shv}
\newcommand{\lax}{\mathrm{lax}}
\newcommand{\rlax}{\mathrm{rlax}}
\newcommand{\Mod}{\mathrm{Mod}}
\newcommand{\CAlg}{\mathrm{CAlg}}
\newcommand{\ret}{\mathrm{r\acute{e}t}}
\newcommand{\Sm}{\mathrm{Sm}}
\newcommand{\QP}{\mathrm{QP}}
\newcommand{\Et}{\mathrm{\acute{E}t}}
\newcommand{\SH}{\mathrm{SH}}
\newcommand{\eff}{\mathrm{eff}}
\newcommand{\Sch}{\mathrm{Sch}}
\newcommand{\ft}{\mathrm{ft}}
\DeclareMathOperator*{\colim}{colim}
\newcommand{\et}{\mathrm{\acute{e}t}}
\newcommand{\Nis}{\mathrm{Nis}}
\newcommand{\pre}{\mathrm{pre}}
\DeclareMathOperator{\Spec}{Spec}
\DeclareMathOperator{\Sper}{Sper}
\theoremstyle{plain}
\newtheorem{theorem}{Theorem}[section]
\newtheorem*{theorem*}{Theorem}
\newtheorem{lemma}[theorem]{Lemma}
\newtheorem{proposition}[theorem]{Proposition}
\newtheorem{corollary}[theorem]{Corollary}
\newtheorem*{corollary*}{Corollary}
\theoremstyle{plain}
\theoremstyle{definition}
\newtheoremstyle{named}{}{}{\itshape}{}{\bfseries}{.}{.5em}{#1 \thmnote{#3}}
\theoremstyle{named}
\newcommand{\Th}{\mathrm{Th}}
\theoremstyle{definition}
\newtheorem{definition}[theorem]{Definition}
\newtheorem{warning}[theorem]{Warning}
\newtheorem{notation}[theorem]{Notation}
\newtheorem{example}[theorem]{Example}
\newtheorem*{example*}{Example}
\newtheorem{question}[theorem]{Question}
\newtheorem*{question*}{Question}
\newtheorem{construction}[theorem]{Construction}
\newtheorem{remark}[theorem]{Remark}
\newcommand{\hh}{h_{\diamond}}
\begin{document}

\tikzcdset{arrow style=tikz, diagrams={>=stealth}}

\maketitle

\begin{abstract} We give an algebro-geometric interpretation of $C_2$-equivariant stable homotopy theory by means of the $b$-topology introduced by Claus Scheiderer in his study of $2$-torsion phenomena in \'etale cohomology. To accomplish this, we first revisit and extend work of Scheiderer on equivariant topos theory by functorially associating to a $\infty$-topos $\Xscr$ with $G$-action a presentable stable $\infty$-category $\Sp^G(\Xscr)$, which recovers the $\infty$-category $\Sp^G$ of genuine $G$-spectra when $\Xscr$ is the terminal $G$-$\infty$-topos. Given a scheme $X$ with $\tfrac{1}{2} \in \Oscr_X$, our construction then specializes to produce an $\infty$-category $\Sp^{C_2}_b(X)$ of ``$b$-sheaves with transfers'' as $b$-sheaves of spectra on the small \'etale site of $X$ equipped with certain transfers along the extension $X[i] \rightarrow X$; if $X$ is the spectrum of a real closed field, then $\Sp^{C_2}_b(X)$ recovers $\Sp^{C_2}$. On a large class of schemes, we prove that, after $p$-completion, our construction assembles into a premotivic functor satisfying the full six functors formalism. We then introduce the $b$-variant $\SH_b(X)$ of the $\infty$-category $\SH(X)$ of motivic spectra over $X$ (in the sense of Morel-Voevodsky), and produce a natural equivalence of $\infty$-categories $\SH_b(X) \comp \simeq \Sp^{C_2}_b(X) \comp$ through amalgamating the \'etale and real \'etale motivic rigidity theorems of Tom Bachmann. This involves a purely algebro-geometric construction of the $C_2$-Tate construction, which may be of independent interest. Finally, as applications, we deduce a ``$b$-rigidity'' theorem, use the Segal conjecture to show \'etale descent of the $2$-complete $b$-motivic sphere spectrum, and construct a parametrized version of the $C_2$-Betti realization functor of Heller-Ormsby.
\end{abstract}

    \paragraph{Keywords.}
    Motivic homotopy theory, real algebraic geometry, real \'etale cohomology, equivariant homotopy theory, higher topos theory.

    \paragraph{Mathematics Subject Classification 2010.}
    \href{http://www.ams.org/mathscinet/msc/msc2010.html?t=14Fxx&btn=Current}{14F42},
    \href{http://www.ams.org/mathscinet/msc/msc2010.html?t=14Pxx&btn=Current}{14P10},
    \href{http://www.ams.org/mathscinet/msc/msc2010.html?t=55Pxx&btn=Current}{55P91}.

\tableofcontents

\section{Introduction}

\subsection{Motivation from algebraic geometry: compactifying the \'etale topos of a real variety} Grothendieck introduced the \'etale topology into algebraic geometry in order to construct a cohomology theory $H^{*}_{\et}$ for algebraic varieties that resembles the theory of singular cohomology $H^{*}$ for topological spaces. The analogy between the two theories is closest when working over a separably closed field $k$. For instance, when the base scheme is the field $\CC$ of complex numbers, the {\bf Artin comparison theorem} \cite{sga4-3}*{Th\'eorem\`e XI.4.4} states that given a complex variety $X$ and a finite abelian group $M$, there is a canonical isomorphism $$ H^{*}_{\et}(X;M) \cong H^{*}(X(\CC);M),$$
where we endow $X(\CC)$ with the complex analytic topology. More generally, if we let $\cd_{\ell}(X_{\et})$ denote the \'etale $\ell$-cohomological dimension of a $k$-variety $X$, then we have the inequality $\cd_{\ell}(X_{\et}) \leq 2 \dim(X)$ for all primes $l$ \cite{sga4-3}*{Corollaire X.4.3}, as predicted by Artin's theorem when $k = \CC$.

If we no longer suppose that $k$ is separably closed, we now expect properties of the absolute Galois group $G = \Gal(\overline{k}/k)$ to  manifest in the \'etale cohomology of $k$-varieties; correspondingly, the role of singular cohomology on the topological side should be replaced by $G$-equivariant Borel cohomology $H^{*}_G$. For instance, suppose that $k = \RR$ is the field of real numbers, so that $G = C_2$. Then parallel to Artin's theorem, Cox proved that for a real variety $X$ and a finite abelian group $M$, there is a canonical isomorphism between \'etale cohomology and (Borel-style) $C_2$-equivariant cohomology: $$H^{*}_{\et}(X;M) \cong H^{*}_{C_2}(X(\CC);M),$$
using the $C_2$-action on $X(\CC)$ given by complex conjugation \cite{cox}. In particular, we may now have $\cd_2(X_{\et}) = \infty$ as a consequence of the infinite $2$-cohomological dimension of $C_2$ itself (in topological terms, the infinite mod $2$ cohomology of $\RR P^{\infty}$). More precisely, if we let $X(\CC)/C_2$ to be the quotient of the topological space $X(\CC)$ with the analytic topology by its natural $C_2$-action, then Cox deduces a long-exact sequence of cohomology groups \cite{cox}*{Proposition 1.2}
$$ \cdots \to H^k(X(\CC)/C_2, X(\RR); M) \to H^k_{\et}(X;M) \to H^k_{C_2}(X(\RR);M) = H^k(X(\RR) \times B C_2; M) \to \cdots  $$
from which it follows that we have an isomorphism
$$ H^n_{\et}(X; \ZZ/2) \xto{\cong} \bigoplus_{i=0}^{\dim(X)} H^i(X(\RR); \ZZ/2) = H^{*}(X(\RR); \ZZ/2) $$
for all $n > 2 \dim(X)$. Furthermore, these results extend in a straightforward way from $\RR$ to an arbitrary real closed field if one instead uses ($C_2$-equivariant) semialgebraic cohomology \cite{scheiderer}*{Chapter 15}.

In his book \cite{scheiderer}, Scheiderer generalized the Cox exact sequence to one involving abelian \'etale sheaves $A$ over a base scheme $S$ on which $2$ is invertible, in which the role of $H^{\ast}(X(\RR);M)$ is replaced by {\bf real \'etale cohomology} . His theorems to this effect operate at the level of topoi, from which the exact sequence is a direct corollary. In picturesque terms, the main idea is to ``compactify the (small) \'etale topos $\widetilde{S_{\et}}$ by gluing in the (small) real \'etale topos $\widetilde{S_{\ret}}$ at $\infty$'', where the notion of gluing in question is that of a {\bf recollement} of topoi \cite{sga4-1}*{D\'efinition 9.1.1}; see also \cite{quigley-shah}*{\S 1-2}, \cite{higheralgebra}*{\S A.8}, and \cite{barwick-glasman} for treatments of this subject in the higher categorical setting.

In more detail, Scheiderer first describes a generic gluing procedure that receives as input any topos $\Xscr$ with $C_p$-action and outputs the ``genuine toposic $C_p$-orbits'' $\Xscr_{C_p}$; in this paper, we recapitulate his construction in the setting of $\infty$-topoi (\S\ref{sect:gen-stab}). His key insight was then that for the $C_2$-topos $\widetilde{S[i]_{\et}}$, the formation of toposic $C_2$-orbits yields $\widetilde{S_{\et}}$ by Galois descent, the formation of toposic $C_2$-fixed points yields $\widetilde{S_{\ret}}$, and with respect to these identifications, the gluing functor $\rho: \widetilde{S_{\et}} \to \widetilde{S_{\ret}}$ identifies with real \'etale sheafification. Therefore, the glued topos $(\widetilde{S[i]_{\et}})_{C_2}$ is equivalent to $\widetilde{S_b}$, where the {\bf $b$-topology} on the small \'etale site is defined to be the intersection of the \'etale and real \'etale topologies. By recollement theory, $\widetilde{S_b}$ thus contains $\widetilde{S_{\et}}$ as an open subtopos with closed complement $\widetilde{S_{\ret}}$.

Apart from its role in obtaining the Cox exact sequence, the $b$-topos turns out to have many excellent formal properties; in particular, Scheiderer proves that $\cd_2(S_b)$ is either $\cd_2(S[i]_{\et})$ or $\cd_2(S[i]_{\et})+1$ \cite{scheiderer}*{Corollary 7.18}. This result leads us to think of $\widetilde{S_b}$ as a suitable compactification of $\widetilde{S_{\et}}$ when the \emph{virtual} \'etale $2$-cohomological dimension of $S$ is finite.

\subsection{Motivation from algebraic topology: compactifying the topos of equivariant sheaves}

Let $G$ be a finite group and $X$ a topological space with properly discontinuous $G$-action. The $\infty$-topos $\Shv_G(X)$ of $G$-equivariant sheaves of spaces\footnote{In this paper, we will always use `space' as a synonym for Kan complex, i.e., $\infty$-groupoid, as opposed to a topological space.} on $X$ generally fails to possess good finiteness properties if $G$ does not act freely. To explain, consider the simple example of $X = \ast$ with trivial $G$-action, so that $$\Shv_G(X) \simeq \Spc^{BG} = \Fun(BG,\Spc),$$ where $BG$ is the one-object groupoid with $G$ as its endomorphisms and $\Spc$ is the $\infty$-category of spaces. Since the formation of homotopy fixed points fails to commute with homotopy colimits in general, we see that the unit is not a compact object in $\Spc^{BG}$.

On the other hand, we have the homotopy theory for $G$-spaces given by taking a category of (nice) topological spaces with $G$-action and inverting those morphisms that induce weak homotopy equivalences on all fixed points. Let $\Spc_G$ denote the resulting $\infty$-category, whose objects we henceforth term $G$-spaces (as opposed to spaces with $G$-action). By Elmendorf's theorem \cite{elmendorf}, we have an equivalence $$\Spc_G \simeq \Fun(\Oscr_G^{\op}, \Spc)$$ of the $\infty$-category of $G$-spaces with presheaves of spaces on the orbit category $\Oscr_G$. In $\Spc_G$, taking $G$-fixed points amounts to evaluation on the orbit $G/G$, which \emph{does} preserve all colimits. Therefore, the unit in $\Spc_G$ is a compact object, in contrast to $\Spc^{BG}$. Moreover, right Kan extension along the inclusion $BG \subset \Oscr_G^{\op}$ (as the full subcategory on the orbit $G/1$) yields a fully faithful embedding $\Spc^{BG} \to \Spc_G$, which presents $\Spc^{BG}$ as an open subtopos of $\Spc_G$. We are thus entitled to view $\Spc_G$ as a compactification of $\Spc^{BG}$. Indeed, if $G = C_p$, then Scheiderer's construction applied to the $\infty$-topos $\Spc$ with trivial $C_p$-action yields $\Spc_{C_p}$ as its genuine toposic $C_p$-orbits. In particular, the $b$-$\infty$-topos of a real closed field identifies with $\Spc_{C_2}$.

Let us now pass to stabilizations. We then have the $\infty$-category of {\bf Borel $G$-spectra} $\Sp^{BG} = \Fun(BG, \Sp)$, and the $\infty$-category of {\bf naive $G$-spectra} $\Fun(\Oscr_G^{\op}, \Sp)$. As is well-known in equivariant stable homotopy theory, the latter $\infty$-category fails to be an adequate homotopy theory of $G$-spectra as it does not possess enough dualizable objects (for instance, one does not have equivariant Atiyah duality in this $\infty$-category). To correct this deficiency, one passes to the $\infty$-category $\Sp^G$ of {\bf genuine $G$-spectra}, which is obtained from $\Spc_G$ by inverting the real representation spheres. In $\Sp^G$, the unit remains compact; indeed, the compact objects now coincide with the dualizable objects. Finally, we still have a fully faithful embedding $\Sp^{BG} \to \Sp^G$ as a right adjoint, whose essential image consists of the {\bf cofree} genuine $G$-spectra.

\subsection{What is this paper about?}

Our first goal in this paper is to unify the above two perspectives on compactifications of $\infty$-topoi in the stable regime. In other words, having seen that the $b$-topos is analogous to the $\infty$-topos $\Spc_{C_2}$ of $C_2$-spaces, we aim to answer the following:

\begin{question} What is the analogue of the passage from $G$-spaces to genuine $G$-spectra for the $b$-topos?
\end{question}

To this end, we will construct a stable $\infty$-category of {\bf $b$-sheaves of spectra with transfers} (Definition~\ref{dfn:OfficialBSheavesWithTransfers}) on a scheme $S$, which we denote by $\Sp^{C_2}_b(S)$. This $\infty$-category agrees with $\Sp^{C_2}$ when $S = \Spec k$ for $k$ a real closed field.

Our second goal is to then relate our construction to a variant of Morel-Voevodsky's stable $\AA^1$-homotopy theory of schemes $\SH(S)$, in which the role of the Nisnevich topology is replaced by that of the finer $b$-topology (on the big site $\Sm_S$ of smooth schemes over $S$). In fact, given any topology $\tau$ finer than the Nisnevich topology, define the full subcategory $\SH_{\tau}(S) \subset \SH(S)$ of {\bf $\tau$ motivic spectra}\footnote{Note that we consider the hypercomplete version of the theory.} to be the local objects with respect to the class of morphisms
\[ \{\Sigma^{p,q}\Sigma^{\infty}_+U_{\bullet} \rightarrow \Sigma^{p,q}\Sigma^{\infty}_+T: p,q \in \ZZ, \: U_{\bullet} \rightarrow T\,\text{is the \v{C}ech nerve of a $\tau$-hypercover $U \rightarrow T$ in $\Sm_S$}\}. \]

We wish to relate $\SH_b(S)$. To this end, two recent \emph{rigidity theorems} of Bachmann allow us to isolate the $\infty$-categories of real \'etale and ($p$-complete) \'etale motivic spectra in terms of the corresponding $\infty$-categories of sheaves of spectra on the \emph{small} site.

\begin{theorem}[\cite{bachmann-ret}] \label{thm:BachmannRealEtaleRigidity} Suppose $S$ is a scheme of finite dimension. Then the morphism of sites $$ (\Et_S, \ret) \to (\Sm_S, \ret) $$ induces an equivalence\footnote{For our formulation of Bachmann's theorem, we use in addition the hypercompleteness of the real \'etale site (see Appendix~\ref{subsec:hyp}).}
\begin{equation} \label{eq:x-ret}
\Sp(\widetilde{S_{\ret}}) \simeq \SH_{\ret}(S).
\end{equation}
Under this equivalence, the localization functor $L_{\ret}$ is given by inverting the element $\rho \in \pi_{-1,-1}(S^0)$ defined by the unit $-1 \in \Oscr^{\times}(S)$.
\end{theorem}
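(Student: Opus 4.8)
The plan is to reproduce the argument of \cite{bachmann-ret}, reorganized so that the hypercompleteness input of Appendix~\ref{subsec:hyp} can be inserted cleanly. We factor the desired equivalence through the symmetric monoidal Bousfield localization $L_\rho \colon \SH(S) \to \SH(S)[\rho^{-1}]$ inverting $\rho \in \pi_{-1,-1}(S^0)$, establishing in turn: (i) $\SH_\ret(S) = \SH(S)[\rho^{-1}]$ as reflective subcategories of $\SH(S)$, so that $L_\ret = L_\rho$ --- which in particular gives the last sentence of the statement; and (ii) the morphism of sites $(\Et_S,\ret) \to (\Sm_S,\ret)$ induces an equivalence $\Sp(\widetilde{S_\ret}) \simeq \SH_\ret(S)$. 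The base case underlying everything is Bachmann's computation that for $k$ real closed, $\SH_\ret(k) \simeq \SH(k)[\rho^{-1}] \simeq \Sp$, under which $\rho$ corresponds to the identity of $S^0$ and the Tate twist $\1(1)[1]$ to $S^0$.

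For (i), the crux is the descent statement that \emph{the presheaf of $\infty$-categories $X \mapsto \SH(X)[\rho^{-1}]$ on $\Sm_S$ is a $\ret$-hypersheaf}. Granting this, the mapping-spectrum presheaves out of (the pullbacks of) a $\rho$-local object $E$ inherit $\ret$-hyperdescent, which is precisely the assertion that $E$ is $\ret$-local in $\SH(S)$; hence $\SH(S)[\rho^{-1}] \subseteq \SH_\ret(S)$. For the reverse inclusion it suffices to check that $\rho$ acts invertibly on the $\ret$-local sphere $L_\ret \1$. Since $\widetilde{S_\ret}$ has enough points, indexed by the real closed fields $\Spec k \to S$ (Scheiderer), and since $L_\ret$ is compatible with base change along such maps --- because $\Sper$, and hence the $\ret$-topology, commutes with base change --- this reduces to the base case over $k$, where $\rho$ is invertible by construction. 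Thus $\SH_\ret(S) \subseteq \SH(S)[\rho^{-1}]$ as well, and the two localizations agree.

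For (ii), Scheiderer's theorem identifies $\widetilde{X_\ret}$ with sheaves on the spectral space $\Sper X$, and homotopy invariance of real \'etale cohomology (Scheiderer, Delfs--Knebusch) --- promoted from abelian sheaves to sheaves of spaces using the finite bound $\cd(\Sper X) \leq \dim X$ --- shows that every $\ret$-sheaf of spectra on $\Sm_S$ is automatically $\AA^1$-invariant, and that the inclusion $\Et_S \hookrightarrow \Sm_S$ induces an equivalence $\Shv_\ret(\Sm_S) \simeq \widetilde{S_\ret}$. Moreover $\rho$ is already invertible in $\Shv_\ret(\Sm_S, \Sp)$ (again by checking stalks over real closed fields, where $\Gm$ realizes to $S^0$), so the $\Gm$-factor of $\PP^1 = S^1 \wedge \Gm$ is inverted and the $\PP^1$-stabilization defining $\SH_\ret(S)$ from $\Shv_\ret(\Sm_S, \Sp)$ is vacuous. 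Assembling these identifications gives $\SH_\ret(S) \simeq \Sp(\widetilde{S_\ret})$, and the same finite-dimensionality assumption ensures this $\infty$-topos is hypercomplete, reconciling the two sides with the conventions of Appendix~\ref{subsec:hyp}.

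I expect the main obstacle to be the descent statement in (i): verifying that $\rho$-inverted motivic spectra satisfy genuine real \'etale hyperdescent on the big site $\Sm_S$. This is where one must exploit Scheiderer's structural results on the real spectrum --- its spectral-space structure, quasi-compactness, and finite cohomological dimension --- together with $\AA^1$-invariance of $\SH$, in order to reduce $\SH(X)[\rho^{-1}]$ to data depending only on $\Sper X$ over $\Sper S$. Once this is in hand, both the collapse of the $\AA^1$- and $\PP^1$-localizations in (ii) and the small/big site comparison follow formally from Scheiderer's topos-theoretic dictionary.
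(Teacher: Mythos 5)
This theorem is not actually proven in the paper: it is Bachmann's result, cited from \cite{bachmann-ret}. The only original content the paper contributes to the statement is contained in the footnote and in Remark~\ref{rem:shret}: (a) the real \'etale topos of a finite-dimensional scheme is hypercomplete (Theorem~\ref{thm:hyper}), which reconciles the non-hypercomplete formulation in \cite{bachmann-ret} with the paper's hypercomplete conventions; and (b) Bachmann assumes noetherian schemes, and the paper removes this hypothesis by the continuity argument of Lemma~\ref{lem:approx} (continuity for $\Sp(\widetilde{(-)}_{\ret})$ and for $\SH_{\ret}$). Your proposal, by contrast, sets out to reconstruct Bachmann's entire argument from scratch. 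As a summary of \cite{bachmann-ret} the outline is essentially faithful: the two-step factorization through $\SH(S)[\rho^{-1}]$, the identification of the crucial descent statement (that $X \mapsto \SH(X)[\rho^{-1}]$ is an $\ret$-hypersheaf on $\Sm_S$), the use of real-closed points for conservativity, and the $\AA^1$-invariance and big/small-site comparison for the second step are all the right moves, and you correctly flag the hypersheaf descent as the hard part rather than pretending to dispatch it.

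Two things are missing relative to what the paper actually asserts. First, nothing in your sketch addresses the passage from noetherian to general finite-dimensional schemes; Bachmann's Theorem~35 is stated under a noetherian hypothesis, and the paper's version of the statement drops it, so a complete argument has to invoke continuity (Lemma~\ref{lem:approx}) or some replacement. Second, the assertion that \emph{every} $\ret$-sheaf of spectra on $\Sm_S$ is automatically $\AA^1$-invariant deserves a more careful formulation --- in Bachmann's argument the $\AA^1$-invariance of $\ret$-sheaves is exactly what requires the finite-dimensionality input via Delfs--Knebusch semialgebraic homotopy invariance and a bounded cohomological dimension argument, and the way you phrase it ("promoted from abelian sheaves to sheaves of spaces using the finite bound $\cd(\Sper X) \leq \dim X$") glosses over the Postnikov-tower convergence that makes the promotion go through; this is precisely the hypercompleteness point the Appendix supplies, so it should be cited there rather than left implicit.
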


\begin{remark} \label{rem:shret} In \cite{bachmann-ret}, the schemes involved are assumed to be noetherian. Using continuity for the functor $S \mapsto \Sp(\widetilde{S}_{\ret})$ (see Lemma~\ref{lem:approx}) and continuity for $\SH$, which persists for $\SH_{\ret}$ (by the same argument with real \'etale covers as in Lemma~\ref{lem:approx}), we have removed this hypothesis in the statement of the above theorem. This is implicit in Bachmann's formulation for the \'etale case below, which we cite verbatim.
\end{remark}

For the next result, we denote by $\Sch^{p\mbox{-}\fin}_B$ the category of schemes that are ``locally $p$-\'etale finite" (see \cite{bachmann-et}*{Definition 5.8}) over a base scheme $B$.

\begin{theorem}[\cite{bachmann-et}] \label{thm:BachmannEtaleRigidity} Suppose $S \in \Sch^{p\mbox{-}\fin}_{\ZZ[\tfrac{1}{p}]}$. Then the morphism of sites $$ (\Et_S, \et) \to (\Sm_S, \et) $$ induces an equivalence after $p$-completion
\begin{equation} \label{eq:x-et}
\Sp(\widehat{S_{\et}})\comp \simeq \SH_{\et}(S)\comp,
\end{equation}
where we consider \emph{hypercomplete} \'etale sheaves.
\end{theorem}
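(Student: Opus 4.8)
Since this theorem is quoted verbatim from Bachmann, my plan is simply to recapitulate the strategy of \cite{bachmann-et}. The equivalence is the one induced by the morphism of sites: pulling an \'etale hypersheaf of spectra on the small site $\Et_S$ back to the big site $\Sm_S$, then $\AA^1$-localizing and $\Gm$-stabilizing, produces a functor $\Sp(\widehat{S_\et})\comp \to \SH_\et(S)\comp$ whose right adjoint is restriction to $\Et_S$. First I would observe that both $S \mapsto \Sp(\widehat{S_\et})\comp$ and $S \mapsto \SH_\et(S)\comp$ are \'etale hypersheaves of $\infty$-categories on $\Sm_S$: the former by definition of the small \'etale $\infty$-topos, the latter because $\SH_\et$ satisfies \'etale hyperdescent and, because $S$ is locally $p$-\'etale finite, $p$-completion commutes with the totalizations occurring in the descent spectral sequence. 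It therefore suffices to check that the comparison map is an equivalence after base change along an \'etale hypercover, and in particular when $S = \Spec R$ for $R$ a strictly henselian local ring with $\tfrac{1}{p} \in R$ --- in which case the small \'etale site of $S$ is trivial and the left-hand side is simply $\Sp\comp$.

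The remaining task is to prove $\SH_\et(R)\comp \simeq \Sp\comp$ for such $R$, and the point is that both operations used to build $\SH_\et(R)$ out of \'etale hypersheaves of spectra on $\Sm_R$ --- $\AA^1$-localization and $\Gm$-stabilization --- become vacuous after $p$-completion. For the first I would invoke homotopy invariance of $p$-torsion \'etale cohomology (classical; cf.\ also Gabber's rigidity theorem and the work of Suslin--Voevodsky): for $U \in \Sm_R$ the projection $U \times \AA^1 \to U$ induces an equivalence on \'etale cohomology with $\ZZ/p$-coefficients, so the pullback to $\Sm_R$ of any $p$-complete \'etale hypersheaf of spectra on $\Et_R$ is already $\AA^1$-invariant; since smooth $R$-schemes are \'etale-locally \'etale over affine space, restriction along $\Et_R \hookrightarrow \Sm_R$ then identifies the $\infty$-category of $\AA^1$-invariant $p$-complete \'etale hypersheaves of spectra on $\Sm_R$ with $\Sp(\widehat{R_\et})\comp = \Sp\comp$. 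For the second I would use triviality of the Tate twist: since $R$ is strictly henselian it contains a primitive $p$-th root of unity, so the \'etale sheaf $\mu_p$ is constant, the $p$-completed reduced motivic suspension spectrum of $\Gm$ is $\otimes$-invertible and in fact equivalent to the simplicial circle $S^1$, and hence $\Gm$-stabilization does not enlarge the already $S^1$-stable category. Combining these two facts gives $\SH_\et(R)\comp \simeq \Sp\comp$, and gluing back up over $S$ by hyperdescent finishes the argument.

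The hard part is the second input --- the behaviour of $\Gm$-stabilization, equivalently the invertibility of the Bott/Tate element in the $p$-complete \'etale motivic sphere --- and this is precisely where \cite{bachmann-et} does its work: one first establishes the claim for modules over $\mathbf{1}/p$ by comparison with an oriented theory such as $\mathrm{MGL}/p$ or $\ku/p$, in which an orientation visibly trivializes the Tate twist, and then propagates it to all of $\SH_\et(-)\comp$ by a connectivity/nilpotence argument exploiting that the relevant homotopy sheaves are \'etale-locally constant. I would also stress that the finiteness hypothesis is not cosmetic: local finiteness of the $p$-cohomological dimension is exactly what makes $p$-completion commute with the limits appearing in \'etale hyperdescent and in Postnikov towers, which is needed both for the reduction to strictly henselian local rings and for the gluing step. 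For the purposes of this paper we take Theorem~\ref{thm:BachmannEtaleRigidity} as a black box.
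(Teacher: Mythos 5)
The paper quotes this theorem verbatim from \cite{bachmann-et} (cf.\ Remark~\ref{rem:shret}) and gives no independent proof, so there is nothing internal to compare against; what can be assessed is whether your sketch of Bachmann's argument is faithful, and at the level of strategy it is: reduce to strictly henselian local rings where the small \'etale topos is trivial, then show $\SH_\et(-)\comp$ collapses to $\Sp\comp$ because $\AA^1$-localization becomes vacuous after $p$-completion (homotopy invariance of $p$-torsion \'etale cohomology) and $\Gm$-stabilization becomes vacuous (trivialization of the Tate twist), with the real content concentrated in inverting the Bott element. Two cautions about the reduction step. First, passing to strictly henselian points is not a single application of \'etale hyperdescent for the category-valued presheaves: a strict henselization is a filtered limit of \'etale neighborhoods, not an \'etale cover, so you also need continuity for $S \mapsto \Sp(\widehat{S_\et})\comp$ and for \'etale motivic spectra, together with a hypercompleteness/Postnikov argument to check equivalences on stalks; the locally $p$-\'etale finite hypothesis is doing quiet work in both places (bounding cohomological dimension so that $p$-completion commutes with the relevant limits), not only in a descent spectral sequence. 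Second, trivializing the $p$-complete Tate twist needs $\mu_{p^n}$ for all $n$, not merely $\mu_p$; this holds over a strictly henselian ring containing $\tfrac{1}{p}$ by Hensel lifting from the separably closed residue field, and is the same mechanism --- Bachmann's twisting spectrum $\hat{\1}_p(1)$ plus adjunction of $p^n$-th roots of unity --- that the paper itself reuses in the proof of Lemma~\ref{lem:duals}. These are matters of precision rather than gaps in strategy, and your concluding stance of treating the theorem as a black box matches the paper's.
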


\begin{remark} We do not expect Theorem~\ref{thm:BachmannEtaleRigidity} to hold integrally, in view of the equivalence of Nisnevich and \'etale motives after rationalization.
\end{remark}

Roughly speaking, we will amalgamate Bachmann's two theorems to construct an equivalence $$\Sp^{C_2}_b(S)\comp \simeq \SH_b(S)\comp$$ under the same hypotheses as Theorem~\ref{thm:BachmannEtaleRigidity}.

\begin{remark} In particular, we obtain a fully faithful embedding of $\Sp^{C_2}_b(S)\comp$ into $\SH(S)\comp$. In \cite{behrens-shah}, Behrens and the second author showed that the right adjoint to $C_2$-Betti realization gives a fully faithful embedding $(\Sp^{C_2})\comp \hookrightarrow \SH^{\text{cell}}(\RR)\comp$ of $p$-complete genuine $C_2$-spectra into $p$-complete \emph{cellular} real motivic spectra. Our theorems here thus specialize to a non-cellular version of this result when $S = \Spec \RR$  (upon identifying $b$-localization with $C_2$-Betti realization).
\end{remark}

\subsection{Main results}
Let us now state our main results in greater detail. Roughly, this paper divides into two parts: \S\ref{sect:gen-stab}-\ref{sect:six} concerns equivariant higher topos theory and its application to establishing the formalism of six operations, while \S\ref{sec:field-case}-\ref{sec:apps} builds a connection with motivic homotopy theory in the $b$-topology. The starting point for equivariant higher topos theory is the following definition.

\begin{definition} Let $G$ be a finite group. A {\bf $G$-$\infty$-topos} is a functor $BG \to \RTop$, where $\RTop$ is the $\infty$-category of $\infty$-topoi and geometric morphisms thereof.
\end{definition}

Given a $G$-$\infty$-topos $\Xscr$, we will construct its genuine stabilization $\Sp^G(\Xscr)$ in two stages (see \S\ref{sect:gen-stab} for details):

\begin{enumerate} \item We define the {\bf toposic genuine $G$-orbits} $\Xscr_{G}$ as a certain lax colimit in $\RTop$ (Construction~\ref{cons:orb}); this recovers Scheiderer's construction when $G = C_p$ (Example~\ref{ex:b}). Ranging over all subgroups $H \leq G$, these $\infty$-topoi assemble into a presheaf 
\[
\Xscr_{(-)}: (\Oscr_G)^{\op} \to \LTop \simeq (\RTop)^{\op},
\] which constitutes an example of a {\bf $G$-$\infty$-category}. 
\item Given any $G$-$\infty$-category $\C$ that admits finite $G$-limits, Nardin has constructed a suitable candidate for its genuine stabilization $\Sp^G(\C)$ as the $\infty$-category of $G$-commutative monoids (or parametrized Mackey functors) in the fiberwise stabilization of $\C$. We apply his construction to define $\Sp^G(\Xscr) = \Sp^G(\Xscr_{(-)})$.
\end{enumerate}

If $G = C_p$, then we also have an alternative description of $\Sp^{C_p}(\Xscr)$ that generalizes Glasman's description of genuine $C_p$-spectra as a recollement obtained by gluing along the $C_p$-Tate construction $(-)^{tC_p}: \Sp^{B C_p} \to \Sp$ \cite{glasman}.

\begin{theorem}[Theorem \ref{prop:recoll-stab}]  Let $\Xscr$ be a $C_p$-$\infty$-topos and let $\Xscr_{\hh C_p}$ and $\Xscr^{\hh C_p}$ denote the toposic colimit and limit of the $C_p$-action on $\Xscr$ (i.e., as taken in $\RTop$ instead of $\Cat_{\infty}$). Then there exists a functor $$\Theta^{\Tate}: \Sp(\Xscr_{\hh C_p}) \to \Sp(\Xscr^{\hh C_p})$$ whose right-lax limit is canonically equivalent to $\Sp^{C_p}(\Xscr)$.
\end{theorem}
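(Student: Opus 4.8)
The plan is to realize $\Sp^{C_p}(\Xscr)$ — which is \emph{defined} to be $\Sp^{C_p}(\Xscr_{(-)})$ — as a stable recollement whose strata are $\Sp(\Xscr_{\hh C_p})$ (open) and $\Sp(\Xscr^{\hh C_p})$ (closed), to let $\Theta^{\Tate}$ be its gluing functor, and to conclude via the standard equivalence between a stable recollement and the right-lax limit of its gluing functor (\cite{higheralgebra}*{\S A.8}, \cite{barwick-glasman}, \cite{quigley-shah}). The whole argument runs parallel to Glasman's \cite{glasman}, which is the case $\Xscr = \Spc$ with trivial $C_p$-action. The first input is that, for \emph{any} $C_p$-$\infty$-category $\Cscr$ with finite $C_p$-limits, the genuine stabilization $\Sp^{C_p}(\Cscr)$ — parametrized spectral Mackey functors in the fiberwise stabilization $\underline{\Sp}(\Cscr)$ — carries a recollement whose open part is the full subcategory of \emph{cofree} ($\{e\}$-complete) objects, whose closed part is the full subcategory of \emph{geometric} ($\widetilde{EC_p}$-local) objects, and whose gluing functor is cofree completion followed by the fiberwise geometric $C_p$-fixed-points functor. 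Because $\Oscr_{C_p}$ has only the two objects $C_p/e$ and $C_p/C_p$, this is a single recollement rather than an iterated tower; it is the parametrized incarnation of the isotropy-separation sequence and may be cited from \cite{quigley-shah} or, equivalently, obtained by rerunning \cite{glasman} fiberwise over $\Oscr_{C_p}$.

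Applying this with $\Cscr = \Xscr_{(-)}$, one identifies the strata using the toposic recollement $\Xscr_{C_p} \simeq \Recoll(\Xscr_{\hh C_p}, \Xscr^{\hh C_p})$ of Construction~\ref{cons:orb} (Example~\ref{ex:b}), with open subtopos $j\colon \Xscr_{\hh C_p}\hookrightarrow \Xscr_{C_p}$, closed complement $i\colon \Xscr^{\hh C_p}\hookrightarrow \Xscr_{C_p}$, and toposic gluing functor $\rho$. For the open stratum: a cofree genuine $C_p$-spectrum over $\Xscr_{(-)}$ is detected by its value at $C_p/e$, which is $\Sp(\Xscr)$ with its residual $C_p$-action, so the cofree objects form $\lim_{BC_p}\Sp(\Xscr)$; since $\Sp(-)$ preserves $\infty$-categorical limits while $\Xscr_{\hh C_p} = \colim^{\RTop}_{BC_p}\Xscr$ is computed as the $\infty$-categorical limit of the inverse-image functors, this is $\Sp(\Xscr_{\hh C_p})$. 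For the closed stratum: stabilizing the toposic recollement of $\Xscr_{C_p}$ realizes $\Sp(\Xscr^{\hh C_p})$ as the closed stratum of $\Sp(\Xscr_{C_p}) = \underline{\Sp}(\Xscr_{(-)})$ at $C_p/C_p$, and one checks that the fiberwise geometric $C_p$-fixed-points functor there is precisely localization onto this closed stratum (its $\widetilde{EC_p}$-local objects being exactly those supported on the $C_p$-fixed locus). Hence $\Sp^{C_p}(\Xscr_{(-)})$ is a recollement of $\Sp(\Xscr_{\hh C_p})$ and $\Sp(\Xscr^{\hh C_p})$.

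Let $\Theta^{\Tate}\colon \Sp(\Xscr_{\hh C_p}) \to \Sp(\Xscr^{\hh C_p})$ be the gluing functor $i^{*}j_{*}$ of this recollement. Then $\Sp^{C_p}(\Xscr) \simeq \Sp^{C_p}(\Xscr_{(-)})$ is the right-lax limit of $\Theta^{\Tate}$ by recollement theory, the comparison carrying $M$ to the triple $\bigl(j^{*}M,\ i^{*}M,\ i^{*}(M)\to i^{*}j_{*}j^{*}M\bigr)$. Chasing the identifications of the previous paragraph, $\Theta^{\Tate}$ is computed as ``geometric $C_p$-fixed points of the cofree completion'': it factors through the toposic gluing functor $\rho$ together with a fiberwise, bar-resolved Tate construction, and specializes to the classical $(-)^{tC_p}\colon \Sp^{BC_p} \to \Sp$ exactly when $\Xscr = \Spc$ with trivial action — giving both a consistency check against \cite{glasman} and the algebro-geometric $C_2$-Tate construction promised in the introduction.

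The crux is the identification of the two strata. Its first half demands that stabilization intertwine the toposic colimit $\colim^{\RTop}_{BC_p}$ with ordinary homotopy fixed points, so that the Borel summand of $\Sp^{C_p}(\Xscr_{(-)})$ is exactly $\Sp(\Xscr_{\hh C_p})$ and not some a priori larger homotopy-fixed-point category; its second half requires reconciling two recollements sharing the same open and closed strata but having genuinely different gluing functors — the isotropy-separation recollement of $\Sp^{C_p}(\Xscr_{(-)})$, whose gluing functor is the Tate-type $\Theta^{\Tate}$, versus the stabilized toposic recollement of $\Xscr_{C_p}$, whose gluing functor is instead a homotopy-fixed-points-type functor. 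If one declines to black-box the parametrized form of Glasman's theorem, the substantive work becomes constructing the toposic Tate square outright and verifying that $\Sp^{C_p}(\Xscr_{(-)})$ is reassembled from it; this is the technically heaviest ingredient and the one billed as of independent interest.
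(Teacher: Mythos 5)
Your strategy is structurally the same as the paper's: establish a recollement on $\Sp^{C_p}(\Xscr)$, identify its open and closed strata as $\Sp(\Xscr_{\hh C_p})$ and $\Sp(\Xscr^{\hh C_p})$, and then identify the gluing functor $i^* j_*$ with the Tate construction $(-)^{tC_p}\circ\nu(C_p)^*$. You also correctly flag where the substance lies: the open-stratum identification (the paper's Lemma~\ref{lem:borel-computation}) and the fact that two recollements with the same strata but different gluing functors must be reconciled.

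But there is a genuine gap in the part you are ``black-boxing.'' First, the ``parametrized Glasman theorem'' you want to cite — that $\Sp^{C_p}(\Cscr)$ carries an isotropy-separation recollement for an arbitrary $C_p$-$\infty$-category $\Cscr$ with finite $C_p$-limits — is not available off the shelf at this level of generality; Glasman's result and its parametrized avatar in \cite{quigley-shah} concern $\Sp^G$ itself (the terminal case $\Xscr = \Spc$). The paper has to \emph{prove} this (Proposition~\ref{prop:preserves}), and doing so requires checking that passage to $C_p$-commutative monoids preserves the recollement of $\underline{\Sp}(\underline{\Xscr}_{C_p})$, which is nontrivial because the localization functor $L_{\sad}$ onto $C_p$-commutative monoids must be shown to respect the recollement decomposition. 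Second — and this is the real crux — your assertion that the gluing functor ``is computed as geometric fixed points of the cofree completion'' and ``specializes to $(-)^{tC_p}$'' is exactly the claim the theorem makes, and ``chasing the identifications'' does not establish it. The paper's mechanism is to construct a categorical fixed-points functor $\Psi^{C_p}\colon \Sp^{C_p}(\Xscr)\to\Sp(\Xscr^{\hh C_p})$ (Theorem~\ref{thm:cat-fixed}) with the three properties: it preserves colimits, $\Psi^{C_p}j_*\simeq(-)^{hC_p}\nu(C_p)^*$, and $\Psi^{C_p}i_*\simeq\id$. These three facts, together with the vanishing of both $\Theta^{\Tate}$ and $i^*j_*$ on induced objects (Lemma~\ref{lem:killsinduced}) and the fact that $\Sp(\Xscr_{\hh C_p})$ is generated under colimits by induced objects (Lemma~\ref{lem:generated}), let one compare the norm cofiber sequence $(-)_{hC_p}\nu(C_p)^* \to (-)^{hC_p}\nu(C_p)^* \to (-)^{tC_p}\nu(C_p)^*$ against $\Psi^{C_p}j_! \to \Psi^{C_p}j_* \to \Psi^{C_p}i_*i^*j_*$ and deduce the equivalence. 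Nothing in your proposal plays the role of $\Psi^{C_p}$: without it you have two recollements on a priori different $\infty$-categories (the right-lax limits of $\Theta^{\Tate}$ and of $i^*j_*$) with the same strata, and no map between them, so the identification does not follow from ``recollement theory.'' This is the step you would actually need to supply.
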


One payoff of the recollement perspective is the construction of a symmetric monoidal structure on $\Sp^{C_p}(\Xscr)$, which we explain in~\S\ref{sec:symmon}. This symmetric monoidal structure will be essential for our subsequent construction of a new six functors formalism when we specialize to our main example of a $G$-$\infty$-topos: the $C_2$-$\infty$-topos $\widehat{S[i]_{\et}}$ of hypercomplete \'etale sheaves of spaces on $S[i]$ for a scheme $S$ on which $2$ is invertible.

%

\begin{definition}[Definition~\ref{dfn:OfficialBSheavesWithTransfers}] \label{def:b-shvtr} Let $\Sp^{C_2}_b(S) = \Sp^{C_2}(\widehat{S[i]}_{\et})$ be the $\infty$-category of {\bf hypercomplete $b$-sheaves of spectra with transfers}.
\end{definition}

Building upon Bachmann's results, we endow $\Sp^{C_2}_b(-)\comp$ with the structure of a premotivic functor satisfying the full six functors formalism.

\begin{theorem} [Theorem~\ref{construct:genuine}] For any prime $p$, there is a six functors formalism
\[
(\Sp_b^{C_2})\comp: (\Sch^{\ft}_{\ZZ[\tfrac{1}{p}, \tfrac{1}{2}]})^{\op} \rightarrow \CAlg(\PrL_{\infty,\stab}),
\]
which assigns to a real closed field the $p$-completion of the $\infty$-category of genuine $C_2$-spectra $\Sp^{C_2}$.
\end{theorem}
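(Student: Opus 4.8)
The plan is to leverage the recollement presentation of $\Sp^{C_2}_b(S)$ together with Bachmann's two rigidity theorems to exhibit the $p$-completion as a functorial recollement of two $\infty$-categories that \emph{already} carry six functors formalisms, and then to promote the gluing datum to a compatible gluing of those formalisms. Concretely, I would apply Theorem~\ref{prop:recoll-stab} to the $C_2$-$\infty$-topos $\widehat{S[i]_{\et}}$: since the toposic colimit of its $C_2$-action is $\widehat{S_{\et}}$ (by Galois descent) and the toposic limit is $\widehat{S_{\ret}}$, this presents
\[
\Sp^{C_2}_b(S) \;\simeq\; \Recoll\bigl(\Sp(\widehat{S_{\et}}),\, \Sp(\widehat{S_{\ret}});\, \Theta^{\Tate}\bigr)
\]
naturally in $S$, with gluing functor the algebro-geometric $C_2$-Tate construction $\Theta^{\Tate}$. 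Passing to $p$-completions and invoking Theorem~\ref{thm:BachmannEtaleRigidity} and Theorem~\ref{thm:BachmannRealEtaleRigidity} (the latter after $p$-completion), the two strata are identified with $\SH_{\et}(S)\comp$ and $\SH_{\ret}(S)\comp$; here one must check that under these identifications $\Theta^{\Tate}$ corresponds to a motivic $C_2$-Tate transformation $\SH_{\et}(-)\comp \to \SH_{\ret}(-)\comp$, which reduces to the naturality of Bachmann's equivalences and is where the algebro-geometric $C_2$-Tate construction advertised in the introduction enters. For $p$ odd the norm map is already an equivalence, so $\Theta^{\Tate} \simeq 0$, the recollement splits as the product $\SH_{\et}(-)\comp \times \SH_{\ret}(-)\comp$, and its six functors formalism is the product of those of the two factors; so I may assume $p = 2$.

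Next I would record that each stratum, regarded as a functor on $(\Sch^{\ft}_{\ZZ[\tfrac{1}{p},\tfrac{1}{2}]})^{\op}$, carries a six functors formalism valued in $\CAlg(\PrL_{\infty,\stab})$: for $\SH_{\et}(-)\comp$ this comes from the six functors formalism on $\SH(-)$ (after Morel--Voevodsky, Ayoub, Cisinski--Déglise, Khan) together with the compatibility of $p$-complete \'etale localization with the six operations on the schemes at hand, and for $\SH_{\ret}(-)$ one uses Bachmann's identification of $\ret$-localization with the smashing localization $\SH(-)[\rho^{-1}]$, so that the formalism descends verbatim. Since the target is $\CAlg(\PrL_{\infty,\stab})$, the tensor product, internal hom, and the adjunction $(f^*, f_*)$ are automatic on the recollement; the substance is to produce $f_\sharp$ for smooth $f$ and $(f_!, f^!)$ for separated finite-type $f$ together with all the compatibilities (smooth and proper base change, the two projection formulas, purity, localization, homotopy invariance, $\PP^1$-stability). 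I would extract these from a gluing principle: if $\Dscr_0, \Dscr_1$ are six functors formalisms and $\Theta\colon \Dscr_0 \to \Dscr_1$ is a natural transformation that is lax symmetric monoidal and exact in each component and such that each $\Theta_S$ commutes with $f_!$ for every separated finite-type $f$, then the stratum-wise recollement $S \mapsto \Recoll(\Dscr_0(S), \Dscr_1(S); \Theta_S)$ is again a six functors formalism, with every operation computed stratum-wise and the axioms inherited. Granting this --- a formal if lengthy exercise in recollement theory --- the theorem reduces to verifying the compatibility hypothesis for $\Theta^{\Tate}$.

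The main obstacle is exactly this verification, which I expect to be the technical heart of the argument: the $C_2$-Tate construction interlaces a $BC_2$-indexed limit with a cofiber, and neither $f^*$ nor $f_!$ commutes with $BC_2$-limits on formal grounds. The resolution is that the algebro-geometric $\Theta^{\Tate}$ is not the naive pointwise Tate construction but is built as a morphism of \emph{premotivic} functors: its compatibility with $f^*$ should be part of its very construction, obtained by $C_2$-Galois descent along $S[i] \to S$ and using that finite type over $\ZZ[\tfrac{1}{2}]$ forces finite virtual \'etale $2$-cohomological dimension, so that the relevant limits become effectively finitary after $2$-completion; its compatibility with $f_!$ I would reduce, via Nagata compactification, to the case of an open immersion --- where $f_!$ is extension by zero and commutes with $\Theta^{\Tate}$ through the recollement structure itself --- and the case of a proper morphism --- where $f_! = f_*$ is finitary under the standing noetherian finite-dimensional hypotheses and commutes with $\Theta^{\Tate}$ by the projection formula together with $2$-complete bounded cohomological dimension.

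Finally, the value on a real closed field $k$ follows from Definition~\ref{def:b-shvtr} and the basic properties of the construction: $S[i] = \Spec(k[i])$ is the spectrum of an algebraically closed field, so $\widehat{S[i]_{\et}}$ is the terminal $\infty$-topos $\Spc$ equipped with the necessarily trivial $C_2 = \Gal(k[i]/k)$-action, that is, the terminal $C_2$-$\infty$-topos; since $\Sp^{C_2}(-)$ recovers the $\infty$-category of genuine $C_2$-spectra on the terminal $C_2$-$\infty$-topos, we obtain $\Sp^{C_2}_b(S) \simeq \Sp^{C_2}$, and hence $\Sp^{C_2}_b(S)\comp \simeq (\Sp^{C_2})\comp$, as claimed.
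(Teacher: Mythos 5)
Your plan to present $(\Sp_b^{C_2})\comp$ as a recollement of $\Sp(\widehat{-}_{\et})\comp$ and $\Sp(\widetilde{-}_{\ret})\comp$ along $\Theta^{\Tate}$, then import the six operations from each stratum via Bachmann's rigidity and a naturality argument for $\Theta^{\Tate}$, is the same high-level strategy as the paper's, and your observation that the recollement simply splits when $p$ is odd is a nice simplification the paper doesn't spell out. However, the ``gluing principle'' on which everything rests is exactly the naive expectation that Warning~\ref{warn:invertible} refutes: the paper exhibits the right-lax limit along $\Theta = (-)^{hC_2}\nu(C_2)^*$ (which yields naive $C_2$-spectra, failing Atiyah duality) as a counterexample to the claim that gluing two six-functors formalisms along a lax symmetric monoidal, exact gluing functor again produces one. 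The failure is concentrated in Thom stability. Via Lemma~\ref{lem:lax-lim}, $f_\sharp$ is computed stratumwise, so $j^*$ and $i^*$ carry the Thom transformation to equivalences on each stratum; but joint conservativity of $(j^*,i^*)$ applies to \emph{maps between objects}, not to invertibility of an object of the Picard groupoid, so one cannot conclude that $\1(1)$ is invertible without first exhibiting a candidate inverse defined at the level of the recollement. That is precisely what Tate-dualizability (Definition~\ref{def:enough}) supplies: if $\1(1)$ is dualizable, the dual $\1(1)^\vee$ is the candidate and the evaluation/coevaluation maps can be tested on strata (Lemma~\ref{lem:PropertyThomStability}). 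Proving Tate-dualizability for $(\Sp_b^{C_2})\comp$ is the genuinely non-formal step (Lemma~\ref{lem:duals}), and it requires identifying the premotivic $\rho$-cofiber sequence with the categorical one $p_{S\sharp}p_S^*\1 \to \1 \to \1(1)[1]$ via Bachmann's twisting spectrum $\hat{\1}_p(1)$ after passing to an \'etale cover containing $p$-power roots of unity. Your proposal contains no substitute for this input; the ``formal if lengthy exercise in recollement theory'' would, if attempted, run into the conservativity issue above.

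Conversely, the hypothesis you impose on $\Theta^{\Tate}$ commuting with $f_!$ is both more than you need and not well-posed a priori (the $f_!$ on the recollement is itself part of the formalism being constructed). The paper's route via Ayoub's theorem (Theorem~\ref{thm:six}) and Theorem~\ref{thm:abstract2} only needs $\Theta^{\Tate}$ to commute with $f^*$ (Corollary~\ref{cor:tate-stable}); once homotopy invariance, localization, and Thom stability are verified for the recollement (Lemmas~\ref{lem:PropertyHomotopyInvariance}--\ref{lem:PropertyThomStability}), the existence and compatibilities of $f_!, f^!$ are outputs, not inputs. Your Nagata-compactification case analysis for $f_!$-compatibility is thus duplicating work that the abstract machinery already handles, while the actual gap (Tate-dualizability) is left unfilled. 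The value on a real closed field, via $\widehat{k[i]}_{\et} \simeq \Spc$ and Example~\ref{ex:g-sp}, is correct as you state it.
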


That $(\Sp_b^{C_2})\comp$ assembles into a full six functors formalism crucially relies on the maneuver of genuine stabilization, which guarantees a sufficient supply of dualizable objects; see Definition~\ref{def:enough} for the precise dualizibility assumption that we require. In particular, the Tate motive $\1(1)$ is invertible in $(\Sp_b^{C_2})\comp$, even though the construction of $(\Sp_b^{C_2})\comp$ as a premotivic functor does not \emph{a priori} enforce this (cf.  Lemma~\ref{lem:duals}). 

Next, we relate our construction to $b$-motivic spectra. In order to facilitate the comparison, we first express the $C_2$-Tate construction in purely algebro-geometric terms.

\begin{theorem}[Theorem~\ref{thm:mot-v-tate}] Let $k$ be a real closed field. Under the equivalences $(\Sp^{BC_2})\comp \stackrel{\simeq}{\rightarrow} \SH_{\et}(k)\comp$ and $(\Sp)\comp \stackrel{\simeq}{\rightarrow} \SH_{\ret}(k)\comp$, we have a canonical equivalence of functors
\[
(-)^{tC_2} \simeq L_{\ret}i_{\et}:  \SH_{\et}(k)\comp \rightarrow \SH_{\ret}(k)\comp.
\]
\end{theorem}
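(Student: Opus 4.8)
The plan is to reduce the statement to a computation on the small étale site of $k$, where everything is governed by the $C_2$-equivariant Borel picture, and then to identify the algebro-geometric recollement pieces with their topological counterparts. First I would recall that under Bachmann's étale rigidity theorem (Theorem~\ref{thm:BachmannEtaleRigidity}), after $p$-completion $\SH_{\et}(k)\comp \simeq \Sp(\widehat{k_{\et}})\comp$, and since $k$ is real closed, $\widehat{k_{\et}} \simeq \Spc^{BC_2}$ (the étale homotopy type of $k$ is $BC_2$), so $\SH_{\et}(k)\comp \simeq (\Sp^{BC_2})\comp$; likewise, real étale rigidity (Theorem~\ref{thm:BachmannRealEtaleRigidity}) together with $\widetilde{k_{\ret}} \simeq \Spc$ gives $\SH_{\ret}(k)\comp \simeq (\Sp)\comp$. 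Under these identifications, $i_{\et}$ is the étale sheafification composed with the inclusion into $\SH(k)$, and $L_{\ret}$ is $\rho$-inversion; so the composite $L_{\ret}i_{\et}$ corresponds, on the small-site side, to a functor $(\Sp^{BC_2})\comp \to (\Sp)\comp$ that I want to identify with $(-)^{tC_2}$.

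The key observation is that the real étale topology on $\Et_k$, viewed through Scheiderer's recollement, picks out exactly the ``closed complement'' of the open étale subtopos inside the $b$-topos, and the gluing functor of that recollement is precisely real étale sheafification $\rho$. Translated to stable categories via the recollement of Theorem~\ref{prop:recoll-stab} for $\Xscr = \widehat{k[i]_{\et}}$: we have $\Xscr_{\hh C_2} \simeq \widehat{k_{\et}} \simeq \Spc^{BC_2}$ (Galois descent) and $\Xscr^{\hh C_2} \simeq \widetilde{k_{\ret}} \simeq \Spc$, and the gluing functor $\Theta^{\Tate}: \Sp^{BC_2} \to \Sp$ is, by construction of $\Sp^{C_2}(\Xscr)$ as a right-lax limit recovering genuine $C_2$-spectra, the $C_2$-Tate construction $(-)^{tC_2}$. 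So the content of the theorem is that the motivic realization of this gluing functor — namely, take an étale-local object, regard it motivically via $i_{\et}$, then apply $\rho$-localization $L_{\ret}$ — computes the same thing.

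Concretely, the main step is to show that for $E \in \SH_{\et}(k)\comp$ corresponding to $X \in (\Sp^{BC_2})\comp$, the $\rho$-inverted object $L_{\ret} i_{\et}(E)$ corresponds to $X^{tC_2} \in (\Sp)\comp$. I would prove this by identifying $\rho$-inversion on the small-site side: by Theorem~\ref{thm:BachmannRealEtaleRigidity}, $L_{\ret}$ is inversion of $\rho \in \pi_{-1,-1}$, and under the equivalence $\SH(k) \to $ (something computing genuine $C_2$-spectra over $\RR$, e.g. via $C_2$-Betti realization as in the Behrens–Shah embedding alluded to in the remarks), $\rho$ corresponds to the Euler class $a_\sigma$ of the sign representation; inverting $a_\sigma$ on a Borel (= cofree) $C_2$-spectrum, i.e. applying the geometric localization, yields precisely the Tate construction $(-)^{tC_2}$ after taking $C_2$-fixed points — this is the standard isotropy-separation / Tate square identification $(X^{tC_2}) \simeq (\widetilde{E\mathcal{P}} \wedge F(EC_{2+}, X))^{C_2}$. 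The remaining work is bookkeeping: checking compatibility of the various rigidity equivalences with the symmetric monoidal structures (so that ``invert $\rho$'' transports to ``invert $a_\sigma$''), checking that $i_{\et}$ indeed lands in the Borel/cofree part so that $F(EC_{2+},-)$ is invisible, and verifying naturality of the chain of equivalences.

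The hard part will be the last point — making the identification of $\rho$ with $a_\sigma$ (equivalently, matching the bigraded motivic homotopy groups with $RO(C_2)$-graded equivariant ones) fully functorial and monoidal across Bachmann's two rigidity theorems simultaneously, since the two theorems are proved by genuinely different arguments (one is an étale-local $p$-adic rigidity statement à la Suslin–Voevodsky, the other is the real étale/$\rho$-inverted comparison) and one must glue them compatibly along the recollement. I expect this to be handled by exhibiting both sides as recollements — the algebro-geometric $b$-recollement on one hand, and the Tate recollement of genuine $C_2$-spectra on the other — and invoking the uniqueness of a functor out of a recollement determined by its restriction to the open and closed pieces plus the gluing functor, reducing the whole comparison to the already-established identifications of the open part (étale rigidity), the closed part (real étale rigidity), and the gluing map (the present theorem's claim, now a formal consequence once the monoidal bookkeeping is in place).
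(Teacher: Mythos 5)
You have the right conceptual picture: rigidity identifies $\SH_{\et}(k)\comp$ with Borel $C_2$-spectra and $\SH_{\ret}(k)\comp$ with spectra, and the content is that $\rho$-inversion applied to $i_{\et}$ implements the Tate construction. You also correctly anticipate that a Betti-type realization and the identification $\rho \leftrightarrow a_\sigma$ (Lemma~\ref{lem:rho}) should mediate the comparison. However, your final step is circular as written: you propose to reduce the statement to ``the already-established identifications of the open part, the closed part, and the gluing map,'' but the identification of the gluing map \emph{is} the theorem. A recollement is not determined by its open and closed parts alone, and the universal property of the right-lax limit requires an already-given comparison of gluing functors; nothing in your proposal supplies that comparison.

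The paper resolves this by constructing a genuine intermediary: the $C_2$-semialgebraic realization $\DK^{C_2}: \SH(k) \to \Sp^{C_2}$ defined over an arbitrary real closed field $k$ (not just $\RR$), using the Delfs--Knebusch semialgebraic topology on $X(C)$. Its right adjoint $\Sing^{C_2}$ gives the needed functor directly, without appealing to any abstract uniqueness. The key inputs, which your proposal does not isolate, are: (i) the symmetric monoidal monadicity of the adjunction $\DK^{C_2} \dashv \Sing^{C_2}$, in particular the projection formula, which drives Lemma~\ref{lem:glue} showing $i^* \DK^{C_2} \Sing^{C_2} j_* \simeq i^* j_*$; (ii) Huber's \'etale--semialgebraic comparison theorem, needed in Proposition~\ref{prop:et-compare} to show that after $p$-completion $\Sing^{C_2} j_*$ agrees with $i_{\et}$ under Bachmann's \'etale rigidity equivalence; and (iii) the fact that $\DK^{C_2}(\rho) \simeq \rho$, which gives Proposition~\ref{prop:ret-compare} identifying $\Sing^{C_2} i_*$ with $i_{\ret}$. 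These three facts combine into the one-line chain
\[
L_{\ret} i_{\et} i_p \simeq L_{\ret} \Sing^{C_2} j_* i_p \simeq i^* \DK^{C_2} \Sing^{C_2} j_* i_p \simeq i^* j_* i_p,
\]
which is precisely $(-)^{tC_2}$ after $p$-completion. So while your high-level intuition about $\rho$-inversion and the cofree picture is sound, without the concrete realization functor and the Huber comparison your argument cannot close.
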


To prove Theorem~\ref{thm:mot-v-tate}, we extend the construction of the $C_2$-Betti realization functor to a real closed field in \S\ref{betti-c2} by means of semialgebraic topology, and then prove that its right adjoint is fully faithful after $p$-completion. 

Finally, we obtain the following identification:

\begin{theorem} [Theorem~\ref{cor:sch-1}] Let $S$ be a scheme such that $\tfrac{1}{2} \in \Oscr_S$. Then there is a canonical strong symmetric monoidal functor
\[ C_S: \Sp^{C_2}_b(S) \to \SH_b(S), \]
such that if $S$ is locally $p$-\'etale finite and $\tfrac{1}{p} \in S$, then $C_S$ is an equivalence after $p$-completion.
\end{theorem}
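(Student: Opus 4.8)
The plan is to present both sides as recollements and build $C_S$ as a morphism of recollements, thereby ``amalgamating'' Bachmann's two rigidity theorems. Applying Theorem~\ref{prop:recoll-stab} to the $C_2$-$\infty$-topos $\Xscr = \widehat{S[i]_\et}$ exhibits $\Sp^{C_2}_b(S) = \Sp^{C_2}(\widehat{S[i]_\et})$ as the right-lax limit of a gluing functor $\Theta^{\Tate}\colon \Sp(\widehat{S_\et}) \to \Sp(\widehat{S_\ret})$, using that the toposic $C_2$-colimit of $\widehat{S[i]_\et}$ is $\widehat{S_\et}$ (Galois descent) and its toposic $C_2$-limit is $\widehat{S_\ret}$. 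On the motivic side, because the $b$-topology is the intersection of the \'etale and real \'etale topologies, $\SH_\et(S)$ and $\SH_\ret(S)$ both embed in $\SH_b(S)$ as localizations, and Scheiderer's recollement of the small $b$-topos --- with the \'etale topos open and the real \'etale topos as closed complement --- propagates to a recollement of $\SH_b(S)$ with open part $\SH_\et(S)$, closed part $\SH_\ret(S)$, and gluing functor $L_\ret i_\et\colon \SH_\et(S) \to \SH_\ret(S)$, exactly the functor studied in Theorem~\ref{thm:mot-v-tate}.

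To build $C_S$ I would use Bachmann's comparison maps, which are strong symmetric monoidal, colimit-preserving functors $c^*_\et\colon \Sp(\widehat{S_\et}) \to \SH_\et(S)$ and $c^*_\ret\colon \Sp(\widehat{S_\ret}) \to \SH_\ret(S)$ underlying Theorems~\ref{thm:BachmannEtaleRigidity} and~\ref{thm:BachmannRealEtaleRigidity}. Given a natural transformation $\beta\colon c^*_\ret\circ\Theta^{\Tate} \Rightarrow L_\ret i_\et\circ c^*_\et$, the universal property of the right-lax limit yields $C_S$, sending $(X, Y, Y\to\Theta^{\Tate}X)$ to $\big(c^*_\et X,\ c^*_\ret Y,\ c^*_\ret Y \to c^*_\ret\Theta^{\Tate}X \xrightarrow{\beta_X} L_\ret i_\et c^*_\et X\big)$. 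The symmetric monoidal structure on $\Sp^{C_2}(\Xscr)$ from \S\ref{sec:symmon} is the one assembled componentwise from the tensor products on the strata via the lax monoidal structure on $\Theta^{\Tate}$, and likewise for the smash product on $\SH_b(S)$; since $c^*_\et$ and $c^*_\ret$ are strong monoidal and $\beta$ respects the lax monoidal structures on $\Theta^{\Tate}$ and $L_\ret i_\et$ ($i_\et$ and the Tate construction are lax monoidal, $L_\ret$ is monoidal), $C_S$ upgrades to a strong symmetric monoidal functor, canonically so as each ingredient is.

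For the equivalence after $p$-completion, I would use that $p$-completion of a stable recollement is computed stratumwise, so $C_S\comp$ is the morphism of recollements glued from $(c^*_\et)\comp$ and $(c^*_\ret)\comp$ along $\beta\comp$. Here $(c^*_\et)\comp$ is an equivalence by Theorem~\ref{thm:BachmannEtaleRigidity} (using that $S$ is locally $p$-\'etale finite and $\tfrac{1}{p}\in S$), and $(c^*_\ret)\comp$ is an equivalence since $c^*_\ret$ already is by Theorem~\ref{thm:BachmannRealEtaleRigidity} (finite-dimensionality being available, the noetherian hypothesis removed as in Remark~\ref{rem:shret}). A morphism of recollements that is an equivalence on both strata and compatible with the gluing data is an equivalence, so it remains to show $\beta\comp$ is an equivalence.

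The hard part is this last step --- the ``sheafy'' form of Theorem~\ref{thm:mot-v-tate} --- which I expect to be the main obstacle. After identifying $\SH_\ret(S)\comp \simeq \Sp(\widehat{S_\ret})\comp$ via $c^*_\ret$, it asks that two accessible functors $\Sp(\widehat{S_\et})\comp \to \Sp(\widehat{S_\ret})\comp$, namely $\Theta^{\Tate}$ and the transported $L_\ret i_\et$, coincide. Since $\widehat{S_\ret}$ is the hypercomplete topos of sheaves on the real spectrum $\Sper S$, such an equivalence may be checked after base change along the filtered, pro-\'etale inclusions of the real closures $k_x$ of the residue fields at points $x\in\Sper S$, provided $\Theta^{\Tate}$, $L_\ret i_\et$, $c^*_\et$ and $c^*_\ret$ are continuous along these base changes (cf.\ Remark~\ref{rem:shret} and Lemma~\ref{lem:approx}). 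Over such a $k_x$ one has $\Sper(k_x)=\ast$, $k_x[i]=\overline{k_x}$ and $\Gal(\overline{k_x}/k_x)=C_2$, so $\Sp(\widehat{(k_x)_\et})=\Sp^{BC_2}$, $\Sp(\widehat{(k_x)_\ret})=\Sp$, the functor $\Theta^{\Tate}$ specializes to $(-)^{tC_2}$ by the case of a real closed point in Theorem~\ref{prop:recoll-stab} (Glasman's recollement), and the desired identity is precisely Theorem~\ref{thm:mot-v-tate}. The delicate points are: (a) setting up $\beta$ and the continuity statements with enough coherence that the stratumwise equivalence assembles naturally in $S$ and compatibly with tensor products; and (b) the integral-versus-$p$-complete bookkeeping --- for odd $p$ both gluing functors vanish (the $C_2$-Tate construction being $2$-complete), so $C_S\comp$ reduces to $(c^*_\et)\comp$, whereas for $p=2$ one genuinely invokes Theorem~\ref{thm:mot-v-tate} together with the $2$-completeness of $\Theta^{\Tate}$ on the relevant hypersheaves.
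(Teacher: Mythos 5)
Your proposal matches the paper's strategy in structure: both sides are exhibited as right-lax limits of gluing functors, Bachmann's rigidity equivalences are used as the comparison on the open and closed strata, $C_S$ is assembled from a transformation between the gluing functors, and the $p$-complete equivalence is checked by base change to real closed points of $\Sper S$, where it reduces to Theorem~\ref{thm:mot-v-tate}. This is the right shape of argument.

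The genuine gap is that you postulate $\beta\colon c^*_\ret\,\Theta^{\Tate} \Rightarrow L_\ret i_\et\, c^*_\et$ without constructing it, and the construction is the substantive step --- it cannot be deferred to ``setting up $\beta$ with enough coherence.'' There is no naive candidate: the small-site stable gluing functor is a \emph{Tate} construction, while the big-site gluing functor is a \emph{homotopy-fixed-point}-flavored localization, and there is no visible map between them before $p$-completion. The paper's route (Construction~\ref{construct:compare}) is to first compare at the level of the \emph{non}-Tate gluing functor $\Theta = (-)^{hC_2}\nu(C_2)^*$: the change-of-site functors yield a lax commutative square $\gamma^\ret_S\,\Theta \Rightarrow \Theta^{\mot}\gamma^\et_S$ by pure sheaf theory (equation~\eqref{eq:big-small}, built from Construction~\ref{construct:laxbigsmall} and Lemma~\ref{lem:InvertedRecollement}). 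One then observes that $\Theta^{\mot}\gamma^\et_S$ annihilates induced objects --- this is where Lemma~\ref{lem:kills-ind} enters, using the ambidexterity $p_* \simeq p_\sharp$ for the Galois cover $S[i]\to S$ and the vanishing of $\SH_\ret(S[i])$ --- and finally invokes the lax symmetric monoidal universal property of $\mu\colon \Theta \Rightarrow \Theta^{\Tate}$ among transformations to functors killing induced objects (Proposition~\ref{prop:ThetaTateMonoidalUniversalProperty}, the Nikolaus--Scholze Verdier-quotient characterization) to produce $T_S = \beta$ \emph{integrally}, together with its lax monoidal structure. Without this, you have no functor $C_S$ to $p$-complete. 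The same universal property is also what makes $\beta$ natural in $S$: the pushforward along base change preserves induced objects, so the localization onto functors annihilating them is compatible with the locally cocartesian structure of the pairing construction used in the proof of Theorem~\ref{cor:sch-1}, echoing Lemma~\ref{lem:TateMonoidalInFamilies}.

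One smaller caution: your remark that for odd $p$ both gluing functors vanish, so $C_S\comp$ collapses to $(c^*_\et)\comp$, is not an independent shortcut. While $(-)^{tC_2}$ vanishes on $p$-complete objects for odd $p$ is elementary, the vanishing of $L_\ret i_\et$ on $p$-complete objects for odd $p$ is itself a statement about motivic spectra over real closed fields --- it is a consequence of Theorem~\ref{thm:mot-v-tate} (or equivalently of Theorem~\ref{thm:sing-ff}), not an input to it. So the odd-$p$ case is not actually any lighter than the $p=2$ case; the paper therefore treats all $p$ uniformly by reducing to Theorem~\ref{thm:mot-v-tate} at real closed points.
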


The proof of Theorem~\ref{cor:sch-1} follows at once from the identification of Theorem~\ref{thm:mot-v-tate} after we construct a transformation $\Theta^{\Tate} \Rightarrow \Theta^{\mot} = L_{\ret}i_{\et}$ that is suitably stable under base change. 

We conclude with three applications of our results. First, in \S\ref{b-rigid}, we discuss a version of rigidity for the $b$-topology which, unlike the \'etale topology, is not quite an identification of the stable motivic category with sheaves of spectra on the small site. Indeed, as Theorem~\ref{cor:sch-1} indicates, one needs to adjoin more transfers. In \S\ref{realization}, we describe yet another realization functor out of $\SH$, which in this case is a parametrized version of the $C_2$-Betti realization functor of Heller-Ormsby. Lastly, in \S\ref{segal}, we apply Segal's conjecture for the $C_2$-equivariant sphere to deduce a surprising result: the $b$-sphere satisfies \'etale descent after $2$-completion. We do not know a way to see this result without appealing to equivariant stable homotopy theory.

\begin{remark} In \cite{hoyois-sixops}, a full six functors formalism has been constructed for equivariant motivic homotopy theory where the group of equivariance is a rather general linear algebraic group. One should view \emph{loc. cit.} as extending motivic homotopy theory from schemes to quotient stacks. In contrast, this paper addresses a different sort of equivariance, namely, \emph{Galois equivariance}. In the context of motivic homotopy theory, this was first studied by Heller and Ormsby in \cite{heller-ormsby}, where the authors established a connection between $C_2$-equivariant homotopy theory and motivic homotopy theory over the real numbers. In this light, one contribution of the present paper is to vastly extend the line of inquiry in \emph{loc. cit.} to the parametrized setting. In particular, we believe that the current paper provides an abstract framework via the six functors formalism for studing Real algebraic cycles (in the style of \cite{etalemotives}*{\S 7.1} and \cite{cisinski-london}). In a future work, we will elaborate on how our formalism is a natural home for various Real cycle class maps recently introduced by various authors in \cite{realcycle1} and \cite{realcycle2}.
\end{remark}

\subsection{Conventions and notation}

\begin{enumerate}

\subsubsection{Algebraic geometry}
\item By convention, all schemes that appear in this paper are quasicompact and quasiseparated (qcqs). If $S$ is a base scheme, we write $\Sch'_S$ to indicate some full subcategory of schemes over $S$ (e.g., finite type $S$-schemes). We add the superscript $\mathrm{fin.dim}$ (resp. $\mathrm{noeth}$) to various categories of schemes to indicate the intersection with finite dimensional (resp. noetherian) schemes.
\item For a ring $A$, resp. scheme $X$, let $\Sper A$, resp. $X_r$ denote the real spectrum, regarded as a topological space in the usual way (cf. \cite{scheiderer}*{0.4}).
\item If $X$ is a scheme and $\tau$ is topology on $\Et_X$, then we write $\widetilde{X_{\tau}}$ for the $\infty$-topos of $\tau$-sheaves of spaces on the small \'etale site of $X$ (e.g., $\widetilde{X}_{\et}$). Abusively, we write $\widetilde{X}_{\pre}$ for the $\infty$-topos of presheaves on $\Et_X$. We also write $\widehat{X_{\tau}}$ for its hypercompletion, and we have a localization $(-)^h: \widetilde{X_{\tau}} \rightarrow \widehat{X_{\tau}}$.
\item Given a presentable $\infty$-category $\Escr$, we write $\Shv_{\tau}(\C, \Escr)$ for the $\infty$-category of $\Escr$-valued sheaves on $(\C,\tau)$. We also suppress the decoration $\Escr$ if $\Escr = \Spc$ is the $\infty$-category of spaces.
\item If $\C$ is a small $\infty$-category and $X \in \C$, then we denote by $h_X \in \Pre(\C)$ the presheaf corresponding to $X$ under the Yoneda embedding.
\item Let $\C$ be an $\infty$-category and $R \subset \C_{/X}$ be a sieve. By the correspondence in \cite[Proposition 6.2.2.5]{htt}, this determines and is determined by a monomorphism $R \hookrightarrow h_X$.
\subsubsection{(Higher) category theory}
\item Let $\Cat_{\infty}$ and $\widehat{\Cat}_{\infty}$ denote the $\infty$-category of small, resp. large $\infty$-categories, and let $\PrL_{\infty} \subset \widehat{\Cat}_{\infty}$ denote the subcategory of presentable $\infty$-categories and colimit-preserving functors thereof. 
\item Given a geometric morphism $f^{\ast}: \Xscr \rightleftarrows \Yscr: f_{\ast}$ of $\infty$-topoi, we will abuse notation and also write $$f^{\ast}: \Sp(\Xscr) \rightleftarrows \Sp(\Yscr): f_{\ast}$$ for the induced adjunction on stabilizations. Note that since both $f^{\ast}$ and $f_{\ast}$ are left-exact, they are computed via postcomposition by the unstable $f^{\ast}$ and $f_{\ast}$ at the level of spectrum objects.
\item If $\C$ is a stable presentable symmetric monoidal $\infty$-category, we write $\C\comp$ for the $p$-completion of $\C$ (i.e., the Bousfield localization at $\1/p$). We have a localization functor $L_p: \C \rightarrow \C\comp$ and a fully faithful right adjoint $i_p: \C\comp \rightarrow \C$. We use \cite{mnn-descent} as our basic reference for this material.
\item Let $\mathrm{Top}$ be the ordinary category of topological spaces. Then there is a functor $\Sing: \Top \rightarrow \Spc$ which sends a topological space $X$ to $|\Hom(\Delta^{\bullet}, X)|$, where $\Delta^n$ is the standard $n$-simplex.
\item Given a functor $f:S \rightarrow \widehat{\Cat}_{\infty}$, we write $\int f \rightarrow S^{op}$ for the associated cartesian fibration \cite{htt}*{\S3.2}. 
\item Given a cartesian fibration $X \rightarrow S^{\op}$, we write $X^{\vee} \rightarrow S$ for the dual cocartesian fibration \cite{dualizing}.
\item Following standard terminology in parametrized higher category theory, a {\bf $G$-$\infty$-category} is a cocartesian fibration $\C \rightarrow \Oscr_G^{\op}$, and a {\bf $G$-functor} $\C \rightarrow \D$ is a functor that preserves cocartesian edges.\footnote{To avoid confusion with the notion of a $G$-$\infty$-topos, it would perhaps be more appropriate to call such objects $\Oscr_G$-$\infty$-categories and $\Oscr_G$-functors, but we will stick to this terminology.} We refer to \cite{quigley-shah}*{Appendix A} for a quick primer on the theory of $G$-$\infty$-categories. Let $\widehat{\Cat}_{\infty, \Oscr_G}$ denote the $\infty$-category of $G$-$\infty$-categories (so $\widehat{\Cat}_{\infty, \Oscr_G} \simeq \Fun(\Oscr_G^{\op}, \widehat{\Cat}_{\infty}))$).
\item Since $\widehat{\Cat}_{\infty, \Oscr_G}$ is cartesian-closed, it has an internal hom: given $G$-$\infty$-categories $\C$ and $\D$, we let $\underline{\Fun}_G(\C,\D)$ denote the $G$-$\infty$-category of $G$-functors $\C \to \D$. The second author proffered an explicit construction of this internal hom at the level of marked simplicial sets in \cite{jay-thesis}*{\S 3}.
\item $\underline{\Fun}_G(\C,\D)$ serves as a parametrized enhancement of the $\infty$-category $\Fun_G(\C,\D)$ of $G$-functors $\C \to \D$. In general, if a given construction admits some sort of parametrized enhancement, then we will distinguish between the two possibilities by means of this `underline' notation.
\item We adopt the conventions of \cite{quigley-shah}*{\S1} for recollements. In particular, if $\Xscr$ is an $\infty$-category, then a recollement on $\Xscr$ is specified by a pair $(\Uscr, \Zscr)$ where $\Uscr$ is the open part and $\Zscr$ is the closed part, and we have the recollement adjunctions 
\[ \begin{tikzcd}[row sep=4ex, column sep=6ex, text height=1.5ex, text depth=0.5ex]
\Uscr \ar[shift right=1,right hook->]{r}[swap]{j_{\ast}} & \Xscr \ar[shift right=2]{l}[swap]{j^{\ast}} \ar[shift left=2]{r}{i^{\ast}} & \Zscr \ar[shift left=1,left hook->]{l}{i_{\ast}}
\end{tikzcd} \]
in which the composite $i^{\ast} j_{\ast}$ is said to be the {\bf gluing functor} of the recollement. Conversely, given a left-exact functor $\phi: \Uscr \rightarrow \Zscr$, the {\bf right-lax limit} $$\Xscr = \Uscr \times_{\phi, \Zscr, \ev_1} \Fun(\Delta^1, \Zscr) = \Uscr \overrightarrow{\times} \Zscr$$ admits a recollement given by $(\Uscr, \Zscr)$ with gluing functor equivalent to $\phi$. Note also that in the stable setting a recollement is completely determined by a localizing and colocalizing stable subcategory \cite{barwick-glasman}.
\subsubsection{Motivic homotopy theory}
\item We denote by $\TT$ the pointed motivic space $\TT:=\frac{\AA^1}{\AA^1 \setminus 0}$. We have a canonical equivalence $\TT \xrightarrow{\simeq} \PP^1$ of pointed motivic spaces.
\item Let $\tau$ be a Grothendieck topology and $X$ a scheme. We will denote by $$\H_{\tau}(X), \: \H_{\tau}(X)_{\bullet}, \: \SH^{S^1}_{\tau}(X), \: \SH_{\tau}(X)$$ the unstable, pointed, $S^1$-stable and $\TT$-stable motivic $\infty$-categories defined with respect to the hypercomplete $\tau$-topology. By definition, these are the localizations of the usual motivic categories $$\H(X), \: \H(X)_{\bullet}, \: \SH^{S^1}(X), \: \SH(X)$$ at $\tau$-hypercovers (or desuspensions thereof). See \cite{elso}*{\S2} for details.
\item We remark on the choice to work with the hypercomplete versions of motivic homotopy theory:
\begin{itemize}
\item Bachmann's theorems in \cite{bachmann-et} regarding \'etale motivic homotopy theory only hold after hypercompletion.
\item In Appendix~\ref{subsec:hyp}, we prove that the real \'etale topos of a scheme of finite Krull dimension is hypercomplete. Thus, the hypercomplete and non-hypercomplete versions of real \'etale motivic homotopy theory coincide.
 \end{itemize}
\end{enumerate}

\subsection{Acknowledgements} Special thanks go to Tom Bachmann for interesting discussions related to this work and for setting the stage for it by writing \cite{bachmann-ret} and \cite{bachmann-et}. We would also like to thank Joseph Ayoub, Clark Barwick, Mark Behrens, Dori Ali-Bejleri, Denis-Charles Cisinski, Peter Haine, Marc Hoyois, Paul Arne \O stv\ae r, James Tao and Claus Scheiderer for their interest and helpful discussions related to this paper. We also thank an anonymous referee for correcting some mathematical errors and suggestions which improved the exposition of this paper. The authors were partially supported by the National Science Foundation under grants DMS-1440140 and DMS-1547292. Part of this work was also completed while the first author was in residence at the Mathematical Sciences Research Institute for the ``Derived Algebraic Geometry'' program in spring 2019. Finally, both authors were together at the University of Chicago as undergraduates many years ago --- we would like to thank Peter May and the Topology group there for originally sparking our interest in this subject.

\section{Genuine stabilization of $G$-$\infty$-topoi} \label{sect:gen-stab}

As we recalled in the introduction, one constructs the stable $\infty$-category $\Sp^G$ of {\bf genuine $G$-spectra} so as to have a good theory of compact and dualizable objects in equivariant stable homotopy theory. Traditionally, to do this one $\otimes$-inverts inside $\Spc_{G \bullet}$ the set $\{ S^V \}$ of (finite-dimensional) real representation spheres (see \cite{lewis} and \cite{bachmann-hoyois}*{\S9.2} for a modern treatment). More relevant for our purposes are two more recent and distinct approaches to constructing $\Sp^G$:

\begin{enumerate}
\item Let $\Mack_G = \Fun^{\times}(\Span(\Fin_G), \Sp)$ denote the $\infty$-category of {\bf spectral Mackey functors}. A theorem of Guillou-May shows that $\Mack_G$ is equivalent to $\Sp^G$ \cite{guillou-may}; see also \cite{denis-stab} for a proof of this result in the language of the current paper. This perspective was elaborated further in \cite{barwick} and was later shown by Nardin in \cite{denis-stab} to be an instance of a more general procedure that, given a $G$-$\infty$-category, outputs a {\bf $G$-stable $G$-$\infty$-category} in the sense of \cite{param}.
\item Given $E \in \Sp^G$, the associated Mackey functor evaluates on an orbit $G/H$ to the {\bf categorical fixed points} spectrum $E^H$. The Guillou-May theorem thus says that a genuine $G$-spectrum can be understood diagramatically in terms of its categorical fixed points. These categorical fixed points can in turn be described via {\bf geometric fixed points} and data coming from {\bf Borel equivariant homotopy theory} (see \cite{barwick}*{B.7}). For example, if $G = C_p$ is the cyclic group of order $p$, then we have the pullback square
\[
\begin{tikzcd}
E^{C_p} \ar{r} \ar{d} & E^{\Phi C_p} \ar{d}\\ 
E^{hC_p} \ar{r} & E^{tC_p},
\end{tikzcd}
\]
where $E^{tC_p}$ is the cofiber of the additive norm map $E_{hC_p} \rightarrow E^{hC_p}$. From this point of view, one may reconstruct genuine $G$-spectra in terms of its ``Borel pieces" (see \cite{glasman} and \cite{naive}).
\end{enumerate}

The goal of this section is generalize this picture from $G$-spaces to $G$-$\infty$-topoi.

\begin{definition} Let $G$ be a finite group. A {\bf $G$-$\infty$-topos} $\Xscr$ is a $G$-object in $\RTop$, i.e., a functor
\[ \Xscr: BG \rightarrow \RTop. \]
\end{definition}

Given a $G$-$\infty$-topos $\Xscr$, we will abuse notation and also refer to its underlying $\infty$-topos as $\Xscr$.

\begin{example} \label{ex:g-top-ex} Let $X$ be a topological space with a $G$-action through continuous maps. We may form the $\infty$-category $\Shv(X)$ of sheaves of spaces on $X$, which is naturally a $G$-$\infty$-topos. For instance, if $X= \ast$ is the one-point space (with necessarily trivial $G$-action), then $\Shv(X) \simeq \Spc$ with trivial $G$-action, which is the terminal object in the $\infty$-category $(\RTop)^{BG} = \Fun(BG, \RTop)$ of $G$-$\infty$-topoi.
\end{example}

In this section, we will explain how to functorially associate to a $G$-$\infty$-topos $\Xscr$ a certain $\infty$-category $\Sp^G(\Xscr)$ of ``$G$-spectrum objects'' in $\Xscr$ (Definition \ref{construct:denis}), which we will regard as its {\bf genuine stabilization}. For the terminal $G$-$\infty$-topos $\Spc$, this construction by design specializes to $\Mack_G$ (Example \ref{ex:g-sp}), so we are entitled to think of $\Sp^G(\Xscr)$ as a generalization of (1) wherein we adopt spectral Mackey functors as our preferred \emph{definition} of $G$-spectra. Specializing to the case $G = C_p$, we then introduce the toposic generalization of the $C_p$-Tate construction (Definition \ref{def:stable-glue}) and prove a reconstruction theorem for $\Sp^{C_p}(\Xscr)$ (Theorem \ref{prop:recoll-stab}) that generalizes (2). We conclude by applying our reconstruction theorem to endow $\Sp^{C_p}(\Xscr)$ with a symmetric monoidal structure (Construction \ref{con:MonoidalStructureGenuineStabilization}).

\subsection{Toposic homotopy fixed points, homotopy orbits and genuine orbits}

Suppose that $\C$ is an $\infty$-category that admits all (small) limits and colimits, so that we have the adjunctions
\[
(-)^{\triv}: \C \rightleftarrows \C^{BG}: (-)^{hG} = \lim_{BG}, \qquad  (-)_{hG} = \colim_{BG}: \C^{BG} \rightleftarrows \C: (-)^{\triv}.
\]
The functors $(-)^{hG}$ and $(-)_{hG}$ are called the {\bf homotopy fixed points} and {\bf homotopy orbits}, respectively.

Now let $\C = \RTop$, which admits all (small) limits and colimits in view of \cite[\S6.3]{htt}. We are then interested in the homotopy fixed points and orbits as taken in $\RTop$, as opposed to $\widehat{\Cat}_{\infty}$. To distinguish between the two possibilities, we introduce the following modified notation/definition.

\begin{definition} Let $\Xscr$ be a $G$-$\infty$-topos. Then we let
\[ \Xscr_{\hh G} = \colim_{BG} \Xscr, \qquad \Xscr^{\hh G} = \lim_{BG} \Xscr\]
denote the {\bf toposic homotopy orbits} and {\bf toposic homotopy fixed points} of $\Xscr$, i.e., the colimit and limit over $BG$ as formed in $\RTop$. We will also use the terms {\bf homotopy orbits topos} and {\bf homotopy fixed topos} to refer to $\Xscr_{\hh G}$ and $\Xscr^{\hh G}$.

By definition, these $\infty$-topoi come with the canonical geometric morphisms
\[ \nu^{\ast}: \Xscr^{\hh G} \rightleftarrows \Xscr: \nu_{\ast}, \qquad \pi^{\ast}: \Xscr \rightleftarrows \Xscr_{\hh G}: \pi_{\ast}, \]
which are moreover $G$-equivariant with respect to the trivial $G$-actions on $\Xscr_{\hh G}$ and $\Xscr^{\hh G}$.
\end{definition}

For brevity, we will also make use of the following notation.

\begin{notation} Suppose $\Xscr$ is a $G$-$\infty$-topos. We let $$\underline{\Xscr} \rightarrow BG$$ denote the cartesian fibration associated to the functor $$\Xscr^{\op}: BG^{\op} \rightarrow \LTop \subset \widehat{\Cat}_{\infty}.$$
\end{notation}

\subsubsection{Toposic homotopy orbits}

Recall that for any simplicial set $S$, we have a natural equivalence 
\begin{equation} \label{eq:prlr}
(-)^{\dagger}:[S, \RTop] \stackrel{\simeq}{\rightarrow} [S^{\op}, \LTop]
\end{equation}
that transports a diagram
\begin{equation*}
\Fscr:S \rightarrow \RTop \qquad \left( f: s \rightarrow t \right) \mapsto \left( f_*:\Fscr(s) \rightarrow \Fscr(t) \right)
\end{equation*}
to the diagram
\begin{equation*}
\Fscr^{\dagger}: S \rightarrow \LTop \qquad \left( f: t \rightarrow s \right) \mapsto \left( f^*:\Fscr(t) \rightarrow \Fscr(s) \right),
\end{equation*}
where $(f^*,f_*)$ defines a geometric morphism of $\infty$-topoi \cite{htt}*{Corollary 6.3.1.8}. For clarity, we will always decorate functors into $\LTop$ by a dagger.

The following facts will then enable us to compute colimits in $\RTop$ using the equivalence \eqref{eq:prlr}:
\begin{enumerate}
\item Colimits in $\RTop$ are computed as limits of the corresponding diagram in $\LTop$.
\item The forgetful functor $\LTop \subset \widehat{\Cat}_{\infty}$ creates limits \cite{htt}*{Proposition 6.3.2.3}.
\item Given a diagram $f: K \to \widehat{\Cat}_{\infty}$, we have a natural equivalence \cite{htt}*{Corollary 3.3.3.2} $$\lim_K f \simeq \Sect^{\text{cart}}_{K^{\op}}( \int f).$$ 
\end{enumerate}

It follows that for a $G$-$\infty$-topos $\Xscr$, we have natural equivalences $$\Xscr_{\hh G} \simeq \Xscr^{h G} \simeq \Sect_{BG}(\underline{\Xscr}).$$

\begin{example} \label{exmp:trivial} Suppose that $\Xscr$ is a $G$-$\infty$-topos where the action is trivial. Then $$\Xscr_{\hh G} \simeq \Xscr^{hG} \simeq \Fun(BG, \Xscr).$$ In other words, taking toposic homotopy orbits computes the $\infty$-category of  ``Borel-equivariant" $G$-objects in $\Xscr$.
\end{example}

We also introduce a construction to record the functoriality of the toposic homotopy orbits ranging over subgroups $H \leq G$. 

\begin{construction} \label{construct:borel-orb}  Suppose $\Xscr: BG \to \RTop$ is a $G$-$\infty$-topos. Let $$\Xscr_{\hh(-)}: \Oscr_G \to \RTop$$ be the left Kan extension of $\Xscr$ along the inclusion $BG \subset \Oscr_G$. Under the equivalence \eqref{eq:prlr}, $\Xscr_{\hh(-)}$ corresponds to the functor $$\Xscr_{\hh(-)}^{\dagger}: \Oscr_G^{\op} \to \widehat{\Cat}_{\infty}, \qquad G/H \mapsto \Fun_{/BG}(BH,\underline{\Xscr}), $$ with functoriality given by restriction in the source (using the action groupoid functor $\Oscr_G \to \Gpd$ that sends $G/H$ to $BH$). We denote the cocartesian fibration associated to $\Xscr_{\hh(-)}^{\dagger}$ by
\[
\underline{\Xscr}_{\hh G} \rightarrow \Oscr_G^{\op}.
\]
\end{construction}

In particular, $\underline{\Xscr}_{\hh G}$ constitutes an example of a $G$-$\infty$-category.

\begin{remark} Given a map $f: G/H \to G/K$ of $G$-orbits, the restriction functor $$f^{\ast}: \Fun_{/BG}(BK,\underline{\Xscr}) \to \Fun_{/BG}(BH,\underline{\Xscr})$$ admits both left and right adjoints $f_!$ and $f_{\ast}$ given by relative left and right Kan extension along $BH \to BK$ (cf. \cite{quigley-shah}*{\S2.2.1} for the theory of relative Kan extensions along a general functor).
\end{remark}

\begin{example} \label{exmp:et-cover} Suppose that $X$ is a scheme such that $\frac{1}{2} \in \Oscr_X$. Then $X[i] \rightarrow X$ is a $C_2$-Galois cover and thus the $\infty$-topos $\widetilde{X[i]}_{\et}$ acquires a canonical $C_2$-action. The homotopy orbits topos of $\widetilde{X[i]}_{\et}$ is, by Galois descent, equivalent to the \'etale topos of $X$, so we have:
\[
(\widetilde{X[i]}_{\et})_{\hh C_2} \simeq (\widetilde{X[i]}_{\et})^{hC_2} \simeq \widetilde{X}_{\et}.
\]
\end{example}

\subsubsection{Toposic homotopy fixed points}

Since limits in $\RTop$ are not computed at the level of the underlying $\infty$-categories, it is usually difficult to describe $\Xscr^{\hh G}$ explicitly. However, this task has been accomplished by Scheiderer in our main examples of interest at the level of $1$-topoi, as we now recall. Let $\RTop_1$ denote the $(2,1)$-category of $1$-topoi and geometric morphisms thereof, and let a \df{$G$-topos} be a $G$-object in $\RTop_1$. Note for the following two examples that Scheiderer's notion of the fixtopos of a $G$-topos (\cite{scheiderer}*{Definition 10.15}) corresponds to taking the limit over $BG$ in $\RTop_1$ (cf. \cite{scheiderer}*{10.15.1}; Scheiderer formulates his constructions in the setting of fibered topoi).

\begin{example} \label{ex:involute} Suppose that $X$ is a topological space with $G$-action where $G$ acts properly discontinuously on $X$, and consider the $G$-topos $\Xscr = \Shv(X, \Set)$. Then by \cite{scheiderer}*{Proposition 13.2}, we have a canonical equivalence of $1$-topoi
\[
\Shv(X, \Set)^{\hh G} \simeq \Shv(X^G, \Set),
\]
where $X^G$ is the subspace of $G$-fixed points in $X$ and the limit is taken in $\RTop_1$. Moreover, the geometric morphism $(\nu^{\ast}, \nu_{\ast})$ is the one induced by the inclusion of spaces $X^G \subset X$.
\end{example}


\begin{example} \label{ex:ret} Suppose that $X$ is a scheme with $1/2 \in \Oscr_X$, and consider the $C_2$-topos $\Xscr = \Shv_{\et}(\Et_{X[i]}, \Set)$. Then by \cite{scheiderer}*{Theorem 11.1.1}, we have a canonical equivalence of $1$-topoi
\[
\Shv_{\et}(\Et_{X[i]}, \Set)^{\hh C_2} \simeq \Shv_{\ret}(\Et_{X}, \Set),
\]
where the limit is taken in $\RTop_1$.
\end{example}

We next indicate how to deduce equivalences at the level of $\infty$-topoi (we deal with Example \ref{ex:ret}; the argument for Example \ref{ex:involute} is similar).

\begin{theorem} \label{thm:exret} Suppose that $X$ is a scheme with $1/2 \in \Oscr_X$. Then there is a canonical equivalence
\[
\widetilde{X[i]}_{\et}^{\hh C_2} \simeq \widetilde{X}_{\ret}.
\]
\end{theorem}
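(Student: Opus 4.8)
The plan is to upgrade Scheiderer's $1$-topos equivalence $(\widetilde{X[i]_{\et}})^{\hh C_2}_{\le 0} \simeq \widetilde{X_{\ret}}_{\le 0}$ (\cite{scheiderer}*{Theorem 11.1.1}) to an equivalence of $\infty$-topoi. The key observation is that the toposic homotopy fixed points $\Xscr^{\hh C_2}$ is a limit in $\RTop$, so it is computed by first passing to left adjoints and then taking the limit in $\widehat{\Cat}_\infty$ (by the three facts recorded before Example~\ref{exmp:trivial}): concretely $\widetilde{X[i]_{\et}}^{\hh C_2} \simeq \Sect_{BC_2}(\underline{\widetilde{X[i]_{\et}}})$, the $\infty$-category of cartesian sections of the cartesian fibration over $BC_2$ classifying the $C_2$-action. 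This is an intrinsically "underlying-$\infty$-category" description even though the limit in $\RTop$ is not; that is what makes the $\infty$-categorical statement tractable.

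First I would set up the comparison geometric morphism. On $1$-topoi, Scheiderer produces a morphism of sites (or a cocontinuous functor) realizing $\widetilde{X_{\ret}}$ as the toposic fixed points; I would lift this to a geometric morphism $\widetilde{X_{\ret}} \to \widetilde{X[i]_{\et}}^{\hh C_2}$ of $\infty$-topoi by using the universal property of the limit, i.e. by exhibiting a $C_2$-equivariant geometric morphism $\widetilde{X_{\ret}} \to \widetilde{X[i]_{\et}}$ (with trivial action on the source) — the composite $\widetilde{X_{\ret}} = \widetilde{X[i]_{\et}}^{\hh C_2} \to \widetilde{X_{\et}} \to \widetilde{X[i]_{\et}}$ via the real-\'etale-to-\'etale and \'etale-pullback maps should serve here, matching Scheiderer's construction on truncations. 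Then I would check this functor is an equivalence.

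Second, to verify it is an equivalence I would reduce to the truncated statement. The cleanest route: both $\infty$-topoi are hypercomplete (for $\widetilde{X_{\ret}}$ when $X$ has finite Krull dimension, this is Appendix~\ref{subsec:hyp}; in general one argues on the level of the sites), and a geometric morphism of hypercomplete $\infty$-topoi that induces an equivalence on $n$-truncated objects for all $n$ is an equivalence. For each $n$, the $n$-truncated objects of $\Sect_{BC_2}(\underline{\widetilde{X[i]_{\et}}})$ are computed by the analogous cartesian-section $\infty$-category built from the $(\le n)$-truncations $\tau_{\le n}\widetilde{X[i]_{\et}}$, because truncation is compatible with the geometric morphisms in the $BC_2$-diagram (it is a left-exact localization preserved by $f^*$ and $f_*$). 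Inducting up the Postnikov tower, the $n$-truncated piece is controlled by the $1$-truncated (i.e. $1$-topos) statement together with the vanishing/gerbe data in higher degrees, and here one invokes Scheiderer's computation verbatim at the bottom and a cohomological-dimension or $b$-topology descent argument for the higher stages — this is where \cite{scheiderer}*{Theorem 11.1.1} does the real work.

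\textbf{Main obstacle.} The genuinely delicate point is controlling the \emph{higher} homotopy sheaves of the cartesian sections: passing from a $1$-toposic equivalence to an $\infty$-toposic one requires that no exotic higher gerbes appear in $\Sect_{BC_2}(\underline{\widetilde{X[i]_{\et}}})$ beyond what $\widetilde{X_{\ret}}$ sees, i.e. that the $C_2$-homotopy-fixed-point spectral sequence computing the higher truncations degenerates to the real-\'etale answer. I expect the resolution to be exactly Scheiderer's insight that real-\'etale cohomology has the right vanishing properties (the $C_2$-fixed-point topos "is" the real spectrum, a genuinely $1$-categorical object), so the higher descent is automatic; but pinning down that the equivalence is natural in $X$ and that hypercompleteness is not needed as a hypothesis (so the theorem holds for all qcqs $X$ with $1/2 \in \Oscr_X$, as stated) will require some care, likely by reducing to the finite-dimensional Noetherian case via continuity and then bootstrapping, paralleling Remark~\ref{rem:shret}.
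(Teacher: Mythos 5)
Your proposal takes a genuinely different — and considerably harder — route than the paper, and it has a real gap at the point you yourself flag as the main obstacle.

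The paper's proof is much shorter and avoids the Postnikov-tower argument entirely. The key observation it uses is that both $\widetilde{X[i]}_{\et}$ and $\widetilde{X}_{\ret}$ are $1$-localic $\infty$-topoi (they are sheaves of spaces on $1$-sites with finite limits), and that the functor $\RTop_1 \to \RTop_\infty$ sending a $1$-topos to its associated $1$-localic $\infty$-topos is fully faithful and \emph{limit-preserving} (\cite{htt}*{\S6.4.5}). Since the $C_2$-action on $\widetilde{X[i]}_{\et}$ comes from the scheme-theoretic involution and hence lives in $\RTop_1$, the limit $\widetilde{X[i]}_{\et}^{\hh C_2}$ in $\RTop_\infty$ is computed by taking the limit in $\RTop_1$ — which is Scheiderer's Theorem 11.1.1 verbatim — and then passing to the associated $1$-localic $\infty$-topos. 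Lemma~\ref{lem:fin-lim} then identifies that $\infty$-topos with $\widetilde{X}_{\ret}$. No truncation induction, no homotopy-fixed-point spectral sequence, no hypercompleteness. In particular, the theorem does hold for arbitrary qcqs $X$ with $\tfrac{1}{2}\in\Oscr_X$; no finite-dimensionality is needed.

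Your approach, by contrast, proposes to build a comparison geometric morphism and show it is an equivalence by inducting up the Postnikov tower. This has two concrete problems. First, the reduction to $n$-truncated objects via hypercompleteness is not available in the stated generality: Appendix~\ref{subsec:hyp} establishes hypercompleteness of $\widetilde{X}_{\ret}$ only when $X$ has finite Krull dimension, and "in general one argues on the level of the sites" is not an argument. Second, and more seriously, the step where you "invoke Scheiderer's computation verbatim at the bottom and a cohomological-dimension or $b$-topology descent argument for the higher stages" is exactly where your proof would need its hard content, and you acknowledge that this degeneration of the $C_2$-homotopy-fixed-point spectral sequence is something you "expect" rather than prove. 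That expectation is not obviously true: the limit over $BC_2$ in $\RTop_\infty$ is a topos-theoretic limit, not a limit of underlying $\infty$-categories, so the $\infty$-category of cartesian sections $\Sect_{BC_2}(\underline{\widetilde{X[i]}_{\et}})$ has no a priori reason to be $1$-localic from your point of view — that is precisely what has to be shown. The paper gets this for free from the limit-preservation and full faithfulness of $\RTop_1 \hookrightarrow \RTop_\infty$, which is the observation your sketch is missing.
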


\begin{proof} As we recall in Lemma~\ref{lem:fin-lim}, there is a limit-preserving fully faithful functor $\RTop_1 \rightarrow \RTop_{\infty}$ that sends a $1$-topos to its associated $1$-localic $\infty$-topos, such that for any $1$-topos $\Xscr \simeq \Shv_{\tau}(X, \Sets)$ presented as the $1$-category of sheaves of sets on a site $(X, \tau)$ with finite limits, the associated $1$-localic $\infty$-topos is $\Shv_{\tau}(X)$. Since the functor $\RTop_1 \rightarrow \RTop_{\infty}$ preserves limits, the equivalence of Example \ref{ex:ret} then implies the result at the level of sheaves of spaces.
\end{proof}

As we did with toposic homotopy orbits, we also introduce a construction that will record the functoriality of the toposic homotopy fixed points.

\begin{construction} \label{construct:borel-fix} Suppose $\Xscr: BG \to \RTop$ is a $G$-$\infty$-topos. Let
\[
\Xscr^{\hh(-)}:  \Oscr_G^{\op} \rightarrow \RTop
\] 
be the right Kan extension of $\Xscr$ along the inclusion $BG \rightarrow \Oscr_G^{\op}$. Note that by the formula for right Kan extension, we have that $\Xscr^{\hh(-)}$ evaluates to $\Xscr^{\hh H} = \lim_{BH} \Xscr$ on $G/H$. We denote the cartesian fibration associated to $(\Xscr^{\hh(-)})^{\dagger}$ by
\[
\underline{\Xscr}^{\hh G} \rightarrow \Oscr^{\op}_G.
\]
\end{construction}

\begin{remark} \label{rem:caution-cart} We caution the reader that $\underline{\Xscr}^{\hh G}$ is not usually a cocartesian fibration over $\Oscr^{\op}_G$, as its \emph{cartesian} functoriality is that given by the \emph{left} adjoints of a geometric morphism.
\end{remark} 
\subsubsection{Toposic genuine orbits}

In addition to the toposic homotopy orbits and homotopy fixed points, we will need the topos-theoretic counterpart of ``genuine orbits". This is the most subtle of the three constructions and is one of the key innovations of this paper. First, we need a technical lemma.

\begin{lemma} \label{lem:sect} Let $S$ be a small $\infty$-category and $\Fscr: S \rightarrow \RTop$ a functor.
\begin{enumerate}
\item The $\infty$-category of sections
\[
\Sect_S(\int \Fscr^{\dagger})
\]
is an $\infty$-topos.
\end{enumerate}
Now suppose $\phi:T \rightarrow S$ is a functor of small $\infty$-categories.
\begin{enumerate} \setcounter{enumi}{1} \item The restriction functor
\[
\phi^*:\Sect_S(\int \Fscr^{\dagger}) \rightarrow \Sect_T((\int \Fscr^{\dagger}) \times_S T) \simeq \Sect_T(\int (\Fscr \phi)^{\dagger})
\]
preserves finite limits and (small) colimits. In other words, $\phi^*$ is the left adjoint of a geometric morphism of $\infty$-topoi.
\item Suppose that $\phi$ satisfies the following assumption: for any $x \in S$, the simplicial set $T \times_S S^{x/}$ is finite. Then the right adjoint $\phi_{\ast}$ is computed as relative right Kan extension\footnote{For us, the term ``Kan extension'' always refers to the concept of \emph{pointwise} Kan extension.} along $\phi$. 
\item Suppose that $\phi: T \to S$ is (equivalent to) a cocartesian fibration. Then $\phi^*$ admits a left adjoint $\phi_!$, computed as relative left Kan extension along $\phi$. Moreover, given a object $x: T \to \int \Fscr^{\dagger}$ in $\Sect_T(\int (\Fscr \phi)^{\dagger})$, $\phi_! x$ is computed by taking colimits fiberwise, i.e., for all $s \in S$, we have an equivalence
\begin{equation} \label{eq:shriek-formula}
(\phi_! x)(s) \simeq \colim (x|_{T_s}: T_s \rightarrow \Fscr(s)).
\end{equation}
\end{enumerate}
\end{lemma}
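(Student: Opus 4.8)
The plan is to prove the four assertions of Lemma~\ref{lem:sect} in order, leaning throughout on the three ``computational facts'' about $\RTop$ recorded just before Example~\ref{exmp:trivial} --- namely that colimits in $\RTop$ are limits in $\LTop$, that $\LTop \subset \widehat{\Cat}_\infty$ creates limits, and that a limit of $\infty$-categories is the $\infty$-category of cartesian sections of the associated fibration (\cite{htt}*{Corollary 3.3.3.2}).

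\medskip

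\noindent\emph{Part (1).} First I would observe that, by definition of the dagger equivalence \eqref{eq:prlr}, the diagram $\Fscr^\dagger: S^{\op} \to \LTop$ has the property that its \emph{limit} in $\RTop$ --- equivalently its colimit in $\LTop$, no wait: the relevant statement is that $\lim_{S^{\op}} \Fscr^\dagger$ computed in $\widehat{\Cat}_\infty$ is again an $\infty$-topos (creation of limits), and by fact (3) this limit is $\Sect^{\cart}_S(\int \Fscr^\dagger)$. However, the lemma asserts something about \emph{all} sections, not just cartesian ones. So the correct approach is different: I would realize $\Sect_S(\int \Fscr^\dagger)$ as $\lim$ over $S$ (not $S^{\op}$) of a functor valued in $\LTop$ via the Grothendieck construction for presheaf categories, or more efficiently invoke that $\int \Fscr^\dagger \to S$ is a cocartesian fibration classified by a functor to $\widehat{\Cat}_\infty$ landing in $\infty$-topoi with the left-exact-left-adjoint functoriality, hence $\Sect_S(\int \Fscr^\dagger) \simeq \lim_{S} \Fscr^\dagger$ taken in $\RTop$ via the \emph{left} adjoints, which is a limit in $\RTop$ and thus an $\infty$-topos. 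The cleanest route is probably: sections of a cocartesian fibration $p: E \to S$ form the limit $\lim_{S}$ of the classifying functor (a standard fact, \cite{htt}*{\S3.3.3} again, applied to $E \to S$ rather than its dual); since the classifying functor here factors through $\RTop \to \widehat{\Cat}_\infty$ and $\RTop$ admits all small limits, $\Sect_S(\int\Fscr^\dagger)$ is a limit of $\infty$-topoi, hence an $\infty$-topos.

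\medskip

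\noindent\emph{Part (2).} Given $\phi: T \to S$, I would note the base-change identity $(\int \Fscr^\dagger) \times_S T \simeq \int(\Fscr\phi)^\dagger$ (pullback of cocartesian fibrations corresponds to precomposition of classifying functors) and then describe $\phi^*$ as the map on sections induced by the cone point comparison $\lim_S \to \lim_T$ of the restricted diagram. Since this is the image of $\phi$ under the limit functor $\widehat{\Cat}_{\infty, /(\cdot)} \to \widehat{\Cat}_\infty$, and since each transition functor in our diagrams is a left-exact left adjoint, the induced map $\phi^*$ on sections is computed pointwise by postcomposition and hence preserves finite limits and all colimits (these are computed objectwise in section categories). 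This gives that $\phi^*$ is the left adjoint of a geometric morphism, with $\phi_*$ its right adjoint.

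\medskip

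\noindent\emph{Parts (3), (4).} For the pointwise formulas I would argue via the general theory of relative Kan extensions for sections of fibrations. For (3): the hypothesis that $T \times_S S^{x/}$ is finite for each $x$ guarantees that relative right Kan extension along $\phi$ exists pointwise and is computed by a \emph{finite} limit --- which is therefore a geometric (left-exact) operation, so it indeed lands in sections valued in $\infty$-topoi and computes $\phi_*$; I would identify it with the abstractly-defined $\phi_*$ by checking the adjunction unit/counit, or more slickly by noting both are right adjoint to $\phi^*$. For (4): when $\phi$ is a cocartesian fibration, the pointwise left Kan extension formula specializes (by the usual cofinality argument that the fiber $T_s \hookrightarrow T\times_S S^{/s}$ is cofinal when $\phi$ is cocartesian) to the fiberwise-colimit formula \eqref{eq:shriek-formula}; this colimit exists because each $\Fscr(s)$ is cocomplete, and it is automatically a section (rather than merely a lax section) again by the cocartesian hypothesis on $\phi$ together with the fact that the transition functors $\Fscr^\dagger(g) = f^*$ preserve colimits.

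\medskip

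\noindent I expect the main obstacle to be \emph{Part (1)}: correctly setting up $\Sect_S(\int\Fscr^\dagger)$ as a limit in $\RTop$ rather than in $\widehat{\Cat}_\infty$, since the naive ``cartesian sections'' identification computes the limit over $S^{\op}$ and one must instead use that sections of a cocartesian fibration compute the limit of its classifying diagram with the \emph{covariant} (left-adjoint) functoriality. Once that identification is in hand, Parts (2)--(4) are formal consequences of the behavior of the limit functor and the standard pointwise formulas for relative Kan extensions, the only real inputs being finiteness (for left-exactness of $\phi_*$ in (3)) and the cocartesian cofinality argument (for \eqref{eq:shriek-formula} in (4)).
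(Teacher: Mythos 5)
Your plan for Part (1) contains a genuine error that undermines the whole argument. You claim that ``sections of a cocartesian fibration $p: E \to S$ form the limit $\lim_S$ of the classifying functor.'' This is false: what computes the limit of the classifying diagram is the category of \emph{cocartesian} sections, not all sections. The full section category $\Sect_S(\int \Fscr^{\dagger})$ computes the \emph{oplax} limit of $\Fscr^{\dagger}$ --- the paper records exactly this in Remark~\ref{rem:sect}, citing \cite{ghn}*{Proposition 7.1}. The oplax limit is generally strictly larger than the limit (for instance, over $BG$ one obtains $\Fun(BG,\Xscr)$ rather than $\Xscr^{hG}$, and these are genuinely different $\infty$-topoi), so it is not a limit of a diagram of $\infty$-topoi in $\RTop$ and you cannot simply invoke closure of $\RTop$ under small limits. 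Your closing paragraph senses that Part (1) is the delicate point and tries to fix it by adjusting variance, but the issue is not variance (cartesian vs.\ cocartesian) or ambient category ($\RTop$ vs.\ $\widehat{\Cat}_\infty$); it is that taking \emph{all} sections is a lax operation, not a strict one. The paper takes a different route: it dualizes $\int\Fscr^\dagger$ to a cartesian (topos) fibration $\int\Fscr\to S^{\op}$, rewrites $\Sect_S(\int\Fscr^{\dagger})\simeq\Fun_{/S^{\op}}^{\cocart}(\mathrm{Tw}(S^{\op}),\int\Fscr)$ via \cite{quigley-shah}*{(1.12)}, and then invokes the explicit criterion of \cite{htt}*{Proposition 5.4.7.11} that identifies such functor categories as $\infty$-topoi; no limit argument appears at all.

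Parts (2)--(4) of your sketch are closer to the paper's argument in spirit. The paper proves (2) directly from the fact that finite limits and all colimits in the section category are computed pointwise (because the pushforward functors along edges of $S$ preserve them), so restriction along $\phi$ preserves them --- which is essentially what you say, modulo describing $\phi^*$ as ``postcomposition'' (it is restriction, i.e.\ precomposition with $\phi$). For (3) and (4) the paper reduces directly to the pointwise-existence criteria for relative Kan extensions in \cite{quigley-shah}*{Remark 2.9 and Corollary 2.11}; your sketch gestures at the same finiteness and cofinality considerations but leaves the reduction to the relative-Kan-extension machinery implicit. These parts are salvageable, but the proof of Part (1) as written would not go through and needs to be replaced with an argument that handles all (not just cocartesian) sections.
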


\begin{proof} \begin{enumerate}[leftmargin=*]
\item We have a topos fibration (in the sense of \cite{htt}*{Definition 6.3.1.6}) $$\int \Fscr \simeq (\int \Fscr^{\dagger})^{\vee} \rightarrow S^{\op}.$$ By \cite{quigley-shah}*{(1.12)}, we thus have an equivalence
\[
\Sect_S(\int \Fscr^{\dagger}) \simeq \Fun_{/S^{\op}}^{\mathrm{cocart}}(\mathrm{Tw}(S^{\op}), (\int \Fscr^{\dagger})^{\vee}) \simeq \Fun_{/S^{\op}}^{\mathrm{cocart}}(\mathrm{Tw}(S^{\op}), \int \Fscr).
\]
The right hand side is then an $\infty$-topos by the criterion of \cite{htt}*{Proposition 5.4.7.11}.
\end{enumerate}
\begin{enumerate} \setcounter{enumi}{1}
\item Since the cartesian pullback functors of $\int \Fscr^{\dagger}$ preserve finite limits and all colimits, finite limits and all colimits in $\Sect_S(\int \Fscr^{\dagger})$ are computed pointwise. It follows that $\phi^*$ preserves finite limits and all colimits.

\item The content of the claim is that $\phi_*$ is computed by the pointwise formula for the relative right Kan extension. For this, it suffices to show that given a diagram
\[
\begin{tikzcd}
T \ar{r}{x} \ar{d}{\phi} & \int (\Fscr \phi)^{\dagger} \ar{r}{\rho} & \int \Fscr^{\dagger} \ar{d}{p}\\
S \ar[dashed]{urr}[swap]{\phi_{\ast} x}  \ar{rr}[swap]{=} & & S,
\end{tikzcd}
\]
there exists a dotted lift $\phi_{\ast} x$ that is a $p$-right Kan extension of $\rho x$ along $\phi$ in the sense of \cite{quigley-shah}*{Definition 2.8}. Given our hypotheses on the functor $\phi$, this follows by \cite{quigley-shah}*{Remark 2.9}. 

\item This is immediate from the dual of \cite{quigley-shah}*{Corollary 2.11}.
\end{enumerate}
\end{proof}

\begin{definition} \label{def:laxcolim} Suppose we have a diagram $\Fscr: S \rightarrow \RTop$. The {\bf toposic lax colimit} of $\Fscr$ is the $\infty$-topos
\[
\lax.\colim_S \Fscr = \Sect_{S}(\int \Fscr^{\dagger}).
\]
\end{definition}

\begin{remark}\label{rem:sect} By \cite{ghn}*{Proposition 7.1}, $\Sect_{S}(\int \Fscr^{\dagger})$ computes the oplax limit of $\Fscr^{\dagger}$ (as a functor into $\widehat{\Cat}_{\infty}$). This justifies the terminology of Definition \ref{def:laxcolim}.
\end{remark}

\begin{lemma} \label{lem:functoriality} \begin{enumerate}[leftmargin=*]
\item The toposic lax colimit construction assembles into a functor
\[
\lax.\colim: (\Cat)_{\infty/\RTop}^{\op} \rightarrow \widehat{\Cat}_{\infty}.
\]
\end{enumerate}
\begin{enumerate} \setcounter{enumi}{1}
\item Suppose that $F:K \rightarrow (\Cat)_{\infty/\RTop}^{\op}$ is a functor such that for any $i \rightarrow j \in K$ and $x \in F(j)$, the simplicial set $F(i) \times_{F(j)} F(j)^{x/}$ is finite. Then there exists a canonical lift:
\[
\begin{tikzcd}[column sep=8ex]
K \ar[dashed, "\lax.\colim"]{r} \ar[swap]{d}{F} & \LTop  \ar{d} \\
(\Cat)_{\infty/\RTop}^{\op} \ar{r}{\lax.\colim} & \widehat{\Cat}_{\infty}.
\end{tikzcd}
\]
\end{enumerate}
\end{lemma}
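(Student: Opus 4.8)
For part~(1), the plan is to recognize the objectwise recipe of Definition~\ref{def:laxcolim} as ``sections of a pulled‑back universal family'', for which contravariant functoriality in the pair $(S,\Fscr)$ becomes formal. Let $\Mscr \to \widehat{\Cat}_{\infty}^{\op}$ denote the universal cartesian fibration, whose fiber over an $\infty$-category $\Cscr$ is $\Cscr$ itself. By unstraightening \cite{htt}*{\S3.2}, the cartesian fibration $\int\Fscr^{\dagger}$ of Definition~\ref{def:laxcolim} is canonically the pullback of $\Mscr$ along the functor determined by $\Fscr$ through passage to daggers (i.e.\ via $\RTop \simeq \LTop^{\op} \hookrightarrow \widehat{\Cat}_{\infty}^{\op}$), so that
\[ \lax.\colim_S \Fscr = \Sect_S(\int \Fscr^{\dagger}) \simeq \Fun_{/\widehat{\Cat}_{\infty}^{\op}}(S, \Mscr). \]
Now (i) postcomposition of the structure map with $\RTop \simeq \LTop^{\op} \hookrightarrow \widehat{\Cat}_{\infty}^{\op}$ defines a functor of slices $(\Cat)_{\infty/\RTop} \to (\Cat)_{\infty/\widehat{\Cat}_{\infty}^{\op}}$, and (ii) for a fixed $\infty$-category $\Bscr$ and a fixed functor $\Mscr \to \Bscr$, the rule $(X \to \Bscr) \mapsto \Fun_{/\Bscr}(X, \Mscr)$ --- restriction of sections along maps over $\Bscr$ --- is a functor $\bigl((\Cat)_{\infty/\Bscr}\bigr)^{\op} \to \widehat{\Cat}_{\infty}$. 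Composing (i) with (ii) taken at $\Bscr = \widehat{\Cat}_{\infty}^{\op}$ produces the desired $\lax.\colim: (\Cat)_{\infty/\RTop}^{\op} \to \widehat{\Cat}_{\infty}$; on objects it recovers Definition~\ref{def:laxcolim}, and on $1$-morphisms it recovers the restriction functors $\phi^*$ of Lemma~\ref{lem:sect}(2). (One could instead combine Remark~\ref{rem:sect} with the functoriality of oplax limits from \cite{ghn}.)

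For part~(2), granting part~(1): the composite $\lax.\colim \circ F: K \to \widehat{\Cat}_{\infty}$ carries every object of $K$ to an $\infty$-topos by Lemma~\ref{lem:sect}(1), and every $1$-morphism $\alpha: i \to j$ of $K$ to the restriction functor along the associated functor of indexing $\infty$-categories, which by Lemma~\ref{lem:sect}(2) preserves finite limits and all colimits and is therefore the left adjoint of a geometric morphism. Because $\LTop \subset \widehat{\Cat}_{\infty}$ is a (non-full) subcategory --- on its objects, which are presentable, left-exactness and the property of being a left adjoint (equivalently, preservation of small colimits) each cut out a union of path components in every mapping space --- a functor out of $K$ factors through $\LTop$ as soon as it does so on objects and $1$-morphisms, and this yields the essentially unique dashed lift. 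The finiteness hypothesis on $F$ is precisely what is needed to invoke Lemma~\ref{lem:sect}(3) at each transition functor; it serves to identify the right adjoints of the left-exact left adjoints above --- i.e.\ the geometric morphisms themselves --- with pointwise relative right Kan extensions, which is the form in which the resulting $\LTop$-valued functor is used in the sequel (cf.\ Constructions~\ref{construct:borel-orb} and~\ref{construct:borel-fix}).

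The step that demands genuine care is part~(1): since the lax colimit is \emph{a priori} only specified objectwise, the content lies in assembling it coherently into a single functor, and the reformulation as sections of a pulled‑back universal cartesian fibration (equivalently, the \cite{ghn} description of oplax limits) is what reduces this to the manifest functoriality of ``restriction of sections'' rather than a hands-on verification of the requisite coherences. Part~(2) is then soft --- it amounts to the observation that the objects and transition morphisms delivered by Lemma~\ref{lem:sect}(1) and~(2) already lie in the subcategory $\LTop \subset \widehat{\Cat}_{\infty}$.
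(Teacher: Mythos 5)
Your proof is correct and in substance coincides with the paper's (which consists of the single sentence ``This follows immediately from Lemma~\ref{lem:sect}''): both reduce matters to the objectwise and $1$-morphism-level information supplied by Lemma~\ref{lem:sect}, with your use of the universal cartesian fibration $\Mscr \to \widehat{\Cat}_{\infty}^{\op}$ being a clean way to make explicit the coherence that the paper leaves implicit. Your parenthetical observation about part~(2) is also accurate: since Lemma~\ref{lem:sect}(2) already shows each transition functor preserves finite limits and small colimits \emph{without} the finiteness hypothesis, the lift through the (non-full) subcategory $\LTop \subset \widehat{\Cat}_{\infty}$ exists regardless; the hypothesis is carried along so that Lemma~\ref{lem:sect}(3) applies, which is how the resulting $\LTop$-valued functor is actually exploited in Construction~\ref{cons:orb}.
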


\begin{proof}  This follows immediately from Lemma~\ref{lem:sect}.
\end{proof}



\begin{construction} \label{cons:orb} Let $\Xscr: BG \rightarrow \RTop$ be a $G$-$\infty$-topos. Consider the functor
\[
\Oscr_G \rightarrow(\Cat_{\infty})_{/\RTop}, \qquad G/H \mapsto \left( \Xscr^{\hh(-)}|_{(\Oscr^{\op}_G)_{(G/H)/}}: (\Oscr^{\op}_G)_{(G/H)/} \rightarrow \RTop \right),
\]
satisfying the hypotheses of Lemma~\ref{lem:functoriality}. Then the formation of toposic lax colimits yields a functor
\[
\Xscr^{\dagger}_{(-)}: \Oscr_G^{\op} \rightarrow \LTop,
\]
whose value 
\[
\Xscr_G = \Sect_{\Oscr_G^{\op}}(\underline{\Xscr}^{\hh G}),
\] on $G/G$ is the {\bf toposic genuine orbits} of $\Xscr$. We denote the associated \emph{bicartesian} fibration over $\Oscr_G^{\op}$ by
\[
\underline{\Xscr}_{G} \rightarrow \Oscr_G^{\op}.
\]
Given a $G$-equivariant geometric morphism $f_*: \Yscr \to \Xscr$, we have an induced functor $f^*: \underline{\Xscr}^{\hh G} \to \underline{\Yscr}^{\hh G}$ over $\Oscr_G^{\op}$ that preserves cartesian edges, and hence postcomposition induces a functor upon taking sections 
\[
f^*: \Xscr_G \to \Yscr_G.
\]
Moreover, since colimits and finite limits are computed fiberwise in toposic lax colimits, $f^*$ preserves colimits and finite limits and is thus the left adjoint of a geometric morphism.

As $f^*$ is given by postcomposition of sections, it is compatible with pullback of sections, and hence we obtain an induced functor $f^*: \underline{\Xscr}_G \to \underline{\Yscr}_G$. The assignment $\Xscr \mapsto \underline{\Xscr}_G$ thereby assembles to a functor valued in $G$-$\infty$-categories
\[ \Fun(BG, \LTop) \to \widehat{\Cat}_{\infty, \Oscr_G}. \]
\end{construction}


\begin{remark} \label{rem:dror-farjoun} Suppose that $X$ is a CW complex equipped with a $G$-action (in the $1$-categorical sense) with at least one fixed point. Then by a result of Dror-Farjoun, for any subgroup $H \leq G$ we have an equivalence
\[
\Sing_{\bullet}(X/H) \simeq \colim_{K \in \Oscr_{\H}} \Sing_{\bullet}(X^K),
\]
where $X^K \subset X$ is the subspace of $K$-fixed points \cite{dror-farjoun}*{Chapter~4, Lemma~A.3}.  In this light, we may view Construction~\ref{cons:orb} as a categorification of this formula for the genuine orbits of a $G$-space.
\end{remark}

In the context of $S$-$\infty$-categories (i.e., cocartesian fibrations over $S$), the theory of parametrized limits and colimits over $S$ was comprehensively studied in the second author's thesis \cite{jay-thesis}. When specialized to $S = \Oscr_G^{\op}$, the resulting notions of admitting all {\bf finite $G$-(co)products} or all {\bf (finite) $G$-(co)limits} unwind to the following more explicit definition \cite{jay-thesis}*{Proposition 5.11 and Corollary 12.15}.\footnote{The assertion that the existence of all \emph{finite} $G$-(co)limits in the sense of \cite{jay-thesis} (where the finiteness condition is on the diagram $G$-$\infty$-category) is equivalent to the below formulation follows as a consequence of the technique of reduction to the Grothendieck construction as explained in \cite{quigley-shah}*{A.12}.}

\begin{definition} \label{def:g-products} A functor 
\[
\Fscr: \Fin_G^{\op} \rightarrow \widehat{\Cat}_{\infty}, \qquad (U \rightarrow V) \mapsto (f^*:\Fscr(V) \rightarrow \Fscr(U)),
\] is said to {\bf admit finite $G$-products} (resp. {\bf $G$-coproducts}) if
\begin{enumerate}
\item $\Fscr$ is a right Kan extension of its restriction to $\Oscr_G^{\op}$. In other words, given an orbit decomposition $U \simeq \coprod_{i} U_i$ of a finite $G$-set $U$, the canonical map $\Fscr(U) \to \prod_i \Fscr(U_i)$ is an equivalence.
\item For any map $f: U \rightarrow V$ of finite $G$-sets, the functor $f^*:\Fscr(V) \rightarrow \Fscr(U)$ admits a right (resp. left) adjoint
\[
f_{\ast} = \prod_f: \Fscr(U) \rightarrow \Fscr(V) \qquad (\text{resp. } f_! = \coprod_f: \Fscr(U) \rightarrow \Fscr(V)).
\]
\item $\Fscr$ satisfies the {\bf Beck-Chevalley condition}: for any pullback diagram
\[
\begin{tikzcd}
U' \ar{d}{f'} \ar{r}{g'} & U \ar{d}{f}\\
V' \ar{r}{g} & V
\end{tikzcd}
\]
of finite $G$-sets, the commutative diagram
\[
\begin{tikzcd}
\Fscr(V) \ar{d}{f^{\ast}} \ar{r}{g^{\ast}} & \Fscr(V') \ar{d}{f'^{\ast}} \\
\Fscr(U) \ar{r}{g'^{\ast}} & \Fscr(U')
\end{tikzcd}
\]
is right (resp. left) adjointable, i.e., the exchange transformation
\[ f^* g_* \Rightarrow g'_* f'^*, \quad \text{resp.} \quad f'_! g'^* \Rightarrow g^* f_! \]
is an equivalence.
\end{enumerate}
We then say that $\Fscr$ {\bf admits all (finite) $G$-limits} (resp. {\bf admits all (finite) $G$-colimits}) if
\begin{enumerate}
\item $\Fscr$ admits finite $G$-products (resp. finite $G$-coproducts).
\item For every finite $G$-set $U$, $\Fscr(U)$ admits (finite) limits (resp. (finite) colimits).
\item For every map $f: U \to V$ of finite $G$-sets, the functor $f^*$ is preserves all (finite) limits (resp. preserves all (finite) colimits).
\end{enumerate}
Finally, we also say the same for the $G$-$\infty$-category corresponding to $\Fscr|_{\Oscr_G^{\op}}$.
\end{definition}

\begin{remark} \label{rem:ReallyObviousRemark} In Definition~\ref{def:g-products}, if $\Fscr$ admits \emph{both} finite $G$-coproducts and finite $G$-products and $\Fscr(U)$ is cocomplete and complete for all $G$-orbits $U$ (e.g., presentable), then $\Fscr$ admits all $G$-colimits and $G$-limits.
\end{remark}

\begin{remark} \label{rem:ExtendedToposConstructions} The functors $\Xscr^{\dagger}_{\hh(-)}$ and $\Xscr_{(-)}^{\dagger}$ extend in a natural way from $\Oscr_G^{\op}$ to $\Fin_G^{\op}$. For $\Xscr^{\dagger}_{\hh(-)}$, let $\FinGpd$ be the $1$-category of finite groupoids, let $\omega: \Fin_G \to \FinGpd_{/BG}$ be the action groupoid functor $U \mapsto U//G$, and define the extension
\[ \Xscr^{\dagger}_{\hh(-)}: \Fin_G^{\op} \xrightarrow{\omega} \FinGpd_{/BG}^{\op} \rightarrow \LTop, \quad U \mapsto \Xscr_{\hh U} = \Fun_{/BG}(U//G, \underline{\Xscr}).  \]
For $\Xscr_{(-)}^{\dagger}$, let $\underline{U} \to \Oscr_{G}^{\op}$ be the left fibration classified by $\Hom_{\Fin_G}(-,U)$ (covariantly functorial in maps of finite $G$-sets) and define the extension
\[ \Xscr_{(-)}^{\dagger}: \Fin_G^{\op} \rightarrow \LTop, \quad U \mapsto \Xscr_U = \Fun_{/\Oscr_G^{\op}}(\underline{U}, \underline{\Xscr}^{\hh G}). \]
Clearly, these extensions are right Kan extensions of their restrictions to $\Oscr_G^{\op}$.
\end{remark}

To proceed, we need to show that the $G$-$\infty$-categories $\underline{\Xscr}_{\hh G}$ and $\underline{\Xscr}_G$ admit all $G$-colimits and $G$-limits. By Remark~\ref{rem:ReallyObviousRemark}, it suffices to show the existence of finite $G$-coproducts and finite $G$-products.

\begin{proposition} \label{prop:genuine} Let $\Xscr$ be a $G$-$\infty$-topos. Then the functors
\[
\Xscr^{\dagger}_{\hh(-)}: \Fin_G^{\op} \rightarrow \LTop \subset \widehat{\Cat}_{\infty}
\]
and 
\[
\Xscr_{(-)}^{\dagger}: \Fin_G^{\op} \rightarrow \LTop \subset \widehat{\Cat}_{\infty}
\]
defined as in Remark~\ref{rem:ExtendedToposConstructions} admit finite $G$-coproducts and finite $G$-products.
\end{proposition}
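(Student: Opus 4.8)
The plan is to verify conditions (1)--(3) of Definition~\ref{def:g-products} directly, for each of $\Xscr^{\dagger}_{\hh(-)}$ and $\Xscr_{(-)}^{\dagger}$, in both the $G$-product and the $G$-coproduct versions. The organizing point is that both functors are instances of the toposic lax colimit of Definition~\ref{def:laxcolim}: there are a base $\infty$-category $B$, a functor $\Escr\colon B\to\RTop$, and a functor $D\colon\Fin_G\to(\Cat_{\infty})_{/B}$ preserving finite coproducts and pullbacks, with structure maps $r_U\colon D(U)\to B$, so that the functor in question is $U\mapsto\Sect_{D(U)}\!\big(\int(\Escr\circ r_U)^{\dagger}\big)$ and, for $f\colon U\to V$, the restriction map $f^{\ast}$ is the restriction-of-sections functor along $D(f)\colon D(U)\to D(V)$. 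For $\Xscr^{\dagger}_{\hh(-)}$ one takes $B=BG$, $\Escr=\Xscr$ and $D(U)=U//G$ the action groupoid, recovering $\Fun_{/BG}(U//G,\underline{\Xscr})$ as in Remark~\ref{rem:ExtendedToposConstructions}; for $\Xscr_{(-)}^{\dagger}$ one takes $B=\Oscr_G^{\op}$, $\Escr=\Xscr^{\hh(-)}$ and $D(U)=\underline{U}$ the left fibration classified by $\Hom_{\Fin_G}(-,U)$, recovering $\Fun_{/\Oscr_G^{\op}}(\underline{U},\underline{\Xscr}^{\hh G})$. That $D$ preserves finite coproducts and pullbacks is clear in the first case, and in the second follows from the fact that $U\mapsto(U^{(-)}\in\Fun(\Oscr_G^{\op},\Set))$ does and that straightening over $\Oscr_G^{\op}$ preserves limits and coproducts.

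Granting this, condition (1) is immediate: $D$ preserves coproducts and $\Sect$ carries coproducts to products, so the functor is a right Kan extension of its restriction to $\Oscr_G^{\op}$, as already observed in Remark~\ref{rem:ExtendedToposConstructions}. For condition (2) I would produce the right adjoint $f_{\ast}=D(f)_{\ast}$ from Lemma~\ref{lem:sect}(3): its hypothesis asks the comma categories $D(U)\times_{D(V)}D(V)^{x/}$ to be finite, which holds because $D(U)$ and $D(V)^{x/}$ are themselves finite --- $U//G$ is the action groupoid of a finite $G$-set, and $\underline{U}$ is the total space of a left fibration over the finite category $\Oscr_G$ with finite fibres (indeed a short calculation gives $U//G\times_{V//G}(V//G)^{v/}\simeq f^{-1}(v)$). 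I would produce the left adjoint $f_{!}=D(f)_{!}$ from Lemma~\ref{lem:sect}(4), which requires $D(f)$ to be (equivalent to) a cocartesian fibration: in both cases $D(f)$ is equivalent over $D(V)$ to the left fibration classified by the functor $D(V)\to\Spc$ sending a point $x$ to the fibre of $D(f)$ over it (a finite discrete set), since that left fibration and $D(f)$ have the same total space over $D(V)$. Note that mere presentability of these $\infty$-topoi, via Lemma~\ref{lem:sect}(1), would yield the adjoints abstractly, but not the pointwise Kan extension formulas that condition (3) needs.

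For condition (3), fix a pullback square of finite $G$-sets with $U'=U\times_{V}V'$; since $D$ preserves pullbacks, the induced square $D(U')=D(U)\times_{D(V)}D(V')$ is again a pullback. I would first check the $G$-coproduct exchange transformation $f'_{!}g'^{\ast}\Rightarrow g^{\ast}f_{!}$ using the fibrewise-colimit formula of Lemma~\ref{lem:sect}(4): the pullback square identifies the fibre of $D(U')\to D(V')$ over a point $s'$ with the fibre of $D(U)\to D(V)$ over the image of $s'$, compatibly with the sections; and because $D(f)$ is a functor over $B$ this fibre maps constantly to a point $b\in B$, so the restricted section lands in the single topos $\Escr(b)$ and both sides evaluate at $s'$ to the same colimit there, the exchange transformation being the resulting equivalence. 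The $G$-product exchange transformation $f^{\ast}g_{\ast}\Rightarrow g'_{\ast}f'^{\ast}$ is then the mate of the $G$-coproduct one under passage to left adjoints --- which exist by condition (2) --- and so is an equivalence as well.

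The main obstacle is condition (2): one must pin down the indexing functors $U//G\to V//G$ and $\underline{U}\to\underline{V}$ well enough to check the finiteness of the relevant comma categories and, more delicately, that they are equivalent to cocartesian fibrations, so as to bring Lemma~\ref{lem:sect}(3)--(4) to bear; the subtlety is that the literal maps need not be fibrations on the nose, so one argues via the left fibration classified by the fibrewise-fibre functor. Everything else is comparatively formal: the mate correspondence reduces the full Beck--Chevalley condition to its $G$-coproduct half, which follows at once from the explicit fibrewise description of $f_{!}$.
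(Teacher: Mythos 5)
Your proof proposal is essentially correct and shares the structural backbone of the paper's argument: both appeal to Lemma~\ref{lem:sect}(3) and (4) to construct the coinduction and induction functors as relative Kan extensions with pointwise formulas, and both verify the finiteness and (equivalent-to-)cocartesian-fibration hypotheses of those lemmas in much the same way. Where you diverge is in the treatment of the Beck--Chevalley conditions. The paper reduces to the case of $G$-orbits and establishes both exchange transformations independently, feeding the right-cofinality criteria of Lemma~\ref{lem:BeckChevalley} (item (1) for the $G$-product exchange, the dual of item (2) for the $G$-coproduct exchange) into Lemma~\ref{lem:bc1}. You instead verify only the $G$-coproduct exchange $f'_!g'^{\ast}\Rightarrow g^{\ast}f_!$ directly, by inspecting the fibrewise-colimit formula~\eqref{eq:shriek-formula} and using that the diagram functor $D$ preserves pullbacks, and then deduce the $G$-product exchange $f^{\ast}g_{\ast}\Rightarrow g'_{\ast}f'^{\ast}$ by the calculus of mates. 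That mate step is sound: since all the restriction functors admit both adjoints, the $G$-product exchange is exactly the passage of the $G$-coproduct exchange to \emph{right} adjoints (you write ``left adjoints'', a slip), and passing to adjoints preserves invertibility by the coherence of mates. This spares the second cofinality verification and is arguably cleaner than the paper's double check. Two remarks. First, your unified packaging of both $\Xscr^{\dagger}_{\hh(-)}$ and $\Xscr^{\dagger}_{(-)}$ via a single diagram functor $D\colon\Fin_G\to(\Cat_{\infty})_{/B}$ preserving finite coproducts and pullbacks is a nice organizational device the paper does not make explicit (it simply notes ``the claim for $\Xscr_{\hh(-)}$ follows by the same argument where $\Oscr_G^{\op}$ is replaced by $BG$''). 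Second, your direct check that the coproduct exchange is ``the resulting equivalence'' compresses the step --- made precise by Lemma~\ref{lem:bc1} --- that the formal exchange transformation really coincides with the fibrewise comparison map arising from the identification of fibers; this is the right intuition, but it is the part of the argument that most deserves spelling out.
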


To prove this result, we need a few categorical preliminaries on the Beck-Chevalley condition.

\begin{lemma} \label{lem:bc1} Suppose that we have a diagram of $\infty$-categories
\[ \begin{tikzcd}[row sep=4ex, column sep=4ex, text height=1.5ex, text depth=0.25ex]
I \times_J K \ar{r}{\psi'} \ar{d}{\phi'} & I \ar{d}{\phi} \ar{r}{F} & C \ar{d}{p} \\
K \ar{r}{\psi} & J \ar{r} & S
\end{tikzcd} \]
in which $p$ is a cartesian fibration and the lefthand square is homotopy cartesian. Consider the class of simplicial sets $$\Ascr = \{ I \times_J J^{y/} : y \in J \},$$
and suppose that the fibers of $C$ admit $\Ascr$-indexed limits and the cartesian pullback functors of $C$ preserve $\Ascr$-indexed limits. Then the $p$-right Kan extensions $\phi_{\ast} F$ and $\phi'_{\ast} (F \circ \psi')$ exist, and we may consider the exchange transformation
$$ \chi: \psi^{\ast} \phi_{\ast} F \to \phi'_{\ast} \psi'^{\ast} F. $$
Furthermore, if the map $\theta_x$ of Lemma \ref{lem:BeckChevalley} is right cofinal for all $x \in K$, then $\chi$ is an equivalence.
\end{lemma}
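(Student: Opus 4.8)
The plan is to view this as the relative form --- over $S$, along the cartesian fibration $p$ --- of the classical Beck--Chevalley isomorphism for pointwise right Kan extensions, and to deduce it from the classical statement by cartesian straightening fiberwise over $S$. I would proceed in three steps: check that the two $p$-right Kan extensions are computed by the pointwise formula; build $\chi$ as the mate of the counit $\phi^{\ast}\phi_{\ast}F \to F$; and verify that $\chi$ is an equivalence by evaluating at each $x \in K$ and comparing two $p$-limits along $\theta_x$.

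For existence, recall the pointwise criterion for relative right Kan extensions (cf. \cite{quigley-shah}*{\S2.2.1}): $\phi_{\ast}F$ exists and is computed pointwise as soon as, for every $y \in J$, the diagram $I \times_J J^{y/} \to I \xrightarrow{F} C$ admits a $p$-limit over the evident augmentation to $S$. Since $p$ is cartesian, \cite{htt}*{Proposition 4.3.1.10} identifies such a $p$-limit with an ordinary limit --- formed in the fiber of $C$ over the image of the cone point, of the diagram obtained by cartesian transport --- and the indexing category $I \times_J J^{y/}$ is a member of $\Ascr$ by definition, so the hypotheses (fibers admit $\Ascr$-indexed limits, cartesian pullbacks preserve them) furnish exactly this; hence $\phi_{\ast}F$ exists. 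For $\phi'_{\ast}(F \circ \psi')$, homotopy-cartesianness of the left square identifies the comma category at $x \in K$ with $I \times_J K^{x/}$, which $\theta_x$ carries to $I \times_J J^{\psi(x)/} \in \Ascr$; the same reduction (using the cofinality of $\theta_x$, or a direct cofinal-subcategory comparison of the two indexing shapes) produces the required $p$-limits, so this Kan extension exists too. With both available, $\chi$ is produced formally: by the universal property of $\phi'_{\ast}$, a map $\psi^{\ast}\phi_{\ast}F \to \phi'_{\ast}\psi'^{\ast}F$ is the same datum as a map $\phi'^{\ast}\psi^{\ast}\phi_{\ast}F \to \psi'^{\ast}F$ over $S$, and rewriting the source as $\psi'^{\ast}\phi^{\ast}\phi_{\ast}F$ via $\phi \circ \psi' \simeq \psi \circ \phi'$, one takes $\psi'^{\ast}$ of the counit $\phi^{\ast}\phi_{\ast}F \to F$.

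For the equivalence, fix $x \in K$ and use the pointwise formulas: $(\psi^{\ast}\phi_{\ast}F)(x) = (\phi_{\ast}F)(\psi(x))$ is the $p$-limit of $F$ over $I \times_J J^{\psi(x)/}$, whereas $(\phi'_{\ast}\psi'^{\ast}F)(x)$ is the $p$-limit of $F \circ \theta_x$ over $I \times_J K^{x/}$; unwinding the construction of $\chi$ shows that $\chi_x$ is exactly the comparison map of these two $p$-limits induced by restriction along $\theta_x$. Transporting the cofinality theorem \cite{htt}*{\S4.1} through the cartesian straightening of the previous step --- legitimate precisely because cartesian pullbacks preserve $\Ascr$-indexed limits --- restriction along a right cofinal functor preserves $p$-limits, so right cofinality of $\theta_x$ makes $\chi_x$ an equivalence; as $x$ was arbitrary, so is $\chi$. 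I expect the only genuine obstacle to be bookkeeping: making the two appeals to \cite{htt}*{Proposition 4.3.1.10} compatible, i.e. checking that the straightened fiberwise diagrams over $I \times_J J^{\psi(x)/}$ and over $I \times_J K^{x/}$ are identified under $\theta_x$ compatibly with the augmentations to $S$, so that the classical cofinality statement applies verbatim in the fiber. This is where homotopy-cartesianness of the left square and the preservation hypothesis on cartesian pullbacks are both genuinely used.
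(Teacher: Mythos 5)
Your proof is correct and takes essentially the same approach as the paper, which simply cites the pointwise existence criterion for $p$-right Kan extensions (in the form of \cite{quigley-shah}*{Remark 2.9}) for the first claim and then asserts the second claim follows "from the definitions" — you have spelled out the identifications (reduction to fiberwise limits via \cite{htt}*{Proposition 4.3.1.10}, construction of $\chi$ as the mate of the counit, and pointwise comparison along $\theta_x$) that the paper leaves implicit. One small point of bookkeeping worth noting: as literally stated, the hypotheses on $\Ascr$ only guarantee fiberwise limits indexed by shapes of the form $I \times_J J^{y/}$, so existence of $\phi'_{\ast}(F\circ\psi')$ (whose comma shapes are $I\times_J K^{x/}$) already needs the cofinality comparison via $\theta_x$ that you invoke — the paper is equally terse on this, and your parenthetical caveat flags it appropriately.
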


\begin{proof} The first claim follows from the pointwise existence criterion for $p$-right Kan extensions \cite{quigley-shah}*{Remark 2.9}. The second then follow immediately from the definitions.
\end{proof}


\begin{lemma} \label{lem:BeckChevalley} Suppose we have a homotopy pullback square of $\infty$-categories
\[ \begin{tikzcd}[row sep=4ex, column sep=4ex, text height=1.5ex, text depth=0.25ex]
I \times_J K \ar{r}{\psi'} \ar{d}{\phi'} & I \ar{d}{\phi} \\
K \ar{r}{\psi} & J.
\end{tikzcd} \]
For $x \in K$, consider the induced map
$$ \theta_x: I \times_J K \times_K K_{x/} \to I \times_J J_{\psi(x)/}.$$
\begin{enumerate}
\item Suppose that $\psi$ is equivalent to a left fibration and the induced functor $K_{x/} \to J_{\psi(x)/}$ has weakly contractible fibers. Then $\theta_x$ is right cofinal. In particular, if $\psi: K \to J$ is equivalent to a map of corepresentable left fibrations $f^{\ast}: S_{v/} \simeq (S_{v/})_{f/} \to S_{u/}$ for a morphism $f: u \to v$ in an $\infty$-category $S$, then $\theta_x$ is right cofinal.
\item  Suppose that $\phi^{\op}: I^{\op} \to J^{\op}$ is smooth \cite{htt}*{Definition 4.1.2.9} (for example, if $\phi$ is equivalent to a cartesian fibration \cite{htt}*{Proposition 4.1.2.15}). Then $\theta_x$ is right cofinal.
\end{enumerate}
\end{lemma}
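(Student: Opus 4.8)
The plan is to realize $\theta_x$ as a base change of the coslice‑restriction functor $\alpha_x\colon K_{x/}\to J_{\psi(x)/}$ induced by $\psi$, and then to analyze $\alpha_x$ and this base change separately in the two cases. Since the relevant constructions are homotopy invariant we may take the square in the hypothesis to be a strict pullback with $\phi$ a categorical fibration; then, using $I\times_J K\times_K K_{x/}\simeq I\times_J K_{x/}$, one checks that the square
\[
\begin{tikzcd}
I\times_J K_{x/} \ar{r}{\theta_x} \ar{d} & I\times_J J_{\psi(x)/} \ar{d} \\
K_{x/} \ar{r}{\alpha_x} & J_{\psi(x)/}
\end{tikzcd}
\]
(vertical maps the evident projections) is again a pullback. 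Since the right‑hand vertical map $\mathrm{pr}\colon I\times_J J_{\psi(x)/}\to J_{\psi(x)/}$ is the base change of $\phi$ along $J_{\psi(x)/}\to J$, this exhibits $\theta_x$ as the base change of $\alpha_x$ along $\mathrm{pr}$; in particular the fibers of $\theta_x$ agree with those of $\alpha_x$.

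For part (1): since $\psi$ is equivalent to a left fibration, the induced coslice‑restriction $\alpha_x$ is again equivalent to a left fibration, by a standard closure property of left fibrations (\cite{htt}*{\S2.1}). A left fibration with weakly contractible fibers is a trivial Kan fibration, hence a categorical equivalence; by the hypothesis on the fibers of $\alpha_x$ this applies, and as a base change of a categorical equivalence $\theta_x$ is itself a categorical equivalence, in particular right cofinal. For the ``in particular'' clause, the functor $f^{\ast}$ is, under the standard identification of $S_{v/}$ with an iterated coslice of $S_{u/}$, the source projection of a coslice and hence a left fibration, while the corresponding $\alpha_x$ is an equivalence of iterated coslices; so its fibers are contractible and the first part applies.

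For part (2): I would first record that for \emph{any} functor $\psi\colon K\to J$ and object $x\in K$, the functor $\alpha_x\colon K_{x/}\to J_{\psi(x)/}$ is right cofinal. By the cofinality criterion \cite{htt}*{Theorem 4.1.3.1} (in the form appropriate to the over‑slice), it suffices to see that for each $\gamma\in J_{\psi(x)/}$ the category $K_{x/}\times_{J_{\psi(x)/}}(J_{\psi(x)/})_{/\gamma}$ is weakly contractible, and one checks directly that the pair formed by the initial object $\mathrm{id}_x\in K_{x/}$ together with the canonical map $\mathrm{id}_{\psi(x)}\to\gamma$ out of the initial object of $J_{\psi(x)/}$ is an initial object of this category. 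Now, since $\phi^{\op}$ is smooth the functor $\phi$ is proper, and properness is stable under base change, so $\mathrm{pr}$ is proper; applying \cite{htt}*{Proposition 4.1.2.15} to opposite simplicial sets, base change along a proper map carries right cofinal maps to right cofinal maps. As $\theta_x$ is the base change of the right cofinal map $\alpha_x$ along the proper map $\mathrm{pr}$, it is right cofinal.

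The one genuinely non‑formal step is the initial‑object computation showing that coslice restriction $\alpha_x$ is always right cofinal; everything else is bookkeeping. The point that requires care is the handedness: one must invoke the smooth/proper base change theorem for $\phi^{\op}\colon I^{\op}\to J^{\op}$ rather than for $\phi$ itself, and one must match the over‑slice $(-)_{/\gamma}$ occurring above to the convention in force for ``right cofinal''.
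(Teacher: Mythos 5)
Your proof is correct and takes a genuinely different route from the paper's, built around the observation that (after strictification) $\theta_x$ is literally the base change of $\alpha_x\colon K_{x/}\to J_{\psi(x)/}$ along $\mathrm{pr}\colon I\times_J J_{\psi(x)/}\to J_{\psi(x)/}$. The paper records the same pullback square but exploits it differently. For part~(1), the paper applies Joyal's Quillen~A criterion directly to $\theta_x$, using only that left fibrations are smooth together with the contractibility hypothesis; you instead prove the stronger statement that $\alpha_x$ (hence $\theta_x$) is a trivial Kan fibration, which is slicker. For part~(2), the paper factors $\theta_x$ through $I\times_J\{\psi(x)\}$, base-changes only the inclusion $\{x\}\to K_{x/}$ of the initial object --- a monomorphism, so it suffices to invoke \cite{htt}*{Proposition 4.1.2.8} and its dual --- and then finishes by right cancellation of cofinal maps. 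You instead show that $\alpha_x$ is always right cofinal and base-change that single map, which is cleaner but requires the base-change theorem for cofinal maps that need not be monomorphisms.

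Two corrections are needed. In (1), the sentence ``as a base change of a categorical equivalence $\theta_x$ is itself a categorical equivalence'' is not a valid step: the Joyal model structure is not right proper, so categorical equivalences are not stable under base change along categorical fibrations. What you actually need --- and have already established --- is that $\alpha_x$ is a \emph{trivial Kan fibration}, and trivial Kan fibrations \emph{are} stable under arbitrary base change by the lifting-property characterization; the conclusion then follows. In (2), \cite{htt}*{Proposition 4.1.2.15} only says that cartesian fibrations are smooth; it does not assert that base change along a map whose opposite is smooth preserves right cofinal maps. For that you should either cite the relevant statement on preservation of cofinal maps under smooth base change, or combine \cite{htt}*{Proposition~4.1.2.8} with the factorization of a cofinal map into an anodyne map followed by a trivial Kan fibration (both factors then being preserved under the base change). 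Your self-flagged warning about handedness is apt; with the paper's convention the inclusion of an initial object is right cofinal and left anodyne, so it is the dual of \cite{htt}*{Proposition~4.1.2.8} that is in play.
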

\begin{proof} \begin{enumerate}[leftmargin=*] \item By (the dual of) Joyal's version of Quillen's theorem A \cite{htt}*{Theorem 4.1.3.1}, it suffices to check that for any object $(a,\alpha) \in I \times_J J_{\psi(x)/}$, the pullback
$$ (I \times_J K \times_K K_{x/}) \times_{I \times_J J_{\psi(x)/}} (I \times_J J_{\psi(x)/})_{/(a,\alpha)} $$
is weakly contractible. Note that we have a pullback square
\begin{equation} \label{eq:pull}
\begin{tikzcd}[row sep=4ex, column sep=4ex, text height=1.5ex, text depth=0.25ex]
I \times_J K \times_K K_{x/} \ar{r}{\theta_x} \ar{d} & I \times_J J_{\psi(x)/} \ar{d} \\
K_{x/} \ar{r} & J_{\psi(x)/}.
\end{tikzcd} 
\end{equation}
Since $\psi$ was assumed to be (equivalent to) a left fibration, the same holds for $\theta_x$. Now, left fibrations are smooth by \cite{htt}*{Proposition 4.1.2.15}. Since the inclusion of the final object $$\{(a,\alpha) \}  \hookrightarrow (I \times_J J_{\psi(x)/})_{/(a,\alpha)}$$ is a left cofinal inclusion and hence right anodyne \cite{htt}*{Proposition 4.1.1.3(4)}, we have that the induced map $$(I \times_J K \times_K K_{x/}) \times_{I \times_J J_{\psi(x)/}} \{(a,\alpha) \} \rightarrow (I \times_J K \times_K K_{x/}) \times_{I \times_J J_{\psi(x)/}}  (I \times_J J_{\psi(x)/})_{/(a,\alpha)}$$ continues to be left cofinal since right anodyne maps are preserved under base change along a smooth morphism \cite{htt}*{Proposition 4.1.2.8}.

But now, from the cartesian diagram~\eqref{eq:pull}, we have an equivalence $$(K_{x/})_\alpha \simeq  (I \times_J K \times_K K_{x/}) \times_{I \times_J J_{\psi(x)/}} \{(a,\alpha) \}.$$ Thus, it further suffices to check that for any object $\alpha: \psi(x) \to y$ in $J_{\psi(x)/}$, the fiber $(K_{x/})_\alpha$ is weakly contractible, which is true by assumption.

Finally, suppose that $\psi$ is equivalent to a functor $f^{\ast}: S_{v/} \to S_{u/}$ for a morphism $f: u \to v$ in $S$, and let $[g: v \to w] \in S_{v/}$ be any object. Then the induced functor $$(S_{v/})_{g/} \xto{\simeq} (S_{x/})_{(g \circ f)/} \simeq S_{w/}$$ is an equivalence, verifying the criterion of (1). 
\item We may factor $\theta_x$ as
\[ \begin{tikzcd}
I \times_J K \times_K \{ x \} \ar{r}{\cong} \ar{d}{\iota} & I \times_J \{ \psi(x) \} \ar{d}{\iota'} \\
I \times_J K \times_K K_{x/} \ar{r}{\theta_x} & I \times_J J_{\psi(x)/}.
\end{tikzcd} \]

Since $\phi^{\op}$ is smooth and $\{x \} \to K_{x/}$ is a right cofinal inclusion, we deduce that $\iota$ is right cofinal, using now the dual of \cite{htt}*{Proposition 4.1.2.8}. Likewise, $\iota'$ is right cofinal. The right cancellative property of right cofinal maps \cite[Proposition 4.1.1.3(2)]{htt} then shows that $\theta_x$ is right cofinal.
\end{enumerate}
\end{proof}


\begin{proof} [Proof of Proposition~\ref{prop:genuine}]We first verify the claim for $\Xscr_{(-)}^{\dagger}$. Let $f: U \to V$ be any map of finite $G$-sets and also denote by $f$ the induced map of $G$-$\infty$-categories $\underline{U} \to \underline{V}$. By Lemma~\ref{lem:sect}.3 and Lemma~\ref{lem:sect}.4, the coinduction and induction functors $f_{\ast}, f_!: \Xscr_U \to \Xscr_V$ exist and are computed by relative right and left Kan extension along $f$. It remains to check the Beck-Chevalley condition. For this, suppose we have a pullback square of finite $G$-sets
\[ \begin{tikzcd}[row sep=4ex, column sep=4ex, text height=1.5ex, text depth=0.25ex]
W \times_V U \ar{r}{g'} \ar{d}{f'} & U \ar{d}{f} \\
W \ar{r}{g} & V.
\end{tikzcd} \]
Without loss of generality, we may suppose that $U, V, W$ are orbits. Then the exchange transformation $f^* g_* \Rightarrow g'_* f'^*$ is an equivalence by Lemma~\ref{lem:bc1} and Lemma~\ref{lem:BeckChevalley}.1, and the exchange transformation $f'_! g'^* \Rightarrow g^* f_!$ is an equivalence by the dual of Lemma~\ref{lem:BeckChevalley}.2.

The claim for $\Xscr_{\hh(-)}$ follows by the same argument where $\Oscr_G^{\op}$ is replaced by $BG$, and we use instead Lemma~\ref{lem:BeckChevalley}.2 for the coinduction functors and its dual for the induction functors.
\end{proof}

We are now prepared to establish functoriality of the assignment $\Xscr \mapsto \underline{\Xscr}_G$ in the \emph{right} adjoint of a $G$-equivariant geometric morphism. For the following proposition, recall the notion of a relative adjunction \cite{higheralgebra}*{\S7.3.2}. Given a relative adjunction $L \dashv R$ of $G$-$\infty$-categories, we say that $L \dashv R$ is a \emph{$G$-adjunction} if both $L$ and $R$ preserve cocartesian edges over $\Oscr_G^{\op}$ \cite{jay-thesis}*{\S8}.

\begin{proposition} \label{prp:InducedAdjunctionParamTopOrbits} Let $f_{\ast}: \Xscr \to \Yscr$ be a $G$-equivariant geometric morphism of $G$-$\infty$-topoi and consider the $G$-functor $f^{\ast}: \underline{\Yscr}_G \to \underline{\Xscr}_G$. Then for any map of $G$-orbits $\alpha: U \to V$, the exchange transformation
\[ \chi: \alpha^{\ast} f_{\ast} \Rightarrow f_{\ast} \alpha^{\ast} \]
is an equivalence. Therefore, the fiberwise pushforward functors $f_*$ assemble into a $G$-functor $f_*: \underline{\Xscr}_G \to \underline{\Yscr}_G$ that is $G$-right adjoint to $f^*$.

Similarly, we have a $G$-adjunction $f^*: \underline{\Yscr}_{\hh G} \rightleftarrows \underline{\Xscr}_{\hh G}: f_*$.
\end{proposition}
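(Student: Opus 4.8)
The plan is to verify the exchange equivalence fiberwise via the explicit formulas provided by Lemma~\ref{lem:sect}, and then deduce the $G$-adjunction formally. Fix a map of $G$-orbits $\alpha\colon U \to V$. Recall that $\Xscr_U = \Fun_{/\Oscr_G^{\op}}(\underline{U}, \underline{\Xscr}^{\hh G})$ and that, by Lemma~\ref{lem:sect}.3, the pushforward $\alpha_*\colon \Xscr_U \to \Xscr_V$ along the induced functor $\underline{U} \to \underline{V}$ of $G$-$\infty$-categories is computed by relative right Kan extension; the finiteness hypothesis needed there holds since $\underline{U} \to \underline{V}$ is a left fibration with $\underline{U}$ having finitely many points over each object of $\Oscr_G^{\op}$. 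On the other hand, the $G$-functor $f^*\colon \underline{\Yscr}_G \to \underline{\Xscr}_G$ is given fiberwise by postcomposition of sections with the left-exact, colimit-preserving functor $f^*\colon \underline{\Yscr}^{\hh G} \to \underline{\Xscr}^{\hh G}$ over $\Oscr_G^{\op}$. Since postcomposition commutes with pullback of sections, the outer square relating $\alpha^*f_*$ and $f_*\alpha^*$ fits the hypotheses of Lemma~\ref{lem:bc1}: the relevant fibration is the cartesian fibration underlying $\underline{\Xscr}^{\hh G} \to \Oscr_G^{\op}$, whose fibers are $\infty$-topoi (hence admit all limits) and whose cartesian transition functors are the \emph{left} adjoints $f^*$ of geometric morphisms (hence left-exact, so they preserve the finite limits that arise here).

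Concretely, I would set up the square
\[
\begin{tikzcd}[row sep=4ex, column sep=4ex, text height=1.5ex, text depth=0.25ex]
\underline{U}\times_{\underline{V}}\underline{W} \ar{r} \ar{d} & \underline{U} \ar{d}{\alpha} \ar{r} & \underline{\Xscr}^{\hh G} \ar{d}{p} \\
\underline{W} \ar{r}{\beta} & \underline{V} \ar{r} & \Oscr_G^{\op}
\end{tikzcd}
\]
of $G$-$\infty$-categories, though in the present situation $W = V$ and $\beta = \id$ and we only need the single map $\alpha\colon U \to V$; the point is that $\alpha$ is a left fibration (a map of corepresentable left fibrations $\underline{U}\to\underline{V}$ over $\Oscr_G^{\op}$), so by Lemma~\ref{lem:BeckChevalley}.1 the comparison map $\theta_x$ is right cofinal for every $x$, and Lemma~\ref{lem:bc1} then gives that the exchange transformation $\chi\colon \alpha^* f_* \Rightarrow f_* \alpha^*$ is an equivalence. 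This establishes that the fiberwise pushforwards $f_*$ preserve cocartesian edges over $\Oscr_G^{\op}$, i.e.\ assemble into a $G$-functor $f_*\colon \underline{\Xscr}_G \to \underline{\Yscr}_G$.

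It then remains to see that $f^* \dashv f_*$ as a $G$-adjunction. Fiberwise over each orbit $G/H$, the adjunction $f^*\colon \Yscr_H \rightleftarrows \Xscr_H \colon f_*$ holds: both are obtained by postcomposition of sections with the underlying geometric-morphism adjunction $f^*\colon \underline{\Yscr}^{\hh G} \rightleftarrows \underline{\Xscr}^{\hh G}\colon f_*$ fiber-by-fiber, and postcomposition with an adjunction is an adjunction on functor categories. Having just shown that both $f^*$ and $f_*$ are $G$-functors, the criterion for a relative adjunction \cite{higheralgebra}*{Proposition 7.3.2.1}, applied over $\Oscr_G^{\op}$, upgrades the fiberwise adjunction to a relative adjunction, which is therefore a $G$-adjunction in the sense of \cite{jay-thesis}*{\S8}. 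The case of $\underline{\Xscr}_{\hh G}$ is identical, replacing $\Oscr_G^{\op}$ by $BG$ throughout and using the extension of $\Xscr^{\dagger}_{\hh(-)}$ to $\Fin_G^{\op}$ from Remark~\ref{rem:ExtendedToposConstructions}; here $\alpha$ corresponds to a map of groupoids $BH \to BK$, which is again smooth (indeed a fibration), so Lemma~\ref{lem:bc1} applies verbatim.

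The main obstacle is purely bookkeeping: one must be careful that the cartesian transition functors of $\underline{\Xscr}^{\hh G} \to \Oscr_G^{\op}$ are the \emph{left} adjoints $f^*$ (cf.\ Remark~\ref{rem:caution-cart}), so the hypothesis of Lemma~\ref{lem:bc1} that the transition functors preserve the indexed limits is the assertion that $f^*$ is left-exact — which is exactly the half of a geometric morphism we have. If one instead tried to prove the dual statement about $f_*$ preserving colimits along $\alpha^*$, that hypothesis would \emph{fail} in general, which is consistent with the fact that $f_*$ is only the right adjoint. So the subtlety is entirely in matching the variance of the fibration to the exactness available, and once that is pinned down the argument is a direct application of Lemmas~\ref{lem:bc1} and~\ref{lem:BeckChevalley}.
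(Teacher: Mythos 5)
Your invocation of Lemma~\ref{lem:bc1} does not actually produce the exchange transformation $\chi\colon \alpha^* f_* \Rightarrow f_* \alpha^*$ that the Proposition asks you to verify. That lemma operates with a single, fixed target fibration $p\colon C \to S$ and yields an exchange $\psi^*\phi_*F \Rightarrow \phi'_*\psi'^*F$ between restriction and relative right Kan extension along two maps of \emph{diagram} categories $\phi\colon I \to J$ and $\psi\colon K \to J$. But $f_*\colon \Xscr_U \to \Yscr_U$ in the Proposition is the right adjoint to postcomposition along the map of \emph{target} fibrations $f^*\colon \underline{\Yscr}^{\hh G} \to \underline{\Xscr}^{\hh G}$; it is not a relative Kan extension along any map of diagram categories, so it cannot play the role of either $\phi_*$ or $\psi^*$. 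Concretely, once you set $W = V$ and $\beta = \id$, your Beck--Chevalley square degenerates and the exchange produced by Lemma~\ref{lem:bc1} is the tautology $\alpha_*F \simeq \alpha_*F$, which says nothing about $f_*$. The ``bookkeeping'' remark at the end reflects the same confusion: the cartesian transition functors of $\underline{\Xscr}^{\hh G}$ are the geometric-morphism left adjoints $\nu^*$ internal to $\Xscr$, not the cross-topos functor $f^*$, and Lemma~\ref{lem:bc1} places no hypothesis on a second functor between two different fibrations.

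The paper closes the gap differently: since the induction functor $\alpha_!$ exists by Proposition~\ref{prop:genuine}, invertibility of $\chi$ is equivalent, by passing to left adjoints, to invertibility of the exchange $\chi'\colon \alpha_! f^* \Rightarrow f^* \alpha_!$, and the latter is immediate from the fiberwise colimit formula~\eqref{eq:shriek-formula} for $\alpha_!$ together with the fact that $f^*$, being postcomposition of sections, preserves colimits. It is worth noting that the correct observation you make in your $G$-adjunction paragraph — that the fiberwise $f_*$ is itself postcomposition with the relative right adjoint $f_*\colon \underline{\Xscr}^{\hh G} \to \underline{\Yscr}^{\hh G}$ — already suffices: postcomposition manifestly commutes with the restriction $\alpha^*$, so $\chi$ is an equivalence with no Beck--Chevalley analysis needed. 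So your intuition that $\chi$ should be easy to verify is sound, but the route through Lemma~\ref{lem:bc1} is not a valid path and should be replaced by either the adjoint-exchange argument or the postcomposition observation.
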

\begin{proof} By the general theory of relative adjunctions, the functor $f_*: \underline{\Xscr}_G \to \underline{\Yscr}_G$ exists as a relative right adjoint to $f^*$ \cite{higheralgebra}*{Proposition~7.3.2.6}, but does not necessarily preserve cocartesian edges. To check that it does amounts to verifying that $\chi$ is an equivalence for all morphisms $\alpha$ in $\Oscr_G$.

By Proposition~\ref{prop:genuine}, the induction functor $\alpha_!$ exists. Therefore, it suffices to show the adjoint exchange transformation $\chi': \alpha_! f^{\ast} \Rightarrow f^{\ast} \alpha_!$ is an equivalence. But this is evident in view of the fiberwise colimit description of $\alpha_!$~\eqref{eq:shriek-formula} as well as the definition of $f^{\ast}$ as given by postcomposition of sections.

Finally, the analogous claim for $\underline{\Xscr}_{\hh G}$ is similar but easier.
\end{proof}

In this way, the assignment $\Xscr \mapsto \underline{\Xscr}_G$ assembles into a functor
\[ \underline{(-)}_G: \Fun(BG, \RTop) \to \widehat{\Cat}_{\infty, \Oscr_G}. \]

\subsection{Genuine stabilization: the Mackey approach} \label{section:MackeyApproach}

The parametrized toposic genuine orbits construction is designed so as to feed into the machinery of parametrized $\infty$-categories. In particular, we wish to now deploy Nardin's $G$-stabilization construction. To explain this notion, first recall from \cite{higheralgebra}*{\S1.4} that we have a functor
\[
\Sp: \widehat{\Cat}^{\lex}_{\infty} \rightarrow \widehat{\Cat}^{\ex}_{\infty,\stab}
\]
that assigns to an $\infty$-category $\C$ with finite limits the stable $\infty$-category $\Sp(\C)$ of {\bf spectrum objects} in $\C$, which is the universal stable $\infty$-category equipped with a left-exact functor $\Omega^{\infty}: \Sp(\C) \rightarrow \C$.

\begin{definition} Let $p: \C \to S$ be a cocartesian fibration that straightens to a functor $S \to \Cat_{\infty}^{\lex}$. We then write $\underline{\Sp}(\C) \to S$ for the {\bf fiberwise stabilization} of $p$, and $\Omega^{\infty}: \underline{\Sp}(\C) \to \C$ for the induced functor over $S$ (which preserves cocartesian edges). If $p$ straightens to a functor valued in $\widehat{\Cat}^{\ex}_{\infty,\stab}$ (or equivalently, if $\Omega^{\infty}$ is an equivalence), then we say that $\C$ is {\bf fiberwise stable}.
\end{definition}

Along with fiberwise stability, we need the additional concept of {\bf $G$-semiadditivity} to formulate $G$-stability.

\begin{definition} \label{def:semiadditive} Let $\C$ be a $G$-$\infty$-category, and write $\Fscr:\Fin_G^{\op} \rightarrow \widehat{\Cat}_{\infty}$ for the functor corresponding to $\C$. Suppose that $\C$ admits finite $G$-products and coproducts and is pointed.\footnote{In other words, for each $U \in \Fin_G$, the canonical comparison map from the initial to the terminal object in $\Fscr(U)$ is an equivalence.} Then $\C$ is {\bf $G$-semiadditive} if for all $f: U \rightarrow V$ in $\Fin_G$, the canonical comparison transformation of \cite{denis-stab}*{Construction 5.2}
\[
\coprod_f (-) \Rightarrow \prod_f (-)
\]
is an equivalence.
\end{definition}

\begin{definition} \label{def:stable} Let $\C$ be a pointed $G$-$\infty$-category that admits finite $G$-products and $G$-coproducts. We say that $\C$ is {\bf $G$-stable} if it is fiberwise stable and $G$-semiadditive.
\end{definition}

Let $\widehat{\Cat}_{\infty,\Oscr_G}^{\lex}$ be the $\infty$-category of $G$-$\infty$-categories that admit finite $G$-limits and $G$-left-exact $G$-functors, and let $\widehat{\Cat}_{\infty,\Oscr_G}^{\ex,\stab}$ be the $\infty$-category of $G$-stable $G$-$\infty$-categories and $G$-exact $G$-functors. We now recall Nardin's construction of the $G$-stabilization functor 
\[
\underline{\Sp}^G:\widehat{\Cat}_{\infty,\Oscr_G}^{\lex} \rightarrow \widehat{\Cat}_{\infty,\Oscr_G}^{\ex,\stab}.
\] 
\begin{construction} \label{rem:nardin} Suppose that $\C$ is a $G$-$\infty$-category that admits finite $G$-limits, and let $\C_H = \C \times_{\Oscr_G^{\op}} \Oscr_H^{\op}$ denote the restriction of $\C$ to a $H$-$\infty$-category.\footnote{We implicitly use the identification $\Oscr_H^{\op} \simeq (\Oscr_G^{\op})^{(G/H)/}$.} Let $\underline{\Fin}_{G \ast}$ denote the $G$-category of finite pointed $G$-sets \cite{denis-stab}*{Definition~4.12}, whose fiber over $G/H$ is equivalent to $\Fin_{H \ast}$. Define the $G$-$\infty$-category of {\bf $G$-commutative monoids} in $\C$ to be the full $G$-subcategory
\[
\underline{\CMon}^G(\C) \subset \underline{\Fun}_G(\underline{\Fin}_{G \ast}, \C)
\]
whose fiber $\CMon^H(\C_H)$ over $G/H$ is the full subcategory on those $H$-functors $\underline{\Fin}_{H \ast} \to \C_H$ that send finite $H$-coproducts to finite $H$-products \cite{denis-stab}*{Definition~5.9}. $\underline{\CMon}^G(\C)$ is then $G$-semiadditive \cite{denis-stab}*{Proposition~5.8}.

We also have an alternative model for $\underline{\CMon}^G(\C)$, given by a parametrized variant of the technology of Mackey functors. To define this, we need the {\bf effective Burnside $G$-$\infty$-category} $\underline{\A}^{\eff}(\Fin_{G})$ \cite{denis-stab}*{Definition~4.10}, whose fiber over $G/H$ is equivalent to the effective Burnside $\infty$-category\footnote{In fact, $\A^{\eff}(\Fin_H)$ is a $(2,1)$-category.} $\A^{\eff}(\Fin_H)$ of finite $H$-sets (formed by taking spans in $\Fin_H$). One may check that $\underline{\A}^{\eff}(\Fin_{G})$ admits finite $G$-products and $G$-coproducts and is moreover $G$-semiadditive. Define the $G$-$\infty$-category of {\bf $G$-Mackey functors} in $\C$ to be the full $G$-subcategory
\[
\underline{\Mack}^G(\C) = \underline{\Fun}^{\times}_{G}(\underline{\A}^{\eff}(\Fin_{G}), \C) \subset \underline{\Fun}_{G}(\underline{\A}^{\eff}(\Fin_{G}), \C)
\]
whose fiber $\Mack^H(\C_H)$ over $G/H$ is the full subcategory on those $H$-functors $\A^{\eff}(\Fin_H) \to \C_H$ that preserve finite $H$-products. We then have an equivalence \cite{denis-stab}*{Theorem~6.5} 
\[
\underline{\Mack}^G(\C) \xrightarrow{\simeq} \underline{\CMon}^G(\C)
\]
implemented by restriction along a certain $G$-functor $\underline{\Fin}_{G*} \to \underline{\A}^{\eff}(\Fin_{G})$.

In either case, we write $U$ for the forgetful $G$-functor given by evaluation along the cocartesian section $\Oscr_G^{\op}$ that selects either $G/G_+$ in $\Fin_{G*}$ or $G/G$ in $\A^{\eff}(\Fin_G)$ (so $U$ is compatible with the stated equivalence). $U: \underline{\CMon}^G(\C) \to \C$ is then universal among $G$-left-exact functors from a $G$-semiadditive $G$-$\infty$-category \cite{denis-stab}*{Corollary~5.11.1}, and likewise for $U: \underline{\Mack}^G(\C) \to \C$.\footnote{In the cited reference, Nardin more generally proves that $U$ is universal among $G$-semiadditive functors $\D \to \C$ from pointed $G$-$\infty$-categories $\D$ that admit finite $G$-coproducts.} In particular, if $\C$ is $G$-semiadditive to begin with, then $U$ is an equivalence \cite{denis-stab}*{Proposition 5.11}.

Finally, define the {\bf $G$-stabilization} $\underline{\Sp}^G(\C)$ to be \cite{denis-stab}*{Definition 7.3}
\[
\underline{\Sp}^G(\C) = \underline{\Sp}(\underline{\CMon}^G(\C)).
\]
Since $\underline{\Sp}$ preserves the property of $G$-semiadditivity \cite{denis-stab}*{Lemma 7.2}, the $G$-$\infty$-category $\underline{\Sp}^G(\C)$ is $G$-stable. Define $\Omega^{\infty}: \underline{\Sp}^G(\C) \to \C$ to be the composite
\[
\underline{\Sp}^G(\C) = \underline{\Sp}(\underline{\CMon}^G(\C)) \xrightarrow{\Omega^{\infty}} \underline{\CMon}^G(\C) \xrightarrow{U} \C.
\]
Then $\Omega^{\infty}$ is universal among $G$-left-exact $G$-functors to $\C$ from $G$-stable $G$-$\infty$-categories \cite{denis-stab}*{Theorem~7.4}. For later use, we also observe by the above facts that the forgetful $G$-functor implements an equivalence
\begin{equation} \label{eq:denis-observe}
U: \underline{\CMon}^G(\underline{\Sp}^G(\C)) \xrightarrow {\simeq} \underline{\Sp}^G(\C).
\end{equation}

We now elaborate on the functoriality of genuine stabilization. Suppose $\D$ is another $G$-$\infty$-category that admits finite $G$-limits, and let $F: \C \to \D$ be a $G$-left-exact $G$-functor. Then via postcomposition we have induced $G$-left-exact $G$-functors
\begin{align*}
F: \underline{\CMon}^G(\C) \to \underline{\CMon}^G(\D) &, \qquad F: \underline{\Mack}^G(\C) \to \underline{\Mack}^G(\D), \\
F: \underline{\Sp}(\C) \to \underline{\Sp}(\D) &, \qquad F: \underline{\Sp}^G(\C) \to \underline{\Sp}^G(\D)
\end{align*}
that commute with $U$ and $\Omega^{\infty}$.
\end{construction}

Nardin constructs $\underline{\Sp}^G$ via the two-step procedure of $G$-semiadditivization and fiberwise stabilization. The next lemma shows that we may perform these two operations in either order. Though simple, it will be crucial at various points of the paper.

\begin{lemma} \label{lem:interchange} Let $\C$ be a $G$-$\infty$-category that admits finite $G$-limits. Then there is a canonical equivalence of $G$-$\infty$-categories
\[ \underline{\CMon}^G (\underline{\Sp}(\C)) \simeq \underline{\Sp}(\underline{\CMon}^G(\C)). \]
\end{lemma}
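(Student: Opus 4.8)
The plan is to identify both sides with a common full $G$-subcategory of the $G$-$\infty$-category $\underline{\Fun}_G(\underline{\Fin}_{G*}, \underline{\Sp}(\C))$ (or equivalently, after using the Mackey model, of $\underline{\Fun}_G(\underline{\A}^{\eff}(\Fin_G), \underline{\Sp}(\C))$), and then check that the two subcategory conditions coincide. First I would unwind the definitions: by Construction~\ref{rem:nardin}, the right-hand side $\underline{\Sp}(\underline{\CMon}^G(\C))$ is, fiberwise over $G/H$, the category $\Sp(\CMon^H(\C_H))$ of spectrum objects in $H$-commutative monoids; and spectrum objects in any left-exact $\infty$-category $\D$ are computed as a limit $\lim(\cdots \to \D \xto{\Omega} \D \xto{\Omega} \D)$, i.e.\ as a full subcategory of $\Fun(\NN \times \NN, \D)$ cut out by an excision (Mayer--Vietoris/$\Omega$-spectrum) condition. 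Since $\CMon^H(\C_H) \subset \Fun^{\times}(\Fin_{H*}, \C_H)$ is closed under limits in the functor category (the product-preservation condition is a limit condition, and limits of functors are computed pointwise), and $\Omega$ is computed pointwise there, we get
\[
\Sp(\CMon^H(\C_H)) \simeq \Fun^{\times}_{\NN\times\NN}\bigl(\Fin_{H*}, \Sp(\C_H)\bigr)^{\Omega\text{-loc}},
\]
where the superscript denotes the full subcategory of fiberwise $\Omega$-spectra; this is exactly $\CMon^H(\Sp(\C_H)) = \CMon^H(\underline{\Sp}(\C)_H)$, the fiber of the left-hand side.

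The key input making this rigorous and $G$-natural is the general principle that $\underline{\Sp}(-)$ commutes with limits of $G$-$\infty$-categories and that $\underline{\CMon}^G(-)$ is itself defined as a limit (it is a full subcategory of a parametrized functor category, and the defining condition is the vanishing of certain parametrized (co)products-to-products comparison maps, hence a limit condition preserved under $\underline{\Sp}$). Concretely, I would invoke that both $\underline{\CMon}^G(\C) = \underline{\Fun}^{\times}_G(\underline{\A}^{\eff}(\Fin_G), \C)$ and the formation of fiberwise spectrum objects $\underline{\Sp}(-) = \lim_{\Omega}(-)$ are right-exact-type (limit-preserving) operations on the relevant $\infty$-category of $G$-$\infty$-categories, and that $\underline{\Fun}_G(K, -)$ for fixed $K$ commutes with $\underline{\Sp}$ (since $\Omega^{\infty}$ and $\Omega$ are computed pointwise). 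Chaining these: $\underline{\Sp}(\underline{\CMon}^G(\C)) \simeq \underline{\CMon}^G(\underline{\Sp}(\C))$ follows by commuting the two limit-type constructions past each other. I would phrase this cleanly by noting that $\underline{\Sp}(\D)$ for $\D$ fiberwise-complete is the inverse limit of the tower $\cdots \xto{\Omega} \underline{\Fun}_G(\Delta^1/\partial\Delta^1, \D) \to \cdots$ built out of parametrized cotensors, and cotensoring, forming $\underline{\Fun}^{\times}_G(\underline{\A}^{\eff}(\Fin_G), -)$, and taking the $\Omega$-tower limit are all mutually commuting.

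The main obstacle is purely bookkeeping at the parametrized level: one must be careful that the full $G$-subcategory conditions match on the nose, i.e.\ that a parametrized functor $F\colon \underline{\A}^{\eff}(\Fin_G) \to \underline{\Sp}(\C)$ which is "fiberwise an $\Omega$-spectrum" is additive (sends finite $H$-coproducts to $H$-products) if and only if the corresponding $\Omega$-tower of parametrized functors lands in $\underline{\CMon}^G(\C)$ at each stage — but this is immediate since both additivity and the $\Omega$-spectrum condition are detected pointwise and in opposite variables (additivity in the $\underline{\A}^{\eff}(\Fin_G)$-variable, $\Omega$-spectrum-ness in the $\NN\times\NN$-variable), so they are independent conditions that can be imposed in either order. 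A secondary technical point is to make sure $\underline{\Sp}(\C)$ still admits finite $G$-limits (so that $\underline{\CMon}^G(\underline{\Sp}(\C))$ is defined) — this holds because $\underline{\Sp}$ preserves the existence of finite $G$-limits, being a fiberwise right adjoint whose fiberwise right adjoints are $G$-left-exact, or alternatively because $\underline{\Sp}(\C)$ is $G$-stable once we additionally $G$-semiadditivize, and we only need finite $G$-products and $G$-coproducts here, which exist by the same pointwise argument. I would present the proof as a short formal manipulation once these preservation statements are cited, rather than constructing the equivalence by hand.
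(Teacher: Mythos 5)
Your proposal is correct, and it takes a genuinely different route from the paper's. The paper argues abstractly: it first observes that $\underline{\Sp}$ preserves $G$-semiadditivity (so $\underline{\Sp}(\underline{\CMon}^G(\C))$ is $G$-stable), then proves that $\underline{\CMon}^G$ of a fiberwise-stable $G$-$\infty$-category is again fiberwise stable (so $\underline{\CMon}^G(\underline{\Sp}(\C))$ is also $G$-stable), and finally invokes the \emph{universal properties} of $\underline{\Sp}$ (fiberwise stabilization) and $\underline{\CMon}^G$ ($G$-semiadditivization) to produce comparison functors $\alpha$, $\beta$ in both directions and to see that they are mutually inverse. You instead construct the equivalence by hand: both sides embed into a common functor $G$-$\infty$-category built from $\underline{\Fin}_{G*}$ and an $\Omega$-tower, and you check the two full-subcategory conditions (additivity in the $\underline{\Fin}_{G*}$-variable, $\Omega$-spectrum in the $\NN\times\NN$-variable) commute because they are detected pointwise in opposite variables. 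What the paper's route buys is brevity and a clean uniqueness statement for free; what your route buys is an explicit description of the equivalence and independence from Nardin's universal-property theorems. One point you should make explicit to tighten your argument: the reason the "sends $G$-coproducts to $G$-products" condition passes from $\C$ to $\underline{\Sp}(\C)$ (and back) is that the norm/coinduction functors $f_*$ are \emph{left-exact} (being right adjoints), hence commute with $\Omega$, so $f_*$ on $\underline{\Sp}(\C)$ really is the pointwise extension of $f_*$ on $\C$. You gesture at this via "independent conditions, opposite variables," but the left-exactness of $f_*$ is the reason the two $f_*$'s agree and the argument closes. The claim that $\underline{\Sp}(-)$ commutes with limits of $G$-$\infty$-categories, which you cite as a key input, is correct and standard (spectrum objects are a limit of iterated loop categories), though it deserves a reference or a one-line justification.
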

\begin{proof} We already know that $\underline{\Sp}$ of a $G$-semiadditive $G$-$\infty$-category is again $G$-semiadditive (hence $G$-stable). It remains to show that given a fiberwise stable $G$-$\infty$-category $\D$ that admits finite $G$-limits, $\CMon^G(\D)$ is again stable (and thus $\underline{\CMon}^G(\D)$ is fiberwise stable). To see this, we use the fully faithful embedding
\[ \CMon^G(\D) \simeq \Fun_G^{\times}(\underline{\A}^{\eff}(\Fin_G),\D) \subset \Fun_G(\underline{\A}^{\eff}(\Fin_G),\D). \]
Given any $G$-$\infty$-category $K$, in view of our assumption on $\D$, finite limits and colimits in $\Fun_G(K,\D)$ exist and are computed fiberwise, hence $\Fun_G(K,\D)$ is stable. It thus suffices to show that given a finite diagram $p$ of $G$-functors $\underline{\A}^{\eff}(\Fin_G) \to \D$ that preserve finite $G$-products, both the limit and colimit of $p$ as computed in $\Fun_G(\underline{A}^{\eff}(\Fin_G),\D)$ is again a $G$-functor that preserves finite $G$-products. But for any map $f: U \to V$ of $G$-orbits, our assumption is that $f_{\ast}: \D_U \to \D_V$ is an exact functor, so the claim follows.

We then have comparison functors $$\alpha: \underline{\CMon}^G(\underline{\Sp}(\C)) \to \underline{\Sp}(\underline{\CMon}^G(\C)), \quad \beta: \underline{\Sp}(\underline{\CMon}^G(\C)) \to  \underline{\CMon}^G(\underline{\Sp}(\C))$$ obtained via the universal properties of $\underline{\CMon}^G$ as $G$-semiadditivization and $\underline{\Sp}$ as fiberwise stabilization. These are easily checked to be mutually inverse equivalences, using again the universal properties.
\end{proof}

We next consider the presentable situation.

\begin{lemma} \label{lem:SimplePresentabilityCheck} Let $\C$ be a $G$-$\infty$-category that admits finite $G$-products and is fiberwise presentable. Then $\CMon^G(\C)$, resp. $\Mack^G(\C)$ is an accessible localization of $\Fun_G(\Fin_{G\ast}, \C)$, resp. $\Fun_G(\underline{\A}^{\eff}(\Fin_{G}), \C)$. Therefore, $\CMon^G(\C)$ and $\Sp^G(\C)$ are presentable.
\end{lemma}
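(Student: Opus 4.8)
The plan is to exhibit $\CMon^G(\C)$ and $\Mack^G(\C)$ as accessible localizations of presentable $\infty$-categories of the form $\Fun_G(K,\C)$ for $K$ a small $G$-$\infty$-category, and then to read off presentability of $\Sp^G(\C) = \Sp(\CMon^G(\C))$ formally. The one structural feature we will exploit is that, since $\C$ admits finite $G$-products, each restriction functor $f^{\ast}\colon \C_V \to \C_U$ is a \emph{left} adjoint (its right adjoint being $\prod_f$), hence preserves all small colimits; equivalently, the cocartesian fibration $\C \to \Oscr_G^{\op}$ is classified by a functor valued in $\PrL$.

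The first step is to check that for every essentially small $G$-$\infty$-category $K$ --- in our situation $K = \underline{\Fin}_{G\ast}$ or $K = \underline{\A}^{\eff}(\Fin_{G})$ --- the $\infty$-category $\Fun_G(K,\C)$ of $G$-functors is presentable. One views $\Fun_G(K,\C)$ as the full subcategory of $\Fun_{/\Oscr_G^{\op}}(K,\C)$ on the functors preserving cocartesian edges. The ambient $\Fun_{/\Oscr_G^{\op}}(K,\C)$ is presentable --- this is the usual fact that sections over a small simplicial set of a $\PrL$-valued cocartesian fibration form a presentable $\infty$-category, or, if one prefers, a direct instance of the theory of parametrized presentable $\infty$-categories, which shows that $\underline{\Fun}_G(K,-)$ preserves the property of being fiberwise presentable with colimit-preserving restriction functors. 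Since the restrictions of $\C$ preserve colimits, colimits in $\Fun_{/\Oscr_G^{\op}}(K,\C)$ are computed pointwise, and a pointwise colimit of cocartesian-edge-preserving functors is again cocartesian-edge-preserving; hence $\Fun_G(K,\C)$ is a colimit-closed, accessible full subcategory of a presentable $\infty$-category, and is therefore presentable.

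The second step is to recognize $\CMon^G(\C) \subseteq \Fun_G(\underline{\Fin}_{G\ast},\C)$ and $\Mack^G(\C) \subseteq \Fun_G(\underline{\A}^{\eff}(\Fin_{G}),\C)$ as the objects cut out by a ``$G$-Segal condition'': one demands that a fixed, small collection of cones in the source --- the finite-$H$-coproduct cocones, resp.\ the finite-$H$-product cones, over orbits, for $H \leq G$, together with the cocartesian-edge data recording orbit decompositions --- be carried to limit cones in $\C$. Because all the data in play are essentially small, this is precisely the condition of being local with respect to a small set of morphisms in the relevant presentable $\infty$-category, exactly as in Lurie's Segal-style description of commutative monoids \cite{higheralgebra} and in Nardin's construction of $\underline{\CMon}^G$ \cite{denis-stab}. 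By the general theory of localizations of presentable $\infty$-categories \cite{htt}, $\CMon^G(\C)$ and $\Mack^G(\C)$ are then accessible localizations of the respective $\Fun_G(K,\C)$; in particular both are presentable, and $\Sp^G(\C) = \Sp(\CMon^G(\C))$ is presentable as the stabilization of a presentable $\infty$-category \cite{higheralgebra}. (Presentability of $\CMon^G(\C)$ could also be deduced from that of $\Mack^G(\C)$ via the equivalence $\underline{\Mack}^G(\C) \simeq \underline{\CMon}^G(\C)$ of Construction~\ref{rem:nardin}.)

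The step I expect to be the main obstacle is the first one, and the subtlety there is an asymmetry: since $\C$ is assumed to admit only finite $G$-\emph{products}, its restriction functors are known to preserve colimits but need not preserve limits, so limits in $\Fun_G(K,\C)$ need not be computed pointwise and the limit side of the argument cannot be run symmetrically. The resolution is to work entirely with the colimit structure and to invoke ``presentable $=$ accessible $+$ cocomplete'' rather than trying to verify completeness directly; correspondingly, the real technical content is showing that the non-parametrized ambient category $\Fun_{/\Oscr_G^{\op}}(K,\C)$ is genuinely presentable, which is where one leans on the structure theory of cocartesian fibrations with presentable fibers (or on the established theory of parametrized presentable $\infty$-categories).
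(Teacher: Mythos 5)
Your overall architecture --- presentability of $\Fun_G(K,\C)$, then exhibit $\CMon^G(\C)$ and $\Mack^G(\C)$ as accessible localizations, then deduce presentability of $\Sp^G(\C)$ from \cite{higheralgebra}*{Proposition 1.4.4.4} --- is the same as the paper's, and your unpacking of the first step (view $\Fun_G(K,\C)$ as a colimit-closed accessible full subcategory of the presentable $\Fun_{/\Oscr_G^{\op}}(K,\C)$, using that the cocartesian pushforwards of $\C$ preserve colimits because they are left adjoints to $\prod_f$) is a reasonable expansion of the paper's one-line appeal to \cite{htt}*{Proposition 5.4.7.11} via presentable fibrations.

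The gap is in your second step. You assert that the $G$-Segal condition is ``precisely the condition of being local with respect to a small set of morphisms'' because ``all the data in play are essentially small,'' citing Lurie and Nardin. Smallness of the indexing data is not the issue: the issue is whether the condition ``$X(V_+)\to\prod_f X(U_+)$ is an equivalence for all $f\colon U\to V$'' can be \emph{expressed} as $S$-locality for a set $S$ of morphisms in $\Fun_G(\underline{\Fin}_{G\ast},\C)$ at all, which requires constructing those morphisms (a corepresentability/tensoring argument that you do not give). The references you cite do not do this either: Lurie proves presentability of $\infty$-operad algebras via limit- and sifted-colimit-closure (i.e.\ monadicity), and Nardin argues similarly. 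Of course, \emph{a posteriori} every accessible reflective localization is a Bousfield localization at a small set, but that is a consequence of what you are trying to prove, not a route to it. The paper instead verifies the two criteria of the adjoint functor theorem directly: the inclusion $i\colon\CMon^G(\C)\subset\Fun_G(\underline{\Fin}_{G\ast},\C)$ is limit-closed because the Segal comparison maps are built from the right adjoints $\prod_f$, which preserve limits; and $i$ is $\kappa$-accessible once $\kappa$ is chosen (using fiberwise presentability) so that every $\prod_f\colon\C_U\to\C_V$ is $\kappa$-accessible. This choice of $\kappa$ is the quantitative content your argument is missing --- even if you wished to run a ``Segal $=$ locality'' argument, you would need the same control on the accessibility of the $\prod_f$ to bound the locality set, so the adjoint-functor-theorem route is both more direct and unavoidable in substance.
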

\begin{proof} Note that under our hypotheses, $\C \to \Oscr_G^{\op}$ is a presentable fibration \cite{htt}*{Definition~5.5.3.2}. We prove the first claim; the second will follow by an identical argument (or one may use \cite{denis-stab}*{Theorem~6.5}). By \cite{htt}*{Proposition~5.4.7.11}, for any small $G$-$\infty$-category $K$, $\Fun_G(K,\C)$ is presentable. Since right adjoints preserve all limits, it is clear that $i: \CMon^G(\C) \subset \Fun_G(\underline{\Fin}_{G \ast}, \C)$ is closed under limits. Now let $\kappa$ be a regular cardinal such that for all maps $f: U \to V$ in $\Oscr_G$, $f_*: \C_U \to \C_V$ is $\kappa$-accessible. Then $i$ is likewise $\kappa$-accessible. We conclude that $i$ is the inclusion of an accessible localization of a presentable $\infty$-category, and thus $\CMon^G(\C)$ is presentable \cite{htt}*{Remark~5.5.1.6}. Since $\Sp$ of a presentable $\infty$-category is again presentable \cite{higheralgebra}*{Proposition 1.4.4.4}, we further conclude that $\Sp^G(\C)$ is presentable.
\end{proof}

\begin{notation} \label{ntn:sad} In Lemma~\ref{lem:SimplePresentabilityCheck}, denote the localization functor by $L_{\sad}$.
\end{notation}

\begin{remark} Suppose $\C$ admits finite $G$-products and is fiberwise presentable, so $\underline{\Sp}^G(\C)$ is fiberwise presentable by Lemma~\ref{lem:SimplePresentabilityCheck}. Then the $G$-functor $\Omega^{\infty}: \underline{\Sp}^G(\C) \to \C$ admits a $G$-left adjoint $\Sigma^{\infty}_+: \C \to \underline{\Sp}^G(\C)$ by \cite{higheralgebra}*{Proposition~7.3.2.11}. Moreover, if $\C$ admits finite $G$-coproducts (and thus all $G$-colimits), then $\Sigma^{\infty}_+$ preserves finite $G$-coproducts (and any $G$-colimit) as a $G$-left adjoint \cite{jay-thesis}*{Corollary~8.7}. Similarly, the forgetful $G$-functor $U:\underline{\CMon}^G(\C) \to \C$ admits a $G$-left adjoint $\Fr$.
\end{remark}

We obtain additional functoriality for $\underline{\Sp}^G$, etc., in the presentable setting.

\begin{proposition} \label{prp:PresentableFunctorialityOfGStabilization} Suppose $\C$ and $\D$ be $G$-$\infty$-categories that admit finite $G$-products and are fiberwise presentable, and let $F: \C \rightleftarrows \D: R$ be a $G$-adjunction. Then we obtain induced $G$-adjunctions
\begin{align*} F: \underline{\CMon}^G(\C) \rightleftarrows \underline{\CMon}^G(\D): R &, \qquad F: \underline{\Mack}^G(\C) \rightleftarrows \underline{\Mack}^G(\D): R \\
F: \underline{\Sp}(\C) \rightleftarrows \underline{\Sp}(\D): R &, \qquad F: \underline{\Sp}^G(\C) \rightleftarrows \underline{\Sp}^G(\D): R
\end{align*}
such that $F$ commutes with $\Sigma^{\infty}_+$ and $\Fr$, and $R$ commutes with $\Omega^{\infty}$ and $U$.
\end{proposition}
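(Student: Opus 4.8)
The plan is as follows. Since $\underline{\Sp}^G(-) = \underline{\Sp}(\underline{\CMon}^G(-))$ by definition and $\underline{\Mack}^G(-) \simeq \underline{\CMon}^G(-)$ naturally by Construction~\ref{rem:nardin}, it suffices to treat the two cases of the fiberwise stabilization $\underline{\Sp}$ and of the $G$-commutative monoid construction $\underline{\CMon}^G$; the other two assertions are then obtained by transporting along that equivalence and by composing the resulting adjunctions. In each case the strategy is the same: first produce the $G$-functor in the \emph{right}-adjoint direction (this is essentially the functoriality already recorded in Construction~\ref{rem:nardin}), then produce its $G$-left adjoint, and finally read off the compatibilities with $\Omega^{\infty}, U$ and $\Sigma^{\infty}_+, \Fr$ by passing to adjoints.

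I would begin by observing that $R$, being a $G$-right adjoint, is a $G$-left-exact $G$-functor: it is fiberwise left exact (each $R_H$ is a right adjoint), it preserves cocartesian edges and hence commutes with the restriction functors, and it sends $G$-products $\prod_f$ to $G$-products (a $G$-right adjoint preserves $G$-limits). It is moreover fiberwise accessible, since each $R_H \colon \D_H \to \C_H$ is a right adjoint between presentable $\infty$-categories. Consequently, postcomposition with $R$ induces $G$-functors $R \colon \underline{\CMon}^G(\D) \to \underline{\CMon}^G(\C)$ and $\underline{\Sp}(R) \colon \underline{\Sp}(\D) \to \underline{\Sp}(\C)$ commuting with $U$, respectively $\Omega^{\infty}$, as in Construction~\ref{rem:nardin}. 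By Lemma~\ref{lem:SimplePresentabilityCheck} together with the stability of presentability under $\Sp(-)$, the source and target of each of these functors are fiberwise presentable; since finite limits in $\underline{\CMon}^G$ and in $\underline{\Sp}$ are computed in the ambient $G$-functor categories, both functors preserve $G$-limits; and both are fiberwise accessible. A parametrized adjoint functor theorem in the fiberwise-presentable setting (cf.\ \cite{jay-thesis}) then supplies $G$-left adjoints $F \colon \underline{\CMon}^G(\C) \to \underline{\CMon}^G(\D)$ and $\underline{\Sp}(F) \colon \underline{\Sp}(\C) \to \underline{\Sp}(\D)$. Alternatively, for $\underline{\CMon}^G$ one can build the left adjoint explicitly as $L_{\sad} \circ (F \circ -)$, postcomposition with $F$ followed by the semiadditivization reflector $L_{\sad}$ of Notation~\ref{ntn:sad}, checking the adjunction by the standard mapping-space computation in a reflective localization; and for $\underline{\Sp}$ one can take $\underline{\Sp}(F)$ to be the fiberwise stabilization of $F$, using that $\Sp(-)$ sends an adjunction of presentable $\infty$-categories to an adjunction.

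With the $G$-adjunctions in hand, the compatibilities follow formally. The statements for $R$ (commuting with $U$ and $\Omega^{\infty}$) are part of Construction~\ref{rem:nardin}. Passing to left adjoints in $U \circ R \simeq R \circ U$, using $\Fr \dashv U$ on each side and $F \dashv R$, yields $F \circ \Fr \simeq \Fr \circ F$ on $\underline{\CMon}^G$; similarly, passing to left adjoints in $\Omega^{\infty} \circ \underline{\Sp}(R) \simeq R \circ \Omega^{\infty}$, using $\Sigma^{\infty}_+ \dashv \Omega^{\infty}$ fiberwise together with $F \dashv R$ and $\underline{\Sp}(F) \dashv \underline{\Sp}(R)$, yields $\underline{\Sp}(F) \circ \Sigma^{\infty}_+ \simeq \Sigma^{\infty}_+ \circ F$. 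Composing the $\underline{\CMon}^G$- and $\underline{\Sp}$-adjunctions then produces the $\underline{\Sp}^G$-adjunction, and its compatibilities reduce to the previous two via the factorizations $\Omega^{\infty} = U \circ \Omega^{\infty}$ and $\Sigma^{\infty}_+ = \Sigma^{\infty}_+ \circ \Fr$ relating the $\underline{\Sp}^G$ structure maps to those of $\underline{\CMon}^G$ and $\underline{\Sp}$.

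I expect the main obstacle to be the bookkeeping required to promote fiberwise adjunctions to genuine $G$-adjunctions: verifying that the induced right adjoints preserve $G$-products (so that the parametrized adjoint functor theorem applies) and that the left adjoints constructed above preserve cocartesian edges over $\Oscr_G^{\op}$, equivalently that the exchange transformations along morphisms in $\Oscr_G$ are invertible. For the right adjoints this is already encapsulated in Construction~\ref{rem:nardin}; for the left adjoints it follows either from the $G$-functoriality of $L_{\sad}$ and of postcomposition with $F$, or from the general fact that the left adjoint of a $G$-functor between $G$-$\infty$-categories with enough $G$-(co)limits is again a $G$-functor once the exchange transformations are equivalences, which by passing to adjoints reduces to the corresponding statement for the right adjoint recorded in Construction~\ref{rem:nardin}.
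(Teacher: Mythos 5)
Your proposal is correct and takes essentially the same route as the paper: the paper handles $\underline{\Sp}$ by observing that $\Sp\colon \PrL_\infty \to \PrL_{\infty,\stab}$ is a functor, and handles $\underline{\CMon}^G$ and $\underline{\Mack}^G$ by postcomposition (citing \cite{jay-thesis}*{Corollary~8.3} for the induced $G$-adjunction on $\underline{\Fun}_G(K,-)$), with the left adjoint given explicitly by postcomposition with $F$ followed by $L_{\sad}$ — precisely your ``alternative'' construction. Your additional appeal to a parametrized adjoint functor theorem is an overkill the paper avoids, but the compatibilities with $\Omega^\infty, U, \Sigma^\infty_+, \Fr$ are deduced formally by passing to adjoints in both arguments.
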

\begin{proof} Since $\Sp$ is given as a functor $\Sp: \PrL_{\infty} \to \PrL_{\infty,\stab}$, we already know the assertion regarding fiberwise stabilization. By \cite{jay-thesis}*{Corollary~8.3}, for any $G$-$\infty$-category $K$ (and not using our hypotheses on $\C$ and $\D$), we obtain an induced $G$-adjunction via postcomposition
\[ F : \underline{\Fun}_G(K,\C) \rightleftarrows \underline{\Fun}_G(K, \D) : R.
\]
Since the right $G$-adjoint $R$ preserves $G$-commutative monoids (or $G$-Mackey functors), we may define the left $G$-adjoint $F$ at the level of $\underline{\CMon}^G$ as postcomposition followed by $L_{\sad}$. In this manner, we obtain the desired $G$-adjunctions. Finally, we already noted the claim about $R$ and $\Omega^{\infty}, U$ above, and the other one follows by adjunction.
\end{proof}

Let us now return to our study of $G$-$\infty$-topoi.

\begin{definition} \label{construct:denis} 
Let $\Xscr$ be a $G$-$\infty$-topos. Then by Proposition~\ref{prop:genuine}, $\underline{\Xscr}_G$ admits finite $G$-limits, so we may define its {\bf parametrized genuine stabilization} to be the $G$-stable $G$-$\infty$-category
\[
\underline{\Sp}^G(\Xscr) = \underline{\Sp}^G(\underline{\Xscr}_G),
\]
and its {\bf genuine stabilization} to be the fiber $\Sp^G(\Xscr) = \underline{\Sp}^G(\Xscr)_{G/G}$. We also refer to objects in $\Sp^G(\Xscr)$ as {\bf $G$-spectrum objects} in $\Xscr$.
\end{definition}

Note that given a $G$-equivariant geometric morphism $f_{\ast}: \Xscr \to \Yscr$ of $G$-$\infty$-topoi, by Proposition~\ref{prp:InducedAdjunctionParamTopOrbits} and Proposition~\ref{prp:PresentableFunctorialityOfGStabilization} we have an induced $G$-adjunction
\[ f^*: \underline{\Sp}^G(\Yscr) \rightleftarrows \underline{\Sp}^G(\Xscr): f_*. \]

\begin{example}\label{ex:g-sp} Consider the terminal $G$-$\infty$-topos $\Spc$. Then the right Kan extension $\Spc^{\hh(-)}: \Oscr_G^{\op} \rightarrow \RTop$ is the constant functor at $\Spc$ because limits of terminal objects are again terminal objects. The toposic genuine orbits are then given by

\[
 \Spc_H = \lax.\colim(\Spc^{\hh(-)}|_{(\Oscr^{\op}_G)_{G/H/}}) = \Sect_{(\Oscr^{\op}_G)_{G/H/}}(\Spc \times (\Oscr^{\op}_G)_{G/H/}) \simeq \Fun(\Oscr_H^{\op}, \Spc).
\]
Thus, Construction~\ref{cons:orb} produces the usual $G$-$\infty$-category $\underline{\Spc}_{G}$ of $G$-spaces \cite{denis-stab}*{Definition 2.7}. Definition~\ref{construct:denis} then yields the $G$-$\infty$-category $\underline{\Sp}^G$ of $G$-spectra, whose fiber over $G/H$ is the usual $\infty$-category $\Sp^H$ of genuine $H$-spectra.
\end{example}

The next lemma shows that the genuine stabilization of $\underline{\Xscr}_{\hh G}$ is given already by fiberwise stabilization.

\begin{lemma} \label{lem:borel-computation}  Let $\Xscr$ be a $G$-$\infty$-topos. Then
\begin{enumerate}
\item The fiberwise stabilization of $\underline{\Xscr}_{\hh G}$ is in addition $G$-stable, i.e., it is $G$-semiadditive. 
\item There are canonical equivalences of stable $\infty$-categories
\[
\Sp^G(\underline{\Xscr}_{\hh G}) \simeq \Sp(\Xscr_{\hh G}) \simeq \Sp(\Xscr^{hG}) \simeq \Sp(\Xscr)^{h G}.
\]
\end{enumerate}
\end{lemma}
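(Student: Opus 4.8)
The plan is to establish (1) first and then deduce (2) formally from the universal properties already in place. The key observation for (1) is that the fibrewise stabilization $\underline{\Sp}(\underline{\Xscr}_{\hh G})$ is the \emph{parametrized Borel construction} on $\Sp(\Xscr)$: by Construction~\ref{construct:borel-orb} and Remark~\ref{rem:ExtendedToposConstructions}, the fibre of $\underline{\Xscr}_{\hh G}$ over $G/H$ is $\Xscr_{\hh H} = \Fun_{/BG}(BH, \underline{\Xscr})$ with restriction functoriality, and since spectrum objects in a relative functor $\infty$-category are computed pointwise, the fibre of $\underline{\Sp}(\underline{\Xscr}_{\hh G})$ over $G/H$ is $\Sp(\Xscr_{\hh H}) \simeq \Fun_{/BG}(BH, \underline{\Sp}(\underline{\Xscr}))$, where $\underline{\Sp}(\underline{\Xscr}) \to BG$ is obtained by stabilizing $\underline{\Xscr} \to BG$ fibrewise (equivalently, $\Sp(\Xscr)$ with its induced $G$-action, the action autoequivalences being left-exact on the nose and hence stabilizing by postcomposition). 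Note also that each fibre $\Sp(\Xscr_{\hh H})$ is presentable stable, and restriction functors preserve all limits, so $\underline{\Xscr}_{\hh G}$ admits finite $G$-limits (as needed below).

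For (1): by Definition~\ref{def:semiadditive} we must check pointedness, existence of finite $G$-products and $G$-coproducts, and that the canonical norm transformation is an equivalence. Pointedness is clear since the fibres are stable. For finite $G$-(co)products, after the (valid, since $\Sp$ preserves finite products) reduction to orbits of Remark~\ref{rem:ExtendedToposConstructions}, fix a map of orbits $f \colon G/H \to G/K$; the transition functor $\Sp(\Xscr_{\hh K}) \to \Sp(\Xscr_{\hh H})$ is restriction along $BH \to BK$, hence preserves all limits and colimits and is accessible between presentable $\infty$-categories, so it has a left adjoint $\coprod_f$ and a right adjoint $\prod_f$, computed as relative left (resp.\ right) Kan extension along $BH \to BK$. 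The Beck--Chevalley conditions are inherited from their unstable versions in Proposition~\ref{prop:genuine}: $\Sp$ sends the adjunction $(f^{\ast} \dashv f_{\ast})$ of left-exact functors to $(\Sp(f^{\ast}) \dashv \Sp(f_{\ast}))$ and preserves equivalence-ness of mate squares, while the left-adjointable squares follow because left Kan extension along $BH \to BK$ into a cocomplete target is the fibrewise colimit over the finite discrete homotopy fibre $K/H$, and this formula base-changes. The crux is the ambidexterity claim, where stability enters: $\coprod_f$ and $\prod_f$ are, fibrewise over $BK$, the colimit and the limit over the finite discrete set $K/H$, and since every fibre of $\underline{\Sp}(\underline{\Xscr})$ is stable, hence semiadditive, the canonical map from the $K/H$-indexed coproduct to the $K/H$-indexed product is an equivalence in each fibre; one checks this is the map underlying the norm transformation of \cite{denis-stab}. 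Hence $\underline{\Sp}(\underline{\Xscr}_{\hh G})$ is $G$-semiadditive and, being fibrewise stable, $G$-stable.

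Granting (1), part (2) is formal. By Lemma~\ref{lem:interchange}, $\underline{\Sp}^G(\underline{\Xscr}_{\hh G}) = \underline{\Sp}(\underline{\CMon}^G(\underline{\Xscr}_{\hh G})) \simeq \underline{\CMon}^G(\underline{\Sp}(\underline{\Xscr}_{\hh G}))$; since $\underline{\Sp}(\underline{\Xscr}_{\hh G})$ is $G$-semiadditive by (1), the forgetful $G$-functor $U$ is an equivalence \cite{denis-stab}, so $\underline{\Sp}^G(\underline{\Xscr}_{\hh G}) \simeq \underline{\Sp}(\underline{\Xscr}_{\hh G})$, and passing to fibres over $G/G$ yields $\Sp^G(\underline{\Xscr}_{\hh G}) \simeq \Sp(\Xscr_{\hh G})$. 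The middle equivalence $\Sp(\Xscr_{\hh G}) \simeq \Sp(\Xscr^{hG})$ is the identification $\Xscr_{\hh G} \simeq \Xscr^{hG}$ recorded after \eqref{eq:prlr}. For the last equivalence, $\Xscr^{hG}$ is the limit over $BG$, formed in $\widehat{\Cat}_{\infty}$, of the (left-exact) action functors; since $\Sp \colon \widehat{\Cat}^{\lex}_{\infty} \to \widehat{\Cat}^{\ex}_{\infty,\stab}$ is a right adjoint and limits in both source and target are created in $\widehat{\Cat}_{\infty}$, it commutes with this limit, giving $\Sp(\Xscr^{hG}) \simeq \lim_{BG} \Sp(\Xscr) = \Sp(\Xscr)^{hG}$.

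The only real work is in (1): pinning down the identification of $\underline{\Sp}(\underline{\Xscr}_{\hh G})$ with the parametrized Borel construction on $\Sp(\Xscr)$ precisely enough that the transfer functors $\coprod_f$ and $\prod_f$ are visibly fibrewise (co)limits over $K/H$, and matching the resulting finite-semiadditivity equivalence with the norm transformation of \cite{denis-stab}. Everything else is bookkeeping or a direct appeal to the universal properties set up above.
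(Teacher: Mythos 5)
Your proof is correct and follows essentially the same route as the paper: reduce (2) to (1), and establish (1) by observing that the transfer functors $\coprod_f$, $\prod_f$ are fibrewise finite coproducts and products indexed by the discrete homotopy fibre $K/H$, which coincide once each fibre is stable. The paper states this more tersely (via the pointwise formula $(\pi_{\ast} x)(\ast) \simeq \prod_G g^{\ast} x$ and the one-line remark that the same holds for all $f: U \to V$) and treats the reduction of (2) to (1) as immediate, while you unpack the use of Lemma~\ref{lem:interchange} and the remaining two equivalences of (2) explicitly — but these are elaborations of the same argument, not a different one.
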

\begin{proof} The second claim is equivalent to saying that the genuine stabilization of $\underline{\Xscr}_{\hh G}$ is given by the fiberwise stabilization. Therefore, it suffices to verify the first claim. To verify $G$-semiadditivity of $\underline{\Sp}(\underline{\Xscr}_{\hh G})$, note first that the functors $$\pi_*, \pi_!: \Xscr \rightarrow \Xscr_{\hh G} \simeq \Sect_{BG}(\underline{\Xscr})$$ that are right and left adjoint to the restriction functor $\pi^*$ are computed by the pointwise formula for relative right and left Kan extension as the $G$-indexed product and coproduct (for instance, given $x \in \Xscr$, we have $(\pi_{\ast} x)(\ast) \simeq \prod_{G} g^{\ast} x$). Since finite products and coproducts coincide in a stable $\infty$-category, we deduce that the canonical comparison map $\pi_! \rightarrow \pi_\ast$ is an equivalence after stabilization. The same reasoning holds for the left and right adjoints to the restriction $f^{\ast}$ induced by any map $f: U \to V$ of finite $G$-sets, so we may conclude.
\end{proof}

\begin{example} Let $\Xscr$ be the terminal $G$-$\infty$-topos $\Spc$. Then $\underline{\Sp}^G(\underline{\Xscr}_{\hh G})$ is the $G$-$\infty$-category of Borel $G$-spectra, whose fiber over $G/H$ is equivalent to $\Fun(BH, \Sp)$.
\end{example}

\subsection{A Borel approach for cyclic groups of prime order}  \label{sect:borel-approach}

In this subsection, we undertake a \emph{Borel} approach to genuine stabilization for $G$-$\infty$-topoi in the sense of \cite{glasman} or \cite{naive} (see also \cite{quigley-shah}*{Theorem~3.44}). However, we will not fully generalize the stratification theory of \cite{glasman} to accommodate $G$-$\infty$-topoi for an arbitrary finite group $G$, instead restricting our attention to the simplest case of the cyclic group $C_p$ of prime order $p$. In fact, the case $p=2$ is the only case of interest for our applications in algebraic geometry. We begin with the unstable situation.

Consider the $G$-equivariant geometric morphism $\nu_*:\Xscr^{\hh G} \rightarrow \Xscr$ of Construction~\ref{construct:borel-fix}. Taking its toposic homotopy orbits, we obtain the geometric morphism
\[
\nu(G)_*:(\Xscr^{\hh G})_{\hh G} \rightarrow \Xscr_{\hh G}.
\]
Since $\Xscr^{\hh G}$ has trivial $G$-action, by Example~\ref{exmp:trivial} we have a canonical equivalence of $\infty$-topoi $$(\Xscr^{\hh G})_{\hh G} \simeq \Fun(BG, \Xscr^{\hh G}).$$

\begin{definition} \label{def:glue} Let $\Xscr$ be a $C_p$-$\infty$-topos. The {\bf unstable gluing functor} is
\[
\theta = (-)^{hC_p} \circ \nu(C_p)^*: \Xscr_{\hh C_p} \rightarrow \Fun(BC_p, \Xscr^{\hh C_p}) \rightarrow \Xscr^{\hh C_p}.
\]
\end{definition}

Note that $\theta$ is in general only a left-exact functor of $\infty$-topoi.

\begin{example} \label{ex:et-to-ret} Suppose that $X$ is a scheme with $\frac{1}{2} \in \Oscr_X$. Using the identification of Theorem~\ref{thm:exret} and Example~\ref{exmp:et-cover}, the unstable gluing functor takes the form
\[
\theta: \widetilde{X}_{\et} \rightarrow \widetilde{X}_{\ret}.
\]
This functor is in turn computed to be \cite{scheiderer}*{Lemma~6.4.2(b)}
\[
\widetilde{X}_{\et} \xrightarrow{i_{\et}} \widetilde{X}_{\pre} \xrightarrow{L_{\ret}} \widetilde{X}_{\ret}.
\]
Define the {\bf $b$-topology} on the small site $\Et_X$ to be the intersection of the \'etale and real \'etale topologies \cite{scheiderer}*{Definition~2.3}. Scheiderer identifies the right-lax limit of $L_{\ret} i_{\et}$ (which he calls $\rho$) with the $\infty$-topos\footnote{Scheiderer works at the level of $1$-topoi, but the statement for $\infty$-topoi follows by the same reasoning, as we show in Appendix~\ref{app:sh-top}.} $\widetilde{X}_b$ \cite{scheiderer}*{Proposition~2.6.1}. As we recall in Appendix~\ref{app:sh-top}, the key point is that the other composite $L_{\et} i_{\ret}$ is equivalent to the constant functor at the terminal sheaf (Example~\ref{ex:recoll-b} and Lemma~\ref{lem:asymmetry} with $C = \Et_X$ and $\tau = b$, $\tau_U = \et$, $\tau_Z = \ret$).
\end{example}

The next proposition justifies the terminology of Definition~\ref{def:glue}.

\begin{proposition} \label{prop:recoll} Let $\Xscr$ be a $C_p$-$\infty$-topos. Then the right-lax limit of the unstable gluing functor $\theta$ is equivalent to $\Xscr_{C_p}$. Equivalently, we have a recollement
\[ \begin{tikzcd}[row sep=4ex, column sep=6ex, text height=1.5ex, text depth=0.25ex]
\Xscr_{\hh C_p} \ar[shift right=1,right hook->]{r}[swap]{j_{*}} & \Xscr_{C_p} \ar[shift right=2]{l}[swap]{j^{*}} \ar[shift left=2]{r}{i^{*}} & \Xscr^{\hh C_p} \ar[shift left=1,left hook->]{l}{i_{*}}
\end{tikzcd} \]
in which the gluing functor $i^{*} j_{*}$ is equivalent to $\theta$.
\end{proposition}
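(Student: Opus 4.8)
The claim has two halves: the identification of $\Xscr_{C_p}$ with the right-lax limit of $\theta$, and the recollement that formally follows from it. Since $\theta$ is left-exact --- being the composite of the inverse image $\nu(C_p)^{\ast}$ of a geometric morphism with the limit functor $(-)^{hC_p} = \lim_{BC_p}$, both of which are left-exact --- the right-lax limit $\Xscr_{\hh C_p} \overrightarrow{\times}_{\theta} \Xscr^{\hh C_p}$ automatically acquires a recollement with open part $\Xscr_{\hh C_p}$, closed part $\Xscr^{\hh C_p}$, and gluing functor $\theta$, by the recollement generalities recalled in the conventions (cf. \cite{quigley-shah}*{\S1}). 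So the whole statement reduces to producing an equivalence $\Xscr_{C_p} \simeq \Xscr_{\hh C_p} \overrightarrow{\times}_{\theta} \Xscr^{\hh C_p}$ compatible with the evident restriction functors. I would work directly from the definition $\Xscr_{C_p} = \Sect_{\Oscr_{C_p}^{\op}}(\underline{\Xscr}^{\hh C_p})$ of Construction~\ref{cons:orb}, using that $\Oscr_{C_p}$ has exactly the two objects $C_p/1$ and $C_p/C_p$ with $C_p/C_p$ terminal, so that $\Oscr_{C_p}^{\op} \simeq \{v\} \star BC_p$ with $v = C_p/C_p$ an initial cone point and $BC_p \subset \Oscr_{C_p}^{\op}$ the full subcategory on $C_p/1$.

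The key step is a computation of sections over this join. Given a cartesian fibration $\Escr \to \{v\} \star BC_p$, a section $F$ restricts to an object $F(v) \in \Escr_v$ and a section $s = F|_{BC_p} \in \Sect_{BC_p}(\Escr|_{BC_p})$, together with coherent comparison maps from $F(v)$ to the values of $s$ over the canonical edges $v \to x$. Since $\Escr$ is a cartesian fibration and $\Escr_v$ admits all small limits, these comparison maps transpose to a single morphism $F(v) \to \Phi(s)$ in $\Escr_v$, where $\Phi \colon \Sect_{BC_p}(\Escr|_{BC_p}) \to \Escr_v$ is the functor obtained by restricting a section to $BC_p$, applying cartesian pullback along the canonical edge into $v$ fiberwise, and taking the limit over $BC_p$. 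This standard decomposition of functors out of a join (compare \cite{ghn}*{Proposition 7.1} and \cite{htt}*{\S4.2}) identifies $\Sect_{\{v\} \star BC_p}(\Escr)$ with the right-lax limit $\Sect_{BC_p}(\Escr|_{BC_p}) \overrightarrow{\times}_{\Phi} \Escr_v$. Applying this to $\Escr = \underline{\Xscr}^{\hh C_p}$: the fiber over $v$ is $\Xscr^{\hh C_p}$; the restriction $\underline{\Xscr}^{\hh C_p}|_{BC_p}$ is $\underline{\Xscr}$, because $\Xscr^{\hh(-)}$ is a right Kan extension of $\Xscr$; hence $\Sect_{BC_p}(\underline{\Xscr}) \simeq \Xscr_{\hh C_p}$ by the identification $\Xscr_{\hh G} \simeq \Sect_{BG}(\underline{\Xscr})$; and the cartesian pullback $\Xscr \to \Xscr^{\hh C_p}$ along $v \to C_p/1$ is exactly the inverse image $\nu^{\ast}$ of the canonical geometric morphism of Construction~\ref{construct:borel-fix} (this follows by straightening the fibration $\underline{\Xscr}^{\hh C_p}$). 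Thus $\Xscr_{C_p} \simeq \Xscr_{\hh C_p} \overrightarrow{\times}_{\Phi} \Xscr^{\hh C_p}$ with $\Phi(s) \simeq \lim_{BC_p}(\nu^{\ast} \circ s)$.

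It remains to identify $\Phi$ with $\theta$. Under the equivalence $(\Xscr^{\hh C_p})_{\hh C_p} \simeq \Fun(BC_p, \Xscr^{\hh C_p})$ of Example~\ref{exmp:trivial}, the inverse image $\nu(C_p)^{\ast}$ of the homotopy-orbits geometric morphism $\nu(C_p)$ is given by postcomposition of sections with $\nu^{\ast}$: indeed, toposic homotopy orbits are computed as a limit in $\LTop$ --- equivalently, as the $\infty$-category of cartesian sections of the underlying diagram in $\widehat{\Cat}_{\infty}$ --- so a morphism of $C_p$-equivariant diagrams induces on homotopy orbits the fiberwise postcomposition. Composing with $(-)^{hC_p} = \lim_{BC_p}$ then gives precisely $s \mapsto \lim_{BC_p}(\nu^{\ast} \circ s) \simeq \Phi(s)$, so $\theta \simeq \Phi$ and the identification $\Xscr_{C_p} \simeq \Xscr_{\hh C_p} \overrightarrow{\times}_{\theta} \Xscr^{\hh C_p}$ is complete.

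The content here is bookkeeping rather than a single hard idea, and the main obstacle is exactly keeping the several variances in play straight --- cartesian versus cocartesian fibrations, $\RTop$ versus $\LTop$, and inverse versus direct image functors --- across the join decomposition of the second step and the identification of $\nu(C_p)^{\ast}$ in the third. A secondary point, relevant only if one wants the recollement adjunctions described concretely, is to check that the functors $j^{\ast} \dashv j_{\ast}$ and $i^{\ast} \dashv i_{\ast}$ produced abstractly from the right-lax limit are realized by restriction and relative Kan extension of sections along the inclusions $BC_p \hookrightarrow \Oscr_{C_p}^{\op}$ and $\{v\} \hookrightarrow \Oscr_{C_p}^{\op}$.
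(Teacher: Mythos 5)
Your proposal is correct and its spine is the same as the paper's: decompose $\Oscr_{C_p}^{\op} \simeq (BC_p)^{\lhd}$ along its initial object $v = C_p/C_p$, recognize $\Xscr^{\hh C_p}$ as the fiber over $v$ and $\Xscr_{\hh C_p}$ as the sections over $BC_p$, and identify the gluing data as a $C_p$-homotopy limit of the cartesian pullback along $v \to C_p/1$. The paper packages the first part differently --- it maps $\Oscr_{C_p}^{\op}$ to $\Delta^1$ and invokes \cite{quigley-shah}*{Proposition~2.4} to produce the recollement on sections, then in a second step identifies $i^{\ast}j_{\ast}$ with $\theta$ via an explicit $p$-limit recipe --- whereas you argue directly that $\Sect_{\Oscr_{C_p}^{\op}}(\underline{\Xscr}^{\hh C_p})$ is the right-lax limit of $\Phi \simeq (-)^{hC_p} \circ \nu^{\ast}$ and let the recollement come for free from the left-exactness of $\theta$. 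These are two framings of the same computation.

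Two small points are worth tightening if you wrote this up in full. First, your sentence describing $\Phi$ begins "restricting a section to $BC_p$," but the input is already a section over $BC_p$; you mean only cartesian pullback along $v \to C_p/1$ followed by the $BC_p$-indexed limit. Second, the step where the coherent comparison data transposes to a single morphism $F(v) \to \Phi(s)$ is exactly the universal property of a $p$-limit diagram, and the paper spends most of its proof making this precise (via the trivial fibration $\Fun(\Delta^1,C)_{\cart} \to \Fun(\Delta^1,S) \times_S C$ and the criterion of \cite{htt}*{Corollary~4.3.1.11}); citing that machinery, or \cite{quigley-shah}*{Remark~2.9}, would be more on-target than \cite{ghn}*{Proposition~7.1}, which computes sections as oplax limits rather than giving the cone-decomposition you want. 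Neither point is a gap in the argument, just in the bookkeeping and the citations.
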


\begin{proof} Let $\pi: \Oscr_{C_p}^{\op} \to \Delta^1$ be the unique functor that sends $C_p/C_p$ to $0$ and $C_p/1$ to $1$. Let $p: {\underline{\Xscr}}^{\hh C_p} \to \Oscr_{C_p}^{\op}$ denote the cartesian fibration of Construction~\ref{construct:borel-fix}, so that $\Xscr_{C_p} = \Sect(\underline{\Xscr}^{\hh C_p})$. Note that $(\underline{\Xscr}^{\hh C_p})_0 \simeq \Xscr^{\hh C_p}$ and $(\underline{\Xscr}^{\hh C_p})_1 \simeq \underline{\Xscr}$. We may readily verify the existence hypotheses of \cite{quigley-shah}*{Proposition 2.4}, so we have an induced recollement decomposing $\Xscr_{C_p}$ in terms of $\Xscr_{\hh C_p}$ and $\Xscr^{\hh C_p}$. It remains to identify the gluing functor $i^{\ast} j_{\ast}$ of this recollement as $\theta$. For this, we observe that by the discussion prior to \cite[Proposition 2.4]{quigley-shah}, given an object $f: B C_p \to \underline{\Xscr} \subset \underline{\Xscr}^{\hh C_p}$ of $\Xscr_{\hh C_p}$, $i^{\ast} j_{\ast} (f)$ is computed as value of the $p$-limit diagram $\overline{f}$ on the cone point $v$:
\[ \begin{tikzcd}[row sep=4ex, column sep=4ex, text height=1.5ex, text depth=0.25ex]
B C_p \ar{r}{f} \ar{d} & \underline{\Xscr}^{\hh C_p} \ar{d}{p} \\
\Oscr_{C_p}^{\op} \cong (B C_p)^{\lhd} \ar{r} \ar{ru}{\overline{f}} & \Oscr_{C_p}^{\op}.
\end{tikzcd} \]
We now digress to explain how to compute such $p$-limits. In general, suppose given a diagram
\[ \begin{tikzcd}[row sep=4ex, column sep=6ex, text height=1.5ex, text depth=0.25ex]
K \ar{r}{f} \ar{d} & C \ar{d}{p} \\ 
K^{\lhd} \ar{r}{\overline{pf}} \ar[dotted]{ru}{\overline{f}} & S
\end{tikzcd} \]
in which $p$ is a cartesian fibration, and let $x = \overline{p f}(v)$ for $v$ the cone point. Supposing that $C_x$ admits $K$-indexed limits, we then have the dotted functor $\overline{f}$ given by sending $v$ to the limit of $g: K \to C_x$, where $g$ is the $p$-cartesian pullback of $f$ to the fiber over $x$. In more detail, we may construct $\overline{f}$ via the following recipe:
\begin{itemize}
    \item Let $\Fun(\Delta^1,C)_{\cart} \to \Fun(\Delta^1,S) \times_S C$ be the trivial fibration given by the dual of \cite{jay-thesis}*{Lemma 2.22} that sends a $p$-cartesian edge $[a \to b]$ to $([p(a) \to p(b)], b)$, let $\sigma$ be a choice of a section and let 
    \[
    P = \ev_0 \circ \sigma: \Fun(\Delta^1,S) \times_S C \to C
    \] be the resulting choice of cartesian pullback functor. Let $r: K \times \Delta^1 \to K^{\lhd}$ be the unique functor that restricts to the trivial functor $K \times \{0\} \to \ast$ and to the identity on $K \times \{1\}$, let $q: K \to \Fun(\Delta^1,S)$ be the functor adjoint to $K \times \Delta^1 \xto{r} K^{\lhd} \xto{\overline{pf}} S$, and let $$F = (q,f): K \to \Fun(\Delta^1,S) \times_S C.$$ Then $P \circ F: K \to C$ factors through $C_x$. Let $g: K \to C_x$ denote this functor, which is the $p$-cartesian pullback of $f$ to the fiber over $x$. Using that $C_x$ admits $K$-indexed limits, let $\overline{g}: K^{\lhd} \to C_x$ denote the extension of $g$ to a limit diagram.

    Next, let $h: K \times \Delta^1 \to C$ be the functor adjoint to $\sigma \circ F: K \to \Fun(\Delta^1,C)_{\cart}$ and consider the diagram
\[ \begin{tikzcd}[row sep=4ex, column sep=6ex, text height=1.5ex, text depth=0.25ex]
K \times \Delta^1 \cup_{K \times \{0\} } K^{\lhd} \times \{ 0\} \ar{r}{(h,\overline{g})} \ar{d} & C \ar{d}{p} \\
(K \times \Delta^1)^{\lhd} \ar{r} \ar[dotted]{ru}[swap]{\overline{h}} & S.
    \end{tikzcd} \]
    By \cite{htt}*{Lemma 2.1.2.3}, the lefthand inclusion is inner anodyne, so the dotted lift $\overline{h}$ exists. Finally, we set: 
    \[\overline{f} = \overline{h}|_{(K \times \{1\})^{\lhd}}.
    \]
\end{itemize}

By the proof of \cite{htt}*{Corollary 4.3.1.11}, $\overline{f}$ is then a $p$-limit diagram if and only if for all morphisms $(\alpha: y \to x = \overline{p f}(v))$ in $S$ (defining the pullback functor $\alpha^{\ast}: C_x \to C_y$), $\alpha^{\ast} \overline{g}$ is a limit diagram in $C_y$. In particular, if $x$ is an initial object in $S$, then the latter condition is vacuous and $\overline{f}$ is necessarily a $p$-limit diagram. Now suppose that $S \simeq S_1^{\lhd}$ and $C_x$ admits $S_1$-indexed limits. Let $K = S_1$ and $\overline{p f} = \id$. Then the content of \cite{quigley-shah}*{Proposition 2.4} in this case is that $\Fun_{/S}(S,C)$ is the right-lax limit of the gluing functor
\[ \Fun_{/S_1}(S_1, C \times_S S_1) \simeq \Fun_{/S}(S_1, C) \xrightarrow{P_{\ast}} \Fun(S_1, C_x) \xrightarrow{\lim} C_x, \]
where $P_{\ast}$ denotes the functorial assignment $f \mapsto g$ above that is determined by the choice of $P$ (which is only ambiguous up to contractible choice).

Returning to our situation of interest, observe that
$$P: \Fun(\Delta^1,\Oscr^{\op}_{C_p}) \times_{\Oscr^{\op}_{C_p}} \underline{\Xscr}^{\hh C_p} \to \underline{\Xscr}^{\hh C_p}$$
restricts to
$$P': \underline{\Xscr} \simeq BC_p \times_{\Oscr_{C_p}^{op}} \underline{\Xscr}^{\hh C_p} \to \Xscr^{\hh C_p},$$
where we identify the full subcategory of $\Fun(\Delta^1,\Oscr^{\op}_{C_p})$ on those arrows with source $C_p/C_p$ and target $C_p/1$ with $BC_p$. We may then identify $P_{\ast}$ with postcomposition by $P'$.

Note that the $C_p$-equivariant functor $\nu^{\ast}: \Xscr \to \Xscr^{\hh C_p}$ passes under unstraightening to the functor $$(P', p|_{\underline{\Xscr}}): \underline{\Xscr} \to \Xscr^{\hh C_p} \times B C_p$$ over $BC_p$. Therefore, the $C_p$-homotopy fixed points\footnote{Recall here that the left adjoint of the geometric morphism given under formation of toposic homotopy orbits in $\RTop$ is computed by taking homotopy fixed points in $\widehat{\Cat}_{\infty}$.} $\nu(C_p)^{\ast}: \Xscr_{\hh C_p} \to \Fun(BC_p, \Xscr^{\hh C_p})$ is computed by postcomposition by $P'$ in terms of the descriptions of the domain and codomain $\infty$-categories as sections. We conclude that $(-)^{h C_p} \circ P_{\ast}$ identifies with $\theta = (-)^{h C_p} \circ \nu(C_p)^{\ast}$, which shows that $i^\ast j_{\ast} \simeq \theta$. 
\end{proof}

\begin{remark} In view of Proposition~\ref{prop:recoll}, $\Xscr_{C_p}$ corresponds to Scheiderer's notion of {\bf quotient topos} \cite{scheiderer}*{Definition~14.2}, which he only defines for $G = C_p$. The toposic genuine orbits construction may therefore be viewed as a generalization of Scheiderer's construction to the case of an arbitrary finite group.
\end{remark}

\begin{example} \label{ex:b} Suppose that $X$ is a scheme with $\frac{1}{2} \in \Oscr_X$. Then in view of Example~\ref{ex:et-to-ret} and Proposition~\ref{prop:recoll}, we have an equivalence
\[
\widetilde{X}_b \simeq (\widetilde{X[i]}_{\et})_{C_2}.
\]
\end{example}

We can also understand the functoriality of toposic genuine orbits in terms of the unstable gluing functor. For the next proposition, recall from \cite{quigley-shah}*{1.7} that given a lax commutative square (with $\phi$ left-exact)
\[ \begin{tikzcd}[row sep=4ex, column sep=4ex, text height=1.5ex, text depth=0.5ex]
\Uscr \ar{r}{\phi} \ar{d}[swap]{f^*} \ar[phantom]{rd}{\SWarrow} & \Zscr \ar{d}{f^*} \\
\Uscr' \ar{r}[swap]{\phi} & \Zscr',
\end{tikzcd} \]
we obtain an induced functor $f^*: \Xscr = \Uscr \overrightarrow{\times} \Zscr \to \Xscr' = \Uscr' \overrightarrow{\times} \Zscr'$ upon taking right-lax limits horizontally. Moreover, $f^*$ is a (lax) {\bf morphism of recollements} $(\Uscr, \Zscr) \to (\Uscr',\Zscr')$, that is, $f^*$ sends $j^*$, resp. $i^*$-equivalences to $j'^*$, resp. $i'^*$-equivalences \cite{quigley-shah}*{Definition~1.1}. Conversely, if $f^*$ is a morphism of recollements, then $f^*$ induces functors $f^*: \Uscr \to \Uscr'$ and $f^*: \Zscr \to \Zscr'$ that commute with $f^*: \Xscr \to \Xscr'$ and the recollement left adjoints, and we thereby obtain an exchange transformation $$\chi: f^* \phi \simeq f^* i^* j_* \Rightarrow \phi f^* \simeq i'^* j'_* f^*.$$ These two constructions are inverse equivalent to each other \cite{higheralgebra}*{Proposition~A.8.8}.

\begin{proposition} \label{prp:UnstableRecollementFunctoriality} Let $f_{\ast}: \Xscr \to \Yscr$ be a $C_p$-equivariant geometric morphism of $C_p$-$\infty$-topoi. Then the induced functor $f^{\ast}: \Yscr_{C_p} \to \Xscr_{C_p}$ is a morphism of the recollements as defined in Proposition~\ref{prop:recoll}. Therefore, we obtain an exchange transformation $\chi: f^* \theta \Rightarrow \theta f^*$ such that $f^*$ is canonically equivalent to the right-lax limit of $\chi$. Moreover, $\chi$ factors as 
\[ \begin{tikzcd}[row sep=6ex, column sep=8ex, text height=1.5ex, text depth=0.5ex]
\Yscr_{\hh C_p} \ar{r}{\nu(C_p)^*} \ar{d}[swap]{(f^*)_{\hh C_p}} \ar[phantom]{rd}{\simeq \SWarrow} & \Fun(B C_p, \Yscr^{\hh C_p}) \ar{d}{f^*} \ar{r}{(-)^{h C_p}} \ar[phantom]{rd}{\SWarrow} & \Yscr^{\hh C_p} \ar{d}{(f^*)^{\hh C_p}} \\
\Xscr_{\hh C_p} \ar{r}[swap]{\nu(C_p)^*} & \Fun(B C_p, \Xscr^{\hh C_p}) \ar{r}[swap]{(-)^{h C_p}} & \Xscr^{\hh C_p},
\end{tikzcd} \]
where the lefthand invertible transformation is toposic homotopy $C_p$-orbits of $(f^*)^{\hh C_p} \nu^* \simeq \nu^* f^*$ and the righthand transformation is adjoint to $(f^*)^{\hh C_p} \delta \simeq \delta (f^*)^{\hh C_p}$ for the constant diagram functor $\delta$.
\end{proposition}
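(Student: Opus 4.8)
The plan is to reduce the statement to the general functoriality of recollements recalled just above, using only that, by Construction~\ref{cons:orb}, $f^{\ast}\colon \Yscr_{C_p}\to\Xscr_{C_p}$ is \emph{postcomposition} along a map $f^{\ast}\colon\underline{\Yscr}^{\hh C_p}\to\underline{\Xscr}^{\hh C_p}$ of cartesian fibrations over $\Oscr_{C_p}^{\op}\cong(BC_p)^{\lhd}$ that preserves cartesian edges, and whose fibre over the object $C_p/1$ is $f^{\ast}\colon\underline{\Yscr}\to\underline{\Xscr}$ (so that postcomposition on sections over $BC_p$ is $(f^{\ast})_{\hh C_p}$) and whose fibre over the cone point $v=C_p/C_p$ is $(f^{\ast})^{\hh C_p}\colon\Yscr^{\hh C_p}\to\Xscr^{\hh C_p}$. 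Recall also from the proof of Proposition~\ref{prop:recoll} that the recollement on $\Xscr_{C_p}$ is the one of \cite{quigley-shah}*{Proposition~2.4} attached to $\underline{\Xscr}^{\hh C_p}\to\Oscr_{C_p}^{\op}$, so that $j^{\ast}$ is restriction of sections to the open stratum $BC_p$ and $i^{\ast}$ is restriction (i.e.\ evaluation) at $v$.

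First I would show $f^{\ast}$ is a morphism of recollements. Since postcomposition along a map of cartesian fibrations over $\Oscr_{C_p}^{\op}$ commutes strictly with restriction of sections to any full subcategory of the base, there are canonical equivalences $j^{\ast}f^{\ast}\simeq(f^{\ast})_{\hh C_p}j^{\ast}$ and $i^{\ast}f^{\ast}\simeq(f^{\ast})^{\hh C_p}i^{\ast}$; the first forces $f^{\ast}$ to carry $j^{\ast}$-equivalences to $j^{\ast}$-equivalences and the second to carry $i^{\ast}$-equivalences to $i^{\ast}$-equivalences, so $f^{\ast}$ is a morphism of recollements in the sense of \cite{quigley-shah}*{Definition~1.1} with induced open functor $(f^{\ast})_{\hh C_p}$ and closed functor $(f^{\ast})^{\hh C_p}$. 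By the equivalence between morphisms of recollements and lax squares of gluing functors (\cite{quigley-shah}*{1.7}, \cite{higheralgebra}*{Proposition~A.8.8}), this immediately produces the exchange transformation $\chi\colon(f^{\ast})^{\hh C_p}\theta_{\Yscr}\Rightarrow\theta_{\Xscr}(f^{\ast})_{\hh C_p}$ together with the identification of $f^{\ast}$ with the right-lax limit of $\chi$; moreover, unwinding that equivalence exhibits $\chi$, up to the two canonical equivalences above, as $i^{\ast}f^{\ast}j_{\ast}$ followed by the map induced by the unit of $j^{\ast}\dashv j_{\ast}$ on $\Xscr_{C_p}$ — concretely, for a section $g$ of $\Yscr_{\hh C_p}$, the $v$-component of the canonical comparison $f^{\ast}j_{\ast}g\to j_{\ast}\bigl((f^{\ast})_{\hh C_p}g\bigr)$ between pointwise $p$-Kan extensions.

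It then remains to match this $\chi$ with the pasting of the two displayed squares. For this I would revisit the explicit formula for $\theta=i^{\ast}j_{\ast}$ from the proof of Proposition~\ref{prop:recoll}: there $j_{\ast}g=\overline{g}$ is the relative right Kan extension along $BC_p\hookrightarrow(BC_p)^{\lhd}$, and $\overline{g}(v)$ is exhibited as $\nu(C_p)^{\ast}(g)^{hC_p}$, with the cartesian pullback functor $P'$ appearing there equal to $\nu^{\ast}$. Postcomposition by $f^{\ast}$ commutes with the formation of cartesian pullbacks in a map of cartesian fibrations; applied to $P'$ and then passed to toposic homotopy $C_p$-orbits, this is exactly the (invertible) left-hand square, i.e.\ $\nu(C_p)^{\ast}\bigl((f^{\ast})_{\hh C_p}g\bigr)\simeq(f^{\ast})^{\hh C_p}\circ\nu(C_p)^{\ast}(g)$, which is toposic homotopy $C_p$-orbits of the naturality equivalence $(f^{\ast})^{\hh C_p}\nu^{\ast}\simeq\nu^{\ast}f^{\ast}$. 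The remaining discrepancy between $f^{\ast}\overline{g}$ and $\overline{(f^{\ast})_{\hh C_p}g}$, evaluated at $v$, is the failure of $(f^{\ast})^{\hh C_p}$ to commute with $\lim_{BC_p}$, i.e.\ the canonical comparison $(f^{\ast})^{\hh C_p}(\lim_{BC_p}-)\to\lim_{BC_p}\bigl((f^{\ast})^{\hh C_p}\circ-\bigr)$, which is by definition the mate of the invertible naturality square for the constant-diagram functor $\delta$; this is the right-hand square. Pasting these two $2$-cells yields $\chi$.

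I expect this last matching to be the only real obstacle: one must carry the $p$-limit diagram $\overline{g}$ through postcomposition by $f^{\ast}$ using the explicit recipe for $\overline{(-)}$ in the proof of Proposition~\ref{prop:recoll}, and verify that the comparison map of pointwise $p$-Kan extensions restricts at the cone point to the comparison map of $BC_p$-indexed limits after the left-hand-square identification — in other words, that the naturality of $\nu^{\ast}$ and the non-preservation of $BC_p$-indexed limits by $(f^{\ast})^{\hh C_p}$ together account for \emph{all} of $\chi$. Everything else is formal manipulation of recollements together with the (left-adjoint) functoriality in $C_p$-equivariant geometric morphisms of the toposic homotopy orbit and homotopy fixed-point constructions.
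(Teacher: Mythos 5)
Your proposal is correct and follows essentially the same route the paper takes: the first assertion from the fact that $f^{\ast}$ is postcomposition of sections while $j^{\ast},i^{\ast}$ are restriction, the exchange transformation from the general theory of recollement morphisms, and the factorization from a diagram chase that unpacks the identification $i^{\ast}j_{\ast}\simeq(-)^{hC_p}\nu(C_p)^{\ast}$ in the proof of Proposition~\ref{prop:recoll}. The paper states the latter step as a ``straightforward diagram chase''; your spelling out of how postcomposition commutes with the cartesian pullback functor $P'$ (giving the invertible left square) and how the remaining defect is the limit-comparison mate for $(f^{\ast})^{\hh C_p}$ (giving the right square) is precisely what that chase amounts to.
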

\begin{proof} The first assertion is clear since $f^{\ast}$ is given by postcomposition of sections and $j^*, i^*$ are given by restriction of sections. We thus obtain $\chi$ as just explained. The factorization of $\chi$ then follows from a straightforward diagram chase upon unpacking the factorization $i^* j_* \simeq (-)^{hC_p} \nu(C_p)^*$ of Proposition~\ref{prop:recoll}.
\end{proof}

We next aim to construct a \emph{stable} gluing functor that recovers the genuine stabilization $\Sp^{C_p}(\Xscr)$. To begin with, let us denote the stabilization of the functor $\theta$ by
\begin{equation} \label{eq:small-theta}
\Theta = \Sp(\theta) \simeq (-)^{h C_p} \circ \nu(C_p)^{\ast}: \Sp(\Xscr_{\hh C_p}) \rightarrow \Fun(B C_p,  \Sp( \Xscr^{\hh C_p}) ) \rightarrow \Sp( \Xscr^{\hh C_p}).
\end{equation}
Given Proposition~\ref{prop:recoll}, it is easy to see that the right-lax limit of $\Theta$ obtains $\Sp(\Xscr_{C_p})$ (and not $\Sp^{C_p}(\Xscr)$ in general) -- we record this observation as Lemma~\ref{lem:stabilizationOfRecollement}. To instead obtain the gluing functor for $\Sp^{C_p}(\Xscr)$, we must replace the homotopy fixed points by the Tate construction, as in the following definition.

\begin{definition} \label{def:stable-glue} Let $\Xscr$ be a $C_p$-$\infty$-topos. The {\bf stable gluing functor} is
\[
\Theta^{\Tate} = (-)^{tC_p} \circ \nu(C_p)^*: \Sp(\Xscr_{\hh C_p}) \rightarrow \Fun(BC_p, \Sp(\Xscr^{\hh C_p})) \rightarrow \Sp(\Xscr^{\hh C_p}).
\]
\end{definition}

Here is the first main theorem of the paper. 

\begin{theorem} \label{prop:recoll-stab} Let $\Xscr$ be a $C_p$-$\infty$-topos and let $\Sp^{\Tate}_{C_p}(\Xscr)$ denote the right-lax limit of the stable gluing functor $\Theta^{\Tate}$. We have an equivalence of $\infty$-categories
$$\Sp^{\Tate}_{C_p}(\Xscr) \simeq \Sp^{C_p}(\Xscr).$$
\end{theorem}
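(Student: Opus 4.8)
The plan is to exhibit $\Sp^{C_p}(\Xscr)$ as a stable recollement whose open part is $\Sp(\Xscr_{\hh C_p})$, whose closed part is $\Sp(\Xscr^{\hh C_p})$, and whose gluing functor $i^{\ast}j_{\ast}$ is $\Theta^{\Tate}$; since a stable recollement is reconstructed as the right-lax limit of its gluing functor \cite{barwick-glasman}, this identifies $\Sp^{C_p}(\Xscr)$ with $\Sp(\Xscr_{\hh C_p}) \overrightarrow{\times}_{\Theta^{\Tate}} \Sp(\Xscr^{\hh C_p}) = \Sp^{\Tate}_{C_p}(\Xscr)$. This is the toposic generalization of Glasman's theorem \cite{glasman} (see also \cite{barwick}*{B.7} and \cite{naive}), so the strategy is to transport Glasman's argument, replacing the classical homotopy orbits, homotopy fixed points and categorical fixed points by the toposic notions of \S\ref{sect:gen-stab}.

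First I would reorganize the construction: by Lemma~\ref{lem:interchange},
\[ \Sp^{C_p}(\Xscr) \;\simeq\; \big(\underline{\CMon}^{C_p}(\underline{\Sp}(\underline{\Xscr}_{C_p}))\big)_{C_p/C_p}, \]
so the task is to understand $C_p$-Mackey functors valued in the fiberwise stabilization $\underline{\Sp}(\underline{\Xscr}_{C_p})$, whose fiber over $C_p/1$ is $\Sp(\Xscr)$ with its residual $C_p$-action and whose fiber over $C_p/C_p$ is $\Sp(\Xscr_{C_p}) \simeq \Sp(\Xscr_{\hh C_p}) \overrightarrow{\times}_{\Theta} \Sp(\Xscr^{\hh C_p})$ by Lemma~\ref{lem:stabilizationOfRecollement}. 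I would then set up the isotropy-separation recollement on $\Sp^{C_p}(\Xscr)$. The open localization $j^{\ast}\colon \Sp^{C_p}(\Xscr) \to \Sp(\Xscr_{\hh C_p})$ is the ``underlying Borel object'' functor (restriction of a Mackey functor to the sector $BC_p \subset \Oscr_{C_p}^{\op}$), with fully faithful right adjoint $j_{\ast}$ the $C_p$-cofree functor; here I use Lemma~\ref{lem:borel-computation}, which identifies $\underline{\Sp}(\underline{\Xscr}_{\hh C_p})$ as already $C_p$-stable, to see that the open part genuinely is $\Sp(\Xscr_{\hh C_p})$. The closed inclusion $i_{\ast}\colon \Sp(\Xscr^{\hh C_p}) \to \Sp^{C_p}(\Xscr)$ sends $Y$ to the $C_p$-Mackey functor whose underlying object vanishes and whose value at $C_p/C_p$ is the image of $Y$ under the closed inclusion $\Sp(\Xscr^{\hh C_p}) \hookrightarrow \Sp(\Xscr_{C_p})$; this is manifestly a $C_p$-Mackey functor (the pertinent restriction and transfer are zero maps) and is fully faithful, and one checks that its essential image is localizing and colocalizing, so that in the stable setting it determines a recollement, necessarily with open part the Verdier complement of $i_{\ast}$, which is $\Sp(\Xscr_{\hh C_p})$ via $j_{\ast}$.

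The heart of the matter is the computation of the gluing functor $i^{\ast}j_{\ast}$, which is where the Tate construction appears instead of the homotopy fixed points supplied by $\Theta$: this is the price of adjoining genuine transfers. Following Glasman, I would first identify $i^{\ast}$, the toposic ``geometric $C_p$-fixed points'', as (a shift of) the cofiber of the transfer, namely
\[ i^{\ast}M \;\simeq\; \mathrm{cofib}\big(\text{(homotopy $C_p$-orbits of the underlying object of $M$)} \xrightarrow{\ \mathrm{tr}\ } M(C_p/C_p)\big), \]
the cofiber being formed in $\Sp(\Xscr_{C_p})$ and then pushed to the closed part $\Sp(\Xscr^{\hh C_p})$; this is the internal avatar of the isotropy-separation cofiber sequence. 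Applying it to $M = j_{\ast}X$ for $X \in \Sp(\Xscr_{\hh C_p})$, and using that the value at $C_p/C_p$ of the cofree object projects on the closed part to $\Theta(X) \simeq (\nu(C_p)^{\ast}X)^{hC_p}$ (by \eqref{eq:small-theta} and the defining property of $j_{\ast}$), the computation reduces to the norm cofiber sequence
\[ (\nu(C_p)^{\ast}X)_{hC_p} \xrightarrow{\ \Nm\ } (\nu(C_p)^{\ast}X)^{hC_p} \longrightarrow (\nu(C_p)^{\ast}X)^{tC_p} \]
for the $C_p$-action on $\nu(C_p)^{\ast}X \in \Fun(BC_p, \Sp(\Xscr^{\hh C_p}))$, yielding $i^{\ast}j_{\ast}X \simeq (\nu(C_p)^{\ast}X)^{tC_p} = \Theta^{\Tate}(X)$, as claimed. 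Naturality of each step in $\Xscr$ then makes the equivalence compatible with base change along $C_p$-equivariant geometric morphisms, refining it to an equivalence of the parametrized objects $\underline{\Sp}^{C_p}(\Xscr)$ should that be needed elsewhere.

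The main obstacle is precisely this computation of $i^{\ast}j_{\ast}$: pinning down the toposic geometric fixed points as the cofiber of the transfer and rerunning the isotropy-separation argument internally to $\Xscr$, which amounts to reproving Glasman's key lemma in this generality. A subsidiary technical point, needed just to erect the recollement, is to verify that $j_{\ast}$ and $i_{\ast}$ are fully faithful and that $(j^{\ast}, i^{\ast})$ is jointly conservative on $\Sp^{C_p}(\Xscr)$; these should follow from Nardin's framework (Construction~\ref{rem:nardin}) together with the accessible-localization description of $\underline{\CMon}^{C_p}$ (Lemma~\ref{lem:SimplePresentabilityCheck}) and its functoriality for $C_p$-adjunctions (Proposition~\ref{prp:PresentableFunctorialityOfGStabilization}).
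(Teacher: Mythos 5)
Your strategy coincides with the paper's in architecture: erect the stable recollement $(\Sp(\Xscr_{\hh C_p}),\Sp(\Xscr^{\hh C_p}))$ on $\Sp^{C_p}(\Xscr)$, introduce the toposic categorical fixed points $\Psi^{C_p}$, and identify $i^*j_*$ with $\Theta^{\Tate}$ via the Tate cofiber sequence. The recollement setup is exactly Proposition~\ref{prop:preserves}, and the categorical fixed points with the properties you invoke is Theorem~\ref{thm:cat-fixed}, both proven roughly as you anticipate from Nardin's framework. The one meaningful organizational difference is in the gluing identification. You propose to first prove the full internal isotropy-separation formula $i^*M\simeq\mathrm{cofib}\big((\nu(C_p)^*j^*M)_{hC_p}\xrightarrow{\mathrm{tr}}\Psi^{C_p}M\big)$ and then specialize to $M=j_*X$; executing this would require identifying $\Psi^{C_p}j_!$ with $(-)_{hC_p}\nu(C_p)^*$ and the recollement counit with the additive norm, which you rightly flag as the obstacle. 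The paper sidesteps this: it applies the exact functor $\Psi^{C_p}$ to the recollement fiber sequence $j_!\to j_*\to i_*i^*j_*$, matches only the middle terms against the Tate sequence (Theorem~\ref{thm:cat-fixed}(2) gives $\Psi^{C_p}j_*\simeq(-)^{hC_p}\nu(C_p)^*$), constructs $\alpha\colon\Theta^{\Tate}\Rightarrow i^*j_*$ by noting the long diagonal is null (both $\Theta^{\Tate}$ and $i^*j_*$ kill induced objects, Lemma~\ref{lem:killsinduced}, and the source functor $(-)_{hC_p}\nu(C_p)^*$ preserves the monadic colimits by which induced objects generate, Lemma~\ref{lem:generated}), and then shows $\alpha$ is an equivalence by checking its fiber on induced objects, using colimit-preservation from Theorem~\ref{thm:cat-fixed}(1). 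This soft route buys not having to analyse the norm at all; the identification $\Psi^{C_p}j_!\simeq(-)_{hC_p}\nu(C_p)^*$ that you want as a prerequisite falls out a posteriori. One caution on your formula as literally written: if the cofiber of the transfer is formed in $\Sp(\Xscr_{C_p})$ and \emph{then} pushed to the closed part by $i^*_{\pre}$, the orbits term (being induced) is annihilated by $i^*_{\pre}$ and you recover $\Psi^{C_p}M$, not $i^*M$. The isotropy-separation sequence must be formed after pushing, entirely in $\Sp(\Xscr^{\hh C_p})$, as the $\Psi^{C_p}$-image of the recollement fiber sequence.
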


\begin{question} \label{quest:arb} What is the analogue of Theorem~\ref{prop:recoll-stab} for an arbitrary finite group?
\end{question}

The rest of this section will be devoted to a proof of Theorem~\ref{prop:recoll-stab}. To begin with, we need a better understanding of how $\underline{\Sp}^{C_p}(-)$ interacts with recollements. Fixing notation, consider the following functors obtained from Constructions~\ref{construct:borel-orb} and~\ref{cons:orb} respectively:
\[
\Xscr_{\hh(-)}: \Oscr^{\op}_{C_p} \rightarrow \LTop, \qquad (C_p/1 \rightarrow C_p/C_p) \mapsto (\Xscr \xleftarrow{\pi^*} \Xscr_{\hh C_p}),
\]
\[
\Xscr_{(-)}: \Oscr^{\op}_{C_p} \rightarrow \LTop, \qquad (C_p/1 \rightarrow C_p/C_p) \mapsto (\Xscr \xleftarrow{\overline{\pi}^*} \Xscr_{C_p}).
\]
We also have the following functor:
\[
\Xscr^c: \Oscr^{\op}_{C_p} \rightarrow \LTop, \qquad (C_p/1 \rightarrow C_p/C_p) \mapsto (* \leftarrow \Xscr^{\hh C_p}),
\]
where $*$ indicates the initial $\infty$-topos, i.e., the category $\Delta^0$. The ``$c$" decoration indicates that $\Xscr^c$ wants to be the closed part of a fiberwise recollement on $\Xscr_{(-)}$. To justify this, we need the following lemmas.

\begin{lemma} \label{lem:j-open} For any finite group $G$, there is a canonical adjunction 
\[ j^{\ast}:{\Xscr_G} \rightleftarrows {\Xscr_{\hh G}}:{j_{\ast}}, \]
such that:
\begin{enumerate}
\item $j^{\ast}$ is implemented by restriction of sections along the inclusion $BG \subset \Oscr_G^{\op}$.
\item We have a factorization
\[ \begin{tikzcd}[row sep=4ex, column sep=6ex, text height=1.5ex, text depth=0.25ex]
\Xscr_G \ar{r}[swap]{j^{\ast}} \ar[bend left]{rr}{\overline{\pi}^{\ast}} & \Xscr_{\hh G} \ar{r}[swap]{\pi^{\ast}} & \Xscr. 
\end{tikzcd} \]
\item If $G = C_p$, then the right adjoint $j_{\ast}$ is implemented by (pointwise) relative right Kan extension along $BG \subset \Oscr_G^{\op}$, hence $j_{\ast}$ is fully faithful and we have the factorization
\[ \begin{tikzcd}[row sep=4ex, column sep=6ex, text height=1.5ex, text depth=0.25ex]
\Xscr_{\hh G} \ar{r}[swap]{j_{\ast}} \ar[bend left]{rr}{\pi^{\ast}} & \Xscr_G \ar{r}[swap]{\overline{\pi}^{\ast}} & \Xscr.
\end{tikzcd} \]
\end{enumerate}
\end{lemma}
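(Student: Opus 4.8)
The plan is to derive all three assertions from the two section models $\Xscr_G = \Sect_{\Oscr_G^{\op}}(\underline{\Xscr}^{\hh G})$ and $\Xscr_{\hh G} = \Sect_{BG}(\underline{\Xscr})$ together with Lemma~\ref{lem:sect}. The preliminary observation is that, since $\Xscr^{\hh(-)}$ is by construction the right Kan extension of $\Xscr\colon BG \to \RTop$ along the fully faithful inclusion $\iota\colon BG \hookrightarrow \Oscr_G^{\op}$, its restriction along $\iota$ recovers $\Xscr$; passing to the $(-)^{\dagger}$-descriptions and unstraightening then gives a canonical equivalence $\underline{\Xscr}^{\hh G}\times_{\Oscr_G^{\op}} BG \simeq \underline{\Xscr}$. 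Hence restriction of sections along $\iota$ defines a functor $j^\ast\colon \Xscr_G \to \Xscr_{\hh G}$, which is assertion~(1); and by Lemma~\ref{lem:sect}(2) it preserves finite limits and all colimits, so it is the inverse image part of a geometric morphism and in particular admits a right adjoint $j_\ast$. This settles the first claim.

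For~(2), I would identify $\overline{\pi}^\ast$ and $\pi^\ast$ with evaluation functors. Unwinding the definition of $\Xscr^{\dagger}_{(-)}$ in Construction~\ref{cons:orb}, $\overline{\pi}^\ast\colon \Xscr_G \to \Xscr$ is the evaluation of a section at the object $G/1 \in \Oscr_G^{\op}$, i.e.\ restriction along $\{G/1\} \hookrightarrow \Oscr_G^{\op}$, while $\pi^\ast\colon \Xscr_{\hh G} \to \Xscr$ is evaluation at the basepoint of $BG$. Since $\{G/1\} \hookrightarrow \Oscr_G^{\op}$ factors through $\{\ast\} \hookrightarrow BG \hookrightarrow \Oscr_G^{\op}$ and restriction of sections is functorial in the indexing $\infty$-category (using the compatibility $\underline{\Xscr}^{\hh G}|_{BG} \simeq \underline{\Xscr}$), the factorization $\overline{\pi}^\ast \simeq \pi^\ast \circ j^\ast$ is immediate. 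This argument works for an arbitrary finite group.

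The real content is~(3), where $G = C_p$. Here $j_\ast$ already exists as the right adjoint of $j^\ast$; the task is to show it is computed by the pointwise relative right Kan extension along $\iota$. This is not covered by Lemma~\ref{lem:sect}(3): that lemma's finiteness hypothesis fails, since the relevant comma $\infty$-categories $BC_p \times_{\Oscr_{C_p}^{\op}} (\Oscr_{C_p}^{\op})_{x/}$ are not finite (for $x = C_p/C_p$ this comma category is equivalent to $BC_p$ itself, and for $x = C_p/1$ to a contractible groupoid). I would instead verify existence of the pointwise relative right Kan extension directly: each fiber $\Xscr^{\hh H}$ of $\underline{\Xscr}^{\hh C_p}$ is an $\infty$-topos, hence complete, so the comma-category limits exist fiberwise; and by the $p$-limit criterion recalled in the proof of Proposition~\ref{prop:recoll}, such a limit over $x$ is a $p$-limit precisely when it is preserved by $\alpha^\ast\colon (\underline{\Xscr}^{\hh C_p})_x \to (\underline{\Xscr}^{\hh C_p})_{x'}$ for every $\alpha\colon x' \to x$ in $\Oscr_{C_p}^{\op}$. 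Over $x = C_p/C_p$ this is vacuous, as $C_p/C_p$ is initial in $\Oscr_{C_p}^{\op}$ and admits no nontrivial morphisms into it; over $x = C_p/1$ the comma $\infty$-category is a contractible groupoid, so the limit in question is computed by evaluation at any of its objects and is therefore preserved by every transition functor. Hence the pointwise relative right Kan extension along $\iota$ exists, so it computes $j_\ast$; being a Kan extension along the fully faithful $\iota$, $j_\ast$ is fully faithful. The factorization $\pi^\ast \simeq \overline{\pi}^\ast \circ j_\ast$ then drops out, since for a section $s$ over $BC_p$ one has $(j_\ast s)(C_p/1) \simeq s(\ast)$ by full faithfulness of $\iota$, while $\overline{\pi}^\ast$ evaluates at $C_p/1$ and $\pi^\ast$ at $\ast$.

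I expect the main obstacle to be exactly this last verification: one must be careful that, although the comma categories governing the pointwise Kan extension are genuinely infinite and the cartesian transition functors of $\underline{\Xscr}^{\hh C_p}$ (being inverse images of geometric morphisms) need not preserve infinite limits, the very simple shape of $\Oscr_{C_p}^{\op}$---only the free orbit carries nontrivial automorphisms, and $C_p/C_p$ is initial---forces the $p$-limit condition anyway. For a general finite group this collapses: the transition functor $\Xscr^{\hh H} \to \Xscr^{\hh K}$ attached to a proper inclusion $K \subsetneq H$ of nontrivial subgroups genuinely fails to preserve the relevant limits, which is the structural reason the Borel/recollement description in this section is confined to $C_p$.
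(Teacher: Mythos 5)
Your proof is correct and takes essentially the same route as the paper: (1) and (2) are obtained by unwinding the section models and the evaluation functors, and (3) is proved by verifying that the pointwise $p$-right Kan extension along $BC_p \subset \Oscr_{C_p}^{\op}$ exists, using the $p$-limit criterion already recalled in the proof of Proposition~\ref{prop:recoll}. The paper is terser at step (3) --- it only cites the relevant HTT propositions and asserts the condition is vacuous for $C_p$ --- whereas you spell out the check at both orbits (vacuous at $C_p/C_p$ since only the identity maps to it in $\Oscr_{C_p}^{\op}$; trivial at $C_p/1$ since the comma category has an initial object, so the limit is evaluation and is preserved by any functor), and you also correctly note that Lemma~\ref{lem:sect}(3) does not literally apply because its finiteness hypothesis fails. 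One small slip in your closing remark: for $K \subsetneq H$ the cartesian transition functor of $\underline{\Xscr}^{\hh G}$, being the left adjoint along $G/H \to G/K$ in $\Oscr_G^{\op}$, goes $\Xscr^{\hh K} \to \Xscr^{\hh H}$ rather than the other way; the obstruction for a general $G$ is really that the $p$-limit condition at an intermediate orbit $G/H$ (with $1 \neq H \subsetneq G$) involves the transition functors $\Xscr^{\hh H} \to \Xscr^{\hh K}$ for $H \subsetneq K$, which need not preserve infinite limits --- but this does not affect the $C_p$ argument.
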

\begin{proof} By definition, $\overline{\pi}^{\ast}$ is given by restriction of sections (valued in $\underline{\Xscr}^{\hh G}$) along $\{G/1 \} \subset \Oscr_G^{\op}$, and $\pi^{\ast}$ is given by restriction of sections (valued in $\underline{\Xscr}$) along $\{G/1 \} \subset BG$. If we define $j^{\ast}: \Xscr_G \to \Xscr^{h G} \simeq \Xscr_{\hh G}$ as restriction of sections along $BG \subset \Oscr^{\op}_G$, then we have defined a functor $j^*$ satisfying (1) and (2).

%

Note that the right adjoint $j_{\ast}$ exists by presentability considerations. However, we observe that the hypotheses of (the dual of) \cite[Proposition~4.3.2.15]{htt} and \cite[Proposition~4.3.1.10]{htt} for the existence of a dotted lift in the commutative diagram
\[ \begin{tikzcd}[row sep=4ex, column sep=4ex, text height=1.5ex, text depth=0.25ex]
BG \ar{r} \ar{d} & \underline{\Xscr}^{\hh} \ar{d} \\
\Oscr_G^{\op} \ar{r}[swap]{=} \ar[dotted]{ru} & \Oscr_G^{\op}
\end{tikzcd} \]
are not satisfied \emph{unless} $G = C_p$, since the left adjoint in a geometric morphism need not preserve infinite limits (if $G = C_p$, then condition (2) in \cite[Proposition~4.3.1.10]{htt} is vacuous; we already used this in the proof of Proposition~\ref{prop:recoll}). Therefore, we do not know if $j_{\ast}$ is computed by the relative right Kan extension in general; in particular, we do not know that $j_{\ast}$ is fully faithful unless $G = C_p$. However, if $G = C_p$, then the final assertions of the lemma follow immediately.
\end{proof}

\begin{lemma}  \label{lem:i-closed} For any finite group $G$, there is a canonical adjunction
\[ i^{\ast}:{\Xscr_{G}} \rightleftarrows {\Xscr^{\hh G}}:i_{\ast}, \]
where $i^{\ast}$ is implemented by restriction of sections along $\{ G/G \} \subset \Oscr_G^{\op}$, such that the diagrams
\[ \begin{tikzcd}[row sep=4ex, column sep=4ex, text height=1.5ex, text depth=0.25ex]
\Xscr_{G} \ar{r}{i^{\ast}} \ar{d}{\overline{\pi}^{\ast}} & \Xscr^{\hh G} \ar{d} \\
\Xscr \ar{r} & \ast
\end{tikzcd}, \quad
\begin{tikzcd}[row sep=4ex, column sep=4ex, text height=1.5ex, text depth=0.25ex]
\Xscr_{G} \ar{d}{\overline{\pi}^{\ast}} & \Xscr^{\hh G} \ar{d} \ar{l}[swap]{i_{\ast}} \\
\Xscr & \ast \ar{l}
\end{tikzcd} \]
commute.
\end{lemma}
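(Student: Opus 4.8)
The plan is to read off $i^{\ast}$, $i_{\ast}$, and the two squares from the general analysis of sections of topos fibrations in Lemma~\ref{lem:sect}. I would apply that lemma to the cartesian fibration $\underline{\Xscr}^{\hh G} \to \Oscr_G^{\op}$ of Construction~\ref{construct:borel-fix} together with the inclusion $\iota: \{G/G\} \hookrightarrow \Oscr_G^{\op}$ of the \emph{initial} object of $\Oscr_G^{\op}$. By construction the fiber of $\underline{\Xscr}^{\hh G}$ over $G/G$ is $\Xscr^{\hh G} = \lim_{BG} \Xscr$, so restriction of sections along $\iota$ is a functor $i^{\ast}: \Xscr_G \to \Xscr^{\hh G}$; this is the asserted functor. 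By Lemma~\ref{lem:sect}(2) it preserves finite limits and all small colimits, and since $\Xscr_G$ (by Lemma~\ref{lem:sect}(1)) and $\Xscr^{\hh G}$ are $\infty$-topoi, it therefore admits a right adjoint $i_{\ast}$ with $(i^{\ast}, i_{\ast})$ a geometric morphism.

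To identify $i_{\ast}$ I would invoke Lemma~\ref{lem:sect}(3). Its finiteness hypothesis requires $\{G/G\} \times_{\Oscr_G^{\op}} (\Oscr_G^{\op})^{x/}$ to be finite for every $x \in \Oscr_G^{\op}$; but this fiber product is the set of morphisms $x \to G/G$ in $\Oscr_G^{\op}$, equivalently the set of $G$-maps $G/G \to x$, which is empty unless $x = G/G$ and a single point when $x = G/G$ --- finite in every case. Hence $i_{\ast}$ is the relative right Kan extension along $\iota$: for $Y \in \Xscr^{\hh G}$ the section $i_{\ast}(Y)$ takes the value $Y$ at $G/G$ and, at every other orbit $G/H$, the value of an empty limit, namely the terminal object $1_{\Xscr^{\hh H}}$. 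This assignment is a genuine cartesian section because the cartesian pullback functors of $\underline{\Xscr}^{\hh G}$ are left-exact and hence preserve terminal objects. In particular $i^{\ast} i_{\ast} \simeq \id$, so $i_{\ast}$ is fully faithful --- this is the data exhibiting $\Xscr^{\hh G}$ as the closed part of the recollement on $\Xscr_G$ complementary to the open part $\Xscr_{\hh G}$ of Lemma~\ref{lem:j-open}.

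It remains to verify the two squares (here $G$ is taken nontrivial). The bottom-right corner of the first square is the terminal object $\ast$ of $\LTop$, so both composites $\Xscr_G \to \ast$ agree up to canonical equivalence and the square commutes. For the second, recall from Lemma~\ref{lem:j-open} that $\overline{\pi}^{\ast}$ is restriction of sections along $\{G/1\} \subset \Oscr_G^{\op}$; thus $\overline{\pi}^{\ast} i_{\ast}(Y) \simeq i_{\ast}(Y)(G/1)$, which is the terminal object $1_{\Xscr}$ by the previous paragraph, there being no morphism $G/1 \to G/G$ in $\Oscr_G^{\op}$. On the other hand the composite $\Xscr^{\hh G} \to \ast \to \Xscr$ is the constant functor at $1_{\Xscr}$, the functor $\ast \to \Xscr$ being the one selecting the terminal object. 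The two constant functors agree, so the square commutes.

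I do not expect a real obstacle here: the content is confined to matching the situation to the hypotheses of Lemma~\ref{lem:sect}, evaluating the comma categories $\{G/G\} \times_{\Oscr_G^{\op}} (\Oscr_G^{\op})^{x/}$ --- the one place where the combinatorics of $\Oscr_G^{\op}$ (its initial object receiving only the identity) genuinely enters --- and checking that extension by terminal objects defines a cartesian section. Everything else is formal.
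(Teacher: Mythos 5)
Your proof is correct and takes the same route as the paper's: $i_{\ast}$ is the relative right Kan extension along $\{G/G\} \subset \Oscr_G^{\op}$ (you invoke Lemma~\ref{lem:sect}(3) explicitly, checking that the comma categories $\{G/G\} \times_{\Oscr_G^{\op}} (\Oscr_G^{\op})^{x/}$ are empty or singletons, while the paper's proof cites the preservation of terminal objects by the cartesian pullback functors, which amounts to the same justification), and both squares drop out of the resulting pointwise formula. One small terminological slip: $\Xscr_G$ is the $\infty$-category of \emph{all} sections of $\underline{\Xscr}^{\hh G}$, not cartesian ones, so you should simply say $i_{\ast}(Y)$ defines a section --- it is in general not a cartesian section, and that stronger claim is neither needed nor true.
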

\begin{proof} Since the left adjoint of a geometric morphism preserves the terminal object, we see that the right adjoint $i_{\ast}$ to the given functor $i^{\ast}$ exists and is computed by relative right Kan extension along $\{ G/G \} \subset \Oscr^{\op}_G$. The commutativity of the first square is trivial, and the commutativity of the second square follows from the formula for relative right Kan extension.
\end{proof}


To proceed further, it will be convenient to refer to the $\infty$-category of recollements. If a functor $f^*: \Xscr \to \Xscr'$ is a morphism of recollements $(\Uscr, \Zscr) \to (\Uscr', \Zscr')$, then we say the morphism is {\bf strict} if $f^*$ also commutes with the gluing functors \cite{quigley-shah}*{Definition 1.4}. The collection of recollements and strict morphisms forms an $\infty$-category $\mathrm{Recoll}_0$. We shall also consider the $\infty$-category $\mathrm{Recoll}_0^{\stab}$ of {\bf strict stable recollements} where the underlying $\infty$-categories are stable and functors are exact. If $S$ is a small $\infty$-category, we say that a functor $S \rightarrow \mathrm{Recoll}^{(\stab)}_0$ is a {\bf (stable) fiberwise recollement}. 

\begin{corollary} \label{cor:recoll} Let $\Xscr$ be a $C_p$-$\infty$-topos. Then the functor $\Xscr_{(-)}$ lifts to a fiberwise recollement given by
\[
(\Xscr_{\hh(-)}, \Xscr^c):\Oscr^{\op}_{\C_p} \rightarrow \mathrm{Recoll}_0.
\]
\end{corollary}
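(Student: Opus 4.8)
The plan is to produce the claimed lift of $\Xscr_{(-)}$ by specifying its \emph{gluing functor} over $\Oscr_{C_p}^{\op}$ and invoking the dictionary between recollements and gluing functors. Recall (from our conventions following \cite{quigley-shah}*{\S1} and \cite{higheralgebra}*{\S A.8}) that a recollement on an $\infty$-category $\Yscr$ with open part $\Uscr$ and closed part $\Zscr$ is the same datum as a left-exact functor $\phi\colon \Uscr \to \Zscr$, via $\Yscr \simeq \Uscr \overrightarrow{\times} \Zscr$, and that a strict morphism of recollements is the same as a commuting square of gluing functors (\cite{higheralgebra}*{Proposition~A.8.8}). Hence a functor $\Oscr_{C_p}^{\op} \to \mathrm{Recoll}_0$ whose ``ambient'', ``open part'' and ``closed part'' functors are prescribed to be $\Xscr_{(-)}$, $\Xscr_{\hh(-)}$ and $\Xscr^c$ amounts to a natural transformation $\tau\colon \Xscr_{\hh(-)} \Rightarrow \Xscr^c$ of functors $\Oscr_{C_p}^{\op} \to \widehat{\Cat}_\infty$ with left-exact components, together with a compatible identification of $\Xscr_{(-)}$ with the fiberwise right-lax limit of $\tau$.

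First I would build $\tau$. Over $C_p/C_p$ I put $\tau_{C_p/C_p} = \theta\colon \Xscr_{\hh C_p} \to \Xscr^{\hh C_p}$, the unstable gluing functor of Definition~\ref{def:glue}, which is left-exact; over $C_p/1$ there is a unique choice, since $\ast = \Delta^0$ is the initial $\infty$-topos and thus terminal as an object of $\widehat{\Cat}_\infty$ relevant here. The key point is that, because $\Xscr^c$ takes the terminal value $\ast$ at $C_p/1$, all remaining data of such a natural transformation --- the naturality square over $C_p/C_p \to C_p/1$, the equivariance for $\Aut(C_p/1) = C_p$, and every higher coherence --- is indexed by mapping spaces into $\ast$ and is therefore contractible. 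Concretely, restriction along $\{C_p/C_p\} \hookrightarrow \Oscr_{C_p}^{\op}$ identifies the space of natural transformations $\Xscr_{\hh(-)} \Rightarrow \Xscr^c$ with left-exact components with the space of left-exact functors $\Xscr_{\hh C_p} \to \Xscr^{\hh C_p}$, of which $\theta$ is a point; so $\tau$ exists and is pinned down by $\theta$.

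Next I would compute the fiberwise right-lax limit of $\tau$ and match it with $\Xscr_{(-)}$. Over $C_p/C_p$, Proposition~\ref{prop:recoll} gives $\Xscr_{\hh C_p} \overrightarrow{\times} \Xscr^{\hh C_p} \simeq \Xscr_{C_p} = \Xscr_{(-)}(C_p/C_p)$, carrying exactly the recollement $(\Xscr_{\hh(-)}, \Xscr^c)$ evaluated there (the fully faithful $j_\ast$, $i_\ast$ being those of Lemmas~\ref{lem:j-open} and~\ref{lem:i-closed}). Over $C_p/1$, $\Xscr \overrightarrow{\times} \ast \simeq \Xscr = \Xscr_{(-)}(C_p/1)$, carrying the degenerate recollement with open part $\Xscr$ and closed part $\ast$. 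It remains to identify the transition functor produced by $\tau$ with $\overline{\pi}^\ast = \Xscr_{(-)}(\iota)$ for the unique map $\iota\colon C_p/C_p \to C_p/1$: the morphism of recollements induced by the naturality square of $\tau$ restricts to $\pi^\ast$ on open parts and to the unique functor $\Xscr^{\hh C_p} \to \ast$ on closed parts; by Lemma~\ref{lem:j-open}(2) and Lemma~\ref{lem:i-closed}, $\overline{\pi}^\ast$ has precisely these restrictions, it carries $j^\ast$- (resp. $i^\ast$-) equivalences to equivalences (resp. to $i'^\ast$-equivalences, automatically, since the target closed part is $\ast$), so it is a strict morphism of recollements; and a strict morphism is determined by its open and closed restrictions together with the exchange transformation, which here is forced to be invertible as it lands in $\ast$. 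Hence the right-lax limit of $\tau$ recovers $\Xscr_{(-)}$, and $(\Xscr_{\hh(-)}, \Xscr^c)$ is the asserted fiberwise recollement.

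The main obstacle is not mathematical depth but coherence bookkeeping: making rigorous that $\tau$ is determined by the single functor $\theta$ (with no obstruction beyond the choice of $\theta$), and that the right-lax-limit construction reproduces $\Xscr_{(-)}$ \emph{on the nose} --- in particular that the transition functor it yields is canonically $\overline{\pi}^\ast$ and not merely abstractly equivalent to it. Both points dissolve once one observes that $\Xscr^c$ is valued in the terminal object $\ast$ at $C_p/1$ and invokes the uniqueness clause in the recollement--gluing-functor correspondence (\cite{higheralgebra}*{Proposition~A.8.8}); no input is needed beyond Proposition~\ref{prop:recoll} and Lemmas~\ref{lem:j-open},~\ref{lem:i-closed}.
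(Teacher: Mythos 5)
Your argument is correct, and it is essentially the same as the paper's, phrased from the opposite side of the recollement--gluing-functor dictionary. The paper exhibits the single commutative diagram
\[
\begin{tikzcd}
\Xscr_{\hh C_p} \ar[swap]{d}{\pi^*} & \ar[swap]{l}{j^*} \Xscr_{C_p} \ar{d}{\overline{\pi}^*} \ar{r}{i^*} & \Xscr^{\hh C_p} \ar{d} \\
\Xscr  & \Xscr \ar{l}{\id} \ar{r} & *
\end{tikzcd}
\]
as a strict morphism of recollements (using Lemma~\ref{lem:j-open}(2) for the left square and Lemma~\ref{lem:i-closed} for the right square), and from this immediately concludes the fiberwise recollement exists. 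You instead build the gluing functor in families --- the natural transformation $\tau: \Xscr_{\hh(-)} \Rightarrow \Xscr^c$ with $\tau_{C_p/C_p} = \theta$ --- and match its fiberwise right-lax limit with $\Xscr_{(-)}$ via Proposition~\ref{prop:recoll} and the same two lemmas. These are the two directions of \cite{higheralgebra}*{Proposition~A.8.8}, so the mathematical content is identical. What your version adds is an explicit treatment of the coherence question that the paper leaves implicit: a functor from $\Oscr_{C_p}^{\op}$ requires compatibility with $\Aut(C_p/1) = C_p$ and all higher simplices, and you correctly point out that because $\Xscr^c$ takes the terminal value $\ast$ at $C_p/1$, the twisted-arrow end computing the space of such $\tau$ collapses to $\Map(\Xscr_{\hh C_p}, \Xscr^{\hh C_p})$, so $\theta$ determines $\tau$ with no obstruction. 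That observation --- that the data over $C_p/1$ is degenerate and hence cost-free --- is exactly what makes the paper's terse proof legitimate, and it is good to have it spelled out.
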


\begin{proof} Lemmas~\ref{lem:i-closed} and~\ref{lem:j-open} demonstrate that the diagram
\[
\begin{tikzcd}
\Xscr_{\hh C_p} \ar[swap]{d}{\pi^*} & \ar[swap]{l}{j^*} \Xscr_{C_p} \ar{d}{\overline{\pi}^*} \ar{r}{i^*} & \Xscr^{\hh C_p} \ar{d} \\
\Xscr  & \Xscr \ar{l} {\id} \ar{r} & *
\end{tikzcd}
\]
specifies a strict morphism of recollements, whence we obtain a functor as desired.
\end{proof}


Given a stable fiberwise recollement $\Fscr: \Oscr_G^{\op} \rightarrow \mathrm{Recoll}_0^{\stab}$, we say that $\Fscr$ is a {\bf $G$-stable recollement} if the underlying $G$-$\infty$-category of $\Fscr$ is $G$-stable \cite{quigley-shah}*{Definition 1.39}. This implies that the open and closed parts of $\Fscr$ are $G$-stable $G$-$\infty$-categories and the adjunctions are $G$-exact $G$-adjunctions \cite{quigley-shah}*{Corollary 1.38}.

\begin{proposition} \label{prop:preserves} Let $\Xscr$ be a $C_p$-$\infty$-topos. Then $\underline{\Sp}^{C_p}(\Xscr)$ admits a $C_p$-stable recollement given by $$(\underline{\Sp}^{C_p}(\underline{\Xscr}_{\hh C_p}), \underline{\Sp}^{C_p}(\underline{\Xscr}^c)).$$
\end{proposition}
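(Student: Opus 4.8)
The plan is to apply the two-step construction $\underline{\Sp}^{C_p} = \underline{\Sp} \circ \underline{\CMon}^{C_p}$ (Construction~\ref{rem:nardin}) to the fiberwise recollement $(\underline{\Xscr}_{\hh C_p}, \underline{\Xscr}^c)$ on $\underline{\Xscr}_{C_p}$ furnished by Corollary~\ref{cor:recoll}, and to show that each of $\underline{\CMon}^{C_p}$ and $\underline{\Sp}$ carries a fiberwise recollement of fiberwise-presentable $C_p$-$\infty$-categories with $C_p$-left-exact gluing functor to one of the same type, preserving the open and closed parts. (Note the gluing functor of Corollary~\ref{cor:recoll} is, fiberwise over $C_p/C_p$, the left-exact functor $\theta$ of Definition~\ref{def:glue} and, over $C_p/1$, the terminal functor, so it is $C_p$-left-exact. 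Note also that $\underline{\Xscr}_{\hh C_p}$ and $\underline{\Xscr}^c$ are fiberwise presentable and, being the open and closed parts of that recollement, admit finite $C_p$-products and $C_p$-coproducts, hence all finite $C_p$-limits by Remark~\ref{rem:ReallyObviousRemark}, so $\underline{\Sp}^{C_p}$ is defined on them.) Granting the two claims, the output will be a fiberwise recollement on $\underline{\Sp}(\underline{\CMon}^{C_p}(\underline{\Xscr}_{C_p})) = \underline{\Sp}^{C_p}(\Xscr)$ with open part $\underline{\Sp}(\underline{\CMon}^{C_p}(\underline{\Xscr}_{\hh C_p})) = \underline{\Sp}^{C_p}(\underline{\Xscr}_{\hh C_p})$ and closed part $\underline{\Sp}(\underline{\CMon}^{C_p}(\underline{\Xscr}^c)) = \underline{\Sp}^{C_p}(\underline{\Xscr}^c)$.

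\textbf{The $\underline{\CMon}^{C_p}$-step.} Write $\underline{\CMon}^{C_p}(\C) = \underline{\Fun}^{\times}_{C_p}(\underline{\A}^{\eff}(\Fin_{C_p}), \C) \subset \underline{\Fun}_{C_p}(\underline{\A}^{\eff}(\Fin_{C_p}), \C)$ as in Construction~\ref{rem:nardin}. Since finite limits and colimits in $\underline{\Fun}_{C_p}(\underline{\A}^{\eff}(\Fin_{C_p}), \C)$ are computed objectwise, a fiberwise recollement $(\Uscr, \Zscr)$ on $\C$ induces an objectwise fiberwise recollement on $\underline{\Fun}_{C_p}(\underline{\A}^{\eff}(\Fin_{C_p}), \C)$, with open part $\underline{\Fun}_{C_p}(\underline{\A}^{\eff}(\Fin_{C_p}), \Uscr)$, closed part $\underline{\Fun}_{C_p}(\underline{\A}^{\eff}(\Fin_{C_p}), \Zscr)$, and the four recollement functors given by postcomposition. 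Because $j^*, i^*$ are exact and $j_*, i_*$ are left-exact, postcomposition with any of them preserves the property of sending finite $C_p$-coproducts to finite $C_p$-products, and the full subcategory of such $C_p$-functors is closed under finite limits. Hence the recollement restricts to $\underline{\CMon}^{C_p}(\C)$: the restriction of a recollement to a full subcategory closed under the endofunctors $j_*j^*$, $i_*i^*$ and under finite limits is again a recollement, since the fracture square of any object of the subcategory is a pullback computed objectwise and therefore stays inside the subcategory. Its open part is $j^*(\underline{\CMon}^{C_p}(\C)) = \underline{\CMon}^{C_p}(\Uscr)$ and its closed part is $i^*(\underline{\CMon}^{C_p}(\C)) = \underline{\CMon}^{C_p}(\Zscr)$, using that $i_*$ (resp. $j_*$) is fully faithful and left-exact, hence preserves and reflects finite $C_p$-products. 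Functoriality of $\underline{\CMon}^{C_p}$ and its compatibility with $C_p$-adjunctions (Proposition~\ref{prp:PresentableFunctorialityOfGStabilization}) ensure these pieces assemble into a genuine fiberwise recollement, i.e., a functor $\Oscr_{C_p}^{\op} \to \mathrm{Recoll}_0$.

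\textbf{The $\underline{\Sp}$-step.} The stabilization functor on presentable $\infty$-categories preserves limits and satisfies $\Sp(\Fun(\Delta^1, \Escr)) \simeq \Fun(\Delta^1, \Sp(\Escr))$. Thus for a recollement $\Escr \simeq \Uscr \overrightarrow{\times} \Zscr = \Uscr \times_{\phi, \Zscr, \ev_1} \Fun(\Delta^1, \Zscr)$ with $\phi$ left-exact, applying $\Sp$ yields $\Sp(\Escr) \simeq \Sp(\Uscr) \overrightarrow{\times} \Sp(\Zscr)$, a recollement with open part $\Sp(\Uscr)$, closed part $\Sp(\Zscr)$, and gluing functor the left-exact functor $\Sp(\phi)$. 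Applying this fiberwise over $\Oscr_{C_p}^{\op}$ (as $\underline{\Sp}$ is defined fiberwise) carries the fiberwise recollement produced by the previous step to a fiberwise stable recollement on $\underline{\Sp}^{C_p}(\Xscr)$ with open part $\underline{\Sp}^{C_p}(\underline{\Xscr}_{\hh C_p})$ and closed part $\underline{\Sp}^{C_p}(\underline{\Xscr}^c)$, as desired.

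\textbf{$C_p$-stability and the main obstacle.} To upgrade this to a $C_p$-stable recollement in the sense of \cite{quigley-shah}*{Definition~1.39} one must check that its underlying $C_p$-$\infty$-category is $C_p$-stable; but that $C_p$-$\infty$-category is $\underline{\Sp}^{C_p}(\underline{\Xscr}_{C_p}) = \underline{\Sp}^{C_p}(\Xscr)$, which is $C_p$-stable by Construction~\ref{rem:nardin}. Then \cite{quigley-shah}*{Corollary~1.38} shows the open and closed parts are $C_p$-stable $C_p$-$\infty$-categories and the recollement adjunctions are $C_p$-exact, finishing the proof. The main work is the $\underline{\CMon}^{C_p}$-step: unlike $\underline{\Sp}$, the functor $\underline{\CMon}^{C_p}$ does not act fiberwise on $C_p$-$\infty$-categories, so one must verify directly that the recollement it produces has open and closed parts $\underline{\CMon}^{C_p}(\underline{\Xscr}_{\hh C_p})$ and $\underline{\CMon}^{C_p}(\underline{\Xscr}^c)$ — its gluing functor, by contrast, is \emph{not} obtained from $\theta$ by postcomposition, and identifying the stabilized gluing functor with $\Theta^{\Tate}$ is precisely the content of the remainder of the proof of Theorem~\ref{prop:recoll-stab}.
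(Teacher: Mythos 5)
Your two-step strategy — apply $\underline{\CMon}^{C_p}$ and $\underline{\Sp}$ to the fiberwise recollement of Corollary~\ref{cor:recoll} — is the right shape and matches the paper's approach up to the order of operations (the paper fiberwise stabilizes first, invoking Lemma~\ref{lem:interchange}). The $\underline{\Sp}$-step is correct. The gap is in the $\underline{\CMon}^{C_p}$-step, and it is the step that carries essentially all of the content of the proposition.

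You assert that ``because $j^*, i^*$ are exact and $j_*, i_*$ are left-exact, postcomposition with any of them preserves the property of sending finite $C_p$-coproducts to finite $C_p$-products.'' For the right adjoints $j_*, i_*$ this is fine. For $j^*$ it happens to be true, but not because of exactness (unstably, $j^*$ is only left-exact); the relevant fact is the special identity $j^* \overline{\pi}_* \simeq \pi_*$, which the paper deduces from Lemma~\ref{lem:j-open}(2) and which is what guarantees $\overline{j}^*$ is literally postcomposition with no $L_{\sad}$ correction. For $i^*$ the assertion is \emph{false}, both unstably and after fiberwise stabilization. A $C_p$-commutative monoid valued in $\underline{\Xscr}^c$ must take the value $0$ (resp.\ the terminal object) on $C_p/1_+$, since the coinduction from the trivial fiber of $\underline{\Xscr}^c$ over $C_p/1$ is the zero (resp.\ terminal) functor. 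But if $X$ is a $C_p$-commutative monoid in $\underline{\Sp}(\underline{\Xscr}_{C_p})$ with underlying object $Y = X(C_p/C_{p+})$, then
\[
i^*\bigl(X(C_p/1_+)\bigr) \simeq i^* \overline{\pi}_* \overline{\pi}^*(Y) \simeq i^* j_* \pi_* \overline{\pi}^*(Y) \simeq \Theta\bigl(\pi_* \overline{\pi}^* Y\bigr) \simeq \nu^* \overline{\pi}^* Y,
\]
which is not zero in general: the homotopy-fixed-point gluing functor $\Theta = (-)^{hC_p}\nu(C_p)^*$ does \emph{not} vanish on induced objects --- only $\Theta^{\Tate}$ does, and that is the whole point of genuine stabilization. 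For the same reason $\underline{\CMon}^{C_p}(\underline{\Sp}(\underline{\Xscr}_{C_p}))$ is not closed under $i_* i^*$, so the closure hypothesis in your ``restriction of a recollement to a full subcategory closed under $j_*j^*$, $i_*i^*$ and finite limits'' claim is not satisfied either.

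The paper's proof is designed to avoid exactly this trap: $\overline{i}^*$ is taken to be $L_{\sad}$ composed with postcomposition by $i^*$, the joint conservativity is verified using the \emph{right} adjoint $\overline{i}^!$ (which, being a right adjoint, does preserve $C_p$-products and hence acts by postcomposition on $C_p$-commutative monoids), and one separately checks $L_{\sad}\, i_* \simeq \overline{i}_*\, L_{\sad}$ so that $i^!$ descends. This bookkeeping of which recollement functors commute with $L_{\sad}$ is the substance of the proposition, and your proposal does not engage with it.
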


\begin{proof} Recall (Construction~\ref{rem:nardin}) that genuine stabilization is obtained by taking $G$-commutative monoids and fiberwise stabilization, where by Lemma~\ref{lem:interchange} we may perform these two operations in either order. Fiberwise stabilization preserves recollements by Lemma~\ref{lem:stabilizationOfRecollement}. Since $\underline{\Sp}^{C_p}(\Xscr)$ is $C_p$-stable, it suffices to check that taking $C_p$-commutative monoids on the recollement of Corollary~\ref{cor:recoll} gives us a fiberwise recollement
\[ \left( \underline{\CMon}^{C_p}(\underline{\Sp}(\underline{\Xscr}_{\hh C_p})), \: \underline{\CMon}^{C_p}(\underline{\Sp}(\underline{\Xscr}^c) \right). \]
Since $\Xscr^c(C_p/1) \simeq \ast$, this is obvious over the orbit $C_p/1$, so it suffices to check the recollement conditions over the orbit $C_p/C_p$. For this, first note that if we take $K = \underline{\A}^{\eff}(\Fin_{C_p})$ in \cite{quigley-shah}*{Lemma 1.33}, we obtain a recollement
\[ \begin{tikzcd}[row sep=4ex, column sep=6ex, text height=1.5ex, text depth=0.25ex]
\Fun_{C_p}(\underline{\A}^{\eff}(\Fin_{C_p}),\underline{\Sp}(\underline{\Xscr}_{\hh C_p})) \ar[shift right=1,right hook->]{r}[swap]{j_{*}} & \Fun_{C_p}(\underline{\A}^{\eff}(\Fin_{C_p}),\underline{\Sp}(\underline{\Xscr}_{C_p})) \ar[shift right=2]{l}[swap]{j^{*}} \ar[shift left=2]{r}{i^{*}} & \Fun_{C_p}(\underline{\A}^{\eff}(\Fin_{C_p}),\underline{\Sp}(\underline{\Xscr}^c))\ar[shift left=1,left hook->]{l}{i_{*}}.
\end{tikzcd} \]

Here, we denote by $j_{\ast}$ etc. the functors induced by postcomposition by the same denoted $C_p$-functors for the fiberwise recollement $(\underline{\Sp}(\underline{\Xscr}_{\hh C_p}), \underline{\Sp}(\underline{\Xscr}^c))$ on $\underline{\Sp}(\underline{\Xscr}_{C_p})$. Since the $\infty$-category of $C_p$-Mackey functors in a fiberwise stable presentable $C_p$-$\infty$-category $\C$ is a Bousfield localization of $\Fun_{C_p}(\underline{\A}^{\eff}(\Fin_{C_p}), \C)$, and postcomposition by the $C_p$-left-exact $C_p$-functors $j_{\ast}$, $i_{\ast}$ preserves $C_p$-commutative monoid objects, we have induced adjunctions

\[ \begin{tikzcd}[row sep=4ex, column sep=6ex, text height=1.5ex, text depth=0.25ex]
\CMon^{C_p}(\underline{\Sp}(\underline{\Xscr}_{\hh C_p})) \ar[shift right=1,right hook->]{r}[swap]{\overline{j}_{*}} & \CMon^{C_p}(\underline{\Sp}(\underline{\Xscr}_{C_p})) \ar[shift right=2]{l}[swap]{\overline{j}^{*}} \ar[shift left=2]{r}{\overline{i}^{*}} & \CMon^{C_p}(\underline{\Sp}(\underline{\Xscr}^c)) \ar[shift left=1,left hook->]{l}{\overline{i}_{*}},
\end{tikzcd} \]
in which $\overline{j}_{\ast}, \overline{i}_{\ast}$ are defined by postcomposition by $j_{\ast}, i_{\ast}$ and their left adjoints $\overline{j}^{\ast}, \overline{i}^{\ast}$ are defined by postcomposition by $j^{\ast}, i^{\ast}$ followed by $L_{\sad}$ (Notation~\ref{ntn:sad}). It is thus clear that $\overline{j}_{\ast}, \overline{i}_{\ast}$ remain fully faithful.

Furthermore, we have that $j^{\ast} \overline{\pi}_{\ast} \simeq \pi_{\ast}$, as this may be checked after application of $j_{\ast}$, in which case it follows from Lemma~\ref{lem:j-open}(2). Therefore, $\overline{j}^{\ast}$ is computed already by postcomposition by $j^{\ast}$. We deduce that $\overline{j}^{\ast} \overline{i}_{\ast} \simeq 0$ by the same property for $j^{\ast} i_{\ast}$.

Next, we observe that we have a canonical equivalence $L_{\sad} i_{\ast} \simeq \overline{i}_{\ast} L_{\sad}$. Indeed, it suffices to show that $i_{\ast}$ of an acyclic object $N$ is again acyclic. But for this, note that $$i^{\ast} L_{\sad} i_{\ast} N \simeq L_{\sad} i^{\ast} i_{\ast} N \simeq L_{\sad} N \simeq 0 \quad \text{and} \quad j^{\ast} L_{\sad} i_{\ast} N \simeq L_{\sad} j^{\ast} i_{\ast} N \simeq 0,$$ 
and hence $L_{\sad} N \simeq 0$ by joint conservativity of $j^{\ast}, i^{\ast}$. It follows that the \emph{right adjoint} $i^!$ of $i_{\ast}$ (which exists since we are in the situation of a \emph{stable} recollement) descends to $$\overline{i}\,^!: \CMon^{C_p}(\underline{\Sp}(\underline{\Xscr}_{C_p})) \to \CMon^{C_p}(\underline{\Sp}(\underline{\Xscr}^c)), $$
where $\overline{i}\,^!$ is induced by postcomposition by the $C_p$-right adjoint $i^!: \underline{\Sp}(\underline{\Xscr}_{C_p}) \to \underline{\Sp}(\underline{\Xscr}^c)$.

Finally, note that instead of showing joint conservativity of $\overline{j}^{\ast}, \overline{i}^{\ast}$, in the stable setting it suffices instead to show joint conservativity of $\overline{j}^{\ast}, \overline{i}\,^!$. But this follows immediately from the same property for $j^{\ast}, i^!$.
\end{proof}

In particular, evaluating at $C_p/C_p$ and using the identification of Lemma~\ref{lem:borel-computation}, we have the gluing functor\footnote{Note that we now drop the temporary `overline' decoration for the recollement functors used in the proof of Proposition~\ref{prop:preserves}.}
\begin{equation} \label{eq:g-glue}
i^*j_*:\Sp^{C_p}(\underline{\Xscr}_{\hh C_p}) \simeq \Sp(\Xscr^{hC_p}) \rightarrow \Sp(\Xscr^{\hh C_p}).
\end{equation}
\begin{lemma} \label{lem:killsinduced} The functor $i^*j_*$ vanishes on the stabilization of $\pi_*:\Xscr \rightarrow \Xscr_{\hh C_p}$, i.e., the composite
\[
\Sp(\Xscr) \xrightarrow{\pi_*} \Sp(\Xscr^{hC_p}) \xrightarrow{i^*j_*} \Sp(\Xscr^{\hh C_p})
\]
is nullhomotopic.
\end{lemma}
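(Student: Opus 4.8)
The plan is to deduce the vanishing from the $C_p$-semiadditivity of genuine $C_p$-spectra, by writing the gluing functor $i^{\ast}j_{\ast}$ in a way that exposes a \emph{left} adjoint $j_!$ of $j^{\ast}$ with $i^{\ast}j_! \simeq 0$. Recall that the recollement of Proposition~\ref{prop:preserves}, evaluated at $C_p/C_p$, is a recollement of presentable stable $\infty$-categories; as such, besides the fully faithful right adjoint $j_{\ast}\colon \Sp(\Xscr^{hC_p}) \to \Sp^{C_p}(\Xscr)$ it comes with a fully faithful left adjoint $j_!$ of $j^{\ast}$, and $i^{\ast}j_! \simeq 0$ — this is the standard six-functor package of a stable recollement ($j^{\ast}i_{\ast} \simeq 0$ forces $i^{\ast}j_! \simeq 0$ by adjunction; cf.\ the use of $i^!$ in the proof of Proposition~\ref{prop:preserves} and \cite{barwick-glasman}). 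On the other hand, $\pi^{\ast}\colon \Sp(\Xscr^{hC_p}) \to \Sp(\Xscr)$ — the restriction functor of the $C_p$-$\infty$-category $\underline{\Sp}^{C_p}(\underline{\Xscr}_{\hh C_p})$ along $C_p/1 \to C_p/C_p$, i.e.\ ``underlying object of a Borel $C_p$-spectrum'' — is computed pointwise, hence admits both adjoints $\pi_!$ and $\pi_{\ast}$; the latter is precisely the stabilized functor appearing in the lemma, under the identification $\Xscr_{\hh C_p} \simeq \Xscr^{hC_p}$ of Lemma~\ref{lem:borel-computation}.

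The structural heart of the argument is the identification $\pi^{\ast}\circ j^{\ast} \simeq \res$, where $\res\colon \Sp^{C_p}(\Xscr) = \underline{\Sp}^{C_p}(\Xscr)_{C_p/C_p} \to \underline{\Sp}^{C_p}(\Xscr)_{C_p/1} = \Sp(\Xscr)$ is the restriction functor of $\underline{\Sp}^{C_p}(\Xscr)$ along $C_p/1 \to C_p/C_p$. This holds because $j^{\ast}$ is a $C_p$-functor (being the left adjoint of the $C_p$-adjunction $j^{\ast}\dashv j_{\ast}$, it preserves cocartesian edges over $\Oscr_{C_p}^{\op}$), hence commutes with restriction, and because the recollement of Proposition~\ref{prop:preserves} is trivial over $C_p/1$ — its closed part $\underline{\Sp}^{C_p}(\underline{\Xscr}^c)$ restricts to $\Sp(\ast) = 0$ there — so that $j^{\ast}$ restricts to the identity over $C_p/1$; comparing cocartesian pushforwards then gives $\res \simeq \pi^{\ast}j^{\ast}$. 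Passing to left and to right adjoints identifies $j_!\pi_!$ and $j_{\ast}\pi_{\ast}$ with the induction and coinduction functors $\Ind_1^{C_p}, \mathrm{coInd}_1^{C_p}\colon \Sp(\Xscr)\to\Sp^{C_p}(\Xscr)$ along $C_p/1 \to C_p/C_p$.

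Granting this, the lemma is immediate: since $\underline{\Sp}^{C_p}(\Xscr)$ is $C_p$-stable, hence $C_p$-semiadditive (Construction~\ref{rem:nardin}), the canonical comparison $\Ind_1^{C_p} \Rightarrow \mathrm{coInd}_1^{C_p}$ is an equivalence, so
\[ i^{\ast}j_{\ast}\pi_{\ast} \simeq i^{\ast}\,\mathrm{coInd}_1^{C_p} \simeq i^{\ast}\,\Ind_1^{C_p} \simeq i^{\ast}j_!\pi_! \simeq (i^{\ast}j_!)\,\pi_! \simeq 0. \]
The step I expect to require the most care is the bookkeeping in the previous paragraph: verifying that $j^{\ast}$, as produced by Proposition~\ref{prop:preserves} (a postcomposition followed by $L_{\sad}$), really is a $C_p$-functor restricting to the identity over $C_p/1$, and that $j_!$ exists with $i^{\ast}j_! \simeq 0$ — nothing deep, but it must be spelled out. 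A tidier alternative is to carry the whole argument out at the level of $C_p$-$\infty$-categories, using the $C_p$-adjunctions of Proposition~\ref{prp:PresentableFunctorialityOfGStabilization} throughout and only passing to the fiber over $C_p/C_p$ at the end.
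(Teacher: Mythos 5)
Your proof is correct and relies on the same underlying structure as the paper's, namely the $C_p$-stable recollement of Proposition~\ref{prop:preserves}, but takes a slightly different route through it. The paper directly exploits $G$-exactness of $i^*$: the top square $j_*\pi_* \simeq \overline{\pi}_*$ comes from $j_*$ being a $G$-right adjoint (commuting with coinduction), and the bottom square $i^*\overline{\pi}_* \simeq 0$ comes from $i^*$ being a $G$-exact $G$-functor whose target $\underline{\Sp}(\underline{\Xscr}^c)$ has zero fiber over $C_p/1$, so coinduction out of $0$ gives $0$. Your version instead uses ambidexterity to convert $j_*\pi_* \simeq \mathrm{coInd}$ into $\Ind \simeq j_!\pi_!$ and then the fiberwise recollement fact $i^*j_! \simeq 0$. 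These are two parameterizations of the same observation — both ultimately trade on the closed part being trivial over $C_p/1$ — but your route makes the role of ambidexterity explicit while the paper subsumes it into the cited ``$G$-exactness of the $G$-functors in a $G$-stable recollement.'' One small caveat: your claim that $i^*j_! \simeq 0$ follows by adjunction from $j^*i_* \simeq 0$ is correct in a stable recollement, but it is worth noting that this uses stability (via Yoneda: $\Map(i^*j_!U, Z) \simeq \Map(j_!U, i_*Z) \simeq \Map(U, j^*i_*Z) \simeq *$, hence $i^*j_!U \simeq 0$), which is available here since everything in sight is a stable recollement of presentable stable $\infty$-categories.
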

\begin{proof} By Proposition \ref{prop:preserves} and the $G$-exactness of the $G$-functors in a $G$-stable recollement, we have a commutative diagram
\[
\begin{tikzcd}
\Sp(\Xscr^{hC_p}) \ar[swap]{d}{j_*} & \Sp(\Xscr) \ar[swap]{l}{\pi_*} \ar{d}{=} \\
\Sp^{C_p}(\Xscr_{C_p}) \ar[swap]{d}{i^*} & \Sp(\Xscr) \ar{d} \ar{l}{\overline{\pi}_*} \\
\Sp(\Xscr^{\hh C_p}) & \ar{l} 0.
\end{tikzcd}
\]
\end{proof}

Lemma~\ref{lem:killsinduced} highlights the relevance of the following definition.

\begin{definition} \label{def:topos-induced} Suppose that $\Xscr$ is a $G$-$\infty$-topos. Then the subcategory of {\bf induced objects} in $\Sp(\Xscr_{\hh G})$ is the essential image of the functor $\pi_*: \Sp(\Xscr) \rightarrow \Sp(\Xscr_{\hh G})$.
\end{definition}

\begin{lemma} \label{lem:generated} The adjunction $$\pi^*: \Sp(\Xscr_{\hh G}) \rightleftarrows \Sp(\Xscr): \pi_*$$ is ambidextrous, and $\pi^{\ast}$ exhibits $\Sp(\Xscr_{\hh G})$ as monadic over $\Sp(\Xscr)$. In particular, $\Sp(\Xscr_{\hh G})$ is generated under colimits by the induced objects.
\end{lemma}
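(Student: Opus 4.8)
The plan is to apply the Barr--Beck--Lurie monadicity theorem \cite{higheralgebra}*{Theorem~4.7.3.5} to recognize $\pi^* \colon \Sp(\Xscr_{\hh G}) \to \Sp(\Xscr)$ as a monadic functor, thereby exhibiting $\Sp(\Xscr_{\hh G})$ as the $\infty$-category of modules over the monad $T = \pi^*\pi_*$ acting on $\Sp(\Xscr)$. The generation statement will then be formal, since every module over a monad is the geometric realization of its canonical bar resolution by free modules.

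First I would record the properties of $\pi^*$ on stabilizations. As $\pi^* \dashv \pi_*$ is the stabilization of a geometric morphism of $\infty$-topoi, $\pi^*$ is a left adjoint and hence preserves all colimits. By Lemma~\ref{lem:borel-computation}, the fiberwise stabilization $\underline{\Sp}(\underline{\Xscr}_{\hh G})$ is $G$-semiadditive; specialized to the map of orbits $G/1 \to G/G$, this says that the induction and coinduction functors along it --- namely the left and right adjoints $\pi_!$ and $\pi_*$ of $\pi^*$ --- are canonically equivalent. Thus $\pi^*$ admits a left adjoint, which we may identify with $\pi_*$, and we are in the ambidextrous situation $\pi_* \dashv \pi^* \dashv \pi_*$. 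Finally, $\pi^*$ is conservative: it is computed by postcomposition with the unstable $\pi^*$, which by (the proof of) Lemma~\ref{lem:j-open} is evaluation of sections at the unique object of $BG$, and equivalences of spectrum objects in $\Sect_{BG}(\underline{\Xscr})$ are detected there.

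With these facts in hand, Barr--Beck--Lurie applies to the adjunction $\pi_* \dashv \pi^*$: the right adjoint $\pi^*$ is conservative and preserves geometric realizations of $\pi^*$-split simplicial objects (it preserves all colimits). Hence $\pi^*$ is monadic, so the canonical comparison $\Sp(\Xscr_{\hh G}) \xrightarrow{\ \simeq\ } \Mod_T(\Sp(\Xscr))$ is an equivalence; under it, $\pi^*$ becomes the forgetful functor and, using $\pi_! \simeq \pi_*$, its left adjoint $\pi_*$ becomes the free-module functor $E \mapsto T(E)$. Consequently, for any $N \in \Sp(\Xscr_{\hh G})$ the bar resolution exhibits $N$ as the geometric realization of the simplicial object $[n] \mapsto (\pi_* \pi^*)^{n+1}(N)$, each term of which lies in the essential image of $\pi_*$ and hence is an induced object in the sense of Definition~\ref{def:topos-induced}. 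Therefore $\Sp(\Xscr_{\hh G})$ is generated under colimits by the induced objects. (By ambidexterity one likewise checks that $\pi^* \dashv \pi_*$ is monadic --- for conservativity of $\pi_*$, note that the composite $\id \to \pi^*\pi_! \simeq \pi^*\pi_* \to \id$ of the relevant unit and counit is an equivalence, as follows from the pointwise formulas $\pi^*\pi_* E \simeq \prod_{g \in G} g^* E$ and $\pi^*\pi_! E \simeq \coprod_{g \in G} g^* E$ --- but it is the version proven above that yields the stated corollary.)

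I do not anticipate a substantive obstacle: once the stabilized identification $\pi_! \simeq \pi_*$ of Lemma~\ref{lem:borel-computation} is in place, the argument is bookkeeping --- chiefly recording that it is $\pi^*$ (not $\pi_*$) whose monadicity realizes $\Sp(\Xscr_{\hh G})$, and not $\Sp(\Xscr)$, as a module category, and matching the resulting free modules with the induced objects of Definition~\ref{def:topos-induced}.
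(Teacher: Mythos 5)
Your proof is correct and takes essentially the same approach as the paper's: both invoke the ambidexterity $\pi_! \simeq \pi_*$ from Lemma~\ref{lem:borel-computation}, the conservativity of $\pi^*$ via evaluation at $\ast \in BG$, and the Barr--Beck--Lurie theorem applied to the adjunction $\pi_! \dashv \pi^*$ to recognize $\pi^*$ as monadic, after which the generation statement follows from the bar resolution. Your closing parenthetical carefully distinguishing which direction of the ambidextrous adjunction actually yields the stated corollary is a nice clarification of a point the paper's phrasing leaves implicit.
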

\begin{proof} We already produced the ambidexterity equivalence $\pi_! \xto{\simeq} \pi_{\ast}$ in Lemma~\ref{lem:borel-computation}(1). We then note that $\pi^{\ast}$ is conservative as it is given by evaluation of sections on $\ast \in BG$. This confirms the hypotheses of the Barr-Beck-Lurie theorem and thereby shows monadicity of the adjunction $\pi_! \dashv \pi^\ast$. The last statement then follows from the existence of monadic resolutions.
\end{proof}

In order to complete the proof of Theorem~\ref{prop:recoll-stab}, we need to identify the gluing functor of~\eqref{eq:g-glue} with the functor $\Theta^{\Tate}$ of Definition~\ref{def:stable-glue}. To facilitate this comparison, we will construct the analogue of {\bf categorical fixed points} in our setting. Recall that if $G$ is a finite group and we identify $\Sp_G$ with the $\infty$-category of spectral Mackey functors $\Fun^{\times}(\Span(\Fin_G), \Sp)$, then the categorical fixed points functor $\Psi^G$ is given by evaluation at the object $G/G$. Our construction will follow a similar pattern.

\begin{theorem} \label{thm:cat-fixed} There is an exact functor 
\[
\Psi^{C_p}: \Sp^{C_p}(\Xscr) \rightarrow \Sp(\Xscr^{\hh C_p})
\]
with the following three properties:
\begin{enumerate}
\item $\Psi^{C_p}$ preserves colimits.
\item $\Psi^{C_p}j_*(-) \simeq (-)^{hC_p} \circ \nu(C_p)^*$ as functors $\Sp(\Xscr_{\hh C_p}) \to \Sp(\Xscr^{\hh C_p})$.
\item $\Psi^{C_p}i_* \simeq \id$ as functors $\Sp(\Xscr^{\hh C_p}) \to \Sp(\Xscr^{\hh C_p})$.
\end{enumerate}
Here, $j_{\ast}$ and $i_{\ast}$ are the functors defined by the recollement on $\Sp^{C_p}(\Xscr)$ of Proposition~\ref{prop:preserves}.
\end{theorem}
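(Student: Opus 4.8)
The plan is to construct $\Psi^{C_p}$ as the composite
\[ \Psi^{C_p}\colon \Sp^{C_p}(\Xscr) \xrightarrow{\ U\ } \Sp(\Xscr_{C_p}) \xrightarrow{\ i_0^{\ast}\ } \Sp(\Xscr^{\hh C_p}), \]
where $U$ is the value at $C_p/C_p$ of the forgetful $C_p$-functor $\underline{\Sp}^{C_p}(\Xscr) = \underline{\Mack}^{C_p}(\underline{\Sp}(\underline{\Xscr}_{C_p})) \to \underline{\Sp}(\underline{\Xscr}_{C_p})$ of Construction~\ref{rem:nardin}, and $i_0^{\ast}$ is the closed-part projection of the stable recollement on $\Sp(\Xscr_{C_p})$ obtained by applying fiberwise stabilization (Lemma~\ref{lem:stabilizationOfRecollement}) to the fiberwise recollement of Corollary~\ref{cor:recoll}; by Proposition~\ref{prop:recoll} together with~\eqref{eq:small-theta}, the gluing functor of this stabilized recollement is exactly $\Theta = \Sp(\theta) \simeq (-)^{hC_p}\circ\nu(C_p)^{\ast}$. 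Exactness of $\Psi^{C_p}$ is immediate: $U$ is $G$-left-exact between $G$-stable $G$-$\infty$-categories, hence exact on fibers, and $i_0^{\ast}$ is exact as a stable recollement projection.

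For property $(1)$ I would argue that $U$ preserves colimits; since $i_0^{\ast}$ preserves colimits as the left adjoint part of a stable recollement, this suffices. The point is that colimits in $\Sp^{C_p}(\Xscr) = \Mack^{C_p}(\underline{\Sp}(\underline{\Xscr}_{C_p}))$ are computed pointwise on the underlying $C_p$-diagram: a $C_p$-Mackey functor valued in the fiberwise stable $C_p$-$\infty$-category $\underline{\Sp}(\underline{\Xscr}_{C_p})$ is a $C_p$-functor (a condition preserved by pointwise colimits, since restriction functors preserve colimits) that is fiberwise additive (a condition also preserved by pointwise colimits, as finite products agree with finite coproducts in each stable fiber and finite coproducts commute with colimits). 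As $U$ is an evaluation functor, it then preserves colimits, and hence so does $\Psi^{C_p}$; this is the parametrized incarnation of the classical fact that categorical fixed points $\mathrm{ev}_{G/G}\colon \Sp^G \to \Sp$ preserve colimits.

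The substance of the argument is to show that $U$ is compatible with the two recollements in play — the $C_p$-stable recollement $(\underline{\Sp}^{C_p}(\underline{\Xscr}_{\hh C_p}), \underline{\Sp}^{C_p}(\underline{\Xscr}^c))$ on $\underline{\Sp}^{C_p}(\Xscr)$ of Proposition~\ref{prop:preserves} and the stabilized recollement of Corollary~\ref{cor:recoll} on $\underline{\Sp}(\underline{\Xscr}_{C_p})$ — and that $U$ restricts to equivalences on the open and closed parts. Revisiting the proof of Proposition~\ref{prop:preserves}, the recollement there is manufactured by applying $\underline{\Mack}^{C_p}$ and fiberwise $\Sp$ (in either order, by Lemma~\ref{lem:interchange}) to the recollement of Corollary~\ref{cor:recoll}, with the fully faithful right adjoints $j_{\ast}, i_{\ast}$ defined by postcomposition with the corresponding functors downstairs; since $U$ is an evaluation functor it commutes with these postcompositions, so $U j_{\ast} \simeq j_{0,\ast}\circ(U|_{\mathrm{open}})$ and $U i_{\ast} \simeq i_{0,\ast}\circ(U|_{\mathrm{closed}})$. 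Moreover $U|_{\mathrm{open}}$ is an equivalence by Lemma~\ref{lem:borel-computation} (genuine stabilization agrees with fiberwise stabilization on the Borel side, so the $C_p$-Mackey structure is already present), and $U|_{\mathrm{closed}}$ is an equivalence because $\underline{\Xscr}^c$ has trivial value over $C_p/1$, so the $C_p$-Mackey condition for $\underline{\Sp}^{C_p}(\underline{\Xscr}^c)$ is vacuous. Granting this, property $(3)$ reads $\Psi^{C_p} i_{\ast} = i_0^{\ast} U i_{\ast} \simeq i_0^{\ast} i_{0,\ast} \simeq \id$ (using $i_0^{\ast} i_{0,\ast}\simeq\id$ in a recollement), and property $(2)$ reads $\Psi^{C_p} j_{\ast} = i_0^{\ast} U j_{\ast} \simeq i_0^{\ast} j_{0,\ast}$, which is by construction the gluing functor $\Theta \simeq (-)^{hC_p}\circ\nu(C_p)^{\ast}$ of the stabilized Corollary~\ref{cor:recoll} recollement.

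I expect the main obstacle to be precisely the bookkeeping in the last paragraph: verifying that $U$ intertwines the recollement structures. The delicate point is that the recollement \emph{left} adjoints $j^{\ast}, i^{\ast}$ on $\underline{\Sp}^{C_p}(\Xscr)$ are not simply postcompositions — they are postcompositions followed by the $C_p$-semiadditivization $L_{\sad}$ (Notation~\ref{ntn:sad}) — so one cannot directly say $U$ commutes with them; instead one works with the right adjoints $j_{\ast}, i_{\ast}$, which \emph{are} honest postcompositions, and recovers the statements about $j^{\ast}, i^{\ast}$ by adjunction and joint conservativity, exactly in the style of the identity $j^{\ast}\overline{\pi}_{\ast}\simeq\pi_{\ast}$ from the proof of Proposition~\ref{prop:preserves}. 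Once this compatibility is in place, $\Psi^{C_p}$ is moreover the tool that lets one identify the gluing functor of~\eqref{eq:g-glue} with $\Theta^{\Tate}$ and thereby complete the proof of Theorem~\ref{prop:recoll-stab}.
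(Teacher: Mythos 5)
Your overall plan coincides with the paper's: define $\Psi^{C_p}$ as evaluation (the forgetful functor $U$) followed by the fiberwise closed projection $i^{\ast}_{\pre}$, establish that $U$ preserves colimits, and check properties (2) and (3) by exploiting the fact that $j_{\ast}$ and $i_{\ast}$ on the genuine stabilization are honest postcompositions with $j^{\pre}_{\ast}$ and $i^{\pre}_{\ast}$, so commute with $U$. Your treatments of (2) and (3), including the observation that $U$ is an equivalence on the open part (Lemma~\ref{lem:borel-computation}) and trivially on the closed part, are sound and match the paper's in substance.

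However, your argument for (1) has a genuine gap. You characterize a $C_p$-Mackey functor valued in $\underline{\Sp}(\underline{\Xscr}_{C_p})$ as ``a $C_p$-functor that is fiberwise additive,'' and argue that fiberwise additivity (finite products $=$ finite coproducts in each stable fiber) is closed under pointwise colimits. But the defining condition for $\CMon^{C_p}$, resp. $\Mack^{C_p}$, is $C_p$\emph{-semiadditivity}, i.e. preservation of finite $C_p$-products in the sense of Definition~\ref{def:g-products}. In addition to the fiberwise condition, this requires the canonical map $X(C_p/1_+) \to \overline{\pi}_{\ast}(X(1_+))$ to be an equivalence, where $\overline{\pi}_{\ast}$ is the coinduction $\Sp(\Xscr) \to \Sp(\Xscr_{C_p})$. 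For this to be stable under pointwise colimits one must know that $\overline{\pi}_{\ast}$ preserves colimits, which is the nontrivial content of the paper's proof of (1): it does \emph{not} follow from stability alone (unlike the classical $\Xscr = \Spc$ case where coinduction is a finite product), but requires the recollement analysis $i^{\ast}_{\pre}\overline{\pi}_{\ast} \simeq \nu^{\ast}$ and $j^{\ast}_{\pre}\overline{\pi}_{\ast} \simeq \pi_{\ast} \simeq \pi_{!}$, both of which are left adjoints, combined with joint conservativity of $(i^{\ast}_{\pre}, j^{\ast}_{\pre})$. Without supplying this step, your claim that the inclusion $\CMon^{C_p}(\underline{\Sp}(\underline{\Xscr}_{C_p})) \subset \Fun_{C_p}(\underline{\Fin}_{C_p*}, \underline{\Sp}(\underline{\Xscr}_{C_p}))$ preserves colimits — and hence that $U$, and so $\Psi^{C_p}$, preserves colimits — is unproven.
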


\begin{proof} In this proof only, let us distinguish the recollement functors for $(\Sp(\Xscr_{\hh C_p},\Sp(X^{\hh C_p}))$ on $\Sp(X_{C_p})$ (as well as the fiberwise variant for $\underline{\Sp}(\underline{X}_{C_p})$) by the subscript `pre'. First, using Lemma~\ref{lem:interchange} to describe $\Sp^{C_p}(\Xscr)$, we define the functor $\Psi^{C_p}$ by the formula
\[
\Psi^{C_p}: \Sp^{C_p}(\Xscr) \simeq \CMon^{C_p}(\underline{\Sp}(\underline{\Xscr}_{C_p})) \rightarrow \Sp(\Xscr^{\hh C_p}),
\]
\[
\left( X:\underline{\Fin}_{C_p*} \rightarrow \underline{\Sp}(\underline{\Xscr}_{C_p}) \right) \mapsto \left( \{C_p/C_{p+}\} \subset \underline{\Fin}_{C_p*} \xrightarrow{X} \underline{\Sp}(\underline{\Xscr}_{C_p}) \xrightarrow{i^*_{\pre}} \underline{\Sp}(\underline{\Xscr}^c) \right).
\]
In other words, $\Psi^{C_p}$ sends a $C_p$-commutative monoid $X$ to $(i^*_{\pre} X)(C_p/C_{p+})$, an object in the fiber $\underline{\Sp}(\underline{\Xscr}^c)_{C_p/C_p} \simeq \Sp(\Xscr^{\hh C_p})$. It is clear that $\Psi^{C_p}$ is exact. We now verify properties (1)-(3) in turn:

\begin{enumerate}
\item If we regard the above formula as a functor from $\Fun_{C_p}(\underline{\Fin}_{C_p*}, \underline{\Sp}(\underline{\Xscr}_{C_p}))$ to $\Sp(\Xscr^{\hh C_p})$, it obviously preserves colimits as an evaluation functor. Hence, it suffices to prove that the inclusion of the full subcategory
\[
\CMon^{C_p}(\underline{\Sp}(\underline{\Xscr}_{C_p}))  \subset \Fun_{C_p}(\underline{\Fin}_{C_p*}, \underline{\Sp}(\underline{\Xscr}_{C_p}),
\]
preserves colimits. To this end, suppose that we have a diagram $f:K \rightarrow \CMon^{C_p}(\underline{\Sp}(\underline{\Xscr}_{C_p})) $ and let $X$ be the colimit of $f$ as computed in $\Fun_{C_p}(\underline{\Fin}_{C_p*}, \underline{\Sp}(\underline{\Xscr}_{C_p}))$. We need to show that $X$ is $C_p$-semiadditive. For this, it suffices to check that the canonical maps
\[
X(C_p/C_{p+} \coprod C_p/C_{p+}) \rightarrow X(C_p/C_{p+}) \oplus X(C_p/C_{p+}), \: \text{ and}
\]
\[
X(C_p/1_+) \rightarrow \overline{\pi}_*(X(1_+))
\]
are equivalences. The first equivalence follows from stability. For the second equivalence, it suffices to prove that $\overline{\pi}_*$ preserves colimits. But to do so, in view of the conservativity of the recollement functors $$(i^*_{\pre},j^*_{\pre}): \Sp(\Xscr_{C_p}) \rightarrow \Sp(X^{\hh C_p}) \times \Sp(\Xscr_{\hh C_p}),$$ it suffices to check that $i^*_{\pre} \overline{\pi}_*$ and $j^*_{\pre} \overline{\pi}_*$ are colimit-preserving. For the first functor, we have that $i^*_{\pre} \overline{\pi}_* \simeq \nu^*$, which preserves colimits since $\nu^*$ is a left adjoint. For the second functor, we have that $j^*_{\pre} \overline{\pi}_* \simeq \pi_*$, but $\pi_* \simeq \pi_!$ by Lemma~\ref{lem:borel-computation} and $\pi_!$ preserves colimits as a left adjoint.

\item By Lemma~\ref{lem:borel-computation}, fiberwise stabilization for $\underline{\Xscr}_{\hh C_p}$ is already $C_p$-stabilization, so $\Sp^{C_p}(\underline{\Xscr}_{\hh C_p}) \simeq \Sp(\Xscr_{\hh C_p})$. Hence, applying the equivalence~\eqref{eq:denis-observe}, we may canonically regard any $X \in \Sp(\Xscr_{\hh C_p})$ as a $C_p$-commutative monoid $X: \underline{\Fin}_{C_p*} \rightarrow \underline{\Sp}(\underline{\Xscr}_{\hh C_p})$. The functor $j_*$ is then computed on $X$ as the composite
\[
\underline{\Fin}_{C_p*}  \xrightarrow{X} \underline{\Sp}(\underline{X}_{\hh C_p}) \xrightarrow{j^{\pre}_*} \underline{\Sp}(\underline{\Xscr}_{C_p}),
\]
since $j_*^{\pre}$ preserves $C_p$-limits (so that the composite remains $C_p$-semiadditive). Therefore, $\Psi^{C_p}j_*X$ is given by 
\[
\{C_p/C_{p+}\} \subset \underline{\Fin}_{C_p*} \xrightarrow{X} \underline{\Sp}(\underline{X}_{\hh C_p}) \xrightarrow{j^{\pre}_*} \underline{\Sp}(\underline{\Xscr}_{C_p}) \xrightarrow{i^*_{\pre}} \underline{\Sp}(\underline{\Xscr}^c).
\]
$\Psi^{C_p} j_*$ is thus equivalent to $(-)^{hC_p} \circ \nu(C_p)^*$ by the same formula on the unstable level (Proposition~\ref{prop:recoll}).

\item Suppose that $X \in \Sp(\Xscr^{\hh C_p})$. By~\eqref{eq:denis-observe} and Lemma~\ref{lem:interchange}, we may canonically regard $X$ as a $C_p$-commutative monoid $X: \underline{\Fin}_{C_p*} \rightarrow \underline{\Sp}(\underline{\Xscr}^c)$. The functor $i_*$ is then computed on $X$ as the composite
\[
\underline{\Fin}_{C_p*}  \xrightarrow{X} \underline{\Sp}(\underline{\Xscr}^c) \xrightarrow{i_*^{\pre}} \underline{\Sp}(\underline{\Xscr}^{C_p}),
\]
since $i_*^{\pre}$ preserves $C_p$-limits (so that the composite remains $C_p$-semiadditive). Therefore, $\Psi^{C_p}i_*X$ is given by 
\[
\{C_p/C_{p+}\} \subset \underline{\Fin}_{C_p*} \xrightarrow{X} \underline{\Sp}(\underline{\Xscr}^c) \xrightarrow{i_*^{\pre}} \underline{\Sp}(\underline{\Xscr}_{C_p}) \xrightarrow{i^*_{\pre}} \underline{\Sp}(\underline{\Xscr}^c),
\]
which is evidently the identity on $X$ since $i_*^{\pre}$ is fully faithful.
\end{enumerate}
\end{proof}

\begin{corollary} \label{cor:MonadicityForgetful} The forgetful functor $U: \Sp^{C_p}(\Xscr) \simeq \CMon^{C_p}(\underline{\Xscr}_{C_p}) \to \Sp(\Xscr_{C_p})$ preserves all limit and colimits and is conservative. In particular, the adjunction $$\Fr: \Sp(\Xscr_{C_p}) \rightleftarrows \Sp^{C_p}(\Xscr): U$$ is monadic.
\end{corollary}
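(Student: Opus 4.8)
The functor $U$ is the right adjoint in the adjunction $\Fr \dashv U$ recalled above, hence it automatically preserves all limits; so by the Barr--Beck--Lurie theorem \cite{higheralgebra} it remains only to show that $U$ is conservative and preserves colimits, and monadicity of $\Fr \dashv U$ will follow. The strategy is to play off the recollement of $\Sp^{C_p}(\Xscr)$ from Proposition~\ref{prop:preserves} against the recollement on $\Sp(\Xscr_{C_p})$ obtained by stabilizing Proposition~\ref{prop:recoll} (as in Lemma~\ref{lem:stabilizationOfRecollement}); note that both recollements have open part $\Sp(\Xscr_{\hh C_p})$ and closed part $\Sp(\Xscr^{\hh C_p})$.

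The first step is to identify $U$ relative to these two recollements. On the level of $\underline{\Sp}(\underline{\Xscr}_{C_p})$ the recollement functors $j^*_{\pre}$ and $i^*_{\pre}$ are restriction of $\Oscr_{C_p}^{\op}$-sections (Lemmas~\ref{lem:j-open} and~\ref{lem:i-closed}), while $U$ is evaluation along the cocartesian section; since restriction of sections commutes with evaluation at a section, and since the open part $\underline{\Sp}(\underline{\Xscr}_{\hh C_p})$ is already $C_p$-semiadditive by Lemma~\ref{lem:borel-computation} (so that $L_{\sad}$ acts as the identity there), one obtains $j^* U \simeq j^*$ as functors $\Sp^{C_p}(\Xscr) \to \Sp(\Xscr_{\hh C_p})$, and $i^* U \simeq \Psi^{C_p}$ directly from the construction of the categorical fixed points functor in the proof of Theorem~\ref{thm:cat-fixed}. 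In particular $U$ is a morphism of recollements.

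Granting this, colimit-preservation is immediate: for any recollement the functors $j^*$ and $i^*$ preserve colimits (being left adjoints, $j^* \dashv j_*$ and $i^* \dashv i_*$) and are jointly conservative, so a functor into $\Sp(\Xscr_{C_p})$ preserves colimits as soon as its composites with $j^*$ and $i^*$ do; here $j^* U \simeq j^*$ preserves colimits, and $i^* U \simeq \Psi^{C_p}$ preserves colimits by Theorem~\ref{thm:cat-fixed}(1). For conservativity, let $\alpha$ be a morphism in $\Sp^{C_p}(\Xscr)$ with $U(\alpha)$ an equivalence, so that $j^*\alpha$ and $\Psi^{C_p}\alpha$ are equivalences (as $j^* U \simeq j^*$ and $i^* U \simeq \Psi^{C_p}$). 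Applying the exact functor $\Psi^{C_p}$ to the recollement cofiber sequence $j_! j^* E \to E \to i_* i^* E$ in $\Sp^{C_p}(\Xscr)$ and using $\Psi^{C_p} i_* \simeq \id$ (Theorem~\ref{thm:cat-fixed}(3)) produces a natural cofiber sequence
\[ \Psi^{C_p}(j_! j^* E) \longrightarrow \Psi^{C_p}(E) \longrightarrow i^*(E) \]
(the left-hand term may be identified with $(\nu(C_p)^* j^* E)_{hC_p}$ using Theorem~\ref{thm:cat-fixed}(2), but this is not needed here). Since $j^*\alpha$ is an equivalence, so is $\Psi^{C_p}(j_! j^* \alpha)$, and since $\Psi^{C_p}(\alpha)$ is an equivalence, the cofiber sequence forces $i^*\alpha$ to be an equivalence; joint conservativity of $\{j^*, i^*\}$ on $\Sp^{C_p}(\Xscr)$ then shows $\alpha$ is an equivalence.

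With $U$ conservative and colimit-preserving (hence in particular preserving $U$-split geometric realizations) and $\Fr \dashv U$, the Barr--Beck--Lurie theorem yields that $\Fr \dashv U$ is monadic. I expect the only genuinely delicate point to be the bookkeeping behind the identifications $j^* U \simeq j^*$ and $i^* U \simeq \Psi^{C_p}$, i.e. checking that $U$ really is a morphism of recollements compatible with the constructions of Proposition~\ref{prop:preserves} and Theorem~\ref{thm:cat-fixed}; once this is in place the rest of the argument is formal.
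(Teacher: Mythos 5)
Your argument for colimit-preservation is exactly the one in the paper: both proofs use the factorizations $j^*_{\pre} \circ U \simeq j^*$ (from the proof of Proposition~\ref{prop:preserves}) and $i^*_{\pre} \circ U \simeq \Psi^{C_p}$ (built into the definition of $\Psi^{C_p}$ in Theorem~\ref{thm:cat-fixed}), then appeal to the joint conservativity and colimit-preservation of $(j^*_{\pre}, i^*_{\pre})$ together with the colimit-preservation of $j^*$ and $\Psi^{C_p}$. These identifications, which you flag as the delicate bookkeeping step, are in fact already on record in the proofs of Proposition~\ref{prop:preserves} and Theorem~\ref{thm:cat-fixed}, so there is no gap there.

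Where you diverge from the paper is the conservativity of $U$. The paper simply asserts this as ``clearly conservative'' without argument; the standard reason is that $U$ is evaluation at $C_p/C_{p+}$ of a full subcategory of a parametrized functor category, and the $C_p$-semiadditivity (Segal) condition forces the values of a $C_p$-commutative monoid at the remaining objects of $\underline{\Fin}_{C_p*}$ to be (indexed) products of values recoverable from $U$, so that a morphism is an equivalence as soon as $U$ of it is. Your alternative argument --- using $j^*U \simeq j^*$, $i^*U \simeq \Psi^{C_p}$, exactness of $\Psi^{C_p}$ applied to the recollement cofiber sequence $j_!j^* \to \id \to i_*i^*$, and $\Psi^{C_p}i_* \simeq \id$ to deduce that $i^*\alpha$ is an equivalence --- is correct, and it has the virtue of not unwinding the Segal condition, relying instead only on the already-established recollement machinery. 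It is somewhat longer than it needs to be, but it is a valid and self-contained route to the same conclusion.
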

\begin{proof} We already have that $U$ preserves limits as a right adjoint, and it is clearly conservative. It remains to check that $U$ preserves colimits; monadicity of the adjunction $\Fr \dashv U$ will then follow from the Barr-Beck-Lurie theorem. Note that the functor $\Psi^{C_p}: \Sp^{C_p}(\Xscr) \to \Sp(\Xscr^{\hh C_p})$ of Theorem~\ref{thm:cat-fixed} factors as
\[ \Psi^{C_p}: \Sp^{C_p}(\Xscr) \xrightarrow{U} \Sp(\Xscr_{C_p}) \xrightarrow{i^*_{\pre}} \Sp(\Xscr^{\hh C_p}). \]
Also, the functor $j^*: \Sp^{C_p}(\Xscr) \to \Sp(\Xscr_{\hh C_p})$ factors as
\[ \Sp^{C_p}(\Xscr) \xrightarrow{U} \Sp(\Xscr_{C_p}) \xrightarrow{j^*_{\pre}} \Sp(\Xscr_{\hh C_p}) \]
as we saw in the proof of Proposition~\ref{prop:preserves}. Since $(j^*_{\pre}, i^*_{\pre})$ are jointly conservative and colimit-preserving and $(j^*, \Psi^{C_p})$ are colimit-preserving, the claim follows.
\end{proof}

\begin{remark} For any finite group $G$ and $G$-$\infty$-topos $\Xscr$, we expect $U: \Sp^G(\Xscr) \to \Sp(\Xscr_G)$ to preserve all colimits. For this, it would suffice to show that for every subgroup $H \leq G$, the composite functor $$\Psi^H = \ev_{G/H} \circ U: \Sp^G(\Xscr) \to \Sp(\Xscr_G) \to \Sp(\Xscr^{\hh H}) $$ preserves colimits, or equivalently that either inclusion
\[
\CMon^G(\underline{\Sp}(\underline{\Xscr}_G)) \subset \Fun_G(\underline{\Fin}_{G*}, \underline{\Sp}(\underline{\Xscr}_G)),  \qquad  \Mack^G(\underline{\Sp}(\underline{\Xscr}_G)) \subset  \Fun_G(\underline{\A}^{\eff}(\Fin_G), \underline{\Sp}(\underline{\Xscr}_G))
\]
preserves colimits. For instance, this holds for the base case of genuine $G$-spectra using \cite{barwick}*{Proposition~6.5} or the known compactness of the orbits $\Sigma^{\infty}_+ G/H$ in $\Sp^G$. As the general case is of less significance to us in this paper, we leave the details to the reader.
\end{remark}

\begin{proof} [Proof of Theorem~\ref{prop:recoll-stab}] Consider the following commutative diagram of functors and transformations between them
\[
\begin{tikzcd}
(-)_{hC_p}\nu(C_p)^* \ar{r} \ar{d} & (-)^{hC_p}\nu(C_p)^* \ar{r} \ar{d}{\simeq} & (-)^{tC_p}\nu(C_p)^* \ar{d}\\
\Psi^{C_p} j_! \ar{r} & \Psi^{C_p} j_* \ar{r}  & \Psi^{C_p} i_*i^*j_*,
\end{tikzcd}
\]
in which the bottom row comes from the exact functor $\Psi^{C_p}$ of Theorem~\ref{thm:cat-fixed} applied to the fiber sequence $j_! \to j_{\ast} \to i_{\ast} i^{\ast} j_{\ast}$ associated to the recollement of Proposition~\ref{prop:preserves}.

We first explain how to obtain the vertical arrows. The middle equivalence holds by Theorem~\ref{thm:cat-fixed}(2). Observe then that the resulting composite from the top left corner to the bottom right corner is nullhomotopic, since $i^{\ast} j_{\ast}$ vanishes on induced objects by Lemma~\ref{lem:killsinduced}, the functor $(-)_{hC_p}\nu(C_p)^*$ preserves colimits, and $\Sp(\Xscr_{\hh C_p})$ is generated under colimits by induced objects by Lemma~\ref{lem:generated}. Hence, the right vertical arrow exists by taking cofibers and the left vertical arrow is in turn its fiber.

By Theorem~\ref{thm:cat-fixed}(3) the bottom right corner is equivalent to $i^*j_*$, and thus we get a natural transformation $$\alpha: \Theta^{\Tate} = (-)^{tC_p} \nu(C_p)^{\ast} \Rightarrow i^*j_*.$$ To prove that $\alpha$ is an equivalence, it suffices to check that the natural transformation on fibers
$$\beta: \fib(\Theta \to \Theta^{\Tate}) \simeq (-)_{h C_p} \nu(C_p)^{\ast} \Rightarrow \fib(\Theta \to i^*j_*) \simeq \Psi^{C_p} j_!$$
is an equivalence. For this, note that $(-)_{h C_p} \nu(C_p)^{\ast}$ preserves colimits, and since $\Psi$ preserves colimits by Theorem~\ref{thm:cat-fixed}(1), the functor $\Psi j_!$ preserves colimits as well. Moreover, $\beta$ is an equivalence on induced objects because $(-)^{tC_p}\nu(C_p)^*$ and $i^*j_*$ both vanish on induced objects: for the former claim, we use the commutativity of the diagram
\[ \begin{tikzcd}
\Sp(\Xscr) \ar{r}{\nu^{\ast}} \ar{d}{\pi_{\ast}} & \Sp(\Xscr^{\hh C_p}) \ar{d}{\pi_{\ast}} \\ 
\Sp(\Xscr_{\hh C_p}) \ar{r}{\nu(C_p)^{\ast}} & \Fun(B C_p, \Sp(\Xscr^{\hh C_p})),
\end{tikzcd} \]
which holds in view of the calculation of the vertical functors as $C_p$-indexed products, the left-exactness of the unstable $\nu^{\ast}$, and the definition of $\nu(C_p)^{\ast}$. The conclusion then follows by invoking Lemma~\ref{lem:generated} again.
\end{proof}

\begin{remark} \label{rem:StableRecollementFunctoriality} Let us examine the functoriality of the recollement on $\Sp^{C_p}(\Xscr)$ of Proposition~\ref{prop:preserves} along $C_p$-equivariant geometric morphisms. First note that $\Omega^{\infty}: \Sp^{C_p}(\Xscr) \to \Xscr_{C_p}$ clearly commutes with the recollement right adjoints, since they are induced via postcomposition from their unstable counterparts (as we saw in the proof of \ref{prop:preserves}). Therefore, $\Sigma^{\infty}_+: \Xscr_{C_p} \to \Sp^{C_p}(\Xscr)$ is a morphism of recollements $$\Sigma^{\infty}_+: (\Xscr_{\hh C_p}, \Xscr^{\hh C_p}) \to (\Sp(\Xscr_{\hh C_p}), \Sp(\Xscr^{\hh C_p})).$$ Likewise, we observe the same pattern for the functor $f_*: \Sp^{C_p}(\Xscr) \to \Sp^{C_p}(\Yscr)$ induced by a $G$-equivariant geometric morphism $f_*: \Xscr \to \Yscr$, hence its left adjoint $f^*$ is a morphism of recollements $$f^*: (\Sp(\Yscr_{\hh C_p}), \Sp(\Yscr^{\hh C_p})) \to (\Sp(\Xscr_{\hh C_p}), \Sp(\Xscr^{\hh C_p})).$$
Under Proposition~\ref{prop:recoll} and Theorem~\ref{prop:recoll-stab}, we thus obtain exchange transformations
\[
\alpha: \Sigma^{\infty}_+ \theta \Rightarrow \Theta^{\Tate} \Sigma_+^{\infty}, \qquad \xi: f^* \Theta^{\Tate} \Rightarrow \Theta^{\Tate} f^*
\]
that induce the functors $\Sigma^{\infty}_+$ and $f^*$ above upon taking right-lax limits. We can also understand how $\alpha$ and $\xi$ interact with the factorizations $\theta \simeq (-)^{h C_p} \nu(C_p)^*$ and $\Theta^{\Tate} \simeq (-)^{t C_p} \nu(C_p)^*$. If we factor $\Sigma^{\infty}_+$ as $$\Xscr_{C_p} \xrightarrow{\Sigma^{\infty}_+} \Sp(\Xscr_{C_p}) \xrightarrow{\Fr} \Sp^{C_p}(\Xscr)$$ (so the middle term is given by the right-lax limit of $\Theta$), we obtain a factorization of $\alpha$ as 
\[  \Sigma^{\infty}_+ \theta \simeq \Sigma^{\infty}_+ (-)^{hC_p} \nu(C_p)^{\ast} \xRightarrow{\beta} \Theta \Sigma^{\infty}_+ \simeq (-)^{hC_p} \nu(C_p)^{\ast} \Sigma^{\infty}_+ \xRightarrow{\gamma} \Theta^{\Tate} \Sigma^{\infty}_+ (-)^{tC_p} \nu(C_p)^{\ast} \Sigma^{\infty}_+, \]
where $\beta$ is induced by the adjoint to $\Omega^{\infty} \delta \simeq \delta \Omega^{\infty}$ and $\gamma$ is induced by the usual map $\mu: (-)^{h C_p} \to (-)^{tC_p}$. This may be depicted diagrammatically as follows:
\[ \begin{tikzcd}[row sep=6ex, column sep=8ex, text height=1.5ex, text depth=0.5ex]
\Xscr_{\hh C_p} \ar{r}{\nu(C_p)^*} \ar{d}[swap]{\Sigma^{\infty}_+} \ar[phantom]{rd}{\simeq \SWarrow} & \Fun(B C_p, \Xscr^{\hh C_p}) \ar{d}{\Sigma^{\infty}_+} \ar{r}{(-)^{h C_p}} \ar[phantom]{rd}{\SWarrow} & \Xscr^{\hh C_p} \ar{d}{\Sigma^{\infty}_+} \\
\Sp(\Xscr_{\hh C_p}) \ar{d}[swap]{\simeq} \ar{r}[swap]{\nu(C_p)^*} \ar[phantom]{rd}{\simeq \SWarrow} & \Fun(B C_p, \Sp(\Xscr^{\hh C_p})) \ar{r}[swap]{(-)^{h C_p}} \ar[phantom]{rd}{\mu \SWarrow} \ar{d}{\simeq} & \Sp(\Xscr^{\hh C_p}) \ar{d}{\simeq} \\
\Sp(\Xscr_{\hh C_p}) \ar{r}[swap]{\nu(C_p)^*} & \Fun(B C_p, \Sp(\Xscr^{\hh C_p})) \ar{r}[swap]{(-)^{t C_p}} & \Sp(\Xscr^{\hh C_p}).
\end{tikzcd} \]
Similarly, we have a factorization of $\xi$ as
\[ \begin{tikzcd}[row sep=6ex, column sep=8ex, text height=1.5ex, text depth=0.5ex]
\Sp(\Yscr_{\hh C_p}) \ar{r}{\nu(C_p)^*} \ar{d}[swap]{f^*} \ar[phantom]{rd}{\simeq \SWarrow} & \Fun(B C_p, \Sp(\Yscr^{\hh C_p})) \ar{d}{f^*} \ar{r}{(-)^{t C_p}} \ar[phantom]{rd}{\lambda \SWarrow} & \Sp(\Yscr^{\hh C_p}) \ar{d}{f^*} \\
\Sp(\Xscr_{\hh C_p}) \ar{r}[swap]{\nu(C_p)^*} & \Fun(B C_p, \Sp(\Xscr^{\hh C_p})) \ar{r}[swap]{(-)^{t C_p}} & \Sp(\Xscr^{\hh C_p}) 
\end{tikzcd} \]
in which $\lambda$ is induced by vertically taking cofibers in the commutative square
\[ \begin{tikzcd}[row sep=4ex, column sep=6ex, text height=1.5ex, text depth=0.5ex]
f^* (-)_{h C_p} \ar{r}{\simeq} \ar{d}{f^* \Nm} &  (-)_{h C_p} f^* \ar{d}{\Nm f^*} \\
f^* (-)^{h C_p} \ar{r} & (-)^{h C_p} f^*.
\end{tikzcd} \]
Here, the desired naturality property of the additive norm follows from an elementary diagram chase using its inductive construction (see \cite{higheralgebra}*{Construction 6.1.6.4} or \cite{nikolaus-scholze}*{Construction I.1.7}).

Given Proposition~\ref{prp:UnstableRecollementFunctoriality}, both of these factorization assertions hold in view of the naturality of the comparison map $(-)^{t C_p} \nu(C_p)^* \xrightarrow{\simeq} i^* j_*$ as constructed in the proof of Theorem~\ref{prop:recoll-stab}.
\end{remark}

\subsection{Symmetric monoidal structures}\label{sec:symmon} In this subsection, we apply Theorem~\ref{prop:recoll-stab} to endow $\Sp^{C_p}(\Xscr)$ with a symmetric monoidal structure. Interpreting dualizability as a type of ``finiteness" for an object in a symmetric monoidal $\infty$-category, we then flesh out the way in which the genuine stabilization may be thought of as a ``compactification" of a $C_p$-$\infty$-topos as discussed in the introduction (Corollary~\ref{cor:DualizableImpliesCompact} and Proposition~\ref{prp:InternallyCompactToDualizable}). When $\Xscr$ is taken to be the $C_2$-$\infty$-topos $\widehat{X[i]}_{\et}$, we will later see how the good dualizability properties of $\Sp^{C_2}(\Xscr)$ (as opposed to $\Sp(\Xscr_{C_2})$) intercede in the proof of Thom stability (Lemma~\ref{lem:duals}) and thus in establishing the formalism of six operations.

Recall that given a left-exact lax symmetric monoidal functor $\phi: \Uscr \to \Zscr$ of symmetric monoidal $\infty$-categories, the right-lax limit $\Xscr = \Uscr \overrightarrow{\times} \Zscr$ may be endowed with the {\bf canonical symmetric monoidal structure} \cite{quigley-shah}*{Definition~1.21} that makes $(\Uscr,\Zscr)$ into a monoidal recollement \cite{quigley-shah}*{Definition~1.19}. Explicitly, given the data of a morphism of $\infty$-operads $\phi^{\otimes}: \Uscr^{\otimes} \to \Zscr^{\otimes}$ lifting $\phi$, we define (cf. Remark~\ref{rem:rlaxlim-param}) $$\Xscr^{\otimes} = \Uscr^\otimes \times_{\Zscr^{\otimes}} (\Zscr^\otimes)^{\Delta^1}.$$

\begin{construction} \label{con:MonoidalStructureGenuineStabilization}Let $\Xscr$ be a $C_p$-$\infty$-topos. To construct the symmetric monoidal structure on $\Sp^{C_p}(\Xscr)$, we note the following two facts:
\begin{enumerate} \item The stabilization functor $\Sp: \PrL_{\infty} \to \PrL_{\infty,\stab}$ is symmetric monoidal with respect to the Lurie tensor product of presentable $\infty$-categories \cite{higheralgebra}*{\S4.8.2}. In particular, given an $\infty$-topos $\Xscr$ equipped with the cartesian symmetric monoidal structure, we obtain a symmetric monoidal structure on its stabilization $\Sp(\Xscr)$. Moreover, for the left-exact left adjoint $f^{\ast}: \Xscr \to \Yscr$ of a geometric morphism, we obtain a symmetric monoidal functor $f^{\ast}: \Sp(\Xscr) \to \Sp(\Yscr)$.
\item Given a stable presentable symmetric monoidal $\infty$-category $\C$ and finite group $G$, the Tate construction $$(-)^{tG}: \Fun(B G, \C) \to \C$$ is lax symmetric monoidal such that the usual map $\mu: (-)^{h G} \to (-)^{t G}$ is the \emph{universal} lax symmetric monoidal natural transformation among those to lax symmetric monoidal functors that annihilate induced objects \cite{nikolaus-scholze}*{Theorem~I.3.1}.\footnote{The cited reference states this fact for $\C = \Sp$, but their proof clearly extends to accommodate the indicated generality as it only uses abstract multiplicative properties of the Verdier quotient.}
\end{enumerate}
We deduce that the stable gluing functor $\Theta^{\Tate} = (-)^{tC_p} \circ \nu(C_p)^{\ast}$ is lax symmetric monoidal. We then regard the right-lax limit $\Sp^{C_p}(\Xscr)$ of $\Theta^{\Tate}$ as a symmetric monoidal $\infty$-category via the canonical symmetric monoidal structure, and we call the resulting tensor product on $\Sp^{C_p}(\Xscr)$ the {\bf smash product}.
\end{construction}

\begin{remark} \label{rem:PresentablySMC} Since the jointly conservative recollement left adjoints of a monoidal recollement are strong symmetric monoidal, we see that the smash product on $\Sp^{C_p}(\Xscr)$ commutes with colimits separately in each variable. Therefore, $\Sp^{C_p}(\Xscr)$ is presentably symmetric monoidal and lifts to an object in $\CAlg(\PrL_{\infty, \stab})$.
\end{remark}

\begin{remark} \label{rem:idempotence} Lurie's construction of the smash product in spectra arises as a ``decategorification'' of the fact that $\Sp$ is an idempotent object in $\PrL_{\infty}$ with respect to the Lurie tensor product of presentable $\infty$-categories \cite{higheralgebra}*{\S 4.8.2}. The analogous picture for $\underline{\Sp}^G$ and its $G$-symmetric monoidal structure (which by design also incorporates the Hill-Hopkins-Ravenel norm functors \cite{HHR}) has been worked out by Nardin in his thesis \cite{denis-thesis}*{\S 3}. We expect that similar methods may be used to construct a $G$-symmetric monoidal structure on $\underline{\Sp}^G(\Xscr)$ for any $G$-$\infty$-topos $\Xscr$. Since the details of such a construction would take us too far afield, we will be content with the simpler and more explicit Construction~\ref{con:MonoidalStructureGenuineStabilization}, which relies on the recollement presentation of $\Sp^{C_p}(\Xscr)$.
\end{remark}

As with the Tate construction, $\Theta^{\Tate}$ has a monoidal universal property. Let $\mu: \Theta \to \Theta^{\Tate}$ also denote the lax symmetric monoidal transformation obtained from $\mu: (-)^{h C_p} \to (-)^{tC_p}$ via precomposition by $\nu(C_p)^*$, and recall our notion of induced objects in $\Sp(\Xscr_{\hh C_p})$ (Definition~\ref{def:topos-induced}).

\begin{proposition} \label{prop:ThetaTateMonoidalUniversalProperty} $\mu: \Theta \to \Theta^{\Tate}$ is the universal lax symmetric monoidal transformation among those to lax symmetric monoidal functors that annihilate induced objects of $\Sp(\Xscr_{\hh C_p})$.
\end{proposition}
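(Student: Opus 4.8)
The plan is to transport the universal property of the ordinary Tate construction, \cite{nikolaus-scholze}*{Theorem~I.3.1} (recalled in Construction~\ref{con:MonoidalStructureGenuineStabilization}(2)), along the functor $N := \nu(C_p)^*\colon \Sp(\Xscr_{\hh C_p}) \to \Fun(BC_p,\Sp(\Xscr^{\hh C_p}))$. Recall that $N$ is strong symmetric monoidal and colimit-preserving (Construction~\ref{con:MonoidalStructureGenuineStabilization}(1)), that $\Theta = (-)^{hC_p}\circ N$ and $\Theta^{\Tate} = (-)^{tC_p}\circ N$ by \eqref{eq:small-theta} and Definition~\ref{def:stable-glue}, and that $\mu\colon\Theta\Rightarrow\Theta^{\Tate}$ is obtained by whiskering the Nikolaus--Scholze transformation $\mu^{C_p}\colon(-)^{hC_p}\Rightarrow(-)^{tC_p}$ (for the category $\Sp(\Xscr^{\hh C_p})$) with $N$. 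First I would check that $(\Theta^{\Tate},\mu)$ is an object of the relevant $\infty$-category, i.e.\ that $\Theta^{\Tate}$ annihilates induced objects: by the commutative square $N\pi_*\simeq\pi_*\nu^*$ from the proof of Theorem~\ref{prop:recoll-stab}, $N$ sends an induced object $\pi_* Z$ to the coinduced object $\pi_*\nu^* Z$ of $\Fun(BC_p,\Sp(\Xscr^{\hh C_p}))$, on which $(-)^{tC_p}$ vanishes --- alternatively, use the identification $\Theta^{\Tate}\simeq i^*j_*$ of Theorem~\ref{prop:recoll-stab} and Lemma~\ref{lem:killsinduced}.

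For the universal property, fix a lax symmetric monoidal $F\colon\Sp(\Xscr_{\hh C_p})\to\Dscr$ that annihilates induced objects together with a lax symmetric monoidal transformation $\alpha\colon\Theta\Rightarrow F$; I must show that the space of lax symmetric monoidal $\bar\alpha\colon\Theta^{\Tate}\Rightarrow F$ with $\bar\alpha\circ\mu\simeq\alpha$ is contractible. Whiskering with $N$ the norm cofiber sequence $(-)_{hC_p}\xrightarrow{\Nm}(-)^{hC_p}\xrightarrow{\mu^{C_p}}(-)^{tC_p}$ produces a cofiber sequence of functors $\Theta_!\xrightarrow{\Nm}\Theta\xrightarrow{\mu}\Theta^{\Tate}$, where $\Theta_! := (-)_{hC_p}\circ N$ preserves colimits. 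I claim $\Map(\Theta_!,F)\simeq *$: since $\Theta_!$ preserves colimits and, by the monadic resolution underlying Lemma~\ref{lem:generated}, every object of $\Sp(\Xscr_{\hh C_p})$ is a colimit of induced objects, $\Theta_!$ is the left Kan extension of its restriction to a small full subcategory of induced objects, on which $F$ vanishes. Applying $\Map(-,F)$ to the cofiber sequence then identifies $\mu^*\colon\Map(\Theta^{\Tate},F)\xrightarrow{\simeq}\Map(\Theta,F)$, which furnishes $\bar\alpha$ at the level of underlying functors. The lax symmetric monoidal refinement follows from the argument of \cite{nikolaus-scholze}*{Theorem~I.3.1}, which I would read not as a statement tied to its original setting but as one about the norm cofiber sequence attached to a lax symmetric monoidal functor out of a presentably symmetric monoidal stable $\infty$-category that is colimit-generated by objects annihilated by all the functors under consideration; its multiplicative content is inherited here through whiskering along the strong symmetric monoidal $N$.

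The step I expect to be the crux is this last one --- promoting the factorization from underlying functors to lax symmetric monoidal functors --- because it rests on the fine multiplicative behaviour of the norm cofiber sequence rather than on the cofiber sequence of underlying functors alone. That behaviour is exactly what \cite{nikolaus-scholze}*{Theorem~I.3.1} supplies, and it is stable under whiskering by $N$; the only genuinely new ingredient beyond \emph{loc.\ cit.} is the colimit-generation of $\Sp(\Xscr_{\hh C_p})$ by induced objects (Lemma~\ref{lem:generated}), which plays the role of the parallel fact for $\Fun(BC_p,\Sp(\Xscr^{\hh C_p}))$. It is worth noting that this cannot be circumvented by a symmetric monoidal Verdier quotient: since $\Theta$ and $\Theta^{\Tate}$ do not preserve colimits, a lax symmetric monoidal functor may annihilate the colimit-generating class of induced objects without being zero, and it is the norm cofiber sequence, not a quotient construction, that accounts for this.
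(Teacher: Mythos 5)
Your direct verification of the universal property has two gaps. The first is at the claim $\Map(\Theta_!,F)\simeq *$: you infer that $\Theta_!$ is the left Kan extension of its restriction to the induced objects from the facts that $\Theta_!$ preserves colimits and the induced objects generate $\Sp(\Xscr_{\hh C_p})$ under colimits. That inference is invalid --- for a colimit-preserving functor to be the Kan extension of its restriction to a full subcategory, one needs that subcategory to be \emph{dense} (every object the canonical colimit over its comma category of objects in the subcategory), which is strictly stronger than colimit-generation; the induced objects are not a small subcategory of compact generators, and density fails already for $\Xscr=\Spc$, where $\Theta_!=(-)_{hC_p}\colon\Sp^{BC_p}\to\Sp$. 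Without this step, $\Map(\Theta_!,F)\simeq *$ is not established --- and since the universal property to be proved lives in $\CAlg$ of the Day convolution rather than in $\Fun(\Sp(\Xscr_{\hh C_p}),\Sp(\Xscr^{\hh C_p}))$ itself, the underlying mapping-spectrum statement is in any case the wrong thing to aim at. The second gap is the whiskering step: precomposing \cite{nikolaus-scholze}*{Theorem~I.3.1} with the strong symmetric monoidal $N=\nu(C_p)^*$ only transports universality against test functors of the form $G\circ N$ with $G$ annihilating all induced objects of $\Fun(BC_p,\Sp(\Xscr^{\hh C_p}))$, a strictly smaller test class than the one the proposition quantifies over, where $F$ is an \emph{arbitrary} lax symmetric monoidal functor out of $\Sp(\Xscr_{\hh C_p})$ that annihilates its induced objects and need not factor through $N$.

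The paper avoids both problems by running the argument in reverse. It invokes the symmetric monoidal Verdier quotient of \cite{nikolaus-scholze}*{Theorem~I.3.6} --- which is a construction internal to $\Fun^{\lax,\otimes}$, not a quotient of the source category $\Sp(\Xscr_{\hh C_p})$, so your closing remark about why the Verdier quotient cannot be used is arguing against a strawman --- to produce the universal lax symmetric monoidal $\mu'\colon\Theta\to\widehat\Theta$ in one stroke; since $\Theta^{\Tate}$ annihilates induced objects, the universal property of $\mu'$ yields a comparison $\alpha\colon\widehat\Theta\to\Theta^{\Tate}$, and the only thing remaining is to show $\alpha$ is an \emph{underlying} equivalence. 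This is done by showing that both $\fib(\mu)\simeq(-)_{hC_p}\nu(C_p)^*$ and $\fib(\mu')$ preserve colimits (the latter via the explicit filtration formula of \cite{nikolaus-scholze}*{Theorem~I.3.3(ii)}, which commutes with the monadic resolutions underlying Lemma~\ref{lem:generated}), that the induced map between the fibers is an equivalence on induced objects (two-out-of-three against $\Nm,\Nm'$), and that induced objects generate under colimits. You would do well to adopt that structure: construct the universal object abstractly in the monoidal category of lax functors and then identify it with $\Theta^{\Tate}$ by a plain-functor comparison, rather than attempting to directly establish the $\CAlg$-level universal property of $\Theta^{\Tate}$.
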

\begin{proof} By the theory of the Verdier quotient \cite{nikolaus-scholze}*{\S I.3}, there exists a lax symmetric monoidal transformation $\mu': \Theta \to \widehat{\Theta}$ with the indicated universal property \cite{nikolaus-scholze}*{Theorem~I.3.6}. Moreover, by the formula of \cite{nikolaus-scholze}*{Theorem~I.3.3(ii)} (or rather, its fiber), we see that by construction $\fib(\mu')$ commutes with the canonical monadic resolutions in $\Sp(\Xscr_{\hh C_p})$ exhibiting objects as colimits of induced objects. Since we also have that
\[ \Theta \circ \pi_* \simeq \nu^*: \Sp(\Xscr) \to \Sp(\Xscr^{\hh C_p}), \]
and in particular, $\Theta \circ \pi_*$ preserves colimits, we deduce that $\fib(\mu')$ commutes with all colimits.

As we saw in the proof of Theorem~\ref{prop:recoll-stab}, $\Theta^{\Tate}$ annihilates induced objects, and $\mu$ is lax symmetric monoidal via Construction~\ref{con:MonoidalStructureGenuineStabilization}. By the universal propery of $\mu'$, there exists a lax symmetric monoidal transformation $\alpha:\widehat{\Theta} \to \Theta^{\Tate}$ factoring $\mu$ through $\mu'$. Forgetting monoidal structure (as we may do to check that $\alpha$ is an equivalence), we then take fibers to obtain a commutative diagram
\[ \begin{tikzcd}[row sep=4ex, column sep=6ex]
\fib(\mu') \ar{r}{\Nm'} \ar{d}[swap]{\beta} & \Theta \ar{r}{\mu'} \ar{rd}[swap]{\mu} & \widehat{\Theta} \ar{d}{\alpha} \\
\fib(\mu) \ar{ru}[swap]{\Nm} & & \Theta^{\Tate}.
\end{tikzcd} \]
Since $\Nm$ and $\Nm'$ are equivalences on induced objects, $\beta$ is also an equivalence on induced objects by the two-out-of-three property. Since both $\fib(\mu')$ and $\fib(\mu) \simeq (-)_{h C_p} \nu(C_p)^{\ast}$ commute with all colimits and induced objects generate $\Sp(\Xscr_{\hh C_p})$ under colimits, we deduce that $\beta$ and thus $\alpha$ is an equivalence.
\end{proof}

We will need to perform Construction~\ref{con:MonoidalStructureGenuineStabilization} in families. Intuitively, given a lax commutative square of lax symmetric monoidal functors (with $\phi$ left-exact)
\[ \begin{tikzcd}[row sep=4ex, column sep=4ex, text height=1.5ex, text depth=0.5ex]
\Uscr \ar{r}{\phi} \ar{d}[swap]{f^*} \ar[phantom]{rd}{\eta \SWarrow} & \Zscr \ar{d}{f^*} \\
\Uscr' \ar{r}[swap]{\phi} & \Zscr',
\end{tikzcd} \]
the induced functor $f^*: \Xscr = \Uscr \overrightarrow{\times} \Zscr \to \Xscr' = \Uscr' \overrightarrow{\times} \Zscr'$ canonically lifts to a lax symmetric monoidal functor via the following formula: let $\gamma_F: F(-) \otimes F(-) \to F(- \otimes -)$ generically denote the lax comparison map for a lax symmetric functor $F$ and suppose $x_i = (u_i,z_i,\alpha_i: u_i \to \phi z_i) \in \Xscr$, $i=1,2$. Then we have equivalences
\begin{align*} f^*(x_1) \otimes f^*(x_2) & \simeq (f^*u_1 \otimes f^* u_2, f^* z_1 \otimes f^* z_2, \gamma_{\phi} \circ (\eta(u_1) \otimes \eta(u_2)) \circ (f^* \alpha_1 \otimes f^* \alpha_2)), \\
f^*(x_1 \otimes x_2) & \simeq (f^*(u_1 \otimes u_2), f^*(z_1 \otimes z_2), \eta(u_1 \otimes u_2) \circ f^* \gamma_{\phi} \circ f^*(\alpha_1 \otimes \alpha_2) ),
\end{align*}
with respect to which the lax comparison map \[ \gamma_{f^*}: f^*(x_1) \otimes f^*(x_2) \to f^*(x_1 \otimes x_2) \]
is given by the maps $\gamma_{f^*}$ on components and the commutative diagram
\[ \begin{tikzcd}[row sep=6ex, column sep=8ex, text height=1.5ex, text depth=0.5ex]
f^* z_1 \otimes f^* z_2 \ar{r}{f^* \alpha_1 \otimes f^* \alpha_2} \ar{d}{\gamma_{f^*}} & f^* \phi u_1 \otimes f^* \phi u_2 \ar{r}{\eta(u_1) \otimes \eta(u_2)} \ar{d}{\gamma_{f^*}} \ar{rd} & \phi f^* u_1 \otimes \phi f^* u_2 \ar{r}{\gamma_\phi} \ar{rd} \ar[phantom]{d}{\simeq} & \phi(f^* u_1 \otimes f^* u_2) \ar{d}{\phi \gamma_{f^*}} \\
f^*(z_1 \otimes z_2) \ar{r}{f^*(\alpha_1 \otimes \alpha_2)} & f^*(\phi u_1 \otimes \phi u_2) \ar{r}{f^* \gamma_{\phi}} \ar{r} & f^* \phi (u_1 \otimes u_2) \ar{r}{\eta(u_1 \otimes u_2)} \ar{r} & \phi f^*(u_1 \otimes u_2).
\end{tikzcd} \]

Furthermore, if $f^*: \Uscr \to \Uscr'$ and $f^*: \Zscr \to \Zscr'$ are in addition strong symmetric monoidal, then we see that $f^*: \Xscr \to \Xscr'$ is strong symmetric monoidal.

Using the fibrational perspective, we can make this intuition precise, at least if the vertical functors $f^*$ are in addition symmetric monoidal. Recall that a diagram $$\C_{(-)}: S \to \CAlg(\widehat{\Cat}_{\infty})$$ of symmetric monoidal $\infty$-categories and strong symmetric monoidal functors unstraightens to a {\bf cocartesian $S$-family of symmetric monoidal $\infty$-categories} \cite{higheralgebra}*{Definition~4.8.3.1} $$\C^{\otimes} \to S \times \Fin_*.$$ A lax natural transformation $\phi: \C_{(-)} \Rightarrow \D_{(-)}$ through lax symmetric monoidal functors then corresponds to a functor $\phi: \C^{\otimes} \to \D^{\otimes}$ over $S \times \Fin_*$ such that for every $s \in S$, the restriction $\phi_s: \C_s^{\otimes} \to \D_s^{\otimes}$ is a morphism of $\infty$-operads. By Lemma~\ref{lem:lax-lim} (with $B = S \times \Fin_*$) and \cite{quigley-shah}*{Lemma~B.2}, the fiberwise right-lax limit $\C^{\otimes} \overrightarrow{\times} \D^{\otimes}$ of $\phi$ yields again a cocartesian $S$-family of symmetric monoidal $\infty$-categories. Moreover, unpacking the description of the pushforward functor in Lemma \ref{lem:lax-lim} yields the formula just described. Using the analysis in Remark~\ref{rem:StableRecollementFunctoriality}, we may then deduce:

\begin{theorem} \label{thm:MonoidalityOfGenuineStabilizationFunctor} The functor $\Sp^{C_p}: \Fun(B C_p, \LTop) \to \PrL_{\infty, \stab}$ lifts to a functor
\[ (\Sp^{C_p})^{\otimes}: \Fun(B C_p, \LTop) \to \CAlg(\PrL_{\infty, \stab}). \]
\end{theorem}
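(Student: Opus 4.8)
The plan is to realize the assignment $\Xscr \mapsto \Sp^{C_p}(\Xscr)^{\otimes}$ of Construction~\ref{con:MonoidalStructureGenuineStabilization} as the fiberwise right-lax limit of a morphism of cocartesian $S$-families of symmetric monoidal $\infty$-categories, where $S = \Fun(BC_p, \LTop)$, and then to invoke Lemma~\ref{lem:lax-lim} (with $B = S \times \Fin_*$) together with \cite{quigley-shah}*{Lemma~B.2} to conclude that the output is again a cocartesian $S$-family of symmetric monoidal $\infty$-categories, equivalently a functor $S \to \CAlg(\widehat{\Cat}_{\infty})$. Computing on fibers via Theorem~\ref{prop:recoll-stab} will identify the fiber over $\Xscr$ with $\Sp^{C_p}(\Xscr)^{\otimes}$, and Remark~\ref{rem:PresentablySMC} will then upgrade the target to $\CAlg(\PrL_{\infty, \stab})$.

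First I would assemble the two families. As established in~\S\ref{sect:gen-stab} (cf. Proposition~\ref{prp:InducedAdjunctionParamTopOrbits} and Remark~\ref{rem:StableRecollementFunctoriality}), the toposic homotopy orbits and homotopy fixed points are functorial along $C_p$-equivariant geometric morphisms, defining functors $(-)_{\hh C_p}, (-)^{\hh C_p}: S \to \LTop$. Postcomposing with the symmetric monoidal functor $\Sp: \PrL_{\infty} \to \PrL_{\infty, \stab}$ of \cite{higheralgebra}*{\S4.8.2} --- whose functoriality along left adjoints of geometric morphisms is strong symmetric monoidal (Construction~\ref{con:MonoidalStructureGenuineStabilization}(1)) --- gives two functors $S \to \CAlg(\PrL_{\infty, \stab})$, hence by unstraightening two cocartesian $S$-families $\Cscr^{\otimes}, \Dscr^{\otimes} \to S \times \Fin_*$. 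Next, the parametrized pullback $\nu(C_p)^{\ast}: \Sp\bigl((-)_{\hh C_p}\bigr) \Rightarrow \Fun\bigl(BC_p, \Sp((-)^{\hh C_p})\bigr)$, being induced from left adjoints of geometric morphisms followed by stabilization, is a \emph{strict} natural transformation through strong symmetric monoidal functors (the lefthand invertible square in Remark~\ref{rem:StableRecollementFunctoriality}), so it defines a cocartesian-edge-preserving map of $S$-families. Postcomposing with the Tate construction $(-)^{tC_p}$, which is lax symmetric monoidal and lax natural in the $\CAlg(\PrL_{\infty, \stab})$-valued family $\Dscr$ via the additive norm (\cite{nikolaus-scholze}*{Construction~I.1.7, Theorem~I.3.1}, extended to general coefficients as in the footnote to Construction~\ref{con:MonoidalStructureGenuineStabilization}), produces a functor $\Theta^{\Tate}: \Cscr^{\otimes} \to \Dscr^{\otimes}$ over $S \times \Fin_*$ whose restriction over each $\Xscr \in S$ is the morphism of $\infty$-operads lifting the stable gluing functor of Definition~\ref{def:stable-glue}.

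I would then apply the fiberwise right-lax limit. By Lemma~\ref{lem:lax-lim} and \cite{quigley-shah}*{Lemma~B.2}, the family $\Cscr^{\otimes} \overrightarrow{\times} \Dscr^{\otimes} \to S \times \Fin_*$ is a cocartesian $S$-family of symmetric monoidal $\infty$-categories, whose pushforward functors are computed by the formulas displayed just before the theorem. Its fiber over $\Xscr$ is $\Sp^{C_p}(\Xscr)^{\otimes}$ by Construction~\ref{con:MonoidalStructureGenuineStabilization} and Theorem~\ref{prop:recoll-stab}, and the induced functor on fibers along $f_{\ast}: \Xscr \to \Yscr$ is the right-lax limit of the exchange square $\xi: f^{\ast} \Theta^{\Tate} \Rightarrow \Theta^{\Tate} f^{\ast}$ of Remark~\ref{rem:StableRecollementFunctoriality}, which therefore refines the underlying functor $\Sp^{C_p}$. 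Thus we obtain the lift $(\Sp^{C_p})^{\otimes}: S \to \CAlg(\widehat{\Cat}_{\infty})$; since each $\Sp^{C_p}(\Xscr)^{\otimes}$ is presentably symmetric monoidal and stable (Remark~\ref{rem:PresentablySMC}) and each $f^{\ast}$ is a colimit-preserving strong symmetric monoidal functor of stable presentable $\infty$-categories, the lift factors through $\CAlg(\PrL_{\infty, \stab})$, as required.

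The main obstacle is packaging the lax symmetric monoidal structure on the Tate construction \emph{coherently over the whole family} $S \times \Fin_*$, rather than merely componentwise: one must exhibit $(-)^{tC_p}\colon \Fun(BC_p, -) \Rightarrow \mathrm{id}$ as a lax natural transformation of functors $\CAlg(\PrL_{\infty, \stab}) \to \CAlg(\PrL_{\infty, \stab})$ through lax symmetric monoidal functors, so that applying it fiberwise over $S$ (to the family $\Dscr$) yields a genuine functor over $S \times \Fin_*$ with the stated fiberwise behaviour. Both this and the operadic strictness of $\nu(C_p)^{\ast}$ follow from the inductive construction of the additive norm and the elementary naturality argument used in the proof of Remark~\ref{rem:StableRecollementFunctoriality}, combined with the naturality of the Verdier-quotient universal property of \cite{nikolaus-scholze}*{\S I.3}; alternatively, one may invoke the monoidal universal property of $\mu\colon \Theta \Rightarrow \Theta^{\Tate}$ (Proposition~\ref{prop:ThetaTateMonoidalUniversalProperty}) fiberwise to pin down the lax monoidal enhancement in a base-change-compatible manner. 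I would organize the argument so that all of this functoriality is produced simultaneously at the level of the fibrations over $S \times \Fin_*$, avoiding any component-by-component bookkeeping of higher coherences.
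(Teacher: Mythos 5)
Your overall architecture matches the paper's: unstraighten to cocartesian $S$-families of symmetric monoidal $\infty$-categories over $S \times \Fin_*$, lift the stable gluing functor $\Theta^{\Tate}$ to a map between these families, and take fiberwise right-lax limits via Lemma~\ref{lem:lax-lim} together with \cite{quigley-shah}*{Lemma~B.2}. Your treatment of $\nu(C_p)^*$ is also correct --- it is strong symmetric monoidal and strictly natural, so unstraightening handles it. The identification of the fibers via Theorem~\ref{prop:recoll-stab} and the final upgrade to $\CAlg(\PrL_{\infty,\stab})$ via Remark~\ref{rem:PresentablySMC} are both fine. The problem is the step you yourself flag as ``the main obstacle'': you assert, but do not prove, that $(-)^{tC_p}$ lifts to a map of families over $S \times \Fin_*$.

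You claim this ``follows from the inductive construction of the additive norm and the elementary naturality argument used in the proof of Remark~\ref{rem:StableRecollementFunctoriality}, combined with the naturality of the Verdier-quotient universal property.'' That does not work as stated: those arguments produce, for each individual morphism $s \to s'$ in $S$, a compatibility square of lax monoidal transformations, but they do not supply the higher coherences needed to promote this to an actual map of fibrations over $S \times \Fin_*$. This is exactly the content of Lemma~\ref{lem:TateMonoidalInFamilies} in the paper, whose proof is substantive: it builds a locally cocartesian fibration $\widetilde{\Fun}{}^{\otimes,\lax}(\C^{BG},\C) \to S$ of lax symmetric monoidal functors via the pairing construction $q_*p^*$ in marked simplicial sets (using flatness of the structure map to preserve cocartesian fibrations and operadic left Kan extension for the locally cocartesian structure), identifies the functors annihilating induced objects as a fiberwise accessible localization (using Day convolution and the adjoint functor theorem to produce the fiberwise left adjoints), and then invokes the relative adjoint functor theorem \cite{higheralgebra}*{Proposition~7.3.2.11} to glue those localizations into a global one, which applied to $(-)^{hG}$ yields the coherent $(-)^{tG}$ and $\mu$. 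Your alternative --- invoking Proposition~\ref{prop:ThetaTateMonoidalUniversalProperty} ``fiberwise in a base-change-compatible manner'' --- names the right universal property but again leaves unexplained how those fiberwise characterizations are made to cohere over $S$, which is the whole difficulty. In short: the plan is right, but the load-bearing lemma is invoked rather than proved, and the sketch of a proof you offer for it is not sufficient.
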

\begin{proof} By a Yoneda argument and the naturality of unstraightening, it will suffice to consider a diagram $$\Xscr_{(-)}: S \to \Fun(B C_p, \LTop)$$ of $C_p$-$\infty$-topoi and construct the corresponding diagram $$\Sp^{C_p}(\Xscr)^{\otimes}: S \to \CAlg(\PrL_{\infty, \stab}).$$
Let $\Sp(\Xscr_{\hh C_p})^{\otimes} \to S \times \Fin_*$ and $\Sp(\Xscr^{\hh C_p})^{\otimes} \to S \times \Fin_*$ be the two cocartesian $S$-families of symmetric monoidal $\infty$-categories obtained via unstraightening. As we have just indicated, we need to lift the stable gluing functor $\Theta^{\Tate} = (-)^{t C_p} \circ \nu(C_p)^{\ast}$ to a map
\[ \Sp(\Xscr_{\hh C_p})^{\otimes} \xrightarrow{\nu(C_p)^{\ast}} (\Sp(\Xscr^{\hh C_p})^{\otimes})^{B C_p} \xrightarrow{(-)^{tC_p}} \Sp(\Xscr^{\hh C_p})^{\otimes} \]
over $S \times \Fin_*$ that preserves inert edges. For $\nu(C_p)^*$, this is easy; since $\nu(C_p)^*$ is already strong symmetric monoidal, we may obtain it at the level of cocartesian $S$-families via unstraightening. We now invoke Lemma~\ref{lem:TateMonoidalInFamilies} to handle the more difficult case of $(-)^{t C_p}$. 
\end{proof}

\begin{lemma} \label{lem:TateMonoidalInFamilies} Let $\C_{(-)}: S \to \CAlg(\PrL_{\infty,\stab})$ be a functor and let $\C^{\otimes} \to S \times \Fin_{\ast}$ be the corresponding cocartesian $S$-family of symmetric monoidal $\infty$-categories. The collection of lax symmetric monoidal transformations $$ \left \{ \left( \mu_s: (-)^{h G} \Rightarrow (-)^{tG} \right): \C_s^{B G} \to \C_s \: | \: s \in S \right\}$$ of \cite{nikolaus-scholze}*{Theorem~I.3.1} assemble to a natural transformation
\[ \left( \mu: (-)^{h G} \Rightarrow (-)^{t G} \right): (\C^{\otimes})^{B G} \to \C^{\otimes} \]
of functors over $S \times \Fin_*$, which restrict to maps of $\infty$-operads over $\{ s \} \times \Fin_*$ for all $s \in S$.
\end{lemma}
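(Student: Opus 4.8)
The plan is to run the Nikolaus--Scholze construction of $(-)^{tG}$ and the map $\mu$ \emph{relative to the base $S$}, exploiting that every ingredient of that construction is natural in the symmetric monoidal $\infty$-category for strong symmetric monoidal, exact, colimit-preserving functors. First I would record the input data. Since $\C_{(-)}\colon S\to\CAlg(\PrL_{\infty,\stab})$ is a diagram of symmetric monoidal $\infty$-categories and strong symmetric monoidal functors, postcomposition with $\Fun(BG,-)$ (with its pointwise symmetric monoidal structure) yields another such diagram $\C^{BG}_{(-)}$, whose unstraightening is precisely the cocartesian $S$-family $(\C^{\otimes})^{BG}\to S\times\Fin_*$ in the statement (cf.\ \cite{higheralgebra}*{Definition~4.8.3.1}). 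The constant-diagram functors assemble into a strong symmetric monoidal natural transformation, i.e.\ a map $\delta\colon\C^{\otimes}\to(\C^{\otimes})^{BG}$ of cocartesian $S$-families of symmetric monoidal $\infty$-categories.

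Next I would build homotopy fixed points and homotopy orbits in families. On each fiber over $(s,\langle n\rangle)\in S\times\Fin_*$ the functor $\delta$ restricts to $\delta_s^{\times n}\colon\C_s^{\times n}\to(\C_s^{BG})^{\times n}$, which admits both adjoints because $\C_s$ is presentable; hence by the relative adjoint functor theorem \cite{higheralgebra}*{Proposition~7.3.2.6} the map $\delta$ admits a relative right adjoint $(-)^{hG}$ and a relative left adjoint $(-)_{hG}$ as functors $(\C^{\otimes})^{BG}\to\C^{\otimes}$ over $S\times\Fin_*$. Restricting to a fixed fiber $\{s\}\times\Fin_*$, the functor $(-)^{hG}$ is the right adjoint over $\Fin_*$ of the symmetric monoidal functor $\delta_s^{\otimes}$, hence a map of $\infty$-operads; so $(-)^{hG}$ restricts to a lax symmetric monoidal functor over each $\{s\}\times\Fin_*$. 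Forgetting the $\Fin_*$-direction, $(-)_{hG}$ preserves $S$-cocartesian edges (a colimit-preserving functor commutes with $BG$-colimits), so the additive norm transformation $\Nm\colon(-)_{hG}\Rightarrow(-)^{hG}$ of \cite{higheralgebra}*{Construction~6.1.6.4} (equivalently \cite{nikolaus-scholze}*{Construction~I.1.7}) assembles --- by the same elementary diagram chase in its inductive construction already invoked in Remark~\ref{rem:StableRecollementFunctoriality} --- into a natural transformation of functors over $S\times\Fin_*$.

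Then I would \emph{define} $(-)^{tG}$ to be the fiberwise cofiber of $\Nm$; since the $S$-cocartesian pushforwards of $\C^{\otimes}$, as well as the $\Fin_*$-coordinatewise pushforwards, are exact, this cofiber is again a functor $(\C^{\otimes})^{BG}\to\C^{\otimes}$ over $S\times\Fin_*$, and the cofiber map gives $\mu\colon(-)^{hG}\Rightarrow(-)^{tG}$. It remains to see that $(-)^{tG}$ restricts on each $\{s\}\times\Fin_*$ to a map of $\infty$-operads, compatibly with $\mu$. But on the fiber $\{s\}\times\Fin_*$ all of the data produced above ($\delta_s^{\otimes}$, $(-)^{hG}$, $(-)_{hG}$, $\Nm_s$) agree with the data that Nikolaus and Scholze feed into their multiplicative Verdier quotient, so the cofiber inherits exactly their lax symmetric monoidal structure \cite{nikolaus-scholze}*{Theorem~I.3.1} and $\mu|_{\{s\}\times\Fin_*}=\mu_s$; in particular the maps $(\C^{\otimes})^{BG}\to\C^{\otimes}$ over $S\times\Fin_*$ that we have built restrict to maps of $\infty$-operads over each $\{s\}\times\Fin_*$, and the fiber over $(s,\langle 1\rangle)$ recovers the Tate construction on $\C_s$ together with $\mu_s$, as required.

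The main obstacle is the bookkeeping around the homotopy-orbit functor: $(-)_{hG}$ is only \emph{oplax} symmetric monoidal, so one cannot simply invoke the symmetric monoidal relative adjoint functor theorem for it, and one must ensure that the fiberwise cofiber of a transformation from an oplax functor to a lax functor genuinely acquires a lax symmetric monoidal structure in the family. The clean way around this is to \emph{not} attempt to refine $(-)_{hG}$ to an $S\times\Fin_*$-functor respecting its oplax structure, but to use only its honest $S$-cocartesian functoriality to produce $\Nm$ as a transformation of plain functors over $S\times\Fin_*$, and then to import the operad structure on the cofiber fiberwise over $S$ from \cite{nikolaus-scholze}*{\S I.3}, the compatibility in the $S$-direction being automatic since every construction in sight is natural with respect to the strong symmetric monoidal colimit-preserving pushforward functors of $\C_{(-)}$. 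Alternatively, one may phrase the entire argument as an application of the functoriality of the multiplicative Verdier quotient of \cite{nikolaus-scholze}*{\S I.3} to the $S$-family of pairs $\bigl(\C_s^{BG},\ \{\text{induced objects}\}\bigr)$.
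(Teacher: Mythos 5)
Your opening move --- realizing the diagonal $\delta$ as a map of cocartesian $S$-families of symmetric monoidal $\infty$-categories and taking its relative right adjoint $(-)^{hG}$ over $S\times\Fin_*$ --- agrees with the paper. The argument breaks, however, at the claim that $\delta$ also admits a relative left adjoint $(-)_{hG}$ as a functor $(\C^{\otimes})^{BG}\to\C^{\otimes}$ over $S\times\Fin_*$. For a relative left adjoint to exist over a base, the fiberwise left adjoints must satisfy a Beck--Chevalley compatibility over every edge of that base. Over $S$-cocartesian edges and inert edges of $\Fin_*$ this holds, but over the active edge $\langle 2\rangle\to\langle 1\rangle$ the requisite mate would have to be an equivalence $(a\otimes b)_{hG}\simeq a_{hG}\otimes b_{hG}$; this is exactly the oplax comparison for $(-)_{hG}$, the map induced by the diagonal $BG\to BG\times BG$, and it is not invertible. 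Hence $(-)_{hG}$ does not exist as a functor over $S\times\Fin_*$, the norm $\Nm$ is not a morphism in $\Fun_{/S\times\Fin_*}$, and the fiberwise-cofiber definition of $(-)^{tG}$ that you propose has nothing to take a cofiber of.

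You flag the oplax obstacle yourself, but your proposed workaround --- form $\Nm$ using only the $S$-cocartesian functoriality and then ``import the operad structure on the cofiber fiberwise,'' with compatibility in the $S$-direction ``automatic'' --- leaves unresolved precisely the coherence problem the lemma is asking for: there is no formal mechanism that turns a family of fiberwise lax symmetric monoidal structures into a single functor $(\C^{\otimes})^{BG}\to\C^{\otimes}$ over $S\times\Fin_*$. The paper's proof sidesteps $(-)_{hG}$ and the norm entirely. It instead performs an $\infty$-operadic pairing construction to obtain a locally cocartesian fibration $\widetilde{\Fun}{}^{\otimes,\lax}(\C^{BG},\C)\to S$ whose fiber over $s$ is the $\infty$-category $\Fun^{\otimes,\lax}(\C_s^{BG},\C_s)$ of lax symmetric monoidal functors; it exhibits $(-)^{hG}$ as a section; it shows that the full subfibration of functors annihilating induced objects admits a relative left adjoint $L$ via \cite{higheralgebra}*{Proposition~7.3.2.11}; and it defines $(-)^{tG}=L\circ(-)^{hG}$ with $\mu$ the unit of the localization, which recovers the Nikolaus--Scholze construction fiberwise by the universal property in \cite{nikolaus-scholze}*{Theorem~I.3.1}. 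Your parenthetical ``alternative'' (functoriality of the multiplicative Verdier quotient over $S$) names essentially the right idea, but making that functoriality precise is the entire content of the proof.
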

\begin{proof} The diagonal $\delta: \C_{(-)} \Rightarrow (\C_{(-)})^{BG}$ is a natural transformation valued in $\CAlg(\PrL_{\infty,\stab})$, and hence unstraightens to a functor $\delta: \C^{\otimes} \to (\C^{\otimes})^{B G}$ over $S \times \Fin_*$ that preserves cocartesian edges (where $(-)^{B G}$ now indicates the cotensor in (marked) simplicial sets over $S \times \Fin_*$). By \cite{higheralgebra}*{Corollary~7.3.2.7}, $\delta$ admits a relative right adjoint $(-)^{h G}$ over $S \times \Fin_*$, which restricts to a map of $\infty$-operads over $\{ s \} \times \Fin_*$ for all $s \in S$. 

To further assemble the lax symmetric monoidal transformations $\mu_s$ (and, along the way, construct $(-)^{t G}$), we apply an $\infty$-operadic variant of the pairing construction of \cite{htt}*{Corollary~3.2.2.13} (also see \cite{jay-thesis}*{Example~2.24}) in order to speak of a ``family of lax monoidal functors and lax monoidal transformations." Namely, given cocartesian $S$-families of symmetric monoidal $\infty$-categories $A^{\otimes}, B^{\otimes} \to S \times \Fin_*$, let $\leftnat{A^{\otimes}}$ indicate the marking given by the inert edges and consider the span of marked simplicial sets
\[ \begin{tikzcd}[row sep=4ex, column sep=4ex, text height=1.5ex, text depth=0.5ex]
S^{\sharp} & \leftnat{A^{\otimes}} \ar{r}{p} \ar{l}[swap]{q} & (S \times \Fin_*)^{\sharp},
\end{tikzcd} \]
where $q$ is the composition of the structure map $p$ and the projection to $S$. Then we have the functor
$$q_* p^*: s\Set^+_{/(S \times \Fin_*)} \to s\Set^+_{/S}, \qquad \leftnat{B^{\otimes}} \mapsto \widetilde{\Fun}{}^{\otimes,\lax}(A,B) = q_* p^*(\leftnat{B^{\otimes}})$$
that carries cocartesian fibrations over $S \times \Fin_*$ to categorical fibrations over $S$ by \cite{higheralgebra}*{Proposition~B.4.5}, using that $q$ is a flat categorical fibration \cite{higheralgebra}*{Definition~B.3.8} since it is a cocartesian fibration by \cite{higheralgebra}*{Example~B.3.11}. Unwinding the definition of the simplicial set $\widetilde{\Fun}{}^{\otimes,\lax}(A,B)$, we see that the fiber over $s \in S$ is isomorphic to the functor $\infty$-category $\Fun^{\otimes,\lax}_{/\{s\} \times \Fin_*}(A^{\otimes}_s, B^{\otimes}_s)$ of maps of $\infty$-operads $f: A^{\otimes}_s \to B^{\otimes}_s$, and a morphism $f \to g$ in $\widetilde{\Fun}{}^{\otimes,\lax}(A,B)$ covering $\alpha: s \to t$ in $S$ is given by the data of a lax commutative square of lax symmetric monoidal functors
\[ \begin{tikzcd}[row sep=4ex, column sep=4ex, text height=1.5ex, text depth=0.5ex]
A_s \ar{r}{f} \ar{d}[swap]{\alpha^{A}_!} \ar[phantom]{rd}{\SWarrow} & B_s \ar{d}{\alpha^{B}_!} \\
A_t \ar{r}[swap]{g} & B_t.
\end{tikzcd} \]
Moreover, if $B^{\otimes}$ is classified by a functor to $\CAlg(\PrL_{\infty})$ (so that the underlying $\infty$-category is presentable and the tensor product distributes over colimits), then the existence of operadic left Kan extensions\footnote{To invoke the existence theorem (and later on, the adjoint functor theorem), we note that if the fibers of $A^{\otimes}$ are presentable, we also implicitly restrict to the full subcategory of accessible functors.} \cite{higheralgebra}*{Theorem~3.1.2.3} ensures that $\widetilde{\Fun}{}^{\otimes,\lax}(A,B)$ is a \emph{locally cocartesian} fibration\footnote{Note that this sort of functoriality is separate from that articulated in \cite{htt}*{Corollary~3.2.2.13}.} over $S$, with the indicated square corresponding to a locally cocartesian edge if and only if $g$ is a operadic left Kan extension of $\alpha^{B}_! \circ f$ along $\alpha^{A}_!$. Finally, we note that sections of $\widetilde{\Fun}{}^{\otimes,\lax}(A,B)$ over $S$ correspond to functors $A^{\otimes} \to B^{\otimes}$ over $S \times \Fin_*$ that preserve inert edges.

Returning to the task at hand, consider $\widetilde{\Fun}{}^{\otimes,\lax}(\C^{BG}, \C)$ and the full subcategory $\widetilde{\Fun}{}^{\otimes,\lax}_0(\C^{BG}, \C)$ on those lax symmetric monoidal functors $\C_s^{BG} \to \C_s$ that annihilate induced objects. Note that since the $\infty$-category of lax symmetric monoidal functors $\Fun^{\otimes,\lax}(\C,\D)$ is equivalent to $\CAlg(\Fun(\C,\D))$ for the Day convolution symmetric monoidal structure on $\Fun(\C,\D)$ \cite{glasmanDay}*{Proposition~2.12}, it follows that $\Fun^{\otimes,\lax}(\C,\D)$ is presentable if $\D$ is presentable symmetric monoidal by \cite{glasmanDay}*{Lemma~2.13} and \cite{higheralgebra}*{Corollary~3.2.3.5}, and limits in $\Fun^{\otimes,\lax}(\C,\D)$ are created in $\Fun(\C,\D)$ by \cite{higheralgebra}*{Corollary~3.2.2.5}. Since the condition that functors annihilate induced objects is stable under arbitrary limits, it then follows by the adjoint functor theorem that the inclusion $$\Fun^{\otimes, \lax}_0(\C_s^{BG}, \C_s) \subset \Fun^{\otimes,\lax}(\C_s^{BG}, \C_s)$$ admits a left adjoint, which sends $(-)^{h G}$ to $(-)^{t G}$ by \cite{nikolaus-scholze}*{Theorem~I.3.1}. We also see that $\widetilde{\Fun}{}^{\otimes,\lax}_0(\C^{BG}, \C)$ inherits the property of being locally cocartesian over $S$ by applying the (fiberwise in the target) localization functor to the operadic left Kan extension (so the inclusion of $\widetilde{\Fun}_0$ into $\widetilde{\Fun}$ need not preserve locally cocartesian edges). Invoking \cite{higheralgebra}*{Proposition~7.3.2.11}, we deduce that these fiberwise left adjoints assemble to a localization functor
\[ L: \widetilde{\Fun}{}^{\otimes,\lax}(\C^{BG}, \C) \to \widetilde{\Fun}{}^{\otimes,\lax}_0(\C^{BG}, \C). \]
The desired functor $(-)^{tG}: (\C^{\otimes})^{BG} \to \C^{\otimes}$ is now obtained by postcomposition of $(-)^{h G}$ as a section of $\widetilde{\Fun}{}^{\otimes,\lax}(\C^{BG}, \C)$ with $L$, and the natural transformation $\mu$ similarly corresponds to the unit for the localization applied to $(-)^{h G}$.
\end{proof}

\begin{remark} \label{rem:SuspensionMonoidal} The exchange transformation $\alpha: \Sigma^{\infty}_+ \theta \Rightarrow \Theta^{\Tate} \Sigma^{\infty}_+$ of Remark~\ref{rem:StableRecollementFunctoriality} is adjoint to an equivalence of lax symmetric monoidal functors, hence is a lax symmetric monoidal transformation. We deduce that its right-lax limit $\Sigma^{\infty}_+: \Xscr_{C_p} \to \Sp^{C_p}(\Xscr)$ is strong symmetric monoidal with respect to the cartesian product on the source and the smash product on the target. Moreover, with respect to the smash product on $\Sp(\Xscr_{C_p})$, $\Sigma^{\infty}_+$ factors as the composite of strong symmetric monoidal functors
\[ \Xscr_{C_p} \xrightarrow{\Sigma^{\infty}_+} \Sp(\Xscr_{C_p}) \xrightarrow{\Fr} \Sp^{C_p}(\Xscr). \]
\end{remark}

\begin{corollary} \label{cor:DualizableImpliesCompact} If the unit in $\Sp(\Xscr_{C_p})$ is compact, then the unit in $\Sp^{C_p}(\Xscr)$ is compact (and hence any dualizable object in $\Sp^{C_p}(\Xscr)$ is compact).
\end{corollary}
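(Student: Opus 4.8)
The plan is to deduce compactness of the unit purely formally from the monadic adjunction $\Fr: \Sp(\Xscr_{C_p}) \rightleftarrows \Sp^{C_p}(\Xscr): U$ of Corollary~\ref{cor:MonadicityForgetful} together with the strong symmetric monoidality of $\Fr$ recorded in Remark~\ref{rem:SuspensionMonoidal}. The two facts I really need are that $\Fr$ carries the unit of $\Sp(\Xscr_{C_p})$ to the unit of $\Sp^{C_p}(\Xscr)$, and that $U$ preserves filtered colimits.

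First I would observe that, since $\Fr$ is strong symmetric monoidal, it preserves units; write $\1$ for the unit of either $\infty$-category. Then for every $Y \in \Sp^{C_p}(\Xscr)$ the adjunction gives a natural equivalence $\Map_{\Sp^{C_p}(\Xscr)}(\1, Y) \simeq \Map_{\Sp^{C_p}(\Xscr)}(\Fr \1, Y) \simeq \Map_{\Sp(\Xscr_{C_p})}(\1, U Y)$. Next, given a filtered diagram $\{Y_\alpha\}$ in $\Sp^{C_p}(\Xscr)$, I would use that $U$ commutes with colimits (Corollary~\ref{cor:MonadicityForgetful}) to identify $U(\colim_\alpha Y_\alpha) \simeq \colim_\alpha U Y_\alpha$, and then invoke the hypothesis that $\1 \in \Sp(\Xscr_{C_p})$ is compact to commute $\Map_{\Sp(\Xscr_{C_p})}(\1, -)$ past this filtered colimit. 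Chaining these equivalences yields $\Map_{\Sp^{C_p}(\Xscr)}(\1, \colim_\alpha Y_\alpha) \simeq \colim_\alpha \Map_{\Sp^{C_p}(\Xscr)}(\1, Y_\alpha)$, which is exactly compactness of $\1$ in $\Sp^{C_p}(\Xscr)$.

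For the parenthetical claim, I would recall from Remark~\ref{rem:PresentablySMC} that $\Sp^{C_p}(\Xscr)$ is presentably symmetric monoidal, so for a dualizable object $X$ with dual $X^\vee$ the functor $X^\vee \otimes (-)$ preserves all colimits and $\Map_{\Sp^{C_p}(\Xscr)}(X, -) \simeq \Map_{\Sp^{C_p}(\Xscr)}(\1, X^\vee \otimes (-))$; combining this with compactness of $\1$ shows $X$ is compact. There is no serious obstacle in the argument itself — the entire content has been offloaded to the monadicity statement of Corollary~\ref{cor:MonadicityForgetful} (in particular the colimit-preservation of $U$, which rested on the factorizations of $\Psi^{C_p}$ and $j^*$ through $U$ and the joint conservativity of $(j^*_{\pre}, i^*_{\pre})$) and to the monoidal refinement of Remark~\ref{rem:SuspensionMonoidal}; the only thing to be careful about is that both of those cited results are genuinely applicable in the form stated.
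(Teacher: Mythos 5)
Your proof is correct and takes essentially the same approach as the paper: the paper's proof is a one-line appeal to Remark~\ref{rem:SuspensionMonoidal} (for strong symmetric monoidality of $\Fr$, hence preservation of the unit) and Corollary~\ref{cor:MonadicityForgetful} (for colimit-preservation of $U$), which is exactly the argument you have spelled out.
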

\begin{proof} By Remark~\ref{rem:SuspensionMonoidal} and Corollary~\ref{cor:MonadicityForgetful}, $\Fr: \Sp(\Xscr_{C_p}) \to \Sp^{C_p}(\Xscr)$ is a strong symmetric monoidal functor whose right adjoint $U$ preserves colimits. The claim follows.
\end{proof}

We can also produce dualizable objects in $\Sp(\Xscr_{C_p})$ from $\Sp(\Xscr)$. To show such a result, we first need to establish the projection formula for the monoidal adjunction $$\overline{\pi}^*: \Sp^{C_p}(\Xscr) \rightleftarrows \Sp(\Xscr) : \overline{\pi}_*,$$
where $\overline{\pi}^*$ comes endowed with the structure of a strong symmetric monoidal functor as the composition $$\pi^* j^*: \Sp^{C_p}(\Xscr) \to \Sp(\Xscr_{\hh C_p}) \to \Sp(\Xscr).$$

\begin{proposition} Let $\Xscr$ be a $C_p$-$\infty$-topos and consider the monoidal adjunction $\overline{\pi}^* \dashv \overline{\pi}_*$. Then:
\begin{enumerate}
\item $\overline{\pi}_*$ preserves colimits.
\item $\overline{\pi}_*$ is conservative.
\item For any $A \in \Sp^{C_p}(\Xscr)$ and $B \in \Sp(\Xscr)$, the natural map
$$ \gamma: \overline{\pi}_*(B) \otimes A \to \overline{\pi}_*(B \otimes \overline{\pi}^*(A)) $$
is an equivalence.
\end{enumerate}
Therefore, we have an equivalence $\Sp(\Xscr) \simeq \Mod_{\overline{\pi}_*(\1)} (\Sp^{C_p}(\Xscr))$ of symmetric monoidal $\infty$-categories.
\end{proposition}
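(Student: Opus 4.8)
The plan is to compute the right adjoint $\overline{\pi}_*$ explicitly via the monoidal recollement of Proposition~\ref{prop:preserves}, from which all four assertions will follow. Since $\overline{\pi}^* = \pi^* \circ j^*$, its right adjoint is $\overline{\pi}_* = j_* \circ \pi_*$, where $j_*\colon \Sp(\Xscr_{\hh C_p}) \to \Sp^{C_p}(\Xscr)$ is the open right adjoint of that recollement and $\pi_*\colon \Sp(\Xscr) \to \Sp(\Xscr_{\hh C_p})$ is right adjoint to the restriction $\pi^*$. The crucial first step is to observe that $\overline{\pi}_*$ in fact lands in the open part: for any $B$, the object $\pi_* B$ is induced in the sense of Definition~\ref{def:topos-induced}, so Lemma~\ref{lem:killsinduced} gives $i^* j_* \pi_* B \simeq \Theta^{\Tate}(\pi_* B) \simeq 0$; hence $j_* \pi_* B$ lies in $\ker(i^*)$, which is the fully faithful image of the further left adjoint $j_!$, and therefore $\overline{\pi}_*(B) \simeq j_!(\pi_* B)$. (This is consistent with $\overline{\pi}_* \simeq \overline{\pi}_!$, as predicted by the $C_p$-semiadditivity of the $C_p$-stable $C_p$-$\infty$-category $\underline{\Sp}^{C_p}(\Xscr)$.)

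Granting this formula, (1) is immediate: $\pi_* \simeq \pi_!$ preserves colimits by Lemma~\ref{lem:borel-computation}, and $j_!$ preserves colimits as a left adjoint, so $\overline{\pi}_*$ does too. For (2), I would apply $j^*$: since $j^* j_! \simeq \id$ one gets $j^* \overline{\pi}_* \simeq \pi_*$, and $\pi_*$ is conservative because $\pi^* \pi_* B \simeq \prod_{g \in C_p} g^* B$ (the relative right Kan extension formula of Lemma~\ref{lem:borel-computation}) contains $B$ as a retract in the additive $\infty$-category $\Sp(\Xscr)$; hence $\overline{\pi}_*$ is conservative.

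For (3) --- the main point --- I would first record that $j_!$ satisfies its own projection formula in the monoidal recollement: $i^*$ is strong symmetric monoidal with $i^* j_! \simeq 0$, so $i^*(j_! X \otimes A) \simeq i^* j_! X \otimes i^* A \simeq 0$ for all $A$, whence $j_! X \otimes A$ again lies in $\ker(i^*) = \im(j_!)$, and applying $j^*$ yields $j_! X \otimes A \simeq j_!(X \otimes j^* A)$. Combined with $\overline{\pi}_*(B) \simeq j_!(\pi_* B)$ and the identity $\overline{\pi}^* A = \pi^*(j^* A)$, the map $\gamma$ is identified --- after cancelling the fully faithful $j_!$ --- with the projection map $\pi_*(B) \otimes j^* A \to \pi_*(B \otimes \pi^*(j^* A))$ for the adjunction $\pi^* \dashv \pi_*$. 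It then suffices to prove the projection formula for the strong symmetric monoidal, conservative, colimit-preserving functor $\pi^*$; since $\pi_* \simeq \pi_!$, I would verify this after applying the conservative $\pi^*$, using strong monoidality on the left, the formula $\pi^* \pi_*(-) \simeq \prod_g g^*(-)$ on the right, the canonical identification $g^* \pi^*(j^* A) \simeq \pi^*(j^* A)$ coming from the section structure of $\underline{\Xscr}$ over $B C_p$, and the fact that $- \otimes \pi^*(j^* A)$ commutes with the finite product $\prod_g$ in the stable setting; both sides then become $(\prod_g g^* B) \otimes \pi^*(j^* A)$ compatibly with $\pi^* \gamma$, so $\gamma$ is an equivalence. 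I expect this step --- or rather, organizing the reduction of it to the Wirthm\"uller-type projection formula for $\pi^*$ via the clean formula $\overline{\pi}_* \simeq j_! \pi_*$ --- to be where essentially all the content lies; the rest is formal.

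Finally, for the concluding equivalence: by (1), (2) and the Barr--Beck--Lurie theorem \cite{higheralgebra}, $\overline{\pi}_*$ is monadic, so $\Sp(\Xscr) \simeq \Mod_{\overline{\pi}_* \overline{\pi}^*}(\Sp^{C_p}(\Xscr))$. Since $\overline{\pi}_*$ is lax symmetric monoidal (as right adjoint to the strong monoidal $\overline{\pi}^*$), the object $R := \overline{\pi}_*(\1)$ is a commutative algebra in $\Sp^{C_p}(\Xscr)$, and the projection formula of (3) upgrades to an equivalence of monads $\overline{\pi}_* \overline{\pi}^*(-) \simeq \overline{\pi}_*(\1 \otimes \overline{\pi}^*(-)) \simeq R \otimes (-)$ --- with the monad structures agreeing because the projection transformation is compatible with the unit and the multiplication. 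Hence $\Sp(\Xscr) \simeq \Mod_R(\Sp^{C_p}(\Xscr))$, and this is an equivalence of symmetric monoidal $\infty$-categories by the standard theory of modules over commutative algebra objects: $\overline{\pi}^*$ is strong symmetric monoidal and colimit-preserving with a colimit-preserving, conservative right adjoint satisfying the projection formula, so it exhibits $\Sp(\Xscr)$ as $\Mod_R(\Sp^{C_p}(\Xscr))$ compatibly with tensor products (cf. \cite{higheralgebra}*{\S4.5, \S4.8}).
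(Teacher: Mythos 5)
Your proof is correct, and it takes a route that is genuinely different from the paper's in one interesting respect. The paper works directly with $\overline{\pi}^*$: it cites ambidexterity at the level of $\underline{\Sp}^{C_p}(\Xscr)$ for (1), derives the double-coset formula $\overline{\pi}^* \overline{\pi}_*(X) \simeq \bigoplus_{g} g^* X$ from the Beck--Chevalley condition for finite $C_p$-products, concludes (2) from the retract, and for (3) checks $\gamma$ after the jointly conservative pair $(j^*, i^*)$ --- observing that $i^*(\gamma)$ is an equivalence between zeros since $i^*$ is monoidal and annihilates $\overline{\pi}_*(-)$, while $\overline{\pi}^*(\gamma) = \pi^* j^*(\gamma)$ reduces by the double-coset formula to distributivity of $\otimes$ over $\oplus$. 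You instead begin by extracting from Lemma~\ref{lem:killsinduced} and the identification $\ker(i^*) = \im(j_!)$ the intermediate formula $\overline{\pi}_* \simeq j_! \pi_*$. This yields (1) literally as a composite of left adjoints (no appeal to ambidexterity needed at the parametrized level, only for $\pi_* \simeq \pi_!$), (2) by applying $j^*$, and (3) by using the recollement $j_!$-projection formula to cancel the fully faithful $j_!$ and reduce to the projection formula for $\pi^* \dashv \pi_*$, verified after the conservative $\pi^*$. The two routes ultimately perform the same finite-products computation $\pi^* \pi_* \simeq \prod_g g^*$, so neither is more elementary; but your formula $\overline{\pi}_* \simeq j_! \pi_*$ is a clean and reusable structural fact (it gives at a glance that $\overline{\pi}_*$ factors through the open part and is a composite of two colimit-preserving functors), whereas the paper's direct computation is marginally shorter. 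For the concluding equivalence you spell out Barr--Beck--Lurie plus the identification of the monad with $R \otimes (-)$; the paper cites \cite{mnn-descent}*{Theorem 5.29} as a black box. Both are valid. One small caveat you flag yourself --- identifying the reduced map with the projection map for $\pi$ after cancelling $j_!$ --- does require a short diagram chase through the lax structure maps, but this is routine and does not threaten the argument.
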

\begin{proof} The conclusion will follow from \cite{mnn-descent}*{Theorem~5.29} once we establish the three properties. (1) holds in view of the ambidexterity equivalence $\overline{\pi}_* \simeq \overline{\pi}_!$. For (2) and (3), we first note that for any $X \in \Sp(\Xscr)$, we have the formula $$\overline{\pi}^* \overline{\pi}_* (X) \simeq X \oplus \sigma^*(X),$$
using that $\underline{\Sp}^{C_p}(\Xscr)$ admits finite $C_p$-products and the Beck-Chevalley condition (here, $\sigma^*$ denotes the $C_2$-action on $\Sp(\Xscr)$). It is thus clear that $\overline{\pi}_*$ is conservative. As for the projection formula, since $i^*: \Sp^{C_p}(\Xscr) \to \Sp(\Xscr^{\hh C_p})$ is symmetric monoidal and annihilates induced objects, it suffices to check that $\overline{\pi}^*(\gamma)$ is an equivalence. But for this, under the given formula $\overline{\pi}^*(\gamma)$ becomes
\[ (B \oplus \sigma^* B) \otimes \overline{\pi}^*(A) \to (B \otimes \overline{\pi}^*(A)) \oplus \sigma^* (B \otimes \overline{\pi}^* A),\]
which is an equivalence in view of $\sigma^* \overline{\pi}^* \simeq \overline{\pi}^*$ and distributivity of $\otimes$ over $\oplus$.
\end{proof}

Let $\underline{\Hom}(-,-)$ denote the internal hom in a presentable symmetric monoidal $\infty$-category.

\begin{corollary} \label{cor:Dualizability} Let $\Xscr$ be a $C_p$-$\infty$-topos. Then:
\begin{enumerate} \item For all $A \in \Sp^{C_p}(\Xscr)$ and $B \in \Sp(\Xscr)$, the natural map
\[ \overline{\pi}_* \underline{\Hom}(B, \overline{\pi}^* A) \to \underline{\Hom}(\overline{\pi}_* B,A) \]
is an equivalence.
\item Suppose $E \in \Sp(\Xscr)$ is a dualizable object (with dual $E^{\vee} \simeq \underline{\Hom}(E,\1)$). Then $\overline{\pi}_*(E)$ is dualizable with dual $\overline{\pi}_*(E)^{\vee} \simeq \overline{\pi}_*(E^{\vee})$. In particular, $\overline{\pi}_*(\1)$ is a self-dual $\mathbb{E}_{\infty}$-algebra.
\end{enumerate}
\end{corollary}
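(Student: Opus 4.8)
The plan is to deduce both assertions formally, using the projection formula and the ambidexterity equivalence $\overline{\pi}_* \simeq \overline{\pi}_!$ established in the preceding Proposition, together with the standard criterion that an object $M$ of a presentable symmetric monoidal $\infty$-category is dualizable if and only if the canonical map $\underline{\Hom}(M,\1) \otimes (-) \to \underline{\Hom}(M,-)$ is a natural equivalence.

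For part (1), I would argue by Yoneda. Fix $A \in \Sp^{C_p}(\Xscr)$ and $B \in \Sp(\Xscr)$, and let $C \in \Sp^{C_p}(\Xscr)$ vary. Using the adjunction $\overline{\pi}^* \dashv \overline{\pi}_*$ and the tensor--hom adjunction in $\Sp(\Xscr)$,
\[ \Map\bigl(C,\, \overline{\pi}_* \underline{\Hom}(B, \overline{\pi}^* A)\bigr) \simeq \Map\bigl(\overline{\pi}^* C,\, \underline{\Hom}(B, \overline{\pi}^* A)\bigr) \simeq \Map\bigl(\overline{\pi}^* C \otimes B,\, \overline{\pi}^* A\bigr). \]
On the other hand, by the tensor--hom adjunction in $\Sp^{C_p}(\Xscr)$, the projection formula $C \otimes \overline{\pi}_* B \simeq \overline{\pi}_*(\overline{\pi}^* C \otimes B)$, and the adjunction $\overline{\pi}_! \dashv \overline{\pi}^*$ (valid since $\overline{\pi}_! \simeq \overline{\pi}_*$),
\[ \Map\bigl(C,\, \underline{\Hom}(\overline{\pi}_* B, A)\bigr) \simeq \Map\bigl(\overline{\pi}_*(\overline{\pi}^* C \otimes B),\, A\bigr) \simeq \Map\bigl(\overline{\pi}^* C \otimes B,\, \overline{\pi}^* A\bigr). \]
Matching these two chains, naturally in $C$, yields the equivalence. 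The one nontrivial point --- and the main technical obstacle --- is to check that the resulting equivalence is implemented by the canonical comparison map of the statement, rather than merely an abstract isomorphism of corepresented functors; this is a diagram chase unwinding that map from the unit and counit of the ambidextrous adjunction $\overline{\pi}_! \simeq \overline{\pi}_* \dashv \overline{\pi}^*$ and the lax symmetric monoidal structure on $\overline{\pi}_*$ (equivalently, the arrow realizing the projection formula in the preceding Proposition). Everything else is formal.

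For part (2), assume $E$ is dualizable with dual $E^\vee$, so $\underline{\Hom}(E, Y) \simeq E^\vee \otimes Y$ naturally in $Y$. Then, naturally in $A \in \Sp^{C_p}(\Xscr)$, part (1) and the projection formula give
\[ \underline{\Hom}(\overline{\pi}_* E,\, A) \simeq \overline{\pi}_* \underline{\Hom}(E, \overline{\pi}^* A) \simeq \overline{\pi}_*\bigl(E^\vee \otimes \overline{\pi}^* A\bigr) \simeq \overline{\pi}_*(E^\vee) \otimes A. \]
Specializing to $A = \1$ identifies $\underline{\Hom}(\overline{\pi}_* E, \1) \simeq \overline{\pi}_*(E^\vee)$, and the displayed chain (after the same routine check that it realizes the canonical comparison map) shows that $\underline{\Hom}(\overline{\pi}_* E, \1) \otimes (-) \to \underline{\Hom}(\overline{\pi}_* E, -)$ is an equivalence; hence $\overline{\pi}_* E$ is dualizable with dual $\overline{\pi}_*(E^\vee)$. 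Finally, taking $E = \1$ shows $\overline{\pi}_*(\1)$ is dualizable and self-dual, and it is an $\mathbb{E}_{\infty}$-algebra because $\overline{\pi}_*$, as the right adjoint of the symmetric monoidal functor $\overline{\pi}^*$, is lax symmetric monoidal.
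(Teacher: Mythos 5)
Your proof is correct and uses the same ingredients as the paper's (projection formula, ambidexterity $\overline{\pi}_! \simeq \overline{\pi}_*$, and the standard dualizability criterion); your Yoneda/right-adjoints argument for (1) is precisely what the paper compresses into ``easy consequence,'' and your chain for (2) is the paper's chain read in the other direction. Your cautionary remark about checking that the abstract equivalence is the canonical comparison map is reasonable, but the mate calculus for adjunctions takes care of it automatically, so there is no actual gap.
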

\begin{proof} (1) is an easy consequence of the $(\overline{\pi}^*, \overline{\pi}_*)$ projection formula and ambidexterity equivalence $\overline{\pi}^* \simeq \overline{\pi}_!$. For (2), it suffices to show that for any $X \in \Sp^{C_p}(\Xscr)$, the natural map
\[ \underline{\Hom}(\overline{\pi}_*(E),\1) \otimes X \to  \underline{\Hom}(\overline{\pi}_*(E),X) \]
is an equivalence. But under (1) and the projection formula, this map is equivalent to
\[ \overline{\pi}_* (\underline{\Hom}(E,\1) \otimes \overline{\pi}^* X) \to \overline{\pi}_* \underline{\Hom}(E,\overline{\pi}^* X). \]
Since $E$ is dualizable, we have that $E^{\vee} \otimes \overline{\pi}^* X \simeq \underline{\Hom}(E,\overline{\pi}^* X)$, and the claim follows.
\end{proof}

\begin{remark} Suppose $\Xscr$ is a presentable symmetric monoidal $\infty$-category and $(\Uscr, \Zscr)$ is a monoidal recollement of $\Xscr$. Let $X \in \Xscr$. Then the endofunctor $(-) \otimes X : \Xscr \to \Xscr$ is a morphism of recollements that induces $(-) \otimes j^*(X)$ on $\Uscr$ and $(-) \otimes i^*(X)$ on $\Zscr$. Moreover, if $X$ is dualizable, then $(-) \otimes X$ is a \emph{strict} morphism of recollements. This amounts to showing that the $(j^* , j_*)$-projection formula holds, i.e., for every $U \in \Uscr$, the natural map
\[ j_*(U) \otimes X \to j_*(U \otimes j^*X). \]
is an equivalence. Indeed, if we let $Y \in \Xscr$ be any object, then using dualizability of $X$ we see that
\[ \Map(Y,j_*(U) \otimes X) \simeq \Map(j^* Y \otimes j^*(X)^{\vee},U) \simeq \Map(Y,j_*(U \otimes j^*X)). \]
\end{remark}

\begin{remark} \label{rem:FiniteLocalization} We can use Corollary~\ref{cor:Dualizability} to control the smashing localization $i_* i^*$ on $\Sp^{C_p}(\Xscr)$ as a finite localization (in the sense of Miller \cite{Miller}), by presenting the idempotent $\mathbb{E}_{\infty}$-algebra $i_* i^*(\1)$ as a filtered colimit of dualizable objects. Namely, we observe that the dualizable $\mathbb{E}_{\infty}$-algebra object $A = \overline{\pi}_* \overline{\pi}^*(\1)$ fits into the general theory of $A$-complete, $A$-torsion, and $A^{-1}$-local objects in the sense of Mathew-Naumann-Noel \cite{mnn-descent}*{Part~1}. In particular, using that $\Sp(\Xscr_{\hh C_p})$ is monadic over $\Sp(\Xscr)$ (Lemma~\ref{lem:generated}), we see that the full subcategory of $A$-complete objects \cite{mnn-descent}*{Definition~2.15} and the functor $L_A$ of $A$-completion \cite{mnn-descent}*{Definition~2.19} identify with $\Sp(\Xscr_{\hh C_p})$ embedded via $j_*$ and $j_* j^*$. Moreover, the recollement cofiber sequence
\[ j_!j^*(\1) \to \1 \to i_* i^*(\1) \]
identifies with the cofiber sequence of \cite{mnn-descent}*{Construction~3.4}
\[ V_A \to \1 \to U_A, \]
where $U_A$ is defined to be the filtered colimit of the sequence of maps
\[ \1 = U_0 \to U_1 \to U_2 \to U_3 \to \cdots \]
dual to the $A$-based Adams tower for $\1$ (where $I = \fib(\1 \to A)$ and $U_i:=(I^{\otimes i})^{\vee}$)
\[ \1 = I^{\otimes 0} \leftarrow I \leftarrow I^{\otimes 2} \leftarrow I^{\otimes 3} \leftarrow \cdots .\]
Thus, the full subcategory of $A$-torsion objects \cite{mnn-descent}*{Definition~3.1} and the $A$-acyclization functor $\mathrm{AC}_A$ \cite{mnn-descent}*{Construction~3.2} identify with $\Sp(\Xscr_{\hh C_p})$ embedded via $j_!$ and the colocalization $j_! j^*$, while the full subcategory of $A^{-1}$-local objects \cite{mnn-descent}*{Definition~3.10} and the $A^{-1}$-localization functor identify with $\Sp(\Xscr^{\hh C_p})$ embedded via $i_*$ and the smashing localization $i_* i^*$.

We further observe that if $p=2$, $I$ and hence $U_k$ are all \emph{invertible} objects. Indeed, since $I$ is dualizable, it suffices to show that $i^*(I)$ and $\overline{\pi}^*(I)$ are invertible. But $i^*(I) \simeq \1$ since $i^*(A) \simeq 0$, and $\overline{\pi}^*(I) \simeq \Sigma^{-1} \1$ using that the map $\overline{\pi}^*(\1 \to A)$ identifes with the summand inclusion $\1 \to \1 \oplus \1$. This reflects the fact that the cofiber sequence $$A^{\vee} \simeq A \to \1 \to U_1$$ is a categorical avatar of the cofiber sequence (for $\sigma$ the sign $C_2$-real representation) $$(C_2)_+ = S(\sigma)_+ \to S^0 \to S^{\sigma} $$ in equivariant homotopy theory, which recovers it when $\Xscr = \Spc$ with trivial $C_2$-action. We will see another perspective on this in the algebro-geometric setting in Lemma~\ref{lem:duals}.
\end{remark}

We end this subsection by applying Remark~\ref{rem:FiniteLocalization} to prove a partial converse to Corollary~\ref{cor:DualizableImpliesCompact}. For a presentable symmetric monoidal $\infty$-category $\C$, call an object $X \in \C$ {\bf internally compact} if the endofunctor $\underline{\Hom}(X,-): \C \to \C$ preserves filtered colimits. Note that if $X$ is dualizable, then $X$ is internally compact, since $X^{\vee} \otimes (-) \simeq \underline{\Hom}(X,-)$.

\begin{proposition} \label{prp:InternallyCompactToDualizable} Let $\Xscr$ be a $C_p$-$\infty$-topos and $X \in \Sp^{C_p}(\Xscr)$ an internally compact object. Suppose that $j^*(X)$ is dualizable in $\Sp(\Xscr_{\hh C_p})$ (equivalently, $\overline{\pi}^*(X)$ is dualizable in $\Sp(\Xscr)$)\footnote{Note that this latter condition is sometimes easier to check. For instance, if (internally) compact objects are known to be dualizable in $\Sp(\Xscr)$, then we may use that $\overline{\pi}^*$ preserves (internally) compact objects as it participates in an ambidextrous adjunction. However, $j^*$ does not preserve all compact objects, and correspondingly, dualizable objects are almost never compact in $\Sp(\Xscr_{\hh C_p})$ unless they are also induced.} and $i^*(X)$ is dualizable in $\Sp(\Xscr^{\hh C_p})$. Then $X$ is dualizable. 
\end{proposition}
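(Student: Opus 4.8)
The plan is to prove that the natural comparison map $\nu_Q\colon\underline{\Hom}(X,\1)\otimes Q\to\underline{\Hom}(X,Q)$ is an equivalence for every $Q\in\Sp^{C_p}(\Xscr)$; this is exactly the assertion that $X$ is dualizable. Let $\mathcal{D}\subseteq\Sp^{C_p}(\Xscr)$ be the full subcategory of objects $Q$ for which $\nu_Q$ is an equivalence. Because $X$ is internally compact and $\Sp^{C_p}(\Xscr)$ is stable, $\underline{\Hom}(X,-)$ is exact and preserves filtered colimits, hence preserves all colimits; since $\underline{\Hom}(X,\1)\otimes(-)$ also preserves colimits and $\nu$ is natural, $\mathcal{D}$ is closed under colimits, and therefore also under retracts, shifts and extensions. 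Using the recollement fibre sequence $j_!j^*Q\to Q\to i_*i^*Q$ on $\Sp^{C_p}(\Xscr)$ (Proposition~\ref{prop:preserves}), every object lies in the localizing subcategory generated by the essential images of $j_!$ and $i_*$, so it suffices to prove $j_!(\Sp(\Xscr_{\hh C_p}))\cup i_*(\Sp(\Xscr^{\hh C_p}))\subseteq\mathcal{D}$.

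The crux is that the recollement restriction functors $j^*$ and $i^*$ preserve the internal homs we need. First, since $i_*i^*$ is a smashing localization (Remark~\ref{rem:FiniteLocalization}), so is the complementary colocalization $j_!j^*$; consequently $j_!$ satisfies the projection formula $j_!(N)\otimes Q\simeq j_!(N\otimes j^*Q)$, and a short adjunction argument then gives $j^*\underline{\Hom}(P,Q)\simeq\underline{\Hom}_{\Sp(\Xscr_{\hh C_p})}(j^*P,j^*Q)$ for all $P,Q$. Second, although $i^*$ is not closed in general, for $X$ we may use that $U_A:=i_*i^*(\1)$ is a filtered colimit $\colim_k U_k$ of \emph{dualizable} objects (Remark~\ref{rem:FiniteLocalization}) to compute
\[ i_*i^*\underline{\Hom}(X,Q)\;\simeq\;U_A\otimes\underline{\Hom}(X,Q)\;\simeq\;\colim_k\underline{\Hom}(X,U_k\otimes Q)\;\simeq\;\underline{\Hom}(X,U_A\otimes Q), \]
where the middle equivalence uses dualizability of the $U_k$ together with $U_A\simeq\colim_k U_k$, and the last uses internal compactness of $X$ to commute $\underline{\Hom}(X,-)$ past the colimit; since $U_A\otimes Q\simeq i_*i^*Q$ and $i_*$ is fully faithful, this yields $i^*\underline{\Hom}(X,Q)\simeq\underline{\Hom}_{\Sp(\Xscr^{\hh C_p})}(i^*X,i^*Q)$. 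Taking $Q=\1$ and invoking the hypotheses that $j^*X$ and $i^*X$ (equivalently $\overline{\pi}^*X$) are dualizable, we obtain $j^*\underline{\Hom}(X,\1)\simeq(j^*X)^{\vee}$ and $i^*\underline{\Hom}(X,\1)\simeq(i^*X)^{\vee}$.

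It then remains to assemble these facts using joint conservativity of $(j^*,i^*)$. For $Q=j_!N$ with $N\in\Sp(\Xscr_{\hh C_p})$: applying $i^*$ sends $\nu_Q$ to a map between zero objects, since $i^*j_!=0$ kills the source while closedness of $i^*$ at $X$ gives $i^*\underline{\Hom}(X,j_!N)\simeq\underline{\Hom}_{\Sp(\Xscr^{\hh C_p})}(i^*X,i^*j_!N)=0$; applying $j^*$, and using $j^*j_!\simeq\id$ together with closedness of $j^*$, turns $\nu_Q$ into the map $(j^*X)^{\vee}\otimes N\to\underline{\Hom}_{\Sp(\Xscr_{\hh C_p})}(j^*X,N)$, which is an equivalence because $j^*X$ is dualizable. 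The case $Q=i_*M$ with $M\in\Sp(\Xscr^{\hh C_p})$ is entirely symmetric, using $j^*i_*=0$, $i^*i_*\simeq\id$, and dualizability of $i^*X$. Hence $\mathcal{D}=\Sp^{C_p}(\Xscr)$, so $X$ is dualizable.

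I expect the main obstacle to be the middle step: proving that $j^*$ is closed symmetric monoidal and that $i^*$ is ``closed at $X$''. The former is structural and rests only on the smashing property of the recollement (Remark~\ref{rem:FiniteLocalization}); the latter is where the internal compactness hypothesis is genuinely consumed — it is precisely what permits pulling $\underline{\Hom}(X,-)$ through the filtered colimit presenting $U_A$ — and internal compactness is also what legitimizes the reduction to the generators $j_!(\Sp(\Xscr_{\hh C_p}))$ and $i_*(\Sp(\Xscr^{\hh C_p}))$ in the first step.
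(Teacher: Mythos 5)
Your proof is correct and rests on the same two substantive lemmas as the paper's: (i) the recollement functor $j^*$ is closed, giving $j^*\underline{\Hom}(X,Q)\simeq\underline{\Hom}(j^*X,j^*Q)$ for all $Q$; and (ii) internal compactness of $X$, combined with the filtered presentation $i_*i^*(\1)\simeq\colim_{k}U_k$ by dualizable objects from Remark~\ref{rem:FiniteLocalization}, yields $i^*\underline{\Hom}(X,Q)\simeq\underline{\Hom}(i^*X,i^*Q)$. Both arguments then conclude by joint conservativity of $(j^*,i^*)$. Your detour through the localizing subcategory generated by the $j_!$- and $i_*$-images is unnecessary: once (i) and (ii) hold, one can apply $j^*$ and $i^*$ directly to $\nu_Q$ for \emph{arbitrary} $Q$, identifying $j^*(\nu_Q)$ (resp.\ $i^*(\nu_Q)$) with the comparison map at $j^*Q$ (resp.\ $i^*Q$) for the dualizable $j^*X$ (resp.\ $i^*X$), which is exactly how the paper finishes; this also spares the second invocation of internal compactness (to close $\mathcal{D}$ under colimits). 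Relatedly, closedness of $j^*$ is a general fact about monoidal recollements --- the paper cites \cite{quigley-shah}*{Proposition~1.30.5} --- and holds with no appeal to the smashing structure of $i_*i^*$ and no hypotheses on $X$, so your rederivation via Remark~\ref{rem:FiniteLocalization}, while valid, is heavier than needed.
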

\begin{proof} Let $Y \in \Sp^{C_p}(\Xscr)$ be any object. Without any assumptions on $X$ or $Y$, we note that the natural map
\[ j^* \underline{\Hom}(X,Y) \to \underline{\Hom}(j^*X,j^*Y)
\]
is an equivalence by \cite{quigley-shah}*{Proposition~1.30.5}. Given that $X$ is internally compact, we further note that the natural map
\[ i^* \underline{\Hom}(X,Y) \to \underline{\Hom}(i^* X,i^*Y)
\]
is an equivalence. Indeed, it suffices to check the map is an equivalence upon applying $i_*$, after which it identifies as
\[ \underline{\Hom}(X,Y) \otimes i_* i^*(\1) \to \underline{\Hom}(X,Y \otimes i_* i^*(\1)),
\]
where on the righthand side, we have traded the internal hom in $\Sp(\Xscr^{\hh C_p})$ for that in $\Sp^{C_p}(\Xscr)$ (cf. \cite{quigley-shah}*{1.29}). Now by Remark~\ref{rem:FiniteLocalization}, we have that $i_* i^*(\1) \simeq \colim_{k \in \NN} U_k$, and using that $X$ is internally compact, the map is equivalent to
\[ \colim_{k \in \NN}(f_k) : \colim_{k \in \NN} \underline{\Hom}(X,Y) \otimes U_{k} \to \colim_{k \in \NN} \underline{\Hom}(X, Y \otimes U_{k}). \]
It thus suffices to check that each map $f_k$ is an equivalence, which follows readily from the dualizability of each of the $U_k$. Indeed, we note
\[ \underline{\Hom}(X,Y) \otimes U_{k} \simeq \underline{\Hom}(X \otimes U_k^{\vee},Y) \simeq \underline{\Hom}(X,Y \otimes U_k). \]
We now generically write $(-)^{\vee} = \underline{\Hom}(-,\1)$, so that $(j^* X)^{\vee}$ and $(i^* X)^{\vee}$ are the respective duals of $j^* X$ and $i^* X$, and claim that $X^{\vee}$ is dual to $X$. For this, first note that the above equivalences specialize to show that $j^\ast(X^{\vee}) \simeq (j^\ast X)^{\vee}$ and $i^\ast(X^{\vee}) \simeq (i^\ast X)^{\vee}$. Next, note that the natural maps
\[ X^{\vee} \otimes Y \to \underline{\Hom}(X,Y), \qquad X \otimes Y \to \underline{\Hom}(X^\vee,Y) \]
are sent under $j^*$ and $i^*$ to the maps
\begin{align*} (j^*X)^{\vee} \otimes j^* Y \to \underline{\Hom}(j^*X,j^* Y),& \qquad j^*X \otimes j^* Y \to \underline{\Hom}((j^*X)^\vee,j^* Y), \\
(i^*X)^{\vee} \otimes i^* Y \to \underline{\Hom}(i^*X,i^* Y),& \qquad i^*X \otimes i^* Y \to \underline{\Hom}((i^*X)^\vee,i^* Y),
\end{align*}
which are equivalences by our assumption that $j^* X$ and $i^* X$ are dualizable. We then invoke the joint conservativity of $(j^*, i^*)$ to conclude that $X$ is dualizable.
\end{proof}

\begin{remark} Suppose $X \in \Sp^G$ is a compact object. Then $X$ is internally compact. We can show this fact even without assuming the implication that compact objects in $\Sp^G$ are dualizable. Indeed, using the joint conservativity of the categorical fixed point functors (that preserve colimits in view of the reverse implication that dualizable objects are compact in $\Sp^G$), it suffices to check that for all subgroups $H \leq G$, the composite
\[ \Sp^G \xrightarrow{\underline{\Hom}(X,-)} \Sp^G \xrightarrow{\res^G_H} \Sp^H \xrightarrow{\Hom(\1,-)} \Sp \]
preserves colimits (where $\Hom(-,-)$ denotes the mapping spectrum). But a diagram chase shows this composite to be equivalent to
\[ \Sp^G \xrightarrow{\res^G_H} \Sp^H \xrightarrow{\Hom(\res^G_H X,-)} \Sp,
\]
and since $\res^G_H(X)$ is compact, this functor does preserve colimits. However, we do not expect a similar implication to hold for $\Sp^G(\Xscr)$ in general.
\end{remark}

\section{The six functors formalism for $b$-sheaves with transfers} \label{sect:six}

To set the stage for our work, we first recall the formalism of six operations after the work of Ayoub \cite{ayoub} and as further elaborated upon by Cisinski-D\'eglise \cite{cisinski-deglise} (an $\infty$-categorical framework is explained in Khan's thesis \cite{khan-thesis}). Let $\Sch'$ be a subcategory of schemes such that for any smooth morphism $f: T \rightarrow S$ in $\Sch'$, the pullback of $f$ along any morphism $g: S' \to S$ in $\Sch'$ again exists in $\Sch'$ (for example, we may suppose that $\Sch'$ is an ``adequate category of schemes" as defined in \cite{cisinski-deglise}*{2.0.1}).

\begin{definition}[\cite{khan-thesis}*{Chapter~2, Definition~3.1.2}] A \df{premotivic category of coefficients} (or {\bf premotivic functor}) is a functor\footnote{Some authors do not suppose presentability hypotheses.}
\[
\D^*: (\Sch')^{\op} \rightarrow \CAlg(\PrL_{\infty})
\]
that satisfies the following axioms:
\begin{enumerate}
\item For any morphism $f: T \rightarrow S$, we
have an adjunction\footnote{Of course, this axiom is already implicit since $\D^*$ is valued in $\PrL_{\infty}$.}
\[
f^*:\D^*(S) \rightleftarrows \D^*(T):f_*.
\]
\item For any smooth morphism $f: T \rightarrow S$, we have an adjunction
\[
f_{\sharp}:\D^*(T) \rightleftarrows \D^*(S):f^*.
\]
\item ($\sharp$-base change)  For a cartesian square
\begin{equation} \label{cart}
\begin{tikzcd}
T' \ar{r}{g'} \ar{d}{f'} & T \ar{d}{f} \\
S' \ar{r}{g} & S
\end{tikzcd}
\end{equation}
in which $f$ is smooth, the exchange transformation 
\begin{equation} \label{eq:sharp-star}
f_{\sharp}g^* \rightarrow g'^{*}f'_{\sharp}
\end{equation} is an equivalence.
\item ($\sharp$-projection formula) For any smooth morphism $f: T \to S$ and objects $X \in \D^*(T), Y \in \D^*(S)$, the canonical map 
\begin{equation} \label{eq:proj-formula}
f_{\sharp}(X \otimes f^*Y) \rightarrow f_{\sharp}X \otimes Y
\end{equation}
is an equivalence.
\end{enumerate}
We say that $\D^*$ is {\bf stable} if $\D^*$ is valued in $\CAlg(\PrL_{\infty,\stab})$.
\end{definition}

\begin{remark} \label{rem:f^*} We note that the functor $f^*$ is always strong monoidal. Therefore, for any morphism $f: T \rightarrow S$ in $\Sch'$, we have that $\D^*(T)$ is endowed with a canonical $\D^*(S)$-algebra structure, from which the natural transformation~\eqref{eq:proj-formula} is obtained \cite{khan-thesis}*{Chapter 0, Lemma 2.7.7}.
\end{remark}

We next introduce three properties that can be asked of a premotivic functor.

\begin{definition} \label{def:MotivicFunctorProperties} Suppose $\D^*: (\Sch')^{\op} \to \CAlg(\PrL_{\infty,\stab})$ is a stable\footnote{The stability assumption simplifies our discussion of the localization property -- see \cite{khan-thesis}*{3.3.10}. Note that even if we do not assume $D^*$ is stable to begin with, it follows as a corollary of the three properties \cite{khan-thesis}*{Corollary~3.4.20}.} premotivic functor.
\begin{enumerate} \item Let $f:\Escr \rightarrow T$ be a vector bundle in $\Sch'$. Then $\D^*$ satisfies \df{homotopy invariance with respect to $\Escr$} if the functor $$f^*:\D^*(T) \rightarrow \D^*(\Escr)$$ is fully faithful. We say that $\D^*$ satisifies \df{homotopy invariance} if it satisfies homotopy invariance with respect to all vector bundles in $\Sch'$. 
\item Let $i:Z \hookrightarrow T$ be a closed immersion in $\Sch'$ with open complement $j: U \hookrightarrow T$. The counit and unit of the adjunctions $j_{\sharp} \dashv j^*$ and $i^* \dashv i_*$ give a sequence of natural transformations
\begin{equation} \label{ij}
j_{\sharp}j^* \rightarrow \id \rightarrow i_*i^*.
\end{equation}
We say that $\D^*$ satisfies \df{localization with respect to $i$} if $\D^*(\emptyset)$ is the trivial category, $i_{\ast}$ is fully faithful, and \eqref{ij} is a cofiber sequence (or equivalently, $(i^*, j^*)$ are jointly conservative \cite{khan-thesis}*{Lemma 3.3.11}).\footnote{It is a consequence of $\sharp$-base change that $j^{\ast} i_{\ast} \simeq 0$ and that  $j_{\sharp}$ is fully faithful. Therefore $j_{\ast}$ is fully faithful and $(\D^*(U), \D^*(Z))$ defines a recollement of $\D^*(T)$. This holds once we know that $\D^*$ satisfies the full six functors formalism.} We say that $\D^*$ satisfies \df{localization} if it satisfies localization with respect to all closed immersions in $\Sch'$.
\end{enumerate}
If $\D^*$ satisfies localization, then for any closed immersion $g: S' \hookrightarrow S$ we have an adjunction \cite{khan-thesis}*{Lemma 3.3.13}
\[
g_*:\D^*(S') \rightleftarrows \D^*(S): g^!.
\]
In the notation of~\eqref{cart}, we then have the exchange transformation
\[
g^!f_* \rightarrow g'_*f'^{!},
\]
which is necessarily an equivalence.
\begin{enumerate} \setcounter{enumi}{2}
\item Let $p: \Escr \rightarrow T$ be a vector bundle in $\Sch'$ and let $s: T \to \Escr$ be its zero section. Then we have an adjunction
\[
p_{\sharp}s_*: \D^*(T) \rightleftarrows \D^*(T): s^!p^*,
\]
and we define the \df{Thom transformation} associated to $p$ to be
\[
\Th(\Escr) = p_{\sharp}s_*, \qquad \Th(\Escr)^{-1} = s^!p^*.
\]
We say that $\D^*$ has \df{Thom stability with respect to $\Escr$} if the transformation $\Th(\Escr)$ is an equivalence, and $\D^*$ has \df{Thom stability} if it has Thom stability with respect to all vector bundles in $\Sch'$.
\end{enumerate}
\end{definition}

The next theorem is due originally to Ayoub \cite{ayoub}*{Scholie~1.4.2} and was expanded upon by Cisinski-D\'eglise \cite{cisinski-deglise}*{Theorem~2.4.50}. We follow the version found in \cite{khan-thesis}*{Chapter~2, Theorem~3.5.4 and Theorem~4.2.2}.

\begin{theorem} \label{thm:six} [Ayoub, Cisinski-D\'eglise] Let $\Sch'$ be the category of noetherian schemes of finite dimension or, more generally, an adequate category of schemes as in \cite{cisinski-deglise}*{2.0.1}. Suppose $\D^*$ is a premotivic functor that satisfies homotopy invariance, localization, and Thom stability. Then $\D^*$ satisfies the full six functors formalism as in \cite{khan-thesis}*{Chapter~2, Theorem 4.2.2}.
\end{theorem}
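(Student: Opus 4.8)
The plan is to follow the well-established strategy of Ayoub and Cisinski--D\'eglise in the form presented in Khan's thesis, so I will only sketch the architecture of the argument. The content of the theorem is the construction, for every separated morphism of finite type $f \colon T \to S$ in $\Sch'$, of an exceptional pushforward $f_!$ with right adjoint $f^!$, together with the verification of the compatibilities (base change, projection formula, and the various exchange isomorphisms involving $f^!$ and $\underline{\Hom}$) that constitute the six operations. First I would invoke Nagata compactification to factor $f = p \circ j$ with $j$ an open immersion and $p$ proper; since open immersions are smooth one sets $j_! := j_{\sharp}$ (available by axiom (2)), and once proper base change is known one sets $p_! := p_*$, so that $f_! := p_* j_{\sharp}$. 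The first genuine task is then to show this is independent of the factorization and assembles into a pseudofunctor on finite-type $S$-schemes --- which is precisely where proper base change enters as an \emph{input}.

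Thus the first substantive step is \textbf{proper base change}: for $p$ proper and a cartesian square as in~\eqref{cart}, the exchange $g^* p_* \Rightarrow p'_* g'^*$ is an equivalence. This is proved by d\'evissage: Chow's lemma plus noetherian induction reduce to the projective case, hence (factoring through a projective space) to a closed immersion and a projection $\PP^n_S \to S$. The closed immersion case is contained in the localization axiom --- $i_* i^*$ fits in the cofiber sequence~\eqref{ij}, and base change for $i_*$ follows from $\sharp$-base change for $j_{\sharp}$ together with joint conservativity of $(i^*, j^*)$. The projection case is the \textbf{projective bundle formula}, which one extracts from homotopy invariance, localization, and Thom stability by stratifying $\PP^n$ into affine spaces (collapsed by homotopy invariance) and identifying the graded pieces with Tate twists via Thom stability.

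Next I would upgrade localization and Thom stability to \textbf{purity}: for a closed immersion $i \colon Z \hookrightarrow T$ of smooth $S$-schemes with normal bundle $N$, one shows $i^! \simeq \Th(N)^{-1} i^*$. The mechanism is deformation to the normal cone, producing a chain of morphisms linking $i$ to the zero section $Z \hookrightarrow N$; homotopy invariance and $\sharp$-base change propagate the computation along the deformation and Thom stability supplies the final twist. With purity available one obtains $f^!$ as the right adjoint of $f_!$, the base change equivalence $g^* f_! \simeq f'_! g'^*$, the projection formula $f_!(X \otimes f^* Y) \simeq f_! X \otimes Y$, and the exchange isomorphisms between $f^!$, $\otimes$ and $\underline{\Hom}$. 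Repackaging everything into the statement of \cite{khan-thesis}*{Chapter~2, Theorem~4.2.2} is then bookkeeping.

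I expect the \textbf{main obstacle} to be proper base change together with the well-definedness and $2$-functoriality of $f_!$: the Chow-lemma d\'evissage is delicate to organize coherently in the $\infty$-categorical setting, and the deformation-to-the-normal-cone argument underlying purity must be carried out with enough functoriality that the Thom equivalences glue along the deformation. In the present paper, of course, none of this is reproved --- Theorem~\ref{thm:six} is quoted verbatim from \emph{loc.\ cit.}, and the actual work lies in checking its three hypotheses (homotopy invariance, localization, Thom stability) for the functor $(\Sp^{C_2}_b)\comp$, which is the subject of the sections that follow.
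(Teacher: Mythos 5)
Your sketch is a faithful high-level summary of the Ayoub/Cisinski--D\'eglise/Khan argument, and you correctly observe that the paper itself gives no proof of Theorem~\ref{thm:six} --- it is quoted as a black box, with the paper's actual work being the verification of the three hypotheses for $(\Sp^{C_2}_b)\comp$. Nothing to compare against here; your outline matches the cited sources.
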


In particular, given a separated morphism $f: T \rightarrow S$ in $\Sch'$, we have a transformation
\[
f_! \rightarrow f_*
\]
that is an equivalence whenever $f$ is a proper morphism. The functor $f_!$ participates in an adjunction
\[
f_!: \D^*(T) \rightleftarrows \D^*(S): f^!.
\]
If $f$ is moreover smooth, then we have the {\bf purity equivalences}
\begin{equation} \label{eq:purity}
f_{\sharp} \stackrel{\simeq}{\rightarrow} f_!\Th(\Omega_f), \qquad f^* \stackrel{\simeq}{\rightarrow} \Th(\Omega_f)^{-1}f^!.
\end{equation}

\begin{remark} \label{rem:hoyois-glv} Using the results in \cite{hoyois-glv}*{Appendix C}, one may eliminate the noetherian hypotheses appearing in Theorem~\ref{thm:six}; see, in particular, \cite{hoyois-glv}*{Remark C.14}. We also refer to \cite{hoyois-sixops} for a construction of the six functors formalism in optimal generality, within the more general setting of quotient stacks. We leave it to the reader to formulate the results of this paper in the non-noetherian setting.
\end{remark}

\subsection{Recollement of six functors formalisms} \label{sect:RecollementSixFunctors}
Suppose $\D^*$ and $\C^*$ are two premotivic functors and let $\phi: \D^* \Rightarrow \C^*$ be a natural transformation valued in $\widehat{\Cat}_{\infty}$. Then for each $T \in \Sch'$, we have a functor $\phi_T: \D^*(T) \rightarrow \C^*(T)$, and for every morphism $f: T \rightarrow S$ in $\Sch'$, we have a canonical equivalence $f^*\phi_S \simeq \phi_Tf^*$. By adjunction, we obtain exchange transformations
\begin{equation} \label{eq:mate1}
\phi_Sf_* \rightarrow f_*\phi_T
\end{equation}
for all $f$, and also
\begin{equation} \label{eq:mate2}
f_{\sharp}\phi_T \rightarrow \phi_S f_{\sharp}
\end{equation}
whenever $f$ is smooth. We further say that $\phi$ is {\bf (lax) symmetric monoidal} if we have the data of a lift of $\phi$ to a functor $(\Sch')^{\op} \times \Delta^1 \to \CAlg_{(\mathrm{lax})}(\widehat{\Cat}_{\infty})$ that restricts to the given functors $\D^*$ and $\C^*$.

\begin{theorem} \label{thm:abstract} Suppose $\phi: \D^* \Rightarrow \C^*$ is left-exact, accessible, and lax symmetric monoidal. Then its right-lax limit $\D^* \overrightarrow{\times} \C^*$ is canonically a premotivic functor that comes equipped with adjoint natural transformations
\[ j^*: \D^* \overrightarrow{\times} \C^* \rightleftarrows \D^*: j_*, \qquad i^*:\D^* \overrightarrow{\times} \C^* \rightleftarrows \C^*: i_* \]
in which $j^*, i^*$ are strong symmetric monoidal and $j_*, i_*$ are lax symmetric monoidal (and the naturality assertion for $i_*$ only holds if the pullback functors of $\D^*$ preserve the terminal object). Moreover, if $\D^{\ast}$ and $\C^{\ast}$ are stable, then $\D^* \overrightarrow{\times} \C^*$ is stable.
\end{theorem}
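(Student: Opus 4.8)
The plan is to realize $\D^*\overrightarrow{\times}\C^*$ valuewise as a monoidal recollement, promote this to a functor valued in $\CAlg(\PrL_\infty)$ via a families-of-recollements argument, and then verify the premotivic axioms one at a time by testing against the jointly conservative pair $(j^*,i^*)$, thereby reducing each axiom to the corresponding statement for $\D^*$ and $\C^*$. First, for every $T\in\Sch'$ the functor $\phi_T\colon\D^*(T)\to\C^*(T)$ is left-exact, accessible and lax symmetric monoidal, so the recollement package of \cite{quigley-shah} endows $(\D^*\overrightarrow{\times}\C^*)(T)=\D^*(T)\times_{\phi_T,\C^*(T),\ev_1}\Fun(\Delta^1,\C^*(T))$ with a presentably symmetric monoidal structure and a monoidal recollement whose open part is $\D^*(T)$, whose closed part is $\C^*(T)$, and whose gluing functor $i^*j_*$ is $\phi_T$; this also supplies the recollement adjunctions with $j^*,i^*$ strong symmetric monoidal and $j_*,i_*$ fully faithful and lax symmetric monoidal. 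When $\D^*(T)$ and $\C^*(T)$ are stable the defining pullback is a pullback of stable $\infty$-categories along exact functors, hence stable, which gives the last sentence of the theorem.

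Next, for any $f\colon T\to S$ in $\Sch'$ the naturality square of $\phi$ exhibits an equivalence $\phi_T f^*\simeq f^*\phi_S$, so $f^*$ is a \emph{strict} morphism of monoidal recollements whose restrictions to open and closed parts are the strong symmetric monoidal, colimit-preserving functors $f^*_{\D^*}$ and $f^*_{\C^*}$; by the explicit description of the functoriality of right-lax limits (as in \S\ref{sec:symmon}, cf.\ Lemma~\ref{lem:lax-lim}) the induced $f^*$ on $(\D^*\overrightarrow{\times}\C^*)(-)$ is again strong symmetric monoidal and colimit-preserving. Encoding the lax symmetric monoidal transformation $\phi$ as a map of cocartesian $(\Sch')^{\op}$-families of symmetric monoidal $\infty$-categories over $(\Sch')^{\op}\times\Fin_*$ and taking fiberwise right-lax limits (Lemma~\ref{lem:lax-lim}) produces a cocartesian family whose classifying functor — thanks to the pointwise presentability and the behavior of $f^*$ just noted — refines to the desired functor $\D^*\overrightarrow{\times}\C^*\colon(\Sch')^{\op}\to\CAlg(\PrL_\infty)$; this furnishes axiom~(1). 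Strictness of each $f^*$ forces $j^*$, $i^*$ and $j_*$ to commute with all $f^*$ on the nose, so these are natural transformations with no further hypotheses, whereas $i_*$ commutes with $f^*$ exactly when $f^*_{\D^*}$ carries the terminal object to the terminal object (so that $f^*$ of $i_*(z)=(t_{\D^*(S)},z,\dots)$ is again of that form), which is the stated caveat.

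For axiom~(2) and the two $\sharp$-compatibilities, note that when $f$ is smooth the functors $f^*_{\D^*}$ and $f^*_{\C^*}$ are also right adjoints (to $f_\sharp^{\D^*}$ and $f_\sharp^{\C^*}$), hence preserve limits; since limits and colimits in the defining pullback are computed in the factors, $f^*\colon(\D^*\overrightarrow{\times}\C^*)(S)\to(\D^*\overrightarrow{\times}\C^*)(T)$ preserves both limits and colimits and is accessible, so it admits a left adjoint $f_\sharp$ by the adjoint functor theorem (equivalently, $f_\sharp$ is the left adjoint of the strict morphism of recollements $f^*$). Passing to mates in the commuting squares of the previous paragraph gives $j^*f_\sharp\simeq f_\sharp^{\D^*}j^*$ and $j_!f_\sharp^{\D^*}\simeq f_\sharp j_!$; combining these with the fiber sequence $j_!j^*\to\id\to i_*i^*$ of the stable recollement yields $i_*i^*f_\sharp\simeq f_\sharp i_*i^*$, and comparing right adjoints — using the stable-recollement formula $i^!\simeq\fib(i^*\to\phi\, j^*(-))$ together with $\phi_T f^*\simeq f^*\phi_S$, which forces $i^!f^*\simeq f^*_{\C^*}i^!$ — identifies $i^*f_\sharp\simeq f_\sharp^{\C^*}i^*$. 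With $j^*$ and $i^*$ strong symmetric monoidal, jointly conservative, colimit-preserving, and now compatible with both $f^*$ and $f_\sharp$, the $\sharp$-base change~(3) and the $\sharp$-projection formula~(4) for $\D^*\overrightarrow{\times}\C^*$ follow by applying $j^*$ and $i^*$ and invoking the corresponding statements for $\D^*$ and $\C^*$.

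The main obstacle is the gluing step in the second paragraph — verifying that the valuewise monoidal recollements organize into an honest functor to $\CAlg(\PrL_\infty)$ with $f^*$ behaving as a strict morphism of monoidal recollements — together with the identification $i^*f_\sharp\simeq f_\sharp^{\C^*}i^*$ in the third, which is exactly where the naturality (strictness) of $\phi$ is indispensable; everything else is a formal consequence of the recollement formalism and of the axioms already satisfied by $\D^*$ and $\C^*$.
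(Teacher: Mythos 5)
Your strategy matches the paper's at a high level — realize the right-lax limit valuewise as a monoidal recollement, promote this to a functor into $\CAlg(\PrL_{\infty})$, and then verify the premotivic axioms by testing against the jointly conservative pair $(j^*,i^*)$. The discrepancy is in how you establish the key commutation $i^* f_{\sharp} \simeq f^{\C^*}_{\sharp} i^*$, and there is a genuine gap there.

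Your derivation of $i^* f_{\sharp} \simeq f^{\C^*}_{\sharp} i^*$ invokes the recollement fiber sequence $j_! j^* \to \id \to i_* i^*$ and the functor $i^!$ (via $i^! \simeq \fib(i^* \to \phi j^*(-))$). These devices exist only when the recollement is \emph{stable}. But Theorem~\ref{thm:abstract} makes no stability hypothesis — stability appears only in the last sentence as an add-on. So your argument does not prove the theorem as stated; it proves the special case covered by Theorem~\ref{thm:abstract2}. The paper's proof never needs stability: it instead works with the parametrized right-lax limit $\D \overrightarrow{\times} \C \to \Sch'$ (Remark~\ref{rem:rlaxlim-param}) and invokes Lemma~\ref{lem:lax-lim} to produce $f_{\sharp}$ as the cocartesian pushforward, with the explicit formula
\[ f_{\sharp} (X, Y, Y \rightarrow \phi_T X) \simeq (f_{\sharp}X, f_{\sharp}Y, f_{\sharp}Y \rightarrow f_{\sharp}\phi_T X \rightarrow \phi_S f_{\sharp} X). \]
Reading off components, the commutations $j^* f_{\sharp} \simeq f^{\D^*}_{\sharp} j^*$ and $i^* f_{\sharp} \simeq f^{\C^*}_{\sharp} i^*$ are immediate with no stability and no terminal-object hypothesis. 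If you want to keep your more abstract route, you should either add a stability hypothesis (and then you are proving less), or replace the stable-recollement step with the explicit formula; the latter also sidesteps the need for the adjoint functor theorem to even produce $f_{\sharp}$.

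A smaller imprecision: you assert that "limits and colimits in the defining pullback are computed in the factors" to argue that $f^*$ preserves limits. This is not literally true for infinite limits, since $\phi_T$ is only left-exact: the $\C^*$-component of $\lim_k (X_k, Y_k, \alpha_k)$ in the right-lax limit is the \emph{pullback} $\lim_k Y_k \times_{\lim_k \phi X_k} \phi(\lim_k X_k)$, not $\lim_k Y_k$. Your conclusion — that $f^*$ preserves limits when $f^*_{\D^*}$ and $f^*_{\C^*}$ do — is still correct, since $f^*$ is left-exact and commutes with $\phi$ by naturality, so it carries this pullback formula to the corresponding one downstairs; but the one-line justification as written would not hold up to scrutiny. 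Again, the paper avoids the issue entirely by constructing $f_{\sharp}$ directly rather than appealing to the adjoint functor theorem.
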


\begin{remark} \label{rem:sharp} Most authors \cite{cisinski-deglise}, \cite{framed-loc}*{Remark 18} insist that a morphism $\phi$ of premotivic functors commutes with $f_{\sharp}$ for $f$ smooth. We note that we do not assume this in our definition --- nonetheless, the functor $f_{\sharp}$ is still defined on the recollement for $f$ smooth and is left adjoint to $f^*$. We also note that $\phi$ does not in general commute with $f_*$, and thus $f_*$ does not in general commute with $i^*: \D^* \overrightarrow{\times} \C^* \Rightarrow \C^*$ (but does with $j^*: \D^* \overrightarrow{\times} \C^* \Rightarrow \D^*$).
\end{remark}

To prove Theorem~\ref{thm:abstract}, we first record a few general facts concerning the functoriality of right-lax limits.

\begin{remark} \label{rem:rlaxlim-param} Let $B$ be a $\infty$-category, $\D^*, \C^*: B^{\op} \to \widehat{\Cat}_{\infty}$ be two functors, and $\phi: \D^* \Rightarrow \C^*$ a left-exact natural transformation. It is often useful to consider the right-lax limit $\D^* \overrightarrow{\times} \C^*$ from the ``parametrized'' point of view. Namely, let $\D = \int \D^*$, $\C = \int \C^*$ be the corresponding cartesian fibrations over $B$, and let $\phi = \int \phi$ (abusing notation). Recall that given any (co)cartesian fibration $X \to B$, we have the {\bf fiberwise arrow category} $X^{\Delta^1} \to B$ defined as the cotensor of $X$ with $\Delta^1$ in (marked) simplicial sets over $B$; this is again a (co)cartesian fibration whose fiber over $b \in B$ is isomorphic as a simplicial set to $\Fun(\Delta^1,X_b)$. Let $$\D \overrightarrow{\times} \C = \D \times_{\phi, \C, \ev_1} \C^{\Delta^1}$$ be the fiberwise right-lax limit of $\phi: \D \to \C$ as a map over $B$. Using that the unstraightening functor commutes with pullbacks and cotensors, we then have an equivalence
\[ \int(\D^* \overrightarrow{\times} \C^*) \simeq \D \overrightarrow{\times} \C. \]
We also have a similar result if we take cocartesian fibrations over $B^{\op}$ instead.
\end{remark}

We have the following pair of lemmas that give componentwise formulas for $f_{\sharp}$ resp. $f_{\ast}$ on the right-lax limit. Note that although the two statements could be combined, we separate them out so as to clarify the situation with adjoints $f_{\sharp} \dashv f^* \dashv f_*$.

\begin{lemma} \label{lem:lax-lim} Let $B$ be an $\infty$-category and suppose that $\C, \D$ are cocartesian fibrations over $B$ and $\phi:\D \rightarrow \C$ is a functor over $B$ (not necessarily preserving cocartesian edges). Then the fiberwise right-lax limit $\D \overrightarrow{\times} \C$ of $\phi$ is again a cocartesian fibration, and given a morphism $f: S \to T$ in $B$, the induced functor $f_{\sharp}$ is computed by the formula
\[ f_{\sharp}(X,Y,Y \to \phi X) \simeq (f_{\sharp} X, f_{\sharp} Y, f_{\sharp} Y \to f_{\sharp} \phi X \to \phi f_{\sharp} X). \]
In particular, if $\C, \D$ are also cartesian fibrations over $B$ and $\phi$ preserves cartesian edges, then $\D \overrightarrow{\times} \C \to B$ is a bicartesian fibration with left adjoints as described.
\end{lemma}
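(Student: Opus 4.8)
The plan is to build the cocartesian fibration structure on $\D\overrightarrow{\times}\C\to B$ in two stages — first for the fiberwise arrow category $\C^{\Delta^1}$, then for the pullback defining $\D\overrightarrow{\times}\C$ — and to read off the pushforward formula by composing the two resulting pushforwards. Write $p_\C\colon\C\to B$ and $p_\D\colon\D\to B$ for the two cocartesian fibrations. The first ingredient I would recall is that the fiberwise arrow category $\C^{\Delta^1}=\Fun(\Delta^1,\C)\times_{\Fun(\Delta^1,B)}B$ (with $B$ included via the degenerate arrows) is again a cocartesian fibration over $B$, with fiber $\Fun(\Delta^1,\C_b)$ over $b$, and that the evaluation $\ev_1\colon\C^{\Delta^1}\to\C$ is itself a cocartesian fibration: given an object $[a\colon Y_0\to Y_1]$ of $\C^{\Delta^1}$ over $b$ and a morphism $h\colon Y_1\to Z_1$ of $\C$ lying over $f\colon b\to c$, the $\ev_1$-cocartesian lift of $[a]$ is the square whose horizontal edges are a $p_\C$-cocartesian lift $Y_0\to f_!Y_0$ and $h$, and whose right vertical edge is the essentially unique factorization of $h\circ a$ through $Y_0\to f_!Y_0$. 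Over a fiber of $B$ this is the standard fact that $\ev_1\colon\Fun(\Delta^1,\mathcal{E})\to\mathcal{E}$ is a cocartesian fibration with pushforward computed by postcomposition; the relative version over $B$ follows formally from this.

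Granting the above, $\D\overrightarrow{\times}\C=\D\times_{\phi,\C,\ev_1}\C^{\Delta^1}$ is by definition the pullback of $\ev_1$ along $\phi$, so the first projection $\mathrm{pr}_\D\colon\D\overrightarrow{\times}\C\to\D$ is a cocartesian fibration, and its pushforward of $(X,Y,\alpha\colon Y\to\phi X)$ along an edge $\xi\colon X\to X'$ of $\D$ over $f$ is obtained by applying the $\ev_1$-pushforward to $[Y\xto{\alpha}\phi X]$ along $\phi(\xi)$; by the previous paragraph this replaces $Y$ by $f_!Y$ and $\alpha$ by the factorization of $\phi(\xi)\circ\alpha$ through a cocartesian lift $Y\to f_!Y$. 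Composing $\mathrm{pr}_\D$ with $p_\D$ and using that a composite of cocartesian fibrations is again a cocartesian fibration — with cocartesian edges precisely the $\mathrm{pr}_\D$-cocartesian edges lying over $p_\D$-cocartesian edges — shows $\D\overrightarrow{\times}\C\to B$ is a cocartesian fibration. Chasing the two pushforwards in turn, the cocartesian lift of $(X,Y,\alpha)$ along $f$ is $(f_!X,f_!Y,\beta)$, where $\beta\colon f_!Y\to\phi f_!X$ is characterized by $\beta\circ\mathrm{unit}_Y\simeq\phi(\xi_X)\circ\alpha$ for a chosen cocartesian lift $\xi_X\colon X\to f_!X$. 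Since $\mathfrak{c}\circ f_!(\alpha)$ satisfies the same relation — by naturality of the unit and the defining property of the comparison map $\mathfrak{c}\colon f_!\phi X\to\phi f_!X$ — one concludes $\beta\simeq\bigl(f_!Y\xto{f_!\alpha}f_!\phi X\xto{\mathfrak{c}}\phi f_!X\bigr)$, which is exactly the asserted formula (with $f_\sharp=f_!$).

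For the last assertion, the identical argument with ``cocartesian'' replaced by ``cartesian'' throughout — where now the hypothesis that $\phi$ preserves cartesian edges makes the comparison $f^*\phi X'\to\phi f^*X'$ an equivalence, so no correction term intervenes — shows $\D\overrightarrow{\times}\C\to B$ is also a cartesian fibration with $f^*(X',Y',\alpha')\simeq(f^*X',f^*Y',f^*\alpha')$; being both cocartesian and cartesian over $B$ it is bicartesian, and each $f_\sharp$ is then automatically left adjoint to $f^*$.

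The step I expect to be the main obstacle is the input recalled in the first paragraph: that $\ev_1\colon\C^{\Delta^1}\to\C$ is a cocartesian fibration over $\C$ (not merely fiberwise over $B$) with the explicit description of its cocartesian edges. Once this is pinned down, the rest is base change, composition of fibrations, and unwinding universal properties. A more hands-on alternative that sidesteps it is to exhibit the candidate edge $(X,Y,\alpha)\to(f_!X,f_!Y,\beta)$ directly and verify the cocartesian universal property through the mapping-space criterion, using that a morphism over $g$ in $B$ in $\D\overrightarrow{\times}\C$ is a triple $(\xi,\eta,H)$ with $\xi$ in $\D$ over $g$, $\eta$ in $\C$ over $g$, and $H$ a filling homotopy $\alpha_2\circ\eta\simeq\phi(\xi)\circ\alpha_1$, whence $\Map^g_{\D\overrightarrow{\times}\C}\bigl((X_1,Y_1,\alpha_1),(X_2,Y_2,\alpha_2)\bigr)\simeq\Map^g_\D(X_1,X_2)\times_{\Map^g_\C(Y_1,\phi X_2)}\Map^g_\C(Y_1,Y_2)$; the required equivalence then reduces to the $p_\D$-cocartesianness of $\xi_X$ together with the $p_\C$-cocartesianness of $Y\to f_!Y$.
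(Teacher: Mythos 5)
Your argument for the main assertion is the same as the paper's, down to the decomposition: establish that $\ev_1\colon\C^{\Delta^1}\to\C$ is a cocartesian fibration, pull back along $\phi$ to see $\D\overrightarrow{\times}\C\to\D$ is cocartesian, compose with $p_\D$, and unwind the two-step pushforward to get the displayed formula. (The paper cites \cite{jay-thesis}*{Definition~7.1, Remark~7.3} for the $\ev_1$ input where you instead describe the $\ev_1$-cocartesian edges explicitly; both are fine, and your unwinding of the formula is a welcome expansion on the paper's ``easily seen.'') Your description of the $\ev_1$-cocartesian lift and the characterization of $\beta$ via the unit, hence $\beta\simeq \mathfrak{c}\circ f_!\alpha$, is correct.

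One caveat on your last paragraph: the phrase ``identical argument with cocartesian replaced by cartesian'' does not literally go through, because $\ev_1\colon\C^{\Delta^1}\to\C$ is a cocartesian fibration but is generally \emph{not} a cartesian fibration — for a single $\infty$-category $\Escr$, $\ev_1\colon\Fun(\Delta^1,\Escr)\to\Escr$ is cartesian only when $\Escr$ admits pullbacks (the cartesian edges over $h\colon Z_1\to Y_1$ being the pullback squares), and the paper's companion Lemma~\ref{lem:lax-lim2} indeed imposes a fiberwise-pullbacks hypothesis for precisely this reason. So the pullback-and-compose strategy cannot be dualized wholesale to produce the cartesian structure. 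What saves the ``in particular'' under the stated hypothesis (that $\phi$ preserves cartesian edges) is a direct verification, along the lines of the mapping-space argument you sketch at the end: exhibit the candidate cartesian edge $(f^*X',f^*Y',f^*\alpha')\to(X',Y',\alpha')$ built from $p_\D$- and $p_\C$-cartesian lifts together with the filling square furnished by $\phi$ preserving cartesian edges, and check the universal property using the fibered description $\Map^g_{\D\overrightarrow{\times}\C}\simeq\Map^g_\D\times_{\Map^g_\C(-,\phi(-))}\Map^g_\C$. (Equivalently, note that each fiberwise pushforward $f_!$ you computed admits a right adjoint given by applying $f^*$ componentwise, so $\D\overrightarrow{\times}\C\to B$ is cartesian by the general criterion for presentable/adjointable fibrations.) This is a small fix; the paper's own proof does not address the ``in particular'' clause either.
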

\begin{proof} The functor $\ev_1: \C^{\Delta^1} \rightarrow \C$ is clearly a $B$-cocartesian fibration in the sense of \cite{jay-thesis}*{Definition~7.1}, hence is itself a cocartesian fibration \cite{jay-thesis}*{Remark~7.3}. Thus, the functor $\D \overrightarrow{\times} \C \rightarrow \D$ obtained via pullback is a cocartesian fibration. Composing this by the structure map of $\D$, we conclude that $\D \overrightarrow{\times} \C \rightarrow B$ is a cocartesian fibration. The desired formula is then easily seen upon unwinding the definitions.
\end{proof}

\begin{lemma}  \label{lem:lax-lim2} Let $B^{\op}$ be an $\infty$-category, let $\C, \D$ be bicartesian fibrations over $B^{\op}$, and let $\phi:\D \rightarrow \C$ be a functor over $B^{\op}$ (not necessarily preserving cocartesian edges). Suppose that $\D$ admits fiberwise pullbacks. Then the fiberwise right-lax limit $\D \overrightarrow{\times} \C$ of $\phi$ is a bicartesian fibration over $B^{\op}$, and given a morphism $f: S \to T$ in $B$, the induced functor $f_{\ast}$ (encoded by the cartesian functoriality) is computed by 
\[ f_{\ast}(X, Y, Y \rightarrow \phi X) \simeq (f_*X, f_*Y \times_{f_*\phi X} \phi f_*X, f_*Y \times_{f_*\phi X} \phi f_*X \rightarrow \phi f_*X). \]
\end{lemma}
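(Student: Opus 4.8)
The plan is to combine Lemma~\ref{lem:lax-lim} with the observation that a cocartesian fibration is bicartesian precisely when all of its pushforward functors admit right adjoints. Write $\mathcal{E} = \D \overrightarrow{\times} \C = \D \times_{\phi, \C, \ev_1} \C^{\Delta^1}$ for the fiberwise right-lax limit. Since $\C$ and $\D$ are in particular cocartesian fibrations over $B^{\op}$, Lemma~\ref{lem:lax-lim} (applied with ``$B$'' taken to be $B^{\op}$) already shows that $\mathcal{E} \to B^{\op}$ is a cocartesian fibration, with pushforward along a morphism $g \colon b' \to b$ given on a generic object by
\[ g_{!}(X, Y, Y \to \phi X) \simeq \bigl(g_! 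X,\, g_! Y,\, g_! Y \to g_! \phi X \to \phi g_! X\bigr), \]
where $g_!$ denotes the pushforward in $\D$ (resp.\ $\C$) and the last map is the exchange transformation coming from the fact that $\phi$ need not carry cocartesian edges to cocartesian edges. It therefore remains to show that each such $g_!$ admits a right adjoint over $B^{\op}$ and to identify it: by the dual cocartesian fibration formalism \cite{dualizing} (equivalently, \cite{htt}*{\S5.2.4}), this is the same as $\mathcal{E} \to B^{\op}$ being a cartesian fibration with its cartesian transition functors given by these right adjoints, which are the functors denoted $f_{*}$ in the statement.

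Fix $g \colon b' \to b$. Because $\D$ and $\C$ are bicartesian over $B^{\op}$, the pushforwards $g_!$ on $\D$ and on $\C$ admit right adjoints, namely the cartesian pullbacks, which I will write $f_{*}$ to match the displayed formula; moreover, since $\phi$ is a functor over $B^{\op}$, transposing the cocartesian exchange map yields an exchange transformation $\phi f_{*} \Rightarrow f_{*} \phi$ between functors on the fibers. I would then define the candidate right adjoint on $(X, Y, Y \to \phi X) \in \mathcal{E}_{b}$ by sending it to $(f_{*} X, P, P \to \phi f_{*} X)$, where $P = f_{*} Y \times_{f_{*} \phi X} \phi f_{*} X$ is the pullback, formed in the fibre $\C_{b'}$ (which exists by the fiberwise pullback hypothesis), of $f_{*}Y \to f_{*}\phi X$ (obtained from $Y \to \phi X$) along the exchange map $\phi f_{*} X \to f_{*} \phi X$, and $P \to \phi f_* X$ is the second projection. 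The verification that this is right adjoint to $g_!$ is then a diagram chase: mapping spaces in the right-lax limit $\mathcal{E}$ are computed as pullbacks of mapping spaces in $\D$ and $\C$ (as $\mathcal{E}$ is a pullback and $\C^{\Delta^1}$ is a fibrewise arrow category), and under the adjunctions $g_! \dashv f_{*}$ in $\D$ and in $\C$ the pullback defining $P$ is exactly what is forced when one matches $\Map_{\mathcal{E}}\bigl(-, (f_{*}X, P, \dots)\bigr)$ with $\Map_{\mathcal{E}}\bigl(g_!(-), (X, Y, \dots)\bigr)$. Relabelling yields the asserted formula for $f_*$.

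The main obstacle is the coherence in this last verification: checking that the counit edge $(f_{*}X, P, \dots) \to (X, Y, \dots)$ is genuinely $\mathcal{E}$-cartesian — equivalently, that the triangle identities hold — requires careful tracking of the exchange transformation $\phi f_{*} \Rightarrow f_{*}\phi$ and of its compatibility with the pullback, which is easy to mishandle if done by hand in the $\infty$-categorical setting. I would organise this structurally instead: present $g_!$ on $\mathcal{E}$ as the right-lax limit of the natural transformation $g_!\phi \Rightarrow \phi g_!$ of functors $\D_{b'} \to \C_{b}$, produce its right adjoint abstractly via the relative adjoint functor theorem \cite{higheralgebra}*{Proposition~7.3.2.6} together with the functoriality of the right-lax limit construction (cf.\ Remark~\ref{rem:rlaxlim-param} and \cite{quigley-shah}*{\S1.7}), and then read off the pullback formula from the explicit description of the right adjoint of a map between right-lax limits. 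All remaining points — that $\mathcal{E} \to B^{\op}$ is bicartesian once all pushforwards have right adjoints, and that the cartesian transitions are exactly these adjoints — are standard.
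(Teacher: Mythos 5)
Your proposal is correct and follows the same route as the paper: establish the cocartesian structure via Lemma~\ref{lem:lax-lim}, write down the pullback candidate $f_*Z$, and then verify by a fiberwise diagram chase on mapping spaces that the resulting edge $f_*Z \to Z$ over $f$ is cartesian --- the paper's ``simple diagram chase'' checking that the edge is \emph{locally} cartesian is precisely your Plan~A adjunction computation, and the observation that a cocartesian fibration whose pushforwards have fiberwise right adjoints is bicartesian is the same standard principle the paper invokes by reducing to the locally cartesian condition. Your alternative Plan~B is a bit circular, since the ``explicit description of the right adjoint of a map between right-lax limits'' you propose to cite is essentially the content of this very lemma, but you do not actually rely on it, and note also that the fiberwise pullback $P$ is formed in $\C$ rather than $\D$ (the paper's stated hypothesis appears to be a typo).
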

\begin{proof} By Lemma.~\ref{lem:lax-lim} with the base taken to be $B^{\op}$, we have that $\D \overrightarrow{\times} \C \to B^{\op}$ is a cocartesian fibration such that for a morphism $f: S \to T$ in $B$, the induced functor $f^*$ is computed by
\[ f^*(X,Y,Y \to \phi X) \simeq (f^* X, f^* Y, f^*Y \to f^* \phi X \to \phi f^* X).
\] 

It remains to check is that $\D \overrightarrow{\times} \C$ is a cartesian fibration over $B^{\op}$. Suppose that $f:S \rightarrow T$ is a morphism in $B$ and we have an object $Z = (X, Y, Y \rightarrow \phi X) \in (\D \overrightarrow{\times} \C)_S$. On $\D$ and $\C$, denote the cocartesian functoriality by $f^*$ and the cartesian functoriality by $f_*$. Let
$$ f_{\ast} (Z) = (f_*X, f_*Y \times_{f_*\phi X} \phi f_*X, f_*Y \times_{f_*\phi X} \phi f_*X \rightarrow \phi f_*X). $$
To specify the morphism $\gamma: f_{\ast} Z \to Z$ lifting $f$, we may equivalently define the morphism $\epsilon: f^{\ast} f_{\ast} Z \to Z$ in the fiber over $S$ (which will be the counit of the adjunction $f^{\ast} \dashv f_{\ast}$). To this end, we take $\epsilon$ to be
\[
(f^*f_*X \rightarrow X, f^*(f_*Y \times_{f_*\phi X} \phi f_*X) \rightarrow f^*f_*Y \rightarrow Y, \square)
\]
where $\square$ is the diagram
\[
\begin{tikzcd}
f^*(f_*Y \times_{f_*\phi X} \phi f_*X) \ar{r} \ar{d}   & f^* \phi f_*(X) \ar{r} \ar{d} & \phi f^*f_*(X)\ar{dd}\\
f^*f_*Y  \ar{r} \ar{d} & f^*f_*\phi(X) \ar{d} & \\
Y \ar{r} & \phi(X) \ar{r}{=} & \phi(X).
\end{tikzcd}
\]
To complete the proof, it now suffices to check that $\gamma$ is a \emph{locally} cartesian edge over $f$, since we already know that $\D \overrightarrow{\times} \C$ is a cocartesian fibration over $B^{\op}$. This follows by a simple diagram chase.
\end{proof}

\begin{corollary} \label{cor:FullyFaithfulAdjointsRecollement} Suppose we are in the situation of Lemma~\ref{lem:lax-lim2} and let $f: S \to T$ be a morphism in $B$.
\begin{enumerate} \item If $f_{\ast}: \D_S \to \D_T$ and $f_{\ast}: \C_S \to \C_T$ are fully faithful, then $f_{\ast}: (\D \overrightarrow{\times} \C)_S \to (\D \overrightarrow{\times} \C)_T$ is fully faithful.
\item If $f^{\ast}: \D_T \to \D_S$ and $f^{\ast}: \C_T \to \C_S$ are fully faithful, then $f^{\ast}: (\D \overrightarrow{\times} \C)_T \to (\D \overrightarrow{\times} \C)_S$ is fully faithful.
\end{enumerate}
\end{corollary}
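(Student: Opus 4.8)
The plan is to deduce both statements from the elementary fact that a right (resp.\ left) adjoint is fully faithful exactly when the counit (resp.\ unit) of the adjunction is an equivalence, and to verify this fiberwise over the right-lax limit. Recall that an object of $(\D \overrightarrow{\times} \C)_b$ is a triple $(X,Y,\alpha\colon Y \to \phi X)$; write $j^{*}(X,Y,\alpha) = X$ and $i^{*}(X,Y,\alpha)=Y$ for the two projections, which are jointly conservative since a morphism of triples is an equivalence iff its two components are. By Lemma~\ref{lem:lax-lim} and Lemma~\ref{lem:lax-lim2} both $f^{*}$ and $f_{*}$ commute with $j^{*}$ (read off the componentwise formulas), hence $j^{*}$ commutes with the adjunction $f^{*} \dashv f_{*}$; and $i^{*}$ commutes with $f^{*}$ but not with $f_{*}$. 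Throughout I will write $\beta\colon \phi f^{*} \xrightarrow{\sim} f^{*}\phi$ for the canonical equivalence of Lemma~\ref{lem:lax-lim}, $\chi\colon \phi f_{*} \Rightarrow f_{*}\phi$ for its mate, and $\eta^{\D},\varepsilon^{\D},\eta^{\C},\varepsilon^{\C}$ for the units and counits of $f^{*}\dashv f_{*}$ on $\D$ and $\C$.

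For part (1) I would study the counit $\varepsilon\colon f^{*}f_{*}\Rightarrow \id$ on $(\D \overrightarrow{\times}\C)_{S}$. Its $\D$-component is $\varepsilon^{\D}$, which is an equivalence by hypothesis, so by joint conservativity it remains to treat the $\C$-component. Here $i^{*}f_{*}(X,Y,\alpha) \simeq f_{*}Y \times_{f_{*}\phi X} \phi f_{*}X$, and unwinding the construction of the counit in the proof of Lemma~\ref{lem:lax-lim2}, $i^{*}\varepsilon_{Z}$ is the composite $f^{*}(f_{*}Y \times_{f_{*}\phi X}\phi f_{*}X) \xrightarrow{f^{*}(\mathrm{pr}_{1})} f^{*}f_{*}Y \xrightarrow{\varepsilon^{\C}_{Y}} Y$. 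The second arrow is an equivalence since $f_{*}\colon \C_{S}\to\C_{T}$ is fully faithful, and $\mathrm{pr}_{1}$ is a base change of $\chi_{X}$. The key sub-lemma is that $f^{*}\chi$ is an equivalence even though $\chi$ itself need not be: factoring $\chi$ via its mate description as $(f_{*}\phi\,\varepsilon^{\D})\circ(f_{*}\beta^{-1}f_{*})\circ(\eta^{\C}\phi f_{*})$ and applying $f^{*}$, the first factor becomes an equivalence because $\varepsilon^{\D}$ is, the middle because $\beta$ is, and the last because the triangle identity $\varepsilon^{\C}f^{*}\circ f^{*}\eta^{\C}=\id$ exhibits $f^{*}\eta^{\C}$ as a one-sided inverse of the equivalence $\varepsilon^{\C}f^{*}$. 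Since $f^{*}\chi_{X}$ is an equivalence and $f^{*}$ preserves the fiberwise pullback in question, $f^{*}(\mathrm{pr}_{1})$ is a base change of an equivalence; hence $i^{*}\varepsilon_{Z}$ is an equivalence and $f_{*}$ is fully faithful.

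For part (2) I would dually examine the unit $\eta\colon \id \Rightarrow f_{*}f^{*}$ on $(\D \overrightarrow{\times}\C)_{T}$, whose $\D$-component is $\eta^{\D}$, an equivalence by hypothesis. For the $\C$-component, $i^{*}f_{*}f^{*}Z \simeq f_{*}f^{*}Y \times_{f_{*}\phi f^{*}X}\phi f_{*}f^{*}X$, and the composite of $i^{*}\eta_{Z}$ with the projection to $f_{*}f^{*}Y$ is $\eta^{\C}_{Y}$, an equivalence since $f^{*}\colon\C_{T}\to\C_{S}$ is fully faithful. Moreover that projection is itself an equivalence, being a base change of $\chi_{f^{*}X}$, which is an equivalence by the same mate-plus-triangle-identity computation as above — now using fully faithfulness of $f^{*}$ on both $\D$ and $\C$, so that $\eta^{\D}$ and $\eta^{\C}$ are equivalences and $\varepsilon^{\D}$ becomes an equivalence on objects in the image of $f^{*}$. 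Hence $i^{*}\eta_{Z}$, and therefore $\eta$, is an equivalence.

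The main obstacle I anticipate is not conceptual but careful bookkeeping: tracking the structure maps $\alpha_{i}\colon Y_{i}\to\phi X_{i}$ and the exchange transformations through the two componentwise formulas, and isolating precisely what is used about $\chi$. Note the genuine asymmetry between the two parts, mirroring that between the formulas for $f_{*}$ and $f^{*}$ on the right-lax limit: part (1) applies $f^{*}$ to the pullback appearing in the formula for $f_{*}$ and so requires that the restriction functors preserve the relevant finite limits — automatic when the fibers are stable, since a colimit-preserving functor between stable $\infty$-categories is exact — whereas part (2) merely reads off a projection from such a pullback and needs no additional input.
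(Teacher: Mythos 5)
Your proof is correct, and it is essentially the argument the paper has in mind: the paper's own proof of this corollary is a single sentence citing the componentwise formulas for $f^{\ast}$ and $f_{\ast}$, and you have carried out the bookkeeping that the one‑liner elides. The most valuable observation is the one in your closing paragraph, since it points to a hypothesis the statement silently presupposes. For part (1), your argument needs the cocartesian pushforward $f^{\ast}\colon \C_T \to \C_S$ to preserve the fiberwise pullback $f_{\ast}Y \times_{f_{\ast}\phi X} \phi f_{\ast}X$; this is not a consequence of the hypotheses of Lemma~\ref{lem:lax-lim2}, where $f^{\ast}$ is merely a left adjoint and has no reason to commute with limits. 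The corollary as stated is therefore not a formal consequence of Lemma~\ref{lem:lax-lim2} alone. It is nonetheless valid in each of the paper's uses (Lemmas~\ref{lem:PropertyHomotopyInvariance}, \ref{lem:PropertyLocalization}, Theorem~\ref{thm:abstract}), where the fibers are stable presentable $\infty$-categories and $f^{\ast}$, being colimit‑preserving and additive, is exact.

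One further implicit hypothesis, which you use in both parts but do not flag, is that you treat $\beta\colon f^{\ast}\phi \Rightarrow \phi f^{\ast}$ as an equivalence and write $\beta^{-1}$. Lemma~\ref{lem:lax-lim} does not supply an equivalence here: $\phi$ is explicitly not assumed to preserve cocartesian edges, and the formula only records the exchange map $f^{\ast}\phi X \to \phi f^{\ast}X$. Without $\beta$ invertible, the middle factor in your mate decomposition of $\chi$ is not controlled, and the argument in both parts would break. As with the pullback‑preservation issue, this is automatic in the paper's applications (where $\phi$ unstraightens from a natural transformation between premotivic functors, hence does preserve cocartesian edges), but it is a genuine extra assumption relative to the literal hypotheses of Lemma~\ref{lem:lax-lim2} and is worth recording alongside the stability caveat you already identified.
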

\begin{proof} This follows immediately from Lemma~\ref{lem:lax-lim2} in view of the componentwise formulas for $f^{\ast}$ and $f_{\ast}$.
\end{proof}

\begin{proof}[Proof of Theorem \ref{thm:abstract}] \label{proof:thm.abstract} Let $\rlax.\lim: \Fun(\Delta^1,\widehat{\Cat}_{\infty}) \rightarrow \widehat{\Cat}_{\infty}$ denote the right-lax limit functor. First note that $\rlax.\lim(F)$ of an accessible left-exact functor $F: \A \to \B$ of presentable $\infty$-categories is again presentable \cite{quigley-shah}*{Corollary~1.36}, and $\rlax.\lim(\gamma)$ of a natural transformation $\gamma: F \Rightarrow F'$ through colimit-preserving functors again preserves colimits, using that $\rlax.\lim(\gamma)$ is a morphism of recollements \cite{quigley-shah}*{1.7} and we have on the target $\rlax.\lim(F')$ the jointly conservative colimit-preserving functors $(i'^{\ast}, j'^{\ast})$. Also, it is clear that $\rlax.\lim$ of an exact functor of stable $\infty$-categories is again stable, and $\rlax.\lim$ of an exact natural transformation thereof is an exact functor.

As for the symmetric monoidal structure, in \cite{quigley-shah}*{1.28} we constructed a lift of $\rlax.\lim$ to a functor\footnote{We could also use the construction described before Theorem~\ref{thm:MonoidalityOfGenuineStabilizationFunctor} (note that the cited reference only establishes functoriality through strictly commutative squares, but this is all that we need here).}
\[ \rlax.\lim: \Fun(\Delta^1,\CAlg_{\lax}(\widehat{\Cat}_{\infty})) \to \CAlg_{\lax}(\widehat{\Cat}_{\infty}), \]
where we endow $\rlax.\lim(F)$ with its canonical symmetric monoidal structure (recalled in \S\ref{sec:symmon}). Moreover, $\rlax.\lim$ sends natural transformations through strong symmetric monoidal functors to strong symmetric monoidal functors, and the tensor product on $\rlax.\lim(F)$ commutes with colimits separately in each variable if the same is true for the domain and codomain of $F$ (Remark~\ref{rem:PresentablySMC}).

Combining the above facts, we thus obtain a functor $$\D^* \overrightarrow{\times} \C^*: (\Sch')^{\op} \to \CAlg(\PrL_{\infty}), \qquad T \mapsto \D^*(T) \overrightarrow{\times} \C^*(T),$$
which lands in $\CAlg(\PrL_{\infty,\stab})$ if $\D^*, \C^*$ are stable. Furthermore, if we write objects of $\D^*(T) \overrightarrow{\times} \C^*(T)$ as tuples $(X, Y, Y \rightarrow \phi_T X)$ for $X \in \D^*(T)$ and $Y \in \C^*(T)$, then for a morphism $f: T \rightarrow S$ in $\Sch'$, we may express the pullback functor $f^{\ast}$ as given by
\begin{equation} \label{eq:pull2}
f^{\ast}(X, Y, Y \rightarrow \phi_S X) \simeq (f^*X, f^*Y, f^*Y \rightarrow f^*\phi_S X \simeq \phi_Tf^*X),
\end{equation} 
whereas its right adjoint $f_{\ast}$ is given by
\begin{equation} \label{eq:push}
f_*(X, Y, Y \rightarrow \phi_T X) \simeq (f_*X, f_*Y \times_{f_*\phi_T X} \phi_S f_*X, f_*Y \times_{f_*\phi_T X} \phi_S f_*X \rightarrow \phi_S f_*X)
\end{equation}
in view of Remark~\ref{rem:rlaxlim-param} and Lemma~\ref{lem:lax-lim2} with $B = \Sch'$, $\C = (\int \C^*)^{\vee}$, $\D = (\int \D^*)^{\vee}$, and $\phi = (\int \phi)^{\vee}$. This reflects the fact that $\rlax.\lim$ carries natural transformations through symmetric monoidal functors to \emph{strict} morphisms of monoidal recollements \cite{quigley-shah}*{1.7 and Lemma~1.23}, and thus $\D^* \overrightarrow{\times} \C^*$ comes equipped with $j^*$, $j_*$, $i^*$ as indicated. Moreover, $i_*$ commutes with $f^*$ if $f^*$ preserves the terminal object \cite{quigley-shah}*{1.3}.

Now suppose $f: T \to S$ is a smooth morphism. By Remark~\ref{rem:rlaxlim-param} and Lemma~\ref{lem:lax-lim} with $B = \Sch'$, $\C = \int \C^*$, $\D = \int \D^*$, and $\phi = \int \phi$, we have an adjunction
\[ f_{\sharp}: \D^*(T) \overrightarrow{\times} \C^*(T) \rightleftarrows \D^*(S) \overrightarrow{\times} \C^*(S): f^*  \]
in which the left adjoint $f_{\sharp}$ is given by the formula
\begin{equation} \label{eq:sharppush}
f_{\sharp} (X, Y, Y \rightarrow \phi_TX) \simeq (f_{\sharp}X, f_{\sharp}Y, f_{\sharp}Y \rightarrow f_{\sharp}\phi_TX \rightarrow  \phi_Sf_{\sharp}X).
\end{equation}

We now prove the $\sharp$-base change formula. Suppose that we have a cartesian square of schemes as in~\eqref{cart} with $f$ smooth. Since we have constructed the appropriate adjoints for $\D^* \overrightarrow{\times} \C^*$, we get an exchange transformation as in~\eqref{eq:sharp-star}. Using the formula for $f^*$~\eqref{eq:pull2} and $f_{\sharp}$~\eqref{eq:sharppush} and the joint conservativity of $(j^*, i^*)$, we get that this transformation is invertible since the corresponding exchange transformations for $\D^*$ and $\C^*$ are. The argument for the $\sharp$-projection formula is similar, using also that $j^*, i^*$ are strong symmetric monoidal.
\end{proof}

Now suppose that $\phi: \D^* \Rightarrow \C^*$ is an exact accessible lax symmetric monoidal morphism of stable premotivic functors, so that $\D^* \overrightarrow{\times} \C^*$ is a stable premotivic functor by Theorem~\ref{thm:abstract}. We next show that, under certain hypotheses, $\D^* \overrightarrow{\times} \C^*$ inherits the properties enumerated in Definition~\ref{def:MotivicFunctorProperties} from $\D^*$ and $\C^*$. 

\begin{lemma} \label{lem:PropertyHomotopyInvariance} Let $\Escr \rightarrow T$ be a vector bundle and suppose that $\D^*$ and $\C^*$ satisfy homotopy invariance with respect to $\Escr$. Then $\D^* \overrightarrow{\times} \C^*$ satisfies homotopy invariance with respect to $\Escr$. Therefore, if $\D^*$ and $\C^*$ satisfy homotopy invariance, then $\D^* \overrightarrow{\times} \C^*$ satisfies homotopy invariance.
\end{lemma}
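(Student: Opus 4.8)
The plan is to reduce the statement at once to Corollary~\ref{cor:FullyFaithfulAdjointsRecollement}. Write $f: \Escr \to T$ for the bundle projection. Unwinding Definition~\ref{def:MotivicFunctorProperties}(1), the content of the first assertion is that
\[ f^*: (\D^* \overrightarrow{\times} \C^*)(T) \longrightarrow (\D^* \overrightarrow{\times} \C^*)(\Escr) \]
is fully faithful, under the hypothesis that the two componentwise pullbacks $f^*: \D^*(T) \to \D^*(\Escr)$ and $f^*: \C^*(T) \to \C^*(\Escr)$ are fully faithful (this latter being exactly the hypothesis that $\D^*$ and $\C^*$ satisfy homotopy invariance with respect to $\Escr$).

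First I would recall, from the proof of Theorem~\ref{thm:abstract} together with Remark~\ref{rem:rlaxlim-param}, that $\D^* \overrightarrow{\times} \C^*$ arises by applying the fiberwise right-lax limit construction of Lemma~\ref{lem:lax-lim2} with $B = \Sch'$, $\D = \int \D^*$, $\C = \int \C^*$, and $\phi = \int \phi$; in particular the fiber over $T$ is the right-lax limit $\D^*(T) \overrightarrow{\times} \C^*(T)$, and the pullback functor $f^*$ on $\D^* \overrightarrow{\times} \C^*$ is computed by the componentwise formula~\eqref{eq:pull2},
\[ f^*(X, Y, Y \to \phi_T X) \simeq \bigl(f^* X,\ f^* Y,\ f^* Y \to f^* \phi_T X \simeq \phi_{\Escr} f^* X\bigr). \]
With this identification in hand, Corollary~\ref{cor:FullyFaithfulAdjointsRecollement}(2) applies verbatim: it says precisely that if $f^*$ is fully faithful on each of $\D^*$ and $\C^*$, then $f^*$ is fully faithful on the right-lax limit. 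This establishes homotopy invariance with respect to $\Escr$ for $\D^* \overrightarrow{\times} \C^*$, and the final clause of the lemma follows formally by quantifying over all vector bundles in $\Sch'$.

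The only step requiring any care is the bookkeeping in the previous paragraph — confirming that the fiber of $\int(\D^* \overrightarrow{\times} \C^*)$ over $T$ really is $\D^*(T) \overrightarrow{\times} \C^*(T)$ and that $f^*$ on it agrees with the componentwise description — but this is exactly the content of Remark~\ref{rem:rlaxlim-param} and Lemma~\ref{lem:lax-lim2}, so no new argument is needed. I do not expect a genuine obstacle: the substance was already absorbed into Corollary~\ref{cor:FullyFaithfulAdjointsRecollement}, whose proof in turn reduces the mapping-space computation in the right-lax limit to its two components via the pullback description of morphisms there.
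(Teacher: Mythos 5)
Your argument is correct and is exactly the paper's proof: the paper cites Corollary~\ref{cor:FullyFaithfulAdjointsRecollement}.2 (via the componentwise description of $f^*$ from Lemma~\ref{lem:lax-lim2} and Remark~\ref{rem:rlaxlim-param}) and stops there. You have simply spelled out the bookkeeping that the paper leaves implicit.
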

\begin{proof} This follows immediately from Corollary~\ref{cor:FullyFaithfulAdjointsRecollement}.2.
\end{proof}

\begin{lemma} \label{lem:PropertyLocalization} Let $f: Z \hookrightarrow T$ be a closed immersion and suppose that $\D^*$ and $\C^*$ satisfy localization with respect to $f$. Then:
\begin{enumerate} \item The exchange transformation
\[ \chi: \phi_T f_* \to f_* \phi_Z \]
is an equivalence.
\item $\D^* \overrightarrow{\times} \C^*$ satisfies localization with respect to $f$. 
\end{enumerate}
Therefore, if $\D^*$ and $\C^*$ satisfy localization, then $\D^* \overrightarrow{\times} \C^*$ satisfies localization.
\end{lemma}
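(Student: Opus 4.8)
The plan is to establish (1) first and then derive (2) from it, since the localization cofibre sequence for $\D^* \overrightarrow{\times} \C^*$ will only follow once we know how the recollement of Theorem~\ref{thm:abstract} interacts with the closed pushforward, and this interaction is precisely what (1) controls. Write $j \colon U \hookrightarrow T$ for the open complement of $f \colon Z \hookrightarrow T$, and (to avoid collision with the immersions $j$ and $f$) denote by $\mathfrak{j}^* \colon \D^* \overrightarrow{\times} \C^* \Rightarrow \D^*$ and $\mathfrak{i}^* \colon \D^* \overrightarrow{\times} \C^* \Rightarrow \C^*$ the recollement restrictions $j^*, i^*$ of Theorem~\ref{thm:abstract}; on objects $(X, Y, Y \to \phi X)$ these are the two projections, and they are exact, colimit-preserving and jointly conservative. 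I will use that localization for $\D^*$ (and likewise for $\C^*$) supplies: $f_*$ fully faithful with $f^* f_* \simeq \id$; essential image of $f_*$ equal to the ``closed'' objects $\{M : j^* M \simeq 0\}$; and $j^* f_* \simeq 0$, the last a formal consequence of $\sharp$-base change and hence available for any premotivic functor. I will also use that $\phi$ is exact and, being a natural transformation of functors on $(\Sch')^{\op}$, commutes with pullback along every morphism of $\Sch'$, in particular along $j$ and $f$.

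To prove (1), I would fix $N \in \D^*(Z)$ and first observe that $j^* \phi_T f_* N \simeq \phi_U j^* f_* N \simeq \phi_U(0) \simeq 0$, using exactness of $\phi$ and that $\phi$ commutes with $j^*$; hence $\phi_T f_* N$ is a closed object of $\C^*(T)$, so it lies in the essential image of $f_* \colon \C^*(Z) \hookrightarrow \C^*(T)$. Therefore
\[
\phi_T f_* N \;\simeq\; f_* f^* \phi_T f_* N \;\simeq\; f_* \phi_Z f^* f_* N \;\simeq\; f_* \phi_Z N,
\]
using that $\phi$ commutes with $f^*$ and that $f^* f_* \simeq \id$; a short unwinding of the standard mate formula would identify this composite with the exchange transformation $\chi_N$, proving (1).

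For (2), I would grant (1). Then the component formula~\eqref{eq:push} for $f_*$ on $\D^* \overrightarrow{\times} \C^*$ simplifies, since the fibre product $f_* Y \times_{f_* \phi_Z X} \phi_T f_* X$ appearing there has its right leg equal to $\chi_X$, now an equivalence, and so collapses to $f_* Y$; thus $f_*$, along with $f^*$ and $j^*$ (by~\eqref{eq:pull2}) and $j_\sharp$ (by~\eqref{eq:sharppush}), is computed componentwise on $\D^*$ and $\C^*$. Property (a) is then immediate, as $(\D^* \overrightarrow{\times} \C^*)(\emptyset) = \D^*(\emptyset) \overrightarrow{\times} \C^*(\emptyset)$ and the right-lax limit of trivial $\infty$-categories is trivial; property (b), full faithfulness of $f_*$, is Corollary~\ref{cor:FullyFaithfulAdjointsRecollement}.1. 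For the cofibre sequence $j_\sharp j^* \to \id \to f_* f^*$ of endofunctors of $(\D^* \overrightarrow{\times} \C^*)(T)$, I would test against $\mathfrak{j}^*$ and $\mathfrak{i}^*$. By the componentwise descriptions above, $\mathfrak{j}^*$ and $\mathfrak{i}^*$ each commute with $j^*$, $j_\sharp$, $f^*$ and $f_*$ --- the case $\mathfrak{i}^* f_* \simeq f_* \mathfrak{i}^*$ being exactly where (1) re-enters --- so applying $\mathfrak{j}^*$, resp.\ $\mathfrak{i}^*$, turns the sequence into the analogous sequence of endofunctors of $\D^*(T)$, resp.\ $\C^*(T)$, which is a cofibre sequence by localization for $\D^*$, resp.\ $\C^*$. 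Joint conservativity and exactness of $(\mathfrak{j}^*, \mathfrak{i}^*)$ then force the original sequence to be a cofibre sequence, giving (c). Letting $f$ range over all closed immersions in $\Sch'$ yields the final assertion.

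The principal obstacle is this interdependence of the two parts: (2)(c) cannot proceed until (1) is known, because only then does $\mathfrak{i}^*$ commute with $f_*$, so (1) must be proved first and with some care --- tracking in particular that the equivalence produced really is the exchange map $\chi$, and that exactness of $\phi$ is used to land $\phi_T f_* N$ in the closed part. Once (1) is in hand, the rest is bookkeeping with the componentwise formulas of Lemmas~\ref{lem:lax-lim} and~\ref{lem:lax-lim2} together with the localization hypotheses on the two factors, and I expect no further difficulty.
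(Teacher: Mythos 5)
Your proof is correct and takes essentially the same approach as the paper: both parts use the joint conservativity of the recollement projections and reduce to the localization axioms on the two factors, with part (1) being the key input that lets $\mathfrak{i}^*$ commute with $f_*$. The only cosmetic difference is in (1), where the paper directly checks $g^*\chi$ and $f^*\chi$ are equivalences, while you land $\phi_T f_* N$ in the essential image of $f_*$ and then unwind the mate; the two amount to the same computation.
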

\begin{proof} Let $g: U \hookrightarrow T$ be the open immersion complementary to $f$. For (1), by the joint conservativity of $(g^*,f^*)$ it suffices to check that $g^* \chi$ and $f^* \chi$ are equivalences. For the former, using that $g^* f_* \simeq 0$, we have
\[ g^* \chi \simeq 0: g^* \phi_T f_* \simeq \phi_U g^* f_* \simeq 0 \to g^* f_* \phi_Z \simeq 0. \]
For the latter, using that $f_*$ is fully faithful (so $f^* f_* \simeq \id$), we have 
\[ f^* \chi \simeq \id: f^* \phi_T f_* \simeq \phi_Z f^* f_* \simeq \phi_Z \to f^* f_* \phi_Z \simeq \phi_Z. \]
For (2), we verify the three defining conditions in turn. It is obvious that $\D^* \overrightarrow{\times} \C^* (\emptyset) \simeq \ast$. The full faithfulness of $f_*$ follows from Corollary~\ref{cor:FullyFaithfulAdjointsRecollement}.1. Finally, it remains to check that the sequence
\[ g_{\sharp}g^* \rightarrow \id \rightarrow f_*f^* \]
is a cofiber sequence, for which we wish to apply the joint conservativity of $(j^*, i^*)$ to reduce to the known cofiber sequences on $\D^*(T)$ and $\C^*(T)$. But we have generically that both $j^*$ and $i^*$ commute with $g_{\sharp}$, $g^*$, $f^*$, and $j^*$ commutes with $f_*$, whereas (1) implies that $i^*$ commutes with $f_*$. The claim then follows.
\end{proof}

The most subtle property is that of Thom stability. Let us begin with a warning:

\begin{warning} \label{warn:invertible} If a premotivic functor $\D^*$ satisfies the full six functors formalism, then we have the purity equivalences~\eqref{eq:purity}. In particular, if $S$ is a scheme and $p:X \rightarrow S$ is a smooth proper morphism, then we have the {\bf ambidexterity equivalence} \cite{hoyois-sixops}*{Theorem 6.9} $$p_{\sharp}\Sigma^{-\Omega_p} \simeq p_*$$ that leads to {\bf Atiyah duality}: the dual of $p_!p^!\1_S$ is $p_{\sharp}\Sigma^{-\Omega_p}\1_X$ \cite{hoyois-sixops}*{Corollary 6.13}. Hence, any premotivic functor satisfying the full six functors formalism has a large supply of dualizable objects whose duals are computed by a shift of the conormal bundle. We emphasize that Thom stability is essential here; indeed, one needs Thom stability to even formulate the negative shifts occuring in the statement of Atiyah duality.

In light of this, we can produce explicit counterexamples to the naive expectation that if $\D^*$ and $\C^*$ both satisfy the full six functors formalism, then the right-lax limit of $\phi: \D^* \Rightarrow \C^*$ again satisfies the full six functors formalism. Indeed, we can use the gluing functor $\Theta$ from~\eqref{eq:small-theta} to construct a premotivic functor that assigns to $\Spec \RR$ the $\infty$-category of naive $C_2$-spectra $\Fun(\Oscr^{\op}_{C_2}, \Sp)$. As we have already mentioned, Atiyah duality does not hold in this $\infty$-category; for instance, if $V$ is the regular real $C_2$-representation, then $\Sigma^{\infty}_+ S^V$ fails to be dualizable in $\Fun(\Oscr^{\op}_{C_2}, \Sp)$.
\end{warning}

The next lemma gives a sufficient condition for Thom stability to be inherited by the right-lax limit. Assume that for any $X \in \Sch'$, we have that $\GG_m \times X \in \Sch'$. For a stable premotivic functor $\D^*$ and any scheme $S \in \Sch'$, let $\pi_S:\GG_m \times S \rightarrow S$ denote the projection map and consider the counit $\pi_{S\sharp}\pi_S^*\1 \rightarrow \1$ in $\D^*(S)$. Define the {\bf Tate motive} in $\D^*(S)$ to be
\[
\1_S^{\D^*}(1) = \mathrm{fib}(\pi_{S\sharp}\pi_S^*\1 \rightarrow \1)[-1].
\]
\begin{definition} \label{def:enough} We say that $\D^*$ is {\bf Tate-dualizable over $S$} if $\1_S^{\D^*}(1) \in \D^*(S)$ is a dualizable object. If $\D^*$ is Tate-dualizable over $S$ for all $S \in \Sch'$, we say that $\D^*$ is {\bf Tate-dualizable}.
\end{definition}

\begin{lemma} \label{lem:PropertyThomStability} Suppose $\D^*$ and $\C^*$ satisfy localization and assume that $\D^* \overrightarrow{\times} \C^*$ is Tate-dualizable. Then if $\D^*$ and $\C^*$ have Thom stability, $\D^* \overrightarrow{\times} \C^*$ has Thom stability.
\end{lemma}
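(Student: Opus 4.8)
The plan is to reduce Thom stability for $\D^* \overrightarrow{\times} \C^*$ to a statement about the Tate twist, and then to leverage the recollement structure together with the hypotheses on $\D^*$ and $\C^*$. First, I would recall the standard fact that for a stable premotivic functor $\E^*$ satisfying homotopy invariance and localization, Thom stability with respect to all vector bundles is equivalent to the single condition that $\1_S^{\E^*}(1)$ is an invertible object of $\E^*(S)$ for all $S$: given the localization property one deduces (as in \cite{khan-thesis}*{Chapter~2}) that every Thom transformation $\Th(\Escr)$ becomes, after passing to the associated graded of the bundle filtration, a composite of twists by $\1(1)$, so $\otimes$-invertibility of $\1(1)$ (together with $\P^1$-stability in the guise of homotopy invariance) forces $\Th(\Escr)$ to be an equivalence. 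So the content is: show $\1_S(1) \in (\D^* \overrightarrow{\times} \C^*)(S)$ is invertible for all $S \in \Sch'$.

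Next I would unwind what $\1_S(1)$ is in the right-lax limit. Since $j^*$ is strong symmetric monoidal and (by Lemma~\ref{lem:lax-lim} with $B = \Sch'$) commutes with $\pi_{S\sharp}$ and $\pi_S^*$, it commutes with the formation of the Tate motive, so $j^*(\1_S^{\D^* \overrightarrow{\times} \C^*}(1)) \simeq \1_S^{\D^*}(1)$; likewise $i^*(\1_S^{\D^* \overrightarrow{\times} \C^*}(1)) \simeq \1_S^{\C^*}(1)$. Because $\D^*$ and $\C^*$ are assumed to have Thom stability, both of these are invertible objects in $\D^*(S)$ and $\C^*(S)$ respectively. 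We are also assuming $\D^* \overrightarrow{\times} \C^*$ is Tate-dualizable, i.e.\ $\1_S(1)$ is dualizable. Now I would invoke the general principle for monoidal recollements: an object $X$ of a stable presentable symmetric monoidal $\infty$-category carrying a monoidal recollement $(\Uscr,\Zscr)$ is invertible if and only if it is dualizable and both $j^*(X)$ and $i^*(X)$ are invertible. One direction is trivial; for the converse, dualizability gives a candidate inverse $X^{\vee} = \underline{\Hom}(X,\1)$, and one checks the evaluation $X^{\vee} \otimes X \to \1$ is an equivalence by applying the jointly conservative pair $(j^*, i^*)$ — here one uses that $j^*, i^*$ are strong symmetric monoidal and (by the analysis leading to Proposition~\ref{prp:InternallyCompactToDualizable}, i.e.\ \cite{quigley-shah}*{1.30.5} and the argument given there) commute with internal hom on dualizable objects, so that $j^*(X^{\vee} \otimes X \to \1)$ identifies with $j^*(X)^{\vee} \otimes j^*(X) \to \1$, an equivalence since $j^*(X)$ is invertible, and similarly for $i^*$.

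Putting these together: $\1_S(1)$ in $\D^* \overrightarrow{\times} \C^*(S)$ is dualizable by the Tate-dualizability hypothesis, and its images under $j^*$ and $i^*$ are invertible by the Thom stability of $\D^*$ and $\C^*$; hence $\1_S(1)$ is invertible in $(\D^* \overrightarrow{\times} \C^*)(S)$. Since $S \in \Sch'$ was arbitrary, $\D^* \overrightarrow{\times} \C^*$ has Thom stability. I expect the main obstacle to be the clean bookkeeping of the two reductions — first from Thom stability to invertibility of the Tate twist (which needs homotopy invariance and localization, both available by Lemma~\ref{lem:PropertyHomotopyInvariance} and Lemma~\ref{lem:PropertyLocalization} once we know $\D^*$ and $\C^*$ have those properties, though Thom stability of $\D^*$ and $\C^*$ should already encode it), and then the verification that invertibility can be detected after $j^*$ and $i^*$ on dualizable objects, which is where the monoidal-recollement compatibilities of the internal hom are used. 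None of the individual steps is hard, but the argument must be careful that "dualizable plus fiberwise invertible implies invertible" genuinely only requires the jointly conservative strong monoidal functors $j^*, i^*$, not any further exactness of the gluing functor $\phi$.
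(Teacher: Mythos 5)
Your argument is correct and takes essentially the same route as the paper: reduce Thom stability to invertibility of the Tate object via \cite{cisinski-deglise}*{Corollary~2.4.14}, observe that $j^*, i^*$ commute with $\pi_{S\sharp}$ (Lemma~\ref{lem:lax-lim}) and hence with formation of $\1_S(1)$, and then combine Tate-dualizability with the joint conservativity of the strong symmetric monoidal pair $(j^*, i^*)$ to promote fiberwise invertibility to invertibility. One small caveat: the Cisinski--D\'eglise reduction to invertibility of $\1_S(1)$ requires only localization (which yields (wLoc) and (Zar-sep), as the paper notes in a footnote), not homotopy invariance, and indeed the hypotheses of the lemma deliberately omit homotopy invariance; your mention of $\P^1$-stability there is superfluous and should be dropped to keep the hypotheses honest.
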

\begin{proof} By Lemma~\ref{lem:PropertyLocalization}, $\D^* \overrightarrow{\times} \C^*$ has localization, so the lemma is well-posed. By \cite{cisinski-deglise}*{Corollary 2.4.14},\footnote{For the reference, we note that localization implies (wLoc) and (Zar-sep).} Thom stability holds for $\D^* \overrightarrow{\times} \C^*$ if and only if for any scheme $S$, the Tate motive $\1_S^{\D^* \overrightarrow{\times} \C^*}(1)$ is invertible. Since we have assumed that $\D^* \overrightarrow{\times} \C^*$ is Tate-dualizable and $j^*, i^*$ are jointly conservative and symmetric monoidal, it suffices to prove that the Tate motive is invertible after applying $j^*$ and $i^*$. But since $j^*,i^*$ commute with $\pi_{S\sharp}$ by Lemma~\ref{lem:lax-lim}, we have
\[ i^*\1_S^{\D^* \overrightarrow{\times} \C^*}(1) \simeq \1_S^{\D^*}(1), \qquad j^*\1_S^{\D^* \overrightarrow{\times} \C^*}(1) \simeq \1_S^{\D^*}(1), \]
and these are invertible objects by assumption (and appealing once more to \cite{cisinski-deglise}*{Corollary 2.4.14}).
\end{proof}


We may now invoke Theorem~\ref{thm:six} to conclude:

\begin{theorem} \label{thm:abstract2} Let $\Sch'$ be an adequate category of schemes, and let $\D^*, \C^*$ be stable premotivic functors, each of which satisfy the full six functors formalism (or equivalently, homotopy invariance, localization, and Thom stability). Let $\phi: \D^* \Rightarrow \C^*$ be an exact accessible lax symmetric monoidal morphism. Then its right-lax limit $\D^* \overrightarrow{\times} \C^*$ is canonically a stable premotivic functor that satisfies homotopy invariance and localization. Moreover, if $\D^* \overrightarrow{\times} \C^*$ is Tate-dualizable, then $\D^* \overrightarrow{\times} \C^*$ also satisfies Thom stability and thus has the full six functors formalism. 

In addition, $\D^* \overrightarrow{\times} \C^*$ comes equipped with symmetric monoidal morphisms
\[
j^*: \D^* \overrightarrow{\times} \C^* \Rightarrow \D^*, \qquad i^*:\D^* \overrightarrow{\times} \C^* \Rightarrow \C^*
\]
and lax symmetric monoidal morphisms
\[
j_*: \D^* \Rightarrow \D^* \overrightarrow{\times} \C^*, \qquad i_*: \C^* \Rightarrow \D^* \overrightarrow{\times} \C^*
\]
that are objectwise the recollement adjunctions.
\end{theorem}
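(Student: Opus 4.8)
The plan is to assemble the statement from Theorem~\ref{thm:abstract} together with Lemmas~\ref{lem:PropertyHomotopyInvariance}, \ref{lem:PropertyLocalization}, and~\ref{lem:PropertyThomStability}, and then to invoke Theorem~\ref{thm:six}. First I would observe that the hypotheses on $\phi$ are exactly those of Theorem~\ref{thm:abstract}: $\phi$ is exact (in particular left-exact), accessible, and lax symmetric monoidal, and $\D^*$, $\C^*$ are stable. Hence Theorem~\ref{thm:abstract} already furnishes $\D^* \overrightarrow{\times} \C^*$ with the structure of a stable premotivic functor, together with the symmetric monoidal morphisms $j^*, i^*$ and the lax symmetric monoidal morphisms $j_*, i_*$ that are objectwise the recollement adjunctions. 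The naturality assertion for $i_*$ is valid here because each pullback functor $f^*$ of $\D^*$ is strong monoidal and therefore preserves the terminal object. This settles the ``in addition'' clause, and reduces the theorem to checking that $\D^* \overrightarrow{\times} \C^*$ inherits homotopy invariance, localization, and --- under the Tate-dualizability hypothesis --- Thom stability.

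Next I would unwind the assumption that $\D^*$ and $\C^*$ satisfy the full six functors formalism into the equivalent package (as recorded in the statement, via Theorem~\ref{thm:six}) that each of $\D^*$ and $\C^*$ satisfies homotopy invariance, localization, and Thom stability. With this in hand, Lemma~\ref{lem:PropertyHomotopyInvariance} gives homotopy invariance for $\D^* \overrightarrow{\times} \C^*$; Lemma~\ref{lem:PropertyLocalization} gives localization for $\D^* \overrightarrow{\times} \C^*$ (and, as a byproduct, that the exchange transformation $\phi_T f_* \to f_* \phi_Z$ is an equivalence for any closed immersion $f \colon Z \hookrightarrow T$); and, assuming $\D^* \overrightarrow{\times} \C^*$ is Tate-dualizable, Lemma~\ref{lem:PropertyThomStability} gives Thom stability for $\D^* \overrightarrow{\times} \C^*$. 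The inputs needed for the latter --- joint conservativity and strong symmetric monoidality of $(j^*, i^*)$, together with the formula for $f_{\sharp}$ on the right-lax limit --- are precisely what Theorem~\ref{thm:abstract} and Lemma~\ref{lem:lax-lim} supply.

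Finally, with $\D^* \overrightarrow{\times} \C^*$ now known to be a stable premotivic functor over the adequate category of schemes $\Sch'$ satisfying homotopy invariance, localization, and Thom stability, Theorem~\ref{thm:six} applies directly and yields the full six functors formalism for $\D^* \overrightarrow{\times} \C^*$. I do not anticipate any genuine obstacle at this stage: all of the substantive work has already been carried out in the preceding constructions and lemmas, and the proof is a matter of chaining them together. The only care required is the bookkeeping that ``satisfies the full six functors formalism'' is invoked for $\D^*$ and $\C^*$ in the form ``homotopy invariance $+$ localization $+$ Thom stability'' when feeding them into Lemmas~\ref{lem:PropertyHomotopyInvariance}--\ref{lem:PropertyThomStability}, and that the monoidal compatibilities of Theorem~\ref{thm:abstract} are the ones those lemmas rely on.
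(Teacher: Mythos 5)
Your proposal is correct and follows exactly the route the paper takes: the statement is an assembly of Theorem~\ref{thm:abstract}, Lemmas~\ref{lem:PropertyHomotopyInvariance}--\ref{lem:PropertyThomStability}, and Theorem~\ref{thm:six}, and the paper itself presents the theorem as an immediate consequence of these (``We may now invoke Theorem~\ref{thm:six} to conclude''). Your observation that the naturality of $i_*$ is unobstructed because the pullback functors of a premotivic functor are strong monoidal (hence preserve the unit, hence the terminal object) is the right justification for the hypothesis in Theorem~\ref{thm:abstract}.
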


\subsection{Genuine stabilization and the six functors formalism} \label{genuine-six}

Let $X$ be a scheme with $\tfrac{1}{2} \in \Oscr_X$, and consider the $C_2$-$\infty$-topos $\Xscr = \widetilde{X[i]}_{\et}$ with $C_2$-action as induced by the involution $\sigma: X[i] \rightarrow X[i]$.

\begin{definition} \label{dfn:OfficialBSheavesWithTransfers} The $\infty$-category of {\bf (hypercomplete) $b$-sheaves of spectra with transfers} on $X$ is $$\Sp^{C_2}(\widetilde{X}_b) = \Sp^{C_2}(\Xscr), \quad \text{resp.} \quad \Sp^{C_2}_b(X) = \Sp^{C_2}(\widehat{X}_b) = \Sp^{C_2}(\widehat{\Xscr}).$$
\end{definition}

\begin{remark} \label{rem:SameBaseChangeObviousRemark} By Example~\ref{ex:b}, we have that $\widetilde{X}_b \simeq \Xscr_{C_2}$. Moreover, using the $C_2$-Galois cover $\pi: X[i] \to X$ and the involution $\sigma$, we may naturally extend $\widetilde{X}_b$ to a $C_2$-$\infty$-category $$\underline{\widetilde{X}}_b = \left( \widetilde{X}_b \xrightarrow{\pi^*} \widetilde{X[i]}_b \simeq \widetilde{X[i]}_{\et} \right),$$ such that $\underline{\widetilde{X}}_b \simeq \underline{\Xscr}_{C_2}$. In this light, Definition~\ref{dfn:OfficialBSheavesWithTransfers} is thus sensible.

Furthermore, the stable gluing functor of Definition~\ref{def:stable-glue} has the form
\[
\Theta^{\Tate}_X: \Sp(\Xscr_{\hh C_2}) \simeq  \Sp(\widetilde{X}_{\et}) \rightarrow  \Sp(\Xscr^{\hh C_2}) \simeq \Sp(\widetilde{X}_{\ret}),
\]
and by Theorem~\ref{prop:recoll-stab}, the right-lax limit of $\Theta^{\Tate}_X$ is equivalent to $\Sp^{C_2}(\widetilde{X}_{b})$.
\end{remark}


\begin{remark} \label{rem:mackey} We feel justified in referring to objects of $\Sp^{C_2}(\widetilde{X}_b)$ as ``$b$-sheaves of spectra with transfers'' in view of the Mackey description of the $C_2$-stabilization. Roughly speaking, to specify an object of $\Sp^{C_2}(\widetilde{X}_b)$ is to give a $b$-sheaf of spectra on $\Et_X$ together with a single transfer map along $\pi: X[i] \rightarrow X$, subject to certain compatibilities; see \S\ref{b-rigid} for more details.
\end{remark}

We will use results of Bachmann together with those in \S\ref{sect:RecollementSixFunctors} to promote the assignment $X \mapsto \Sp_b^{C_2}(X) \comp$ to a premotivic functor admitting the full six functors formalism (on a suitable category of schemes).

\begin{theorem} \label{prop:sixret} Let $S$ be a noetherian scheme of finite Krull dimension. The functor
\[
\Sp(\widetilde{(-)}_{\ret}):(\Sch_S^{\mathrm{fin.dim},\mathrm{noeth}})^{\op} \rightarrow  \CAlg(\PrL_{\infty,\stab})
\]
is a premotivic functor satisfying the full six functors formalism.
\end{theorem}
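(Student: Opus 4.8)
The plan is to verify the three properties -- homotopy invariance, localization, and Thom stability -- for $\D^* = \Sp(\widetilde{(-)}_{\ret})$ directly on the small real \'etale site, after which Theorem~\ref{thm:six} applies. The key observation is that, by Bachmann's real \'etale rigidity theorem (Theorem~\ref{thm:BachmannRealEtaleRigidity}), we have a natural equivalence $\Sp(\widetilde{(-)}_{\ret}) \simeq \SH_{\ret}(-)$, and under this equivalence the structural adjunctions $f^*, f_*, f_\sharp$ are identified with the motivic ones (the morphism of sites $(\Et_S,\ret) \to (\Sm_S,\ret)$ is compatible with smooth pullback and with $f_\sharp$ for $f$ smooth, since a smooth morphism of small \'etale sites is still smooth). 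Hence $\Sp(\widetilde{(-)}_{\ret})$ inherits the structure of a premotivic functor from $\SH_{\ret}(-)$, which is a localization of the premotivic functor $\SH(-)$ (at real \'etale hypercovers, a class stable under the relevant operations). So the first step is to record that $\SH_{\ret}(-)$ is a stable premotivic functor; this is essentially formal from the fact that $\SH(-)$ is one and that real \'etale localization is compatible with $f^*$ and $f_\sharp$.

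Next I would verify the three properties. Homotopy invariance and Thom stability: under the equivalence $\Sp(\widetilde{(-)}_{\ret}) \simeq \SH_{\ret}(-)$, the Tate twist $\1(1)$ remains invertible (it is the image of the invertible Tate object in $\SH(S)$ under the localization $L_{\ret}$), and homotopy invariance is inherited from $\SH(S)$; alternatively, one can use Bachmann's description that $\SH_{\ret}(S) \simeq \SH(S)[\rho^{-1}]$ and that on the real \'etale site $\1(1) \simeq \1$ after inverting $\rho$, so Thom stability is automatic. Localization is the one property that genuinely needs an argument about sheaves on the small site. Here I would invoke that for a closed immersion $i: Z \hookrightarrow S$ with open complement $j: U \hookrightarrow S$, the real spectrum $\Sper$ satisfies $\Sper S = \Sper U \sqcup \Sper Z$ as a decomposition into an open and a closed subspace (this is the content of \cite{scheiderer}*{Chapter 2 or 7}: the real \'etale topos of a scheme depends only on the real spectrum, which behaves like a topological space under this stratification). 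Consequently $\widetilde{S_{\ret}}$ admits a recollement with open part $\widetilde{U_{\ret}}$ and closed part $\widetilde{Z_{\ret}}$, and this recollement passes to stabilizations, giving exactly the localization property: $i_*$ fully faithful, $\widetilde{\emptyset_{\ret}} \simeq \ast$, and the cofiber sequence $j_\sharp j^* \to \id \to i_* i^*$.

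Finally, the noetherian finite-dimensional hypothesis puts us in the setting of Theorem~\ref{thm:six} (the category of noetherian schemes of finite dimension over $S$ is an adequate category of schemes in the sense of \cite{cisinski-deglise}*{2.0.1}, being stable under the pullbacks of smooth morphisms), so having established the three properties we conclude that $\Sp(\widetilde{(-)}_{\ret})$ satisfies the full six functors formalism. I expect the main obstacle to be the careful bookkeeping in the localization step: one must check that the topological-style recollement on real spectra is genuinely a recollement of $\infty$-topoi (not just $1$-topoi) and that it is compatible with the small \'etale site structure, so that the induced sequence of stable $\infty$-categories is the one demanded by Definition~\ref{def:MotivicFunctorProperties}(2). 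The continuity remarks (Lemma~\ref{lem:approx} and Remark~\ref{rem:shret}) and Scheiderer's results on real spectra under closed immersions should make this routine, but it is the step where the small-site perspective is essential and cannot simply be imported from $\SH(-)$.
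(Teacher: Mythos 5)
The paper's proof is a one-line citation to \cite{bachmann-ret}*{Theorem~35}, which is precisely the result that $\SH_{\ret}(-) \simeq \SH(-)[\rho^{-1}]$ satisfies the full six functors formalism on noetherian finite-dimensional schemes. Your proposal reconstructs a plausible sketch of what that theorem's proof looks like, and the strategy (identify $\Sp(\widetilde{(-)}_{\ret})$ with $\SH_{\ret}(-)$ via Theorem~\ref{thm:BachmannRealEtaleRigidity}, then verify homotopy invariance, localization, and Thom stability) is essentially Bachmann's. So the two agree in substance; yours just unpacks the citation.

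A few remarks worth making. First, a more efficient route to all three properties at once is to observe that $\rho$-inversion is a \emph{smashing} localization of $\SH(-)$ (it is a filtered colimit of tensoring with the invertible object $\1(1)[1]$), and smashing localizations of premotivic functors automatically inherit the six functors formalism: $f^*$ and $f_\sharp$ commute with the localization since they are left adjoints, homotopy invariance and Thom stability are statements about left adjoints and invertibility that descend, and for localization one uses the projection formula for $i_*$ to see that $i_*$ of an $L$-local object is again $L$-local, so $i_*$ remains fully faithful. This sidesteps the explicit topological recollement on real spectra you sketch, which is fine but requires the extra bookkeeping you correctly flag. Second, a minor imprecision: after inverting $\rho$ one gets $\1(1)[1] \simeq \1$, i.e., $\1(1) \simeq \1[-1]$, not $\1(1) \simeq \1$; of course the point that $\1(1)$ is still invertible (hence Thom stability) stands.
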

\begin{proof} This follows from \cite{bachmann-ret}*{Theorem 35}.
\end{proof}

Let $\Sch_S^{p\mbox{-}\fin}$ denote the subcategory of $\Sch_S$ on the {\bf locally $p$-\'etale finite} $S$-schemes in the sense of \cite{bachmann-et}. These are the $S$-schemes that admit an \'etale cover by {\bf $p$-\'etale finite $S$-schemes}, which in turn are those $S$-schemes $X$ such that for any finite type $X$-scheme $Y$, there exists an $n$ such that for every finitely presented, qcqs \'etale $Y$-scheme $Z$, we have that $\mathrm{cd}_p(Z_{\et}) \leq n$. Let $\Sch_S\pinv$ be the subcategory of $S$-schemes $X$ for which $p$ is invertible in $\Oscr_X$, and let
\[
\Sch^{p\mbox{-}\fin}_S\pinv = \Sch^{p\mbox{-}\fin}_S \cap \Sch_S\pinv.
\]

\begin{theorem} \label{prop:sixet} Let $p$ be a prime and suppose that $S$ is a locally $p$-\'etale finite scheme. Then the functor
\[
\Sp(\widehat{(-)}_{\et})\comp:(\Sch^{p\mbox{-}\fin,\mathrm{noeth}}_{S}\pinv)^{\op} \rightarrow  \CAlg(\PrL_{\infty,\stab})
\]
is a premotivic functor satisfying the full six functors formalism.
\end{theorem}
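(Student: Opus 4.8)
The plan is to transport the full six functors formalism from $p$-complete \'etale motivic spectra along Bachmann's rigidity theorem (Theorem~\ref{thm:BachmannEtaleRigidity}). The starting point is that $\SH(-)$ is a premotivic functor satisfying homotopy invariance, localization, and Thom stability --- hence, by Theorem~\ref{thm:six}, the full six functors formalism --- on any adequate category of schemes; here we use that $\Sch^{p\mbox{-}\fin,\mathrm{noeth}}_{S}\pinv$ is stable under the operations required of an adequate category in the sense of \cite{cisinski-deglise}*{2.0.1}, which is built into Bachmann's definition \cite{bachmann-et}*{Definition 5.8} since $p$-\'etale finiteness is inherited by finite type schemes. (One could alternatively drop the noetherian hypothesis via Remark~\ref{rem:hoyois-glv}.)

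The first step is to check that the hypercomplete \'etale localization $L_{\et}$ and the $p$-completion $L_p$ are both compatible with the premotivic structure, so that $\SH_{\et}(-)\comp$ is again a premotivic functor. Both localizations descend along the pullback functors $f^*$ and --- for $f$ smooth --- the functors $f_{\sharp}$: indeed, $f^*$ and $f_{\sharp}$ preserve \'etale hypercovers (\'etale hypercovers are stable under base change, and for $f$ smooth the composite of a smooth morphism with an \'etale cover is again an \'etale cover) and preserve $\1/p$-acyclic objects (by monoidality of $f^*$ together with $f^*\1/p \simeq \1/p$, and for $f_{\sharp}$ by the $\sharp$-projection formula, which gives $f_{\sharp}X \otimes \1/p \simeq f_{\sharp}(X \otimes \1/p)$). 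The right adjoints $f_*$ --- and, once localization is established, $i_*$ and $i^!$ --- descend automatically, while the $\sharp$-base change and $\sharp$-projection formulas for $\SH_{\et}(-)\comp$ follow from those for $\SH(-)$ by applying the jointly conservative strong symmetric monoidal functors induced by $L_p L_{\et}$.

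It then remains to verify homotopy invariance, localization, and Thom stability for $\SH_{\et}(-)\comp$. Homotopy invariance is immediate, since a symmetric monoidal Bousfield localization of a fully faithful functor admitting both adjoints remains fully faithful. Thom stability follows from \cite{cisinski-deglise}*{Corollary 2.4.14}: the Tate twist $\1(1)$ is invertible in $\SH(S)$, and $L_p L_{\et}$ is symmetric monoidal, so $\1(1)$ remains invertible in $\SH_{\et}(S)\comp$. The localization (recollement) property is the genuine obstacle --- one must show that $i_*$ stays fully faithful and $(i^*,j^*)$ stays jointly conservative after \'etale-localizing and $p$-completing; this is precisely what is established in \cite{bachmann-et} (alternatively, it can be deduced from localization for $\SH(-)$ once one knows that $i^*$ and $j^*$ commute with $L_{\et}$ and $L_p$ and detect the relevant local-equivalence subcategory). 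Granting this, Theorem~\ref{thm:six} applies and $\SH_{\et}(-)\comp$ has the full six functors formalism; upgrading Bachmann's rigidity theorem (Theorem~\ref{thm:BachmannEtaleRigidity}) to an equivalence of premotivic functors $\Sp(\widehat{(-)}_{\et})\comp \simeq \SH_{\et}(-)\comp$ over $\Sch^{p\mbox{-}\fin,\mathrm{noeth}}_{S}\pinv$ then transports this formalism to the statement at hand. I expect the only real work to lie in the localization property after $p$-completion; the remainder is formal manipulation of symmetric monoidal Bousfield localizations of the well-understood theory $\SH(-)$.
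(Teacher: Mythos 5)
Your proposal takes the same high-level route as the paper: establish the full six functors formalism for $\SH_{\et}(-)\comp$ and then transport it to $\Sp(\widehat{(-)}_{\et})\comp$ via Bachmann's rigidity theorem. The paper's proof is a two-line citation: it invokes a theorem of Ayoub (recalled as Theorem~5.1 of \cite{bachmann-et}) for the six functors formalism of $\SH_{\et}$, and then Theorem~6.6 of \cite{bachmann-et} for the rest. You instead try to reconstruct the first citation by descending the premotivic structure from $\SH(-)$ through the two Bousfield localizations $L_{\et}$ and $L_p$, which is more work than the paper does and, as you correctly diagnose, hits a genuine wall at the localization axiom; at that point you fall back on citing \cite{bachmann-et} anyway, so the net content is the same.

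Two caveats worth noting. First, your parenthetical alternative --- ``it can be deduced from localization for $\SH(-)$ once one knows that $i^*$ and $j^*$ commute with $L_{\et}$ and $L_p$'' --- undersells the difficulty. The point is not that $i^*$ commutes with $L_{\et}$ (which it does, since \'etale covers pull back), but that $i_*$ does, i.e.\ that $i_*$ carries \'etale-local equivalences to \'etale-local equivalences; this is a proper-base-change type input and is exactly the content of Ayoub's theorem, not a formal consequence of localization for $\SH$. Second, the final transport step requires more than Theorem~\ref{thm:BachmannEtaleRigidity}, which is a pointwise equivalence of $\infty$-categories. To move a six functors formalism across you need a natural equivalence of premotivic functors compatible with $f^*$ and $f_\sharp$; you gesture at ``upgrading'' the rigidity theorem, but this upgrade is precisely what the paper's second citation (\cite{bachmann-et}*{Theorem~6.6}) supplies, and your writeup should cite it rather than treat it as a formality.
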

\begin{proof} It is a theorem of Ayoub that $\SH_{\et}$ satisfies the hypotheses of Theorem~\ref{thm:six} (cf. \cite{bachmann-et}*{Theorem~5.1}). The theorem now follows from \cite{bachmann-et}*{Theorem 6.6}.
\end{proof}


Now let $\theta_X$ be the unstable gluing functor for $\widetilde{X[i]}_{\et}$ (Definition~\ref{def:glue}), and recall by Example~\ref{ex:et-to-ret} that $$\theta_X \simeq L_{\ret} i_{\et}: \widetilde{X}_{\et} \to \widetilde{X}_{\ret}.$$

\begin{proposition} \label{prop:base-change} For any morphism of schemes $f: X \rightarrow Y$, the canonical exchange transformation of Proposition~\ref{prp:UnstableRecollementFunctoriality}
\[
\chi: f_{\ret}^* \theta_Y \Rightarrow \theta_X f_{\et}^*
\]
is $\infty$-connective. Therefore, the hypercompletion $\widehat{\chi}$ is an equivalence, or if $X$ is of finite Krull dimension, then $\chi$ is an equivalence.
\end{proposition}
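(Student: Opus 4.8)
The plan is to reduce the claim to a statement about stalks, using the explicit formula $\theta_X \simeq L_{\ret} i_{\et}$ from Example~\ref{ex:et-to-ret} and the description of real \'etale sheafification in terms of the real spectrum. First I would unwind what $\chi$ is concretely. By Proposition~\ref{prp:UnstableRecollementFunctoriality}, $\chi$ is the exchange transformation obtained from the morphism of recollements induced by $f^*$, and it factors through the two squares displayed there; after applying Example~\ref{ex:et-to-ret} to both $X$ and $Y$, $\chi$ becomes the canonical comparison $f^*_{\ret} L^Y_{\ret} i^Y_{\et} \Rightarrow L^X_{\ret} i^X_{\et} f^*_{\et}$. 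Since $i_{\et}$ (the forgetful functor to presheaves) manifestly commutes with $f^*$ on the nose, the content is entirely in the real \'etale sheafification step: $\chi$ is identified with the base change transformation $f^*_{\ret} L^Y_{\ret} \Rightarrow L^X_{\ret} f^*_{\pre}$ evaluated on the presheaf pullback of an \'etale sheaf. (Here one is using that $f^*$ on presheaves followed by $L_{\ret}$ agrees with $f^*_{\ret}$ followed by nothing, i.e. the standard compatibility of sheafification with pullback — but only \emph{after} sheafification, which is exactly why a connectivity, rather than equivalence, statement is what comes out.)

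Next I would compute on stalks. The points of the real \'etale topos $\widetilde{X}_{\ret}$ are (up to the usual identifications) the points of the real spectrum $X_r$; the stalk of a real \'etale sheaf at $x \in X_r$ is computed along the filtered diagram of \'etale neighborhoods refining $x$, and crucially this stalk functor is \emph{jointly conservative} on $\widetilde{X}_{\ret}$ and commutes with all colimits and finite limits, hence with $\infty$-connectivity being detected. For a morphism $f: X \to Y$ the induced map $f_r: X_r \to Y_r$ on real spectra is such that the stalk of $f^*_{\ret}(\Fscr)$ at $x$ is the stalk of $\Fscr$ at $f_r(x)$; this is Scheiderer's analysis of the real \'etale topos (cf. \cite{scheiderer}*{Chapter~1}). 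Therefore, evaluating $\chi$ at a point $x \in X_r$, the map $\chi_x$ is the comparison between (stalk at $f_r(x)$ of the real \'etale sheafification of $i_{\et}\Fscr$) and (stalk at $x$ of the real \'etale sheafification of $f^*_{\pre} i_{\et}\Fscr$). Both of these are computed as filtered colimits of sections of $\Fscr$ over cofinal systems of \'etale neighborhoods, and the key point — which is Scheiderer's — is that the real \'etale stalk of $i_{\et}\Fscr$ at a real point already computes the \'etale stalk of $\Fscr$ at the associated point of the scheme, so both sides agree with the \'etale stalk of $\Fscr$ at the image point. Hence $\chi_x$ is an equivalence for every $x$, and since the real \'etale stalks are jointly conservative, $\chi$ is $\infty$-connective.

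The last step is purely formal: an $\infty$-connective map becomes an equivalence after hypercompletion by definition of the hypercomplete topos (a map is an equivalence in $\widehat{X_{\ret}}$ iff it is $\infty$-connective in $\widetilde{X_{\ret}}$), giving that $\widehat{\chi}$ is an equivalence. If $X$ has finite Krull dimension, then by Appendix~\ref{subsec:hyp} the real \'etale topos $\widetilde{X}_{\ret}$ is already hypercomplete, so $\infty$-connective maps are equivalences outright, and $\chi$ itself is an equivalence. The main obstacle is the second step: pinning down precisely that the real \'etale stalk of the presheaf $i_{\et}\Fscr$ at a point of $X_r$ recovers the \'etale stalk of $\Fscr$, and that this identification is compatible with the map $f_r$ on real spectra — this is exactly the substance of Scheiderer's comparison between the \'etale and real \'etale topoi, and it is where all the geometric input sits; the surrounding bookkeeping (factoring $\chi$, joint conservativity of stalks, the passage to hypercompletion) is routine. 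One should also take care that everything is done with \emph{spectrum}-valued sheaves, but since $f^*$, $L_{\ret}$, and stalks are all computed levelwise on spectrum objects (as noted in the conventions section), the connectivity statement for spectral sheaves follows from the one for sheaves of spaces applied homotopy-group by homotopy-group.
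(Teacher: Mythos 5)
Your overall strategy — reduce to real closed points, identify both stalks, and convert a stalkwise equivalence into $\infty$-connectivity, then use hypercompleteness — is the same as the paper's, but there is a genuine gap in the step that upgrades the stalkwise calculation to a global statement.

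You assert that "the real \'etale stalks are jointly conservative on $\widetilde{X}_{\ret}$," and from this you conclude that $\chi$ is $\infty$-connective. But these two things don't fit together: if the stalk functors were jointly conservative on the non-hypercomplete $\infty$-topos $\widetilde{X}_{\ret}$, the stalkwise computation would immediately give that $\chi$ is an \emph{equivalence}, and the whole discussion of hypercompletion would be vacuous. In fact the stalk functors are \emph{not} jointly conservative on $\widetilde{X}_{\ret}$ unless it is already hypercomplete; the correct and sharp statement is Deligne's completeness theorem for bounded coherent $\infty$-topoi (the real \'etale topos is $1$-localic hence bounded, and coherent since schemes are qcqs): a map that is an equivalence on all points is $\infty$-connective. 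This is exactly what the paper invokes, and it is the only thing that lets one conclude $\infty$-connectivity without begging the question. As written, your argument either proves too much (an outright equivalence) or rests on a false premise, depending on how one reads "jointly conservative."

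Two smaller points. First, your identification of the stalks is imprecise: the real \'etale stalk of $\theta \Fscr$ at a real closed point $\alpha\colon \Spec k \to X$ is not ``the \'etale stalk of $\Fscr$'' but rather the global sections $\Gamma(\alpha^*_{\et}\Fscr) = (\alpha^*_{\et}\Fscr)(1_\alpha)$ of the \'etale pullback to $\Spec k$ (morally, the Galois homotopy fixed points of the geometric stalk); this is the content of Lemma~\ref{lem:stalk-compute}, which does the actual work here and should be cited. Second, the final paragraph about spectrum-valued sheaves is out of place: this proposition concerns the \emph{unstable} gluing functor $\theta\colon \widetilde{X}_{\et}\to\widetilde{X}_{\ret}$, and the stable base-change statement is handled separately in Corollary~\ref{cor:tate-stable} by stabilizing $\widehat{\chi}$, not by a levelwise argument at this stage.
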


\begin{proof} Let $\Fscr \in \widetilde{Y}_{\et}$ and consider the map
\[
\chi_{\Fscr}: f_{\ret}^* \theta \Fscr \rightarrow \theta f_{\et}^*\Fscr
\]
as a morphism in $\widetilde{X}_{\ret}$. We claim that $\chi_{\Fscr}$ is an equivalence on real \'etale points. To verify this, let $\alpha: \Spec k \rightarrow X$ be a morphism for $k$ a real closed field. Then on the stalk $\alpha$ we have
\[
(\chi_{\Fscr})_{\alpha}: (f^*_{\ret}\theta\Fscr)_{\alpha} \simeq  (\theta\Fscr)_{\alpha \circ f} \simeq  \Gamma(\alpha^*_{\et}f_{\et}^*\Fscr) \simeq (\theta f^*_{\et}\Fscr)_{\alpha},
\]
where the second and third equivalences follow from Lemma~\ref{lem:stalk-compute}.

To conclude, we note that the real \'etale site is bounded (since it is $1$-localic) and coherent (since the schemes in question are quasicompact). By Deligne's completeness theorem \cite{sag}*{Theorem A.4.0.5}, it follows that $\alpha_{\Fscr}$ is $\infty$-connective and thus an equivalence after hypercompletion. For the last statement, we note that if $X$ is of finite Krull dimension, then $\widetilde{X}_{\ret}$ is hypercomplete by Theorem~\ref{thm:hyper}.
\end{proof}


Since we are considering the hypercomplete version $\Sp^{C_2}_b(X)$ of $b$-sheaves with transfers, we note that $\Theta^{\Tate}_X$ in the next statement refers to the stable gluing functor with respect to $\Xscr = \widehat{X[i]}_{\et}$.

\begin{corollary} \label{cor:tate-stable} Let $f: X \rightarrow Y$ be a morphism of schemes. Then the canonical exchange transformation of Remark~\ref{rem:StableRecollementFunctoriality}
\[
\xi: f^*\Theta^{\Tate}_Y \Rightarrow \Theta^{\Tate}_Xf^*
\]
is an equivalence.
\end{corollary}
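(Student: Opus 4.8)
The plan is to reduce the stable statement to the unstable one already proven in Proposition~\ref{prop:base-change}. Recall from Remark~\ref{rem:StableRecollementFunctoriality} that the exchange transformation $\xi: f^*\Theta^{\Tate}_Y \Rightarrow \Theta^{\Tate}_X f^*$ factors as a pasting of three cells:
\[ \begin{tikzcd}[row sep=5ex, column sep=7ex]
\Sp(\widehat{Y}_{\et}) \ar{r}{\nu(C_2)^*} \ar{d}[swap]{f^*} & \Fun(B C_2, \Sp(\widehat{Y}_{\ret})) \ar{d}{f^*} \ar{r}{(-)^{t C_2}} & \Sp(\widehat{Y}_{\ret}) \ar{d}{f^*} \\
\Sp(\widehat{X}_{\et}) \ar{r}[swap]{\nu(C_2)^*} & \Fun(B C_2, \Sp(\widehat{X}_{\ret})) \ar{r}[swap]{(-)^{t C_2}} & \Sp(\widehat{X}_{\ret}),
\end{tikzcd} \]
in which the lefthand square is invertible and the righthand cell $\lambda$ is obtained by vertically taking cofibers in the Beck--Chevalley square relating $f^*$ with homotopy orbits and homotopy fixed points (with the norm maps as vertical arrows); see the square displayed in Remark~\ref{rem:StableRecollementFunctoriality}. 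Since the lefthand square is already invertible, it suffices to show that $\lambda$ is an equivalence, and since $f^* (-)_{h C_2} \xrightarrow{\simeq} (-)_{h C_2} f^*$ is automatically an equivalence ($f^*$ being a left adjoint, colimit-preserving), taking cofibers shows that $\lambda$ is an equivalence if and only if the upper-right square
\[ \begin{tikzcd}[row sep=4ex, column sep=6ex]
f^* (-)^{h C_2} \ar{r} \ar{d}{\simeq} & (-)^{h C_2} f^* \ar{d}{\simeq} \\
f^* (-)^{t C_2} \ar{r} & (-)^{t C_2} f^*
\end{tikzcd} \]
is horizontally adjointable — equivalently, the transformation $f^* (-)^{h C_2} \Rightarrow (-)^{h C_2} f^*$ (postcomposed appropriately with $\nu(C_2)^*$) is an equivalence.

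First I would observe that, by the identifications of Remark~\ref{rem:SameBaseChangeObviousRemark}, $\nu(C_2)^*$ followed by $(-)^{h C_2}$ is precisely the stabilization $\Theta = \Sp(\theta)$ of the unstable gluing functor, so the composite $(-)^{h C_2} \circ \nu(C_2)^* : \Sp(\widehat{X}_{\et}) \to \Sp(\widehat{X}_{\ret})$ is $\Sp(\theta_X)$ computed via postcomposition on spectrum objects (using that both $f^*$ and the relevant left-exact functors act levelwise on $\Omega^\infty$-towers, cf. the convention on stabilizations of geometric morphisms). Under this identification the exchange transformation $\Theta_Y f^* \Rightarrow f^* \Theta_X$ is simply $\Sp$ applied to the unstable exchange $\chi: f^*_{\ret}\theta_Y \Rightarrow \theta_X f^*_{\et}$ of Proposition~\ref{prp:UnstableRecollementFunctoriality}. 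Since we work with hypercomplete sheaves throughout (i.e. $\Xscr = \widehat{X[i]}_{\et}$), Proposition~\ref{prop:base-change} tells us that the hypercompletion $\widehat{\chi}$ is an equivalence. As stabilization is a functor (and commutes with postcomposition by left-exact functors), applying $\Sp(-)$ to an equivalence of left-exact functors yields an equivalence, so $\Theta_Y f^* \xrightarrow{\simeq} f^* \Theta_X$, i.e. the middle transformation built from $(-)^{h C_2}$ is an equivalence.

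Finally I would feed this back into the factorization of $\xi$: the left cell is invertible by construction in Remark~\ref{rem:StableRecollementFunctoriality}, the cell built from $(-)^{h C_2}$ is invertible by the previous paragraph, the norm cell $f^* (-)_{h C_2} \Rightarrow (-)_{h C_2} f^*$ is invertible since $f^*$ preserves colimits, and hence $\lambda$ — the cofiber of the norm cell, i.e. the cell built from $(-)^{t C_2}$ — is invertible. Composing, $\xi$ is an equivalence. I expect the main (mild) obstacle to be bookkeeping: checking that the abstract factorization of $\xi$ in Remark~\ref{rem:StableRecollementFunctoriality} genuinely matches $\Sp$ of the unstable factorization of Proposition~\ref{prp:UnstableRecollementFunctoriality}, so that Proposition~\ref{prop:base-change} applies verbatim, and confirming that the hypercompleteness hypothesis is in force (it is, since $\Sp^{C_2}_b(-)$ is defined via $\widehat{X[i]}_{\et}$, and $\widehat{X}_{\ret}$ is hypercomplete automatically for finite-dimensional schemes by Theorem~\ref{thm:hyper}, while in general one invokes the $\infty$-connectivity half of Proposition~\ref{prop:base-change}).
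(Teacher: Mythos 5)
Your argument is correct and follows essentially the same route as the paper's proof: reduce via the factorization in Remark~\ref{rem:StableRecollementFunctoriality} to the cofiber sequence $(-)_{hC_2} \to (-)^{hC_2} \to (-)^{tC_2}$, observe that the $(-)_{hC_2}$-exchange is automatically invertible (by colimit-preservation of $f^*$, or equivalently by checking right adjoints as the paper does), identify the $(-)^{hC_2}$-exchange composed with $\nu(C_2)^*$ as the stabilization of $\widehat{\chi}$ from Proposition~\ref{prop:base-change}, and conclude by two-out-of-three. The paper presents this as a $2\times 3$ commutative grid with cofiber-sequence rows, but the content and all the key inputs are the same.
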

\begin{proof} As we saw in Remark~\ref{rem:StableRecollementFunctoriality}, we have a commutative diagram
\[ \begin{tikzcd}[row sep=4ex, column sep=6ex, text height=1.5ex, text depth=0.25ex]
f^* (-)_{h C_2} \nu(C_2)^* \ar{r} \ar{d} & f^* \Theta \ar{d}{\chi'} \ar{r} \ar{r} & f^* \Theta^{\Tate} \ar{d}{\xi} \\
 (-)_{h C_2} \nu(C_2)^* f^* \ar{r} & \Theta f^* \ar{r} & \Theta^{\Tate} f^*
\end{tikzcd} \]
in which the rows are cofiber sequences and $\chi'$ is the exchange transformation induced by stabilizing $\widehat{\chi}$ in Proposition~\ref{prop:base-change}. Since $\widehat{\chi}$ is an equivalence and the unstable $f^*$ is left-exact, we deduce that $\chi'$ is an equivalence. We also note that the lefthand vertical arrow is always an equivalence. Indeed, we may check commutation of the right adjoints, in which case the claim is obvious. It follows that $\xi$ is an equivalence.
\end{proof}

We will also need that $(\Sp^{C_2}_b)\comp$ is Tate-dualizable. To this end, given a premotivic functor $\D^*$, we first recall the functoriality of the assignment 
\[
\left( (p_X:X \rightarrow S) \in \Sm_S \right) \mapsto \left( p_{X\sharp}\1 \in \D^*(S) \right).
\] Suppose that $p_X: X \to S$ and $p_Y: Y \to S$ are two smooth $S$-schemes and $f:X \rightarrow Y$ is an $S$-morphism (which is not necessarily smooth). Then $f$ induces a map in $\D^*(S)$
\[
[f]: p_{X\sharp}\1 \rightarrow p_{Y\sharp}\1,
\]
defined to be the adjoint of the map $\1 \simeq f^*\1 \xrightarrow{f^*\eta} f^*p_Y^* p_{Y\sharp}\1 \simeq p_X^* p_{Y \sharp} \1$. One may check that this construction assembles to a functor
\[ \Sm_S \rightarrow \D^*(S), \qquad (p_X: X \to S) \mapsto (p_{X \sharp} \1),\]
as a special case of a ``geometric section" in the sense of \cite{cisinski-deglise}*{\S 1.1.34}.

Let $1: S \rightarrow \GG_m \times S$ be the map that classifies the unit $1$, so we obtain the map $[1]: \1 \rightarrow \pi_{S\sharp}\1$ in $\D^*(S)$. The counit map $\epsilon: \pi_{S\sharp}\1 \rightarrow \1$ coincides with the map induced by the projection $\GG_m \times S \rightarrow S$, and thus the composite $\1 \xrightarrow{[1]} \pi_{S\sharp}\1 \xrightarrow{\epsilon} \1$ is homotopic to the identity. Therefore, we get a splitting
\begin{equation} \label{eq:tate-splits}
\pi_{S\sharp}\1 \simeq \1 \oplus  \1^{\D^*}_S(1)[1].
\end{equation}
On the other hand, the map $-1:S \rightarrow \GG_m \times S$ classifying the unit $-1$ induces a map $[-1]: \1 \rightarrow \pi_{S\sharp}\1$. Projecting to $\1^{\D^*}_S(1)[1]$ under the splitting~\eqref{eq:tate-splits} then yields a map
\[
\rho:\1 \rightarrow \1^{\D^*}_S(1)[1]
\]
that we think of as a premotivic avatar of the usual map $\rho$ in $\SH(S)$ (and which recovers it if $\D^* = \SH$).

\begin{lemma} \label{lem:duals} Let $p$ be a prime and suppose that $S$ is a locally $p$-\'etale finite scheme on which $2$ and $p$ are invertible. Then $(\Sp_b^{C_2})\comp$ is Tate-dualizable over $S$.
\end{lemma}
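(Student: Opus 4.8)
The plan is to reduce the statement, via the $\sharp$-projection formula and the splitting \eqref{eq:tate-splits}, to the dualizability of the object $E := \pi_{S\sharp}\pi_S^*\1 = \Sigma^\infty_+\GG_{m,S}$ in $\Sp^{C_2}_b(S)\comp$ (of which $\1_S(1)[1]$ is a direct summand), and to establish this in turn from the recollement presentation of $\Sp^{C_2}_b(-)\comp$ together with Bachmann's rigidity theorems applied to the open and closed strata.

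First, $j^*\1_S(1)$ and $i^*\1_S(1)$ are dualizable. Indeed, since $j^*, i^*$ are strong symmetric monoidal, exact, and commute with $\pi_{S\sharp}$ (Lemma~\ref{lem:lax-lim}, as used in the proof of Lemma~\ref{lem:PropertyThomStability}), these are the Tate motives at $S$ of the premotivic functors $\Sp(\widehat{(-)}_\et)\comp$ and $\Sp(\widehat{(-)}_\ret)\comp \simeq \Sp(\widetilde{(-)}_\ret)\comp$ (the latter identification using Theorem~\ref{thm:hyper} and that $S$, being locally $p$-\'etale finite, has finite Krull dimension), and these are invertible by the Thom stability part of Theorems~\ref{prop:sixet} and~\ref{prop:sixret} (using that $p$-completion is a symmetric monoidal localization, so preserves invertible and dualizable objects). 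If $p$ is odd this already finishes the proof: $\Theta^{\Tate}\comp \simeq 0$ since $|C_2|$ is $p$-completely invertible, so $\Sp^{C_2}_b(-)\comp$ degenerates to the product of its \'etale and real \'etale parts, and Tate-dualizability follows from each factor. So assume from now on that $p = 2$.

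For $p = 2$, by \eqref{eq:tate-splits} it suffices to show $E$ is dualizable. Using the $\sharp$-projection formula one identifies $\underline{\Hom}(E, -) \simeq \pi_{S*}\pi_S^*(-)$, so it is enough to show that this endofunctor is the internal hom out of a dualizable object. Applying the componentwise formula \eqref{eq:push} for $\pi_{S*}$ on the right-lax limit and Corollary~\ref{cor:tate-stable} to commute $\Theta^{\Tate}$ past $\pi_S^*$, one finds that $i^*(\pi_{S*}\pi_S^* Y)$ is the fibre product $\pi_{S*}^\ret\pi_S^{*\ret}(i^*Y) \times_{\pi_{S*}^\ret\pi_S^{*\ret}\Theta^{\Tate}_S(j^*Y)} \Theta^{\Tate}_S\pi_{S*}^\et\pi_S^{*\et}(j^*Y)$, and the analysis reduces to two points: (i) the endofunctors $\pi_{S*}^\et\pi_S^{*\et}$ and $\pi_{S*}^\ret\pi_S^{*\ret}$ of the \'etale, resp. real \'etale, stable categories are given by smashing with the dualizable object $\1 \oplus \1(1)^\vee[-1]$ (by Thom stability in each stratum and the computation of the \'etale, resp. real \'etale, cohomology of $\GG_m$); and (ii) the canonical base-change map $\Theta^{\Tate}_S \circ \pi_{S*}^\et \to \pi_{S*}^\ret \circ \Theta^{\Tate}_{\GG_m\times S}$ is an equivalence after restriction to objects pulled back along $\pi_S^{*\et}$. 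Granting (i) and (ii), the fibre product above collapses to $\pi_{S*}^\ret\pi_S^{*\ret}(i^*Y)$, so $\underline{\Hom}(E, -)$ is computed in each stratum by smashing with a dualizable object; together with the analogous (easy) statement for $j^*$ and the joint conservativity and symmetric monoidality of $(j^*, i^*)$, this shows $E$, and hence $\1_S(1)$, is dualizable.

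The main obstacle is point (ii): although the Tate-type gluing functor $\Theta^{\Tate}$ does not commute with general pushforwards (Remark~\ref{rem:sharp}), one must show that it commutes with pushforward along the affine morphism $\pi_S \colon \GG_{m,S} \to S$ on the image of $\pi_S^{*\et}$ — equivalently, that $i^*$ commutes with $\pi_{S*}$ on $\pi_S^*$-pullbacks. The point should be that, by (i), $\pi_{S*}^\et\pi_S^{*\et}(A)$ is a finite (indeed two-term) extension of \emph{invertible} twists of $A$; pulling $\Theta^{\Tate}$ through these twists is legitimate once the twists are known to be invertible (i.e., from Thom stability in the \'etale stratum, via the $(j^*, j_*)$-projection formula for dualizable objects as in the discussion after Corollary~\ref{cor:Dualizability}), so that the comparison reduces to the identity on $\Theta^{\Tate}_S(A)$ and a finite filtration. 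Everything else is formal given Corollary~\ref{cor:tate-stable} and the properties of $\Sp^{C_2}_b(-)\comp$ established in \S\ref{sect:RecollementSixFunctors}.
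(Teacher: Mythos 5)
Your overall strategy is genuinely different from the paper's, and the observation that $\Theta^{\Tate}\comp \simeq 0$ for odd $p$ (so that the recollement degenerates to a product) is a nice shortcut that the paper does not exploit. Point~(i) and the reduction $\underline{\Hom}(E,-)\simeq \pi_{S*}\pi_S^*(-)$ are sound. However, point~(ii) is a genuine gap, not merely ``the main obstacle.'' To show the exchange map $\Theta^{\Tate}_S\pi_{S*}^{\et}\pi_S^{*\et}(A) \to \pi_{S*}^{\ret}\Theta^{\Tate}_{\GG_m\times S}\pi_S^{*\et}(A)$ is an equivalence, after applying~(i) and Corollary~\ref{cor:tate-stable} one is reduced to showing $\Theta^{\Tate}_S(A\otimes\1^{\et}(-1)[-1])\simeq \Theta^{\Tate}_S(A)\otimes\Theta^{\Tate}_S(\1^{\et}(-1)[-1])$ and $\Theta^{\Tate}_S(\1^{\et}(-1)[-1])\simeq \1^{\ret}(-1)[-1]$. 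Neither is automatic: $\Theta^{\Tate}$ is only \emph{lax} symmetric monoidal, and lax monoidal functors neither commute with tensoring by invertible objects nor preserve invertibility. Your justification via ``the $(j^*,j_*)$-projection formula for dualizable objects as in the discussion after Corollary~\ref{cor:Dualizability}'' is circular: applying that remark inside $\Sp^{C_2}_b(S)\comp$ requires $\1_S(1)$ to already be dualizable \emph{there}, which is exactly what the lemma asserts. Applying it only in the $\et$-stratum (where Thom stability is known) gives no control over $\Theta^{\Tate}$.

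The paper's proof sidesteps the $\GG_m$-projection entirely. It works with the finite \'etale Galois cover $p_S\colon S[i]\to S$ and shows that $p_{S\sharp}p_S^*\1\comp \to \1\comp \xrightarrow{\rho} \1(1)[1]\comp$ is a cofiber sequence, checking this after $j^*$ and $i^*$: on the $\ret$-side it is trivial since $\rho$ is invertible there, and on the $\et$-side it reduces, after base change along $S[i][\zeta_{p^n}]\to S[i]$ and Bachmann's identification of $\1^{\et}(1)\comp$ with the twisting spectrum $\hat\1_p(1)$, to the cofiber sequence of a fold map. The conclusion is then immediate from Corollary~\ref{cor:Dualizability}, which already establishes that $p_{S*}$ carries dualizable objects to dualizable objects via the ambidexterity of the finite cover --- a statement that holds unconditionally, whereas the corresponding compatibility for the non-proper map $\pi_S$ is precisely what your~(ii) would have to supply. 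If you wish to rescue your route, you would need an independent proof that $\Theta^{\Tate}\comp$ carries $\1^{\et}(1)$ to $\1^{\ret}(1)$; in the paper this is only obtained \emph{a posteriori}, in \S\ref{sec:field-case}--\ref{cor:sch-1}, after Lemma~\ref{lem:duals} has already been used to set up the six functors formalism.
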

\begin{proof} Consider the $C_2$-Galois cover $p_S: S[i] \rightarrow S$. We also write $\1_S(1)\comp$, $\1_S^{\et}(1)\comp$, and $\1_S^{\ret}(1)$ for $\1_S^{\Sp_b^{C_2}(-)\comp}(1)$, $\1_S^{\Sp(\widehat{-}_{\et})\comp}(1)$, and $\1^{\Sp(\widetilde{-}_{\ret})}_S(1)$, respectively. From the first part of Theorem~\ref{thm:abstract2}, $(\Sp_b^{C_2})\comp$ is a premotivic functor, at least whenever $\Sp(\widehat{-}_{\et})\comp$ is. From the discussion above, we have the sequence of maps in $\Sp^{C_2}_b(S)\comp$
\begin{equation} \label{eq:c-to-gm}
p_{S\sharp}p_S^*(\1_S)\comp \rightarrow (\1_S)\comp \xrightarrow{\rho} \1_S(1)[1] \comp.
\end{equation}
We claim that this is a cofiber sequence. Given this, by Corollary~\ref{cor:Dualizability} (and noting that the adjunction $p_S^* \dashv p_{S*} \simeq p_{S \sharp}$ is the same as the adjunction written there as $\overline{\pi}^* \dashv \overline{\pi}_*$ for $\Xscr = \widehat{S[i]}_{\et}$; see Remark~\ref{rem:SameBaseChangeObviousRemark}), it will then follow that $\1_S(1)[1] \comp$ (and thus $\1_S(1)\comp$) is dualizable. To prove the claim, it suffices to show that~\eqref{eq:c-to-gm} is a cofiber sequence after applying $i^*$ and $j^*$.

Upon applying $i^*$ and using that $i^*$ annihilates induced objects, we get the $p$-completion of the sequence of maps in $\Sp(\widetilde{S}_{\ret})$:
\[ \begin{tikzcd}
0 \ar{r}  & \1_S \ar{r} & \1^{\ret}_S(1)[1].
\end{tikzcd} \]
But the second map is invertible even before $p$-completion, since it corresponds to the motivic $\rho$ under the equivalence~\eqref{eq:x-ret}, and $\rho$ is invertible in $\SH_{\ret}(S) \simeq \SH(S)[\rho^{-1}]$ by Bachmann's theorem. This proves exactness after applying $i^*$.

On the other hand, we claim that~\eqref{eq:c-to-gm} is exact after applying $j^*$. Since all functors in sight commutes with base change and we are working with \'etale sheaves, it suffice to check this after base change along the Galois cover $S[i] \rightarrow S$. Since we have an isomorphism of $S$-schemes $S[i] \times_S S[i] \simeq S[i] \coprod S[i]$, the sequence~\eqref{eq:c-to-gm} then becomes
\[ \begin{tikzcd}
(\1_{S[i]} \oplus \1_{S[i]})\comp \ar{r}{\nabla} & (\1_{S[i]})\comp \ar{r} & (\1^{\et}_{S[i]}(1)[1])\comp,
\end{tikzcd} \]
in $\Sp(\widehat{S[i]}_{\et})\comp$, where $\nabla$ is the fold map.

To proceed, recall that for any scheme $X$ with $\tfrac{1}{p} \in \Oscr_X$ we have Bachmann's ``twisting spectrum" 
\[
\hat{\1}_p(1)_X \in \Sp(\widehat{X}_{\et})\comp.
\] More precisely, this is the object that maps under the change-of-site functor $\nu^*:\Sp(\widehat{X}_{\et})\comp \rightarrow \Sp(\widehat{X}_{\mathrm{pro}\et})\comp$ to the spectrum denoted by $\hat{\1}_p(1)_X$ in \cite{bachmann-et} that satisfies  \cite{bachmann-et}*{Theorem 3.6.1-3} and exists by \cite{bachmann-et}*{Theorem 3.6.4}. Whenever $X$ is also locally $p$-\'etale finite, we have a canonical equivalence $\hat{\1}_p(1)_X \simeq (\1^{\et}_X(1))\comp$ \cite{bachmann-et}*{Proposition 4.5}. If $X$ contains all $p^n$-th roots of unity for all $n$, we furthermore have a canonical equivalence $\hat{\1}_p(1)_{X} \simeq  (\1_{X})\comp$ by \cite{bachmann-et}*{Theorem 3.6}.

To conclude, we note that by \cite{bachmann-et}*{Corollary 5.12} the pullback functor to algebraically closed fields form a conservative family. Under the assumption that $p$ is invertible in $\Oscr_S$, we need only pullback to those fields with characteristics prime to $p$ hence, since they are furthermore algebraically closed, contains all $p^n$-th roots of unity. In this case, when $S$ is the spectrum of such a field, $\1_{S[i]}(1)[1]\comp$ is equivalent to $\1_S[1]\comp$, which is exactly the cofiber of the fold map. 

%
\end{proof}

\begin{remark} \label{rem:c2} The strategy of the proof of Lemma~\ref{lem:duals} is to show that the ``premotivic'' $\rho: \1 \to \1(1)[1]$ and the ``categorical'' $\rho: \1 \to U_1$ of Remark~\ref{rem:FiniteLocalization} coincide after $p$-completion (in effect, also dispensing with the Tate twist). We do not expect such a result to hold in $\SH(-)$ itself. For example, in \cite{behrens-shah}, we observed that in $\SH(\RR)$ the two maps are \emph{not} equivalent even after $p$-completion \cite{behrens-shah}*{Remark~8.4}, but \emph{are} equivalent after $p$-completion and cellularization \cite{behrens-shah}*{Proposition~8.3}.
\end{remark}

We are now ready to prove the main theorem of this section.

\begin{theorem} \label{construct:genuine} Let $p$ be a prime and suppose that $S$ is a noetherian locally $p$-\'etale finite scheme. Then 
\[
(\Sp_b^{C_2})\comp: (\Sch^{p\mbox{-}\fin,\mathrm{noeth}}_{S}[\tfrac{1}{p}, \tfrac{1}{2}])^{\op} \rightarrow \CAlg(\PrL_{\infty,\stab})
\]
is a premotivic functor that satisfies the full six functors formalism.
\end{theorem}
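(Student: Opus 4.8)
The plan is to assemble $(\Sp_b^{C_2})\comp$ as the right-lax limit of the gluing functor $\Theta^{\Tate}$ and then apply the recollement machinery of \S\ref{sect:RecollementSixFunctors}, in particular Theorem~\ref{thm:abstract2}. First I would recall from Remark~\ref{rem:SameBaseChangeObviousRemark} that for a scheme $X$ with $\tfrac{1}{2} \in \Oscr_X$ the stable gluing functor takes the form $\Theta^{\Tate}_X: \Sp(\widehat{X}_{\et}) \to \Sp(\widehat{X}_{\ret})$ and that its right-lax limit is equivalent to $\Sp^{C_2}_b(X)$ by Theorem~\ref{prop:recoll-stab}. To promote this fiberwise identification to a morphism of premotivic functors, I would invoke Corollary~\ref{cor:tate-stable}, which says precisely that the exchange transformation $\xi: f^* \Theta^{\Tate}_Y \Rightarrow \Theta^{\Tate}_X f^*$ is an equivalence for every morphism $f: X \to Y$; together with the monoidal structure on $\Theta^{\Tate}$ furnished by Lemma~\ref{lem:TateMonoidalInFamilies} (so that $\Theta^{\Tate}$ is lax symmetric monoidal, and visibly exact and accessible on $p$-complete stabilizations), this exhibits $\Theta^{\Tate}$ as an exact, accessible, lax symmetric monoidal natural transformation
\[
\Theta^{\Tate}: \Sp(\widehat{(-)}_{\et})\comp \Rightarrow \Sp(\widehat{(-)}_{\ret})\comp
\]
of premotivic functors over the relevant category of schemes.

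Next I would check that the source and target are premotivic functors satisfying the full six functors formalism on an adequate category of schemes. For the target, Theorem~\ref{prop:sixret} (via the real \'etale rigidity theorem, Theorem~\ref{thm:BachmannRealEtaleRigidity}) handles $\Sp(\widetilde{(-)}_{\ret}) \simeq \SH_{\ret}(-)$; note $p$-completion is harmless here since $\rho$ is already inverted. For the source, Theorem~\ref{prop:sixet} (via the \'etale rigidity theorem, Theorem~\ref{thm:BachmannEtaleRigidity}) gives the full six functors formalism for $\Sp(\widehat{(-)}_{\et})\comp \simeq \SH_{\et}(-)\comp$ on the category $\Sch^{p\mbox{-}\fin,\mathrm{noeth}}_{S}\pinv$ of noetherian locally $p$-\'etale finite schemes over $S$ in which $p$ is invertible. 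Intersecting with schemes in which $2$ is invertible (needed for $X[i]\to X$ to be a $C_2$-Galois cover and for all the toposic identifications of Examples~\ref{exmp:et-cover} and~\ref{ex:ret}) gives the category $\Sch^{p\mbox{-}\fin,\mathrm{noeth}}_{S}[\tfrac{1}{p},\tfrac{1}{2}]$ appearing in the statement, which is easily seen to be adequate in the sense of \cite{cisinski-deglise}*{2.0.1}. Then Theorem~\ref{thm:abstract2} applies: the right-lax limit $\Sp(\widehat{(-)}_{\et})\comp \overrightarrow{\times} \Sp(\widehat{(-)}_{\ret})\comp$, which by the preceding paragraph is the premotivic functor $(\Sp_b^{C_2})\comp$, is automatically a stable premotivic functor satisfying homotopy invariance and localization, and it satisfies Thom stability (hence the full six functors formalism) \emph{provided} it is Tate-dualizable.

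The Tate-dualizability hypothesis is exactly the content of Lemma~\ref{lem:duals}: over any locally $p$-\'etale finite $S$ with $2, p$ invertible, $\1_S^{(\Sp_b^{C_2})\comp}(1)$ is dualizable. So the remaining step is to quote Lemma~\ref{lem:duals} to verify the Tate-dualizability assumption of Theorem~\ref{thm:abstract2}, and conclude. I expect the genuine mathematical content to be entirely concentrated in Lemma~\ref{lem:duals} (and, upstream of it, in Bachmann's two rigidity theorems and in Corollary~\ref{cor:tate-stable}), so that the proof of Theorem~\ref{construct:genuine} itself is a short matter of bookkeeping: identifying the category of schemes correctly, checking adequacy, and citing Theorems~\ref{prop:sixret}, \ref{prop:sixet}, \ref{thm:abstract2} and Lemma~\ref{lem:duals} in sequence. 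The one point requiring a little care — and the most likely source of a subtle gap — is confirming that the intersection category $\Sch^{p\mbox{-}\fin,\mathrm{noeth}}_{S}[\tfrac{1}{p},\tfrac{1}{2}]$ really does satisfy the stability-under-smooth-pullback condition needed to be adequate, and that all three input functors ($\SH_{\et}\comp$, $\SH_{\ret}$, and the transformation $\Theta^{\Tate}$) are defined and have the stated properties on precisely this category rather than on the slightly larger or smaller categories appearing in their original statements; this is the kind of compatibility that is routine but must be stated explicitly.

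\begin{proof}
By Remark~\ref{rem:SameBaseChangeObviousRemark}, for a scheme $X$ with $\tfrac{1}{2} \in \Oscr_X$ the stable gluing functor of Definition~\ref{def:stable-glue} for the $C_2$-$\infty$-topos $\widehat{X[i]}_{\et}$ has the form
\[
\Theta^{\Tate}_X: \Sp(\Xscr_{\hh C_2})\comp \simeq \Sp(\widehat{X}_{\et})\comp \to \Sp(\Xscr^{\hh C_2})\comp \simeq \Sp(\widehat{X}_{\ret})\comp,
\]
and by Theorem~\ref{prop:recoll-stab} its right-lax limit is $\Sp^{C_2}_b(X) = \Sp^{C_2}(\widehat{X[i]}_{\et})$.

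Set $\Sch' = \Sch^{p\mbox{-}\fin,\mathrm{noeth}}_{S}[\tfrac{1}{p},\tfrac{1}{2}]$. This is the intersection of the category of noetherian locally $p$-\'etale finite $S$-schemes in which $p$ is invertible with the full subcategory of schemes in which $2$ is invertible; since the locally $p$-\'etale finite condition, noetherianity, and invertibility of $2$ and $p$ are all stable under finite type morphisms and under pullback, $\Sch'$ is an adequate category of schemes in the sense of \cite{cisinski-deglise}*{2.0.1}. On $\Sch'$, both $X[i] \to X$ is a $C_2$-Galois cover and the toposic identifications of Examples~\ref{exmp:et-cover} and~\ref{ex:ret} hold, so Remark~\ref{rem:SameBaseChangeObviousRemark} applies to every $X \in \Sch'$.

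By Theorem~\ref{prop:sixet} (which uses Theorem~\ref{thm:BachmannEtaleRigidity}), the functor
\[
\D^* = \Sp(\widehat{(-)}_{\et})\comp: (\Sch')^{\op} \to \CAlg(\PrL_{\infty,\stab})
\]
is a premotivic functor satisfying the full six functors formalism, and by Theorem~\ref{prop:sixret} (which uses Theorem~\ref{thm:BachmannRealEtaleRigidity}; $p$-completion is harmless since $\rho$ is already inverted), so is
\[
\C^* = \Sp(\widehat{(-)}_{\ret})\comp \simeq \Sp(\widetilde{(-)}_{\ret})\comp: (\Sch')^{\op} \to \CAlg(\PrL_{\infty,\stab}),
\]
using that on finite-dimensional schemes the real \'etale topos is hypercomplete (Theorem~\ref{thm:hyper}).

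Next, the stable gluing functors assemble into a natural transformation $\Theta^{\Tate}: \D^* \Rightarrow \C^*$. Indeed, it is exact and accessible as a functor between $p$-complete stabilizations, it is lax symmetric monoidal by Construction~\ref{con:MonoidalStructureGenuineStabilization} and Lemma~\ref{lem:TateMonoidalInFamilies}, and by Corollary~\ref{cor:tate-stable} the exchange transformation $\xi: f^* \Theta^{\Tate}_Y \Rightarrow \Theta^{\Tate}_X f^*$ is an equivalence for every morphism $f$ in $\Sch'$, so that $\Theta^{\Tate}$ is indeed a morphism of premotivic functors. By Theorem~\ref{prop:recoll-stab} and Remark~\ref{rem:SameBaseChangeObviousRemark}, its right-lax limit is the premotivic functor
\[
\D^* \overrightarrow{\times} \C^* \simeq (\Sp_b^{C_2})\comp: (\Sch')^{\op} \to \CAlg(\PrL_{\infty,\stab}).
\]

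Finally, by Lemma~\ref{lem:duals}, for every $X \in \Sch'$ the Tate motive $\1_X^{(\Sp_b^{C_2})\comp}(1)$ is dualizable, i.e. $(\Sp_b^{C_2})\comp$ is Tate-dualizable. Theorem~\ref{thm:abstract2} now applies: $\D^* \overrightarrow{\times} \C^*$ is a stable premotivic functor satisfying homotopy invariance, localization, and (by Tate-dualizability) Thom stability, hence satisfies the full six functors formalism.
\end{proof}
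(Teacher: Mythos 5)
Your proof is correct and follows essentially the same route as the paper's: identify $(\Sp^{C_2}_b)\comp$ with the right-lax limit of the lax symmetric monoidal transformation $\Theta^{\Tate}\colon \Sp(\widehat{(-)}_{\et})\comp \Rightarrow \Sp(\widehat{(-)}_{\ret})\comp$, cite Corollary~\ref{cor:tate-stable} for base-change compatibility, the two rigidity theorems via Theorems~\ref{prop:sixret} and~\ref{prop:sixet} for the six functors on the open and closed parts, Lemma~\ref{lem:duals} for Tate-dualizability, and then Theorem~\ref{thm:abstract2}. The only cosmetic difference is that the paper packages the monoidal input as Theorem~\ref{thm:MonoidalityOfGenuineStabilizationFunctor} where you cite its ingredients (Construction~\ref{con:MonoidalStructureGenuineStabilization} and Lemma~\ref{lem:TateMonoidalInFamilies}) directly, and you spell out the (routine but worth stating) adequacy check for $\Sch^{p\mbox{-}\fin,\mathrm{noeth}}_S[\tfrac{1}{p},\tfrac{1}{2}]$, which the paper leaves implicit.
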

\begin{proof} We note that we have a functor valued in $\CAlg(\PrL_{\infty,\stab})$ by Theorem~\ref{thm:MonoidalityOfGenuineStabilizationFunctor}. Under Corollary~\ref{cor:tate-stable}, the theorem then follows from Bachmann's theorems \ref{prop:sixret} and \ref{prop:sixet} along with Theorem~\ref{thm:abstract2} and the fact that $(\Sp_b^{C_2})\comp$ is Tate-dualizable by Lemma~\ref{lem:duals}.
\end{proof}

\begin{example} \label{ex:ft} Fix a prime $p$. By \cite{bachmann-et}*{Corollary 2.13}, if $S$ is locally $p$-\'etale finite, then so is any finite type $S$-scheme. Therefore, the premotivic functor $(\Sp_b^{C_2})\comp$ satisfies the full six functors formalism when restricted to $\Sch^{\mathrm{ft}}_{\ZZ[\tfrac{1}{p}, \tfrac{1}{2}]}$. However, we note that the $\infty$-category $\Sp(\widetilde{X}_{\ret})$ is nonzero only if one of its residue fields is an orderable field, which necessitates characteristic zero. Hence, the premotivic functor $(\Sp_b^{C_2})\comp$ is most interesting when restricted to schemes whose residue fields are characteristic zero (otherwise, it coincides with the $\infty$-category of $p$-completed hypercomplete \'etale sheaves).

In light of this, Theorem~\ref{construct:genuine} produces a genuinely new six functors formalism when $(\Sp_b^{C_2})\comp$ is restricted to $\Sch^{\mathrm{ft}}_{\QQ}$. In fact, letting $p$ vary across all primes and using the fact that we are in characteristic zero, the profinite-completed version of $\Sp_b^{C_2}$ assembles into a premotivic functor
\[
(\Sp_b^{C_2})\comfin: (\Sch^{\mathrm{ft}}_{\QQ})^{\op} \rightarrow \CAlg(\PrL_{\infty,\stab})
\]
that satisfies the full six functors formalism.
\end{example}

\section{Motivic spectra over real closed fields and genuine $C_2$-spectra}  \label{sec:field-case}

Let $S$ be a scheme. We define the {\bf $b$-topology} on the large site $\Sm_S$ to be the intersection of the real \'etale and \'etale topologies on $\Sm_S$. The $b$-topology is finer than the Nisnevich topology, so we may consider the full subcategory $\SH_b(S) \subset \SH(S)$ of $b$-local motivic spectra as a Bousfield localization. We find it apposite to then make the following definition.

\begin{definition} \label{def:sch} The $\infty$-category of {\bf Scheiderer motivic spectra} over $S$ is $\SH_b(S)$.
\end{definition}

Our primary aim for the remainder of this paper is to relate Scheiderer motivic spectra and $b$-sheaves of spectra with transfers via a parametrized and semialgebraic analogue of the $C_2$-Betti realization functor $\Be^{C_2}: \SH(\RR) \to \Sp^{C_2}$ of Heller-Ormsby \cite{heller-ormsby}. We first consider the simplest nontrivial example with $S$ taken to be the spectrum of a real closed field. To this end, fix a real closed field $k$ and let $C = k[i]$ be an algebraic closure of $k$, so that $C/k$ is a Galois extension of degree $2$. We will define a {\bf $C_2$-semialgebraic} or {\bf Delfs-Knebusch realization} functor
\[
\DK^{C_2}:\SH(k) \rightarrow \Sp^{C_2}
\]
and then show its right adjoint $\Sing^{C_2}$ is fully faithful after $p$-completion with essential image $\SH_b(k)\comp$ (Theorem~\ref{thm:sing-ff}),  thereby locating $C_2$-equivariant homotopy theory as a concept in motivic homotopy theory. The relationship with $C_2$-Betti realization of \cite{heller-ormsby} is addressed in Remark~\ref{rem:salgagrees}. For the proof, we imitate the strategy of \cite{behrens-shah}, in which Behrens and the second author studied a similar question concerning $C_2$-Betti realization and cellular real motivic spectra; apart from semialgebraic topology, the only novel aspect to our proof here is the identification of \'etale motivic spectra over $k$ with Borel $C_2$-spectra under $\Sing^{C_2}$ after $p$-completion (Corollary~\ref{cor:iet}), which permits us to eliminate cellularity hypotheses (but also see Remark~\ref{compare-behrens-shah}). Along the way, we additionally establish the symmetric monoidal monadicity of the adjunction $\DK^{C_2} \dashv \Sing^{C_2}$ (Theorem~\ref{thm:MonadicityofRealization}), and hence of $(L_b)\comp \dashv (i_b)\comp$ as well under Theorem~\ref{thm:sing-ff}.

\subsection{$C_2$-equivariant semialgebraic realization} \label{betti-c2}

Suppose that $X$ is a smooth $k$-variety. Recall that we may endow the set $X(k)$ of $k$-points of $X$ with the {\bf semialgebraic topology}, thereby making it into a semialgebraic space (cf. \cite{salg} for the relevant definitions and \cite{scheiderer}*{Chapter~15}, \cite{geom-rel} for textbook references). This procedure assembles to a functor
\[
\Sm_{k+} \rightarrow \Top_{\bullet}, \qquad X_+ \mapsto X(k)_+.
\]
Furthermore, if we suppose instead that $Y$ is a smooth quasiprojective $C$-variety, then we may endow $Y(C)$ with the semialgebraic topology under the isomorphism
\[
R_{C/k}Y(k) \cong Y(C)
\]
that identifies $Y(C)$ with the $k$-points of its Weil restriction $R_{C/k} Y$ (cf. \cite{scheiderer}*{15.2}).\footnote{In fact, as Scheiderer notes, given any $C$-variety $Y$, we may obtain the semialgebraic topology on $Y(C)$ from the above procedure by gluing on affine open pieces.} Then for a smooth quasiprojective $k$-variety $X$, the induced conjugation $C_2$-action on $X(C)$ is seen to be continuous. We thereby obtain a functor 
\[
\Sm\QP_{k+} \rightarrow \Top_{\bullet}^{BC_2}, \qquad X_+ \mapsto X(C)_+
\]
into the category of pointed topological spaces with $C_2$-action, such that the natural map $X(k)_+ \to X(C)_+$ induces a homeomorphism $X(k)_+ \cong X(C)^{C_2}_+$. Passing to the $\infty$-category of $C_2$-spaces, we get a functor
\begin{equation} \label{eq:c2-formula}
\DK^{C_2}:\Sm\QP_{k+} \rightarrow \Spc_{C_2\bullet} \simeq \Pre(\Oscr_{C_2})_{\bullet},
\end{equation}
such that under Elmendorf's theorem,\footnote{Note that we could define $\DK^{C_2}$ into pointed presheaves $\Pre(\Oscr_{C_2})_{\bullet}$ directly by means of the formula and avoid Elmendorf's theorem altogether.} $\DK^{C_2}$ is given by\footnote{To ward off any potential confusion, we note here a clash of terminology between the total singular complex functor $\Sing$ and the right adjoint to ($C_2$-)Betti or semialgebraic realization, also denoted $\Sing^{(C_2)}$.}
\[
 X_+ \mapsto  (\Sing(X(C))_+ \leftarrow \Sing(X(k))_+).
\]

Similarly, we have the functor
\[
\DK_C: \Sm\QP_{C+} \rightarrow \Spc_{\bullet}, \qquad Y_+ \mapsto \Sing(Y(C))_+.
\]

With respect to the symmetric monoidal structure on the presheaf category $\Pre(\Oscr_{C_2})_{\bullet}$ that is given by the pointwise smash product, the next lemma is immediate.

\begin{lemma} \label{lem:sym-mon} The functors $\DK^{C_2}$ and $\DK_C$ are strong symmetric monoidal.
\end{lemma}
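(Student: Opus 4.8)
The statement is that $\DK^{C_2}\colon \Sm\QP_{k+}\to\Pre(\Oscr_{C_2})_\bullet$ and $\DK_C\colon \Sm\QP_{C+}\to\Spc_\bullet$ are strong symmetric monoidal, where the target of the first is equipped with the pointwise smash product of pointed presheaves on $\Oscr_{C_2}$, and the sources carry the symmetric monoidal structure induced by the product of $k$-varieties (resp.\ $C$-varieties) after adding disjoint basepoints. The plan is to trace the monoidality through the elementary building blocks: the formation of $k$-points (resp.\ $C$-points) of a variety commutes with products, the semialgebraic topology is compatible with products, and the total singular complex functor $\Sing\colon\Top\to\Spc$ is product-preserving. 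Since $\Sing$ preserves finite products and finite products of spaces agree with the cartesian (hence the derived smash after $+$) monoidal structure, the composite is symmetric monoidal, and the $C_2$-equivariant refinement only requires one to also track the conjugation action through each step.

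\textbf{Key steps.} First I would record that the underlying-space functors are symmetric monoidal: for $k$-varieties $X,X'$ one has a natural homeomorphism $(X\times_k X')(k)\cong X(k)\times X'(k)$ of semialgebraic spaces (products are computed pointwise on $k$-points, and the semialgebraic structure on a product of affine pieces is the product semialgebraic structure; this globalizes by gluing along affine opens, as in Scheiderer's construction cited in the excerpt), and likewise $(Y\times_C Y')(C)\cong Y(C)\times Y'(C)$. For the $C_2$-equivariant case, given smooth quasiprojective $k$-varieties $X,X'$, the identification $R_{C/k}(X_C)(k)\cong X(C)$ is natural and monoidal, so $(X\times_k X')(C)\cong X(C)\times X'(C)$ as spaces with conjugation $C_2$-action (the diagonal action on the product matches the action on the product variety). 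Second, since $\Sing\colon\Top\to\Spc$ preserves finite products (it is a right adjoint in the relevant sense; concretely $\Sing(A\times B)\simeq\Sing A\times\Sing B$ because $\Hom(\Delta^\bullet,A\times B)=\Hom(\Delta^\bullet,A)\times\Hom(\Delta^\bullet,B)$ and geometric realization preserves finite products of simplicial sets), we obtain $\Sing(X(C)\times X'(C))\simeq\Sing X(C)\times\Sing X'(C)$, naturally. Adding disjoint basepoints converts cartesian products to smash products: $(A\times B)_+\simeq A_+\wedge B_+$. Third, under Elmendorf's theorem the $C_2$-space $\DK^{C_2}(X_+)$ is the presheaf $(G/1\mapsto \Sing(X(C))_+,\ G/C_2\mapsto\Sing(X(k))_+)$ with restriction map induced by $X(k)\hookrightarrow X(C)$; the pointwise smash product on $\Pre(\Oscr_{C_2})_\bullet$ is computed objectwise, so the previous two steps give $\DK^{C_2}(X_+)\wedge\DK^{C_2}(X'_+)\simeq\DK^{C_2}((X\times_k X')_+)$ objectwise, compatibly with restriction maps. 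Finally I would note that the unit $\Spec k_+=(\mathrm{pt})_+$ goes to the unit $S^0$ (constant presheaf at $S^0$), and that all the identifications above are coherent (the associativity, symmetry, and unit constraints match) because they are assembled from the symmetric monoidal structures on $\Spc$ and on diagram categories together with the symmetric monoidality of $(-)_+$ and of the variety-to-space functors; no extra coherence data needs to be chosen.

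\textbf{Main obstacle.} The only genuine point of care is making the coherence statement precise at the level of $\infty$-categories rather than just checking the isomorphisms objectwise: one wants $\DK^{C_2}$ to be a map of $E_\infty$-monoids, i.e.\ a strong symmetric monoidal functor in the $\infty$-categorical sense, not merely a functor sending $\otimes$ to $\wedge$ up to unnatural isomorphism. The clean way to handle this is to build $\DK^{C_2}$ as a composite of manifestly symmetric monoidal functors: $\Sm\QP_{k+}$ maps symmetric-monoidally to (pointed) $C_2$-semialgebraic spaces via $X_+\mapsto X(C)_+$ (product-preserving on the nose, then $(-)_+$), which maps symmetric-monoidally to $\Pre(\Oscr_{C_2})_\bullet$ via $\Sing$ applied objectwise after Elmendorf (using that $\Sing$ is a product-preserving functor, hence lifts to symmetric monoidal functors for the cartesian-then-smash structures by \cite{higheralgebra}*{\S2.4.1}, and that applying a symmetric monoidal functor objectwise to a diagram category is symmetric monoidal for the pointwise tensor). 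Composites of strong symmetric monoidal functors are strong symmetric monoidal, which gives the claim; the same argument with $\Oscr_{C_2}$ replaced by the trivial orbit category (or simply $C$-varieties in place of $k$-varieties, with no Galois action) yields the statement for $\DK_C$. I expect the write-up to be short, with the bulk of it devoted to the naturality of the homeomorphism $R_{C/k}(X_C)(k)\cong X(C)$ in $X$ and its compatibility with products, which is where Scheiderer's conventions (\cite{scheiderer}*{15.2}) are invoked.
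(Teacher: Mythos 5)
Your proposal is correct and spells out in detail the argument the paper treats as immediate: the formation of $(C)$-points (resp.\ $k$-points) preserves products of varieties, $\Sing$ preserves finite products, $(-)_+$ sends products to smash products, and the smash product on $\Pre(\Oscr_{C_2})_\bullet$ is pointwise, so the composite is strong symmetric monoidal. The paper gives no proof beyond noting that it is immediate once the pointwise smash structure is in place, so your elaboration is entirely consistent with it.
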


The symmetric monoidal structure on $\Pre_{\Sigma}(\Sm\QP_{k+})$ in which we will eventually invert $\TT$ to obtain $\SH(k)$ is given by Day convolution (and likewise for $C$). By the universal property of $\Pre_{\Sigma}$ and Day convolution \cite{higheralgebra}*{Proposition 4.8.1.10}, we get sifted colimit-preserving strong symmetric monoidal extensions
\begin{align*}
\DK^{C_2} &: \Pre_{\Sigma}(\Sm\QP_{k+}) \simeq \Pre_{\Sigma}(\Sm\QP_k)_{\bullet} \rightarrow \Spc_{C_2\bullet}, \\
\DK_C &: \Pre_{\Sigma}(\Sm\QP_{C+}) \simeq \Pre_{\Sigma}(\Sm\QP_C)_{\bullet} \rightarrow \Spc_{\bullet}.
\end{align*}

\begin{lemma} \label{lem:nis} The functors $\DK^{C_2}$ and $\DK_C$ convert Nisnevich sieves and $\AA^1$-homotopy equivalences to equivalences.
\end{lemma}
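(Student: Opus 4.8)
The statement to be proven is that the semialgebraic realization functors $\DK^{C_2}$ and $\DK_C$ invert Nisnevich local equivalences and $\AA^1$-homotopy equivalences. I will treat the $C_2$-equivariant case $\DK^{C_2}$; the non-equivariant case $\DK_C$ is identical (and in fact recovered by restricting to the open part of the recollement, i.e.\ by composing with the functor forgetting the $C_2$-structure down to the underlying space). Since $\DK^{C_2}$ preserves sifted colimits and is already known to be strong symmetric monoidal, and since $\Spc_{C_2 \bullet}$ is an $\infty$-topos localization that is in particular closed under colimits, the key point is to check the two localizing conditions on the generators.

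\textbf{Step 1: $\AA^1$-invariance.} The functor $X \mapsto X(C)$ sends $\AA^1_k$ to $\RR$ (with its standard semialgebraic structure and conjugation action; note that over a real closed field the semialgebraic realization of the affine line is contractible), and more precisely sends the projection $\AA^1 \times X \to X$ to a map whose realization is a deformation retract in $C_2$-spaces: $\Sing(\RR \times X(C)) \to \Sing(X(C))$ is an equivalence since $\RR$ is semialgebraically contractible equivariantly (the conjugation action on $\RR = \AA^1_k(C) = \AA^1_k(k)$ is trivial, so contractibility is visibly equivariant). Hence $\DK^{C_2}$ inverts the maps $\Sigma^\infty_+ (\AA^1 \times X) \to \Sigma^\infty_+ X$, and by the universal property these generate the $\AA^1$-local equivalences under colimits.

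\textbf{Step 2: Nisnevich descent.} For a Nisnevich (elementary distinguished) square in $\Sm_k$, I need the image under $X \mapsto X(C)$ to be a homotopy pushout square of $C_2$-spaces, equivalently (by the gluing lemma / the fact that a square of spaces is a pushout iff it is so on each fixed-point set) a pushout on underlying spaces $X(C)$ and on $C_2$-fixed points $X(C)^{C_2} = X(k)$. For the underlying spaces this is essentially the statement that a Nisnevich square becomes a homotopy pushout after passing to $C$-points with the semialgebraic (equivalently, for $C = k[i]$ real closed, the "strong" or Delfs--Knebusch) topology; this follows because an étale map which is an isomorphism over a closed subscheme induces, on semialgebraic realizations, an open immersion on complements together with an isomorphism over the closed locus, and semialgebraic spaces admit the relevant excision. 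For the fixed points $X(k)$, the same reasoning applies verbatim over the base field $k$, using Delfs--Knebusch semialgebraic topology on $k$-points. Concretely I would cite the semialgebraic analogue of the fact that étale maps are local homeomorphisms, together with the openness of the image (\cite{scheiderer}*{Chapter~15} and \cite{salg} for the semialgebraic background, and \cite{geom-rel} as a textbook reference), to conclude that the Nisnevich square goes to a pushout. Because $\Sing$ converts such squares of (nice) topological spaces to homotopy pushouts, we get a pushout in $\Spc_{C_2 \bullet}$. Thus $\DK^{C_2}$ inverts the Čech-nerve maps generating Nisnevich local equivalences.

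\textbf{The main obstacle.} The genuinely non-formal input is Step 2, specifically the claim that an elementary distinguished (Nisnevich) square of smooth $k$-varieties realizes to a homotopy pushout square of $C_2$-spaces. One must know that the semialgebraic realization of an étale morphism is a local homeomorphism (on both $C$-points and $k$-points) and that the relevant "excision"/gluing statement holds in the category of semialgebraic spaces over a real closed field — this is where the Delfs--Knebusch theory of semialgebraic spaces, rather than just naive point-set topology, is essential, since $k$ need not be $\RR$. Once this geometric input is in hand, everything else (reduction to generators via sifted-colimit preservation, the gluing lemma for pushouts of spaces detected on fixed points, $\AA^1$-contractibility of the realization of the affine line) is formal. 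I would also remark that an alternative, perhaps cleaner route is to factor $\DK^{C_2}$ through the $C_2$-equivariant real étale/$b$-realization already implicit in Scheiderer's comparison (Example~\ref{ex:involute}, Example~\ref{ex:ret}) and deduce Nisnevich descent from étale descent for sheaves on the small site, but the direct semialgebraic argument above is more self-contained.
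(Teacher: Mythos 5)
Your route to Nisnevich descent is genuinely different from the paper's. You reduce to elementary distinguished squares and then detect the homotopy pushout in $\Spc_{C_2\bullet} \simeq \Fun(\Oscr_{C_2}^{\op},\Spc)_\bullet$ by checking on the two orbits (underlying space $X(C)$ and fixed points $X(k)$). The paper instead works directly with the \v{C}ech nerve of (a single refinement of) a Nisnevich covering sieve, observes that equivalences in a presheaf category are detected pointwise so the $C_2$-action may be forgotten, and then invokes Dugger--Isaksen's notion of ``generalized space cover'' together with the fact that \'etale morphisms semialgebraize to such covers (citing Delfs--Knebusch and \cite{dugger-isaksen}*{Theorem 5.5, Proposition 4.10}). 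The same geometric input appears in both arguments; the paper's route has the advantage that the \v{C}ech convergence for generalized space covers is a pre-packaged theorem, whereas your elementary-distinguished-square argument, as written, relies on an excision/gluing step you only gesture at (``semialgebraic spaces admit the relevant excision''). That step is exactly what Dugger--Isaksen's machinery is engineered to supply, so you would essentially have to reprove a version of it. Both routes also require that the semialgebraic realizations are sufficiently nice (finite CW by Delfs's triangulation theorem) so that strict pushouts compute homotopy pushouts; the paper addresses this later in Lemma~\ref{lem:bec2-cpct}, but you would need it already here.

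Your $\AA^1$-invariance step, however, contains a concrete error. You assert that $\AA^1_k(C)=\RR=\AA^1_k(k)$ with trivial conjugation action; this is false. For a real closed field $k$ with algebraic closure $C=k[i]$, one has $\AA^1_k(C)=C$, which is a rank-two semialgebraic space over $k$, and the conjugation $C_2$-action on $C$ is \emph{nontrivial} (it fixes precisely $\AA^1_k(k)=k\subsetneq C$). The intended conclusion---that $\DK^{C_2}(\AA^1)$ is an equivariantly contractible $C_2$-space---is still true (for instance, $C$ with its linear $C_2$-action is equivariantly contractible via the straight-line homotopy to the origin), but the justification you give is wrong. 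The paper sidesteps this computation entirely by using that $\DK^{C_2}$ is strong symmetric monoidal and that $\DK^{C_2}(\AA^1)$ defines homotopies in the semialgebraic category (Delfs), which is the cleaner route.
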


\begin{proof} We prove the claim for $\DK^{C_2}$, since the claim for $\DK_C$ is both similar and easier. The assertion regarding Nisnevich sheaves is essentially \cite{dugger-isaksen}*{Theorem 5.5} and proceeds as follows. By standard arguments, we may reduce to the case of a Nisnevich sieve generated by a single map $X' \rightarrow X$. In this case, it suffices to prove that the \v{C}ech nerves of the maps
 \[ \Sing(X'[i](C))_+ \rightarrow  \Sing(X(C))_+, \qquad \Sing(X')(k)_+ \rightarrow \Sing(X)(k)_+ \]
induce equivalences
\[ |\check{C}_{\bullet}(\Sing(X'[i](C)))_+| \rightarrow  (\Sing(X(C)))_+, \qquad |\check{C}_{\bullet}(\Sing(X')(k))_+| \rightarrow \Sing(X)(k)_+. \]
We note that since equivalences are detected pointwise in presheaf categories, we do not have to consider the $C_2$-action on $\Sing(X'[i](C))_+$. The claim then follows from the fact that \'etale morphisms are converted to generalized space covers in the sense of \cite{dugger-isaksen}*{4.8} under taking semialgebraization by \cite{dk-semi}*{Example 5.1} and \cite{dugger-isaksen}*{Proposition 4.10}. 

The claim about $\AA^1$-homotopy equivalences follows from the fact that $\DK^{C_2}$ is strong symmetric monoidal and that $\DK^{C_2}(\AA^1)$ defines homotopies in the semialgebraic category (cf. \cite{delfs-homotopy}). 
\end{proof}

Recall that for any scheme $S$, $\Shv_{\Nis}(\Sm\QP_S) \simeq \Shv_{\Nis}(\Sm_S)$ since any smooth $S$-scheme admits a Zariski cover by smooth quasiprojective ones. Thus, by Lemma~\ref{lem:nis} we further get sifted colimit-preserving strong symmetric monoidal functors
\[
\DK^{C_2}: \H(k)_{\bullet} \rightarrow  \Spc_{C_2 \bullet}, \qquad \DK_C: \H(C)_{\bullet} \rightarrow \Spc_{\bullet}.
\]

\begin{lemma} \label{lem:bec2-cpct} The functors $\DK^{C_2}$ and $\DK_C$ preserve compact objects.
\end{lemma}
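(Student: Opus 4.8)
The statement to prove is that $\DK^{C_2}: \H(k)_\bullet \to \Spc_{C_2\bullet}$ and $\DK_C: \H(C)_\bullet \to \Spc_\bullet$ preserve compact objects. The plan is to reduce the claim to the observation that $\DK^{C_2}$ sends the generating compact objects of $\H(k)_\bullet$ to compact objects of $\Spc_{C_2\bullet}$, and likewise for $\DK_C$. First I would recall that $\H(k)_\bullet$ is compactly generated, with a set of compact generators given by the pointed motivic spaces $\Sigma^\infty_+$-free objects $X_+$ for $X$ a smooth quasiprojective $k$-variety (or, if one prefers, $X_+ \wedge S^n$ for $n \in \ZZ$ on the unstable pointed level one only needs the $X_+$ together with the sphere $S^1$); this follows since $\H(k)_\bullet$ is a Bousfield localization of $\Pre_\Sigma(\Sm\QP_{k+})$ and the representables $X_+$ are compact there and remain compact after localization because the localization is at a set of maps between compact objects (the Nisnevich-local and $\AA^1$-local maps can be taken between compact objects).

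Next I would use the standard fact that a colimit-preserving functor between compactly generated (presentable) $\infty$-categories preserves compact objects if and only if its right adjoint preserves filtered colimits, and equivalently if and only if it sends a set of compact generators of the source to compact objects of the target. So it suffices to check that $\DK^{C_2}(X_+)$ is compact in $\Spc_{C_2\bullet}$ for each smooth quasiprojective $k$-variety $X$, and that $\DK^{C_2}(S^1) = S^1$ is compact (which is clear). By the explicit formula recalled in the excerpt, $\DK^{C_2}(X_+)$ corresponds under Elmendorf's theorem to the pointed $C_2$-space $(\Sing(X(C))_+ \leftarrow \Sing(X(k))_+)$. Here the key input is that $X(C)$ and $X(k)$, equipped with the semialgebraic topology, are \emph{semialgebraic spaces}, and in particular — by the triangulation theorem for semialgebraic sets over a real closed field (Delfs--Knebusch) — they are homotopy equivalent to finite CW complexes; hence $\Sing(X(C))$ and $\Sing(X(k))$ are compact objects of $\Spc$. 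Since the two fixed-point values of the associated $\Oscr_{C_2}$-presheaf are each compact spaces, and there are only finitely many objects in $\Oscr_{C_2}$, the presheaf is a compact object of $\Pre(\Oscr_{C_2})_\bullet \simeq \Spc_{C_2\bullet}$; concretely, $\DK^{C_2}(X_+)$ is a retract of a finite $C_2$-CW complex, hence compact. The same argument, using only the single space $Y(C)$, handles $\DK_C(Y_+)$.

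The main obstacle — and really the only nontrivial point — is establishing that the semialgebraization $X(k)$ (resp. $X(C)$) has the homotopy type of a finite CW complex, i.e. invoking the appropriate finiteness/triangulability result from Delfs--Knebusch semialgebraic topology over an arbitrary real closed field $k$ (not just $\RR$), so that the singular complex functor $\Sing$ lands in compact spaces. For $k = \RR$ this is classical; for general real closed $k$ one appeals to the semialgebraic triangulation theorem (see \cite{delfs-homotopy}, and the textbook references already cited for semialgebraic topology), which guarantees that every affine semialgebraic space over $k$ is semialgebraically homotopy equivalent to a finite simplicial complex, and one bootstraps to quasiprojective $X$ via a finite affine cover and Mayer--Vietoris / the fact that finite homotopy colimits of compact spaces are compact. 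Once this finiteness is in hand, everything else is formal: compact generation of $\H(k)_\bullet$, the characterization of compact-object-preserving left adjoints, and the finiteness of $\Oscr_{C_2}$.
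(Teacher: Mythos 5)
Your proposal is correct and follows essentially the same route as the paper: reduce to showing that $\Sing(X(k))$ and $\Sing(X(C))$ are finite homotopy types, and deduce this from the Delfs--Knebusch semialgebraic triangulation theorem over an arbitrary real closed field. The paper's proof is more terse — it omits the formal reduction via compact generation that you spell out and simply cites \cite{salg}*{Theorem 4.1} for finite triangulability of the (finite-type) semialgebraic spaces $X(k)$ and $X(C)$, without routing through an affine cover and Mayer--Vietoris — but the mathematical content is the same.
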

\begin{proof} It suffices to prove that $\Sing(X(C))$ and $\Sing(X(k))$ have the homotopy type of finite CW-complexes for a $k$-variety $X$. This follows from the fact that both $X$ and $X[i]$ are, by assumption, finite type and hence the semialgebraic spaces $X(k)$ and $X(C)$  may be covered by finitely many semialgebraic subsets. A theorem of Delfs \cite{salg}*{Theorem 4.1} then asserts the existence of a finite triangulation of these semialgebraic spaces and thus they have the homotopy types of finite CW-complexes.
\end{proof}

Let $V$ be a real $C_2$-representation. Then we can view $S^V$, the $1$-point compactification of $V$, as an object in $\Pre(\Oscr_{C_2})_{\bullet}$. 

\begin{lemma} \label{lem:bec2} We have the following natural equivalences in $\Spc_{C_2{\bullet}}$, resp. $\Spc_{\bullet}$
\[
\DK^{C_2}( (\GG_m, 1)) \simeq S^{\sigma}, \qquad \DK_C( (\GG_m, 1)) \simeq S^1, 
\]
\[
\DK^{C_2}( (S^1,1) ) \simeq S^1, \qquad \DK_C((S^1,1)) \simeq S^1,
\]
and
\[
\DK^{C_2}( (\PP^1, \infty)) \simeq \Sigma S^{\sigma} \simeq S^{\sigma+1}, \qquad \DK_C( (\PP^1, \infty)) \simeq \Sigma S^1 \simeq S^2.
\]
\end{lemma}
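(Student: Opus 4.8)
The plan is to compute the semialgebraic realizations of the three pointed motivic spaces directly from the defining formulas for $\DK^{C_2}$ and $\DK_C$, using the identifications of $\Sm\QP$ generators with familiar spaces. First I would handle $\Gm = \AA^1 \setminus \{0\}$: its $k$-points form the semialgebraic space $k^\times$, which is semialgebraically homotopy equivalent to $\{\pm 1\} \cong S^0$ (by the straight-line homotopy scaling to sign), so $\Sing(\Gm(k))_+ \simeq S^0_+$; meanwhile $\Gm(C) = C^\times$ deformation retracts onto the circle $\{z : |z| = 1\}$ (in the semialgebraic sense, using $z \mapsto z/\sqrt{z\bar z}$, which is semialgebraic over a real closed field), and the conjugation action on $C^\times$ restricts to the reflection on this circle whose fixed points are $\{\pm 1\}$. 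Passing through Elmendorf's theorem, $\DK^{C_2}(\Gm, 1)$ is the reduced $C_2$-space with underlying space $S^1$, fixed points $S^0$, and reflection action — this is precisely $S^\sigma$. The $C$-case gives $S^1$ immediately.

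Next I would treat $(S^1, 1)$, where $S^1 = S^{1}_s$ denotes the simplicial circle $\Delta^1/\partial\Delta^1$ regarded as a constant motivic space; since $\DK^{C_2}$ is a sifted-colimit-preserving functor sending the point to the point, it sends the constant simplicial circle to the constant $C_2$-space $S^1$ (with trivial action), i.e. $S^1 = S^{1+0\sigma}$, and similarly $\DK_C(S^1, 1) \simeq S^1$. Finally, for $(\PP^1, \infty)$ I would use the standard motivic equivalence $\PP^1 \simeq S^1 \wedge \Gm$ in $\H(k)_\bullet$ (recorded in the paper's conventions as $\TT \simeq \PP^1$ together with $\TT \simeq S^{1}_s \wedge \Gm$), and then invoke Lemma~\ref{lem:sym-mon} (strong symmetric monoidality of $\DK^{C_2}$ and $\DK_C$) and Lemma~\ref{lem:nis} (which ensures $\DK^{C_2}$ descends to $\H(k)_\bullet$ and hence respects these motivic equivalences) to conclude
\[
\DK^{C_2}(\PP^1, \infty) \simeq \DK^{C_2}(S^1, 1) \wedge \DK^{C_2}(\Gm, 1) \simeq S^1 \wedge S^\sigma \simeq S^{1+\sigma},
\]
and likewise $\DK_C(\PP^1, \infty) \simeq S^1 \wedge S^1 \simeq S^2$.

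The main obstacle I anticipate is not conceptual but a matter of being careful about the semialgebraic category: the deformation retractions and homotopy equivalences invoked ($k^\times \simeq S^0$ and $C^\times \simeq S^1$, equivariantly) must be realized by semialgebraic maps and semialgebraic homotopies over the given real closed field $k$ — ordinary topological arguments using $\RR$ are not directly available — and one must check that $\Sing$ of these semialgebraic spaces computes the expected homotopy types, which rests on Delfs's triangulation theorem (already used in Lemma~\ref{lem:bec2-cpct}) and the compatibility of $\Sing$ with semialgebraic homotopy equivalences. A secondary point requiring attention is the bookkeeping through Elmendorf's theorem: one should verify that the $C_2$-space produced by the formula $X_+ \mapsto (\Sing(X(C))_+ \leftarrow \Sing(X(k))_+)$ really has the reflection action on the circle in the $\Gm$ case, for which it suffices to note that complex conjugation on $C^\times$ commutes with the semialgebraic retraction onto the unit circle and acts there by $z \mapsto \bar z$, a reflection with fixed set $\{\pm 1\}$; this pins down the class of the $C_2$-space as $S^\sigma$ since a reduced $C_2$-CW complex with underlying space $S^1$, fixed points $S^0$, and the nontrivial action is unique up to equivalence.
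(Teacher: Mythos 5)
Your proposal is correct and takes essentially the same route as the paper: the paper dismisses the $\GG_m$ and $S^1$ cases as ``standard'' (you supply the semialgebraic deformation retractions and the colimit argument that justify this), and for $\PP^1$ the paper invokes the Zariski cover $\{\AA^1 \to \PP^1, \AA^1 \to \PP^1\}$ together with Lemma~\ref{lem:nis}, which is precisely what underlies the equivalence $\PP^1 \simeq S^1 \wedge \GG_m$ that you use in tandem with Lemma~\ref{lem:sym-mon} --- the two phrasings are interchangeable. One small wording point: for the $S^1$ case you need $\DK^{C_2}$ to preserve finite coproducts as well as sifted colimits (so that the constant simplicial circle is carried to the constant $C_2$-space), which does hold here because $\DK^{C_2}$ preserves all colimits as a left adjoint on $\H(k)_\bullet$; ``sifted-colimit-preserving'' alone understates what is being used.
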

\begin{proof} The first two pairs of equivalences are standard. The last pair of equivalences holds in view of the Zariski cover $\{ \AA^1 \rightarrow  \PP^1, \AA^1 \rightarrow \PP^1 \}$ and Lemma~\ref{lem:nis}.
\end{proof}

Stabilization is now standard procedure. The representation sphere $S^{\sigma}$ is invertible in $\Sp^{C_2}$; this is either true by fiat if $\Sp^{C_2}$ is constructed out of $\Spc_{C_2 \bullet}$ from inverting representation spheres (see \cite{bachmann-hoyois}*{\S9.2} for a treatment in our language) or a consequence of the comparison between $C_2$-spectra and spectral Mackey functors (a theorem of Guillou-May \cite{guillou-may}). We also refer the reader to Nardin's proof of the comparison \cite{denis-stab}*{Theorem~A.4} which is native to the language of this paper (also see \cite{bachmann-hoyois}*{Proposition 9.10}).

Using \cite{bachmann-hoyois}*{Lemma 4.1}, which is a variant of Robalo's theorem \cite{robalo}, we get the colimit-preserving symmetric monoidal functors of $C_2$ (resp. $C$)-{\bf semialgebraic realization}
\[
\DK^{C_2}: \SH(k) \rightarrow \Sp^{C_2}, \qquad \DK_C: \SH(C) \rightarrow \Sp
\]
that commute with the suspension functors and the unstable $\DK^{C_2}$, resp. $\DK_C$. We denote their right adjoints by
\[
\Sing^{C_2}: \Sp^{C_2} \rightarrow \SH(k), \qquad \Sing_C: \Sp \rightarrow \SH(C).
\]

Note that the following diagram commutes by construction:
\[ \begin{tikzcd}
\SH(k) \ar{r}{\DK^{C_2}} \ar{d}{\pi^{\ast}} & \Sp^{C_2} \ar{d}{\res^{C_2}} \\
\SH(C) \ar{r}{\DK_C} & \Sp,
\end{tikzcd} \]
where $\pi: \Spec C \to \Spec k$ denotes the Galois cover.

\begin{remark} \label{rem:c} There is yet another adjunction that connects $C_2$-equivariant homotopy theory and motivic spectra over real closed fields that was studied by Heller-Ormsby \cite{heller-ormsby}. It takes the form
\[
c: \Sp^{C_2} \rightleftarrows \SH(k): u.
\] By definition, the functor $\DK^{C_2}$ left splits the constant functor $c: \Sp^{C_2} \rightarrow \SH(k)$ of \cite{heller-ormsby}*{\S4}, so that the composite $\Sp^{C_2} \stackrel{c}{\rightarrow} \SH(k) \stackrel{\DK^{C_2}}{\rightarrow} \Sp^{C_2}$ is equivalent to the identity functor (cf. \cite{bachmann-hoyois}*{Proposition 10.6}). It follows that $c$ is a faithful functor and we have an equivalence
\begin{equation} \label{eq:sing}
u\circ\Sing^{C_2} \simeq \id
\end{equation}
by uniqueness of adjoints. The functor $c$ is further proved to be fully faithful in \cite{heller-ormsby2}.
\end{remark}

Our next result is the semialgebraic version of \cite{behrens-shah}*{Lemma~8.13}.

\begin{theorem} \label{thm:MonadicityofRealization} The symmetric monoidal adjunctions
\[
\DK^{C_2}:\SH(k) \rightleftarrows \Sp^{C_2}: \Sing^{C_2}, \qquad \DK_C:\SH(C) \rightleftarrows \Sp: \Sing_C
\]
satisfy the following conditions:
\begin{enumerate}
\item The functors $\DK^{C_2}$ and $\DK_C$ preserve compact objects.
\item The functors $\Sing^{C_2}$ and $\Sing_C$ preserve colimits.
\item The projection formula holds, i.e., the canonical maps
\begin{align*} \Sing^{C_2}(K) \otimes E & \rightarrow \Sing^{C_2}(K \otimes \DK^{C_2}(E)), \\
\Sing_C(L) \otimes F & \rightarrow \Sing_C(L \otimes \DK_C(F)),
\end{align*}
are equivalences for all $K \in \Sp^{C_2}, E \in \SH(k)$, resp. $L \in \Sp, F \in \SH(C)$.
\item The functors $\Sing^{C_2}$ and $\Sing_C$ are conservative.
\end{enumerate}
Consequently, we have equivalences of symmetric monoidal $\infty$-categories
\begin{align*} \Sp^{C_2} \simeq \Mod_{\Sing^{C_2}\DK^{C_2}(\1)}(\SH(k)), \qquad \Sp \simeq \Mod_{\Sing_C \DK_C(\1)}(\SH(C))
\end{align*}
as $\SH(k)$-algebras, resp. $\SH(C)$-algebras.
\end{theorem}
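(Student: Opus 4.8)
The plan is to verify conditions (1)--(4) of the Barr--Beck--Lurie theorem (in its symmetric monoidal form, as in \cite{mnn-descent}*{Theorem~5.29}), since properties (1)--(3) together with (4) are exactly what is needed to conclude that $\Sing^{C_2}$ (resp. $\Sing_C$) exhibits $\Sp^{C_2}$ (resp. $\Sp$) as modules over the algebra object $\Sing^{C_2}\DK^{C_2}(\1)$ (resp. $\Sing_C\DK_C(\1)$) inside $\SH(k)$ (resp. $\SH(C)$). Property (1) has already been established in Lemma~\ref{lem:bec2-cpct}: semialgebraic realization sends $\Sigma^{\infty}_+ X$ for $X$ a smooth $k$-variety to a finite $C_2$-CW spectrum, and these generate, so $\DK^{C_2}$ and $\DK_C$ preserve compact objects. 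Property (2) is then formal: since $\SH(k)$ is compactly generated and $\DK^{C_2}$ preserves compact objects, its right adjoint $\Sing^{C_2}$ preserves filtered colimits; being a right adjoint it preserves all limits, and since both source and target are stable it preserves finite colimits, hence all colimits. (Alternatively, one invokes the dual adjoint functor theorem.) The same reasoning applies verbatim to $\DK_C \dashv \Sing_C$.

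For property (3), the projection formula, I would argue as follows. Both sides of the comparison map $\Sing^{C_2}(K)\otimes E \to \Sing^{C_2}(K \otimes \DK^{C_2}(E))$ are, by part (2) and the fact that $\otimes$ distributes over colimits, functors that preserve colimits separately in $K$ and in $E$. Thus it suffices to check the map is an equivalence when $E = \Sigma^{p,q}\Sigma^{\infty}_+ X$ ranges over a set of compact generators of $\SH(k)$ and $K$ is arbitrary (or, by symmetry, $K$ compact and $E$ arbitrary). When $E$ is a (shifted suspension spectrum of a) dualizable object---which these generators are, since $\DK^{C_2}$ being symmetric monoidal preserves duals and $\DK^{C_2}(E)$ is dualizable in $\Sp^{C_2}$---the projection formula reduces to the statement that $\DK^{C_2}$ is strong symmetric monoidal (Lemma~\ref{lem:sym-mon} and its extensions), together with the adjunction identity $\Sing^{C_2}(K)\otimes E \simeq \underline{\Hom}(E^{\vee}, \Sing^{C_2}(K)) \simeq \Sing^{C_2}(\underline{\Hom}(\DK^{C_2}(E)^{\vee}, K)) \simeq \Sing^{C_2}(K \otimes \DK^{C_2}(E))$, using $\DK^{C_2}(E^{\vee})\simeq \DK^{C_2}(E)^{\vee}$. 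The identical argument handles $\DK_C$.

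Property (4), conservativity of $\Sing^{C_2}$ and $\Sing_C$, is the main obstacle and the one genuinely using the structure of the situation rather than formal nonsense. For $\Sing_C: \Sp \to \SH(C)$ over an algebraically closed field $C$ of characteristic zero, conservativity should follow because $\DK_C$ is essentially the complex (semialgebraic) Betti realization, which is known to be essentially surjective onto a generating subcategory; concretely, the counit $\DK_C\Sing_C \Rightarrow \id$ is an equivalence on the sphere (since $\Sing_C$ of the sphere is the motivic sphere, whose realization is the sphere), and since $\Sing_C$ preserves colimits and both categories are generated by shifts of the unit under the respective operations, one deduces conservativity by a thick subcategory argument. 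For $\Sing^{C_2}$ one must check that $\Sing^{C_2}(K)\simeq 0$ forces $K \simeq 0$ in $\Sp^{C_2}$; by the isotropy separation / geometric fixed point recollement on $\Sp^{C_2}$ it suffices to see that $\Sing^{C_2}$ detects triviality of the underlying spectrum $\res^{C_2}K$ and of the geometric fixed points $\Phi^{C_2}K$. The underlying part follows from the commuting square with $\DK_C\dashv\Sing_C$ and the (to be verified) conservativity of $\Sing_C$ together with the fact that $\pi^{\ast}$ on $\SH$ is conservative after restricting to the relevant piece; the geometric fixed point part will require identifying $\Phi^{C_2}\DK^{C_2}$ with the real \'etale / real points realization $X \mapsto \Sing(X(k))$, so that $\Phi^{C_2}$ composed with $\Sing^{C_2}$ becomes a real-realization functor whose conservativity on the relevant subcategory is again checked on generators. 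This last identification---that the $C_2$-geometric fixed points of semialgebraic realization compute the semialgebraic realization of the $k$-points---is the technical heart, and is where the Delfs--Knebusch theory (fixed points of $X(C)$ under conjugation being $X(k)$, compatibly with triangulations) does the real work; I expect to cite or reprove a semialgebraic analogue of the classical statement for complex conjugation. Once (1)--(4) are in hand, \cite{mnn-descent}*{Theorem~5.29} immediately yields the claimed symmetric monoidal equivalences of $\SH(k)$- (resp. $\SH(C)$-) algebras.
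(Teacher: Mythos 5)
Your approach to items (1)--(3) essentially matches the paper's, though your justification for dualizability in (3) is backwards: you argue $E = \Sigma^{p,q}\Sigma^{\infty}_+X$ is dualizable because $\DK^{C_2}$ preserves duals and $\DK^{C_2}(E)$ is dualizable in $\Sp^{C_2}$, but a strong symmetric monoidal functor carries dualizables to dualizables, not the reverse. In fact these generators need not be individually dualizable; what you actually need is that $\SH(k)$ is generated by dualizable objects because $k$ has characteristic zero (the paper cites \cite{yang-zhao-levine}*{Proposition B.1}), and then the argument reducing the projection formula to dualizable $E$ via the $\underline{\Hom}$ manipulation goes through exactly as in \cite{e-kolderup}*{Lemma 5.1}.

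The genuine gap is item (4). Your proposed route via isotropy separation and identifying $\Phi^{C_2}\DK^{C_2}$ with a $k$-points realization is not what the paper does, and as written it is not a proof: you flag the key geometric-fixed-points identification as something you ``expect to cite or reprove,'' and your subsidiary claim that ``$\Sing_C$ of the sphere is the motivic sphere, whose realization is the sphere'' is false --- $\Sing_C$ is the \emph{right} adjoint of $\DK_C$, not the constant functor; $\Sing_C(\1) = \Sing_C\DK_C(\1)$ is precisely the descent algebra, not the unit. The paper's proof of conservativity is dramatically shorter and avoids all of this: it invokes Remark~\ref{rem:c}, where the Heller--Ormsby constant functor $c: \Sp^{C_2} \to \SH(k)$ satisfies $\DK^{C_2}\circ c \simeq \id$ by construction. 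Passing to right adjoints yields $u \circ \Sing^{C_2} \simeq \id$ (equation~\eqref{eq:sing}), which exhibits $\Sing^{C_2}$ as a functor with a retraction, hence faithful and in particular conservative. You should use that splitting rather than attempt the geometric-fixed-points route, which would require substantially more machinery to make rigorous.
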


\begin{proof} We give the proof for $\DK^{C_2} \dashv \Sing^{C_2}$ as the proof for $\DK_C \dashv \Sing_C$ is the same. We check: 
\begin{enumerate}
\item This follows from the unstable statement Lemma~\ref{lem:bec2-cpct} and the fact that $\DK^{C_2}$ commutes with the appropriate suspension functors.
\item Since the functor $\Sing^{C_2}$ is exact, we need only prove that $\Sing^{C_2}$ preserves filtered colimits. Let $K_{(-)}:I \rightarrow \Sp^{C_2}$ be a filtered diagram with colimit $K = \colim_i K_i$. We have a comparison map $$\colim_i \Sing^{C_2}(K_i) \rightarrow \Sing^{C_2}(K)$$ that we claim is an equivalence. It suffices to prove this after applying $[\Sigma^{p,q}X_+, -]$ where $X \in \Sm_k$ and $p, q\in \ZZ$. The result now follows from the fact that $\DK^{C_2}$ is strong symmetric monoidal, and that $X_+$ and $\DK^{C_2}(X_+)$ are compact objects in $\SH(k)$ and $\Sp^{C_2}$ respectively.
\item In this case, since $k$ is characteristic zero, $\SH(k)$ is generated by dualizable objects \cite{yang-zhao-levine}*{Proposition B.1}. The argument follows by \cite{e-kolderup}*{Lemma 5.1}.
\item This is a consequence of the faithfulness of the $\Sing^{C_2}$ functor thanks to~\eqref{eq:sing}.
\end{enumerate}
The consequence now follows from the monoidal Barr-Beck theorem \cite{mnn-descent}*{Theorem~5.29} (also see \cite{e-kolderup}*{Corollary 5.3}).
\end{proof}


\begin{remark} We also have a {\bf $k$-semialgebraic realization} functor, specified by extending the functor
\[ \DK_k: \Sm\QP_{k \bullet} \rightarrow \Spc_{\bullet}, \qquad X_+ \mapsto  \Sing(X(k))_+ \]
to $\DK_k: \SH(k) \to \Sp$ by the same methods as above. However, this realization functor is already present in the above picture. Namely, if we let $i^*: \Sp^{C_2} \to \Sp$ denote the functor of geometric $C_2$-fixed points, then by construction we have an equivalence $\DK_k \simeq i^* \DK^{C_2}$.
\end{remark}

\begin{remark} \label{rem:salgagrees} When $k = \RR$, we note that $C_2$-semialgebraic realization $\DK^{C_2}$ is equivalent to $C_2$-Betti realization $\Be^{C_2}$. To see this, we first compare the two as functors $\Sm\QP_{\RR} \to \Spc_{C_2}$. Recall that $\Be^{C_2}$ is defined by sending a smooth quasiprojective $\RR$-scheme $X$ to the $C_2$-space $X(\CC)^{\mathrm{an}}$, where we endow $X(\CC)$ with the analytic topology. Since an analytic space is locally homeomorphic to $\CC^n$ with the standard topology, the space $X(\CC)^{\mathrm{an}}$ is given by endowing $X(\CC) \cong R_{\CC/\RR}X_{\CC}(\RR)$ with the Euclidean or strong topology on the $\RR$-points of an $\RR$-variety, noting that $R_{\CC/\RR}\AA^n \simeq \AA^{2n}$. On the other hand, the semialgebraic topology on $X(\CC)$ is finer than the analytic topology, so if we denote the set $X(\CC)$ equipped with the semialgebraic topology as $X(\CC)^{\mathrm{salg}}$, then the identity map on sets defines a continuous map of spaces $\alpha^{\pre}_X: X(\CC)^{\mathrm{salg}} \rightarrow X(\CC)^{\mathrm{an}}$, which is clearly $C_2$-equivariant. Upon passing to $\Spc_{C_2}$, we obtain a natural transformation $\alpha: \DK^{C_2} \Rightarrow \Be^{C_2}$. From the existence of triangulations for semialgebraic spaces \cite{salg}*{Theorem 4.1} together with \cite{tenyears}*{\S10, Second Comparison Theorem}, we get that $\alpha^{\pre}_X$ and $(\alpha^{\pre}_X){}^{C_2}$ induce isomorphisms of homotopy groups, whence $\alpha^{\pre}_X$ is a weak homotopy equivalence of $C_2$-topological spaces. We conclude that $\alpha$ is an equivalence. It follows that all successive extensions of these functors are equivalent. Similarly, we have that $\DK_{\CC}$ is equivalent to complex Betti realization $\Be_{\CC}$ and $\DK_{\RR}$ is equivalent to real Betti realization $\Be_{\RR}$.
\end{remark}

\subsection{The real \'etale part} In motivic stable homotopy theory, we have a map $\rho: S^{0,0} \rightarrow S^{1,1}$ in $\SH(S)$ for any scheme $S$, defined as the stabilization of the map of schemes $S \rightarrow \GG_m$ that classifies the unit $-1$. We also have a map $\rho$ in $\Sp^{C_2}$ that is the Euler class for the $C_2$-sign representation $\sigma$, given by the stabilization of the unique $C_2$-equivariant inclusion of $C_2$-spaces $S^0 \rightarrow S^{\sigma}$. By the construction of $\DK^{C_2}$ and the identification of Lemma~\ref{lem:bec2}, we note:

\begin{lemma} \label{lem:rho} The $C_2$-semialgebraic realization functor $\DK^{C_2}: \SH(k) \to \Sp^{C_2}$ sends $\rho$ to $\rho$.
\end{lemma}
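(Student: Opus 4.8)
The plan is to trace through the definitions on both sides and check that the map $\rho$ of schemes $S \to \GG_m$ classifying $-1$ gets sent to the equivariant inclusion $S^0 \to S^\sigma$ under the unstable functor $\DK^{C_2}$, after which the stable statement follows by construction of the stable $\DK^{C_2}$. First I would recall that on the motivic side, $\rho \in \pi_{-1,-1}(S^{0})$ is the stabilization of the pointed map $(\Spec k)_+ \simeq S^{0,0} \to (\GG_m, 1) = S^{1,1}$ adjoint to the $k$-point $-1 \in \GG_m(k) = k^\times$; equivalently it is $\Sigma^\infty$ of the map of pointed motivic spaces picking out $-1$.

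Next I would apply the unstable $\DK^{C_2}: \H(k)_\bullet \to \Spc_{C_2\bullet}$ to this map. By the explicit formula~\eqref{eq:c2-formula}, $\DK^{C_2}$ sends a smooth quasiprojective $k$-scheme $X_+$ to the $C_2$-space presented (under Elmendorf) by $(\Sing(X(C))_+ \leftarrow \Sing(X(k))_+)$ with the conjugation action on $X(C)$. Applying this to $\GG_m = \Spec k[t,t^{-1}]$, we get the $C_2$-space with underlying space $\Sing(C^\times)$ and fixed points $\Sing(k^\times)$, and by Lemma~\ref{lem:bec2} we have $\DK^{C_2}((\GG_m, 1)) \simeq S^\sigma$. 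Under this identification, the basepoint $1$ goes to the non-basepoint of $S^\sigma$ at the "origin" and the $k$-point $-1$ is a point in the same path component — crucially, $-1 \in k^\times$ lies in the negative ray, which is the other path component of $k^\times \simeq \{\pm 1\}$ up to semialgebraic homotopy, and the two components of $(C^\times)^{C_2} = k^\times$ sit at the two fixed points $0, \infty$ of $S^\sigma$. Thus the pointed map $S^0 \to \GG_m$ selecting $-1$ realizes, after $\Sing$, to the pointed $C_2$-map $S^0 \to S^\sigma$ that hits the fixed point antipodal to the basepoint, which is exactly the standard equivariant inclusion $S^0 \hookrightarrow S^\sigma$ defining the Euler class $\rho$. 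Concretely I would use the semialgebraic model $\GG_m(k) = k^\times \subset k = \AA^1(k)$ with its two rays, and the semialgebraic homotopy equivalence $k^\times \simeq S^0$ (as a $C_2 = \{\pm 1\}$-fixed space it is just two points), compatibly with the presentation $C^\times \simeq S^1$ carrying the conjugation action to the reflection action, so $C^\times \simeq S(\sigma \oplus 1) \simeq S^\sigma \setminus\{0,\infty\}$ — this makes the one-point compactification statement transparent.

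Finally, since the stable $\DK^{C_2}: \SH(k) \to \Sp^{C_2}$ is constructed so as to commute with the respective suspension spectrum functors and to send $\TT \simeq \PP^1$ to the invertible object $\Sigma S^\sigma$ (Lemma~\ref{lem:bec2} again), stabilizing the above unstable identification gives $\DK^{C_2}(\rho) \simeq \rho$ as maps $\1 \to \1(1)[1] \mapsto S^\sigma$ — matching the convention that motivic $\rho$ lives in bidegree $(1,1)$ and realizes to the $\sigma$-Euler class. The only mild subtlety, and the step I expect to require the most care, is the bookkeeping of basepoints and path components: one must confirm that under the chosen identification $\DK^{C_2}(\GG_m,1) \simeq S^\sigma$ the basepoint $1$ and the distinguished point $-1$ land at the two \emph{fixed} points of $S^\sigma$ (not in the free part), so that the realized map is genuinely the inclusion $S^0 \to S^\sigma$ and not a nullhomotopic map; this is where the semialgebraic triangulation/homotopy-equivalence $k^\times \simeq S^0$ and its $C_2$-equivariant refinement $C^\times \simeq S(\RR\sigma \oplus \RR) $ do the real work. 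Everything else is formal naturality of $\Sigma^\infty$ and the construction of $\DK^{C_2}$.
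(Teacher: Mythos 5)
Your argument is correct and is exactly the expansion the paper has in mind: the paper simply cites the construction of $\DK^{C_2}$ together with Lemma~\ref{lem:bec2} and leaves the unwinding to the reader, which is precisely what you carry out. One small slip in your basepoint bookkeeping: under $\DK^{C_2}(\GG_m,1)\simeq S^\sigma$ the basepoint $1$ must go to the basepoint $\infty$ of $S^\sigma$ (not to the origin as written mid-paragraph), with $-1$ landing at the other fixed point $0$; your concluding sentence already states this correctly, so the proof stands.
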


Since $\Sper(k) = \pt$, we have that $\SH_{\ret}(k) \simeq \Sp(\widetilde{k_{\ret}}) = \Sp$ by Bachmann's theorem~\eqref{eq:x-ret}, which moreover identifies the localization endofunctor $i_{\ret} L_{\ret}$ on $\SH(k)$ with $\rho$-inversion (thereby embedding $\Sp$ as $\rho$-inverted objects). We wish to now identify $L_{\ret}$ and $i^* \DK^{C_2}$. To this end, we first make a simple observation.

\begin{lemma} \label{lem:SimpleAbstractNonsense} Suppose $\C$ is a stable presentable symmetric monoidal $\infty$-category and $F: \C \to \Sp$ is a colimit-preserving symmetric monoidal functor. Suppose $\rho: \1 \to E$ is a map in $\C$ such that we have an abstract equivalence $\gamma: \Sp \xrightarrow{\simeq} \C[\rho^{-1}]$ of symmetric monoidal $\infty$-categories. Then if $F(\rho)$ is an equivalence in $\Sp$, the functor $F$ descends to an equivalence $\overline{F}: \C[\rho^{-1}] \xrightarrow{\simeq} \Sp$ of symmetric monoidal $\infty$-categories. Therefore, if we let $R$ denote the right adjoint to $F$, the adjunction $F \dashv R$ is a smashing localization with the essential image of $R$ given by the $\rho$-inverted objects.
\end{lemma}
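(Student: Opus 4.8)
The plan is to run the standard ``universal property of inverting an object'' argument. First I would recall that since $\C$ is presentably symmetric monoidal and $\rho: \1 \to E$ is a map with $E \in \C$, the localization $\C \to \C[\rho^{-1}]$ (i.e., the Bousfield localization at the maps $\rho \otimes \id_X$, or equivalently the filtered colimit along $E^{\otimes n}$ in the $\rho$-tower) is the initial colimit-preserving symmetric monoidal functor out of $\C$ under which $\rho$ becomes invertible; this is Robalo's universal property (cf.\ the usage of \cite{robalo} and \cite{bachmann-hoyois}*{Lemma~4.1} elsewhere in the paper). Given the hypothesis that $F(\rho)$ is an equivalence in $\Sp$, this universal property produces a colimit-preserving symmetric monoidal functor $\overline{F}: \C[\rho^{-1}] \to \Sp$ together with a canonical equivalence $\overline{F} \circ L_\rho \simeq F$, where $L_\rho: \C \to \C[\rho^{-1}]$ is the localization.

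Next I would argue that $\overline{F}$ is an equivalence. Precompose with the given abstract symmetric monoidal equivalence $\gamma: \Sp \xrightarrow{\simeq} \C[\rho^{-1}]$ to obtain a colimit-preserving symmetric monoidal endofunctor $\overline{F} \circ \gamma: \Sp \to \Sp$. Any colimit-preserving functor $\Sp \to \Sp$ is given by smashing with its value on the sphere $\SS$, and a symmetric monoidal such functor must send $\SS$ to $\SS$; hence $\overline{F} \circ \gamma \simeq \id_{\Sp}$. Therefore $\overline{F}$ is an equivalence, with inverse $\gamma$.

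Finally, to conclude the statement about the adjunction $F \dashv R$: since $F \simeq \overline{F} \circ L_\rho$ with $\overline{F}$ an equivalence, the right adjoint $R$ is $\gamma$ composed with the right adjoint $i_\rho$ of $L_\rho$, which is the fully faithful inclusion of the $\rho$-local (equivalently, $\rho$-inverted, by stability and the fact that $\rho$-inversion is smashing) objects. Thus $R$ is fully faithful with essential image the $\rho$-inverted objects, and $F \dashv R$ is a smashing localization because $L_\rho$ is (the $\rho$-local objects form a $\otimes$-ideal, being detected by $E^{\otimes\infty}$-acyclicity). I do not anticipate a genuine obstacle here; the only point requiring a little care is pinning down that ``$\rho$-local'' coincides with ``$\rho$-inverted'' and that this localization is smashing — but in a stable presentable symmetric monoidal $\infty$-category this is the elementary observation that $L_\rho(-) \simeq (-) \otimes L_\rho(\1)$ and $L_\rho(\1) = \colim(\1 \xrightarrow{\rho} E \xrightarrow{\rho\otimes E} \cdots)$, so the argument is routine.
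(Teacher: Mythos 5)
Your proposal is correct and follows essentially the same route as the paper: descend $F$ via the universal property of $\rho$-inversion, deduce $\overline{F}\circ\gamma\simeq\id_{\Sp}$ from rigidity of colimit-preserving symmetric monoidal endofunctors of $\Sp$, conclude $\overline{F}$ is an equivalence, and then identify the localization $RF\simeq i_\rho L_\rho$ as smashing. The only cosmetic difference is that the paper cites \cite{thomas-yoneda}*{Corollary 6.9} for the contractibility of $\Fun^{L,\otimes}(\Sp,\Sp)$ where you argue directly that a colimit-preserving endofunctor is determined by its value on $\SS$, and it appeals to \cite{higheralgebra}*{Proposition~4.8.2.10} for the idempotent/smashing identification where you spell it out as the colimit of the $\rho$-tower; both are standard and interchangeable.
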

\begin{proof} By assumption and using the universal property of $\rho$-inversion, $F$ descends to a colimit-preserving symmetric monoidal functor $\overline{F}: \C[\rho^{-1}] \to \Sp$. Because the $\infty$-category of colimit-preserving, symmetric monoidal endofunctors of $\Sp$ is contractible (e.g., by \cite{thomas-yoneda}*{Corollary 6.9}), we see that $\overline{F}\circ \gamma$ and $\id$ are homotopic. By the two-out-of-three property of equivalences, we deduce that $\overline{F}$ is an equivalence. Therefore, the unit map $\eta_{\1}: \1 \to RF(\1)$ is homotopic to the idempotent object $\1 \to \1[\rho^{-1}]$ that determines the smashing localization $\C[\rho^{-1}] \subset \C$ (cf. \cite{higheralgebra}*{Proposition~4.8.2.10}), so the conclusion follows.
\end{proof}

\begin{proposition} \label{prop:ret-compare} The adjunction
\[ i^* \DK^{C_2}: \SH(k) \rightleftarrows \Sp: \Sing^{C_2} i_*  \]
is a smashing localization with the essential image of $\Sing^{C_2} i_*$ given by the $\rho$-inverted objects (or equivalently, the $\ret$-local objects).
\end{proposition}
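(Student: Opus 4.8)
The plan is to reduce the statement to Lemma~\ref{lem:SimpleAbstractNonsense}, applied with $\C = \SH(k)$, with $F = i^{\ast}\DK^{C_2}$, and with $\rho:\1 \to \1(1)[1]$ the motivic $\rho$. First I would check that $F$ meets the hypotheses on the functor side: $\DK^{C_2}\colon \SH(k)\to\Sp^{C_2}$ is colimit-preserving and symmetric monoidal by its construction in \S\ref{betti-c2}, while $i^{\ast} = \Phi^{C_2}\colon \Sp^{C_2}\to\Sp$ is colimit-preserving and strong symmetric monoidal as the recollement left adjoint onto the closed part of the monoidal recollement on $\Sp^{C_2} = \Sp^{C_2}(\Spc)$ (Remark~\ref{rem:PresentablySMC} with $\Xscr = \Spc$ carrying the trivial $C_2$-action, together with Example~\ref{ex:g-sp}). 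Its right adjoint is the fully faithful $i_{\ast}\colon \Sp\to\Sp^{C_2}$, so the right adjoint to $F$ is precisely $\Sing^{C_2}i_{\ast}$, consistent with the statement.

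Second, I would supply the abstract symmetric monoidal equivalence $\gamma\colon \Sp \xrightarrow{\simeq} \SH(k)[\rho^{-1}]$ demanded by the lemma. Since $k$ is real closed, $\Sper k$ is a point, hence $\widetilde{k_{\ret}} \simeq \Spc$ and so $\SH_{\ret}(k) \simeq \Sp(\widetilde{k_{\ret}}) \simeq \Sp$ by Theorem~\ref{thm:BachmannRealEtaleRigidity}; that same theorem identifies the localization endofunctor $i_{\ret}L_{\ret}$ on $\SH(k)$ with $\rho$-inversion, so $\SH(k)[\rho^{-1}] \simeq \SH_{\ret}(k)$ as symmetric monoidal $\infty$-categories, with essential image in $\SH(k)$ the class of $\ret$-local objects. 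Composing yields $\gamma$. The one genuinely computational input is then that $F(\rho)$ is an equivalence: by Lemma~\ref{lem:rho}, $\DK^{C_2}(\rho) \simeq \rho$ in $\Sp^{C_2}$, the Euler class of the sign representation $\sigma$, i.e.\ the stabilization of the inclusion $S^0 \hookrightarrow S^{\sigma}$; applying $\Phi^{C_2}$ and using $\Phi^{C_2}((C_2)_+) \simeq 0$, the cofiber sequence $(C_2)_+ \to S^0 \to S^{\sigma}$ shows $\Phi^{C_2}(S^{\sigma}) \simeq S^0$ with $\Phi^{C_2}(\rho)$ an equivalence; equivalently, geometric fixed points factor through $\Sp^{C_2}[\rho^{-1}] \xrightarrow{\simeq} \Sp$.

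With the three hypotheses in hand, Lemma~\ref{lem:SimpleAbstractNonsense} immediately gives that $F$ descends to a symmetric monoidal equivalence $\SH(k)[\rho^{-1}] \xrightarrow{\simeq} \Sp$ and that the adjunction $F \dashv \Sing^{C_2}i_{\ast}$ is a smashing localization with essential image the $\rho$-inverted objects; identifying these with the $\ret$-local objects via Bachmann's theorem as above finishes the argument. The proof is short once these ingredients are assembled, so there is no serious obstacle; the only points that warrant careful writing are the identification $\Phi^{C_2}(\rho) \simeq \id_{S^0}$ (that geometric $C_2$-fixed points invert $\rho$) and the observation that the equivalence $\Sp \simeq \SH(k)[\rho^{-1}]$ is "abstract" in the sense the lemma requires, i.e.\ we need not know a priori that it is compatible with $F$.
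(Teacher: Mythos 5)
Your argument is correct and follows essentially the same route as the paper's proof: reduce to Lemma~\ref{lem:SimpleAbstractNonsense} with $F = i^*\DK^{C_2}$, supply the abstract equivalence $\Sp \simeq \SH(k)[\rho^{-1}]$ from Bachmann's theorem~\eqref{eq:x-ret} using $\Sper k = \pt$, and check $F(\rho)$ is invertible via Lemma~\ref{lem:rho} together with the fact that geometric $C_2$-fixed points is the $\rho$-inverting localization. The paper simply cites Greenlees--May for that last equivariant input, where you spell out the cofiber-sequence argument; otherwise the two proofs coincide.
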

\begin{proof} We note that $\DK^{C_2}$ is symmetric monoidal by Theorem~\ref{thm:MonadicityofRealization} and carries the motivic $\rho$ to the $C_2$-equivariant $\rho$ by Lemma~\ref{lem:rho}, while $i^*$ is the functor of geometric $C_2$-fixed points and is thus symmetric monoidal and given by inverting the $C_2$-equivariant $\rho$ (see \cite{greenlees-may}*{Proposition 3.20} for a classical reference). By Lemma~\ref{lem:SimpleAbstractNonsense} and under Bachmann's theorem \eqref{eq:x-ret}, the conclusion follows.
\end{proof}

\subsection{The \'etale part} Since the Galois group of a real closed field is $C_2$, we have that $\widetilde{k_{\et}} \simeq \Spc^{BC_2}$. In particular, the $\infty$-topos of \'etale sheaves on $\Spec k$ is already hypercomplete. Bachmann's theorem \eqref{eq:x-et} thus furnishes equivalences
\[
\Sp(\widetilde{k_{\et}})\comp \simeq \SH_{\et}(k)\comp, \qquad \Sp(\widetilde{C_{\et}})\comp \simeq \SH_{\et}(C)\comp.
\]

\begin{proposition} \label{prop:et-compare} For any prime $p$, the following diagrams commute
\[
\begin{tikzcd}
\SH(k) \ar{r}{\DK^{C_2}} \ar[swap]{d}{(L_{\et})\comp} & \Sp^{C_2} \ar{d}{(j^*)\comp}\\
\SH_{\et}(k)\comp \ar{r}{\simeq} & (\Sp^{BC_2})\comp
\end{tikzcd}, \qquad
\begin{tikzcd}
\SH(C) \ar{r}{\DK_C} \ar[swap]{d}{(L_{\et})\comp} & \Sp \ar{d}{(-)\comp}\\
\SH_{\et}(C)\comp \ar{r}{\simeq} & \Sp\comp.
\end{tikzcd}
\]
\end{proposition}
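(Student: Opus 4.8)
The plan is to reduce both squares to a computation on suspension spectra of smooth quasi-projective schemes, and then to feed in an (equivariant) comparison theorem of Artin type. In either square both composites are colimit-preserving and symmetric monoidal: the semialgebraic realizations $\DK^{C_2},\DK_C$ are so by construction (\S\ref{betti-c2}); $p$-completion is symmetric monoidal and colimit-preserving \cite{mnn-descent}; the \'etale localization $L_{\et}$, being a Bousfield localization of a presentably symmetric monoidal $\infty$-category, is symmetric monoidal and colimit-preserving; the recollement left adjoint $j^*\colon\Sp^{C_2}\to\Sp^{BC_2}$ is strong symmetric monoidal and colimit-preserving (Remark~\ref{rem:PresentablySMC}); and Bachmann's equivalences \eqref{eq:x-et} are symmetric monoidal. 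By the universal property of $\SH(k)$ (resp.\ $\SH(C)$) through which $\DK^{C_2}$ and $\DK_C$ were built --- Robalo's theorem \cite{robalo}, \cite{bachmann-hoyois}*{Lemma~4.1} together with the universal properties of $\Pre_{\Sigma}$, Day convolution, and Nisnevich/$\AA^1$-localization --- it suffices to exhibit the commutativity after restriction along $\Sm\QP_{k+}\to\SH(k)$ (resp.\ $\Sm\QP_{C+}\to\SH(C)$), naturally and compatibly with products.

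For the right-hand square, fix $X\in\Sm\QP_C$. The lower-right composite sends $X_+$ to $\DK_C(X_+)\comp\simeq(\Sigma^{\infty}_+\Sing(X(C)))\comp$ by \eqref{eq:c2-formula}, while the composite $\SH(C)\xrightarrow{(L_{\et})\comp}\SH_{\et}(C)\comp\simeq\Sp(\widetilde{C_{\et}})\comp\simeq\Sp\comp$, where the first equivalence is \eqref{eq:x-et}, sends $\Sigma^{\infty}_+X$ to the $p$-completed suspension spectrum of the \'etale homotopy type of $X$. The Artin comparison theorem (\cite{sga4-3}, in semialgebraic form over an arbitrary real closed field after \cite{scheiderer}*{Chapter~15}, using invariance of \'etale cohomology under extension of algebraically closed fields and of semialgebraic homotopy type under extension of real closed fields, cf.\ \cite{salg}, \cite{delfs-homotopy} and Remark~\ref{rem:salgagrees}) identifies this canonically with $(\Sigma^{\infty}_+\Sing(X(C)))\comp$, naturally in $X$; since all the comparisons in sight respect products of schemes and smash products, the identification is monoidal, and the right square follows.

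For the left-hand square, fix $X\in\Sm\QP_k$. By \eqref{eq:c2-formula}, the identification of $j^*\colon\Spc_{C_2}\to\Spc^{BC_2}$ with the underlying-space functor carrying its residual $C_2$-action (Lemma~\ref{lem:j-open}, Example~\ref{ex:g-sp}), and the fact that $j^*$ commutes with $\Sigma^{\infty}_+$ (Remark~\ref{rem:SuspensionMonoidal}), the upper composite sends $X_+$ to $(\Sigma^{\infty}_+\Sing(X(C)))\comp$ with $X(C)$ carrying the complex conjugation $C_2$-action. For the lower composite, recall that $\widetilde{k_{\et}}\simeq\Spc^{BC_2}$ with $C_2=\Gal(C/k)$, that $\widetilde{C_{\et}}\simeq\Spc$, and that the equivalences \eqref{eq:x-et} are natural in the base, hence compatible with pullback along $\pi\colon\Spec C\to\Spec k$ (which realizes $\Spc\to\Spc^{BC_2}$ as the functor forgetting the Galois action). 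Combining this with \'etale (Galois) descent and the right-hand square just proved, the image of $\Sigma^{\infty}_+X$ under $\SH(k)\xrightarrow{(L_{\et})\comp}\SH_{\et}(k)\comp\simeq(\Sp^{BC_2})\comp$ has underlying spectrum $(\Sigma^{\infty}_+\Sing(X(C)))\comp$ with residual $C_2$-action induced by the Galois action on $X_C=X\times_k\Spec C$. Since the Artin comparison of the previous step is natural for \emph{all} morphisms of $C$-schemes, it is in particular $\Gal(C/k)$-equivariant, and the Galois action on $X_C$ induces exactly complex conjugation on $X(C)$; hence the two $C_2$-actions agree, and the left square commutes on generators, monoidally. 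Feeding this back through the universal properties --- now inside the $\infty$-category of $C_2$-objects in colimit-preserving symmetric monoidal functors $\SH(k)\to\Sp\comp$, using $(\Sp^{BC_2})\comp\simeq\Fun(BC_2,\Sp\comp)$ --- completes the proof. The main obstacle is making the $C_2$-equivariant Artin comparison precise: one needs the comparison between the $p$-completed \'etale homotopy type of $X_C$ and the $p$-completed semialgebraic homotopy type $\Sing(X(C))$ to be natural enough to record the $\Gal(C/k)$-action --- which it is, being functorial in all $C$-schemes --- and then must transport this action through the descent identifications ($\widetilde{k_{\et}}\simeq\Spc^{BC_2}$, $(\Sp^{BC_2})\comp\simeq\Fun(BC_2,\Sp\comp)$, and the attendant $C_2$-fixed-point presentations of $\SH_{\et}(k)\comp$ and of $j^*\DK^{C_2}$), so as to obtain an equivalence of functors rather than merely of their values on objects.
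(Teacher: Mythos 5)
Your broad strategy — reduce via the universal property of $\SH$ to generators, treat the $C$-square first, then reduce the $k$-square to it by forgetting the $C_2$-action — is the same as the paper's, but your execution departs at two points, and each departure opens a real gap that the paper's argument is designed to avoid.

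First, the paper never tries to identify objects on both sides and then retrofit a $C_2$-action. It begins by constructing an explicit strong monoidal natural transformation $\zeta_k:\gamma^* y\Rightarrow j^*\DK^{C_2}$ on $\Sm_{k+}$, given on $X_+$ by the canonical $C_2$-equivariant continuous map $X(C)^{\delta}_+\to X(C)_+$ that forgets the (semialgebraic) topology. Because $\zeta_k$ is a map of functors from the start, the $\Gal(C/k)$-equivariance and coherence questions you flag in your last paragraph never arise: the transformation is manifestly a map in $\Spc^{BC_2}_{\bullet}$ (and extends through $\Pre_\Sigma$, Day convolution, and Robalo inversion by universal properties), and the only thing left to check is that it is an equivalence after $p$-completion. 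This is exactly the obstacle you name as ``the main obstacle'' and leave unresolved — you produce an object-level identification and then argue informally that the Artin comparison ``is natural enough'' to record the Galois action and ``transport through the descent identifications.'' That last step is not a formality; it is the entire content, and the paper sidesteps it by constructing $\zeta$ first. In the same spirit, the paper's reduction from $\zeta_k$ to $\zeta_C$ is not ``Galois descent of an object-level identification,'' but the cleaner observation that since $\zeta_k$ lands in Borel spectra, being an equivalence may be checked after applying the conservative forgetful functor $\pi^*$, under which $\zeta_k$ literally becomes $\zeta_C\circ\pi^*$ by base-change compatibility.

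Second, your identification of the lower composite in the $C$-square with ``the $p$-completed suspension spectrum of the \'etale homotopy type of $X$,'' followed by a homotopy-type Artin comparison, is not what the paper does and is not obviously available at the level of precision you need. The rigidity equivalence $\SH_{\et}(C)\comp\simeq\Sp\comp$ is an equivalence of $\infty$-categories built out of a change-of-site functor $\gamma^*$; it does not come packaged with a ready-made comparison to the Artin–Mazur \'etale homotopy type, and passing from an abstract homotopy-type statement to an equivalence of $p$-complete spectra requires exactly the cohomological control you elide. The paper instead observes that since everything in sight is colimit-preserving and symmetric monoidal after $p$-completion, $\zeta_C$ is an equivalence iff it induces an isomorphism on $\FF_p$-cohomology of suspension spectra of quasiprojective $X$, then carefully identifies $H\FF_p$ under rigidity with the \'etale cohomology spectrum (citing the compatibility of the rigidity equivalences of \cite{bachmann-et} and \cite{etalemotives}), so that the right-hand side computes $H^*_{\et}(X,\ZZ/p)$; the left-hand side computes semialgebraic cohomology; and Huber's \'etale–semialgebraic comparison theorem \cite{scheiderer}*{Theorem~15.2.1} finishes. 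If you want to pursue the homotopy-type route you would have to reprove these compatibilities, and you would still need the cohomological comparison as the engine, so nothing is gained.

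In short: right skeleton, but the two steps you wave at — equivariant naturality and the spectrum-level Artin comparison — are precisely where the paper invests its care, by (a) building the natural transformation $\zeta$ from the forget-the-topology map so equivariance is free, and (b) reducing to a mod-$p$ cohomology isomorphism and invoking Huber's theorem.
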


\begin{proof} Note that the equivalences $\widetilde{k_{\et}} \simeq \Spc^{BC_2}$ and $\widetilde{C_{\et}} \simeq \Spc$ are implemented by sending an \'etale sheaf $\Fscr$ over $k$, resp. $C$ to the $C_2$-space $\Fscr(C)$, resp. space $\Fscr(C)$; we implicitly make these identifications for the rest of the proof. Let $\gamma^*$ denote the change of site functor. We first define  lax commutative squares
\[
\begin{tikzcd}
\Sm_{k+} \ar[hookrightarrow]{d}{y} \ar{r}{\DK^{C_2}} \ar[phantom]{rd}{\NEarrow}  & \Spc_{\bullet}^{C_2} \ar{d}{j^*}\\
\Shv_{\et}(\Sm_{k})_{\bullet} \ar{r}{\gamma^*}  & \Spc_{\bullet}^{BC_2}
\end{tikzcd}, \qquad
\begin{tikzcd}
\Sm_{C+} \ar[hookrightarrow]{d}{y} \ar{r}{\DK_C} \ar[phantom]{rd}{\NEarrow}  & \Spc_{\bullet} \ar{d}{=}\\
\Shv_{\et}(\Sm_{C})_{\bullet} \ar{r}{\gamma^*}  & \Spc_{\bullet}
\end{tikzcd} 
\]
as specified by natural transformations
\[ \zeta_k: \gamma^*y \Rightarrow j^*\DK^{C_2}, \qquad \zeta_C: \gamma^* y \Rightarrow \DK_C. \]
For $\zeta_k$, we take the transformation that on pointed $k$-varieties $X_+$ evaluates to the canonical map $X(C)^{\delta}_+ \rightarrow X(C)_+$ of topological spaces that forgets to the identity map on sets (where the decoration $(-)^{\delta}$ indicates that we take the discrete topology), and $\zeta_C$ is defined similarly.

We further note that $\zeta_{k}$ and $\zeta_C$ are strong monoidal transformations between strong monoidal functors. By the universal properties of the functors involved, we thereby obtain lax commutative squares
\[
\begin{tikzcd}[column sep=8ex]
\SH(k) \ar{d}{L_{\et}} \ar{r}{\DK^{C_2}}  \ar[phantom]{rd}{\zeta_k \NEarrow} & \Sp^{C_2} \ar{d}{j^*}\\
\SH_{\et}(k) \ar{r}{\gamma^*}& \Sp^{BC_2}
\end{tikzcd}, \qquad
\begin{tikzcd}[column sep=8ex]
\SH(C) \ar{d}{L_{\et}} \ar{r}{\DK_C}  \ar[phantom]{rd}{\zeta_C \NEarrow} & \Sp \ar{d}{=}\\
\SH_{\et}(k) \ar{r}{\gamma^*}& \Sp.
\end{tikzcd}
\]

Our goal is to prove that the homotopies of these lax squares are $p$-adic equivalences. For $\zeta_k$, since its codomain lies in Borel $C_2$-spectra, it suffices to check that the map is an equivalence after forgetting the $C_2$-action. But using the compatibility of both sides with base change, we note that the underlying map of $\zeta_k$ is equivalent to 
\[  \zeta_C \circ \pi^*: \gamma^* L_{\et} \pi^* \Rightarrow \DK_{C} \pi^*,  \]
where $\pi^*: \SH(k) \to \SH(C)$ is base change along the Galois cover. It thus suffices to show that $\zeta_C$ is a $p$-adic equivalence.

Since all functors in sight preserve colimits and are strong symmetric monoidal after $p$-completion,\footnote{The functor $\gamma^*$ is only lax symmetric monoidal, but becomes an strongly so after $p$-completion since it induces an equivalence of symmetric monoidal $\infty$-categories.} it suffices to show that for quasiprojective $X \in \Sm_{C}$ and all $n \geq 0$, the induced map
\[
H^n(X(C); \FF_p) \rightarrow H^n(\gamma^*L_{\et}X; \FF_p) 
\]
is an isomorphism.

Under the rigidity equivalence $\SH_{\et}(C)\comp \simeq \Sp(\widetilde{C_{\et}})\comp = \Sp\comp$, the spectrum $H\FF_p \in \Sp\comp$ coincides with the spectrum representing \'etale cohomology (with coefficients in $\mu_p \simeq \FF_p$) in $\SH_{\et}(C)\comp$, essentially because of \cite{etalemotives}*{Proposition 3.2.3} and the evident compatibility of the rigidity equivalences in \cite{bachmann-et} and \cite{etalemotives}. Hence, for the right hand side, we have an isomorphism
\[
H^*(\gamma^*L_{\et}X; \FF_p) \cong H^*_{\et}(X, \ZZ/p),
\]
noting that all Tate twists disappear since we are working over an algebraically closed field. 

On the other hand, $H^n(X(C); \FF_p)$ computes the semialgebraic cohomology $H^n_{\mathrm{sa}}(X(C); \FF_p)$ for the constant sheaf $\FF_p$, as this is just sheaf cohomology by the discussion in \cite{scheiderer}*{15.2}. The claim now follows from the \'etale-semialgebraic comparison theorem of Roland Huber \cite{scheiderer}*{Theorem~15.2.1}.
\end{proof}

\begin{remark} If $k = \RR$ and $C = \CC$, then the conclusion of Proposition~\ref{prop:et-compare} holds (with the same proof) if one instead considers $C_2$ and complex Betti realization and uses the Artin comparison theorem.
\end{remark}

Passing to the right adjoints of the functors in Proposition~\ref{prop:et-compare}, we deduce:

\begin{corollary} \label{cor:iet} For any prime $p$, the following diagrams commute
\begin{equation} \label{eq:i-et}
\begin{tikzcd}
\SH(k)\comp   & (\Sp^{C_2})\comp \ar[swap]{l}{\Sing^{C_2}}\\
\SH_{\et}(k)\comp \ar[hookrightarrow]{u}{i_{\et}} & (\Sp^{BC_2})\comp \ar[hookrightarrow]{u}[swap]{j_*} \ar{l}{\simeq}
\end{tikzcd}, \qquad
\begin{tikzcd}
\SH(C)\comp   & \Sp\comp \ar[swap]{l}{\Sing_C}\\
\SH_{\et}(C)\comp \ar[hookrightarrow]{u}{i_{\et}} & \Sp\comp \ar{u}[swap]{=} \ar{l}{\simeq}.
\end{tikzcd}
\end{equation}
In particular, the functors $\Sing^{C_2} j_*$ and $\Sing_C$ are fully faithful after $p$-completion, with essential image given by the $p$-complete \'etale-local objects.
\end{corollary}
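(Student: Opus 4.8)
The plan is to obtain both squares of~\eqref{eq:i-et} purely formally, by passing to right adjoints in the commuting squares of Proposition~\ref{prop:et-compare}, and then to read off the fully-faithfulness and the identification of essential images. First I would record that every left-adjoint functor appearing in Proposition~\ref{prop:et-compare} --- namely $\DK^{C_2}$, $\DK_C$, the \'etale localizations $L_{\et}$, and the recollement functor $j^{\ast}$ --- is colimit-preserving and strong symmetric monoidal (for $\DK^{C_2}, \DK_C$ by Theorem~\ref{thm:MonadicityofRealization}, for $L_{\et}$ since it is a Bousfield localization of a symmetric monoidal category, and for $j^{\ast}$ by Construction~\ref{con:MonoidalStructureGenuineStabilization}). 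Hence each carries $\1/p$ to $\1/p$ and so preserves $\1/p$-equivalences, and therefore its right adjoint $\Sing^{C_2}$, $\Sing_C$, $i_{\et}$, resp.\ $j_{\ast}$ preserves $p$-complete objects (cf.\ \cite{mnn-descent}). This is exactly what is needed for these right adjoints to restrict to functors between the $p$-complete $\infty$-categories as drawn in~\eqref{eq:i-et}.

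With this in hand, I would restrict the squares of Proposition~\ref{prop:et-compare} along $i_p$ to the $p$-complete source (the targets already being $p$-complete, and the top-left corner $\SH(k)$, resp.\ $\SH(C)$, being replaced by its $p$-completion, with $\DK^{C_2}$ replaced by $L_p \circ \DK^{C_2} \circ i_p$, etc.); since the realization and localization functors are colimit-preserving and send $\1/p$ to $\1/p$, the restricted squares still commute. Now all four functors in each restricted square are colimit-preserving functors between presentable stable $\infty$-categories and hence admit right adjoints, so passing to right adjoints (via the mate correspondence, which turns a natural equivalence of left-adjoint composites into a natural equivalence of the corresponding right-adjoint composites) produces exactly the commuting squares of~\eqref{eq:i-et}, once one identifies the right adjoint of $L_p \circ L_{\et}$ with $i_{\et}$, that of $L_p \circ j^{\ast}$ with $j_{\ast}$, that of $\DK^{C_2}$ (resp.\ $\DK_C$) with $\Sing^{C_2}$ (resp.\ $\Sing_C$), and that of an equivalence with its inverse.

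For the final clause, commutativity of the first square gives a natural equivalence $\Sing^{C_2} j_{\ast} \simeq i_{\et} \circ (\textrm{rigidity equivalence})$ of functors $(\Sp^{BC_2})\comp \to \SH(k)\comp$, and the second square gives $\Sing_C \simeq i_{\et} \circ (\textrm{rigidity equivalence})$ over $C$. Since $i_{\et}\colon \SH_{\et}(k)\comp \hookrightarrow \SH(k)\comp$ is the inclusion of a Bousfield localization --- hence fully faithful, exact, and with essential image precisely the $p$-complete $\et$-local motivic spectra (using that $i_{\et}$ is exact and limit-preserving, so it reflects and preserves $p$-completeness) --- and since precomposition with an equivalence alters neither full faithfulness nor the essential image, the stated conclusions follow, the $C$-case being identical.

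The argument is entirely formal once Proposition~\ref{prop:et-compare} is in place, so I do not expect a genuine obstacle; the only point requiring care is the interaction of $p$-completion with the two Bousfield localizations in play (\'etale-localization and $p$-completion), i.e.\ verifying that all the relevant right adjoints restrict to the $p$-complete subcategories, which reduces to the colimit-preservation and monoidality of the realization functors noted at the outset. All the substantive work has already been carried out in Proposition~\ref{prop:et-compare}, and upstream of it in Bachmann's rigidity theorem~\eqref{eq:x-et} and Huber's \'etale--semialgebraic comparison theorem.
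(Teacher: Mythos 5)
Your proposal is correct and takes exactly the paper's approach: the paper's proof is the single sentence "Passing to the right adjoints of the functors in Proposition~\ref{prop:et-compare}, we deduce:", and the subsequent Remark~\ref{rem:expand} supplies precisely the bookkeeping about $p$-completion that you work out explicitly (that the monoidal colimit-preserving left adjoints preserve $\1/p$ and hence $p$-equivalences, so their right adjoints preserve $p$-complete objects and the mate square restricts as drawn). Your identification of the essential image, by factoring $\Sing^{C_2} j_*$ through $i_{\et}$ and the rigidity equivalence, is also what the paper intends.
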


\begin{remark} \label{rem:expand} More precisely, we have the following commutative diagram displaying the interaction with $p$-completion
\begin{equation} \label{eq:i-et-expand}
\begin{tikzcd}
\SH(k)   & \Sp^{C_2} \ar[swap]{l}{\Sing^{C_2}}\\
\SH_{\et}(k) \ar{u}{i_{\et}} & \Sp^{BC_2}  \ar{u}{j_*} \\
\SH_{\et}(k)\comp \ar{u}{i_p} & (\Sp^{BC_2})\comp \ar[swap]{u}{i_p} \ar{l}{\simeq}.
\end{tikzcd}
\end{equation}
In other words, on $p$-complete objects the functors $\Sing^{C_2}j_*$ and $i_{\et}$ coincides.
\end{remark}

\subsection{The $C_2$-Tate construction in algebro-geometric terms}

The goal for the rest of this section is to identify the $C_2$-Tate construction in terms of motivic homotopy theory, and thereby identify Scheiderer motives over $k$ with genuine $C_2$-spectra (all after $p$-completion).

\begin{theorem} \label{thm:mot-v-tate} Under the equivalences $(\Sp^{BC_2})\comp \stackrel{\simeq}{\rightarrow} \SH_{\et}(k)\comp$ and $\Sp\comp \stackrel{\simeq}{\rightarrow} \SH_{\ret}(k)\comp$, we have a canonical equivalence of lax symmetric monoidal functors
\[
(-)^{tC_2} \simeq L_{\ret}i_{\et}:  \SH_{\et}(k)\comp \rightarrow \SH_{\ret}(k)\comp.
\]
\end{theorem}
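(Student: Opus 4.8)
The plan is to assemble this from the three comparison results already proved in this section, together with the recollement decomposition of genuine $C_2$-spectra. Recall that $\Sp^{C_2}$ is the right-lax limit of the $C_2$-Tate functor $(-)^{tC_2}: \Sp^{BC_2} \to \Sp$ (Theorem~\ref{prop:recoll-stab} applied to $\Xscr = \Spc$ with trivial $C_2$-action, using that $\Spc_{\hh C_2} \simeq \Spc^{hC_2} \simeq \Spc^{BC_2}$ and $\Spc^{\hh C_2} \simeq \Spc$), and its gluing functor $i^{\ast} j_{\ast}$ is exactly $(-)^{tC_2}$. The key point is that \emph{both} $j^{\ast}$ and $i^{\ast}$ have already been identified motivically after $p$-completion: Corollary~\ref{cor:iet} identifies $j_{\ast}: (\Sp^{BC_2})\comp \hookrightarrow \SH(k)\comp$ with $i_{\et}$ (the inclusion of the $p$-complete \'etale-local objects), and Proposition~\ref{prop:ret-compare} (via $i^{\ast}\DK^{C_2} \dashv \Sing^{C_2} i_{\ast}$) identifies $i_{\ast}: \Sp \hookrightarrow \SH(k)$ with the inclusion of $\rho$-inverted, i.e.\ $\ret$-local, objects, so that $i_{\ast}$ corresponds to $i_{\ret}$. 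So the gluing functor $i^{\ast}j_{\ast}$, transported across these equivalences, becomes the functor $\SH_{\et}(k)\comp \to \SH_{\ret}(k)\comp$ obtained as: include $p$-complete \'etale-local objects into $\SH(k)\comp$, then apply the localization onto the $\ret$-local part, i.e.\ $L_{\ret} i_{\et}$.

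The steps I would carry out, in order: (1) Set up the recollement $(\Sp^{BC_2}, \Sp)$ on $\Sp^{C_2}$ from Theorem~\ref{prop:recoll-stab}, recording that the gluing functor $i^{\ast}j_{\ast} \simeq (-)^{tC_2}$ and that $\mu: (-)^{hC_2} \to (-)^{tC_2}$ is the universal lax symmetric monoidal transformation annihilating induced objects (Construction~\ref{con:MonoidalStructureGenuineStabilization}, Proposition~\ref{prop:ThetaTateMonoidalUniversalProperty}). (2) On the motivic side, observe that $(\SH_{\et}(k)\comp, \SH_{\ret}(k)\comp)$ form a recollement of $\SH_b(k)\comp$ — this is the hypercomplete $b$-topology recollement coming from Example~\ref{ex:et-to-ret} together with Theorem~\ref{prop:recoll-stab}/Theorem~\ref{cor:sch-1}, whose gluing functor is $L_{\ret}i_{\et}$ by the identification $\theta_k \simeq L_{\ret}i_{\et}$. (3) Use Corollary~\ref{cor:iet} to identify the open parts and the functors $j_{\ast}$ vs.\ $i_{\et}$ after $p$-completion, and Proposition~\ref{prop:ret-compare} together with Bachmann's theorem~\eqref{eq:x-ret} to identify the closed parts and $i_{\ast}$ vs.\ $i_{\ret}$. (4) Conclude that $\DK^{C_2}$ (equivalently $\Sing^{C_2}$) carries the $C_2$-recollement on $(\Sp^{C_2})\comp$ to the $b$-recollement on $\SH_b(k)\comp$ as a strict monoidal morphism of recollements — this uses that $\DK^{C_2}$ is strong symmetric monoidal (Theorem~\ref{thm:MonadicityofRealization}) and compatible with both $i^{\ast}$ (geometric fixed points $\leftrightarrow$ $\rho$-inversion) and $j^{\ast}$ (underlying Borel $\leftrightarrow$ $L_{\et}$). (5) Since a strict morphism of recollements intertwines the gluing functors, the gluing functor $(-)^{tC_2}$ is carried to $L_{\ret}i_{\et}$, and the lax symmetric monoidal structure is preserved because $\DK^{C_2}$ is symmetric monoidal and $\mu$ is the universal such transformation (Proposition~\ref{prop:ThetaTateMonoidalUniversalProperty} on the equivariant side, and the analogous universal property of $L_{\ret}$-localization on the motivic side, cf.\ Lemma~\ref{lem:SimpleAbstractNonsense} and the fact that $L_{\et}i_{\ret}$ is constant at the terminal object so the motivic gluing functor also annihilates the analogue of induced objects).

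Concretely, the cleanest route is probably to unwind the definition $(-)^{tC_2} = i^{\ast}j_{\ast}$ and compute directly: for $E \in \SH_{\et}(k)\comp \simeq (\Sp^{BC_2})\comp$, we have $j_{\ast}E \in (\Sp^{C_2})\comp$, which under $\Sing^{C_2}$ corresponds to $i_{\et}E \in \SH(k)\comp$ (Corollary~\ref{cor:iet}, using $\Sing^{C_2}j_{\ast} \simeq i_{\et}$ on $p$-complete objects). Then $i^{\ast}j_{\ast}E$ is the image of $j_{\ast}E$ under geometric $C_2$-fixed points, and under Proposition~\ref{prop:ret-compare} this is $L_{\ret}$ applied to $i_{\et}E$ viewed in $\SH(k)\comp$; since $\SH_{\ret}(k)\comp \simeq \Sp\comp$ identifies $L_{\ret}$ with $\rho$-inversion, we obtain $L_{\ret}i_{\et}E$. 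I expect the main obstacle to be the bookkeeping of lax symmetric monoidal structures: one must check that the monoidal comparison data on $(-)^{tC_2}$ (built via the Verdier-quotient universal property in Construction~\ref{con:MonoidalStructureGenuineStabilization}) matches the monoidal structure on $L_{\ret}i_{\et}$ (which is lax symmetric monoidal since $i_{\et}$ is lax monoidal and $L_{\ret}$ is strong monoidal), rather than merely matching the underlying functors. The right tool is the universal property in Proposition~\ref{prop:ThetaTateMonoidalUniversalProperty}: $L_{\ret}i_{\et}$, precomposed with the appropriate map from $(-)^{hC_2}$, also annihilates induced objects after $p$-completion (because $L_{\ret}$ of an induced \'etale sheaf vanishes — this is the asymmetry $L_{\et}i_{\ret} \simeq \ast$ recalled in Example~\ref{ex:et-to-ret}), and both are receiving a universal lax monoidal transformation, so they must agree as lax symmetric monoidal functors. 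Everything else is formal once the recollements and the two rigidity-type identifications are lined up.
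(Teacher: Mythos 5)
Your overall strategy matches the paper's — combine Corollary~\ref{cor:iet} ($\Sing^{C_2}j_* \simeq i_{\et}$ on $p$-complete objects) with Proposition~\ref{prop:ret-compare} ($i^*\DK^{C_2} \simeq L_{\ret}$) to rewrite the gluing functor — but there is a genuine gap at the crucial step, and it is not the lax-monoidal bookkeeping you flag as the main obstacle. In your ``cleanest route,'' having established $\Sing^{C_2}j_*E \simeq i_{\et}E$, you assert that $i^*j_*E$ ``under Proposition~\ref{prop:ret-compare} is $L_{\ret}$ applied to $i_{\et}E$.'' That does not follow: Proposition~\ref{prop:ret-compare} identifies $i^*\DK^{C_2}$ with $L_{\ret}$, but you are applying $i^*$ to $j_*E$ itself, not to $\DK^{C_2}$ of anything. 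Since $\DK^{C_2}$ is not fully faithful, the counit $\DK^{C_2}\Sing^{C_2}j_*E \to j_*E$ is not an equivalence, so $j_*E$ and $\DK^{C_2}i_{\et}E$ need not coincide. The missing ingredient is Lemma~\ref{lem:glue}, which asserts exactly that this counit \emph{becomes} an equivalence after applying $i^*$ — proved via the projection formula of Theorem~\ref{thm:MonadicityofRealization}, the identification $\widetilde{EC_2} \simeq \DK^{C_2}(\1[\rho^{-1}])$ from Lemma~\ref{lem:rho}, and the vanishing statement of Lemma~\ref{lem:zero}. That is the real crux, and without it the chain does not close.

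A secondary point: several of your citations would be circular. You invoke Theorem~\ref{cor:sch-1} and the identification of $\SH_b(k)\comp$ with $(\Sp^{C_2})\comp$ (Theorem~\ref{thm:sing-ff}), but both are proved in the paper \emph{using} Theorem~\ref{thm:mot-v-tate}. Relatedly, asserting in step (4) that $\DK^{C_2}$ induces a \emph{strict} monoidal morphism of recollements is not an intermediate step but a restatement of the conclusion: strictness means commutation with the gluing functors, and verifying it is exactly what Lemma~\ref{lem:glue} accomplishes. With Lemma~\ref{lem:glue} supplied, the paper's proof is the three-step chain
\[
L_{\ret}i_{\et}i_p \simeq L_{\ret}\Sing^{C_2}j_*i_p \simeq i^*\DK^{C_2}\Sing^{C_2}j_*i_p \simeq i^*j_*i_p,
\]
each link already an equivalence of lax symmetric monoidal functors; Proposition~\ref{prop:ThetaTateMonoidalUniversalProperty} is reserved for the general-base comparison in Construction~\ref{construct:compare} and is not needed in the field case.
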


To prove Theorem~\ref{thm:mot-v-tate}, we begin with some preliminary identifications that exactly parallel \cite{behrens-shah}*{Lemma~8.19-20} which was done in the context of Betti realization. For the convenience of the reader, we transcribe those proofs into our setting. Recall that the functor $$j^*i_*: \Sp \rightarrow \Sp^{C_2} \rightarrow \Sp^{BC_2}$$ is nullhomotopic. This remains true after inserting $\DK^{C_2}\Sing^{C_2}$ in the middle.

\begin{lemma} \label{lem:zero} Let $k$ be a real closed field. The composite of functors
\[
\Sp \xrightarrow{i_*} \Sp^{C_2} \xrightarrow{\Sing^{C_2}} \SH(k) \xrightarrow{\DK^{C_2}} \Sp^{C_2} \xrightarrow{j^*} \Sp^{BC_2},
\]
is nullhomotopic.
\end{lemma}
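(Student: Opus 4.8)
The plan is to exhibit the composite as factoring through a functor known to be zero. The key observation is that $j^{\ast} i_{\ast} \simeq 0$ as functors $\Sp \to \Sp^{BC_2}$ (this is the standard vanishing in the genuine $C_2$-spectra recollement, or equivalently the statement that geometric fixed points kill ``free'' objects). So it suffices to reorganize the given composite to bring an instance of $j^{\ast} i_{\ast}$ to the front (or back). First I would record the factorization $\Sing^{C_2} i_{\ast} \simeq i_{\et}$ after $p$-completion supplied by Corollary~\ref{cor:iet}; however, since we are working integrally here, a cleaner route is to instead use the recollement structure on $\Sp^{C_2}$ directly. Concretely, write $\DK^{C_2} \Sing^{C_2}$ as the counit-comonad $T$ of the adjunction $\DK^{C_2} \dashv \Sing^{C_2}$, and use that $\DK^{C_2}$ (being a left adjoint to a $\rho$-compatible functor, cf.\ Lemma~\ref{lem:rho} and Proposition~\ref{prop:ret-compare}) is a morphism of recollements $\SH(k) \to \Sp^{C_2}$ with open part $\SH_{\et}(k) \to \Sp^{BC_2}$ and closed part $\SH_{\ret}(k) \to \Sp$.

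The main step is then the following diagram chase. By Proposition~\ref{prop:ret-compare}, $i^{\ast} \DK^{C_2}$ is the composite $\SH(k) \xrightarrow{L_{\ret}} \SH_{\ret}(k) \xrightarrow{\simeq} \Sp \xrightarrow{i^\ast_{C_2}} \cdots$; rather, $i^{\ast}_{C_2} \DK^{C_2} \simeq (\text{equiv}) \circ L_{\ret}$, and $\Sing^{C_2} i_{\ast}$ lands in the $\ret$-local (equivalently $\rho$-inverted) objects of $\SH(k)$. Therefore $L_{\ret} \Sing^{C_2} i_{\ast} \simeq \Sing^{C_2} i_{\ast}$ up to the identifications, i.e.\ $\Sing^{C_2} i_{\ast}$ factors through $i_{\ret}: \SH_{\ret}(k) \hookrightarrow \SH(k)$. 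Now apply $\DK^{C_2}$: since $\DK^{C_2}$ is a morphism of recollements, $\DK^{C_2} i_{\ret} \simeq i_{\ast}^{C_2} (i^{\ast}_{C_2} \DK^{C_2} i_{\ret})$ — that is, $\DK^{C_2}$ carries the closed part of $\SH(k)$ into the closed part $i_{\ast}^{C_2}(\Sp)$ of $\Sp^{C_2}$. Hence the composite $\Sp \xrightarrow{i_\ast} \Sp^{C_2} \xrightarrow{\Sing^{C_2}} \SH(k) \xrightarrow{\DK^{C_2}} \Sp^{C_2}$ factors as $\Sp \to i_{\ast}^{C_2}(\Sp) \hookrightarrow \Sp^{C_2}$, i.e.\ through the essential image of $i_{\ast}: \Sp \to \Sp^{C_2}$. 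Postcomposing with $j^{\ast}$ and invoking $j^{\ast} i_{\ast} \simeq 0$ gives the nullhomotopy.

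To make this rigorous one needs two inputs that are essentially already in the excerpt: (i) $\DK^{C_2}$ is a morphism of recollements $(\SH_{\et}(k), \SH_{\ret}(k)) \to (\Sp^{BC_2}, \Sp)$ — this follows because $\DK^{C_2}$ is symmetric monoidal, colimit-preserving, and sends the motivic $\rho$ to the $C_2$-equivariant $\rho$ (Lemma~\ref{lem:rho}), so it sends $L_{\ret}$-equivalences to $\rho$-equivalences and $L_{\et}$-equivalences to $p$-adic equivalences in the Borel part (here integrally one uses that $\DK^{C_2}$ sends $\rho$-inverted objects to $\rho$-inverted = geometric-fixed-point-detected objects, combined with Proposition~\ref{prop:ret-compare}); and (ii) the elementary fact $j^{\ast} i_{\ast} \simeq 0$ in the $\Sp^{C_2}$-recollement. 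The main obstacle is point (i): one must check the recollement-morphism property for $\DK^{C_2}$ carefully, in particular that $\DK^{C_2}$ carries $\ret$-local objects of $\SH(k)$ into the image of $i_{\ast}: \Sp \to \Sp^{C_2}$ and not merely into $\rho$-inverted objects of $\Sp^{C_2}$ — but this is exactly the content of Proposition~\ref{prop:ret-compare} (which identifies $i^{\ast} \DK^{C_2} \dashv \Sing^{C_2} i_{\ast}$ as the $\rho$-inversion smashing localization), so no genuinely new work is required. An alternative, more computational proof would p-complete everything and use Corollary~\ref{cor:iet} to rewrite $\Sing^{C_2} i_{\ast} \simeq i_{\ret}$ (suitably interpreted) and $j^{\ast} \DK^{C_2} \simeq L_{\et}$ after $p$-completion, then observe $L_{\et} i_{\ret} \simeq 0$; but since the statement is integral, the recollement-morphism argument above is preferable.
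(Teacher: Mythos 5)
Your argument is essentially the same as the paper's: $\Sing^{C_2} i_*$ lands in the $\rho$-inverted objects of $\SH(k)$ (Proposition~\ref{prop:ret-compare}), $\DK^{C_2}$ preserves $\rho$-inversion since it is symmetric monoidal and sends $\rho$ to $\rho$ (Lemma~\ref{lem:rho}), and $j^*$ annihilates the $\rho$-inverted objects of $\Sp^{C_2}$, which are precisely the essential image of $i_*$. The worry you raise in point (i) — that $\DK^{C_2}$ might carry $\ret$-local objects merely into $\rho$-inverted objects of $\Sp^{C_2}$ rather than into the image of $i_*$ — is a non-issue, since in $\Sp^{C_2}$ those two full subcategories coincide by definition of the recollement, so the ``morphism of recollements'' scaffolding is unnecessary and the more direct three-step argument suffices.
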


\begin{proof} In Proposition~\ref{prop:ret-compare}, we saw that the essential image of $\Sing^{C_2} i_*$ was given by the $\rho$-inverted objects. Since $\DK^{C_2}(\rho) \simeq \rho$, the same holds for $\DK^{C_2} \Sing^{C_2} i_*$. Since $j^*$ annihilates this subcategory (as it is the essential image of $i_*$), the claim follows.
\end{proof}

\begin{lemma} \label{lem:glue} The natural transformation
\[
i^*\DK^{C_2}\Sing^{C_2}j_* \Rightarrow i^*j_*,
\]
induced by the counit $\DK^{C_2}\Sing^{C_2} \Rightarrow \id$, is invertible.

\end{lemma}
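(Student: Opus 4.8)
The plan is to reduce the claim to a statement about $\rho$-inverted objects, mirroring \cite{behrens-shah}*{Lemma~8.20}. First I would recall that the target of both functors is $\Sp^{BC_2}$, so by conservativity of the forgetful functor $\res^{C_2}$ (after $p$-completion, the underlying functor detects equivalences of $p$-complete Borel spectra), it suffices to check that the transformation is invertible after applying $\res^{C_2}: \Sp^{BC_2} \to \Sp$. Actually, more directly: both sides are functors $\Sp^{BC_2} \to \Sp^{BC_2}$ with source the $p$-complete Borel category; I would instead argue objectwise and use that $i^* j_* \simeq (-)^{tC_2} \circ \nu(C_2)^* \circ (\text{stabilization data})$ via Theorem~\ref{prop:recoll-stab} applied to $\Xscr = \widetilde{k[i]}_{\et}$, so that $i^* j_*$ identifies with the $C_2$-Tate construction precomposed with the restriction $\SH_{\et}(k) \simeq \Sp^{BC_2}$.

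The key steps, in order: (1) Using the recollement of Proposition~\ref{prop:preserves} for $\Sp^{C_2}(\widetilde{k[i]}_{\et})$ and the identifications $\widetilde{k[i]}_{\et,\hh C_2} \simeq \widetilde{k}_{\et} \simeq \Spc^{BC_2}$ (Example~\ref{exmp:et-cover}) and $\widetilde{k[i]}_{\et}^{\hh C_2} \simeq \widetilde{k}_{\ret} \simeq \Spc$ (Theorem~\ref{thm:exret}, since $\Sper k = \pt$), rewrite the source and target of $i^*j_*$ so that it becomes a functor $\Sp(\widetilde{k}_{\et}) \to \Sp(\widetilde{k}_{\ret})$. (2) Observe that $\DK^{C_2}$ carries the motivic recollement structure to the genuine $C_2$-recollement: by Proposition~\ref{prop:et-compare} the composite $L_{\et} \simeq j^* \DK^{C_2} \Sing^{C_2} j_* $ recovers the identity on $\Sp^{BC_2}$ after $p$-completion (Corollary~\ref{cor:iet}), and by Proposition~\ref{prop:ret-compare} the geometric fixed point functor $i^*$ corresponds to $L_{\ret}$. (3) Then the counit transformation $i^* \DK^{C_2} \Sing^{C_2} j_* \Rightarrow i^* j_*$ is, after unwinding, the map $i^* \DK^{C_2}(\text{unit into Borel-completion})$; invertibility follows because the counit $\DK^{C_2} \Sing^{C_2} \Rightarrow \id$ is an equivalence on the essential image of $j_*$ — equivalently, on the $p$-complete \emph{cofree} genuine $C_2$-spectra — which in turn follows from Corollary~\ref{cor:iet} identifying $\Sing^{C_2} j_*$ with the fully faithful $i_{\et}$ after $p$-completion, combined with the fact that $j_*$ is fully faithful and $i^*$ kills nothing in its relevant range.

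More carefully for step (3): the counit $\epsilon: \DK^{C_2} \Sing^{C_2} \Rightarrow \id$ need not be an equivalence globally, but we only evaluate it on objects in the image of $j_*$. By Corollary~\ref{cor:iet}, for $E \in (\Sp^{BC_2})\comp$ we have $\Sing^{C_2} j_* E \simeq i_{\et} E$ (under the rigidity equivalence), and applying $\DK^{C_2}$ and then $j^*$ returns $E$ by Proposition~\ref{prop:et-compare}. Thus $j^* \epsilon_{j_* E}$ is an equivalence. On the other hand, $i^* \DK^{C_2} \Sing^{C_2} j_*$ and $i^* j_*$ are both computed via the gluing functor $\Theta^{\Tate}$, which factors through $j^*$ in the sense that it vanishes on induced objects and is determined by its restriction to Borel objects; so $j^* \epsilon$ being an equivalence forces $i^* \epsilon_{j_*(-)}$ to be an equivalence as well. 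The main obstacle I anticipate is bookkeeping the precise sense in which $\DK^{C_2}$ is compatible with \emph{both} recollement structures simultaneously (motivic $b$-recollement on $\SH(k)$ versus the genuine $C_2$-recollement), i.e., promoting the separate identifications of Propositions~\ref{prop:ret-compare} and~\ref{prop:et-compare} to a morphism of recollements; once that is in hand the lemma is formal. Fortunately, by Warning~\ref{warn:invertible}-style reasoning this can be sidestepped: it is enough to check the transformation is an equivalence after $j^*$ (trivially true, both sides are killed) and after $i^*$ (the content), and $(j^*, i^*)$ are jointly conservative on the stable recollement, so the proof reduces exactly to the displayed statement of the lemma itself — hence the circularity must be broken by instead invoking the explicit formula $\DK^{C_2}(\rho) \simeq \rho$ together with Lemma~\ref{lem:zero} to pin down the gluing data.
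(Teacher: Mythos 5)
Your proposal correctly assembles the relevant ingredients (the identification $\DK^{C_2}(\rho) \simeq \rho$ from Lemma~\ref{lem:rho}, the vanishing Lemma~\ref{lem:zero}, the projection formula from Theorem~\ref{thm:MonadicityofRealization}, and the recollement structure on $\Sp^{C_2}$), and you honestly flag the circularity in your step (3): arguing that ``$j^*\epsilon$ is an equivalence forces $i^*\epsilon_{j_*(-)}$ to be one'' via some compatibility of gluing data is precisely what needs to be proved, not something you can invoke. However, you stop at naming the tools that ``must'' break the circularity without actually carrying out the argument, so there is a genuine gap.

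The way the paper breaks the circularity is different in a concrete and important respect from your suggested route. It does not try to propagate an equivalence from the open part to the closed part. Instead, since $i_*$ is fully faithful (hence conservative) and $i_*i^*$ is tensoring with $\widetilde{EC_2}\simeq S^0[\rho^{-1}]$, it suffices to show that $\widetilde{EC_2}\otimes\epsilon_{j_*X}$ is an equivalence. Now Lemma~\ref{lem:rho} gives $\widetilde{EC_2}\simeq\DK^{C_2}(\1[\rho^{-1}])$, so the projection formula lets you slide the $\widetilde{EC_2}$ inside $\DK^{C_2}\Sing^{C_2}$, identifying $\widetilde{EC_2}\otimes\epsilon_{j_*X}$ with $\epsilon_{\widetilde{EC_2}\otimes j_*X}$. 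This reduces the lemma to: the counit is an equivalence on all objects of the form $i_*Z$. That is checked by the recollement cofiber sequence
\[
j_!j^*\DK^{C_2}\Sing^{C_2}(i_*Z)\rightarrow\DK^{C_2}\Sing^{C_2}(i_*Z)\rightarrow i_*i^*\DK^{C_2}\Sing^{C_2}(i_*Z),
\]
where the first term vanishes by Lemma~\ref{lem:zero} and the last term recovers $i_*Z$ because $\Sing^{C_2}i_*$ is fully faithful (Proposition~\ref{prop:ret-compare}). Note this uses $j_!j^*$ and $i_*i^*$ applied to a \emph{single object}, not joint conservativity of $(j^*,i^*)$ on the transformation --- which is exactly how the circularity you identified is avoided. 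You should rework your final two steps along these lines.
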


\begin{proof} The functor $i_*$ is fully faithful and is thus conservative, and the endofunctor $i_*i^*$ is given by tensoring with $\widetilde{EC_2} \simeq S^0[\rho^{-1}]$. Therefore, it suffices to prove that the canonical map
\[
\widetilde{EC_2} \otimes \DK^{C_2}\Sing^{C_2}j_*(X) \rightarrow \widetilde{EC_2} \otimes j_*(X)
\]
is an equivalence for all $X \in \Sp^{BC_2}$. By Lemma~\ref{lem:rho}, we have that $\widetilde{EC_2} \simeq \DK^{C_2}(\1[\rho^{-1}])$, and thus the projection formula from Theorem~\ref{thm:MonadicityofRealization} yields the equivalence
\[
\widetilde{EC_2} \otimes \DK^{C_2}\Sing^{C_2}j_*(X) \simeq \DK^{C_2}\Sing^{C_2}(j_*(X) \otimes \widetilde{EC_2})
\]
under which $\widetilde{EC_2} \otimes \epsilon_{j_*(X)}$ identifies with $\epsilon_{\widetilde{EC_2} \otimes j_*X}$. Therefore, if we let $Z = \widetilde{EC_2} \otimes j_*X$, it suffices to show that for all $Z \in \Sp$, the counit map induces an equivalence
\[
\DK^{C_2}\Sing^{C_2}(i_*Z) \rightarrow i_* Z.
\]
But for this, note that we have the recollement cofiber sequence
\[
j_!j^*\DK^{C_2}\Sing^{C_2}(i_*Z) \rightarrow \DK^{C_2}\Sing^{C_2}(i_*Z) \rightarrow i_*i^*\DK^{C_2}\Sing^{C_2}(i_*Z),
\]
and $j_!j^*\DK^{C_2}\Sing^{C_2}(i_*Z) \simeq 0$ by Lemma~\ref{lem:zero}, while $i_*i^*\DK^{C_2}\Sing^{C_2}(i_*Z) \simeq i_*Z$ since $\Sing^{C_2}i_*$ is fully faithful by Proposition~\ref{prop:ret-compare}.
\end{proof}

\begin{proof} [Proof of Theorem~\ref{thm:mot-v-tate}] We need to prove that on all $p$-complete objects, the functors $(-)^{tC_2} \simeq i^* j_*$ and $L_{\ret}i_{\et}$ agree as lax symmetric monoidal functors. To show this, we use the sequence of equivalences
\begin{eqnarray*}
L_{\ret}i_{\et}i_p & \simeq & L_{\ret}\Sing^{C_2}j_*i_p \\
& \simeq & i^*\DK^{C_2}\Sing^{C_2}j_*i_p \\
& \simeq & i^*j_*i_p.
\end{eqnarray*}

The first equivalence follows from Remark~\ref{rem:expand} and Corollary~\ref{cor:iet}, the second from Proposition~\ref{prop:ret-compare}, and the last equivalence from Lemma~\ref{lem:glue}.
\end{proof}


We now apply Theorem~\ref{thm:mot-v-tate} to prove the main theorem of this section.

\begin{theorem} \label{thm:sing-ff} Let $p$ be any prime. The functor $\Sing^{C_2}: (\Sp^{C_2})\comp \rightarrow \SH(k)\comp$ is fully faithful with essential image given by the $p$-complete Scheiderer motivic spectra, and we have a canonical equivalence of symmetric monoidal $\infty$-categories
\[ \SH_b(k)\comp \simeq (\Sp^{C_2})\comp. \]
Furthermore, $\SH_b(k)\comp$ is a smashing localization of $\SH(k)\comp$.
\end{theorem}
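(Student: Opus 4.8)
The plan is to assemble the statement from the pieces already in place: the recollement presentation of $\Sp^{C_2}(\widetilde{k}_b)$ as the right-lax limit of the stable gluing functor $\Theta^{\Tate}$ (Theorem~\ref{prop:recoll-stab} and Remark~\ref{rem:SameBaseChangeObviousRemark}), Bachmann's two rigidity theorems in the form $\Sp(\widetilde{k}_{\ret}) \simeq \SH_{\ret}(k)$ and $(\Sp^{BC_2})\comp \simeq \SH_{\et}(k)\comp$, the identification $\Theta^{\Tate} = (-)^{tC_2} \simeq L_{\ret}i_{\et}$ after $p$-completion (Theorem~\ref{thm:mot-v-tate}), and the fact that $\SH_b(k)$ is itself a right-lax limit via the unstable gluing functor $\theta = L_{\ret}i_{\et}$ (Example~\ref{ex:et-to-ret}, applied on the large site). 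The key observation is that $p$-completion is compatible with forming right-lax limits of exact lax symmetric monoidal functors between presentable stable $\infty$-categories, so that $\SH_b(k)\comp$ is the right-lax limit of $(L_{\ret}i_{\et})\comp: \SH_{\et}(k)\comp \to \SH_{\ret}(k)\comp$, which by Theorem~\ref{thm:mot-v-tate} is exactly $\Theta^{\Tate}$ on $p$-complete objects.

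First I would make precise the claim that $b$-localization commutes with $p$-completion, so that $\SH_b(k)\comp = (\SH(k)\comp)_b$ and moreover $\SH_b(k)\comp$ is the right-lax limit of $(\SH_{\et}(k)\comp, \SH_{\ret}(k)\comp)$ along the $p$-completed gluing functor. Here one uses that the $b$-topology is the intersection of the \'etale and real \'etale topologies, so $\SH_b(k)$ is the recollement glued from $\SH_{\et}(k)$ (open part) and $\SH_{\ret}(k)$ (closed part) along $L_{\ret}i_{\et}$ — this is the stable analogue of Example~\ref{ex:et-to-ret}, valid on $\Sm_k$ since over a point the two rigidity statements make the small and large pictures coincide. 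Since $p$-completion is a smashing (hence symmetric monoidal, colimit-preserving) localization, it passes to the open and closed parts and to the gluing functor, and right-lax limits are stable under such localization by the formulas of Lemma~\ref{lem:lax-lim2} (the fiber products defining $f_*$ are preserved). Next I would invoke Theorem~\ref{thm:mot-v-tate} to rewrite $(L_{\ret}i_{\et})\comp$ as $\Theta^{\Tate}$, now regarded as a functor $(\Sp^{BC_2})\comp \to \Sp\comp$ via Bachmann's equivalences, and then invoke Theorem~\ref{prop:recoll-stab} (in the $p$-complete, field case, i.e. Remark~\ref{rem:SameBaseChangeObviousRemark}) to identify the right-lax limit of $\Theta^{\Tate}$ with $\Sp^{C_2}(\widehat{k}_b)\comp = (\Sp^{C_2})\comp$, the last equality because the $b$-$\infty$-topos of a real closed field is $\Spc_{C_2}$ and its genuine stabilization is $\Sp^{C_2}$ (Example~\ref{ex:g-sp}, and the discussion in the introduction). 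Chaining these equivalences — all of them symmetric monoidal, since $\Theta^{\Tate}$ is lax symmetric monoidal and the canonical symmetric monoidal structure on a right-lax limit is natural (Construction~\ref{con:MonoidalStructureGenuineStabilization}, Theorem~\ref{thm:MonoidalityOfGenuineStabilizationFunctor}) — yields $\SH_b(k)\comp \simeq (\Sp^{C_2})\comp$ as symmetric monoidal $\infty$-categories.

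For the full faithfulness of $\Sing^{C_2}$ on $p$-complete objects with image the $p$-complete Scheiderer motivic spectra, I would argue as follows: the composite $(\Sp^{C_2})\comp \xrightarrow{\Sing^{C_2}} \SH(k)\comp$ is the right-lax limit (in the sense of morphisms of recollements, Remark~\ref{rem:StableRecollementFunctoriality}) of the two fully faithful functors $\Sing^{C_2}j_* \simeq i_{\et}: (\Sp^{BC_2})\comp \simeq \SH_{\et}(k)\comp \hookrightarrow \SH(k)\comp$ (Corollary~\ref{cor:iet}) and $\Sing^{C_2}i_* \simeq i_{\ret}: \Sp\comp \simeq \SH_{\ret}(k)\comp \hookrightarrow \SH(k)\comp$ (Proposition~\ref{prop:ret-compare}). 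By Corollary~\ref{cor:FullyFaithfulAdjointsRecollement}.1, a morphism of recollements whose open and closed components are fully faithful is fully faithful, and its essential image is exactly the glued subcategory — which, by the previous paragraph's identification of the gluing functor, is precisely $\SH_b(k)\comp$. Finally, $\SH_b(k)\comp$ is a smashing localization of $\SH(k)\comp$ because both $L_{\et}$ and $L_{\ret}$ are smashing on $\SH(k)$ (the latter by Bachmann, the former after $p$-completion by Bachmann's \'etale rigidity), and a recollement glued from two smashing localizations along a smashing-compatible gluing functor is smashing; alternatively one checks directly that the $b$-localization of the unit is an idempotent $\mathbb{E}_\infty$-algebra, which follows from Remark~\ref{rem:FiniteLocalization} applied to $\Xscr = \widehat{k[i]}_{\et}$.

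\textbf{Main obstacle.} The delicate point is not any single step but the bookkeeping that $p$-completion, the passage from the small \'etale/real \'etale sites to the large $\Sm_k$ site (i.e. Bachmann's rigidity), and the formation of right-lax limits all genuinely commute \emph{as symmetric monoidal functors and compatibly with the recollement adjunctions} — in particular that the gluing functor for the recollement $\SH_b(k)\comp = (\SH_{\et}(k)\comp) \overrightarrow{\times} (\SH_{\ret}(k)\comp)$ really is $(L_{\ret}i_{\et})\comp$ and not merely equivalent to it non-canonically. This is where one must carefully use the naturality built into Theorem~\ref{thm:mot-v-tate} (that the equivalence $(-)^{tC_2}\simeq L_{\ret}i_{\et}$ is one of lax symmetric monoidal functors) together with the naturality of the canonical symmetric monoidal structure on right-lax limits; everything else is a formal consequence of results already established in the excerpt.
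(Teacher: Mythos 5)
The proposal correctly identifies the key ingredients — Theorem~\ref{prop:recoll-stab}, Bachmann's rigidity equivalences, Theorem~\ref{thm:mot-v-tate}, and the recollement picture of $\SH_b(k)$ from Example~\ref{ex:recoll-b} — but inverts the paper's order of argument, and this inversion introduces a real gap in the full faithfulness step. The paper first proves that $\Sing^{C_2}$ is fully faithful on $p$-complete objects by verifying the hypotheses of \cite{behrens-shah}*{Lemma~5.1}, where the inputs are Lemma~\ref{lem:zero} (that $j^*\DK^{C_2}\Sing^{C_2}i_* \simeq 0$) and Lemma~\ref{lem:glue} (that $i^*\DK^{C_2}\Sing^{C_2}j_* \simeq i^*j_*$) together with Corollary~\ref{cor:iet} and Proposition~\ref{prop:ret-compare}; only afterwards does it identify the essential image as $\SH_b(k)\comp$ using Theorem~\ref{thm:mot-v-tate} and the uniqueness of monoidal recollement structures. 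You instead want to first construct a symmetric monoidal equivalence $\Phi\colon \SH_b(k)\comp \simeq (\Sp^{C_2})\comp$ by matching the two gluing functors, and then deduce full faithfulness of $\Sing^{C_2}$. But the existence of $\Phi$ by itself only says that \emph{some} fully faithful functor $i_b\Phi^{-1}\colon (\Sp^{C_2})\comp \hookrightarrow \SH(k)\comp$ exists; it does not say that this functor is $\Sing^{C_2}$. Identifying $\Sing^{C_2}$ with $i_b\Phi^{-1}$ (equivalently, $\Phi L_b$ with $\DK^{C_2}\comp$) is precisely the compatibility that has to be checked, and the natural way to check it is via the content of Lemma~\ref{lem:zero} and Lemma~\ref{lem:glue}, which your argument omits.

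The specific invocation of Corollary~\ref{cor:FullyFaithfulAdjointsRecollement} also does not work as written. That result concerns the $f_*$-functoriality of a bicartesian fibration of fiberwise right-lax limits over a base $B^{\op}$, i.e.\ a functor between two $\infty$-categories each of which is presented as the right-lax limit of an open/closed pair, with a given morphism relating both pairs. Here, $(\Sp^{C_2})\comp$ is the right-lax limit of $((\Sp^{BC_2})\comp, \Sp\comp)$, but $\SH(k)\comp$ is \emph{not} the right-lax limit of $(\SH_{\et}(k)\comp, \SH_{\ret}(k)\comp)$ — only the strictly smaller subcategory $\SH_b(k)\comp$ has that presentation. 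So to apply Corollary~\ref{cor:FullyFaithfulAdjointsRecollement} you would first have to show that $\Sing^{C_2}$ factors through $\SH_b(k)\comp$ and is a strict morphism of recollements, which is exactly what needs proving and amounts again to the content of Lemma~\ref{lem:zero}/\ref{lem:glue}. (The paraphrase ``a morphism of recollements whose open and closed components are fully faithful is fully faithful'' is a stronger statement than Corollary~\ref{cor:FullyFaithfulAdjointsRecollement} and would need its own proof.)

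The smashing claim has a similar issue: the statement ``a recollement glued from two smashing localizations is smashing'' conflates the recollement structure on $\SH_b(k)\comp$ with the localization $\SH_b(k)\comp \subset \SH(k)\comp$; the two are different, and no lemma in the paper covers the claimed implication. The paper instead deduces the smashing assertion from full faithfulness of $\Sing^{C_2}$ together with the monoidal monadicity result Theorem~\ref{thm:MonadicityofRealization} (which exhibits $(\Sp^{C_2})\comp$ as modules over the idempotent $\mathbb{E}_\infty$-algebra $\Sing^{C_2}\DK^{C_2}(\1)\comp$), a cleaner route than the one you sketch.
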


\begin{proof} Following the proof of \cite{behrens-shah}*{Theorem 8.22}, we need only verify the hypotheses (1)-(3) of \cite{behrens-shah}*{Lemma 5.1} for the full faithfulness assertion. (1) holds by Lemma~\ref{lem:glue}, and (2) holds by Lemma~\ref{lem:zero}. The second part of (3) holds by Proposition~\ref{prop:ret-compare}, while the first part of (3) (and the only part requiring $p$-completion) holds by Corollary~\ref{cor:iet}. The identification of the essential image and the resulting equivalence then follow from Example~\ref{ex:recoll-b}, Theorem~\ref{thm:mot-v-tate}, and the uniqueness of symmetric monoidal structures on a monoidal recollement as determined by the lax symmetric monoidal gluing functor \cite{quigley-shah}*{Proposition~1.26}. We then deduce the last assertion by combining full faithfulness with Theorem~\ref{thm:MonadicityofRealization} (which persists after $p$-completion by \cite{behrens-shah}*{Lemma~3.7}).
\end{proof} 

\begin{remark} \label{compare-behrens-shah} Let $k = \RR$. We note that taking the cellularization of $\Sing^{C_2}$ still yields a fully faithful functor on $p$-complete objects by \cite{behrens-shah}*{Theorem 8.22}. Since there is no reason for the cellularization of a fully faithful functor to remain fully faithful, Theorem~\ref{thm:sing-ff} does not imply that result, which has a substantially different proof and results in a distinct embedding of $(\Sp^{C_2})\comp$ into $\SH(\RR)$.
\end{remark}

\section{Scheiderer motives versus $b$-sheaves with transfers}

We now aim to promote the equivalence of Theorem~\ref{thm:sing-ff} to a larger class of schemes. We begin with some preliminaries. We always have the localization adjunction
\[ L_b: \SH(X) \rightleftarrows \SH_b(X): i_b. \]
By Example~\ref{ex:recoll-b}, we also have a monoidal stable recollement
\begin{equation} \label{eq:b-recoll-sh} \begin{tikzcd}[row sep=4ex, column sep=6ex, text height=1.5ex, text depth=0.25ex]
\SH_{\et}(X) \ar[shift right=1,right hook->]{r}[swap]{j_{\ast}} & \SH_{b}(X) \ar[shift right=2]{l}[swap]{j^{\ast}} \ar[shift left=2]{r}{i^{\ast}} & \SH_{\ret}(X) \ar[shift left=1,left hook->]{l}{i_{\ast}}.
\end{tikzcd} \end{equation}
We denote the gluing functor of this recollement by
\begin{equation} \label{eq:mot}
 \Theta^{\mot}_{X} =L_{\ret}i_{\et}:\SH_{\et}(X) \rightarrow \SH_{\ret}(X)
\end{equation}
and call $\Theta^{\mot}$ the {\bf motivic gluing functor}. We also have the (motivic) unstable\footnote{Note that we do not know if $\theta^{\mot}_{X}$ is the gluing functor for a recollement on $\H_{b}(X)$, since we do not know if the unstable $L_{\ret}: \H_{b}(X) \to \H_{\ret}(X)$ is left-exact (Warning~\ref{warn:UnstableMotivicGluingFunctor}).} and $S^1$-stable variants
\begin{align*}
\theta^{\big}_{X} & = L_{\ret} i_{\et}: \Shv_{\et}(\Sm_X) \rightarrow \Shv_{\ret}(\Sm_X), \\
\theta^{\mot}_{X} & = L_{\ret}i_{\et}:\H_{\et}(X) \rightarrow \H_{\ret}(X), \\
\Theta^{S^1\mot}_{X} & = L_{\ret}i_{\et}:\SH^{S^1}_{\et}(X) \rightarrow \SH^{S^1}_{\ret}(X).
\end{align*}
As we observe in Appendix~\ref{app:sh-top}, these fit together into a lax commutative diagram
\[ \begin{tikzcd}[row sep=6ex, column sep=8ex]
\Shv_{\et}(\Sm_Y) \ar{r}{L_{\AA^1}} \ar{d}{\theta^{\big}_X} \ar[phantom]{rd}{\NEarrow} & \H_{\et}(X) \ar{r}{\Sigma^{\infty}_+} \ar{d}{\theta^{\mot}_{X}} \ar[phantom]{rd}{\NEarrow} & \SH^{S^1}_{\et}(X) \ar{r}{\Sigma^{\infty}_{\GG_m}} \ar{d}{\Theta^{S^1\mot}_{X}} \ar[phantom]{rd}{\NEarrow} & \SH_{\et}(X) \ar{d}{\Theta^{\mot}_{X}} \\
\Shv_{\ret}(\Sm_X) \ar{r}{L_{\AA^1}} & \H_{\ret}(X) \ar{r}{\Sigma^{\infty}_+} & \SH^{S^1}_{\ret}(X) \ar{r}{\Sigma^{\infty}_{\GG_m}} & \SH_{\ret}(X).
\end{tikzcd} \]
In more detail, $\theta^{\mot}_X \simeq L_{\AA^1} \theta^{\big}_X$ upon inclusion into $\ret$-sheaves in view of the discussion at the start of \S\ref{sec:AppendixMotivicPart} (or rather its unstable counterpart), we have that $\Theta^{S^1\mot}_{X}$ is the stabilization\footnote{Note that we do not need to assume the unstable $L_{\ret}: \H_{b}(X) \to \H_{\ret}(X)$ is left-exact. More precisely, we have that $L_{\ret} \simeq \Sp(L_{\ret})$ in the sense for \emph{left} adjoints, so that $\Sigma^{\infty}_+ L_{\ret} \simeq L_{\ret} \Sigma^{\infty}_+$, and $i_{\et} \simeq \Sp(i_{\et})$ in the sense for \emph{left-exact} functors, so we have an exchange transformation $\Sigma^{\infty}_+ i_{\et} \Rightarrow i_{\et} \Sigma^{\infty}_+$.} of $\theta^{\mot}_{X}$, and $\Theta^{\mot}_{X}$ is obtained via monoidal inversion of $\GG_m$ from $\Theta^{S^1\mot}_{X}$ in the sense of Lemma~\ref{lem:InvertedRecollement}.

We next address the behavior of the motivic gluing functor under base change by proving the ``big site" analogue of Proposition~\ref{prop:base-change}. Given a morphism $f: Y \rightarrow X$, we have the ``big site" analogues of the adjunctions displayed in \S\ref{subsec:glue-small}:
\[ f_{\pre}^*: \Pre(\Sm_{X}) \rightleftarrows \Pre(\Sm_Y): f_{*\pre} \]
\[ f_{\ret}^*: \Shv_{\ret}(\Sm_{X}) \rightleftarrows \Shv_{\ret}(\Sm_Y): f_{*\ret} \] 
\[ f_{\et}^*: \Shv_{\et}(\Sm_{X}) \rightleftarrows \Shv_{\et}(\Sm_Y): f_{*\et} \]
and the induced adjunctions on the level of $\H$, $\SH^{S^1}$, and $\SH$. 

\begin{proposition} \label{prop:base-change-big} Let $f: X \rightarrow Y$ be a morphism of schemes. Then the canonical exchange transformations
\begin{align*}
(\chi_0: f_{\ret}^* \widehat{\theta^{\big}} \Rightarrow \widehat{\theta^{\big}} f_{\et}^*) &: \widehat{\Shv}_{\et}(\Sm_Y)  \rightarrow \widehat{\Shv}_{\ret}(\Sm_X) , \\
(\chi: f_{\ret}^*\theta^{\mot} \Rightarrow \theta^{\mot} f_{\et}^*) &: \H_{\et}(Y)  \rightarrow \H_{\ret}(X) , \\
(\chi': f_{\ret}^* \Theta^{S^1\mot} \Rightarrow \Theta^{S^1\mot} f_{\et}^*) &: \SH^{S^1}_{\et}(Y)  \rightarrow \SH^{S^1}_{\ret}(X), \\
(\chi'': f_{\ret}^* \Theta^{\mot} \Rightarrow \Theta^{\mot} f_{\et}^*) &:\SH_{\et}(Y)  \rightarrow \SH_{\ret}(X),
\end{align*}
are equivalences.
\end{proposition}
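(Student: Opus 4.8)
## Proof proposal

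The plan is to reduce the four statements to a single one about sheaves on the small \'etale site via Bachmann's rigidity theorems, and then invoke Proposition~\ref{prop:base-change} (the small-site base change statement already proved). The key observation is that the big-site gluing functors $\theta^{\big}$, $\theta^{\mot}$, $\Theta^{S^1\mot}$, $\Theta^{\mot}$ are each built from the small-site unstable gluing functor $\theta_X \simeq L_{\ret}i_{\et}: \widetilde{X}_{\et} \to \widetilde{X}_{\ret}$ by successively applying $L_{\AA^1}$, stabilization with respect to $S^1$, and monoidal inversion of $\GG_m$, as recorded in the lax commutative diagram just before the proposition. Since each of these operations is itself natural in the base scheme (restriction along $f$ commutes with $L_{\AA^1}$ up to canonical equivalence as a left adjoint, with $\Sigma^{\infty}_+$ and $\Sigma^{\infty}_{\GG_m}$ strictly as left adjoints, and with the inclusions $i_{\et}$ only up to an exchange transformation of left-exact functors), the four exchange transformations $\chi_0, \chi, \chi', \chi''$ are obtained from $\widehat{\chi}$ of Proposition~\ref{prop:base-change} by applying these functorial constructions.

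More precisely, I would proceed in the following order. First, handle $\chi_0$ directly: on the \emph{small} sites we already know from Proposition~\ref{prop:base-change} that $\widehat{\chi}: f^*_{\ret}\theta_Y \Rightarrow \theta_X f^*_{\et}$ is an equivalence (using hypercompleteness of the real \'etale topos, Theorem~\ref{thm:hyper}, noting $\widetilde{X}_{\ret}$ is hypercomplete for $X$ of finite Krull dimension, or working hypercomplete throughout as the notation $\widehat{(-)}$ indicates). The passage from the small \'etale/real \'etale sites to the big sites $\Sm_X$ is via left Kan extension along $\Et_X \subset \Sm_X$, which commutes with $f^*$; this promotes $\widehat{\chi}$ to $\chi_0$ on $\widehat{\Shv}_{\et}(\Sm_Y) \to \widehat{\Shv}_{\ret}(\Sm_X)$. (Alternatively, one argues directly on the big site that $\chi_0$ is an equivalence on real \'etale points, checking stalks at $\Spec k$ for $k$ real closed via Lemma~\ref{lem:stalk-compute}, exactly as in the proof of Proposition~\ref{prop:base-change}, and then invokes Deligne's completeness theorem.) Second, obtain $\chi$: since $\theta^{\mot}_X \simeq L_{\AA^1}\theta^{\big}_X$ after inclusion into $\ret$-sheaves, and $f^*_{\ret}$, $f^*_{\et}$ commute with $L_{\AA^1}$ as left adjoints, $\chi$ is identified with $L_{\AA^1}$ applied to $\chi_0$, hence an equivalence. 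Third, obtain $\chi'$: since $\Theta^{S^1\mot}_X$ is the stabilization of $\theta^{\mot}_X$ and $f^*$ commutes with $\Sigma^{\infty}_+$, the transformation $\chi'$ is the stabilization of $\chi$, hence an equivalence (using that stabilization of an equivalence of functors is an equivalence). Fourth, obtain $\chi''$: since $\Theta^{\mot}_X$ is obtained from $\Theta^{S^1\mot}_X$ by monoidally inverting $\GG_m$ in the sense of Lemma~\ref{lem:InvertedRecollement}, and this inversion commutes with $f^*$, the transformation $\chi''$ is the $\GG_m$-inversion of $\chi'$, hence an equivalence.

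The main obstacle is bookkeeping rather than anything deep: one must carefully check that each of the three constructions (unstable $\AA^1$-localization, $S^1$-stabilization, $\GG_m$-inversion) interacts with the base-change functors $f^*$ in a way compatible with the \emph{exchange transformation} $\chi_\bullet$, not merely with the underlying functors. The subtle point is that $i_{\et}$ (inclusion of \'etale sheaves, needed to define the gluing functor as $L_{\ret}i_{\et}$) is \emph{not} a left adjoint, so $f^*$ only commutes with it up to an exchange transformation of left-exact functors; one must verify this exchange transformation is an equivalence, which is where $\infty$-connectivity of $\widehat{\chi}$ (Proposition~\ref{prop:base-change}) and the left-exactness of the unstable $f^*$ enter — precisely the same mechanism as in the proof of Corollary~\ref{cor:tate-stable}. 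Concretely, one should spell out that each $\chi_\bullet$ fits into a cofiber-sequence or mate diagram relating it to $\chi_0$, invoke that the relevant left adjoints preserve the structure, and conclude; I expect this to occupy a paragraph of diagram-chasing but present no genuine difficulty given the results already assembled in Appendix~\ref{app:sh-top} and \S\ref{subsec:glue-small}.
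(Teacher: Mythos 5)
Your proposal follows essentially the same route as the paper: first prove $\chi_0$ by a stalkwise computation at real closed field points via Lemma~\ref{lem:stalk-compute} and the Deligne-completeness argument (your ``alternative'' is in fact the paper's primary route; the left-Kan-extension reduction to the small site is not how the paper argues), then propagate to $\chi$, $\chi'$, $\chi''$ by applying in turn $L_{\AA^1}$, $S^1$-stabilization, and $\GG_m$-inversion. One small point of discrepancy: for the stable steps the paper does \emph{not} need the $\infty$-connectivity/cofiber mechanism you allude to from Corollary~\ref{cor:tate-stable}. Instead it invokes Remark~\ref{rem:PrespectraAndSpectra} to observe that all four functors involved are computed first pointwise at the level of prespectra and then by applying the localization $L_{\mathrm{sp}}$ to land in $S^1$- or $\GG_m$-spectra; consequently $\chi'$ and $\chi''$ are images under $L_{\mathrm{sp}}$ of prespectrum-level maps that are pointwise given by $\chi$, and since $L_{\mathrm{sp}}$ preserves equivalences the result follows. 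Your concern about $i_{\et}$ failing to commute strictly with $f^*$ is resolved upstream once $\chi_0$ is shown to be an equivalence directly on the hypercomplete big site; no further connectivity argument is required downstream, only that the relevant localizations preserve equivalences.
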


\begin{proof}
We first prove the claim for $\chi_0$ by a stalkwise argument. For any $X' \in \Sm_X$ and any $\alpha: \Spec k \rightarrow X'$ where $k$ is a real closed field, Lemma~\ref{lem:stalk-compute} gives equivalences:
\[
(\chi_{0\Fscr})_{\alpha}: (f^*_{\ret}\theta^{\big}\Fscr)_{\alpha} \simeq  (\theta^{\big}\Fscr)_{\alpha \circ f} \simeq  \alpha^*_{\et}f_{\et}^*\Fscr(1_\alpha) \simeq (\theta^{\big} f^*_{\et}\Fscr)_{\alpha}.
\]
By the same reasoning as in Proposition~\ref{prop:base-change}, $\chi_0$ is then an equivalence.


The claim for $\chi$ follows since $L_{\AA^1}(\chi_0) \simeq \chi$. The stable claims now follow in succession from the unstable ones. Indeed, by Remark~\ref{rem:PrespectraAndSpectra} all functors in question are computed first pointwise on the level of prespectra and then by applying a localization functor $L_{\mathrm{sp}}$ to get to either $S^1$-spectra in $\H_{\ret}(X)$ or $\GG_m$-spectra in $\SH^{S^1}_{\ret}(X)$. Since equivalences at the level of prespectra are preserved by $L_{\mathrm{sp}}$, if $\chi$ is an equivalence then so is $\chi'$, and likewise if $\chi'$ is an equivalence then so is $\chi''$.
\end{proof}

\begin{corollary} \label{cor:SixFunctorsFormalismSHb} Let $S$ be a noetherian finite-dimensional base scheme and let $\Sch'_{S}$ be an adequate category of finite-dimensional noetherian $S$-schemes. Then $\SH_b: (\Sch'_{S})^{\op} \to \CAlg(\PrL_{\infty,\stab}) $ satisfies the full six functors formalism.
\end{corollary}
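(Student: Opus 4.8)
The plan is to verify the three properties of Definition~\ref{def:MotivicFunctorProperties}---homotopy invariance, localization, and Thom stability---for $\SH_b$, and then invoke Theorem~\ref{thm:six}. The key structural observation is that $\SH_b(-)$ is (objectwise) the right-lax limit of the motivic gluing functor $\Theta^{\mot}_{(-)} = L_{\ret} i_{\et}: \SH_{\et}(-) \Rightarrow \SH_{\ret}(-)$ by the recollement~\eqref{eq:b-recoll-sh}. Both $\SH_{\et}$ and $\SH_{\ret}$ are premotivic functors satisfying the full six functors formalism (Theorems~\ref{prop:sixet}, \ref{prop:sixret}, after dropping $p$-completion in the \'etale case---or more precisely, one should note that $\SH_{\et}$ and $\SH_{\ret}$ satisfy the hypotheses of Theorem~\ref{thm:six} by the standard Ayoub/Cisinski--D\'eglise arguments for finer-than-Nisnevich topologies; this is what Bachmann cites in \cite{bachmann-ret,bachmann-et}). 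So the strategy is to apply Theorem~\ref{thm:abstract2} with $\D^* = \SH_{\et}$, $\C^* = \SH_{\ret}$, and $\phi = \Theta^{\mot}$.

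First I would check that $\Theta^{\mot}: \SH_{\et} \Rightarrow \SH_{\ret}$ is an exact, accessible, lax symmetric monoidal morphism of premotivic functors. Exactness and accessibility are clear since $\Theta^{\mot} = L_{\ret} i_{\et}$ is a composite of a colimit-preserving localization and a (fully faithful, exact, accessible) inclusion; lax symmetric monoidality follows because $i_{\et}$ is lax symmetric monoidal and $L_{\ret}$ is strong symmetric monoidal. The naturality in $\Sch'_S$ is precisely the content of Proposition~\ref{prop:base-change-big}: the exchange transformation $\chi'': f_{\ret}^* \Theta^{\mot} \Rightarrow \Theta^{\mot} f_{\et}^*$ is an equivalence for every morphism $f$ (and over finite-dimensional schemes the real \'etale topos is hypercomplete by Theorem~\ref{thm:hyper}, so no hypercompletion decoration is needed). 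This promotes $\Theta^{\mot}$ to a genuine natural transformation of functors $(\Sch'_S)^{\op} \to \CAlg_{\lax}(\widehat{\Cat}_\infty)$, and Theorem~\ref{thm:abstract2} then immediately gives that $\SH_b = \SH_{\et} \overrightarrow{\times} \SH_{\ret}$ is a stable premotivic functor satisfying homotopy invariance and localization, with the expected recollement adjunctions $j^*, j_*, i^*, i_*$ (which match those in~\eqref{eq:b-recoll-sh} by uniqueness).

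The remaining point---and the one requiring genuine input---is Thom stability, which by Theorem~\ref{thm:abstract2} reduces to showing that $\SH_b$ is Tate-dualizable, i.e.\ that the Tate motive $\1_S^{\SH_b}(1)$ is dualizable in $\SH_b(S)$ for all $S \in \Sch'_S$. But here I would observe that $\1_S^{\SH_b}(1) = \1_S(1)$ is already \emph{invertible} in $\SH_b(S)$, not merely dualizable: it is the $b$-localization of the Tate object $\1_S(1) \in \SH(S)$, which is invertible there by construction of $\SH(S)$, and invertible objects are preserved by the strong symmetric monoidal localization functor $L_b: \SH(S) \to \SH_b(S)$. (Equivalently, by the compatibility of $L_b$ with $\pi_{S\sharp}$ via Lemma~\ref{lem:lax-lim}, one checks invertibility after applying $j^*$ and $i^*$, landing on $\1_S^{\SH_{\et}}(1)$ and $\1_S^{\SH_{\ret}}(1)$, both invertible.) This is the essential contrast with the genuine-stabilization side, where Tate-dualizability was a substantive theorem (Lemma~\ref{lem:duals})---for $\SH_b$ it is automatic because we have not thrown away the invertibility of the Tate twist. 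Hence all three properties hold, and Theorem~\ref{thm:six} yields the full six functors formalism for $\SH_b$ over any adequate category of finite-dimensional noetherian $S$-schemes. The main (very minor) obstacle is bookkeeping: ensuring that the hypotheses of Theorem~\ref{thm:abstract2} on $\SH_{\et}$ and $\SH_{\ret}$ are met integrally (no $p$-completion) over the stated class of schemes, which is exactly where one appeals to the classical Ayoub--Cisinski--D\'eglise machinery rather than to Bachmann's rigidity theorems.
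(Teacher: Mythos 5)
Your proof follows the same route as the paper: identify $\SH_b$ as the right-lax limit of $\Theta^{\mot} = L_{\ret}i_{\et}$, verify base-change compatibility via Proposition~\ref{prop:base-change-big}, feed this into Theorem~\ref{thm:abstract2}, and observe that Tate-dualizability is automatic here because $\GG_m$ is already inverted in $\SH_b(S)$ (as a Bousfield localization of $\SH(S)$). The only cosmetic difference is that you offer a redundant second argument for invertibility of the Tate object via the jointly conservative symmetric monoidal $j^*, i^*$ — which is really just the internal mechanism of Lemma~\ref{lem:PropertyThomStability} — and your phrasing of the last paragraph slightly understates that $\SH_{\ret}$'s six-functors formalism still rests on Bachmann's (integral) real \'etale rigidity theorem together with the finite-dimensionality hypothesis, not on Ayoub--Cisinski--D\'eglise alone.
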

\begin{proof} First note that $\SH_{\et}$ and $\SH_{\ret}$ are known to satisfy the full six functors formalism (the former by Ayoub's work \cite{bachmann-et}*{Theorem~5.1}, and the latter using the finite-dimensional hypothesis and Bachmann's theorem~\eqref{eq:x-ret}). Since $\SH_b(X)$ is the right-lax limit of $\Theta^{\mot}$ (as a symmetric monoidal $\infty$-category), we may conclude by Theorem~\ref{thm:abstract2} if we know that $\Theta^{\mot}$ commutes with base change and that $\SH_b(X)$ is Tate-dualizable for all $X \in \Sch'_S$. We just showed the former in Proposition~\ref{prop:base-change-big}, and the latter is true by design since we invert $\GG_m$ in $\SH_b(X)$ and for the map $\pi_X: \GG_m \times X \to X$, $\pi_{X \sharp}(\1) \simeq \Sigma^{\infty}_+ (\GG_m)$ by construction.
\end{proof}

We now proceed to construct a comparison functor between $\Sp^{C_2}_b$ and $\SH_b$. From hereon, we suppose that our schemes $X$ have $\tfrac{1}{2} \in \Oscr_X$. Let $p: X[i] \to X$ denote the $C_2$-Galois cover.

\begin{definition} An {\bf \'etale induced object} in $\SH_{\et}(X)$ is an object in the essential image of the functor $p_{\sharp}: \SH_{\et}(X[i]) \rightarrow \SH_{\et}(X)$.
\end{definition}

We note that since $\Shv_{\ret}(\Sm_{Y}) \simeq \ast$ if $\sqrt{-1} \in Y$, we have that $\SH_{\ret}(X[i]) \simeq \ast$.

\begin{lemma} \label{lem:kills-ind} The functor $\Theta^{\mot}:\SH_{\et}(X) \rightarrow \SH_{\ret}(X)$ vanishes on \'etale induced objects. 
\end{lemma}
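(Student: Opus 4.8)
The claim is that $\Theta^{\mot}_X = L_{\ret} i_{\et}$ annihilates objects of the form $p_{\sharp}(E)$ for $E \in \SH_{\et}(X[i])$. The plan is to exploit $\sharp$-base change for the cartesian square coming from the $C_2$-Galois cover $p \colon X[i] \to X$, together with the fact that $\SH_{\ret}(Y) \simeq \ast$ whenever $\sqrt{-1} \in \Oscr_Y$ (noted just before the statement). First I would observe that, since $\Theta^{\mot}_X = L_{\ret} i_{\et}$ and the change-of-topology functors commute with the big-site pullback functors $p^*$ up to the exchange equivalences of Proposition~\ref{prop:base-change-big} (in particular $\chi''$ for $\Theta^{\mot}$), it suffices to understand how $\Theta^{\mot}$ interacts with $p_{\sharp}$.

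\textbf{Key steps.} (1) Recall that $p$ is finite \'etale, hence smooth, so we have the adjunction $p_{\sharp} \dashv p^*$ on each of $\SH_{\et}$ and $\SH_{\ret}$, and $\sharp$-base change applies to the square in which $p$ sits. (2) Apply $\Theta^{\mot}_X = L_{\ret} i_{\et}$ to $p_{\sharp}(E)$. Using that $i_{\et}$ commutes with $p_{\sharp}$ for the smooth morphism $p$ — via the exchange transformation~\eqref{eq:mate2}, which is an equivalence here because $p_{\sharp}$ on both the \'etale and real-\'etale sides is computed compatibly (finite \'etale $p_{\sharp}$ is a summand-type pushforward, so there is nothing to hypercomplete) — and that $L_{\ret}$ likewise commutes with $p_{\sharp}$ as a left adjoint, one gets a natural equivalence
\[
\Theta^{\mot}_X\, p_{\sharp}(E) \;\simeq\; p_{\sharp}\, \Theta^{\mot}_{X[i]}(E)
\]
where $\Theta^{\mot}_{X[i]} \colon \SH_{\et}(X[i]) \to \SH_{\ret}(X[i])$. (3) But $\sqrt{-1} \in \Oscr_{X[i]}$, so $\Shv_{\ret}(\Sm_{X[i]}) \simeq \ast$ and hence $\SH_{\ret}(X[i]) \simeq \ast$ is the zero $\infty$-category; therefore $\Theta^{\mot}_{X[i]}(E) \simeq 0$ for every $E$, whence $\Theta^{\mot}_X p_{\sharp}(E) \simeq p_{\sharp}(0) \simeq 0$. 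This gives the conclusion.

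\textbf{Main obstacle.} The only genuine point requiring care is step (2): justifying that $L_{\ret} i_{\et}$ truly commutes with $p_{\sharp}$ along the finite \'etale cover, i.e.\ that the exchange transformation $p_{\sharp} \Theta^{\mot}_{X[i]} \Rightarrow \Theta^{\mot}_X p_{\sharp}$ is an equivalence. For $L_{\ret}$ this is formal (it is a left adjoint, and $p_{\sharp}$ is also a left adjoint, so their composites agree on the respective localizations by the universal property). For $i_{\et}$ one must check that $i_{\et}$ — a priori only a right adjoint, hence only carrying an exchange transformation $p_{\sharp} i_{\et} \Rightarrow i_{\et} p_{\sharp}$ — is actually an equivalence on this class; here one uses that $p$ is finite \'etale, so $p_{\sharp}$ is ambidextrous ($p_{\sharp} \simeq p_*$) and is computed "pointwise" on stalks as a finite coproduct, which is preserved by the inclusion $i_{\et}$ of \'etale sheaves into presheaves and commutes with the real-\'etale sheafification; alternatively one can invoke the stalkwise argument of Proposition~\ref{prop:base-change-big} directly. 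Once this commutation is in hand, steps (1) and (3) are immediate, and in fact the cleanest writeup may simply cite $\sharp$-base change for the square $X[i] \to X$ against itself and the vanishing $\SH_{\ret}(X[i]) \simeq \ast$, bypassing any hypercompletion subtleties.
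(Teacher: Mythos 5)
Your proof is essentially correct and follows the same route as the paper's: the operative mechanism is ambidexterity $p_\sharp \simeq p_*$ for the finite \'etale (hence smooth proper) Galois cover $p$, which converts the a priori problematic commutation of the right adjoint $i_{\et}$ with $p_\sharp$ into the formal commutation of the two right adjoints $i_{\et}$ and $p_*$ (mated to $L_{\et}$ commuting with $p^*$), followed by $L_{\ret}$ commuting with $p_\sharp$ as left adjoints and the vanishing $\SH_{\ret}(X[i]) \simeq \ast$. That is exactly the chain $L_{\ret} i_{\et} p_\sharp \simeq L_{\ret} i_{\et} p_* \simeq L_{\ret} p_* i_{\et} \simeq L_{\ret} p_\sharp i_{\et} \simeq p_\sharp L_{\ret} i_{\et} \simeq 0$ used in the text.

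One caveat worth noting: the closing aside that "the cleanest writeup may simply cite $\sharp$-base change for the square $X[i] \to X$ against itself" does not give a valid shortcut. That cartesian square (with $X[i] \times_X X[i] \simeq X[i] \sqcup X[i]$) only controls $p^* p_\sharp$, and since $p^*: \SH_{\ret}(X) \to \SH_{\ret}(X[i]) \simeq \ast$ collapses everything, checking vanishing after $p^*$ proves nothing. The $\sharp$-base-change framing in your opening paragraph is likewise tangential; the essential input is ambidexterity, just as in your "main obstacle" paragraph and in the paper. Also, the invocation of \eqref{eq:mate2} for $i_{\et}$ is slightly off — that exchange is stated for the left-adjoint part $L_{\et}$ of the premotivic morphism — but your subsequent ambidexterity argument is the correct fix and renders this a cosmetic issue.
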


\begin{proof} Since we have the ambidexterity equivalence $p_* \simeq p_{\sharp}$ for any premotivic functor with the full six functors formalism, we get
\begin{eqnarray*}
\Theta^{\mot} p_{\sharp}E & = & L_{\ret}i_{\et} p_{\sharp}E\\
 & \simeq & L_{\ret}i_{\et} p_*E \\
& \simeq & L_{\ret}p_*i_{\et}E\\
& \simeq & L_{\ret}p_{\sharp}i_{\et}E.
\end{eqnarray*}
The last spectrum is zero. Indeed, this follows from the commutative diagram of left adjoints
\[
\begin{tikzcd}
\SH(X[i]) \ar[swap]{d}{p_{\sharp}} \ar{r}{L_{\ret}} & \SH_{\ret}(X[i])  \simeq \ast \ar{d}{p_{\sharp}}\\
\SH(X) \ar{r}{L_{\ret}} & \SH_{\ret}(X).
\end{tikzcd}
\]
\end{proof}

\begin{remark} Stabilizing Construction~\ref{construct:borel-orb} applied to Example~\ref{exmp:et-cover} gives us a functor $\pi_*:\Sp(\widehat{X[i]}_{\et}) \rightarrow \Sp(\widehat{X}_{\et})$. After $p$-completion and applying Bachmann's theorem~\eqref{eq:x-et}, the functor $\pi_*$ agrees with the functor $p_{\sharp}$. In this way, the notion of \'etale induced objects is consistent with our earlier notion of induced objects (Definition~\ref{def:topos-induced}); in fact, we have an integral compatibility as recorded by Lemma~\ref{lem:toposvshind}. Lemma~\ref{lem:kills-ind} is then the motivic analogue of the vanishing result of Lemma~\ref{lem:killsinduced}.
\end{remark}

\begin{lemma} \label{lem:toposvshind} If we let $\pi_*:\Sp(\widehat{X[i]}_{\et}) \rightarrow \Sp(\widehat{X}_{\et})$ be the functor of Construction~\eqref{construct:borel-orb}, then the diagram
\[
\begin{tikzcd}
\Sp(\widehat{X[i]}_{\et}) \ar{r}{\pi_*} \ar{d}[swap]{\Sigma^{\infty}_{\GG_m,\et} u^{\et}_{X[i]!}} &  \Sp(\widehat{X}_{\et}) \ar{d}{\Sigma^{\infty}_{\GG_m,\et} u^{\et}_{X!}}\\
\SH_{\et}(X[i]) \ar{r}{p_{\sharp}} & \SH_{\et}(X)
\end{tikzcd}
\]
commutes. In other words, $\Sigma^{\infty}_{\GG_m,\et} u^{\et}_{X!}$ carries induced objects to \'etale induced objects.
\end{lemma}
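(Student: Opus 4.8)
\textbf{Proof plan for Lemma~\ref{lem:toposvshind}.}

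The plan is to reduce the commutativity assertion to the compatibility of change-of-site functors with the operations $u_!$ that pass from small to big sites, combined with the compatibility of both sides with $\GG_m$-stabilization. First I would recall the setup: on the level of small \'etale topoi, $\pi_* : \Sp(\widehat{X[i]}_{\et}) \to \Sp(\widehat{X}_{\et})$ is the stabilization of the right adjoint appearing in Construction~\ref{construct:borel-orb}, which in this case (with $G = C_2$ and the $C_2$-Galois cover $X[i] \to X$) is computed by relative right Kan extension along $BC_2 \subset \Oscr_{C_2}^{\op}$; concretely it is the pushforward along the \'etale morphism of sites $\Et_{X[i]} \to \Et_X$ induced by $p$, which since $p$ is finite \'etale coincides with $p_*$ on the small sites and, by ambidexterity (or rather the fact that $p_* \simeq p_!$ for a finite \'etale cover already at the level of unstable \'etale topoi, up to the identifications of Example~\ref{exmp:et-cover}), with $p_{\sharp}$ computed on the small sites. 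The functors $u^{\et}_{X!}$ and $u^{\et}_{X[i]!}$ are the left adjoints to restriction along the inclusions of small \'etale sites into the big smooth sites, and $\Sigma^{\infty}_{\GG_m,\et}$ is $\GG_m$-stabilization; these are exactly the functors implementing the left vertical and right vertical composites.

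The key steps, in order: (1) First establish the unstable / $S^1$-stable analogue, namely that the square of functors $\Shv_{\et}(\Et_{X[i]}) \to \Shv_{\et}(\Et_X)$ (small site pushforward along $p$) versus $\Shv_{\et}(\Sm_{X[i]}) \to \Shv_{\et}(\Sm_X)$ (big site $p_{\sharp}$), with vertical legs $u^{\et}_{(-)!}$, commutes. This is the heart of the matter and follows from base-change compatibilities: for a smooth $X$-scheme $T$, both $p_{\sharp} u^{\et}_{X[i]!}(\Fscr)$ evaluated on $T$ and $u^{\et}_{X!}\pi_*(\Fscr)$ evaluated on $T$ unwind — using that $u^{\et}_{X!}$ is left Kan extension along $\Et_X \hookrightarrow \Sm_X$ and $p_{\sharp}$ is left Kan extension along $\Sm_{X[i]} \to \Sm_X$ together with the pullback square $T \times_X X[i] \to X[i]$, $T \to X$ — to the same colimit over the \'etale site of $T \times_X X[i]$; one invokes smooth base change in the form already used repeatedly in \S\ref{sect:gen-stab} (e.g.\ the exchange-transformation arguments of Lemma~\ref{lem:bc1} and Lemma~\ref{lem:BeckChevalley}) and the fact that $p$ is finite \'etale so $p_{\sharp}$ agrees with $p_*$. (2) Pass to $\GG_m$-spectra: since $\Sigma^{\infty}_{\GG_m,\et}$ and the corresponding functor over $X[i]$ are obtained by the same monoidal $\GG_m$-inversion procedure, and since both horizontal functors ($\pi_*$ on the stabilized small sites and $p_{\sharp}$ on the big sites) are the stabilizations of the unstable functors from step (1) — using $p_{\sharp} \simeq p_*$ so that the pushforward commutes with $\GG_m$-stabilization via the projection formula $p_{\sharp}(E \otimes p^* \GG_m) \simeq p_{\sharp}(E) \otimes \GG_m$ — the stabilized square commutes as a consequence of the unstable one. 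The second, slogan-level sentence of the lemma then follows immediately: the right vertical functor $\Sigma^{\infty}_{\GG_m,\et} u^{\et}_{X!}$ sends the essential image of $\pi_*$ (the induced objects) into the essential image of $p_{\sharp}$ (the \'etale induced objects).

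The main obstacle I expect is step (1): carefully matching up the two left Kan extension formulas. The subtlety is that $u^{\et}_{X!}$ is left Kan extension along an inclusion of sites while $p_{\sharp}$ is left Kan extension along a morphism of big sites induced by a smooth (indeed \'etale) morphism of base schemes, and one must check these commute at the level of sheaves, i.e.\ that sheafification does not interfere — for this one uses that $p$ is finite \'etale so that $p_{\sharp}$ (equivalently $p_*$) is exact and commutes with \'etale sheafification, and that restricting a representable $h_T$ on $\Sm_X$ along $\Sm_{X[i]} \to \Sm_X$ and then computing $u^{\et}$ matches computing $u^{\et}$ on $h_{T \times_X X[i]}$; the pullback square of sites is the crucial bookkeeping. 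Once the unstable square is in hand, everything else is formal manipulation with stabilization and localization functors of the kind already carried out in Proposition~\ref{prop:base-change-big}.
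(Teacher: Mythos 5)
Your overall plan has a genuine gap in step (1): the unstable square you propose to establish first does \emph{not} commute, and the parenthetical remark that ``$p_* \simeq p_!$ for a finite \'etale cover already at the level of unstable \'etale topoi'' is false. At the unstable level, $\pi_*: \Shv_\et(\Et_{X[i]}) \to \Shv_\et(\Et_X)$ is a right adjoint (it preserves the terminal object, so $\pi_*(\ast) = \ast$), whereas $\pi_!(\ast) = h_{X[i]}$, the (nontrivial) sheaf represented by $X[i]$ as an \'etale $X$-scheme. Plugging the terminal sheaf into your unstable square gives $u^{\et}_{X!}\pi_*(\ast) = \ast$ on one side but $p_\sharp u^{\et}_{X[i]!}(\ast) = h_{X[i]}$ (as a smooth $X$-scheme) on the other, so they disagree. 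The ambidexterity $\pi_! \simeq \pi_*$ only kicks in after stabilization, as recorded in Lemma~\ref{lem:borel-computation}, and it is precisely this stable phenomenon that makes the lemma true. Because of this, the reduction to an unstable statement cannot work, and your step (2) — passing from the unstable case to $\GG_m$-spectra — is reducing to something false.

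The paper's actual proof is much shorter and avoids the issue entirely by staying stable from the start: once one knows $\pi_*$ preserves colimits (which follows from the \emph{stable} ambidexterity $\pi_* \simeq \pi_!$), every functor in the square in sight preserves colimits, and hence commutativity can be checked on the generators $\Sigma^\infty_+ h_V$ for $V$ \'etale over $X[i]$, where both composites visibly produce the $\GG_m$-suspension spectrum of $V$ viewed as a smooth $X$-scheme. The heavy machinery you invoke — base-change exchange transformations, the Kan extension bookkeeping against pullback squares of sites, sheafification compatibilities — is not needed, and trying to push it through an unstable intermediary would in fact fail. If you want to salvage your approach, you should work directly with sheaves of spectra (or $S^1$-spectra) on the small sites, where ambidexterity identifies $\pi_*$ with the colimit-preserving $\pi_!$, and at that point the generator argument collapses your step (1) to a triviality anyway.
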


\begin{proof} All functors in sight commute with colimits, and the claim is obvious on the level of the representable objects that generate under colimits.
\end{proof}

\begin{construction} \label{construct:compare} Consider the following concatenation of lax commutative squares
\begin{equation} \label{eq:big-small}
\begin{tikzcd}[column sep=6ex]
\Sp(\widehat{X}_{\et}) \ar{r}{L_{\ret}i_{\et}} \ar[swap]{d}{u^{\et}_{X!}} \ar[phantom]{rd}{\SWarrow} & \Sp(\widehat{X}_{\ret}) \ar{d}{u^{\ret}_{X!}}  \\
\SH^{S^1}_{\et}(X) \ar{r}{L_{\ret}i_{\et}} \ar[swap]{d}{\Sigma^{\infty}_{\GG_m,\et}} \ar[phantom]{rd}{\SWarrow} & \SH^{S^1}_{\ret}(X) \ar{d}{\Sigma^{\infty}_{\GG_m,\ret}}   \\
\SH_{\et}(X) \ar{r}{L_{\ret}i_{\et}} & \SH_{\ret}(X).
\end{tikzcd}
\end{equation}
Here, the top lax square is given by stabilizing Construction~\ref{construct:laxbigsmall} and the bottom lax square is given by Lemma~\ref{lem:InvertedRecollement}. Let $\gamma^{\et}_X = \Sigma^{\infty}_{\GG_m,\et}u^{\et}_{X!}$ and $\gamma^{\ret}_X = \Sigma^{\infty}_{\GG_m,\ret}u^{\ret}_{X!}$. We thus obtain a lax symmetric monoidal transformation (using Remark~\ref{rem:ushriek} for strong monoidality of the $u_!$ functors)
\begin{equation} \label{eq:s1-t}
\gamma^{\ret}_X \Theta \Rightarrow \Theta^{\mot} \gamma^{\et}_X.
\end{equation}
Moreover, $\Theta^{\mot} \gamma^{\et}_X$ vanishes on induced objects by Lemma~\ref{lem:toposvshind} and Lemma~\ref{lem:kills-ind}. If $X$ has finite Krull dimension, so that $\gamma^{\ret}_X$ is an equivalence by Bachmann's theorem~\eqref{eq:x-ret}, then Proposition~\ref{prop:ThetaTateMonoidalUniversalProperty} directly applies to produce a comparison lax symmetric monoidal transformation $T_X: \Theta^{\Tate} \Rightarrow \Theta^{\mot} \gamma^{\et}_X$. In the general case, we note that the logic of the proof also applies to show preservation of the universal property upon whiskering with a colimit-preserving symmetric monoidal functor such as $\gamma^{\ret}_X$. We thereby obtain a lax symmetric monoidal transformation


\begin{equation} \label{eq:sh-t}
T_X: \gamma^{\ret}_X \Theta^{\Tate} \Rightarrow \Theta^{\mot} \gamma^{\et}_X, 
\end{equation}
or equivalently, a lax commutative square
\[
\begin{tikzcd}
\Sp(\widehat{X}_{\et}) \ar{r}{\Theta^{\Tate}} \ar{d}[swap]{\gamma^{\et}_X} \ar[phantom]{rd}{\SWarrow T_X} & \Sp(\widehat{X}_{\ret}) \ar{d}{\gamma^{\ret}_X}  \\
\SH_{\et}(X) \ar{r}{\Theta^{\mot}} & \SH_{\ret}(X).
\end{tikzcd}
\]
In view of the discussion below Proposition~\ref{prop:ThetaTateMonoidalUniversalProperty} and using that the vertical functors are strong symmetric monoidal, upon taking right-lax limits we obtain a strong symmetric monoidal functor
\[
C_X: \Sp^{C_2}_b(X) \rightarrow \SH_b(X),
\]
where we have identified the right-lax limit of $\Theta^{\mot}$ by Example~\ref{ex:recoll-b} and the right-lax limit of $\Theta^{\Tate}$ by Theorem~\ref{prop:recoll-stab}.

Now let $f:X \to Y$ be a morphism of schemes and consider the lax commutative diagrams
\[ \begin{tikzcd}[column sep=6ex]
\Sp(\widehat{Y}_{\et}) \ar{r}{\Theta^{\Tate}} \ar{d}[swap]{\gamma_Y^{\et}} \ar[phantom]{rd}{\SWarrow T_Y} & \Sp(\widehat{Y}_{\ret}) \ar{d}{\gamma_Y^{\ret}} \\
\SH_{\et}(Y) \ar{d}[swap]{f^*} \ar{r}{\Theta^{\mot}} \ar[phantom]{rd}{\SWarrow \simeq} & \SH_{\ret}(Y) \ar{d}{f^*}  \\
\SH_{\et}(X) \ar{r}{\Theta^{\mot}} & \SH_{\ret}(X)
\end{tikzcd}, \qquad
\begin{tikzcd}[column sep=6ex]
\Sp(\widehat{Y}_{\et}) \ar{r}{\Theta^{\Tate}} \ar{d}[swap]{f^*} \ar[phantom]{rd}{\SWarrow \simeq} & \Sp(\widehat{Y}_{\ret}) \ar{d}{f^*} \\
\Sp(\widehat{X}_{\et}) \ar{r}{\Theta^{\Tate}} \ar{d}[swap]{\gamma_X^{\et}} \ar[phantom]{rd}{\SWarrow T_X} & \Sp(\widehat{X}_{\ret}) \ar{d}{\gamma_X^{\ret}} \\
\SH_{\et}(X) \ar{r}{\Theta^{\mot}} & \SH_{\ret}(X),
\end{tikzcd} \]
where for the equivalences we use that $\Theta^{\Tate}$ and $\Theta^{\mot}$ are both stable under base change by Corollary~\ref{cor:tate-stable} and Proposition~\ref{prop:base-change-big} respectively. We then claim that the lax symmetric monoidal transformations
\begin{align*} f^* \gamma^{\ret}_Y \Theta^{\Tate} \Rightarrow \Theta^{\mot} f^* \gamma^{\et}_Y, \qquad \gamma^{\ret}_X f^* \Theta^{\Tate} \Rightarrow \Theta^{\mot} \gamma^{\et}_X f^*
\end{align*}
are canonically homotopic under the equivalences $f^* \gamma^{\ret}_Y \simeq \gamma^{\ret}_X f^*$, $f^* \gamma^{\et}_Y \simeq \gamma^{\et}_X f^*$. Indeed, this again follows from the universal property of $\Theta \Rightarrow \Theta^{\Tate}$ and its stability under whiskering with a colimit-preserving strong symmetric monoidal functor, as well as the fact that $f^*: \Sp(\widehat{Y}_{\et}) \to \Sp(\widehat{X}_{\et})$, resp. $f^*: \SH_{\et}(Y) \to \SH_{\et}(X)$ preserves induced objects, resp. \'etale induced objects. Upon taking right-lax limits, we then obtain a commutative square of strong symmetric monoidal functors
\[ \begin{tikzcd}[row sep=4ex, column sep=6ex]
\Sp_b^{C_2}(Y) \ar{r}{C_Y} \ar{d}{f^*} & \SH_b(Y) \ar{d}{f^*} \\
\Sp_b^{C_2}(X) \ar{r}{C_X} & \SH_b(X).
\end{tikzcd} \]
\end{construction}

\begin{theorem} \label{cor:sch-1} The functors $C_X$ of Construction~\ref{construct:compare} assemble to a strong symmetric monoidal transformation \[ C:\Sp^{C_2}_b \Rightarrow \SH_b \] of functors $(\Sch[\tfrac{1}{2}])^{\op} \rightarrow \CAlg(\PrL_{\infty,\stab})$. Moreover, if $p$ is a prime and $X$ is a locally $p$-\'etale finite scheme with $1/p \in \Oscr_X$, then $(C_X)\comp$ is an equivalence.
 \end{theorem}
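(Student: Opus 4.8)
The plan is to reduce the global statement to the local case of a real closed field (Theorem~\ref{thm:sing-ff}) via a base change argument, exploiting the fact that both $\Sp^{C_2}_b$ and $\SH_b$ are built as right-lax limits of gluing functors that are themselves stable under base change. First I would observe that Construction~\ref{construct:compare} already produces the natural transformation $C: \Sp^{C_2}_b \Rightarrow \SH_b$ of functors valued in $\CAlg(\PrL_{\infty,\stab})$, with the requisite compatibility with $f^*$ for morphisms $f$ of schemes with $\tfrac12 \in \Oscr_X$; so the content of the theorem is entirely the equivalence assertion after $p$-completion. Since $C_X$ is a morphism of monoidal recollements (it is the right-lax limit of the square $T_X$ with strong symmetric monoidal vertical functors), and both recollements have gluing functors $\Theta^{\Tate}_X$ and $\Theta^{\mot}_X$, it suffices by the joint conservativity of $(j^*, i^*)$ on the target recollement to check that $(C_X)\comp$ induces equivalences on open and closed parts, \emph{and} that it is compatible with the gluing functors (the latter being automatic from the construction of $T_X$).

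The key steps, in order, are as follows. Step one: identify $C_X$ on the open parts. The open part of $\Sp^{C_2}_b(X)$ is $\Sp(\widehat{X}_{\et})$ and the open part of $\SH_b(X)$ is $\SH_{\et}(X)$, and under these identifications $j^* C_X$ becomes $\gamma^{\et}_X = \Sigma^{\infty}_{\GG_m,\et} u^{\et}_{X!}$; after $p$-completion this is Bachmann's \'etale rigidity equivalence~\eqref{eq:x-et} (Theorem~\ref{thm:BachmannEtaleRigidity}), which applies precisely because $X$ is locally $p$-\'etale finite with $1/p \in \Oscr_X$. Step two: identify $C_X$ on the closed parts. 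The closed part of $\Sp^{C_2}_b(X)$ is $\Sp(\widehat{X}_{\ret})$ and the closed part of $\SH_b(X)$ is $\SH_{\ret}(X)$, and $i^* C_X$ becomes $\gamma^{\ret}_X = \Sigma^{\infty}_{\GG_m,\ret} u^{\ret}_{X!}$; this is Bachmann's real \'etale rigidity equivalence~\eqref{eq:x-ret} (Theorem~\ref{thm:BachmannRealEtaleRigidity}), valid once $X$ has finite Krull dimension (which, under our standing assumption of locally $p$-\'etale finiteness, one may arrange, or one invokes Remark~\ref{rem:shret} to dispense with noetherianity; in any case local $p$-\'etale finiteness is strong enough). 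Step three: conclude. Given that $(j^* C_X)\comp$ and $(i^* C_X)\comp$ are equivalences and that $C_X$ commutes with the recollement functors (so in particular the square relating the two gluing functors through $\gamma^{\et}_X, \gamma^{\ret}_X$ commutes after $p$-completion — this is exactly the lax square $T_X$, and we must check $T_X$ becomes an \emph{equivalence} after $p$-completion), it follows formally from the recollement presentation that $(C_X)\comp$ is an equivalence.

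So the main obstacle is Step three's subsidiary point: verifying that the comparison transformation $T_X: \gamma^{\ret}_X \Theta^{\Tate} \Rightarrow \Theta^{\mot} \gamma^{\et}_X$ of~\eqref{eq:sh-t} is an equivalence after $p$-completion. This is where Theorem~\ref{thm:mot-v-tate} — the identification $(-)^{tC_2} \simeq L_{\ret} i_{\et}$ over a real closed field — enters, but now it must be propagated to an arbitrary locally $p$-\'etale finite base. The strategy I would follow is to use that both $\Theta^{\Tate}$ (by Corollary~\ref{cor:tate-stable}) and $\Theta^{\mot}$ (by Proposition~\ref{prop:base-change-big}, via Proposition~\ref{prop:base-change}) are stable under base change, and that $\gamma^{\et}_X, \gamma^{\ret}_X$ are likewise compatible with $f^*$; hence $T_X$ is compatible with base change, and since everything in sight is hypercomplete $\et$- or $\ret$-sheaves of spectra, the claim that $T_X\comp$ is an equivalence can be checked on stalks at points $\Spec k \to X$ with $k$ real closed (for the $\ret$-part) and at geometric points (for the $\et$-part, where both sides vanish since $\Theta^{\mot}$ and $\Theta^{\Tate}$ kill induced objects and over a separably closed field everything \'etale is induced). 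At the real closed stalks, $T_X\comp$ restricts to the transformation compared in Theorem~\ref{thm:mot-v-tate}, which is an equivalence. A routine but slightly delicate point here is the bookkeeping of how the functors $u^{\et}_{X!}, u^{\ret}_{X!}, \Sigma^{\infty}_{\GG_m}$ interact with stalks and with $p$-completion; I would handle this by the same prespectrum-level argument as in Proposition~\ref{prop:base-change-big} (invoking Remark~\ref{rem:PrespectraAndSpectra}), reducing to the pointwise statement, which is precisely the field case already settled. Once $T_X\comp$ is known to be an equivalence, the conclusion that $(C_X)\comp$ is an equivalence of symmetric monoidal $\infty$-categories is formal from the uniqueness of symmetric monoidal structures on a monoidal recollement determined by the lax symmetric monoidal gluing functor \cite{quigley-shah}*{Proposition~1.26}, exactly as in the proof of Theorem~\ref{thm:sing-ff}.
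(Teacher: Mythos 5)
Your argument for the $p$-complete equivalence matches the paper's: you pass through Bachmann's \'etale and real \'etale rigidity on the open and closed parts of the recollement, reduce the invertibility of $(T_X)\comp$ to real closed stalks using hypercompleteness of $\widetilde{X}_{\ret}$ and base-change stability of $\Theta^{\Tate}$ and $\Theta^{\mot}$, and identify the stalk with the equivalence of Theorem~\ref{thm:mot-v-tate}. This is exactly what the paper does. Two remarks: first, the digression about checking at geometric points ``for the $\et$-part'' is not needed and slightly confused — the transformation $T_X$ lands in $\SH_{\ret}(X)$, so only real closed points are relevant; there is no separate verification at geometric points. Second, you should appeal, as the paper does, to the universal property of $\Theta \Rightarrow \Theta^{\Tate}$ (Proposition~\ref{prop:ThetaTateMonoidalUniversalProperty}) to identify $T_k$ canonically with the comparison map of Theorem~\ref{thm:mot-v-tate}, rather than invoking a ``prespectrum-level argument''; the former is what pins down the homotopy uniquely.

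The genuine gap is in the first assertion, which you dismiss by saying Construction~\ref{construct:compare} ``already produces the natural transformation $C$.'' It does not: the construction produces, for each $X$, a lax symmetric monoidal transformation $T_X$ and hence a functor $C_X$, together with commutative squares expressing compatibility with $f^*$ for individual morphisms $f\colon X \to Y$. That is a collection of functors and $2$-cells, not a coherent natural transformation of functors $(\Sch[\tfrac12])^{\op} \to \CAlg(\PrL_{\infty,\stab})$. Assembling the $C_X$ into such a transformation — i.e., verifying all higher coherences — is the bulk of the paper's proof. The paper handles this by the pairing-construction machinery already set up in Lemma~\ref{lem:TateMonoidalInFamilies}: one forms the locally cocartesian fibration $\widetilde{\Fun}^{\otimes,\lax}(\Sp(\widehat{-}_{\et}),\SH_{\ret}(-))$ over $(\Sch[\tfrac12])^{\op}$, exhibits $\gamma^{\ret}\Theta$ and $\Theta^{\mot}\gamma^{\et}$ as sections, produces a global section classifying the lax square, and then applies a relative left adjoint (the fiberwise Bousfield localization onto functors annihilating induced objects) to obtain a section classifying $\gamma^{\ret}\Theta^{\Tate} \Rightarrow \Theta^{\mot}\gamma^{\et}$; the right-lax limit in the parametrized sense of Remark~\ref{rem:rlaxlim-param} then yields $C^{\otimes}$. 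Without some version of this, the statement that ``$C$ is a strong symmetric monoidal transformation'' is unjustified. Your proof needs to either reproduce this fibrational argument or give another coherence mechanism.
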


\begin{proof} For this proof, we denote $S:=(\Sch[\tfrac{1}{2}])^{\op}$. To assemble the transformation $C$, we employ the same strategy as in Lemma~\ref{lem:TateMonoidalInFamilies} and we assume the notation introduced in that proof. The pairing construction furnishes a locally cocartesian fibration
\[
\widetilde{\Fun}{}^{\otimes,\lax}(\Sp(\widehat{-}_{\et}),\SH_{\ret}(-)) \rightarrow S,
\]
equipped with sections
\[
\gamma^{\ret}\Theta, \gamma^{\ret}\Theta^{\Tate}, \Theta^{\mot}\gamma^{\et}
\]
which fiberwise restricts to the functors $\Sp(\widehat{X}_{\et}) \rightarrow \SH_{\ret}(X)$ of the same name. We first construct a section
\[
\begin{tikzcd}[column sep=6ex]
S \times \Delta^1 \ar{r}{q} \ar{rd}[swap]{\mathrm{pr}_S} & \widetilde{\Fun}{}^{\otimes,\lax}(\Sp(\widehat{-}_{\et}),\SH_{\ret}(-))\ar{d}\\
 & S,\\
\end{tikzcd}
\]
classifying a transformation
\[
\gamma^{\ret}\Theta \Rightarrow \Theta^{\mot}\gamma^{\et},
\]
which is fiberwise given by the lax commutative square~\eqref{eq:big-small}. We begin by noting that have the following commutative diagram of cocartesian fibrations over $S \times \Fin_{\ast}$
\[
\begin{tikzcd}
\Sp(\widehat{-}_{b})^{\otimes} \ar{r}{L_{\tau}}\ar[swap]{d}{\gamma_b} & \Sp(\widehat{-}_{\tau})^{\otimes} \ar{d}{\gamma_{\tau}}\\
\SH_b(-)^{\otimes} \ar{r}{L_{\tau}} & \SH_{\tau}(-)^{\otimes},
\end{tikzcd}
\]
where $\tau = \ret, \et$. The theory of relative adjunctions (specifically, using its formulation in terms of unit and counit maps \cite{higheralgebra}*{Proposition 7.3.2.1}) yields a lax commutative mate square
\[
\begin{tikzcd}
\Sp(\widehat{-}_{b})^{\otimes} \ar[swap]{d}{\gamma_b}  \ar[phantom]{rd}{\SEarrow} & \Sp(\widehat{-}_{\tau})^{\otimes} \ar{d}{\gamma_{\tau}} \ar[swap]{l}{i_{\tau}}\\
\SH_b(-)^{\otimes} & \SH_{\tau}(-)^{\otimes} \ar{l}{i_{\tau}},
\end{tikzcd}
\]
where the maps $i_{\tau}$ are fiberwise only lax symmetric monoidal. Concatenating the strict commutative square for $\tau=\ret$ with the lax commutative square for $\tau=\et$ yields a lax commutative square over $S \times \Fin_{\ast}$ that globalizes~\eqref{eq:big-small}.

Now, we have the full subcategory $\iota: \widetilde{\Fun}{}^{\otimes,\lax}_0(\Sp(\widehat{-}_{\et}),\SH_{\ret}(-)) \subset \widetilde{\Fun}{}^{\otimes,\lax}(\Sp(\widehat{-}_{\et}),\SH_{\ret}(-))$ on those lax symmetric monoidal functors that annihilate induced objects. As in the proof of Lemma~\ref{lem:TateMonoidalInFamilies}, this restricts fiberwise over $X \in S$ to a Bousfield localization (bearing in mind that we implicitly restrict to accessible functors) that admits a left adjoint
\[ L_X: \Fun^{\otimes, \lax}(\Sp(\widehat{X}_{\et}),\SH_{\ret}(X)) \to \Fun^{\otimes,\lax}_0(\Sp(\widehat{X}_{\et}),\SH_{\ret}(X)). \]
Moreover, the locally cocartesian pushforward functor for $\widetilde{\Fun}{}^{\otimes,\lax}(\Sp(\widehat{-}_{\et}),\SH_{\ret}(-))$ over $f: X \to Y$ is given by the composite of left adjoints
\[ \begin{tikzcd}
\Fun^{\otimes, \lax}(\Sp(\widehat{Y}_{\et}),\SH_{\ret}(Y)) \ar{r}{f^*_{\ret} \circ } & \Fun^{\otimes, \lax}(\Sp(\widehat{Y}_{\et}),\SH_{\ret}(X)) \ar{r}{(f^*_{\et})_!} & \Fun^{\otimes, \lax}(\Sp(\widehat{X}_{\et}),\SH_{\ret}(X))
\end{tikzcd}
 \]
where $f^*_{\ret} \circ$ is postcomposition by $f^*_{\ret}: \SH_{\ret}(Y) \to \SH_{\ret}(X)$ and is left adjoint to $f_{*\ret} \circ$, and $(f^*_{\et})_!$ is operadic left Kan extension along $f^*_{\et}: \Sp(\widehat{Y}_{\et}) \to \Sp(\widehat{X}_{\et})$ and is left adjoint to restriction $(f^*_{\et})^*$ along $f^*_{\et}$. Since $f^*_{\et}$ preserves induced objects, it follows that $(f_{*\ret} \circ) \circ (f^*_{\et})^* $ preserves $L$-local objects and hence $(f^*_{\et})_! \circ (f^*_{\ret} \circ)$ preserves $L$-equivalences. Therefore, the relative adjoint functor theorem \cite{higheralgebra}*{Proposition~7.3.2.11} applies to show that the inclusion $\iota$ admits a relative left adjoint
\[ L: \widetilde{\Fun}{}^{\otimes,\lax}(\Sp(\widehat{-}_{\et}),\SH_{\ret}(-)) \to \widetilde{\Fun}{}^{\otimes,\lax}_0(\Sp(\widehat{-}_{\et}),\SH_{\ret}(-)). \]
We then see that $L \circ q$ corresponds to the desired transformation
\[ \gamma^{\ret} \Theta^{\Tate} \Rightarrow \Theta^{\mot} \gamma^{\et}. \]
Indeed, by definition the fiber over $X \in S$ of this transformation yields the transformation $T_X$ of Construction~\ref{construct:compare}. Formation of right-lax limits in the parametrized sense of Remark~\ref{rem:rlaxlim-param} now yields a functor $C^{\otimes}: \Sp^{C_2}_b(-)^{\otimes} \to \SH_b(-)^{\otimes}$ over $S \times \Fin_*$ that preserves cocartesian edges, and the straightening of $C^{\otimes}$ defines the transformation $C: \Sp^{C_2}_b \Rightarrow \SH_b$ of interest.


Next, suppose $X$ is a locally $p$-\'etale finite scheme with $\tfrac{1}{p} \in \Oscr_X$. Restricting the transformation $T_X$ to $p$-complete objects, we get a diagram
\[
\begin{tikzcd}[row sep=6ex,column sep=8ex]
\Sp(\widehat{X}_{\et})\comp \ar[swap]{d}{\simeq} \ar{r}{(\Theta^{\Tate})\comp} \ar[phantom]{rd}{\SWarrow (T_X)\comp} & \Sp(\widetilde{X}_{\ret})\comp \ar{d}{\simeq} \\ 
\SH_{\et}(X)\comp \ar{r}[swap]{(\Theta^{\mot})\comp} & \SH_{\ret}(X)\comp,
\end{tikzcd}
\]
where we have applied~\eqref{eq:x-ret} and~\eqref{eq:x-et} to deduce the vertical equivalences. Therefore, to prove the result, it suffices to prove that $(T_X)\comp$ is invertible. For this, let $E \in \Sp(\widehat{X}_{\et})\comp$ and consider the map 
\[
T_X(E)\comp:(\Theta^{\Tate})\comp(E) \rightarrow (\Theta^{\mot})\comp(E).
\]
Since $\SH_{\ret}(X) \simeq \Sp(\widetilde{X_{\ret}})$ and $\widetilde{X_{\ret}}$ is hypercomplete, it suffices to check that $T_X(E)\comp$ is an equivalence after base change to all real closed points $\alpha: \Spec k \to X$. But since $T\comp$ is stable under base change, we have $\alpha^* T_X(E)\comp \simeq T_{k}(\alpha^* E)\comp$. Appealing once more to the universal property of $T_k$, it also identifies upon $p$-completion with the canonical equivalence $((-)^{tC_2})\comp \simeq (L_{\ret} i_{\et})\comp$ of Theorem~\ref{thm:mot-v-tate}, so we are done.
\end{proof}

\begin{remark} We note that Theorem~\ref{cor:sch-1} in conjunction with Corollary~\ref{cor:SixFunctorsFormalismSHb} gives another proof that $(\Sp^{C_2}_b)\comp$ satisfies the full six functors formalism when restricted to $(\Sch^{p\mbox{-}\fin,\mathrm{noeth}}_{S}[\tfrac{1}{p}, \tfrac{1}{2}])^{\op}$ for $S$ a noetherian locally $p$-\'etale finite scheme (Theorem~\ref{construct:genuine}), which in particular bypasses the explicit verification of Tate-dualizability for $(\Sp^{C_2}_b)\comp$ (Lemma~\ref{lem:duals}). However, the proof of Lemma~\ref{lem:duals} still yields more information (cf. Remark~\ref{rem:c2}).
\end{remark}

\section{Applications} \label{sec:apps}


\subsection{Spectral $b$-rigidity} \label{b-rigid}


We explain how to interpret Theorem~\ref{cor:sch-1} as a spectral ``rigidity'' result for the $b$-topology, along the lines studied in the \'etale and real \'etale settings by Bachmann~\cite{bachmann-et, bachmann-ret}. Recall the notation $\underline{\widehat{X}}_b$ from \S\ref{genuine-six}, and the equivalence $\underline{\widehat{X}}_b \simeq \underline{\Xscr}_{C_2}$ for $\Xscr = \widehat{X[i]}_{\et}$ (the hypercomplete version of Remark~\ref{rem:SameBaseChangeObviousRemark}).

\begin{theorem} \label{thm:sch-c2} Let $X$ be a locally $p$-\'etale finite scheme with $\tfrac{1}{2}, \tfrac{1}{p} \in \Oscr_X$. Then there is a canonical equivalence of $\infty$-categories\footnote{We have not explicated a construction of the symmetric monoidal structure on parametrized Mackey functors in this paper.}
\[
\Fun^{\times}_{\C_2}(\underline{\A}^{\eff}(\Fin_{C_2}), \underline{\Sp}(\underline{\widehat{X}}_b))\comp \simeq \SH_b(X)\comp.
\]
\end{theorem}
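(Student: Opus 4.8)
The plan is to recognize the left-hand side as a mere repackaging of $\Sp^{C_2}_b(X)\comp$ and then appeal to Theorem~\ref{cor:sch-1}. Write $\Xscr = \widehat{X[i]}_{\et}$. By Definition~\ref{dfn:OfficialBSheavesWithTransfers} and Definition~\ref{construct:denis}, $\Sp^{C_2}_b(X) = \Sp^{C_2}(\Xscr)$ is the fiber over $C_2/C_2$ of $\underline{\Sp}^{C_2}(\underline{\Xscr}_{C_2})$, and the (hypercompleted) version of Remark~\ref{rem:SameBaseChangeObviousRemark} identifies $\underline{\widehat{X}}_b \simeq \underline{\Xscr}_{C_2}$ as $C_2$-$\infty$-categories. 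By Proposition~\ref{prop:genuine} the latter admits finite $C_2$-limits, so $\underline{\Sp}(\underline{\widehat{X}}_b)$ is a fiberwise-stable $C_2$-$\infty$-category admitting finite $C_2$-limits; in particular $\Fun^{\times}_{C_2}(\underline{\A}^{\eff}(\Fin_{C_2}), \underline{\Sp}(\underline{\widehat{X}}_b))$ is well-defined and is the fiber over $C_2/C_2$ of $\underline{\Mack}^{C_2}(\underline{\Sp}(\underline{\widehat{X}}_b))$.

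Next I would recall the two interchangeable constructions of genuine $C_2$-stabilization: from Construction~\ref{rem:nardin}, $\underline{\Sp}^{C_2}(\C) = \underline{\Sp}(\underline{\CMon}^{C_2}(\C))$, and there is an equivalence $\underline{\Mack}^{C_2}(\C') \simeq \underline{\CMon}^{C_2}(\C')$ implemented by restriction along $\underline{\Fin}_{C_2*} \to \underline{\A}^{\eff}(\Fin_{C_2})$; from Lemma~\ref{lem:interchange}, $\underline{\CMon}^{C_2}$ and $\underline{\Sp}$ may be interchanged. Chaining these with $\C = \underline{\Xscr}_{C_2}$ gives a string of equivalences
\[ \underline{\Sp}^{C_2}(\underline{\Xscr}_{C_2}) = \underline{\Sp}(\underline{\CMon}^{C_2}(\underline{\Xscr}_{C_2})) \simeq \underline{\CMon}^{C_2}(\underline{\Sp}(\underline{\Xscr}_{C_2})) \simeq \underline{\Mack}^{C_2}(\underline{\Sp}(\underline{\widehat{X}}_b)), \]
and passing to fibers over the orbit $C_2/C_2$ yields a canonical equivalence $\Sp^{C_2}_b(X) \simeq \Fun^{\times}_{C_2}(\underline{\A}^{\eff}(\Fin_{C_2}), \underline{\Sp}(\underline{\widehat{X}}_b))$, hence the same after $p$-completion.

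Finally, since $X$ is locally $p$-\'etale finite with $\tfrac{1}{p} \in \Oscr_X$ and $\tfrac{1}{2} \in \Oscr_X$ by standing hypothesis, Theorem~\ref{cor:sch-1} supplies an equivalence $(C_X)\comp \colon \Sp^{C_2}_b(X)\comp \xrightarrow{\simeq} \SH_b(X)\comp$; composing the two equivalences produces the asserted equivalence $\Fun^{\times}_{C_2}(\underline{\A}^{\eff}(\Fin_{C_2}), \underline{\Sp}(\underline{\widehat{X}}_b))\comp \simeq \SH_b(X)\comp$. I do not expect a serious obstacle: all the substance — the Mackey versus $C_2$-commutative-monoid comparison and its compatibility with fiberwise stabilization (Construction~\ref{rem:nardin}, Lemma~\ref{lem:interchange}), together with the rigidity equivalence of Theorem~\ref{cor:sch-1} — has already been established, so the proof is a formal assembly. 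The one point to phrase carefully is that the equivalence is claimed only at the level of underlying $\infty$-categories, since the symmetric monoidal structure on parametrized Mackey functors has not been constructed here and so no monoidal compatibility needs to be checked.
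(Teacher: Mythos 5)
Your proposal is correct and takes essentially the same route as the paper: the paper's proof is a one-line citation of Lemma~\ref{lem:interchange}, Theorem~\ref{prop:recoll-stab}, and Theorem~\ref{cor:sch-1}, and your unwinding — via Definition~\ref{dfn:OfficialBSheavesWithTransfers}, Remark~\ref{rem:SameBaseChangeObviousRemark}, Construction~\ref{rem:nardin} (including Nardin's $\underline{\Mack}^{C_2} \simeq \underline{\CMon}^{C_2}$), Lemma~\ref{lem:interchange}, and then Theorem~\ref{cor:sch-1} — simply spells out the same formal repackaging of $\Sp^{C_2}_b(X)\comp$ as parametrized Mackey functors. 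The only cosmetic difference is that the paper also cites Theorem~\ref{prop:recoll-stab} (the recollement description of $\Sp^{C_2}(\Xscr)$), which underlies how $C_X$ is constructed inside Theorem~\ref{cor:sch-1}, whereas you invoke that theorem as a black box; this does not change the substance.
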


\begin{proof} After Lemma~\ref{lem:interchange} and Theorem~\ref{prop:recoll-stab}, the result follows immediately from Theorem~\ref{cor:sch-1}.
\end{proof}

The meaning of Theorem~\ref{thm:sch-c2} is this: after $p$-completion and with mild finiteness hypotheses, the sole distinction between Scheiderer motives and $b$-sheaves of spectra (on the small site!) lies in adjoining certain transfer maps, which serve to repair the obvious obstruction presented by the failure of Tate-dualizability. Let us now elaborate the structure of these transfers.

\begin{remark} \label{rem:concrete} An object in $\Fun^{\times}_{\C_2}(\underline{\A}^{\eff}(\Fin_{C_2}), \underline{\Sp}(\widehat{X}_b))$ is concretely given as follows: it is the data of a hypercomplete $b$-sheaf of spectra $E \in \Sp(\widehat{X}_b)$, together with single transfer map
\[
\mathrm{tr}: \overline{\pi}_*\overline{\pi}^*E \rightarrow E,
\]
and the data of relations as encoded by the effective Burnside category. Here, the adjunction
\[
\overline{\pi}^*: \Sp(\widehat{X}_b) \rightleftarrows  \Sp(\widehat{X[i]}_b) \simeq \Sp(\widehat{X[i]}_{\et}): \overline{\pi}_*
\]
follows our customary notation (first introduced around Lemma~\ref{lem:j-open}), but also coincides with the pullback-pushforward adjunction for sheaves.
\end{remark}

\begin{remark} \label{rem:concrete2} We can further unwind the transfer map $\mathrm{tr}: \overline{\pi}_*\overline{\pi}^*E \rightarrow E$ in terms of the real \'etale and the \'etale parts of a $b$-sheaf. Let us place ourselves in the context of the (stabilization of the) recollement given in Proposition~\ref{prop:recoll}. Given $E \in \Sp(\widehat{X}_b)$, consider the \'etale and real \'etale parts
\[
F := j^*E \in  \Sp(\widehat{X}_{\et}), \qquad G:=i^*E \in \Sp(\widetilde{X}_{\ret}).
\]
We also have an ambidextrous adjunction
\[
\pi^*:\Sp(\widehat{X}_{\et}) \rightleftarrows \Sp(\widehat{X[i]}_{\et}):\pi_*.
\]

Now, using the ambidexterity equivalence $\pi_! \simeq \pi_*$, the transfer restricted to the \'etale site coincides with the counit map
\[
\epsilon:\pi_*\pi^*F \simeq \pi_!\pi^*F \rightarrow F.
\]
In particular, the \'etale component of the transfer is no additional data. By contrast, the real \'etale component is specified by a map
\[
\mathrm{tr}_{\ret}:\Theta\pi_*\pi^*F  \rightarrow G,
\]
subject to the commutativity constraint
\[
\begin{tikzcd}
\Theta\pi_*\pi^*F \ar{d}[swap]{\mathrm{tr}_{\ret}} \ar{dr}{\Theta(\epsilon)} & \\
G \ar{r} & \Theta F,
\end{tikzcd}
\]
where the bottom arrow arises from the gluing datum of the $b$-sheaf $E$ (in terms of the recollement, this is given by the transformation $i^* E \rightarrow i^*j_*j^* E$). Note also that $\Theta \pi_* \simeq \nu^*$.

In the simplest case of $X = \Spec k$ for a real closed field $k$, so that $E$ is a spectral presheaf on $\Oscr_{C_2}$ that underlies a genuine $C_2$-spectrum, we see that the map $\mathrm{tr}_{\ret}: \nu^* \pi^* F \to G$ is precisely the usual transfer map $t: E^{e} \to E^{C_2}$, the map $G \to \Theta F$ is the inclusion of fixed points $r: E^{C_2} \to E^{e}$, and the constraint amounts to the relation $r \circ t \simeq \sigma + \id$ for $\sigma$ the $C_2$-action on $E^{e}$.
\end{remark}

\subsection{Parametrized $C_2$-realization functor} \label{realization} We append the next theorem to the long list of realization functors already constructed in stable motivic homotopy theory. Again, let $X$ be a locally $p$-\'etale finite scheme with $\tfrac{1}{2}, \tfrac{1}{p} \in \Oscr_X$.

\begin{theorem} \label{thm:realization} There is a colimit preserving and strong symmetric monoidal {\bf parametrized $C_2$-Delfs-Knebusch realization functor}
\[
\DK_X^{C_2}\comp: \SH(X) \rightarrow \Sp^{C_2}_b(X)\comp,
\]
which agrees with the $p$-completion of the $C_2$-Delfs-Knebusch realization functor $\DK^{C_2}$ of \S\ref{betti-c2} whenever $X$ is the spectrum of a real closed field.
\end{theorem}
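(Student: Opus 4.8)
The construction of $\DK_X^{C_2}$ follows the same pattern as the construction of the comparison functor $C_X$ in Construction~\ref{construct:compare}, but now at the level of big sites. The idea is to factor $\SH(X) \to \Sp^{C_2}_b(X)\comp$ through the $b$-localization $L_b : \SH(X) \to \SH_b(X)$, followed by the inverse of $(C_X)\comp : \Sp^{C_2}_b(X)\comp \xrightarrow{\simeq} \SH_b(X)\comp$ from Theorem~\ref{cor:sch-1} (precomposing with $p$-completion). Concretely, set
\[
\DK_X^{C_2}\comp = (C_X)\comp^{-1} \circ (L_b)\comp \circ L_p : \SH(X) \xrightarrow{L_p} \SH(X)\comp \xrightarrow{(L_b)\comp} \SH_b(X)\comp \xrightarrow{(C_X)\comp^{-1}} \Sp^{C_2}_b(X)\comp.
\]
Here $L_p$ is $p$-completion, $(L_b)\comp$ is the $p$-complete $b$-localization (which makes sense since $b$-localization is compatible with $p$-completion), and the last equivalence is furnished by Theorem~\ref{cor:sch-1} under the hypothesis that $X$ is locally $p$-\'etale finite with $\tfrac{1}{p}, \tfrac{1}{2} \in \Oscr_X$. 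Each of the three functors is colimit-preserving and strong symmetric monoidal ($L_p$ and $L_b$ are smashing, hence strong monoidal Bousfield localizations, and $C_X$ is strong symmetric monoidal by Construction~\ref{construct:compare}), so the composite is as well.

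\textbf{Key steps.} First I would recall that $b$-localization $L_b: \SH(X) \to \SH_b(X)$ is a strong symmetric monoidal localization commuting with $p$-completion, so that $(L_b)\comp : \SH(X)\comp \to \SH_b(X)\comp$ is well-defined, colimit-preserving, and strong symmetric monoidal; this is immediate from the definition of $\SH_b$ as a Bousfield localization and the fact that $p$-completion commutes with localizations at a set of morphisms (cf.\ the conventions in \S1.5 and the use of $\comp$ throughout \S\ref{sect:six}). Second, I would invoke Theorem~\ref{cor:sch-1}: under the stated finiteness hypotheses, $(C_X)\comp : \Sp^{C_2}_b(X)\comp \to \SH_b(X)\comp$ is an equivalence of symmetric monoidal $\infty$-categories, so its inverse is also colimit-preserving and strong symmetric monoidal. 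Composing gives $\DK_X^{C_2}\comp$ with the asserted formal properties. Third, for naturality/functoriality in $X$ (if desired in the statement), one uses the commuting squares for $C$ established at the end of Construction~\ref{construct:compare} together with the base-change compatibility of $L_b$ and $L_p$.

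\textbf{Identification over a real closed field.} The remaining point is to check that when $X = \Spec k$ for $k$ a real closed field, the functor $\DK_X^{C_2}\comp$ agrees with the $p$-completion of the $C_2$-Delfs--Knebusch realization $\DK^{C_2}: \SH(k) \to \Sp^{C_2}$ of \S\ref{betti-c2}. By Theorem~\ref{thm:sing-ff}, $\SH_b(k)\comp \simeq (\Sp^{C_2})\comp$ with the equivalence realized by $\Sing^{C_2}$ (as the right adjoint to $(L_b)\comp$, which coincides with the right adjoint to $\DK^{C_2}$ after $p$-completion); dually, the localization $(L_b)\comp$ is identified with $(\DK^{C_2})\comp$ under this equivalence. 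It then remains to verify that under the identification $\Sp^{C_2}_b(k)\comp \simeq \SH_b(k)\comp$ coming from $(C_k)\comp$ on the one hand, and $\Sp^{C_2}_b(k)\comp \simeq (\Sp^{C_2})\comp$ on the other (the latter because $\widetilde{k[i]}_{\et}$ is the terminal $C_2$-$\infty$-topos up to the Galois identification, so $\Sp^{C_2}_b(k) = \Sp^{C_2}(\widehat{k[i]}_{\et}) \simeq \Sp^{C_2}$ by Example~\ref{ex:g-sp}), these two identifications are compatible. This compatibility is exactly what is encoded by the transformation $T_k : \gamma^{\ret}_k \Theta^{\Tate} \Rightarrow \Theta^{\mot}\gamma^{\et}_k$ together with the last paragraph of the proof of Theorem~\ref{cor:sch-1}, where $(T_k)\comp$ is shown to be the canonical equivalence $((-)^{tC_2})\comp \simeq (L_{\ret} i_{\et})\comp$ of Theorem~\ref{thm:mot-v-tate}; chasing this through the right-lax limits identifies $(C_k)\comp^{-1} \circ (L_b)\comp$ with $(\DK^{C_2})\comp$. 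The main obstacle is precisely this last identification: one must carefully track the gluing data on both sides of the recollement and confirm that the comparison map on \'etale and real \'etale parts --- which on the \'etale part is Proposition~\ref{prop:et-compare} and on the real \'etale part is Proposition~\ref{prop:ret-compare} --- assembles to the single transformation $T_k$ after $p$-completion, which is guaranteed by the uniqueness of monoidal structures on a monoidal recollement determined by the lax symmetric monoidal gluing functor \cite{quigley-shah}*{Proposition~1.26}, as already exploited in the proof of Theorem~\ref{thm:sing-ff}.
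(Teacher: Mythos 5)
Your proposal takes essentially the same approach as the paper: define $\DK_X^{C_2}\comp$ as the composite of the $b$-localization, $p$-completion, and the inverse of the equivalence $(C_X)\comp$ from Theorem~\ref{cor:sch-1}, and deduce the identification over a real closed field from Theorem~\ref{thm:sing-ff}. One small correction: you justify strong symmetric monoidality of $L_p$ and $L_b$ by asserting that they are \emph{smashing} localizations, but this is not true in general ($p$-completion is almost never smashing, and the paper only establishes that $L_b$ is smashing over a real closed field in Theorem~\ref{thm:sing-ff}); the correct justification is simply that both are Bousfield localizations whose local equivalences form a $\otimes$-ideal, hence they are strong monoidal colimit-preserving localization functors by \cite{higheralgebra}*{Proposition~2.2.1.9}. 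This does not affect the conclusion.
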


\begin{proof} After Corollary~\ref{thm:sch-c2}, this functor is given by applying $L_b$ and then $p$-completing, both of which are strong symmetric monoidal functors.
\end{proof}

\begin{remark} The functor of Theorem~\ref{thm:realization} defines the $p$-complete part of a functor whose existence was conjectured by Bachmann-Hoyois \cite{bachmann-hoyois}*{Remark 11.8}. We defer the construction of an integral version of this functor, as well as an appropriately compatible norm structure in their sense, to a future work.
\end{remark}

\begin{remark} In \cite{behrens-shah}, Behrens and the second author gave an explicit formula for $\Be^{C_2}\comp$ when restricted to cellular real motivic spectra, as a certain combination of the operations of $\rho$-inversion and $\tau$-inversion on the $\rho$-completion, where $\tau$ is the ``spherical Bott element".\footnote{More precisely, we must take an inverse limit of $C(\rho^n)[\tau_N^{-1}]$ for self-maps $\tau_N$ on $C(\rho^n)$ to define $\1\comrho[\tau^{-1}]$; $\tau$ as a self-map on $\1\comrho$ itself does not exist as the periodicity lengths of the self-maps $\tau_N$ go to $\infty$ as $n \to \infty$.} In \cite{beo}, Bachmann, \O stva\ae r, and the first author will understand \'etale descent in terms of $\tau$-inversion. In a future work, we aim to unite these two perspectives to give an explicit formula for $\DK_X^{C_2}\comp$ in terms of $\rho$ and $\tau$. In particular, such a description would enable us to show that the right adjoint $\Sing^{C_2}_X \comp$ to $\DK_X^{C_2}\comp$ preserves colimits. Conditional on this, one can then enhance the equivalence of Theorem~\ref{thm:MonadicityofRealization} after $p$-completion using \cite{e-kolderup}*{Theorem 5.5}: if $X$ is a regular scheme over a field $k$ of characteristic zero, then we expect a natural equivalence
\[
\Sp^{C_2}_b(X)\comp \simeq \Mod_{\Sing^{C_2}_X \comp \DK_X^{C_2}\comp(\1)}(\SH(X)\comp).
\]
Since $\Sp^{C_2}_b(X)\comp$ also embeds fully faithfully into $\SH(X)\comp$ as $\SH_b(X)\comp$, we would then deduce that $\SH_b(X)\comp$ is a smashing localization of $\SH(X)\comp$, thereby extending Theorem~\ref{thm:sing-ff}.
\end{remark}


\subsection{The Segal conjecture and the Scheiderer sphere} \label{segal}
Let us now place ourselves in the context of the recollement in Proposition~\ref{prop:preserves} for $p=2$; specializing to the case of real closed fields, this yields the recollement $(\Sp^{BC_2}, \Sp)$ of $\Sp^{C_2}$. Recall the following formulation of the Segal conjecture for the group $C_2$, which is a theorem of Lin \cite{lin-1,lin}.

\begin{theorem}[Lin] \label{thm:lin} In $\Sp^{C_2}$, the canonical map $\1 \rightarrow j_*j^*\1$ is an equivalence after $2$-completion, i.e, the $C_2$-sphere spectrum is Borel complete after $2$-completion.
\end{theorem}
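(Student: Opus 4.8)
\textbf{Proposal for the proof of the Segal conjecture statement (Theorem~\ref{thm:lin}).}

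The plan is to reduce the stated toposic formulation to the classical Segal conjecture for $C_2$ as proved by Lin \cite{lin-1,lin}. First I would unpack the map $\1 \to j_* j^* \1$ in $\Sp^{C_2}$ using the recollement of Proposition~\ref{prop:preserves} specialized to the terminal $C_2$-$\infty$-topos $\Spc$ (with trivial $C_2$-action), where $\Sp^{C_2}(\Spc) \simeq \Sp^{C_2}$, $\Sp(\Spc_{\hh C_2}) \simeq \Fun(BC_2, \Sp) = \Sp^{BC_2}$, and $\Sp(\Spc^{\hh C_2}) \simeq \Sp$. Here $j^*$ is the functor that takes a genuine $C_2$-spectrum to its underlying Borel $C_2$-spectrum, and $j_*$ is the right adjoint sending a Borel $C_2$-spectrum $Y$ to the cofree genuine $C_2$-spectrum with underlying Borel object $Y$. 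The unit map $\1 \to j_* j^* \1$ therefore has underlying Borel map the identity (since $j^*$ is a localization so $j^* j_* \simeq \id$, hence $j^* (\1 \to j_*j^*\1)$ is an equivalence), so by joint conservativity of $(j^*, i^*)$ it suffices to check that $i^*$ applied to this unit is a $2$-adic equivalence, where $i^* = \Psi^{\Phi C_2}$ is geometric fixed points (by the identification $\Sp(\Spc^{\hh C_2}) \simeq \Sp$ and Lemma~\ref{lem:i-closed}).

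Next I would identify $i^* j_* j^* \1$: by the factorization $i^* j_* \simeq \Theta^{\Tate} = (-)^{tC_2} \circ \nu(C_2)^*$ of Theorem~\ref{prop:recoll-stab} (applied to $\Xscr = \Spc$, so $\nu(C_2)^*$ is the restriction along $BC_2 \to \Oscr_{C_2}^{\op}$, which on $j^*\1 = \1_{\mathrm{Bor}}$ just recovers the trivial $C_2$-action on the sphere), we get $i^* j_* j^* \1 \simeq (\SS)^{tC_2} = \SS^{tC_2}$, the $C_2$-Tate construction on the sphere spectrum with trivial action. On the other side, $i^* \1 \simeq \SS$ is the sphere spectrum (geometric fixed points of the unit), and the unit map $\SS \to \SS^{tC_2}$ is precisely the composite $\SS \simeq \SS^{hC_2}{}^{\text{(via the unit of triv/fixed pts)}} \to \SS^{tC_2}$... more carefully, it is the canonical comparison map arising from the recollement, which unwinds to the map $\SS \to \SS^{tC_2}$ classically studied in the Segal conjecture. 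The content of Lin's theorem (in the form popularized by Adams--Gunawardena--Miller and Nikolaus--Scholze, see \cite{nikolaus-scholze}*{\S III.1}) is exactly that this map $\SS \to \SS^{tC_2}$ is a $2$-adic equivalence; equivalently, $\SS^{tC_2}\comtwo \simeq \SS\comtwo$.

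The one point requiring care — and the main obstacle — is matching the map produced abstractly by the recollement (namely $i^*$ of the unit $\1 \to j_*j^*\1$) with the classical Segal conjecture map $\SS \to \SS^{tC_2}$ on the nose, rather than merely up to an automorphism of $\SS^{tC_2}\comtwo$ (which, being a $2$-complete ring spectrum with $\pi_0 = \ZZ_2$, has only the two automorphisms $\pm 1$, and in fact checking the map is unital/nonzero on $\pi_0$ rules out the bad cases). I would handle this by tracing the natural transformation $\mu\colon (-)^{hC_2} \to (-)^{tC_2}$ through the identifications: the unit $\1 \to j_*j^*\1$ corresponds under the recollement to the gluing datum, and applying $i^* = \Psi^{\Phi C_2}$ together with Theorem~\ref{thm:cat-fixed}(2)--(3) expresses it as the Tate-valued Frobenius / the canonical map on the summand of the sphere, which is $\SS \to \SS^{tC_2}$ by \cite{nikolaus-scholze}*{Lemma III.1.16} (or by directly comparing with the classical Segal conjecture formulation). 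Once this identification is in place, Lin's theorem gives the $2$-adic equivalence and hence, by joint conservativity of $(j^*,i^*)$ and the fact that $j^*$ of the unit is already an equivalence, we conclude that $\1 \to j_*j^*\1$ is a $2$-adic equivalence in $\Sp^{C_2}$, i.e., the $C_2$-sphere is Borel complete after $2$-completion.
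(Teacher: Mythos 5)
The paper does not actually prove this statement: it cites Lin's work \cite{lin-1,lin} and leaves implicit the translation from the classical Segal conjecture (completion of the Burnside ring, or $\SS^{C_2}\comtwo \simeq (\SS^{hC_2})\comtwo$) to the recollement-theoretic formulation stated here. Your proposal supplies that missing translation, and the route is correct. A few points can be tightened. The reduction to checking $i^*$ of the unit is clean: $j^*(\1\to j_*j^*\1)$ is an equivalence because $j_*$ is fully faithful, and by joint conservativity together with exactness of $j^*,i^*$ (applied to the cofiber and its mod-$2$ reduction) it suffices that $i^*(\1\to j_*j^*\1)$ be a $2$-adic equivalence — you do not need $i^*$ to commute with $2$-completion, only with finite colimits. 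The identification $i^*j_*j^*\1 \simeq \SS^{tC_2}$ is immediate from $\Theta^{\Tate}\simeq(-)^{tC_2}\circ\nu(C_2)^*$ (Theorem~\ref{prop:recoll-stab} applied to the terminal $C_2$-$\infty$-topos). Your concern about matching the resulting map $\SS\to\SS^{tC_2}$ with the classical Segal conjecture map is resolved more directly than you suggest: $j^*,j_*,i^*$ are all lax symmetric monoidal, so $i^*(\1\to j_*j^*\1)$ is a map of $\mathbb{E}_\infty$-rings out of the initial ring spectrum $\SS$, hence \emph{is} the unit of $\SS^{tC_2}$; and Lin's theorem (in the form of \cite{nikolaus-scholze}) says precisely that the unit $\SS\to\SS^{tC_2}$ is a $2$-adic equivalence. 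The discussion of $\pm1$ automorphisms is therefore unnecessary. Finally, the parenthetical claim that "$\SS\simeq\SS^{hC_2}$ (via the unit of triv/fixed pts)" is incorrect — by the tom Dieck splitting $\SS^{C_2}\simeq\SS\oplus\Sigma^\infty_+ BC_2$, so $\SS^{hC_2}$ is not the sphere, and the map coming from the geometric fixed points leg of the recollement does not factor through $\SS^{hC_2}$; you correctly back off from this in the next clause, so it does not propagate, but it should be excised.
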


We have the following translation of the Segal conjecture into the motivic setting.

\begin{corollary} \label{lem:rcf} Suppose that $k$ is a real closed field and let $E \in \SH_b(k)\comtwo$ be in the thick subcategory generated by $\1\comtwo$. Then $E$ satisfies \'etale descent.
\end{corollary}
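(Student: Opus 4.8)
The statement asserts that if $E \in \SH_b(k)\comtwo$ lies in the thick subcategory generated by the unit $\1\comtwo$, then $E$ satisfies \'etale descent. The first step is to unwind what ``\'etale descent'' means in this recollement-theoretic context: since $\SH_b(k)\comtwo$ is (by Theorem~\ref{thm:sing-ff}) equivalent to $(\Sp^{C_2})\comtwo$, which admits the recollement $(\Sp^{BC_2}, \Sp)$ with open part the Borel $C_2$-spectra and open inclusion $j_*$, an object satisfies \'etale descent precisely when the canonical unit map $E \to j_*j^* E$ is an equivalence, i.e.\ $E$ is \emph{Borel complete}. (One should check this is the right translation: under the equivalence of Theorem~\ref{thm:sing-ff}, $j_*j^*$ on $(\Sp^{C_2})\comtwo$ corresponds to the \'etale-localization-then-include functor $i_{\et} L_{\et}$ on $\SH_b(k)\comtwo$, and $E \to i_{\et}L_{\et} E$ being an equivalence is exactly the assertion of \'etale descent.)

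The second step is to invoke Lin's theorem (Theorem~\ref{thm:lin}): the $C_2$-equivariant sphere spectrum $\1$ is Borel complete after $2$-completion, i.e.\ $\1\comtwo \to j_*j^*(\1\comtwo)$ is an equivalence. Hence $\1\comtwo \in \SH_b(k)\comtwo$ satisfies \'etale descent. The third step is a formal closure argument: the full subcategory of $\SH_b(k)\comtwo$ consisting of objects satisfying \'etale descent (equivalently, Borel-complete objects, equivalently objects in the essential image of $j_*$) is a \emph{thick} subcategory. Indeed, $j_*$ is fully faithful and exact, so its essential image is stable under finite limits, finite colimits, shifts, and retracts; equivalently, the Borel-complete objects are the $A$-complete objects for $A = \overline{\pi}_*\overline{\pi}^*(\1)$ in the sense of Mathew--Naumann--Noel (cf.\ Remark~\ref{rem:FiniteLocalization}), and completeness is manifestly a thick condition. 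Since $\1\comtwo$ satisfies \'etale descent and the thick subcategory generated by $\1\comtwo$ is the smallest thick subcategory containing it, every $E$ in this thick subcategory satisfies \'etale descent.

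The main point to get right is the identification in the first step: that ``\'etale descent for $E \in \SH_b(k)\comtwo$'' is equivalent to ``$j^* E$ (as an object over the \'etale part) recovers $E$ via $j_*$,'' i.e.\ that the recollement gluing/inclusion functor for the $b$-topology really does compute \'etale sheafification after $2$-completion. This follows from the monoidal recollement $(\SH_{\et}(k), \SH_{\ret}(k))$ on $\SH_b(k)$ of~\eqref{eq:b-recoll-sh}, together with the fact that for $k$ real closed the $\infty$-topos of \'etale sheaves is already hypercomplete (so $\SH_{\et}(k)\comtwo \simeq (\Sp^{BC_2})\comtwo$ by Bachmann's theorem), under which $j_* j^*$ is precisely $i_{\et} L_{\et}$. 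Once this dictionary is in place, the rest is Lin's theorem plus the triviality that a thick subcategory is closed under the operations defining it, so there is no serious obstacle beyond bookkeeping; the only substantive input is Theorem~\ref{thm:lin} itself.
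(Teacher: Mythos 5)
Your proposal is correct and takes essentially the same route as the paper, whose proof is just the one-liner ``After Theorem~\ref{thm:sing-ff}, this is a consequence of Theorem~\ref{thm:lin}.'' You have simply unfolded that implicit reasoning explicitly: the equivalence $\SH_b(k)\comtwo \simeq (\Sp^{C_2})\comtwo$ matches $i_{\et}L_{\et}$ with the recollement endofunctor $j_*j^*$, Lin's theorem gives Borel-completeness of $\1\comtwo$, and thickness of the essential image of the fully faithful exact functor $j_*$ propagates this to the thick subcategory generated by $\1\comtwo$.
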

\begin{proof} After Theorem~\ref{thm:sing-ff}, this is a consequence of Theorem~\ref{thm:lin}.
\end{proof}

\begin{question} Can one give an independent proof of the Segal conjecture under the connection furnished by Theorem~\ref{thm:sing-ff}? See \cite{hahn-wilson} for another new perspective on the Segal conjecture.
\end{question}

The Segal conjecture further implies the following descent result for the Scheiderer sphere over any scheme of finite Krull dimension.

\begin{theorem} \label{thm:segal} Let $X$ be a finite-dimensional scheme and let $E \in \SH_b(X)\comtwo$ be in the thick subcategory generated by the $2$-complete Scheiderer sphere spectrum $\1_b\comtwo$. Then $E$ satisfies \'etale descent.
\end{theorem}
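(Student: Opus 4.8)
The plan is to reduce the statement over a general finite-dimensional scheme $X$ to the case of real closed fields, where Corollary~\ref{lem:rcf} applies. Since \'etale descent is a condition that can be checked after passing to a conservative family of points, and since $\widetilde{X_{\ret}}$ is hypercomplete by Theorem~\ref{thm:hyper} (as $X$ has finite Krull dimension), the real \'etale stalks at real closed points detect equivalences of $\ret$-sheaves of spectra. First I would recall that \'etale descent for an object $E \in \SH_b(X)\comtwo$ is equivalent to the statement that the canonical map $E \to j_* j^* E$ is an equivalence, where $(j^*, j_*)$ are the recollement functors of~\eqref{eq:b-recoll-sh} (after $2$-completion). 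Equivalently, writing $E$ via its recollement data $(F = j^* E, G = i^* E, G \to \Theta^{\mot} F)$, the condition is that the gluing map $G \to \Theta^{\mot} F$ is an equivalence; indeed, $j_* j^* E$ corresponds to the data $(F, \Theta^{\mot} F, \mathrm{id})$, so $E \to j_* j^* E$ is an equivalence precisely when $i^*$ of it, namely $G \to \Theta^{\mot} F$, is.

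\textbf{Key steps.} The heart of the argument is that the gluing map $G \to \Theta^{\mot} F$ is an equivalence of $\ret$-sheaves of spectra, which by hypercompleteness may be checked on stalks at real closed points $\alpha: \Spec k \to X$. The crucial input is that both $\Theta^{\mot}$ and the whole recollement are stable under base change: by Proposition~\ref{prop:base-change-big}, the exchange transformation $\chi'': f_{\ret}^* \Theta^{\mot}_Y \Rightarrow \Theta^{\mot}_X f_{\et}^*$ is an equivalence for any $f: X \to Y$, and consequently $f^*: \SH_b(Y) \to \SH_b(X)$ is a morphism of recollements commuting with all the relevant functors. Therefore $\alpha^*(G \to \Theta^{\mot}_X F) \simeq (\alpha^* G \to \Theta^{\mot}_k \alpha^* F)$, and this is the gluing map for $\alpha^* E \in \SH_b(k)\comtwo$. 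Next I would observe that since $E$ lies in the thick subcategory generated by $\1_b\comtwo$, its pullback $\alpha^* E$ lies in the thick subcategory of $\SH_b(k)\comtwo$ generated by $\1_{b,k}\comtwo$ (as $\alpha^*$ is exact, symmetric monoidal, and sends the unit to the unit). By Corollary~\ref{lem:rcf}, $\alpha^* E$ then satisfies \'etale descent, i.e., its gluing map is an equivalence. Since this holds for every real closed point $\alpha$, the gluing map for $E$ is an equivalence on all $\ret$-stalks, hence an equivalence by hypercompleteness of $\widetilde{X_{\ret}}$, and therefore $E$ satisfies \'etale descent.

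\textbf{Main obstacle.} The main subtlety is purely bookkeeping: making precise the translation ``$E$ satisfies \'etale descent $\iff$ the gluing map $G \to \Theta^{\mot} F$ is an equivalence'' and checking that the base change $\alpha^*$ genuinely commutes with forming this gluing map, i.e., that $\alpha^*$ is a \emph{strict} morphism of recollements on the relevant $2$-complete categories. This follows from the analysis already carried out in Construction~\ref{construct:compare} and Proposition~\ref{prop:base-change-big} together with the fact that $\alpha^*$ preserves \'etale induced objects, so no new ideas are needed --- one only needs to assemble the pieces. A minor point to verify is that $2$-completion is compatible with the recollement structure and with base change in the required sense, which is handled by the general formalism (e.g.\ \cite{mnn-descent}) and the observation in \cite{behrens-shah}*{Lemma~3.7} that the monoidal structures persist after completion. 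I expect the write-up to be short once the reduction to points is set up correctly.
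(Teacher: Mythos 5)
Your proof is correct and follows essentially the same route as the paper: reduce to stalks at real closed points via hypercompleteness of $\widetilde{X_{\ret}}$ (Theorem~\ref{thm:hyper}), use base change compatibility of the gluing data (Proposition~\ref{prop:base-change-big}), and invoke Corollary~\ref{lem:rcf} for the base case. The only cosmetic differences are that the paper first reduces to $E = \1\comtwo$ (noting \'etale-local objects form a thick subcategory) and phrases the goal as $i^!\1\comtwo \simeq 0$ via the fiber sequence $i^! \to i^* \to i^*j_*j^* = \Theta^{\mot}j^*$, whereas you keep a general $E$ (pushing the thick-subcategory observation to the stalk $\alpha^*E$) and phrase the goal as the gluing map $G \to \Theta^{\mot}F$ being an equivalence --- these two conditions are equivalent by that same fiber sequence.
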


\begin{proof} It suffices to prove the result for $E = \1\comtwo$. We are in the $2$-completion of the recollement situation of~\eqref{eq:b-recoll-sh}. To prove that $\1\comtwo$ has \'etale descent, we need only prove that the canonical map $\1\comtwo \rightarrow j_*j^* \1\comtwo$ is an equivalence, for which it suffices to show that $i^!\1\comtwo \simeq 0$. To prove this, consider the recollement fiber sequence
\[ i^! \rightarrow i^* \rightarrow i^* j_* j^* = \Theta^{\mot} j^*.\]
Both $j^* = L_{\et}$ and $\Theta^{\mot}$ are stable under base change (the former is obvious, and the latter is Proposition~\ref{prop:base-change-big}), so $i^!$ is also stable under base change. Now by the finite-dimensionality assumption on $X$, we have that $\SH_{\ret}(X) \simeq \Sp(\widetilde{X_{\ret}})$ and $\widetilde{X_{\ret}}$ is hypercomplete, so to prove that $i^!\1\comtwo$ is zero, it suffices to show that $i^!\1\comtwo$ is zero after base change along all $\alpha: \Spec k \to X$ for $k$ a real closed field. We thereby reduce to Corollary~\ref{lem:rcf}.
%

\end{proof}

\appendix

\section{Motivic spectra over various topologies} \label{app:sh-top}
\tikzcdset{arrow style=tikz, diagrams={>=stealth}}

The goal of this appendix is to show that recollements on sheaves induced by intersecting Grothendieck topologies descend to recollements of motivic spectra. 

\subsection{Generalities} We first work in greater generality. Let $\B$ be an $\infty$-category and suppose that $\tau_U$ and $\tau_Z$ are two Grothendieck topologies on $\B$. Let $\tau = \tau_U \cap \tau_Z$, so that $\tau$ is the finest topology on $\B$ coarser than both $\tau_U$ and $\tau_Z$. Explicitly, for every $X \in \B$, a sieve $R \subset \B_{/X}$ is a $\tau$-covering sieve if and only if $R$ is both a $\tau_U$ and $\tau_Z$-covering sieve. The recollement on the level of motivic spaces then fits into the format of the next construction:

\begin{construction} \label{construct:recoll-a1}
Let $\omega$ be any Grothendieck topology on $\B$.
\begin{itemize}
\item We let $\Kscr_{\omega}$ be the small set of morphisms $\{ R \hookrightarrow h_X \}$ in $\Pre(\B)$ ranging over $X \in \B$ and $\omega$-covering sieves $R \subset \B_{/X}$.
\item Given another small set $\Lscr$ of morphisms in $\Pre(\B)$, let $\Shv_{\omega}^{\Lscr}(\B) \subset \Pre(\B)$ be the full subcategory of $(\Lscr \cup \Kscr)$-local objects.
\end{itemize} 
Let $\Shv_{\omega}^{\Lscr}(\B,\Escr) = \Shv_{\omega}^{\Lscr}(\B) \otimes \Escr$ be the tensor product in $\PrL_{\infty}$. Appealing to the theory of Bousfield localization \cite{htt}*{Proposition 5.5.4.15}, we have the two adjunctions
\[ \begin{tikzcd}[row sep=4ex, column sep=6ex, text height=1.5ex, text depth=0.25ex]
\Shv_{\tau_U}^{\Lscr}(\B, \Escr) \ar[shift right=1,right hook->]{r}[swap]{j_{\ast}} &  \Shv_{\tau}^{\Lscr}(\B, \Escr) \ar[shift right=2]{l}[swap]{j^{\ast}} \ar[shift left=2]{r}{i^{\ast}} & \Shv_{\tau_Z}^{\Lscr}(\B, \Escr) \ar[shift left=1,left hook->]{l}{i_{\ast}}.
\end{tikzcd} \]
\end{construction}

\begin{lemma} \label{lem:conservativity} In the notation of Construction~\ref{construct:recoll-a1}, the functors $i^*$ and $j^*$ are jointly conservative.
\end{lemma}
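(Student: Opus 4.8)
The claim is that for the recollement of $\tau$-sheaves built from the two topologies $\tau_U$ and $\tau_Z$ (with $\tau = \tau_U \cap \tau_Z$), the open and closed restriction functors $j^*$ and $i^*$ are jointly conservative. The plan is to reduce this to a statement purely about which local equivalences are detected, and then to use the concrete description of $\tau$-covering sieves as intersections. First I would recall that, by the general theory of Bousfield localization (\cite{htt}*{Proposition 5.5.4.15}), an object $F \in \Shv_\tau^{\Lscr}(\B,\Escr)$ lies in the image of $j_*$, resp. $i_*$, precisely when it is local with respect to the larger class $\Kscr_{\tau_U}$, resp. $\Kscr_{\tau_Z}$ (together with $\Lscr$), and that $j^*$, resp. $i^*$ is the associated localization followed by the inclusion. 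Consequently a morphism $\alpha$ in $\Shv_\tau^{\Lscr}(\B,\Escr)$ is sent to an equivalence by $j^*$ iff it is a $\tau_U$-local equivalence, and to an equivalence by $i^*$ iff it is a $\tau_Z$-local equivalence.

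The heart of the argument is then the observation that a morphism which is already a $\tau$-sheaf equivalence and which is \emph{both} a $\tau_U$-local equivalence and a $\tau_Z$-local equivalence must be an equivalence of $\tau$-sheaves. Concretely: since $\alpha$ lies in $\Shv_\tau^{\Lscr}(\B,\Escr)$, to check it is an equivalence it suffices to check it is an equivalence in $\Pre(\B,\Escr)$ after $\tau$-sheafification, i.e., that it becomes an equivalence in the localization at $\Kscr_\tau \cup \Lscr$. Now the key point is that the class of morphisms that $\alpha$ is required to be local-equivalent-against — namely $\Kscr_\tau$ — is contained in the saturated class generated by $\Kscr_{\tau_U} \cup \Kscr_{\tau_Z}$: indeed a $\tau$-covering sieve $R \hookrightarrow h_X$ is by definition simultaneously a $\tau_U$- and a $\tau_Z$-covering sieve, so $R \hookrightarrow h_X$ is already a member of (in particular, a $\Kscr_{\tau_U}$- and a $\Kscr_{\tau_Z}$-local equivalence). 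Thus if $\alpha$ is both a $\tau_U$-local equivalence ($j^*\alpha$ an equivalence) and a $\tau_Z$-local equivalence ($i^*\alpha$ an equivalence), then the localization functors at $\Kscr_{\tau_U}\cup\Lscr$ and at $\Kscr_{\tau_Z}\cup\Lscr$ both send $\alpha$ to an equivalence; since $\alpha$ is already $\tau$-local, and $\tau$-local objects are detected by being fixed points of the $\tau$-sheafification, I want to conclude $\alpha$ is an equivalence. The cleanest way to package this: the class $W$ of morphisms in $\Pre(\B,\Escr)$ inverted by $\tau$-sheafification is the strong saturation of $\Kscr_\tau \cup \Lscr$; since every generator of $\Kscr_\tau$ is inverted by both $\tau_U$- and $\tau_Z$-sheafification, and $\alpha$ lies between $\tau$-local objects, testing $\alpha \in W$ can be done by testing that it is inverted by \emph{either} a coarser sheafification that still inverts all of $\Kscr_\tau$ — but since $j^*$ and $i^*$ together witness that $\alpha$ is inverted by the $\tau_U$- and $\tau_Z$-localizations, and $\tau$ is refined by each, joint conservativity follows from the fact that the localization at $\Kscr_\tau$ factors through both.

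To make that last step rigorous I would argue as follows. Suppose $j^*\alpha$ and $i^*\alpha$ are equivalences; I want $\alpha$ to be an equivalence. By adjunction it suffices to show $\alpha$ induces an equivalence on mapping spaces into every object of $\Shv_\tau^{\Lscr}(\B,\Escr)$. But every such object $G$ either already lies in the image of $j_*$ or of $i_*$? — this is \emph{not} true in general, so instead I use the recollement fracture square: every $G \in \Shv_\tau^{\Lscr}(\B,\Escr)$ sits in a pullback square $G \to i_* i^* G$, $j_* j^* G \to j_* i^* j_* \cdots$ — more precisely, $G \simeq j_* j^* G \times_{j_* i^* j_* G \text{(gluing)}} i_* i^* G$ is the standard recollement fracture. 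Mapping $\alpha$ into this square, each of the three corners is built from $j^* G$ and $i^* G$ via functors that are precomposed by $j^*$ or $i^*$; since $j^*\alpha$ and $i^*\alpha$ are equivalences, $\Map(-,G)$ applied to $\alpha$ is an equivalence on each corner, hence on $G$. Therefore $\alpha$ is an equivalence. \textbf{The main obstacle} I anticipate is getting this fracture-square bookkeeping exactly right — in particular confirming that the gluing term in the recollement fracture square for $\Shv_\tau^{\Lscr}(\B,\Escr)$ is genuinely expressible through $j^*$ and $i^*$ alone (it is: the gluing functor is $i^* j_*$, so the middle corner is $i_* (i^* j_*)(j^* G)$, which indeed only sees $j^* G$), and double-checking the elementary fact that a $\tau$-covering sieve is both a $\tau_U$- and $\tau_Z$-covering sieve, which is exactly the definition of $\tau = \tau_U \cap \tau_Z$ recalled at the start of the subsection. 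Once those are in hand the proof is a two-line formality.

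\begin{proof} By the recollement adjunctions of Construction~\ref{construct:recoll-a1} and the theory of Bousfield localization \cite{htt}*{Proposition~5.5.4.15}, the gluing functor of the recollement is $i^* j_*$, and every object $G \in \Shv_\tau^{\Lscr}(\B,\Escr)$ fits into the fracture pullback square
\[
\begin{tikzcd}
G \ar{r} \ar{d} & i_* i^* G \ar{d} \\
j_* j^* G \ar{r} & i_*(i^* j_*)(j^* G).
\end{tikzcd}
\]
Now suppose $\alpha: G_1 \to G_2$ is a morphism in $\Shv_\tau^{\Lscr}(\B,\Escr)$ with $j^*\alpha$ and $i^*\alpha$ equivalences. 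Each of the three nonterminal corners of the fracture square is obtained by applying to $j^* G$ or $i^* G$ a functor which is a composite of $j^*$, $i^*$, and fully faithful right adjoints; since $j^* \alpha$ and $i^* \alpha$ are equivalences, $\alpha$ induces an equivalence on each of these corners. As the square is a pullback, $\alpha$ itself is an equivalence. Hence $(i^*, j^*)$ are jointly conservative.
\end{proof}
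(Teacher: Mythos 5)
Your proof has a genuine circularity. The fracture pullback square you assert for an arbitrary $\tau$-sheaf $G$ is a \emph{consequence} of the recollement structure, not of the mere existence of the two Bousfield-localization adjunctions: \cite{htt}*{Proposition 5.5.4.15} produces the reflective localizations $(j^*,j_*)$ and $(i^*,i_*)$, but it does not produce the pullback square. The standard way one shows that square is a pullback is to apply $j^*$ and $i^*$ to the comparison map from $G$ to the pullback, use $j^*j_*\simeq\id$, $i^*i_*\simeq\id$, and $j^*i_*\simeq *$ to show each resulting map is an equivalence, \emph{and then invoke joint conservativity of $(j^*,i^*)$} to conclude. So the fracture square depends on precisely the statement you are trying to prove --- consistent with the fact that the paper deduces the recollement (Proposition~\ref{prp:genericToposRecollementIntersectingTopologies}) \emph{from} Lemmas~\ref{lem:conservativity} and~\ref{lem:asymmetry}, not the reverse.

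There is a second, related gap: the fracture square cannot even hold under only the hypotheses of Lemma~\ref{lem:conservativity}. Granting joint conservativity, verifying that the comparison map is an equivalence after applying $j^*$ requires $j^*i_*\simeq *$. That is Lemma~\ref{lem:asymmetry}, which needs an additional hypothesis on the pair $(\tau_U,\tau_Z)$ not present in Lemma~\ref{lem:conservativity}; the conservativity lemma is stated for an arbitrary pair of topologies. So a fracture-square route, even were it not circular, would prove the statement only under strictly stronger hypotheses.

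The paper's actual argument is an elementary observation at the level of generating morphisms and never touches the recollement that is only established afterwards: since $\tau=\tau_U\cap\tau_Z$, a sieve is $\tau$-covering iff it is both $\tau_U$- and $\tau_Z$-covering, hence $\Kscr_\tau\cup\Lscr=(\Kscr_{\tau_U}\cup\Lscr)\cap(\Kscr_{\tau_Z}\cup\Lscr)$, and conservativity is read off directly from this identity between the generating classes of the three localizations. If you wish to salvage a fracture-square argument, you would first have to establish the pullback square by an independent computation (and impose the hypothesis of Lemma~\ref{lem:asymmetry}), which is more work than the direct argument.
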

\begin{proof} This follows from the observation that $\Kscr_{\tau} \cup \Lscr = (\Kscr_{\tau_U} \cup \Lscr) \cap (\Kscr_{\tau_Z} \cup \Lscr)$.
\end{proof}

The next lemma introduces an asymmetry that distinguishes the ``open" from the ``closed" part of the recollement.

\begin{lemma} \label{lem:asymmetry} Suppose for every $X \in \B$, there exists a $\tau_U$-covering sieve $R$ such that for all $Y \in R$, the empty sieve is $\tau_Z$-covering for $Y$. Then the composite functor
\[ \Shv_{\tau_Z}(\B) \xto{i_*} \Shv_{\tau}(\B) \xto{j^*} \Shv_{\tau_U}(\B) \]
is constant at the terminal object.
\end{lemma}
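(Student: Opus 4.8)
\textbf{Proof plan for Lemma~\ref{lem:asymmetry}.}
The plan is to compute the composite $j^* i_* \Fscr$ for an arbitrary $\tau_Z$-sheaf $\Fscr$ and show it is terminal by checking it section-wise, i.e.\ evaluating on each $X \in \B$. Recall that $i_*$ is fully faithful with image the $(\Kscr_{\tau_Z} \cup \Lscr)$-local objects inside $\Shv_\tau(\B)$ (here with $\Lscr = \emptyset$), and that $j^* = L_{\tau_U}$ is the localization onto $\tau_U$-sheaves, so $(j^* i_* \Fscr)(X) \simeq (L_{\tau_U} i_* \Fscr)(X)$. Since $\tau_U$-sheafification is a filtered colimit of the plus-construction, the value on $X$ is computed as a colimit over $\tau_U$-covering sieves $R \subset \B_{/X}$ of $\lim_{Y \in R} (i_* \Fscr)(Y)$ (iterated, but one pass past the point will suffice here). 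So the main point is to exhibit, for each $X$, a single $\tau_U$-covering sieve $R$ on which the relevant limit is already contractible.

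First I would fix $X$ and invoke the hypothesis: there is a $\tau_U$-covering sieve $R \subset \B_{/X}$ such that for every $Y \in R$, the empty sieve $\emptyset \subset \B_{/Y}$ is $\tau_Z$-covering. For such a $Y$, the $\tau_Z$-sheaf condition applied to the empty covering sieve forces $\Fscr(Y) \simeq \lim_{\emptyset} \simeq \ast$, the terminal object; hence $(i_*\Fscr)(Y) \simeq \ast$ as well, since $i_*$ is just the inclusion of a full subcategory of presheaves and does not change underlying values. Consequently $\lim_{Y \in R}(i_*\Fscr)(Y) \simeq \lim_{Y \in R} \ast \simeq \ast$. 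Feeding this into the (transfinite) plus-construction formula for $L_{\tau_U}$: the first plus-construction already sends $(i_*\Fscr)(X)$ to a colimit of such contractible limits, hence is contractible, and being contractible is stable under all further stages of sheafification and under the filtered colimit. Therefore $(j^* i_* \Fscr)(X) \simeq \ast$ for all $X$, so $j^* i_* \Fscr$ is the terminal object of $\Shv_{\tau_U}(\B)$. Naturality in $\Fscr$ is automatic since every step (evaluation, limits over sieves, sheafification) is functorial, which upgrades the pointwise statement to the assertion that $j^* i_*$ is constant at the terminal object.

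The only mild subtlety — and the step I would be most careful about — is making sure the plus-construction/sheafification bookkeeping is correct: a priori $L_{\tau_U}$ requires iterating the plus functor transfinitely (or at least twice, by the usual Grothendieck argument), so I want to phrase the argument so that contractibility, once obtained after the first pass using the sieve $R$, visibly persists through the remaining passes. This is immediate because the terminal presheaf is already a $\tau_U$-sheaf, so $L_{\tau_U}$ fixes it; concretely, once we know the first plus-construction of $(i_*\Fscr)$ is the terminal presheaf, we are done. An alternative, perhaps cleaner, packaging avoids the plus-construction entirely: the terminal object $\ast_{\Shv_\tau}$ restricts under $j^*$ to $\ast_{\Shv_{\tau_U}}$, and one checks directly that the unique map $i_*\Fscr \to \ast_{\Shv_\tau}$ becomes an equivalence after $j^*$ by testing on $\tau_U$-local generators — but either route reduces to the same input, namely that the empty $\tau_Z$-sieve covering $Y \in R$ kills the value $\Fscr(Y)$. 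I would present the plus-construction version for definiteness. Finally, I note the $\Lscr$-decorated variant (with $\Lscr$ the $\AA^1$-equivalences, giving $\theta^{\big}$, $\theta^{\mot}$, etc.) follows formally by tensoring with the relevant localization, since $j^*$ and $i_*$ are compatible with the Bousfield localizations defining $\Shv^{\Lscr}$, so no extra work is needed there.
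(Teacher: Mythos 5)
Your plan is sound and, in fact, more economical than the paper's argument, but the central step is not justified as written. You correctly compute $\lim_{Y \in R}\Fscr(Y) \simeq \ast$ for the particular $\tau_U$-covering sieve $R$ supplied by the hypothesis, and then assert that $\Fscr^{\dagger}(X) = \colim_{R'}\lim_{Y \in R'}\Fscr(Y)$ is therefore ``a colimit of such contractible limits, hence contractible.'' That inference does not follow from what you have shown: a filtered colimit in which one term is terminal need not be terminal (consider $\ast \to X \to X \to \cdots$). Indeed you frame the ``main point'' as exhibiting a \emph{single} covering sieve with contractible limit, which is not enough. What closes the gap — and is probably what you had in mind — is the pair of observations that (i) the covering sieves $R' \subseteq R$ form a cofinal sub-poset of the (filtered) poset of all $\tau_U$-covering sieves of $X$, since for any covering sieve $R''$ the intersection $R \cap R''$ is again covering and refines both; and (ii) for any such $R'$, one also has $\lim_{Y \in R'}\Fscr(Y) \simeq \ast$, because every $Y \in R' \subseteq R$ has the property that the empty sieve is $\tau_Z$-covering, whence $\Fscr(Y) \simeq \ast$. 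Together these show $\Fscr^{\dagger}(X)$ is a filtered colimit of terminal objects over a weakly contractible poset, hence terminal, for every $X$; then $\Fscr^{\dagger}$ is the terminal presheaf and the remaining plus-construction iterations fix it.

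Once that is supplied, your route is a genuine simplification of the paper's proof, which does not pass to the cofinal sub-poset of refinements of $R$. Instead the paper runs an ordinal induction (its claim $(\ast)$) to show $(T_{\gamma}\Fscr)(Y) \simeq \ast$ for every $Y$ whose empty sieve is $\tau_Z$-covering — using pullback stability of covering sieves to propagate this property to every $Z \to Y$ — and then (claim $(\ast\ast)$) handles arbitrary $X$ by showing that each transition map $(T_n\Fscr)(X) \to (T_{n+1}\Fscr)(X) = (T_n\Fscr)^{\dagger}(X)$ factors through $\ast$ via the sieve $R$, so that only after $\omega$-many passes of the plus-construction does the value collapse. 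Your cofinality observation is precisely what shortcuts this to a single pass; it buys a cleaner proof, but it has to be stated.
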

\begin{proof} Let $F$ be a $\tau_Z$-sheaf. It suffices to show that the $\tau_U$-sheafification of $F$ is trivial. For this, we will use the formula for the $\tau_U$-sheafification functor as a filtered colimit of $\tau_U$-{\bf plus constructions} $F^{\dagger}$ that was established by Lurie in the proof of \cite{htt}*{Proposition 6.2.2.7}. The presheaf $F^{\dagger}$ is defined in \cite{htt}*{Remark 6.2.2.12} by the formula
\[
F^{\dagger}(X) = \colim_{R \subset C_{/X}} \lim_{Y \in R} F(Y),
\]
where the colimit is taken over the poset of $\tau_U$-covering sieves of $X$. Let $\kappa$ be the regular cardinal specified in the proof of \cite{htt}*{Proposition 6.2.2.7}, and let 
\begin{itemize}
\item $T_0 = \id$, 
\item $T_{\beta+1}(F) = T_{\beta}(F)^{\dagger}$ for $\beta < \kappa$, and
\item  $T_\gamma(F) = \colim_{\beta < \gamma} T_{\beta}(F)$ for every limit ordinal $\gamma \leq \kappa$,
\end{itemize}
so that $\tau_U$-sheafification $j^*$ is computed as $j^* F \simeq T_{\kappa} F$. We claim:
\begin{itemize} \item[($\ast$)] For all $\gamma \leq \kappa$, $(T_{\gamma} F)(Y) \simeq \ast$ for all $Y \in \B$ such that the empty sieve in $\B_{/Y}$ is $\tau_Z$-covering.
\end{itemize}

The proof is by ordinal induction on $\gamma$. For the base case $\gamma = 0$, this holds since $F$ is a $\tau_Z$-sheaf, whence $F(Y)$ is equivalent to the limit over the empty diagram. Suppose we have proven the claim for all $\beta < \gamma$. If $\gamma$ is a limit ordinal, then we have 
\[
(T_{\gamma} F)(Y) \simeq \colim_{\beta<\gamma} (T_{\beta})(Y) \simeq \colim_{\beta<\gamma} \ast \simeq \ast,
\] 
where for the last equivalence, we use that a filtered category is weakly contractible \cite{htt}*{Lemma~5.3.1.20}. For the successor ordinal case, let us write $\gamma = \beta+1$. Note that for every $Z \to Y$ in $\B$, the empty sieve is also $\tau_Z$-covering for $Z$ by the stability of covering sieves under pullback. Therefore, we may compute 
\[
(T_{\beta} F)^{\dagger}(Y) \simeq \colim_{R \subset \B_{/Y}} \lim_{Z \in R} (T_{\beta} F)(Z) \simeq \colim_{R \subset \B_{/Y}} \ast \simeq \ast, 
\]
using for the last equivalence that the poset of covering sieves is filtered \cite[Rem.~6.2.2.11]{htt}, hence weakly contractible. This proves claim $(\ast)$.

Next, we claim:
\begin{itemize} \item[($\ast\ast$)] For all infinite ordinals $\gamma \leq \kappa$, $(T_{\gamma} F)(X) \simeq \ast$ for all $X \in \B$.
\end{itemize}

It suffices to consider $\gamma = \omega$. We note that for every $X \in \B$, we have a factorization
$$(T_n F)(X) \to \ast \to (T_{n} F)^{\dagger}(X)$$
via selecting the $\tau_U$-covering sieve $R \subset \B_{/X}$ given by our assumption, where we use the previous claim to see that $(T_{n}F)(Y) \simeq \ast$ for all $Y \in R$. It then follows that $(T_{\omega} F)(X) \simeq \ast$, and we are done.
\end{proof}

Given this, we deduce:

\begin{proposition} \label{prp:genericToposRecollementIntersectingTopologies} With assumptions as in Lemma~\ref{lem:asymmetry}, the pair \[
(\Shv_{\tau_U}(\B), \Shv_{\tau_Z}(\B))
\] constitutes a recollement of $\Shv_{\tau}(\B)$. Moreover, this recollement is monoidal with respect to the cartesian monoidal structure.
\end{proposition}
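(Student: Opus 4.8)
\textbf{Proof plan for Proposition~\ref{prp:genericToposRecollementIntersectingTopologies}.}
The plan is to verify the defining conditions of a recollement (in the sense of \cite{quigley-shah}*{\S1}) for the triple of adjunctions produced in Construction~\ref{construct:recoll-a1} with $\Lscr = \emptyset$: namely that $j_\ast$ and $i_\ast$ are fully faithful, that $i^\ast j_\ast$ vanishes in the appropriate sense, and that $(j^\ast, i^\ast)$ are jointly conservative. Joint conservativity is already Lemma~\ref{lem:conservativity}. Full faithfulness of $j_\ast$ and $i_\ast$ is automatic: in all three cases we are looking at the fully faithful right adjoint of an accessible Bousfield localization of $\Pre(\B)$, and $\Shv_{\tau_U}(\B)$ and $\Shv_{\tau_Z}(\B)$ are each localizations of $\Shv_\tau(\B)$ since $\Kscr_{\tau} \subseteq \Kscr_{\tau_U}$ and $\Kscr_\tau \subseteq \Kscr_{\tau_Z}$ (as $\tau$ is coarser than both), so the unit of $j^\ast \dashv j_\ast$ and of $i^\ast \dashv i_\ast$ is an equivalence by \cite{htt}*{Proposition~5.5.4.15}. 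It then remains to check the "semiorthogonality" condition $j^\ast i_\ast \simeq \ast$, i.e. that the gluing functor is defined on the correct source: but the composite $\Shv_{\tau_Z}(\B) \xto{i_\ast} \Shv_\tau(\B) \xto{j^\ast} \Shv_{\tau_U}(\B)$ is constant at the terminal object precisely by Lemma~\ref{lem:asymmetry} (whose hypothesis we have assumed). Together with joint conservativity and the two full faithfulness statements, this is exactly the data of a recollement, so the first assertion follows.

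For the monoidality claim, the approach is to observe that each of $\Shv_{\tau_U}(\B)$, $\Shv_\tau(\B)$, $\Shv_{\tau_Z}(\B)$ carries the cartesian symmetric monoidal structure (they are $\infty$-topoi, being accessible left-exact localizations of $\Pre(\B)$), and that the localization functors $j^\ast: \Shv_\tau(\B) \to \Shv_{\tau_U}(\B)$ and $i^\ast: \Shv_\tau(\B) \to \Shv_{\tau_Z}(\B)$ are strong symmetric monoidal for the cartesian structures, since any left-exact left adjoint between $\infty$-topoi preserves finite products and is therefore canonically symmetric monoidal with respect to the cartesian structures (by the universal property of $\Pre(\B)$ together with \cite{higheralgebra}*{Proposition~2.4.1.7} or directly by left-exactness). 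One must also check that the right adjoints $j_\ast$ and $i_\ast$ are lax symmetric monoidal, which is automatic as right adjoints of strong symmetric monoidal functors. The remaining point is that the gluing functor $\phi = i^\ast j_\ast$ is left-exact and lax symmetric monoidal and that the recollement is recovered as the canonical symmetric monoidal structure on the right-lax limit; here $\phi$ is left-exact as a composite of the left-exact $j_\ast$ and $i^\ast$, and its lax symmetric monoidal structure is inherited from those of $j_\ast$ and $i^\ast$. One then invokes \cite{quigley-shah}*{Definition~1.19} (monoidal recollement) and the uniqueness statement \cite{quigley-shah}*{Proposition~1.26} to conclude that the recollement is monoidal and that the cartesian monoidal structure on $\Shv_\tau(\B)$ agrees with the canonical one determined by $\phi$.

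The main obstacle, and the part requiring the most care, is the verification that the recollement is \emph{monoidal} in the precise sense of \cite{quigley-shah}: one has to confirm that the cartesian symmetric monoidal structure on the middle term $\Shv_\tau(\B)$ is compatible with the gluing presentation, i.e. that the two localization functors are \emph{jointly} strong symmetric monoidal and that the tensor product of $j^\ast$-equivalences (resp. $i^\ast$-equivalences) with arbitrary objects is again a $j^\ast$-equivalence (resp. $i^\ast$-equivalence). For the cartesian structure the latter is exactly the statement that the localizations $\Shv_{\tau_U}(\B), \Shv_{\tau_Z}(\B) \subset \Shv_\tau(\B)$ are compatible with finite products, which holds because they are topological (even left-exact) localizations — the classes $\Kscr_{\tau_U}$ and $\Kscr_{\tau_Z}$ consist of monomorphisms into representables, and such localizations are automatically left-exact, hence compatible with the cartesian structure. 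Once this is in hand, the monoidality of the recollement is a formal consequence; everything else is routine bookkeeping with Bousfield localizations.
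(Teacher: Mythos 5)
Your proposal is correct and follows essentially the same route as the paper: joint conservativity from Lemma~\ref{lem:conservativity}, the vanishing $j^\ast i_\ast \simeq \ast$ from Lemma~\ref{lem:asymmetry}, and left-exactness of the sheafification functors to handle the cartesian monoidal structure. You simply spell out more explicitly a few steps (full faithfulness of $j_\ast, i_\ast$, the monoidality bookkeeping) that the paper's terse three-sentence proof leaves implicit.
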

\begin{proof} Appealing to the theory of sheafification, we have that the localization functors $j^*$ and $i^*$ are left-exact. Since $j^*, i^*$ are jointly conservative by Lemma~\ref{lem:conservativity} and $j^* i_* \simeq \ast$ by Lemma~\ref{lem:asymmetry}, the claim follows.
\end{proof}

\begin{remark} In view of \cite{higheralgebra}*{Proposition A.8.15}, Proposition~\ref{prp:genericToposRecollementIntersectingTopologies} is equivalent to the obvious $\infty$-categorical extension of \cite{scheiderer}*{Proposition~2.2} (in the (ii)$\Rightarrow$(i) direction), although our proof differs from his. 
\end{remark}

We now pass to stabilizations. Recall the following paradigm: 
\begin{construction} Suppose that we have a left-exact functor $G: \C \to \D$, then we have have an induced functor \[
\overline{G}: \Sp(\C) = \Exc_{\ast}(\Spc^{\fin}_{\bullet},\C) \to \Sp(\D) = \Exc_{\ast}(\Spc^{\fin}_{\bullet},\D). \qquad (f: \Spc_{\bullet}^{\fin} \to \C) \mapsto (G \circ f).
\] Moreover, if $G$ admits a left-exact left adjoint $F: \D \to \C$, then $\overline{G}$ admits a left adjoint \[
\overline{F}:  \Sp(\D) \to \Sp(\C)
\] defined by postcomposition by $F$.
\end{construction}

\begin{lemma} \label{lem:stabilizationOfRecollement} Suppose we have a recollement \[ \begin{tikzcd}[row sep=4ex, column sep=6ex, text height=1.5ex, text depth=0.25ex]
\Uscr \ar[shift right=1,right hook->]{r}[swap]{j_{*}} & \Xscr \ar[shift right=2]{l}[swap]{j^{*}} \ar[shift left=2]{r}{i^{*}} & \Zscr \ar[shift left=1,left hook->]{l}{i_{*}}.
\end{tikzcd} \]
Then the induced adjunctions
\[ \begin{tikzcd}[row sep=4ex, column sep=6ex, text height=1.5ex, text depth=0.25ex]
\Sp(\Uscr) \ar[shift right=1,right hook->]{r}[swap]{\overline{j}_{*}} & \Sp(\Xscr) \ar[shift right=2]{l}[swap]{\overline{j}^{*}} \ar[shift left=2]{r}{\overline{i}^{*}} & \Sp(\Zscr) \ar[shift left=1,left hook->]{l}{\overline{i}_{*}}.
\end{tikzcd} \]
together yield a stable recollement.
\end{lemma}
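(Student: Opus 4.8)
The statement asserts that applying $\Sp(-)$ to a recollement $(\Uscr,\Zscr)$ of $\Xscr$ produces a stable recollement $(\Sp(\Uscr),\Sp(\Zscr))$ of $\Sp(\Xscr)$. The plan is to verify directly the axioms of a recollement (in the form used in the excerpt, following \cite{quigley-shah}*{\S1}): namely that $\overline{j}^{*},\overline{i}^{*}$ are left-exact localizations whose right adjoints $\overline{j}_{*},\overline{i}_{*}$ are fully faithful, that $\overline{i}^{*}\overline{j}_{*}$ is the gluing functor, that $\overline{j}^{*}\overline{i}_{*}\simeq 0$, and that $(\overline{j}^{*},\overline{i}^{*})$ are jointly conservative. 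First I would recall that for each of the four functors $j^{*},j_{*},i^{*},i_{*}$ in the original recollement, the induced functor on stabilizations is computed by postcomposition at the level of spectrum objects (excerpt conventions, Sp-of-geometric-morphism discussion), since $j^{*},i^{*}$ are left-exact and $j_{*},i_{*}$ are left-exact (being right adjoints into presentable $\infty$-categories; note $i_{*},j_{*}$ automatically preserve finite limits). In particular $\overline{j}^{*}$ is left adjoint to $\overline{j}_{*}$ and $\overline{i}^{*}$ is left adjoint to $\overline{i}_{*}$, with units and counits computed pointwise on spectrum objects.

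The key steps, in order: (1) Full faithfulness of $\overline{j}_{*}$ and $\overline{i}_{*}$: since $j_{*}$ and $i_{*}$ are fully faithful, the counit $j^{*}j_{*}\Rightarrow\id$ (resp. $i^{*}i_{*}\Rightarrow\id$) is an equivalence; postcomposing with a functor $\Spc^{\fin}_{\bullet}\to\Uscr$ (resp. $\to\Zscr$), this remains an equivalence, so $\overline{j}^{*}\overline{j}_{*}\simeq\id$ and $\overline{i}^{*}\overline{i}_{*}\simeq\id$, whence both are fully faithful. (2) Left-exactness of $\overline{j}^{*},\overline{i}^{*}$: these are automatic since $j^{*},i^{*}$ are left-exact, hence their postcompositions preserve finite limits; and they are accessible localizations because $\overline{j}^{*}\dashv\overline{j}_{*}$ with $\overline{j}_{*}$ fully faithful (and similarly for $i$), using that $\Sp(-)$ of a presentable $\infty$-category is presentable \cite{higheralgebra}*{Proposition 1.4.4.4} together with accessibility of the relevant right adjoints. (3) The vanishing $\overline{j}^{*}\overline{i}_{*}\simeq 0$: since $j^{*}i_{*}$ is the constant functor at the terminal object $*\in\Uscr$, postcomposing a reduced excisive functor $\Spc^{\fin}_{\bullet}\to\Zscr$ with $j^{*}i_{*}$ yields the constant functor at $*$, which as a spectrum object is the zero object of $\Sp(\Uscr)$. (4) Joint conservativity of $(\overline{j}^{*},\overline{i}^{*})$: a map $f$ in $\Sp(\Xscr)$ is an equivalence iff $\Omega^{\infty-n}f$ is an equivalence in $\Xscr$ for all $n$; since $(j^{*},i^{*})$ are jointly conservative in $\Xscr$ and commute with $\Omega^{\infty}$ (being left-exact), and likewise $\overline{j}^{*},\overline{i}^{*}$ commute with the respective $\Omega^{\infty}$, joint conservativity follows. (5) Identification of the gluing functor: $\overline{i}^{*}\overline{j}_{*}$ is the postcomposition functor associated to $i^{*}j_{*}$, as postcomposition is functorial in the functor; equivalently one may invoke the characterization of a stable recollement by a localizing and colocalizing subcategory \cite{barwick-glasman} (cited in the excerpt conventions), noting $\Sp(\Uscr)\hookrightarrow\Sp(\Xscr)$ is such a subcategory.

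I do not anticipate a serious obstacle here; the proof is essentially a matter of observing that all structure in a recollement — adjunctions, full faithfulness, the vanishing $j^{*}i_{*}\simeq *$, and joint conservativity — is preserved by the operation $\C\mapsto\Exc_{*}(\Spc^{\fin}_{\bullet},\C)$ applied to left-exact functors, because that operation preserves finite limits, terminal objects (which become zero objects), and commutes with $\Omega^{\infty}$. The one point requiring a word of care is that the \emph{zero} object of a stable $\infty$-category is simultaneously initial and terminal, so the image of the constant-terminal functor $j^{*}i_{*}$ is genuinely the zero functor and not merely ``terminal'' — this is what upgrades the unstable recollement identity to the stable one and licenses the use of the exact-recollement formalism. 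If one prefers to minimize bookkeeping, an alternative is to cite \cite{barwick-glasman} directly: a stable recollement is equivalent data to a stable $\infty$-category together with a localizing-and-colocalizing full stable subcategory, and $\Sp(\Uscr)\xrightarrow{\overline{j}_{*}}\Sp(\Xscr)$ exhibits such a subcategory with quotient $\Sp(\Zscr)$ by steps (1)–(4) above; I would likely present the argument this way for brevity.
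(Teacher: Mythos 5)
Your argument is correct and takes the same route as the paper: verify the recollement axioms directly, using that all four induced functors on stabilizations are computed by postcomposition, that $\overline{j}^{*}\overline{i}_{*}\simeq 0$ because $j^{*}i_{*}$ is constant at the terminal object (which becomes the zero object after stabilizing), and that joint conservativity holds because equivalences of spectrum objects are detected levelwise. The paper's proof consists of exactly those last two observations and leaves the rest (full faithfulness of $\overline{j}_{*},\overline{i}_{*}$, left-exactness of $\overline{j}^{*},\overline{i}^{*}$, the adjunctions themselves) as implicit consequences of the postcomposition description; one small remark on your step (2) is that the appeal to presentability and accessibility is unnecessary --- the lemma carries no such hypotheses, and the recollement axioms you verify do not need the localizations to be accessible.
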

\begin{proof} Since $j^{*} i_{*}$ is constant at the terminal object, we have that $\overline{j}^{*} \overline{i}_{*} \simeq 0$ in view of the description of this functor as postcomposition by $j^{*} i_{*}$ in terms of spectrum objects. Likewise, the joint conservativity of $\overline{j}^{*}$ and $\overline{i}^{*}$ holds since equivalences of spectrum objects are detected levelwise.
\end{proof}

The next corollary is now immediate.

\begin{corollary} \label{cor:RecollementSheavesSpectra} With assumptions as in Lemma~\ref{lem:asymmetry}, the pair \[
(\Shv_{\tau_U}(\B,\Sp), \Shv_{\tau_Z}(\B, \Sp))
\] constitutes a stable recollement of $\Shv_{\tau}(\B,\Sp)$. Moreover, this recollement is monoidal with respect to the smash product symmetric monoidal structure.
\end{corollary}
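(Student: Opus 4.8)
\textbf{Proof proposal for Corollary~\ref{cor:RecollementSheavesSpectra}.} The plan is to simply combine the unstable input (Proposition~\ref{prp:genericToposRecollementIntersectingTopologies}) with the general stabilization machinery (Lemma~\ref{lem:stabilizationOfRecollement}), and then upgrade the result to a monoidal recollement. First I would recall that under the hypotheses of Lemma~\ref{lem:asymmetry}, Proposition~\ref{prp:genericToposRecollementIntersectingTopologies} furnishes a recollement $(\Shv_{\tau_U}(\B), \Shv_{\tau_Z}(\B))$ of $\Shv_{\tau}(\B)$, given by the localization adjunctions $j^*, j_*, i^*, i_*$ of Construction~\ref{construct:recoll-a1} (taking $\Escr = \Spc$, $\Lscr = \varnothing$). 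Since the $\infty$-topoi in question are presentable and the functors $j^*$, $i^*$ are left-exact accessible localizations, applying the functor $\Sp(-)$ fiberwise and invoking Lemma~\ref{lem:stabilizationOfRecollement} immediately yields a stable recollement $(\Shv_{\tau_U}(\B,\Sp), \Shv_{\tau_Z}(\B,\Sp))$ of $\Shv_{\tau}(\B,\Sp)$; here I use the identification $\Shv_{\omega}(\B,\Sp) \simeq \Sp(\Shv_{\omega}(\B))$, which holds since stabilization commutes with the Lurie tensor product and $\Shv_{\omega}(\B,\Sp) = \Shv_{\omega}(\B) \otimes \Sp$ by definition in Construction~\ref{construct:recoll-a1}.

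For the monoidal refinement, I would argue as follows. By Proposition~\ref{prp:genericToposRecollementIntersectingTopologies}, the unstable recollement is monoidal with respect to the cartesian symmetric monoidal structure: the localizations $j^*$, $i^*$ are strong symmetric monoidal (being localizations at sets of maps that are stable under the relevant base changes, hence compatible with the cartesian product), so the gluing functor $i^* j_*$ is lax symmetric monoidal and exhibits $\Shv_{\tau}(\B)$ as the right-lax limit, i.e.\ $(\Shv_{\tau_U}(\B), \Shv_{\tau_Z}(\B))$ is a monoidal recollement. Now the stabilization functor $\Sp \colon \PrL_{\infty} \to \PrL_{\infty,\stab}$ is symmetric monoidal with respect to the Lurie tensor product (as recalled in Construction~\ref{con:MonoidalStructureGenuineStabilization}(1)), so it carries the cartesian symmetric monoidal structure on $\Shv_{\omega}(\B)$ to the smash product symmetric monoidal structure on $\Shv_{\omega}(\B,\Sp)$, and carries strong symmetric monoidal left adjoints (such as $j^*$, $i^*$) to strong symmetric monoidal functors on stabilizations. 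Consequently $\overline{j}^* = \Sp(j^*)$ and $\overline{i}^* = \Sp(i^*)$ are strong symmetric monoidal, the gluing functor $\overline{i}^* \overline{j}_* \simeq \Sp(i^* j_*)$ inherits the lax symmetric monoidal structure, and the stable recollement is monoidal for the smash product.

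I do not expect any serious obstacle here: the statement is a formal concatenation of results already established earlier in the excerpt. The only point requiring a modicum of care is the compatibility of $\Sp(-)$ with the monoidal structures and with the recollement data simultaneously --- specifically, checking that the stable recollement adjunctions $\overline{j}_*, \overline{i}_*$ obtained by postcomposition agree with those one would get by applying a monoidal version of stabilization. But this is handled by the general principle that $\Sp$ is a symmetric monoidal functor $\PrL_{\infty} \to \PrL_{\infty,\stab}$ together with the fact (Lemma~\ref{lem:stabilizationOfRecollement}) that postcomposition computes the relevant adjoints on spectrum objects; since a monoidal recollement is uniquely determined by its lax symmetric monoidal gluing functor, the two descriptions coincide. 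Hence the proof is essentially the single sentence: ``apply Lemma~\ref{lem:stabilizationOfRecollement} to the monoidal recollement of Proposition~\ref{prp:genericToposRecollementIntersectingTopologies}, using that $\Sp$ is symmetric monoidal."
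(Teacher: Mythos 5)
Your proposal is correct and takes exactly the route the paper intends: the paper states the corollary as ``now immediate'' from Lemma~\ref{lem:stabilizationOfRecollement} applied to the monoidal recollement of Proposition~\ref{prp:genericToposRecollementIntersectingTopologies}, with the monoidal upgrade coming from the fact that $\Sp\colon \PrL_{\infty}\to\PrL_{\infty,\stab}$ is symmetric monoidal for the Lurie tensor product (exactly the principle recalled in Construction~\ref{con:MonoidalStructureGenuineStabilization}(1)). Your writeup merely fills in the details that the paper leaves implicit.
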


Finally, we explain how to pass recollements through monoidal inversion of objects. For this, recall from \cite{robalo}*{Corollary 2.22} that if $\C$ is a presentable symmetric monoidal $\infty$-category and $X \in \C$ is a symmetric object (i.e., for some $n>1$ the cyclic permutation of $X^{\otimes n}$ is homotopic to the identity), then the presentable symmetric monoidal $\infty$-category $C[X^{-1}]$ is computed as the filtered colimit of presentable $\infty$-categories
\[ \C[X^{-1}] \simeq \colim (\C \xrightarrow{- \otimes X} \C \xrightarrow{- \otimes X} \cdots). \]

\begin{lemma} \label{lem:InvertedRecollement} Let
\[ \begin{tikzcd}[row sep=4ex, column sep=6ex, text height=1.5ex, text depth=0.25ex]
\Uscr \ar[shift right=4,right hook->]{r}[swap]{j_{*}} \ar[shift left=4,right hook->]{r}{j_{!}} & \Xscr \ar[shift left=1]{l}[swap]{j^{*}} \ar[shift left=4]{r}{i^{*}} \ar[shift right=4]{r}[swap]{i^!} & \Zscr \ar[left hook->,shift left=1]{l}[swap]{i_{*}}
\end{tikzcd} \]
be a stable\footnote{Stability is used in an essential way in our proof.} monoidal recollement of presentable stable symmetric monoidal $\infty$-categories and let $E \in \Xscr$ be a symmetric object. Then we may descend the recollement adjunctions to obtain a stable monoidal recollement
\[ \begin{tikzcd}[row sep=4ex, column sep=6ex, text height=1.5ex, text depth=0.25ex]
\Uscr[(j^{*} E)^{-1}] \ar[shift right=4,right hook->]{r}[swap]{\overline{j}_{*}} \ar[shift left=4,right hook->]{r}{\overline{j}_{!}} & \Xscr[E^{-1}] \ar[shift left=1]{l}[swap]{\overline{j}^{*}} \ar[shift left=4]{r}{\overline{i}^{*}} \ar[shift right=4]{r}[swap]{\overline{i}\,^!} & \Zscr[(i^{*} E)^{-1}] \ar[left hook->,shift left=1]{l}[swap]{\overline{i}_{*}}.
\end{tikzcd} \]
\end{lemma}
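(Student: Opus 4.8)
\textbf{Proof plan for Lemma~\ref{lem:InvertedRecollement}.}

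The plan is to use the explicit filtered colimit description of $E$-inversion recalled just before the statement, together with the fact that a stable recollement is equivalent data to a localizing-and-colocalizing stable subcategory (cf.\ \cite{barwick-glasman}), so that all the structure can be tracked through filtered colimits in $\PrL_{\infty,\stab}$. First I would observe that tensoring with $E$ is a morphism of recollements $\Xscr \to \Xscr$: since $j^{*}, i^{*}$ are strong symmetric monoidal, $(-\otimes E)$ induces $(-\otimes j^{*}E)$ on $\Uscr$ and $(-\otimes i^{*}E)$ on $\Zscr$, and it commutes with $j^{*}, i^{*}$; moreover one checks it is a \emph{strict} morphism of monoidal recollements using that $j_{!}$ is strong symmetric monoidal (so the $(j_{!}, j^{*})$-projection formula is automatic) and a dual argument for $i^{*}$, hence $(-\otimes E)$ also commutes with $j_{!}, j_{*}, i_{*}$ up to the expected exchange equivalences. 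Then the three columns $(\Uscr \xrightarrow{-\otimes j^{*}E} \Uscr \to \cdots)$, $(\Xscr \xrightarrow{-\otimes E} \Xscr \to \cdots)$, $(\Zscr \xrightarrow{-\otimes i^{*}E} \Zscr \to \cdots)$ are connected by the three recollement left-adjoint functors $j^{*}, i^{*}$ together with the fully faithful right and left adjoints $j_{*}, j_{!}, i_{*}$, all compatible with the transition maps. Taking the colimit in $\PrL_{\infty}$ (which exists and is computed as the colimit of the underlying diagram of left adjoints, with the transition maps' right adjoints assembling to the right adjoint on the colimit) yields the adjunctions $\overline{j}^{*} \dashv \overline{j}_{*}$, $\overline{j}_{!} \dashv \overline{j}^{*}$, $\overline{i}^{*} \dashv \overline{i}_{*}$, $\overline{i}_{*} \dashv \overline{i}\,^{!}$ between $\Uscr[(j^{*}E)^{-1}]$, $\Xscr[E^{-1}]$, $\Zscr[(i^{*}E)^{-1}]$.

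Next I would verify the recollement axioms for the colimit. Full faithfulness of $\overline{j}_{*}, \overline{j}_{!}, \overline{i}_{*}$: a filtered colimit of fully faithful functors between presentable $\infty$-categories along fully faithful transition maps is again fully faithful (one can compute mapping spectra in the colimit as filtered colimits, since filtered colimits of spectra are exact). The vanishing $\overline{j}^{*}\overline{i}_{*} \simeq 0$ and $\overline{i}^{*}\overline{j}_{!} \simeq 0$ follow by passing $j^{*}i_{*}\simeq 0$ and $i^{*}j_{!}\simeq 0$ through the colimit. Joint conservativity of $(\overline{j}^{*}, \overline{i}^{*})$: an object of $\Xscr[E^{-1}]$ is (the image of) some $x \otimes E^{\otimes -n}$; if both $\overline{j}^{*}$ and $\overline{i}^{*}$ kill it, then $j^{*}x$ and $i^{*}x$ become zero after $\otimes(j^{*}E), \otimes(i^{*}E)$-stabilization respectively, and since $j^{*}, i^{*}$ commute with $\otimes E$ one reduces to joint conservativity of $(j^{*}, i^{*})$ on $\Xscr$ after finitely many suspensions---here stability is used to move the $E$-twists around and conclude $x$ maps to zero in $\Xscr[E^{-1}]$. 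Finally, the cofiber sequence $\overline{j}_{!}\overline{j}^{*} \to \id \to \overline{i}_{*}\overline{i}^{*}$ and its Verdier-dual $\overline{i}_{*}\overline{i}\,^{!}\to \id \to \overline{j}_{*}\overline{j}^{*}$ are obtained by taking the filtered colimit of the corresponding sequences in $\Xscr$, using that filtered colimits of exact sequences of stable $\infty$-categories (and of colimit-preserving exact functors) remain exact.

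For the monoidal structure: by \cite{robalo}*{Corollary~2.22} each of $\Uscr[(j^{*}E)^{-1}]$, $\Xscr[E^{-1}]$, $\Zscr[(i^{*}E)^{-1}]$ carries a canonical presentable symmetric monoidal structure for which the localization functors are strong symmetric monoidal, and the induced functors $\overline{j}^{*}, \overline{i}^{*}$ are strong symmetric monoidal as filtered colimits of strong symmetric monoidal functors, while $\overline{j}_{*}, \overline{i}_{*}$ are lax symmetric monoidal as their right adjoints; thus $(\Uscr[(j^{*}E)^{-1}], \Zscr[(i^{*}E)^{-1}])$ is a \emph{monoidal} recollement of $\Xscr[E^{-1}]$ in the sense of \cite{quigley-shah}*{Definition~1.19}. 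The main obstacle I anticipate is the bookkeeping needed to produce the diagram of recollements \emph{coherently} in $\PrL_{\infty}$---i.e., organizing the transition-by-$(-\otimes E)$ squares, their right adjoints, and the exchange equivalences into a single functor out of (a decorated version of) $\mathbb{N} \times \Delta^1$ or into $\mathrm{Recoll}_0^{\stab}$ (Notation from \S\ref{sect:borel-approach}) so that one may simply apply the colimit functor. Once phrased as ``take the filtered colimit of a diagram valued in stable monoidal recollements,'' the verification of the axioms is the routine compatibility of each axiom with filtered colimits sketched above; the only genuinely load-bearing input beyond formal nonsense is stability, used both for the colocalization half of the recollement (which has no unstable analogue) and in the conservativity argument to shuffle $E$-twists.
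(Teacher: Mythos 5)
There is a genuine gap: you claim that $(-\otimes E)$ is a \emph{strict} morphism of recollements, so that the three columns assemble into a diagram valued in $\mathrm{Recoll}_0^{\stab}$ whose colimit produces all the recollement adjunctions at once. Strictness means commuting with the gluing functor $i^*j_*$, and (after applying the valid $j_!$- and $i_*$-projection formulas) this is equivalent to the $(j_*,j^*)$-projection formula $j_*(U)\otimes E \simeq j_*(U\otimes j^*E)$, which fails for a general symmetric object $E$. It holds precisely when $E$ is dualizable --- see the Remark preceding Remark~\ref{rem:FiniteLocalization}, which says exactly this --- but the whole point of the lemma is to invert an $E$ that is \emph{not} yet invertible or dualizable (in the application $E=\Sigma^\infty_+(\GG_m,1)$, which is famously not dualizable before $\TT$-stabilization). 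Consequently the diagram does \emph{not} live in $\mathrm{Recoll}_0^{\stab}$, and $\overline{j}_*$, $\overline{i}\,^!$ cannot be obtained by taking the filtered colimit of $j_*$, $i^!$: there is no natural transformation $j_*(-\otimes j^*E)\Rightarrow(-\otimes E)j_*$ compatible with the colimit system. A secondary error of the same flavor is the assertion that $j_!$ is strong symmetric monoidal (it generally sends $\1_\Uscr$ to $j_!\1_\Uscr\not\simeq\1_\Xscr$); the $(j_!,j^*)$-projection formula is a consequence of the recollement being monoidal, not of $j_!$ being monoidal. You also write that the transition maps $-\otimes E$ are fully faithful, which is false unless $E$ is already invertible.

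The paper's proof sidesteps all of this deliberately. It only propagates $j^*,i^*,j_!,i_*$ through the filtered colimit --- the four functors for which the required exchange squares do commute by \cite{quigley-shah}*{Proposition~1.30} --- then obtains $\overline{j}_*$ and $\overline{i}\,^!$ abstractly as right adjoints (they exist by presentability), checks full faithfulness of $\overline{j}_!,\overline{j}_*,\overline{i}_*$ via direct counit computations (not via colimits of fully faithful functors), and, instead of re-verifying the full list of recollement axioms, descends the idempotent $\mathbb{E}_\infty$-algebra $A = i_*i^*\1$ through the colimit via the monoidal Barr--Beck theorem to present $\Zscr[(i^*E)^{-1}]$ as a smashing localization of $\Xscr[E^{-1}]$, after which the single criterion $(\ast)$ of \cite{quigley-shah}*{1.32} closes the argument. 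If you want to repair your approach, you would have to drop the claim of strictness, construct $\overline{j}_*$ and $\overline{i}\,^!$ by adjunction rather than by colimit, and then prove the recollement property by some route that does not presuppose the $j_*$-projection formula --- which is essentially the paper's smashing-localization argument.
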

\begin{proof} Let us write 
\[
\Sigma^{\infty}_E: \Xscr \rightleftarrows \Xscr[E^{-1}]: \Omega^{\infty}_E
\]
for the defining adjunction, and likewise for $\Uscr$ and $\Zscr$. Also let $E, j^{*} E, i^{*} E$ denote the images of the objects in $\Xscr[E^{-1}]$, etc. We first explain how the various overlined functors are defined. Since $j^{*}$ and $i^{*}$ are symmetric monoidal, we have the commutative diagram
\[ \begin{tikzcd}[row sep=4ex, column sep=6ex, text height=1.5ex, text depth=0.25ex]
\Uscr \ar{d}[swap]{- \otimes j^{*} E} & \Xscr \ar{r}{i^{*}} \ar{l}[swap]{j^{*}} \ar{d}{- \otimes E} & \Zscr \ar{d}{- \otimes i^{*} E} \\
\Uscr & \Xscr \ar{r}{i^{*}} \ar{l}[swap]{j^{*}} & \Zscr,
\end{tikzcd} \]
hence $\overline{j}^{*}$ and $\overline{i}^{*}$ may be taken to be the filtered colimit of the system of functors $\{ j^{*} \}$ and $\{ i^{*} \}$, and we have the commutativity relations
\[  \overline{j}^{*} \circ (E^{-n} \otimes \Sigma^{\infty}_E) \simeq ((j^{*} E)^{-n} \otimes \Sigma^{\infty}_{j^{*} E}) \circ j^{*}, \quad \overline{i}^{*} \circ (E^{-n} \otimes \Sigma^{\infty}_E) \simeq ((i^{*} E)^{- n} \otimes \Sigma^{\infty}_{i^{*} E}) \circ i^{*}.\] 
By adjunction, $\overline{j}^{*}$ and $\overline{i}^{*}$ admit right adjoints $\overline{j}_{*}$ and $\overline{i}_{*}$ that satisfy adjoint commutativity relations involving $\Omega^{\infty}_{-}$. To aid understanding, it may be helpful to note that $\Sigma^{\infty}_E X$ is given as a $E$-spectrum object by $\{ X \otimes E^n \}$, and $\Omega^{\infty}_E \{ X_n \} \simeq X_0$ (cf. Remark~\ref{rem:PrespectraAndSpectra}).

We next observe that the projection formulas 
\[
j_!(U \otimes j^{*} X) \simeq j_!(U) \otimes X \qquad i_{*}(Z) \otimes X \simeq i_{*}(Z \otimes i^{*} X)
\] established in \cite{quigley-shah}*{Proposition 1.30} imply that $\overline{j}_!$ and $\overline{i}_{*}$ may be defined by the same colimit procedure as was done with $\overline{j}^{*}$ and $\overline{i}^{*}$. In particular, $\overline{j}^{*}$ commutes with $\Omega^{\infty}_{-}$ and $\overline{i}_{*}$ commutes with $\Sigma^{\infty}_{-}$ and admits a right adjoint $\overline{i}\,^!$ that commutes with $\Omega^{\infty}_{-}$ (and the same is true for shifts thereof by $E^n$). 

We next establish the full faithfulness of $\overline{j}_!, \overline{j}_{*}$, and $\overline{i}_{*}$. For $\overline{j}_{*}$, note that the counit $\overline{j}^{*} \overline{j}_{*} \to \id$ is sent to an equivalence by $\Omega^{\infty}_{j^{*} E}(E^{n} \otimes -)$, hence is an equivalence. For $\overline{i}_{*}$, the counit $\overline{i}^{*} \overline{i}_{*} \to \id$ is a natural transformation of colimit-preserving functors and is evidently an equivalence on objects in the image of $(i^{*} E)^{-n} \otimes \Sigma^{\infty}_{i^{*} E}$; since such objects generate $\Zscr[(i^{*} E)^{-1}]$ under colimits, we may conclude. The case of $\overline{j}_!$ follows by the same reasoning. Finally, also note that the $\overline{j}_! \dashv \overline{j}^{*}$ and $\overline{i}^{*} \dashv \overline{i}_{*}$ projection formulas follow by the same reasoning. 

Now let $A = i_{*} i^{*} \1$, so that $A$ is the idempotent $\mathbb{E}_{\infty}$-algebra in $\Xscr$ that determines the smashing localization $\Zscr \simeq \Mod_{A}(\Xscr)$. Invoking the monoidal Barr-Beck theorem \cite{mnn-descent}*{Theorem 5.29}, we may descend this equivalence to obtain $$\Zscr[(i^{*} E)^{-1}] \simeq \Mod_{\Sigma^{\infty}_E A}(\Xscr[E^{-1}]).$$

We thus see that $\Zscr[(i^{*} E)^{-1}]$ is the smashing localization of $\Xscr[E^{-1}]$ determined by the idempotent $\mathbb{E}_{\infty}$-algebra $\Sigma^{\infty}_E A$. In this situation, we recall from \cite[1.32]{quigley-shah} that to establish the recollement conditions, it suffices to show 
\begin{itemize}
\item[($\ast$)] the essential image of $\overline{j}_{*}$ coincides with the full subcategory of objects $X$ such that $\Map(\overline{i}_{*} Z,X) \simeq \ast$ for all $Z \in \Zscr[(i^{*} E)^{-1}]$.
\end{itemize} To prove this, first note that $\overline{j}^{*} \overline{i}_{*} \simeq 0$ using that for all $n \in \ZZ$, $$\Omega^{\infty}_{j^{*} E}((j^{*} E)^{n} \otimes \overline{j}^{*} \overline{i}_{*}(-)) \simeq j^{*} i_{*} \Omega^{\infty}_{i^{*} E} ((i^{*} E)^{n} \otimes -) \simeq 0.$$
Therefore, given $U \in \Uscr[(j^{*} E)^{-1}]$, we have that $$\Map(\overline{i}_{*} Z,\overline{j}_{*} U) \simeq \Map(\overline{j}^{*} \overline{i}_{*} Z,U) \simeq \Map(0,U) \simeq *.$$
Conversely, suppose $X \in \Xscr[E^{-1}]$ is such that $\Map(\overline{i}_{*} Z,X) \simeq *$ for all $Z \in \Zscr[(i^{*} E)^{-1}]$. By adjunction, we have that $\overline{i}\,^!(X) \simeq 0$. It follows that $\Omega^{\infty}_E (E^{n} \otimes X)$ lies in the essential image of $j_{*}$ for all $n$, and thus $X$ itself lies in the essential image of $\overline{j}_{*}$. This proves the claim ($\ast$) and hence the lemma is proved.
\end{proof}

\begin{remark} \label{rem:PrespectraAndSpectra} For sake of reference, we record some basic facts about prespectrum and spectrum objects in the $\infty$-categorical setting, fleshing out some details in \cite{elso}*{\S 4.0.6}, \cite{hoyois-cdh}*{\S 3}. Let $\C$ be a presentable symmetric monoidal $\infty$-category, let $E \in \C$ be a symmetric object, and let $\Sigma^E$, resp. $\Omega^E$ denote the endofunctors $E \otimes (-)$, resp. $\underline{\Hom}(E,-)$ of $\C$. Let $p: \ZZ_{\geq 0}^{\op} \to \widehat{\Cat}_{\infty}$ be the functor given by
\[ \begin{tikzcd}[column sep=8ex]
\C & \C \ar{l}[swap]{\Omega^E} & \C \ar{l}[swap]{\Omega^E} & \cdots \ar{l}[swap]{\Omega^E}
\end{tikzcd} \]
Because $E$ is a symmetric object, we have an equivalence of $\infty$-categories $\C[E^{-1}] \simeq \lim(p)$. Moreover, if we let $\widehat{\C} = \int p \to \ZZ_{\geq 0}$ be the cartesian fibration classified by $p$, then $\Sect^{\cart}(\widehat{\C}) \simeq \lim(p)$.\footnote{We could lift $\C[E^{-1}] \simeq \Sect^{\cart}(\widehat{\C})$ to an equivalence of symmetric monoidal $\infty$-categories by means of the Day convolution, but we will not need this.} Using that the inclusion of the spine\footnote{The \emph{spine} of $\ZZ_{\geq 0}$ is the sub-simplicial set given by $\{0 < 1\} \cup_{\{1\}} \{ 1 < 2\} \cup_{\{2\}} \{2 < 3\} \cup \cdots $.} of $\ZZ_{\geq 0}$ into $\ZZ_{\geq 0}$ is inner anodyne, we see that the objects of $\Sect^{\cart}(\widehat{\C})$ may be described as tuples $\{ X_n \in \C, \alpha_n: X_n \xrightarrow{\simeq} \Omega^E X_{n+1} \}$ with further coherences determined essentially uniquely, and similarly for morphisms, so we are entitled to refer to $\Sect^{\cart}(\widehat{\C})$ as the $\infty$-category of $E$-spectra in $\C$.

Moreover, the cartesian fibration $\widehat{\C} \to \ZZ_{\geq 0}$ is also a cocartesian fibration with pushforward functors the left adjoints $\Sigma^E$. Thus, if we consider $\Sect(\widehat{\C})$ instead, its objects may be described as tuples $\{ X_n \in \C, \beta_n: \Sigma^E X_n \rightarrow X_{n+1}\}$, so $\Sect(\widehat{\C})$ is the $\infty$-category of $E$-prespecta in $\C$. We have the inclusion functor $\Sect^{\cart}(\widehat{\C}) \subset \Sect(\widehat{\C})$ that preserves limits and is accessible (since the same is true for $\Omega^E$), so admits a left adjoint $L_{\mathrm{sp}}$ that exhibits $E$-spectra as a Bousfield localization of $E$-prespectra. We also have the equivalence $\Sect^{\mathrm{cocart}}(\widehat{\C}) \xrightarrow{\simeq} \C$ implemented by evaluation at the initial object $0 \in \ZZ_{\geq 0}$, and the inclusion $\Sect^{\cocart}(\widehat{\C}) \subset \Sect(\widehat{\C})$ preserves colimits (as $\Sigma^E$ does) and thus admits a right adjoint which also identifies with evaluation at $0$.

Using these identifications, we then may factor the adjunction $\Sigma^{\infty}_E: \C \rightleftarrows \C[E^{-1}] : \Omega^{\infty}_E$ as
\[ \begin{tikzcd}[row sep=4ex, column sep=6ex]
\C \ar[shift left=1, right hook->]{r} & \Sect(\widehat{\C}) \ar[shift left=2]{l}{\ev_0} \ar[shift left=1]{r}{L_{\mathrm{sp}}}  & \C[E^{-1}] \ar[shift left=2, left hook->]{l}.
\end{tikzcd} \]

Now suppose $i^*: \C \to \D$ is a strong symmetric monoidal colimit-preserving functor of presentable symmetric monoidal $\infty$-categories, and let $\overline{i}^*: \C[E^{-1}] \to \D[(i^*E)^{-1}]$ be the induced functor. We may then factor the equivalence $\Sigma^{\infty}_{i^* E} i^* \simeq \overline{i}^* \Sigma^{\infty}_E$ as
\[ \begin{tikzcd}[row sep=4ex, column sep=6ex]
\C \ar[right hook->]{r} \ar{d}{i^*} & \Sect(\widehat{\C}) \ar{r}{L_{\mathrm{sp}}} \ar{d}{i^*_{\pre}} & \C[E^{-1}] \ar{d}{\overline{i}^*} \\
\D \ar[right hook->]{r} & \Sect(\widehat{\D}) \ar{r}{L_{\mathrm{sp}}} & \D[(i^* E)^{-1}],
\end{tikzcd} \]
where $i^*_{\pre}$ is obtained by postcomposition by $\widehat{i}^*: \widehat{\C} \to \widehat{\D}$, the unstraightening of the natural transformation
\[ \begin{tikzcd}[row sep=4ex, column sep=6ex]
\C \ar{r}{\Sigma^E} \ar{d}{i^*} & \C \ar{r}{\Sigma^E} \ar{d}{i^*} & \C \ar{r}{\Sigma^E} \ar{d}{i^*} & \cdots \\
\D \ar{r}{\Sigma^{i^*E}} & \D \ar{r}{\Sigma^{i^*E}} & \D \ar{r}{\Sigma^{i^* E}} & \cdots
\end{tikzcd} \]
In particular, we see that $\overline{i}^*$ is computed by inclusion into $E$-prespectra and then $L_{\mathrm{sp}} \circ i^*_{\pre}$.
\end{remark}

\subsection{Motivic homotopy theory} \label{sec:AppendixMotivicPart}
Let us now specialize to the situation where $\B = \Sm_{S}$. In general, if we have two topologies $\tau'$ and $\tau$ on $\Sm_S$ with $\tau'$ finer than $\tau$, then the localization functor $L_{\tau'}: \SH^{S^1}_{\tau}(S) \to \SH^{S^1}_{\tau'}(S)$ always preserves the set of morphisms $\{ \Sigma^{\infty}_+ (X \times \AA^1) \to \Sigma^{\infty}_+ X \}$ that generate the $\AA^1$-local equivalences, using that sheafification preserves products. Moreover, $L_{\tau'}$ preserves hypercovers as a sheafification. Therefore, we get a commutative diagram\footnote{Note that the localization functor $L_{\AA^1}$ also imposes hyperdescent according to our conventions.}
\[ \begin{tikzcd}[row sep=4ex, column sep=4ex]
\Shv_{\tau}(\Sm_S,\Sp) \ar{r}{L_{\tau'}} \ar{d}{L_{\AA^1}} & \Shv_{\tau'}(\Sm_S,\Sp)  \ar{d}{L_{\AA^1}} \\
\SH^{S^1}_{\tau}(S) \ar{r}{L_{\tau'}} & \SH^{S^1}_{\tau'}(S).
\end{tikzcd} \]
Using the criterion of \cite{higheralgebra}*{Example 2.2.1.7}, we also see that this is a diagram of localizations compatible with the various symmetric monoidal structures in the sense of \cite{higheralgebra}*{Definition 2.2.1.6}. Returning to the setting of two topologies $\tau_U$ and $\tau_Z$ with $\tau = \tau_U \cap \tau_Z$, we thus obtain a commutative diagram of symmetric monoidal left adjoints
\begin{equation} \label{eq:mot-recolls1} \begin{tikzcd}[row sep=4ex, column sep=6ex]
\Shv_{\tau_U}(\Sm_S,\Sp)\ar{d}{L_{\AA^1}} & \Shv_{\tau}(\Sm_S,\Sp)\ar{d}{L_{\AA^1}} \ar{l}[swap]{j^{*}} \ar{r}{i^{*}} & \Shv_{\tau_Z}(\Sm_S,\Sp)) \ar{d}{L_{\AA^1}} \\
\SH^{S^1}_{\tau_U}(S) & \SH^{S^1}_{\tau}(S) \ar{l}[swap]{j^{*}} \ar{r}{i^{*}} & \SH^{S^1}_{\tau_Z}(S)
\end{tikzcd} 
\end{equation}
with fully faithful right adjoints $j_{*}$, $i_{*}$, and $\iota_{\AA^1}$.
\begin{proposition} \label{prp:S1motivesRecollement} The pair $(\SH^{S^1}_{\tau_U}(S),\SH^{S^1}_{\tau_Z}(S))$ constitute a stable monoidal recollement of $\SH^{S^1}_{\tau}(S)$.
\end{proposition}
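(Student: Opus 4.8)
The strategy is to reduce the claim entirely to the topos-level recollement already established in Corollary~\ref{cor:RecollementSheavesSpectra}. We know from that corollary that $(\Shv_{\tau_U}(\Sm_S,\Sp), \Shv_{\tau_Z}(\Sm_S,\Sp))$ is a stable monoidal recollement of $\Shv_{\tau}(\Sm_S,\Sp)$, with gluing functor $i^*j_*$. The diagram~\eqref{eq:mot-recolls1} exhibits $\SH^{S^1}_{\tau_U}(S)$, $\SH^{S^1}_{\tau}(S)$, $\SH^{S^1}_{\tau_Z}(S)$ as Bousfield localizations of the three corners of the toposic recollement, compatible with the symmetric monoidal structures, via the vertical functors $L_{\AA^1}$ with fully faithful right adjoints. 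So the heart of the proof is to check that the $\AA^1$-localization is compatible with the recollement structure, i.e. that the functors $j^*$, $i^*$ and the fully faithful $j_*$, $i_*$ descend along $L_{\AA^1}$ to give the analogous adjoint triple downstairs.

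First I would record that $j^*$ and $i^*$ on $\SH^{S^1}_\tau(S)$ are defined as the sheafification functors $L_{\tau_U}$ and $L_{\tau_Z}$ (restricted to $\AA^1$-local objects), and that they are left-exact as sheafifications; this is immediate from the construction in~\eqref{eq:mot-recolls1} and the fact that the upstairs $j^*,i^*$ are left-exact. Next I would verify the three defining properties of a recollement in the sense used in the paper (following \cite{quigley-shah}*{\S1}): (i) joint conservativity of $j^*$ and $i^*$ on $\SH^{S^1}_\tau(S)$; (ii) the right adjoints $j_*$, $i_*$ are fully faithful; (iii) $j^*i_* \simeq 0$ (constant at the terminal/zero object). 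For (i), joint conservativity upstairs (Lemma~\ref{lem:conservativity}, or rather its stable version) together with the fact that the vertical $L_{\AA^1}$ are jointly detected by their values — more precisely, an $\AA^1$-local map in $\SH^{S^1}_\tau(S)$ is an equivalence iff it is one in $\Shv_\tau(\Sm_S,\Sp)$, since $\iota_{\AA^1}$ is fully faithful — gives conservativity downstairs. For (iii), the key point is that $j^* i_*$ downstairs is computed by first including into $\Shv_\tau(\Sm_S,\Sp)$ along $\iota_{\AA^1}$, applying the upstairs $j^* i_*$ (which is constant at $0$ by the stable recollement from Corollary~\ref{cor:RecollementSheavesSpectra}, using Lemma~\ref{lem:asymmetry}), and then applying $L_{\AA^1}$; since $L_{\AA^1}(0) \simeq 0$, we get $j^* i_* \simeq 0$ downstairs. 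For (ii), full faithfulness of $i_*$ downstairs: $i_*$ is the composite $\Shv_{\tau_Z}(\Sm_S,\Sp) \supset \SH^{S^1}_{\tau_Z}(S) \xrightarrow{i_*} \Shv_\tau(\Sm_S,\Sp)$ followed by $L_{\AA^1}$, and since upstairs $i_*$ lands in the full subcategory on which its right adjoint's counit is an equivalence, one checks the counit $i^* i_* \to \mathrm{id}$ remains an equivalence after $\AA^1$-localization using left-exactness of $L_{\AA^1}$ in the relevant sense — here I would lean on the general fact (already invoked implicitly via \cite{higheralgebra}*{Proposition A.8.15} and the compatibility of localizations \cite{higheralgebra}*{Definition 2.2.1.6}) that a monoidal recollement upstairs, together with a compatible monoidal Bousfield localization, descends to a monoidal recollement of the localization. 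The cleanest route is in fact to cite \cite{higheralgebra}*{Proposition A.8.15}: a recollement is equivalent to the data of a stable localizing-colocalizing subcategory, and such subcategories are preserved by compatible localizations.

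\textbf{Main obstacle.} The one genuinely delicate point is verifying that $i_*$ (equivalently $j_*$) remains fully faithful after $\AA^1$-localization, since $L_{\AA^1}$ does not commute with arbitrary limits and $i_*$ is built from a limit-type construction. The safest resolution is to observe that $\SH^{S^1}_{\tau_Z}(S) = L_{\AA^1} i^*\bigl(\Shv_{\tau_Z}(\Sm_S,\Sp)\bigr)$-local objects sit inside $\SH^{S^1}_\tau(S)$ as exactly the objects $E$ with $j^* E \simeq 0$: the inclusion is the composite of the fully faithful $\iota_{\AA^1}$ on $\tau_Z$-sheaves with the fully faithful upstairs $i_*$ and then $L_{\AA^1}$, and one checks directly that $L_{\AA^1}$ restricted to the essential image of the upstairs $i_*$ is fully faithful because that essential image consists of objects already orthogonal to $j_! j^*$, a property preserved by $L_{\AA^1}$ (the $\AA^1$-equivalences are generated by maps pulled back along $j^*$, on which $i_*$-objects are orthogonal). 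Once full faithfulness of $i_*$ is secured, monoidality of the recollement is automatic since all the functors in~\eqref{eq:mot-recolls1} are symmetric monoidal or lax symmetric monoidal in the appropriate variance, and the smash product on $\SH^{S^1}_\tau(S)$ is inherited as a compatible localization of the smash product on $\Shv_\tau(\Sm_S,\Sp)$. I would then conclude by invoking Corollary~\ref{cor:RecollementSheavesSpectra} together with the descent-of-recollements principle to assemble the adjoint triple into the asserted stable monoidal recollement.
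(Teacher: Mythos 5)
Your core steps — joint conservativity from Lemma~\ref{lem:conservativity} applied with $\Lscr$ the $\AA^1$-equivalences, and $j^*i_* \simeq 0$ via the computation $j^*_{S^1} i^{S^1}_* \simeq L_{\AA^1} j^* i_* \iota_{\AA^1} \simeq L_{\AA^1}(0) \simeq 0$ — are exactly the paper's proof. Where you go astray is in the second half, which is devoted to resolving an obstacle that does not exist. You flag full faithfulness of $j_*$ and $i_*$ downstairs as "the one genuinely delicate point," but this is free: $\SH^{S^1}_{\tau_U}(S)$ and $\SH^{S^1}_{\tau_Z}(S)$ are by construction full subcategories of $\SH^{S^1}_\tau(S)$ (the $\tau_U$-local, resp. $\tau_Z$-local, objects, since $\tau_U$ and $\tau_Z$ both refine $\tau$), and $j_*$, $i_*$ are precisely the inclusions of these full subcategories — they are the fully faithful right adjoints of a Bousfield localization. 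There is nothing to check. Your description of $i_*$ downstairs as ``the upstairs $i_*$ followed by $L_{\AA^1}$'' misleads you into treating $i_*$ as something on which $L_{\AA^1}$ might genuinely act; but the upstairs $i_*$ applied to an $\AA^1$-local object produces an $\AA^1$-local object, on which $L_{\AA^1}$ is the identity.

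In attempting to shore up this non-problem you also assert that ``the $\AA^1$-equivalences are generated by maps pulled back along $j^*$, on which $i_*$-objects are orthogonal.'' This is not correct: the $\AA^1$-local equivalences in $\Shv_\tau(\Sm_S,\Sp)$ are generated by the projection maps $\Sigma^\infty_+(X\times\AA^1) \to \Sigma^\infty_+ X$ for $X \in \Sm_S$, and these are not pulled back along $j^*$ in any reasonable sense. The conclusion you are reaching for is true (for the trivial reason above), so your overall argument does not collapse, but this particular inference would fail under scrutiny. The appeal to \cite{higheralgebra}*{Proposition~A.8.15} and the descent-of-recollements heuristic is likewise unnecessary overhead; once you recognize $j_*$, $i_*$ as inclusions of local objects, the proof reduces to exactly the two short checks you open with.
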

\begin{proof} For clarity, we decorate the lower horizontal functors of~\eqref{eq:mot-recolls1} with an $S^1$ in this proof. By Lemma~\ref{lem:conservativity}, $j^{*}_{S^1}$ and $i^{*}_{S^1}$ are jointly conservative. As for the composite functor $j^{*}_{S^1} i_{*}^{S^1}$, this follows from Corollary~\ref{cor:RecollementSheavesSpectra} via the following computation:
\[ j^{*}_{S^1} i_{*}^{S^1} \simeq L_{\AA^1} j^{*} \iota_{\AA^1} i_{*}^{S^1} \simeq L_{\AA^1} j^{*} i_{*} \iota_{\AA^1} \simeq L_{\AA^1} (0) \simeq 0. \]
\end{proof}

\begin{warning} \label{warn:UnstableMotivicGluingFunctor} On the unstable level of (pointed) motivic spaces, one likewise has adjunctions
\[ \begin{tikzcd}[row sep=4ex, column sep=6ex, text height=1.5ex, text depth=0.25ex]
\H_{\tau_U}(S)_{(\bullet)} \ar[shift right=1,right hook->]{r}[swap]{j_*} & \H_{\tau}(S)_{(\bullet)} \ar[shift right=2]{l}[swap]{j^*} \ar[shift left=2]{r}{i^*} & \H_{\tau_Z}(S)_{(\bullet)} \ar[shift left=1,left hook->]{l}{i_*}.
\end{tikzcd} \]
such that $j^*,i^*$ are jointly conservative and $j^* i_* \simeq \ast$. However, we do not know how to show that $i^*$ is left-exact (although $j^*$ is left-exact since we may descend the adjunction $j_! \dashv j^*$).
\end{warning}

To pass from $\SH^{S^1}_{\tau}(S)$ to $\SH_{\tau}(S)$, we may invert $\GG_m$. We then have:

\begin{theorem} \label{thm:MainMotivesRecollement} The pair $(\SH_{\tau_U}(S), \SH_{\tau_Z}(S))$ constitutes a stable monoidal recollement of $\SH_{\tau}(S)$.
\end{theorem}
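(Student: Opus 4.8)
The strategy is to deduce Theorem~\ref{thm:MainMotivesRecollement} from Proposition~\ref{prp:S1motivesRecollement} by applying Lemma~\ref{lem:InvertedRecollement} with $E = \GG_m$ (more precisely, the pointed motivic space $(\GG_m,1)$ viewed in $\SH^{S^1}_{\tau}(S)$). First I would check the hypotheses of that lemma: Proposition~\ref{prp:S1motivesRecollement} supplies a stable monoidal recollement $(\SH^{S^1}_{\tau_U}(S), \SH^{S^1}_{\tau_Z}(S))$ of $\SH^{S^1}_{\tau}(S)$ of presentable stable symmetric monoidal $\infty$-categories, and $\GG_m$ is a symmetric object — the cyclic permutation of $\GG_m^{\wedge 3}$ is homotopic to the identity in $\SH^{S^1}(S)$ and this property descends along the localization to any $\SH^{S^1}_{\tau}(S)$. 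Moreover, since $j^*$ and $i^*$ are strong symmetric monoidal, we have $j^*(\GG_m) \simeq \GG_m$ in $\SH^{S^1}_{\tau_U}(S)$ and $i^*(\GG_m) \simeq \GG_m$ in $\SH^{S^1}_{\tau_Z}(S)$, so inverting $\GG_m$ in each of the three categories is compatible with the recollement functors.

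With these inputs, Lemma~\ref{lem:InvertedRecollement} yields a stable monoidal recollement
\[
\left( \SH^{S^1}_{\tau_U}(S)[\GG_m^{-1}], \ \SH^{S^1}_{\tau_Z}(S)[\GG_m^{-1}] \right)
\]
of $\SH^{S^1}_{\tau}(S)[\GG_m^{-1}]$. It then remains to identify these three $\GG_m$-inverted categories with $\SH_{\tau_U}(S)$, $\SH_{\tau_Z}(S)$, and $\SH_{\tau}(S)$ respectively. This is essentially the definition: for any topology $\omega$ finer than the Nisnevich topology, $\SH_{\omega}(S)$ is by construction obtained from $\SH^{S^1}_{\omega}(S)$ by $\otimes$-inverting $\GG_m$ (equivalently, by $\otimes$-inverting the Tate object, using the standard equivalence $\PP^1 \simeq S^1 \wedge \GG_m$), and this construction is compatible with the localization functors $L_\omega$ exactly as recorded in the commuting square of symmetric monoidal left adjoints preceding Proposition~\ref{prp:S1motivesRecollement}. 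Hence the identifications $\SH^{S^1}_{\tau}(S)[\GG_m^{-1}] \simeq \SH_\tau(S)$, and similarly for $\tau_U$ and $\tau_Z$, are as symmetric monoidal $\infty$-categories and intertwine the recollement adjunctions $j^*, j_*, i^*, i_*$ with their $\GG_m$-stable counterparts. Combining this with Lemma~\ref{lem:InvertedRecollement} gives the claimed recollement.

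I do not expect a serious obstacle here, since all the real work has been isolated into Proposition~\ref{prp:S1motivesRecollement} and Lemma~\ref{lem:InvertedRecollement}. The one point requiring a little care is the bookkeeping in Remark~\ref{rem:PrespectraAndSpectra}: one should make sure that the $\GG_m$-stabilization is genuinely compatible with the localization functors $L_{\tau_U}$ and $L_{\tau_Z}$ at the level of prespectra (so that $L_{\mathrm{sp}}$ commutes with the relevant change-of-topology functors), which is what allows the recollement functors — in particular the gluing functor $i^* j_*$ and the non-symmetric-monoidal right adjoints — to descend along $\GG_m$-inversion. This is exactly the content that Lemma~\ref{lem:InvertedRecollement} was designed to handle, so the proof amounts to verifying its hypotheses and invoking it.
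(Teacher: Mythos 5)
Your proposal is correct and follows the same route as the paper: invoke Proposition~\ref{prp:S1motivesRecollement} for the $S^1$-stable recollement, observe that $\GG_m$ is a symmetric object, and apply Lemma~\ref{lem:InvertedRecollement}. The paper's proof is a one-liner that leaves the identifications $\SH^{S^1}_{\omega}(S)[\GG_m^{-1}] \simeq \SH_{\omega}(S)$ implicit; you have merely spelled out that step.
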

\begin{proof} Since $\GG_m$ is a symmetric object for any topology, the theorem follows from Proposition~\ref{prp:S1motivesRecollement} and Lemma~\ref{lem:InvertedRecollement}.
\end{proof}

\begin{example} \label{ex:recoll-b} Let $\tau_U$ be the \'etale topology and $\tau_Z$ be the real \'etale topology, so that $\tau$ is Scheiderer's b-topology. By \cite[2.4]{scheiderer}, the criterion of \ref{lem:asymmetry} is satisfied in this case. Therefore, Theorem~\ref{thm:MainMotivesRecollement} applies to decompose $\SH_b(S)$ as a stable monoidal recollement of $\SH_{\et}(S)$ and $\SH_{\ret}(S)$.
\end{example}

\section{The real \'etale and $b$-topologies}

We begin with some recollections on the real \'etale topology.

\begin{definition} Suppose that $f:Y \rightarrow X$ is a morphism of schemes. We say that $f$ is a {\bf real cover} if for any morphism $\alpha: \Spec k \rightarrow X$ where $k$ is a real closed field, there exists an extension of real closed fields $k'/k$ and a morphism $\alpha':\Spec k' \rightarrow Y$ such that the following diagram commutes
\[
\begin{tikzcd}
\Spec k' \ar{d} \ar{r}{\alpha'} & Y \ar{d} \\
\Spec k \ar{r}{\alpha} & X. 
\end{tikzcd}
\]
\end{definition}

\begin{example} \label{ex:nontrivialRealClosedFieldsExtension} For an example of a non-trivial extension of real closed fields, consider the field $\RR_{\mathrm{alg}}$ of real algebraic numbers, which is the real closure of $\QQ$. Then the inclusion $\RR_{\mathrm{alg}} \subset \RR$ is an extension of real closed fields.
\end{example}

\begin{lemma} \label{lem:real-cover} Let $f:Y \rightarrow X$ be a morphism of schemes. The following conditions on $f$ are equivalent:
\begin{enumerate}
\item $f: Y \rightarrow X$ is a real cover.
\item The induced map on real spectra $f_r: Y_r \rightarrow X_r$ is surjective.
\item For any point $x \in X$ with an ordering on $k(x)$, there exists a point $y \in Y$ with an ordering on $k(y)$ such that $f(y) = x$ and $k(y)/k(x)$ is an extension of ordered fields.
\end{enumerate}
\end{lemma}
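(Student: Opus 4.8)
The plan is to establish the cycle of implications $(1) \Rightarrow (3) \Rightarrow (2) \Rightarrow (1)$, which is the most economical route. The key bookkeeping device throughout is the standard identification between points of the real spectrum $X_r$ and pairs $(x, \leq)$ consisting of a point $x \in X$ together with an ordering $\leq$ on the residue field $k(x)$; this is the content of \cite{scheiderer}*{0.4} and lets us translate freely between the scheme-theoretic picture (orderings on residue fields) and the topological picture (points of $X_r$). A second basic fact I would use repeatedly: an ordered field $(K, \leq)$ admits a \emph{real closure} $k$, i.e.\ an algebraic extension $k/K$ which is real closed and whose unique ordering restricts to $\leq$ on $K$; and conversely, giving a morphism $\Spec k \to X$ from a real closed field is the same as giving a point $x \in X$ together with an embedding of $k(x)$ into $k$, which endows $k(x)$ with the ordering induced from $k$.

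For $(1) \Rightarrow (3)$: given $x \in X$ and an ordering on $k(x)$, pass to a real closure $k$ of $(k(x), \leq)$, so that we obtain $\alpha \colon \Spec k \to X$ hitting $x$ with the prescribed ordering. Apply the real cover hypothesis to get $k'/k$ and $\alpha' \colon \Spec k' \to Y$ lifting $\alpha$; let $y = \alpha'(\Spec k')$ be the image point. Then $f(y) = x$, and the embeddings $k(x) \hookrightarrow k(y) \hookrightarrow k'$ show that $k(y)$ carries the ordering induced by $k'$, which restricts to the given ordering on $k(x)$ — so $k(y)/k(x)$ is an extension of ordered fields, as required. For $(3) \Rightarrow (2)$: a point of $X_r$ is exactly a pair $(x, \leq)$; by $(3)$ there is $(y, \leq')$ with $f(y) = x$ and $\leq'|_{k(x)} = \leq$, and the functoriality of the real spectrum sends the point $(y, \leq') \in Y_r$ to $(x, \leq) \in X_r$ — so $f_r$ is surjective. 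For $(2) \Rightarrow (1)$: given $\alpha \colon \Spec k \to X$ with $k$ real closed, $\alpha$ determines a point $(x, \leq) \in X_r$; surjectivity of $f_r$ yields $(y, \leq') \in Y_r$ over it. Now take $k'$ to be a real closure of $(k(y), \leq')$; the composite $k(y) \hookrightarrow k'$ gives $\alpha' \colon \Spec k' \to Y$ over $y$, and since $k(x) \hookrightarrow k(y) \hookrightarrow k'$ respects orderings, the resulting map $\Spec k' \to \Spec k(x)$ factors the structure map compatibly, so that $k'/k$ is an extension of real closed fields fitting into the required commuting square. (One should check that $k'$ can be taken to contain $k$ as an ordered subfield — this follows because both $k$ and the image of $k(x)$ in $k'$ are real closures of $(k(x), \leq)$ inside a common algebraically closed field, hence isomorphic over $k(x)$; alternatively, enlarge $k'$ to the real closure of the compositum.)

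The main obstacle I anticipate is the final step $(2) \Rightarrow (1)$, specifically the verification that the real closure $k'$ of $k(y)$ can be arranged to sit over $k$ as an \emph{extension of real closed fields} rather than merely as a real closed field mapping compatibly to $X$. The point is that $k$ itself is (isomorphic to) the real closure of $(k(x), \leq)$, and real closures are unique up to unique $k(x)$-isomorphism \cite{scheiderer}*{0.4 and references therein}; so the embedding $k(x) \hookrightarrow k(y) \hookrightarrow k'$ extends to an embedding $k \hookrightarrow k'$ of ordered fields over $k(x)$, which is automatically an embedding of real closed fields. Once this uniqueness-of-real-closures input is in hand, the square commutes by construction and the argument closes up. The other implications are essentially unwinding definitions through the $X_r \leftrightarrow \{(x, \leq)\}$ dictionary and should be routine.
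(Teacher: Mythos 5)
Your proof is correct and takes essentially the same route as the paper, which simply cites Scheiderer's description of points of $X_r$ (as (point, ordering) pairs, equivalently as equivalence classes of morphisms from real closed fields) and declares the equivalences to follow by unwinding definitions; you have carried out that unwinding explicitly, including the one nontrivial point — that in $(2)\Rightarrow(1)$ the real closure $k'$ of $(k(y),\leq')$ admits a $k(x)$-embedding of $k$ by uniqueness of real closures, making the square commute — which is exactly what the citation to \cite{scheiderer}*{0.4.3} is meant to supply.
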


\begin{proof} The equivalence of (2) and (3) follows by tracing through the definition of the real spectrum. The equivalence of (1) and (2) follows from the description of the points of $X_r$ as classes of morphisms $\Spec k \rightarrow X$ where $k$ is a real closed field up to the equivalence relation specified in \cite{scheiderer}*{0.4.3}.
\end{proof}

\begin{remark} \label{rem:ret} A real cover need not be surjective on any other field points. For example, the morphism $\Spec \RR \rightarrow \Spec \RR[x,y]/(x^2+y^2)$ classifying the $\RR$-point ``$(0,0)$" is a real cover. However, on $\CC$-points, it is the inclusion of a single point into a conic and thus is not surjective.
\end{remark}

\begin{definition} \label{def:ret} Let $X$ be a scheme. The {\bf real \'etale topology on $\Et_X$} is the coarsest topology such that the sieves generated by families $\{ U_{\alpha} \rightarrow U \}$ for which $\coprod U_{\alpha} \rightarrow U$ is a real cover are covering sieves. Similarly, we can also consider the {\bf real \'etale topology on $\Sm_X$} or $\Sch_X$ as the coarsest topology such that the sieves generated by families $\{ U_{\alpha} \rightarrow U \}$ for which $U_{\alpha} \rightarrow U$ is \'etale and $\coprod U_{\alpha} \rightarrow U$ is a real cover are covering sieves. 
\end{definition}

Note that if $k \subset k'$ is an algebraic extension of ordered fields and $k$ is real closed, then $k = k'$. Hence, if we assume that a real cover $f: Y \rightarrow X$ is \'etale (so that the singleton $\{Y \rightarrow X\}$ generates a real \'etale covering sieve), we have the following refinement of Lemma~\ref{lem:real-cover}.

\begin{lemma} \label{lem:real-cover-et} Let $f:Y \rightarrow X$ be a morphism of schemes such that for any $y \in Y$ the induced extension of residue fields $f:k(f(y)) \subset k(y)$ is algebraic (e.g., $f$ is \'etale). The following are equivalent:
\begin{enumerate}
\item $f: Y \rightarrow X$ is a real cover.
\item For any real closed field $k$, the induced map $Y(k) \rightarrow X(k)$ is surjective.
\end{enumerate}
\end{lemma}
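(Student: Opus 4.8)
The plan is to prove the equivalence of (1) and (2) by combining Lemma~\ref{lem:real-cover} with a pointwise analysis, using the hypothesis that $f$ induces algebraic residue field extensions to upgrade "extension of ordered fields" to "equality of ordered fields" at the level of real closed points. The direction (1)$\Rightarrow$(2) is the substantive one, and (2)$\Rightarrow$(1) will follow almost immediately from the definitions.

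First I would record the elementary field-theoretic fact already noted before the statement: if $k \subset k'$ is an algebraic extension of ordered fields and $k$ is real closed, then $k = k'$ (since a real closed field has no proper algebraic ordered extension). Then for (1)$\Rightarrow$(2), fix a real closed field $k$ and a $k$-point $\alpha: \Spec k \to X$. By Lemma~\ref{lem:real-cover}, condition (3), applied to the point $x = \alpha(\Spec k) \in X$ equipped with the ordering on $k(x)$ pulled back from $k$, there is a point $y \in Y$ with an ordering on $k(y)$ such that $f(y) = x$ and $k(y)/k(x)$ is an extension of ordered fields. Now I need to produce an actual $k$-point of $Y$ lying over $\alpha$, not merely a scheme-theoretic point. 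The idea is that $\alpha$ factors as $\Spec k \to \Spec k(x) \to X$, and the ordered field $k(y)$ maps to the residue field of the fiber $Y_x = Y \times_X \Spec k(x)$ at $y$; since $f$ is assumed to induce an algebraic extension $k(x) \subset k(y)$, and we have an ordering on $k(y)$ restricting to the given one on $k(x) \hookrightarrow k$, I can embed $k(y)$ into the real closure of $k(x)$ inside $k$ — but $k$ is already real closed and $k(x) \subset k$, so $k(y)$ embeds into $k$ compatibly with the orderings (using the fact above, or rather its proof: an algebraic ordered extension of $k(x)$ embeds into any real closure of $k(x)$, and the real closure of $k(x)$ inside $k$ is a subfield of $k$). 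This yields a morphism $\Spec k \to \Spec k(y) \to Y_x \to Y$ whose composite with $f$ is $\alpha$, proving surjectivity of $Y(k) \to X(k)$.

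For (2)$\Rightarrow$(1), suppose $Y(k) \to X(k)$ is surjective for every real closed field $k$. Given $\alpha: \Spec k \to X$ with $k$ real closed, surjectivity directly provides $\alpha': \Spec k \to Y$ with $f \circ \alpha' = \alpha$; taking $k' = k$ in the definition of real cover shows $f$ is a real cover. (Here one does not even need the algebraicity hypothesis, and indeed no nontrivial extension $k'/k$ is required.)

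The main obstacle I expect is the careful bookkeeping in (1)$\Rightarrow$(2): specifically, verifying that the ordered field $k(y)$ furnished by Lemma~\ref{lem:real-cover}(3) genuinely embeds into the given real closed field $k$ \emph{over} $k(x)$ and compatibly with orderings. This rests on the standard uniqueness statement for real closures — that any two real closures of an ordered field are uniquely isomorphic over it — applied to identify the relative algebraic closure of $k(x)$ in $k$ with (a copy of) the real closure of $(k(x), \leq)$, into which every algebraic ordered extension of $k(x)$ embeds. I would cite \cite{scheiderer}*{Chapter 0} or a standard real algebra reference for these facts rather than reproving them. Once this embedding is in hand, the construction of the desired $k$-point of $Y$ is formal.
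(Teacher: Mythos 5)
Your proposal is correct, and it fills in precisely the gap the paper leaves implicit: the paper states Lemma~\ref{lem:real-cover-et} without proof, prefaced only by the remark that a real closed field admits no proper algebraic ordered extension, and your argument is a careful expansion of that observation. The direction (2)$\Rightarrow$(1) is indeed trivial (take $k'=k$), and your (1)$\Rightarrow$(2) correctly routes through Lemma~\ref{lem:real-cover}(3) and then upgrades the scheme-theoretic point $y$ to an honest $k$-point via the uniqueness and universal property of real closures.

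One small remark on the field-theoretic step, which is the only place where care is needed. You embed $k(y)$ into the relative algebraic closure $L$ of $k(x)$ in $k$; to justify this you should note (as a standard fact) that $L$ \emph{is} a real closure of the ordered field $(k(x),\leq)$: it is formally real as a subfield of $k$, every positive element of $L$ has a square root in $k$ that is again algebraic over $k(x)$ hence in $L$, and every odd-degree polynomial over $L$ has a root in $k$ lying in $L$. With that in hand, the embedding $k(y)\hookrightarrow L$ over $k(x)$ compatible with orderings follows from the universal property of the real closure, exactly as you cite. An equally short variant, perhaps a hair more in the spirit of the paper's one-line hint, works directly with the definition of a real cover: if $\alpha'\colon\Spec k'\to Y$ lifts $\alpha$ after a real closed extension $k'/k$, then $\alpha'$ factors through a point $y\in Y$ with $k(y)\hookrightarrow k'$; since $k(y)/k(x)$ is algebraic and $k$ is relatively algebraically closed in $k'$ (no real closed field has a proper algebraic ordered extension), the image of $k(y)$ lands in $k$, yielding the desired $k$-point. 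Either way the proof is correct.
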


\begin{example} \label{ex:nis-et} Recall that a morphism $f:Y \rightarrow X$ is {\bf completely decomposed} if it is surjective on $k$-points. Hence, one might call condition (2) of Lemma~\ref{lem:real-cover-et} {\bf real completely decomposed}, since we only demand surjectivity on real closed points. Since the Nisnevich topology is given by \'etale morphisms that are jointly surjective on $k$-points, we see that any Nisnevich cover is a real \'etale cover.
%
%
\end{example}
We now discuss points of the real \'etale topology. 



Suppose that $X = \Spec A$ is an affine scheme and $x \in \Sper A$ is a point corresponding to a pair $(\mathfrak{p}, \leq_{\mathfrak{p}})$ where $\mathfrak{p}$ is a prime ideal of $A$ and $\leq_{\mathfrak{p}}$ is an ordering on $\kappa(\mathfrak{p})$. Let $\mathrm{Neib}_{\ret}(X,x)$ be the filtered category of tuples $(B,y \in \Sper B)$, where $B$ is an \'etale $A$-algebra and $y = (\mathfrak{q}, \leq_{\mathfrak{q}})$ is a point over $x$, i.e., $\mathfrak{q}$ lies over $\mathfrak{p}$ and $k(\mathfrak{q})/k(\mathfrak{p})$ is an ordered extension. Unlike the situation of separably closed fields, we note that the real closures of ordered fields are unique (see, for example, \cite{sander}), whence we do not have to make a further choice of an embedding of $k(\mathfrak{q})$ into the real closure $k$ of $k(\mathfrak{p})$; it follows that to define $\mathrm{Neib}_{\ret}(X,x)$, we may have equivalently specified factorizations of $\Spec k \to \Spec A$ through \'etale $A$-schemes. We then have the notion of a {\bf strictly real henselian local ring}, defined to be a henselian local ring with real closed residue field, and the {\bf strict real henselization} of $X$ at $x = (\mathfrak{p}, \leq_{\mathfrak{p}})$, which is the initial strictly real henselian local ring $A^{\ret h}_x$ equipped with a local homomorphism $A_{\mathfrak{p}} \rightarrow A^{\ret h}_x$ preserving the ordering on residue fields (see \cite{strictreal}*{Definition 3.1.1} or \cite{scheiderer}*{Definition~3.7.3}). By \cite{strictreal}*{Proposition 3.1.3}, the strict real henselization of $X$ at $x$ exists and is computed by the usual formula:
\begin{equation} \label{eq:rhensel}
A^{\ret h}_x = \colim_{\mathrm{Neib}_{\ret}(X,x)} B.
\end{equation}

\begin{lemma} \label{lem:splits-ret} Let $R$ be a commutative ring. Then the following are equivalent: 
\begin{enumerate}
\item The ring $R$ is strictly real henselian.
\item For any real \'etale cover $\{ \phi_{\alpha}: \Spec R_{\alpha} \rightarrow \Spec R \}$, one of the morphisms $\phi_{\alpha}$ admits a splitting.
\end{enumerate}
\end{lemma}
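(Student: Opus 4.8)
The plan is to characterize strictly real henselian rings as precisely those for which every real \'etale covering sieve is trivial (i.e.\ contains a section), which is the standard ``the ring is a point of the topology'' criterion. For the implication (1)$\Rightarrow$(2), suppose $R$ is strictly real henselian with maximal ideal $\mathfrak m$ and real closed residue field $k = R/\mathfrak m$. Given a real \'etale cover $\{\phi_\alpha: \Spec R_\alpha \to \Spec R\}$, by Definition~\ref{def:ret} each $\phi_\alpha$ is \'etale and $\coprod \phi_\alpha$ is a real cover. Consider the closed point $x \in \Spec R$ together with the (unique, since $k$ is real closed) ordering on $k$; this determines a point of $\Sper R$. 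Since $\coprod \phi_\alpha$ is a real cover, Lemma~\ref{lem:real-cover} gives a point $y$ in some $\Spec R_\alpha$ over $x$ with $k(y)/k$ an ordered algebraic extension, hence $k(y) = k$ (as $k$ is real closed and admits no proper ordered algebraic extension, cf.\ the remark preceding Lemma~\ref{lem:real-cover-et}). Thus $\phi_\alpha$ is \'etale with a point $y$ lying over $\mathfrak m$ inducing an isomorphism on residue fields. By the structure theory of \'etale morphisms and the henselian property of $R$ (applied exactly as in the classical proof that a strictly henselian local ring splits \'etale covers, e.g.\ via \cite{strictreal}*{Proposition 3.1.3} or \cite{scheiderer}*{Chapter 3}), the section $\Spec k \to \Spec R_\alpha$ lifts to a section $s: \Spec R \to \Spec R_\alpha$ of $\phi_\alpha$.

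For the converse (2)$\Rightarrow$(1), I would run the argument through the explicit presentation~\eqref{eq:rhensel}. Fix the point $x \in \Sper R$ corresponding to $(\mathfrak p, \leq_{\mathfrak p})$; it suffices to treat the case where $R$ is already local (first localize, noting that $\Spec R_{\mathfrak p} \to \Spec R$ is not \'etale so this requires a preliminary reduction — actually better: observe that the splitting hypothesis forces $R$ to be local, since $\Spec R = \Spec R' \sqcup \Spec R''$ with both nonempty would be a real \'etale cover with no section unless one factor is empty). With $R$ local, the filtered colimit $A^{\ret h}_x = \colim_{\mathrm{Neib}_{\ret}(R,x)} B$ of~\eqref{eq:rhensel} receives a canonical local map from $R$, and I claim the splitting hypothesis makes this map an isomorphism: every object $(B, y)$ of $\mathrm{Neib}_{\ret}(R,x)$ determines a real \'etale cover $\{\Spec B \to \Spec R\}$ (the morphism is \'etale and, since $y$ lies over $x$ with ordered residue field extension, surjective on $\Sper$ near $x$ — one must be slightly careful and pass to a suitable \'etale neighborhood or a cover to ensure genuine surjectivity of real spectra, perhaps adjoining a complementary open), so it admits a section $\Spec R \to \Spec B$, i.e.\ $B \to R$ splitting the structure map. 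This exhibits $R$ as a filtered colimit of its own retracts, forcing $R \xrightarrow{\sim} A^{\ret h}_x$, which is strictly real henselian by construction.

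The main obstacle I anticipate is the surjectivity-of-real-spectra bookkeeping in the converse: a single object $(B,y) \in \mathrm{Neib}_{\ret}(R,x)$ need not give a map $\Spec B \to \Spec R$ that is a real cover (it only hits the point $x$ and nearby orderings, not all of $\Sper R$), so presenting it as a genuine real \'etale \emph{cover} — to which hypothesis (2) applies — requires either restricting attention to the local ring at $x$ (so $\Sper R$ is suitably small) or enlarging $\{\Spec B\}$ to a family $\{\Spec B, \Spec R[f^{-1}]\}$ covering the complement. I would handle this by first reducing cleanly to $R$ local via the connectedness remark above, and then arguing that for local $R$ every \'etale neighborhood of the closed point, after possibly shrinking, already dominates a cofinal system in $\mathrm{Neib}_{\ret}(R,x)$ so that the colimit computation goes through. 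The forward direction (1)$\Rightarrow$(2) is routine given the henselian lifting property and Lemma~\ref{lem:real-cover-et}; I expect it to be a short paragraph citing \cite{strictreal} and \cite{scheiderer}.
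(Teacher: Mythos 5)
The paper's proof of this lemma is a single citation to \cite{costes}, so there is no argument in the paper to compare against; what follows concerns whether your reconstruction would succeed. Your forward direction is fine: the uniqueness of the ordering on a real closed field, Lemma~\ref{lem:real-cover}, and the algebraicity of \'etale residue extensions give a prime $\mathfrak q$ of some $R_\alpha$ over $\mathfrak m$ with trivial residue extension, and henselianity of $R$ then lifts the residue-field section to a section of $\phi_\alpha$.

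The converse has genuine gaps. The step asserting that (2) forces $R$ to be local is justified only by ruling out a disconnection $\Spec R = \Spec R' \sqcup \Spec R''$, which shows there are no nontrivial idempotents — connectedness, not locality; the correct argument applies (2) to a Zariski cover $\{\Spec R_f, \Spec R_g\}$ with $f + g = 1$ and uses that a split open immersion is an isomorphism, so $f$ or $g$ must be a unit. More importantly, nothing in your sketch establishes that $\Sper R \neq \emptyset$ (needed, else the empty family is a real \'etale cover with no member to split, so (2) fails vacuously) nor that $R/\mathfrak m$ is orderable so that the point $x$ you fix is supported at the maximal ideal; if $\kappa(\mathfrak m)$ is not formally real, the canonical local map in your colimit step starts from some proper localization $R_{\mathfrak p}$ rather than from $R$, and the ``filtered colimit of retracts'' conclusion does not go through. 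You do flag the surjectivity-on-$\Sper$ bookkeeping as a difficulty, but neither proposed fix (adjoining a complementary open, or arguing cofinality) is carried out, and the final passage from ``each $B$ admits a retraction onto $R$'' to ``$R \cong A^{\ret h}_x$'' also requires compatibility of the retractions with the transition maps in $\mathrm{Neib}_{\ret}(R,x)$, which isn't established. The more robust route is to verify the three defining properties (local, henselian, residue field real closed) separately, each against a real \'etale cover that is genuinely jointly surjective on $\Sper R$; producing such covers for the residue-field step is where the real content lies, and it is precisely what \cite{costes} supplies.
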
 

\begin{proof} This is in \cite{costes}. 
\end{proof}


\begin{lemma} \label{lem:approx} Suppose that $S$ is the limit of a cofiltered diagram of schemes\footnote{Recall that, by convention, schemes are all qcqs.} $S_{\alpha}$. Then we have equivalences of $\infty$-categories
\[
\widetilde{S}_{\ret} \simeq \lim_{\alpha} \widetilde{S_{\alpha}}_{\ret}, \qquad \Sp(\widetilde{S}_{\ret}) \simeq \lim_{\alpha} \Sp(\widetilde{S_{\alpha}}_{\ret}).
\]
\end{lemma}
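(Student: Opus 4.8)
The statement is a continuity result for the real \'etale topos and its stabilization along cofiltered limits of qcqs schemes. The plan is to reduce everything to a statement about the small real \'etale sites $\Et_{S_\alpha}$ and then invoke the general machinery for limits of coherent $\infty$-topoi. First I would recall that, for a cofiltered diagram of qcqs schemes $S = \lim_\alpha S_\alpha$, the small \'etale sites satisfy $\Et_S \simeq \colim_\alpha \Et_{S_\alpha}$ as a filtered $2$-colimit of sites (this is the standard limit result of \cite{sga4-3}, or its modern formulation via \cite{sag}), and crucially that the real \'etale covering sieves are compatible with this colimit: a real cover in $\Et_S$ is, by quasicompactness, pulled back from a real cover in some $\Et_{S_\alpha}$, using the characterization of real covers in terms of surjectivity on real spectra (Lemma~\ref{lem:real-cover}) together with the fact that $S_r = \lim_\alpha (S_\alpha)_r$ as spectral spaces. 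This identifies the real \'etale site of $S$ as the filtered colimit of the real \'etale sites of the $S_\alpha$ in the $\infty$-category of sites with finite limits (or with the relevant Grothendieck topologies).

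Next I would pass from sites to topoi. Since the real \'etale topos is $1$-localic, coherent, and bounded coherent (each $\Et_{S_\alpha}$ is a coherent site and the real \'etale topology is finitary by Lemma~\ref{lem:splits-ret}, i.e., generated by finite covering families), I would apply the compatibility of $\Shv_{\ret}(-)$ with filtered colimits of coherent sites. Concretely: the assignment $(\mathcal{C},\tau) \mapsto \Shv_\tau(\mathcal{C})$ sends filtered colimits of coherent sites (along morphisms inducing equivalences on the relevant categories of covers) to cofiltered limits in $\RTop$; this is exactly the content of \cite{sag}*{Appendix A} on ultraproducts/limits of coherent topoi, or can be deduced from Deligne's completeness theorem. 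This yields $\widetilde{S}_{\ret} \simeq \lim_\alpha \widetilde{S_\alpha}_{\ret}$, the first equivalence. Here I would be careful that the limit is taken in $\RTop$ (equivalently, a filtered colimit of the corresponding left adjoints in $\LTop$), and that the structure maps in the diagram are the pullback geometric morphisms induced by the transition maps $S \to S_\alpha$.

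For the second equivalence, I would stabilize. The functor $\Sp: \RTop \to \PrL_{\infty,\stab}$ (or rather $\widehat{\Cat}_\infty^{\lex} \to \widehat{\Cat}_\infty^{\ex}$ applied to the underlying $\infty$-categories with their geometric morphisms as in the conventions of the excerpt) preserves limits, since stabilization $\Sp(\mathcal{X}) = \Exc_*(\Spc^{\fin}_\bullet, \mathcal{X})$ commutes with limits of $\infty$-categories along left-exact functors — a cofiltered limit of left-exact right adjoints in $\RTop$ is computed at the level of underlying $\infty$-categories, and $\Exc_*(\Spc^{\fin}_\bullet,-)$ preserves such limits. Applying this to $\widetilde{S}_{\ret} \simeq \lim_\alpha \widetilde{S_\alpha}_{\ret}$ gives $\Sp(\widetilde{S}_{\ret}) \simeq \lim_\alpha \Sp(\widetilde{S_\alpha}_{\ret})$, as desired.

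The main obstacle is the first equivalence, and specifically the claim that real \'etale \emph{covers} are compatible with the filtered colimit of sites — i.e., that every real \'etale covering sieve of an object of $\Et_S$ is the pullback of one from some finite stage $\Et_{S_\alpha}$, and conversely. The forward direction uses quasicompactness to descend a finite real cover, but one must check that a family which is a real cover after base change to $S$ is already a real cover after base change to some $S_\alpha$; this follows from $S_r = \lim_\alpha (S_\alpha)_r$ together with quasicompactness of the (spectral) real spectra, so that surjectivity of a map of inverse limits of spectral spaces with surjective transition-compatible behavior descends to a finite stage. Once this combinatorial/topological bookkeeping is in place, the passage to topoi and the stabilization step are formal. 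A secondary point to handle carefully is hypercompleteness: by Theorem~\ref{thm:hyper} the real \'etale topos of a finite-dimensional scheme is hypercomplete, but for the general qcqs statement here one works with the (non-hypercomplete) sheaf topos throughout, and the limit formula is for $\widetilde{(-)}_{\ret}$ as defined in the conventions, so no hypercompleteness input is actually needed.
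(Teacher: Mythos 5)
Your proposal follows essentially the same route as the paper: express the real \'etale topos as sheaves on a filtered colimit of small \'etale sites, descend finitely presented real \'etale covers to a finite stage using the characterization of covers by surjectivity on real closed points, and then stabilize using the fact that $\Sp(-)$ preserves limits along left-exact functors. Two small imprecisions are worth flagging. First, the identification $\Et_S \simeq \colim_\alpha \Et_{S_\alpha}$ is not correct for the full small \'etale sites; the paper first replaces $\widetilde{X}_{\ret}$ by $\Shv_{\ret}(\Et_X^{\mathrm{fp}})$ (which one can do because every \'etale $X$-scheme is Zariski-locally finitely presented and the real \'etale topology refines Zariski), and then the colimit formula holds for the finitely-presented sites via \cite{stacks}*{Tag 07RP}. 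Second, Lemma~\ref{lem:splits-ret} characterizes strictly real henselian rings and is not the right citation for the real \'etale topology being finitary --- that instead follows from quasicompactness of the schemes (any real \'etale cover of a qcqs scheme refines to a finite one). Neither point changes the shape of the argument; the paper's own proof cites the same descent-of-covers input and the same observation about $\Sp$ preserving limits of $\infty$-categories with finite limits.
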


\begin{proof} Since \'etale morphisms are Zariski-locally finitely presented \cite{stacks}*{Tag 02GR} and the real \'etale topology is finer than Zariski, we have that $\widetilde{X}_{\ret} \simeq \Shv_{\ret}(\Et_X^{\mathrm{fp}})$ in general. Following the same logic as in \cite{hoyois-glv}*{Proposition C.7(3)}, the first claim follows from the fact that a finitely presented real-\'etale surjection in $\Et^{\mathrm{fp}}_{S}$ is pulled back from one in $\Et^{\mathrm{fp}}_{S_{\alpha}}$; this is the case for \'etale maps by \cite{stacks}*{Tag 07RP} and clear for surjectivity on real closed points. The stable case follows from the unstable case since $\Sp$ preserves limits of $\infty$-categories with finite limits (using the formula for stabilization in \cite{higheralgebra}*{Proposition 1.4.2.24}).
\end{proof}

\subsection{Hypercompleteness of the real \'etale site} \label{subsec:hyp}

In this section, we will prove that if $X$ is of finite Krull dimension, then the real \'etale site of $X$ is hypercomplete. One use of this result in the main body of the paper is to show that certain gluing functors are stable under base change --- an argument that appeals to ``checking on stalks." If $X$ is a topological space, we write $\widetilde{X}$ for the $\infty$-category of sheaves on spaces on $X$, i.e., that of functors
\[
\mathrm{Open}(X)^{\op} \rightarrow \Spc
\] 
satisfying descent with respect to open covers. We first state a fundamental theorem in the subject of real \'etale cohomology, which in the setting of sheaves of sets is due originally to Coste-Roy and Coste (cf. \cite{scheiderer}*{1.4}); we will follow Scheiderer's more functorial formulation, which may be readily promoted to a statement involving $\infty$-topoi.

\begin{theorem}[Scheiderer] \label{thm:et-ret} Let $X$ be a scheme. There is a canonical equivalence of $\infty$-topoi
\[
\widetilde{X}_{\ret} \simeq \widetilde{X_r}.
\]
\end{theorem}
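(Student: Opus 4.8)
The statement to prove is Theorem~\ref{thm:et-ret}: for a scheme $X$, there is a canonical equivalence of $\infty$-topoi $\widetilde{X}_{\ret} \simeq \widetilde{X_r}$, where $X_r$ is the real spectrum viewed as a topological space. The plan is to reduce to Scheiderer's $1$-topos statement (\cite{scheiderer}*{Chapter~1}), which asserts that the $1$-topos of real \'etale sheaves of sets on $\Et_X$ is equivalent to the $1$-topos of sheaves of sets on the topological space $X_r$. The key observation, as with Theorem~\ref{thm:exret}, is that both sides are $1$-localic $\infty$-topoi, so an equivalence at the level of $1$-topoi upgrades canonically to an equivalence at the level of $\infty$-topoi via the limit-preserving fully faithful embedding $\RTop_1 \to \RTop_\infty$ of \cite{htt}*{\S6.4.5}.

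Concretely, I would proceed as follows. First, recall that $\widetilde{X}_{\ret} = \Shv_{\ret}(\Et_X)$ is $1$-localic: the small \'etale site $\Et_X$ is (equivalent to) a $1$-category with finite limits, and the real \'etale topology is a Grothendieck topology on it, so $\Shv_{\ret}(\Et_X)$ is the $1$-localic $\infty$-topos associated to the ordinary site $(\Et_X, \ret)$; its $1$-truncation recovers the ordinary topos $\Shv_{\ret}(\Et_X, \Sets)$. Similarly, $\widetilde{X_r} = \Shv(\mathrm{Open}(X_r))$ is $1$-localic, being sheaves on (the locale underlying) a topological space, which is a $1$-category with finite limits. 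Second, invoke Scheiderer's theorem in the form he states it --- an equivalence of the underlying ordinary topoi $\Shv_{\ret}(\Et_X, \Sets) \simeq \Shv(X_r, \Sets)$, induced by the morphism of sites sending an \'etale $X$-scheme $U$ to the subset $U_r \subset X_r$ (the image on real spectra), which is shown to be a continuous and cocontinuous functor inducing the equivalence. Third, since the embedding $\RTop_1 \hookrightarrow \RTop_\infty$ is fully faithful and preserves limits, and since for a site $(\Cscr,\tau)$ with $\Cscr$ a $1$-category with finite limits the associated $1$-localic $\infty$-topos is $\Shv_\tau(\Cscr)$ (as used verbatim in the proof of Theorem~\ref{thm:exret}), applying this embedding to Scheiderer's equivalence yields the desired equivalence $\widetilde{X}_{\ret} \simeq \widetilde{X_r}$ of $\infty$-topoi. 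The naturality/canonicity is inherited from the functoriality of the embedding and of the morphism of sites.

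One technical point that needs care is confirming that the $\infty$-topos attached to the site $(\Et_X, \ret)$ by the general machinery (hypercompletion aside) is genuinely $1$-localic, i.e., that no higher-categorical phenomena intervene before hypercompletion; this is immediate from \cite{htt}*{\S6.4.5} since $\Et_X$ is a $1$-category and $\ret$ is an ordinary Grothendieck topology, so $\Shv_{\ret}(\Et_X)$ is by construction $n$-localic for all $n$, hence $1$-localic. A second point is to make sure the comparison is the ``same'' equivalence on both sides --- i.e., that the geometric morphism underlying Scheiderer's site-morphism is the one we want --- but this is a matter of unwinding definitions rather than a substantive obstacle.

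\textbf{Main obstacle.} There is no serious obstacle here: the real content is entirely Scheiderer's (the equivalence of $1$-topoi, resting on the theory of the real spectrum and results of Coste--Coste-Roy), and our contribution is the bookkeeping that promotes it to $\infty$-topoi, which follows the exact template already executed in the proof of Theorem~\ref{thm:exret}. The mild subtlety worth flagging explicitly is that this equivalence is for the \emph{non-hypercomplete} real \'etale $\infty$-topos; the further statement that $\widetilde{X_r}$ (equivalently $\widetilde{X}_{\ret}$) is hypercomplete when $X$ has finite Krull dimension is a separate assertion, proved in Appendix~\ref{subsec:hyp} using that $X_r$ is then a finite-dimensional spectral (hence locally of finite covering dimension) space together with the bound on homotopy dimension for such spaces.
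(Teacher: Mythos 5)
Your proposal is correct and takes essentially the same route as the paper: observe that both $\infty$-topoi are $1$-localic (since $\Et_X$ and $\mathrm{Open}(X_r)$ are ordinary sites with finite limits) and then promote Scheiderer's $1$-topos equivalence through the fully faithful, limit-preserving embedding $\RTop_1 \hookrightarrow \RTop_\infty$; the paper packages this promotion as Lemma~\ref{lem:fin-lim}. One factual inaccuracy worth correcting: the assignment $U \mapsto U_r$ does \emph{not} define a morphism of sites $(\Et_X,\ret) \to (\mathrm{Open}(X_r), \mathrm{usual})$, since $U_r$ is not a subset of $X_r$ --- the map $U_r \to X_r$ is a local homeomorphism, not an open immersion (e.g.\ for $U = \Spec(\RR \times \RR)$ over $X = \Spec\RR$, $U_r$ has two points lying over the single point of $X_r$). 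Scheiderer's equivalence is instead implemented by a \emph{zig-zag} of morphisms of sites through the auxiliary site of pairs $(U,W)$ with $W \subset U_r$ open (\cite{scheiderer}*{1.9}), and the paper's proof promotes each leg of this zig-zag separately via Lemma~\ref{lem:fin-lim}. Since your upgrade argument only needs the existence of the $1$-topos equivalence together with $1$-localic-ness on both sides, this misdescription does not invalidate the proof, but the precise mechanism is the span of sites, not a single site morphism.
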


We can deduce this theorem from \cite{scheiderer}*{Theorem 1.3} after some topos-theoretic preliminaries. Recall from \cite{htt}*{Definition 6.4.1.1} that for $0 \leq n \leq \infty$, an {\bf $n$-topos} is an $\infty$-category $\Xscr$ that is a left-exact accessible localization of $\Pre(\C, \Spc_{\leq n-1})$ for $\C$ a small $\infty$-category. For example, a $1$-topos is a left-exact accessible localization of a category of set-valued presheaves. A {\bf geometric morphism between $n$-topoi $\Xscr$ and $\Yscr$} is a functor $f_*:\Xscr \rightarrow \Yscr$ that admits a left-exact left adjoint.

Examples of $n$-topoi and geometric morphisms thereof are furnished by the following construction. Suppose that $1\leq m \leq n \leq \infty$. An {\bf $m$-site} $(\C, \tau)$ is an $m$-category $\C$ equipped with a Grothendieck topology $\tau$. If $(\C, \tau)$ and $(\D, \tau')$ are $m$-sites with finite limits, then a {\bf morphism of sites} is a functor $f:\C \rightarrow \D$ that preserves finite limits and enjoys the following property:
\begin{itemize}
\item[($\ast$)] For every family of morphisms $\{ U_{\alpha} \rightarrow X \}$ that generates a $\tau$-covering sieve, the collection $\{ f(U_{\alpha}) \rightarrow f(X) \}$ generates a $\tau'$-covering sieve.
\end{itemize}
Note that this recovers the usual notion of a morphism of $1$-sites.\footnote{In \cite[III Definition 1.1]{sga4-1}, this functor is called {\bf continuous}. A better name might be {\bf topologically continuous}, in line with our usage of topologically cocontinuous in Appendix~\ref{bigsmall}.} Given $f: \C \to \D$ a morphism of sites, by \cite{DAGV}*{Lemma 2.4.7} restriction along $f$ induces a geometric morphism of $n$-topoi\footnote{We warn the reader that $f^*$ here is the right adjoint of the geometric morphism, in conflict with our usual notation and that in \cite{DAGV}*{Lemma 2.4.7}. We will also write $f_!$ for the left-exact left adjoint, which is induced by the functor of left Kan extension along $f$ at the level of presheaves.}
\[
f^*: \Shv_{\tau'}(\D, \Spc_{\leq n-1}) \rightarrow \Shv_{\tau}(\C, \Spc_{\leq n-1})
\]
fitting into the following commutative diagram
\[
\begin{tikzcd}
\Shv_{\tau'}(\D, \Spc_{\leq n-1}) \ar{r}{f^*} \ar[hook]{d} &  \Shv_{\tau}(\C, \Spc_{\leq n-1}) \ar[hook]{d}\\
\Pre(\D, \Spc_{\leq n-1}) \ar{r}{f_{\pre}^*} & \Pre(\C, \Spc_{\leq n-1}),
\end{tikzcd}
\] where $f^*_{\pre}$ is the restriction functor. In other words, $f^*_{\pre}$ preserves sheaves. 

We write $\RTop_n \subset \widehat{\Cat}_{\infty}$ for the subcategory spanned by $n$-topoi and geometric morphisms thereof. Using the results of \cite{htt}*{\S6.4.5}, we have that passage to $(n-1)$-truncated objects defines a functor
\[
\tau_{\leq n-1}:\RTop \rightarrow \RTop_n,
\]
equipped with a fully faithful right adjoint $\RTop_n \hookrightarrow \RTop$. If $\Xscr$ is an $n$-topos, we call its image in $\RTop$ the {\bf associated $n$-localic $\infty$-topos}. We now have the following lemma, which allows us to promote equivalences of $n$-topoi to the level of $\infty$-topoi.

\begin{lemma} \label{lem:fin-lim} Let $1\leq m \leq n\leq \infty$. Suppose that $f: (\C, \tau) \rightarrow (\D, \tau')$ is a morphism of $m$-sites such that the induced geometric morphism $f^*:\Shv_{\tau'}(\D, \Spc_{\leq n-1}) \rightarrow \Shv_{\tau}(\C, \Spc_{\leq n-1})$ is an equivalence of $n$-topoi. Then the induced geometric morphism
\[
f^*: \Shv_{\tau'}(\D) \rightarrow \Shv_{\tau}(\C)
\]
is an equivalence of $\infty$-topoi.\footnote{We thank Peter Haine for alerting us to this result.}
\end{lemma}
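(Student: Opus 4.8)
The plan is to reduce the statement for $\infty$-topoi to the hypothesis for $n$-topoi by recalling how sheafification interacts with truncation, then to exhibit the comparison functor $f^*$ as a localization of a presheaf-level functor whose values are all compatible with the truncation functors $\tau_{\leq k}$. First I would observe that both $\Shv_{\tau'}(\D)$ and $\Shv_{\tau}(\C)$ are hypercomplete or, more to the point, that their $k$-truncated objects for every $0 \leq k$ recover the corresponding sheaves with values in $\Spc_{\leq k}$: concretely, the inclusion $\Shv_{\tau}(\C, \Spc_{\leq k}) \hookrightarrow \Shv_{\tau}(\C)$ identifies the source with the full subcategory of $k$-truncated objects of the target, because sheafification commutes with truncation (a left-exact localization preserves $k$-truncated objects and the $k$-truncation of a presheaf of spaces with values in $\Spc$ is computed objectwise). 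This is essentially \cite{htt}*{Lemma 6.5.1.2 and \S6.4.5}, applied to both sites.

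Next I would use that $f^*_{\pre}$ (restriction along $f$ at the level of presheaves) obviously commutes with objectwise truncation, hence after sheafification the geometric morphism $f^*: \Shv_{\tau'}(\D) \to \Shv_{\tau}(\C)$ restricts, on $k$-truncated objects, to the geometric morphism $\Shv_{\tau'}(\D,\Spc_{\leq k}) \to \Shv_{\tau}(\C,\Spc_{\leq k})$ induced by $f$. For $k \leq n-1$ this last functor is an equivalence: indeed it is a restriction of the equivalence $\Shv_{\tau'}(\D,\Spc_{\leq n-1}) \xrightarrow{\simeq} \Shv_{\tau}(\C,\Spc_{\leq n-1})$ of the hypothesis to $k$-truncated objects, and an equivalence of $\infty$-categories restricts to an equivalence on $k$-truncated objects. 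So $f^*$ is an equivalence on $k$-truncated objects for all $k \leq n-1$.

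If $n = \infty$ there is nothing more to do, since the $k$-truncated objects for all $k$ jointly detect equivalences in an $\infty$-topos via the Postnikov/Whitehead tower — more precisely, a geometric morphism of $\infty$-topoi that is an equivalence on all truncations is an equivalence provided both topoi are hypercomplete; but when $n < \infty$ the topoi $\Shv_\tau(\C)$ need not be hypercomplete, so this argument is not available and one must instead argue differently. The cleaner route, which works uniformly, is to note that $f^*: \Shv_{\tau'}(\D) \to \Shv_{\tau}(\C)$ has a left-exact left adjoint $f_!$ obtained by sheafifying left Kan extension along $f$, and that both $f_!$ and $f^*$ are computed by first applying a presheaf-level adjunction and then a Bousfield localization. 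The unit and counit of the presheaf-level adjunction $(f_!^{\pre}, f^*_{\pre})$ are natural transformations of colimit-preserving functors; to check that the sheafified unit $\id \Rightarrow f^* f_!$ and counit $f_! f^* \Rightarrow \id$ are equivalences, it suffices to check after applying every $\tau_{\leq k}$ (since an object of a presentable $\infty$-category, or rather of an $n$-topos realized inside $\Pre(\cdot,\Spc_{\leq n-1})$, that becomes an equivalence under all $\tau_{\leq k}$ for $k \leq n-1$ is already an equivalence — this is automatic because every object of $\Shv_\tau(\C,\Spc_{\leq n-1})$ is $(n-1)$-truncated). By the previous paragraph these truncations are the unit and counit of the $k$-truncated adjunction, which is an equivalence by hypothesis. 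Hence the unit and counit are equivalences and $f^*$ is an equivalence of $\infty$-topoi.

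\textbf{Main obstacle.} The delicate point is the interplay between truncation and sheafification in the case $n < \infty$: one has to be careful that $\Shv_\tau(\C,\Spc_{\leq n-1})$ sits inside $\Shv_\tau(\C)$ precisely as the $(n-1)$-truncated objects and that the truncation functor $\tau_{\leq n-1}: \Shv_\tau(\C) \to \Shv_\tau(\C,\Spc_{\leq n-1})$ agrees with the one induced from presheaves — this is exactly the content organized in \cite{htt}*{\S6.4.5} together with the fact that left-exact localizations commute with truncations. Once this bookkeeping is in place, the argument is formal. Since in all our applications (e.g.\ Theorem~\ref{thm:exret}, Example~\ref{ex:involute}) we invoke the lemma with $n = \infty$ or with $1$-localic topoi where $n$ is finite but the Postnikov argument can be replaced by the adjunction argument above, I would state and prove the lemma in the generality quoted, flagging that for finite $n$ one uses the $(f_!, f^*)$ adjunction rather than a hypercompleteness argument.
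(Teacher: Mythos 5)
Your first two paragraphs are on the right track: using the finite-limits hypothesis on the $m$-site and \cite{htt}*{\S 6.4.5}, you correctly identify $\Shv_\tau(\C,\Spc_{\leq k})$ with the $k$-truncated objects of $\Shv_\tau(\C)$, and you correctly deduce that $f^*$ restricts to an equivalence on $(n-1)$-truncated objects. The gap is in the step that lifts this to an equivalence of $\infty$-topoi.

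The claim in your third paragraph that it suffices to check the unit and counit of the $(f_!, f^*)$ adjunction after applying each $\tau_{\leq k}$ for $k \leq n-1$ is not justified, and the parenthetical offered as justification conflates two different $\infty$-categories: ``every object of $\Shv_\tau(\C,\Spc_{\leq n-1})$ is $(n-1)$-truncated'' is true, but the unit and counit are morphisms in $\Shv_\tau(\C)$, the $\infty$-topos of sheaves of \emph{spaces}, whose objects are not $(n-1)$-truncated in general. In a non-hypercomplete $\infty$-topos a morphism that becomes an equivalence after every truncation is merely $\infty$-connective and need not be invertible (already in $\Spc$, the collapse $S^1 \to *$ is a $\tau_{\leq 0}$-equivalence but not an equivalence), and you yourself note a paragraph earlier that these topoi need not be hypercomplete for finite $n$. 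The adjunction route also silently assumes that $f^*$ (restriction, which in this convention is the \emph{right} adjoint) commutes well enough with colimits or sheafification to bootstrap from truncated objects; that requires topological cocontinuity of $f$, which is not part of the hypothesis. What closes the gap, and what the paper's proof does, is the structural fact \cite{htt}*{Proposition 6.4.5.6}: under the finite-limits assumption on the $m$-site, $\Shv_\tau(\C)$ is exactly the $n$-localic $\infty$-topos associated to the $n$-topos $\Shv_\tau(\C,\Spc_{\leq n-1})$ under the fully faithful embedding $\RTop_n \hookrightarrow \RTop$. Once both sides are recognized as $n$-localic, the equivalence of $n$-topoi you established in your first two paragraphs transports directly to an equivalence of the associated $\infty$-topoi by full faithfulness of that embedding; no morphism-by-morphism truncation check is needed or available. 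You should invoke $n$-localicity rather than attempting a truncation argument.
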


\begin{proof} According to \cite{htt}*{Proposition 6.4.5.6}, under the hypotheses on $(\C, \tau)$ and $(\D, \tau')$, the $n$-localic $\infty$-topoi associated to the $n$-topoi $\Shv_{\tau'}(\D, \Spc_{\leq n-1})$ and $\Shv_{\tau}(\C, \Spc_{\leq n-1})$ are given by passing to sheaves of spaces, whence the claim follows.
\end{proof}

\begin{remark} \label{rem:1-top} Suppose that $(\C, \tau)$ is an $m$-site and consider the $n$-topos $\Shv_{\tau}(\C, \Spc_{\leq n-1})$. Without the assumption on the existence of finite limits, passing to sheaves of spaces need not compute the associated $n$-localic $\infty$-topos; instead, one only expects an equivalence upon hypercompletion. See the discussion in \cite{nlab} and also \cite{asai-shah}*{Remark~2.6}.
\end{remark}

\begin{proof} [Proof of Theorem~\ref{thm:et-ret}] The equivalence of \cite{scheiderer}*{Theorem 1.3} is implemented by a zig-zag of equivalences through geometric morphisms arising from a zig-zag of morphisms of sites, which we first recall from \cite{scheiderer}*{1.9}. Define a site $(\mathrm{Pairs}_X, \mathrm{aux})$ in the following way: the objects are pairs $(U, W)$ where $U \in \Et_X$ and $W \subset U_r$ is an open subset of the real spectrum of $U$, and a morphism $f: (U,W) \to (U',W')$ is a morphism $f: U \to U'$ of $X$-schemes such that $f_r(W) \subset W'$. A family $\{ f_{\alpha}:(U_{\alpha}, W_{\alpha}) \rightarrow (U, W) \}$ is an $\mathrm{aux}$-cover whenever $W = \cup f_{\alpha,r}(W_{\alpha})$. This category evidently has finite limits. By design, we then have a span of morphisms of sites:
\[
(\Et_X, \ret) \stackrel{\phi}{\leftarrow} (\mathrm{Pairs}_X, \mathrm{aux}) \stackrel{\psi}{\rightarrow} (\mathrm{Open}(X_r), \mathrm{usual}).
\]
It is easy to see that $\phi$ preserves finite limits, while $\psi$ preserves finite limits by \cite{scheiderer}*{Corollary 1.7.1}. Lemma~\ref{lem:fin-lim} thus applies to promote \cite{scheiderer}*{Theorem 1.3} to the claimed theorem.
\end{proof}

We now prove:

\begin{theorem} \label{thm:hyper} Let $X$ be a scheme of finite Krull dimension. Then the $\infty$-topos $\widetilde{X}_{\ret}$ is hypercomplete.
\end{theorem}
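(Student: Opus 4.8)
By Theorem~\ref{thm:et-ret}, we have an equivalence $\widetilde{X}_{\ret} \simeq \widetilde{X_r}$, where $X_r$ is the real spectrum of $X$ regarded as a topological space. So the statement reduces to showing that the $\infty$-topos $\widetilde{X_r}$ of sheaves of spaces on the topological space $X_r$ is hypercomplete whenever $X$ has finite Krull dimension. The plan is to exhibit $X_r$ as a topological space of finite covering dimension (in the sense that suffices for the relevant hypercompleteness criterion), and then invoke the fact that sheaves on a paracompact (or more generally, suitably finite-dimensional and coherent) topological space of finite covering dimension form a hypercomplete $\infty$-topos; the precise tool is \cite{htt}*{Theorem 7.2.3.6} (finite homotopy dimension implies hypercompleteness) or the locally-of-finite-covering-dimension criterion \cite{htt}*{Corollary 7.2.1.12} together with \cite{sag} on Postnikov completeness.

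The key geometric input is a bound on the dimension of $X_r$. First I would recall that $X_r$ is a spectral space (it is quasi-compact, and for $X$ qcqs it is coherent), and that by Scheiderer's work \cite{scheiderer}*{Chapter~7} — see in particular the discussion around \cite{scheiderer}*{Corollary~7.18} and the results on the real spectrum in \cite{scheiderer}*{Chapter~0} — the Krull dimension of the real spectrum $X_r$ is bounded by the Krull dimension of $X$. More relevantly, for a spectral space the Krull dimension and the covering dimension of the underlying space agree up to the comparison available in \cite{scheiderer}*{Chapter~II} (or one may cite the classical fact that a spectral space of Krull dimension $d$, viewed with its spectral topology, has covering dimension $\le d$, e.g.\ via the constructible topology and a Čech argument). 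Thus $X_r$ has covering dimension at most $\dim X < \infty$.

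With finite covering dimension in hand, the homotopy-theoretic step is routine: a paracompact topological space of covering dimension $\le d$ has homotopy dimension $\le d$ by \cite{htt}*{Theorem 7.2.3.6}, hence its $\infty$-topos of sheaves is hypercomplete by \cite{htt}*{Corollary 7.2.1.12}. Since $X_r$ is coherent but perhaps not obviously paracompact, I would instead argue locally: $X_r$ is covered by quasi-compact open subsets, each of which is again a spectral space of finite Krull dimension, and hypercompleteness is local on an $\infty$-topos (a sheaf is hypercomplete iff its restriction to each member of an open cover is), so it suffices to treat the case where $X_r$ is quasi-compact — which it already is — and apply the finite-homotopy-dimension bound directly to a coherent spectral space of finite Krull dimension. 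Alternatively, one can cite \cite{sag}*{Theorem A.4.0.5} (Deligne's completeness theorem, already used in Proposition~\ref{prop:base-change}) plus the bound on cohomological dimension: a coherent space of finite Krull dimension has finite cohomological dimension, and a bounded coherent $\infty$-topos of finite cohomological dimension is hypercomplete.

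\textbf{Main obstacle.} The one step requiring care is the passage from $\dim X < \infty$ to a genuine finiteness statement about $X_r$ strong enough to force hypercompleteness — i.e.\ not merely finite Krull dimension of $X_r$ but finite covering/cohomological dimension or finite homotopy dimension of the $\infty$-topos $\widetilde{X_r}$. Scheiderer's book provides exactly the needed estimates (the real spectrum inherits finite dimension, and real cohomological dimension is controlled), so the work is in citing the right statements from \cite{scheiderer}*{Chapter~0} and \cite{scheiderer}*{Chapter~II} and checking that the coherence/quasi-compactness hypotheses of the $\infty$-categorical hypercompleteness criterion are met; no genuinely new argument should be required beyond assembling these.
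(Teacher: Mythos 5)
Your overall strategy agrees with the paper's: reduce to $\widetilde{X_r}$ via Theorem~\ref{thm:et-ret}, record that $X_r$ is a spectral space of Krull dimension bounded by $\dim X$, and deduce hypercompleteness from a finite-dimensionality statement about $\widetilde{X_r}$ together with \cite{htt}*{Corollary 7.2.1.12}. But you have correctly located, and then left unresolved, the crucial step. The route through \cite{htt}*{Theorem 7.2.3.6} does not apply, because $X_r$ is a spectral space and hence essentially never paracompact Hausdorff; moreover the assertion that a spectral space of Krull dimension $d$ has covering dimension $\le d$ in the sense relevant to that theorem is not a fact you can take for granted (covering dimension behaves poorly for non-Hausdorff spaces, and this is exactly why a separate argument is needed). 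Your alternative via Deligne's completeness theorem also falls short: \cite{sag}*{Theorem A.4.0.5} gives conservativity on points for bounded coherent $\infty$-topoi, which lets you detect $\infty$-connective maps, but does not by itself imply hypercompleteness; you would still need to prove finiteness of homotopy (or cohomological) dimension, which is the whole content of the missing lemma.

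The paper fills precisely this gap by citing \cite{clausen-mathew}*{Theorem 3.12}, which states that if a spectral space has Krull dimension $d$ then the $\infty$-topos of sheaves on it has homotopy dimension $< d$. This is exactly the finiteness statement you were searching for in Scheiderer's book; Scheiderer does supply the input that $X_r$ is spectral of bounded Krull dimension (via \cite{geom-rel}*{Corollaire 7.1.16} for the spectrality, and the observation that specialization chains of real primes are specialization chains of primes for the dimension bound), but the passage from Krull dimension of a spectral space to homotopy dimension of its sheaf topos is a modern $\infty$-topos-theoretic result and is not in Scheiderer. Once you have that single reference, the rest of your argument (cover $X_r$ by quasi-compact opens, observe the bound is local, apply \cite{htt}*{Corollary 7.2.1.12}) goes through exactly as in the paper.
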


\begin{proof} Let $A$ be a commutative ring. By \cite{geom-rel}*{Corollaire 7.1.16}, $\Sper A$ is a spectral space. Furthermore, the Krull dimension of a spectral space is given by the supremum over lengths of chains of specializations. With this, the Krull dimension of $\Sper A$ is then bounded above by the Krull dimension of $\Spec A$ as the former is computed using specializations of real prime ideals in $A$. In this case, \cite{clausen-mathew}*{Theorem 3.12} tells us that if the Krull dimension of $\Spec A$ is $d$, then the homotopy dimension of $\widetilde{\Sper A}$ is less than $d$. Hence, for any scheme $X$ of finite Krull dimension $d$, the $\infty$-topos $\widetilde{X_r}$ is locally of homotopy dimension $\leq d$, whence hypercomplete by \cite{htt}*{Corollary 7.2.1.12}. Theorem~\ref{thm:et-ret} then implies that $\widetilde{X}_{\ret}$ is hypercomplete.
\end{proof}

\subsection{The gluing functor} \label{subsec:glue-small}

 In this section, we will compute the stalks of the gluing functor. To begin with, we need to introduce some notation. We have the gluing functor from Example~\ref{ex:et-to-ret}
\[
\theta = L_{\ret} i_{\et}: \widetilde{X}_{\et} \rightarrow \widetilde{X}_{\ret}.
\]
More generally, suppose that $\Sch'_X \subset \Sch_X$ is a subcategory such that:
\begin{enumerate}
\item For any $Y \in \Sch'_X$, if $U \rightarrow Y$ is \'etale then $U \in \Sch'_X$.
\item $\Sch'_X \subset \Sch_X$ is closed under limits.
\end{enumerate}
The only example that will concern us is $\Sch'_X = \Sm_X$. In this case, we can define the \'etale and real \'etale topology on $X$, and thus we can define the gluing functor in this level of generality:
\[
\theta' = L_{\ret}i_{\et}:\Shv_{\et}(\Sch'_X) \rightarrow \Shv_{\ret}(\Sch'_X).
\]

To proceed further, we now appeal to the results of Appendix~\ref{bigsmall} to relate $\theta$ and $\theta'$. Namely, if
$$u^*_{\tau}:\Shv_{\tau}(\Sch'_X) \rightarrow \Shv_{\tau}(\Et_X), \; \tau \in \{\et, \ret \}$$
is the restriction functor, then $u^*_{\ret}\theta' \simeq \theta u^*_{\et}$ by Lemma~\ref{lem:big-to-small}. Furthermore, if $f:Y \rightarrow X$ is a morphism, then we have adjunctions
 \[
f_{\pre}^*: \Pre(\Sch'_X) \rightleftarrows \Pre(\Sch'_Y): f_{*\pre}
\]
 \[
f_{\ret}^*:\Shv_{\ret}(\Sch'_X) \rightleftarrows \Shv_{\ret}(\Sch'_Y): f_{*\ret}
\] 
\[
f_{\et}^*: \Shv_{\et}(\Sch'_X) \rightleftarrows \Shv_{\et}(\Sch'_Y): f_{*\et}
\]

The next lemma is a simple enhancement of \cite[Proposition~3.7.2]{scheiderer} to a slightly more general context. 
\begin{lemma} \label{lem:stalk-compute} Let $\Sch'_X \subset \Sch_X$ be as above. Suppose that $\Fscr: (\Sch'_X)^{\op} \rightarrow \Spc$ is an \'etale sheaf, $Y \in \Sch'_X$, and we have a morphism $\alpha: \Spec k \rightarrow Y$ where $k$ is a real closed field. Then we have an equivalence
\[
(\theta'\Fscr)_{\alpha} \simeq (\alpha_{\et}^*\Fscr)(1_{\alpha}),
\]
where $1_{\alpha} \in \Shv_{\et}(k)$ is the terminal object.
\end{lemma}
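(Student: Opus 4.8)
The plan is to reduce the statement to Scheiderer's computation of the stalk of the gluing functor at the level of $1$-topoi \cite[Proposition~3.7.2]{scheiderer}, using the approximation techniques already assembled in this appendix. First I would set up the stalk functor precisely: the point $\alpha\colon \Spec k \to Y$ determines, via the real \'etale neighborhood category $\mathrm{Neib}_{\ret}(Y,\alpha)$, the strict real henselization $\Oscr^{\ret h}_{Y,\alpha}$ computed by the filtered colimit \eqref{eq:rhensel}. By definition $(\theta'\Fscr)_{\alpha} = \colim_{(V,y) \in \mathrm{Neib}_{\ret}(Y,\alpha)} (\theta'\Fscr)(V)$, and since the formation of stalks for the real \'etale topology factors through $\Spec \Oscr^{\ret h}_{Y,\alpha}$ (the real \'etale site of a strictly real henselian local ring is trivial by Lemma~\ref{lem:splits-ret}, so its global sections compute the stalk), the real \'etale sheafification $L_{\ret}$ does not affect the value: $(\theta'\Fscr)_{\alpha} \simeq \colim_{(V,y)} \Fscr(V)$ where the colimit now runs over \'etale $Y$-schemes through which $\alpha$ factors with a compatible ordering on the relevant residue field. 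Because $\Sch'_X$ is closed under \'etale maps and cofiltered limits, this colimit is exactly $(\alpha^*_{\et}\Fscr)$ evaluated on the terminal \'etale sheaf over $k$ — in other words $\alpha^*_{\et}\Fscr$ is computed by restricting $\Fscr$ along $\Spec \Oscr^{\ret h}_{Y,\alpha} \to Y$ and then taking \'etale sections, and over the strictly real henselian ring $\Oscr^{\ret h}_{Y,\alpha}$ the \'etale topos is that of the residue field $k$ together with its absolute Galois group $C_2$.

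The key steps in order: (i) identify the filtered diagram $\mathrm{Neib}_{\ret}(Y,\alpha)$ and observe via the uniqueness of real closures (as recalled before \eqref{eq:rhensel}) that it coincides with the diagram of \'etale neighborhoods through which $\Spec k \to Y$ factors; (ii) invoke Lemma~\ref{lem:splits-ret} and Theorem~\ref{thm:et-ret} (or directly Scheiderer's \cite[Proposition~3.7.2]{scheiderer}) to see that real \'etale sheafification is invisible to this stalk, so $(\theta'\Fscr)_\alpha \simeq \colim_{\mathrm{Neib}_{\ret}(Y,\alpha)} \Fscr(V)$; (iii) apply the continuity statement (the unstable half of Lemma~\ref{lem:approx}, or rather its \'etale analogue, together with the fact that $\Et$ of a filtered limit of schemes is the corresponding limit of \'etale topoi) to identify this colimit with $(\alpha^*_{\et}\Fscr)(1_\alpha)$, where $\alpha^*_{\et}\colon \Shv_{\et}(\Sch'_Y) \to \Shv_{\et}(\Spec k)$ is the pullback along $\alpha$ and $1_\alpha$ is the terminal object; (iv) promote Scheiderer's $1$-topos statement to $\infty$-topoi exactly as in the proof of Theorem~\ref{thm:exret}, using that the functor $\RTop_1 \to \RTop_\infty$ of \cite{htt}*{\S6.4.5} preserves the relevant limits/colimits (here the filtered colimit defining the stalk), combined with Lemma~\ref{lem:fin-lim}.

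The main obstacle I anticipate is step (ii)–(iii): one must be careful that the filtered colimit computing the stalk commutes with the passage from sheaves of sets to sheaves of spaces, and that Scheiderer's argument — which is phrased for abelian \'etale sheaves / sheaves of sets — genuinely only uses that over a strictly real henselian ring every real \'etale cover splits, a property insensitive to the coefficient $\infty$-category. Concretely, the subtlety is that $\theta' = L_{\ret} i_{\et}$ involves the \'etale-to-presheaf inclusion followed by real \'etale sheafification, and one needs that evaluating the real \'etale sheafification at the pro-object $\Spec\Oscr^{\ret h}_{Y,\alpha}$ returns the same value as evaluating $i_{\et}\Fscr$ there; this is where Lemma~\ref{lem:splits-ret} does the work, since it forces the real \'etale site over $\Oscr^{\ret h}_{Y,\alpha}$ to be trivial and hence global sections to be exact. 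Once that is in hand the identification with $(\alpha^*_{\et}\Fscr)(1_\alpha)$ is formal, and the passage to $\infty$-topoi is routine given Lemma~\ref{lem:fin-lim} and the template of Theorem~\ref{thm:exret}.
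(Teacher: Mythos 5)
Your high-level outline is on the right track and overlaps substantially with the paper's proof: compute the stalk via the strict real henselization, observe that $L_{\ret}$ does not change the value at the point, and exploit the coincidence of real \'etale and \'etale neighborhood categories (via uniqueness of real closures). However there are two genuine gaps.

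First, after passing to the limit over $\mathrm{Neib}_{\ret}(Y,\alpha)$, you arrive at the strict real henselization $\Spec R$ whose residue field is a real closed field $k'$ that may be a \emph{proper} subfield of $k$ (cf. Example~\ref{ex:nontrivialRealClosedFieldsExtension}, $\RR_{\mathrm{alg}}\subset\RR$); your sketch conflates $k$ and $k'$ by asserting that ``over $\Oscr^{\ret h}_{Y,\alpha}$ the \'etale topos is that of the residue field $k$.'' The paper has to make two further identifications you omit: that finite \'etale schemes over $\Spec R$ agree with those over its residue field $\Spec k'$ (henselianity, \cite{stacks}*{Tag 04GK}), and that the inclusion of real closed fields $k'\subset k$ induces an equivalence $\widetilde{k'}_{\et}\xrightarrow{\simeq}\widetilde{k}_{\et}$ (both are $\Spc^{BC_2}$). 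Without these your $\colim_{\mathrm{Neib}}\Fscr(V)$ computes the sections over $\Spec k'$, not over $\Spec k$, and the asserted identification with $(\alpha^*_{\et}\Fscr)(1_\alpha)$ does not yet follow.

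Second, step (iv) is a category error: Lemma~\ref{lem:fin-lim} and the functor $\RTop_1 \to \RTop_\infty$ are tools for promoting \emph{equivalences of topoi} arising from morphisms of sites, but the statement at hand is an identification of two objects of $\Spc$ (a stalk versus an evaluation), not an equivalence of topoi, so that machinery does not apply. The paper instead works directly with $\Spc$-valued sheaves through an explicit chain of equivalences; in particular the big-to-small reduction is handled via $u^*_{\ret}\theta' \simeq \theta u^*_{\et}$ and the compatibilities in Lemma~\ref{lem:big-to-small}, a step your proposal does not explicitly address (your passage from $(\theta'\Fscr)_\alpha$ to a colimit over small \'etale neighborhoods implicitly uses exactly these compatibilities).
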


%

\begin{proof} Without loss of generality, we may assume that $X = Y$. For this proof we follow the notation in Lemma~\ref{lem:big-to-small}. By~\eqref{eq:rhensel}, taking stalks at $\alpha$ is given by the formula\footnote{The strict real henselization may not exist as an object in $\Sch'_X$ although it is an object in $\mathrm{Pro}(\Sch'_X)$, hence the stalk is defined via left Kan extension along the functor $(\Sch'X)^{\op} \rightarrow \mathrm{Pro}(\Sch'_X)^{\op}$.}
\[
(\theta'\Fscr)_{\alpha} \simeq \colim_{T \in \mathrm{Neib}_{\ret}(Y,\alpha)} (\theta'\Fscr)(T).
\]
Since each $T$ in $\mathrm{Neib}_{\ret}(Y,\alpha)$ is, in particular, an \'etale $Y$-scheme, we have the following equivalences 
\begin{eqnarray*}
 \colim_{T \in \mathrm{Neib}_{\ret}(Y,\alpha)} \theta' \Fscr(T) & \simeq &  \colim_{T \in \mathrm{Neib}_{\ret}(Y,\alpha)} \Map(L_{\ret} u_{\pre !} T, \theta' \Fscr) \\
 & \simeq & \colim_{T \in \mathrm{Neib}_{\ret}(Y,\alpha)} \Map(u_{\ret !} L_{\ret_{\mathrm{small}}} T, \theta' \Fscr) \\
 & \simeq &  \colim_{T \in \mathrm{Neib}_{\ret}(Y,\alpha)} \Map(L_{\ret_{\mathrm{small}}} T, u^*_{\ret}\theta'\Fscr)\\
 & \simeq & (\alpha_{\ret_{\mathrm{small}}}^*u^*_{\ret}\theta'\Fscr)(1_{\alpha}),
\end{eqnarray*}
where $\alpha_{\ret_{\mathrm{small}}}^*: \widetilde{X}_{\ret} \rightarrow \widetilde{\Spec k}_{\ret} \simeq \Spc$ is the pullback functor on the small site and we use Lemma~\ref{lem:big-to-small}.A. 

By definition of the gluing functor and the fact that sheafifications always commute with pullbacks and Lemma~\ref{lem:big-to-small}.B, we have equivalences
\begin{eqnarray*}
(\alpha_{\ret_{\mathrm{small}}}^*u^*_{\ret}\theta'\Fscr)(1_{\alpha}) & = & (\alpha_{\ret_{\mathrm{small}}}^*u^*_{\ret}L_{\ret}i_{\et}\Fscr)(1_{\alpha})\\
& \simeq & (\alpha_{\ret_{\mathrm{small}}}^*L_{\ret_{\mathrm{small}}}u^*_{\pre}i_{\et}\Fscr)(1_{\alpha})\\
& \simeq &  (L_{\ret_{\mathrm{small}}}\alpha_{\pre}^*u^*_{\pre}i_{\et}\Fscr)(1_{\alpha}).
\end{eqnarray*}
But now, we note that there are no nontrivial real \'etale covers of a real closed field, so since $\Fscr$ was an \'etale sheaf and hence, in particular, transforms finite coproducts to finite products, the real \'etale sheafification of $\alpha^*_{\pre}u^*_{\pre}i_{\et}\Fscr$ does nothing to $1_{\alpha}$:
\[
(L_{\ret_{\mathrm{small}}}\alpha_{\pre}^*u^*_{\pre}i_{\et}\Fscr)(1_{\alpha}) \simeq (\alpha_{\pre}^*u^*_{\pre}i_{\et}\Fscr)(1_{\alpha}).
\]
To finish the proof, it suffices to check that the canonical map, induced by sheafification,
\[
(\alpha_{\pre}^*u^*_{\pre}i_{\et}\Fscr)(1_{\alpha}) \simeq (\alpha_{\pre}^*i_{\et_{\mathrm{small}}} u^*_{\et}\Fscr)(1_{\alpha}) \rightarrow (\alpha_{\et}^*u^*_{\et}\Fscr)(1_{\alpha}),
\]
is an equivalence. This follows as in the proof of \cite{scheiderer}*{Lemma~3.5.1}; for completeness, we now recall the argument. Let $\mathrm{Neib}_{\et}(Y,\alpha)$ be the filtered category whose objects are factorizations $\Spec k \to U \to Y$ of $\alpha$ with $U$ \'etale over $Y$, and let $Y^{\alpha} = \lim_{\mathrm{Neib}_{\et}(Y,\alpha)} U$, so that $Y^{\alpha}$ is affine and its ring $R$ of global sections is the strict real henselization of $Y$ at $\alpha$. Let $k'$ be the residue field of the local ring $R$, so $k'$ is a real closed field (but we need not have that the inclusion $k' \subset k$ is an equality; cf. Example~\ref{ex:nontrivialRealClosedFieldsExtension}). We then have a factorization of $\alpha$ as
\[ \begin{tikzcd}[row sep=4ex, column sep=4ex, text height=1.5ex, text depth=0.25ex]
\Spec k \ar{r} \ar{rd}[swap]{\alpha} & \Spec k' \ar{r} \ar{d}{\alpha''} & Y^{\alpha} \ar{ld}{\alpha'} \\
& Y &
\end{tikzcd} \]

The claim now follows from the chain of equivalences:
\begin{eqnarray*}
(\alpha_{\pre}^*i_{\et_{\mathrm{small}}} u^*_{\et}\Fscr)(1_{\alpha}) & \simeq & \colim_{U \in \mathrm{Neib}_{\et}(Y,\alpha)} u^*_{\et}\Fscr(U) \\
 & \simeq & \alpha_{\et}'^{*}u^*_{\et}\Fscr(R)\\
 & \simeq & \alpha_{\et}''^{*}u^*_{\et}\Fscr(k')\\
 & \simeq & \alpha_{\et}^{*}u^*_{\et}\Fscr(k) = \alpha_{\et}^{*}u^*_{\et}\Fscr(1_{\alpha}).
\end{eqnarray*}

Here, the first equivalence follows by the left Kan extension formula for $\alpha^*_{\pre}$. The second equivalence follows from the analogue of Lemma~\ref{lem:approx} for the small \'etale site. The third equivalence follows from the fact that the category of finite \'etale morphisms $V \to Y^{\alpha} = \Spec R$ is equivalent to the category of finite \'etale morphisms $V' \to \Spec k'$ (\cite{stacks}*{Tag 04GK}). The fourth equivalence follows from the fact that the morphism $\Spec k \to \Spec k'$ induces an equivalence $\widetilde{k'}_{\et} \xrightarrow{\simeq} \widetilde{k}_{\et}$.

\end{proof}
%

\section{Big versus small sites} \label{bigsmall}

\renewcommand{\small}{\mathrm{small}}

In this appendix, we record some facts about big and small sites which are surely well-known to experts. First, let us fix some notation that we have already used above. Suppose that $\Escr$ is a presentable $\infty$-category and $f: \C \rightarrow \D$ is a functor of small $\infty$-categories. Then we have adjunctions

\[
\begin{tikzcd}
\Pre(\C, \Escr)\ar[rr, "f^{\pre}_!", bend left] \ar[rr, "f^{\pre}_*"' , bend right]&  & \Pre(\D,\Escr) \ar[ll, "f_{\pre}^*"'].
\end{tikzcd}
\] 

\begin{warning} \label{warn:convention} This convention differs from the one adopted in \cite{DAGV}, but agrees with the conventions in \cite{sag}*{Definition 20.6.1}.
\end{warning}

We have already recalled the definition of a morphism of sites in \S\ref{subsec:hyp}. The next notion is usually called ``cocontinuous", but we adopt the terminology in \cite{khan-thesis} to avoid conflict with the notion of a functor that preserves colimits.

\begin{definition} \label{def:top-cocont} Suppose that $(C, \tau), (\D, \tau')$ are sites. A functor $f:\C \rightarrow \D$ is said to be {\bf topologically cocontinuous (with respect to $\tau$ and $\tau'$}) if for each $Y \in \C$ and $\tau'$-covering sieve $R' \hookrightarrow h_{f(Y)}$, the sieve $f^*R \times_{f^*h_{f(Y)}} h_Y \hookrightarrow h_Y$ is a $\tau$-covering sieve.
\end{definition}

Unpacking the above definition, this is equivalent to the following condition:
\begin{itemize}
\item[($\ast$)] For each $\tau'$-covering sieve $R' \hookrightarrow h_{f(Y)}$, the sieve of $h_{Y}$ generated by morphisms $Y' \rightarrow Y$ such that $h_{f(Y')} \rightarrow h_{f(Y)}$ factors through $R'$ is a $\tau$-covering sieve.
\end{itemize}

\begin{lemma} \label{lem:cocont} Suppose that $(C, \tau), (\D, \tau')$ are sites and $f: \C \rightarrow \D$ is a topologically cocontinuous functor. Let $\Escr$ be a presentable $\infty$-category. Then $f_{\pre}^*:\Pre(\D, \Escr) \rightarrow \Pre(\C, \Escr)$ sends $\tau'$-local equivalences to $\tau$-local equivalences. Therefore, there exists a functor $f^*: \Shv_{\tau}(\D, \Escr) \rightarrow \Shv_{\tau}(\C, \Escr)$ rendering the following diagram commutative
\[
\begin{tikzcd}
\Shv_{\tau'}(\D, \Escr) \ar{r}{f^*}  &  \Shv_{\tau}(\C, \Escr) \\
\Pre(\D, \Escr) \ar{r}{f_{\pre}^*} \ar{u}{L_{\tau'}} & \Pre(\C, \Escr) \ar[swap]{u}{L_{\tau}}.
\end{tikzcd}
\]
\end{lemma}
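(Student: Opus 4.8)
\textbf{Proof plan for Lemma~\ref{lem:cocont}.}

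The plan is to reduce everything to a statement about local equivalences at the level of presheaves, since once we know that $f^*_{\pre}$ carries $\tau'$-local equivalences to $\tau$-local equivalences, the existence of the dotted functor $f^*$ and the commutativity of the square follow formally from the universal property of the Bousfield localizations $L_{\tau'}$ and $L_{\tau}$ (together with the fact that $f^*_{\pre}$ preserves colimits as a left adjoint, so that $L_\tau \circ f^*_{\pre}$ inverts $\tau'$-local equivalences and hence descends). So the entire content is the first sentence of the conclusion. First I would recall that the class of $\tau'$-local equivalences is the strongly saturated class generated by the monomorphisms $R' \hookrightarrow h_Y$ for $\tau'$-covering sieves $R'$ (this is \cite{htt}*{Proposition 6.2.2.7} and the discussion of the sheafification localization); and that $f^*_{\pre}$, being a left adjoint, preserves colimits. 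A strongly saturated class is closed under colimits in the arrow category and under the two-out-of-three property, so to show $f^*_{\pre}$ sends $\tau'$-local equivalences into the strongly saturated class of $\tau$-local equivalences, it suffices to check this on the generators, i.e., to show that for every $Y \in \D$ and every $\tau'$-covering sieve $R' \hookrightarrow h_Y$, the map $f^*_{\pre}(R') \to f^*_{\pre}(h_Y)$ is a $\tau$-local equivalence.

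The key computation is then to identify $f^*_{\pre}$ on representables and on sieves. We have the standard formula $f^*_{\pre}(h_Y) \simeq \colim_{(Z, f(Z) \to Y)} h_Z$ where the colimit runs over the comma category $\C \times_{\D} \D_{/Y}$ (this is just the pointwise left Kan extension formula for the Yoneda extension, dualized appropriately, or directly: $f^*_{\pre}(h_Y)(Z) = \Map_{\D}(f(Z), Y)$). More precisely, I would argue as follows: the sieve $S := f^*R' \times_{f^* h_Y} h_Z \hookrightarrow h_Z$ appearing in Definition~\ref{def:top-cocont} is, by hypothesis on topological cocontinuity, a $\tau$-covering sieve of $Z$ for each $Z$ mapping to $Y$; and one checks from the pointwise formulas that $f^*_{\pre}(R') \hookrightarrow f^*_{\pre}(h_Y)$ is, as a monomorphism of presheaves on $\C$, exactly the sieve on the presheaf $f^*_{\pre}(h_Y)$ whose restriction along each $h_Z \to f^*_{\pre}(h_Y)$ is the covering sieve $S$. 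Then, expressing $f^*_{\pre}(h_Y)$ as the colimit $\colim h_Z$ over the comma category, the inclusion $f^*_{\pre}(R') \hookrightarrow f^*_{\pre}(h_Y)$ is a colimit over that same indexing category of the inclusions $S \hookrightarrow h_Z$, each of which is a $\tau$-local equivalence; since $\tau$-local equivalences are closed under colimits in the arrow category, $f^*_{\pre}(R') \to f^*_{\pre}(h_Y)$ is a $\tau$-local equivalence, as desired. (When $\Escr$ is a general presentable $\infty$-category rather than $\Spc$, one reduces to the $\Spc$ case by writing $\Pre(\C,\Escr) \simeq \Pre(\C) \otimes \Escr$ in $\PrL_\infty$ and noting that $f^*_{\pre}$ and the sheafifications are all base-changed from the $\Spc$-valued versions, which preserves the relevant classes of equivalences.)

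The main obstacle I anticipate is bookkeeping rather than conceptual: one must be careful to correctly identify $f^*_{\pre}(R')$ inside $f^*_{\pre}(h_Y)$ as a sieve and to verify that, under the comma-category colimit presentation of $f^*_{\pre}(h_Y)$, this sieve really is the colimit of the sieves $S \hookrightarrow h_Z$ — this requires knowing that colimits of monomorphisms of presheaves, when they are "generated fiberwise" in the appropriate sense, remain the expected sieve, and that pullback of sieves commutes with these colimits in the relevant way. This is the $\infty$-categorical analogue of the classical $1$-topos statement (cf. \cite{sga4-1}*{III}) and should be handled either by citing \cite{khan-thesis} or \cite{sag}*{\S20.6} directly, where exactly this compatibility of topologically cocontinuous functors with sheafification is established, or by the generators-and-strongly-saturated-class argument sketched above. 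I would lead with the strongly-saturated-class reduction and then invoke the representable computation, keeping the verification at the level of generators to minimize the diagram-chasing.
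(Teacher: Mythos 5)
The paper's ``proof'' of Lemma~\ref{lem:cocont} is a one-line citation to \cite{gepner-heller}*{Lemma~2.23} for the case $\Escr = \Spc$, so your direct argument is filling in the content the paper outsources to that reference, and your approach is (as far as I can tell) exactly the one in the reference. The argument is correct: reduce to the generators $R' \hookrightarrow h_Y$ of the strongly saturated class of $\tau'$-local equivalences, use the density presentation $f_{\pre}^*(h_Y) \simeq \colim_{(Z,\phi:f(Z)\to Y)} h_Z$ over the comma $\infty$-category, and conclude by universality of colimits in the $\infty$-topos $\Pre(\C)$ that $f^*_{\pre}(R') \to f^*_{\pre}(h_Y)$ is the colimit of the sieve inclusions $S_{(Z,\phi)} := f^*_{\pre}(R')\times_{f^*_{\pre}(h_Y)} h_Z \hookrightarrow h_Z$, hence a $\tau$-local equivalence once each $S_{(Z,\phi)}$ is known to be $\tau$-covering. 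One step you compress that deserves an explicit sentence: to see that $S_{(Z,\phi)}$ is $\tau$-covering for a general object $(Z,\phi:f(Z)\to Y)$ of the comma category (note $Y$ need not be in the image of $f$), first pull $R'$ back along $\phi$ to obtain a $\tau'$-covering sieve $\phi^*R'$ of $f(Z)$ (stability of covering sieves under base change), then apply the definition of topological cocontinuity to the pair $(Z, \phi^*R')$, and finally use left exactness of $f^*_{\pre}$ together with the factorization of $\phi^*: h_Z \to f^*_{\pre}(h_Y)$ through the unit $h_Z \to f^*_{\pre}(h_{f(Z)})$ to identify the resulting $\tau$-covering sieve with $S_{(Z,\phi)}$. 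The passage to general presentable $\Escr$ by tensoring in $\PrL_{\infty}$ is fine and consistent with the paper's convention $\Shv_\tau(\C,\Escr) = \Shv_\tau(\C)\otimes\Escr$.
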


\begin{proof} In case $\Escr = \Spc$, the first statement is \cite{gepner-heller}*{Lemma 2.23}, and the proof for a general $\Escr$ is the same. The conclusion follows since the $\infty$-category of sheaves is obtained by inverting the local equivalences.
\end{proof} 

We now specialize to the comparison between small and big sites. Let $X$ be a scheme and suppose that we have $\Sch'_X \subset \Sch_X$ a full subcategory of $X$-schemes which is stable under pullbacks and contains \'etale $X$-schemes; the most prominent example will be $\Sch'_X = \Sm_X$. Given a Grothendieck topology $\tau$, we also have the following condition:
\begin{enumerate}[leftmargin=1cm]
\item[\textsf{Small}] For each $\tau$-covering sieve $R \hookrightarrow h_Y$, there exists a collection of \'etale morphisms $\{ U_{\alpha} \rightarrow Y \}$ which belongs to $R$.
\end{enumerate}

\begin{remark} \label{rem:small} This condition is inspired by the theory of {\bf geometric sites} in the sense of \cite{sag}*{\S20.6}, tailormade for our purposes. Examples of Grothendieck sites satisfying \textsf{Small} include the \'etale, real \'etale, and Nisnevich sites on $\Sm_X$.
\end{remark}

Consider the inclusion $u: \Et_X \subset \Sch'_X$.

\begin{lemma} \label{lem:big-to-small} In the situation above, suppose that $\tau$ is a Grothendieck topology on $\Sch'_X$. There exists a Grothendieck topology $\tau_{\small}$ on $\Et_X$ satisfying the condition {\sf Small} such that
\begin{enumerate}
\item The inclusion $u:\Et_X \subset \Sch'_X$ determines a morphism of sites
\[
(\Et_X, \tau_{\small}) \rightarrow (\Sch'_X, \tau).
\]
\item $\tau_{\small}$ is the coarsest topology on $\Et_X$ for which the inclusion functor $u$ induces a morphism of sites.
\item If $\tau$ further satisfies the condition {\sf Small}, then the functor $\Et_X \subset \Sm_X$ is topologically cocontinuous.
\end{enumerate}
Furthermore, let $\Escr$ be a presentable $\infty$-category and denote by $\iota_{\tau}:\Shv_{\tau}(\Sch'_X,\Escr) \subset \Pre(\Sch'_X, \Escr)$ and $\iota_{\tau_{\small}}:\Shv_{\tau_{\small}}(\Et_X,\Escr) \subset \Pre(\Et_X, \Escr)$ the obvious inclusions. Then:
\begin{enumerate}
\item[\rm (A)] We have an adjunction
\[
u_!: \Shv_{\tau_{\small}}(\Et_X, \Escr) \rightleftarrows \Shv_{\tau}(\Sch'_X, \Escr):u^*
\]
satisfying:
\[
u^*_{\pre}\iota_{\tau} \simeq \iota_{\tau_{\small}}u^*,
\]
and
\[
L_{\tau}u^{\pre}_! \simeq u_!L_{\tau_{\small}}.
\]
\item[\rm (B)] We have an adjunction
\[
u^*: \Shv_{\tau}(\Sch'_X, \Escr) \rightleftarrows  \Shv_{\tau_{\small}}(\Et_X, \Escr) :u_*
\]
satisfying:
\[
L_{\tau_{\small}}u^*_{\pre}\simeq u^*L_{\tau},
\]
and
\[
u^{\pre}_*\iota_{\tau_{\small}} \simeq \iota_{\tau}u_*.
\]
\item[\rm (C)] The functors $u_*$ and $u_!$ are fully faithful.
\end{enumerate}
\end{lemma}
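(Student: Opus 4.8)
The statement to be established is Lemma~\ref{lem:big-to-small}, comprising the construction of $\tau_{\small}$ together with the assertions (1)--(3) and (A)--(C). The plan is to proceed in the evident order, building the topology, verifying the site-theoretic properties, and then deriving the sheaf-level consequences from the general machinery recalled in \S\ref{subsec:hyp} and Appendix~\ref{bigsmall}.

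First I would \emph{define} $\tau_{\small}$ on $\Et_X$: declare a sieve $R \hookrightarrow h_Y$ (for $Y \in \Et_X$) to be a $\tau_{\small}$-covering sieve if and only if the sieve $u_{\pre}^* R' \times_{u_{\pre}^* h_{u(Y)}} h_Y \hookrightarrow h_Y$ it pulls back from some $\tau$-covering sieve $R' \hookrightarrow h_{u(Y)}$ on $\Sch'_X$ is contained in $R$ --- equivalently, $\tau_{\small}$ is generated by the sieves on $\Et_X$ obtained by restricting $\tau$-covering sieves of $\Sch'_X$ along $u$. One checks this is a Grothendieck topology using that $\tau$ is one and that $\Et_X$ is stable under pullback inside $\Sch'_X$ (so the restricted sieves are stable under pullback and satisfy the local character axiom). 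The condition \textsf{Small} for $\tau_{\small}$ is then immediate: given a $\tau_{\small}$-covering sieve $R$, pick a generating $\tau$-cover $R'$ of $u(Y)$; since any morphism in $\Et_X$ is in particular \'etale, the pullback family lies in $R$ and consists of \'etale morphisms, but one must additionally argue that the pulled-back sieve is \emph{nonempty} --- this uses that a $\tau$-covering sieve of $Y$ always contains $\mathrm{id}_Y$ or, more carefully, that the pullback of the maximal sieve is maximal. For (1), the inclusion $u$ tautologically sends $\tau_{\small}$-covering families to $\tau$-covering families by the very definition of $\tau_{\small}$ (it is the \emph{finest} topology for which pulled-back $\tau$-covers are covers, hence in particular $u$ is topologically continuous), and $u$ preserves finite limits since fiber products of \'etale $X$-schemes are again \'etale; thus $u$ is a morphism of sites. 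For (2), if $\tau'$ is any topology on $\Et_X$ making $u$ a morphism of sites, then $u$ being topologically continuous for $\tau'$ forces every $\tau_{\small}$-covering sieve (being generated by restrictions of $\tau$-covers) to be $\tau'$-covering, so $\tau_{\small} \subseteq \tau'$. For (3), assuming $\tau$ itself satisfies \textsf{Small}, topological cocontinuity of $u$ (Definition~\ref{def:top-cocont}) requires that for $Y \in \Et_X$ and a $\tau$-covering sieve $R' \hookrightarrow h_{u(Y)}$, the sieve on $h_Y$ generated by those $Y' \to Y$ with $u(Y') \to u(Y)$ factoring through $R'$ is $\tau_{\small}$-covering; but \textsf{Small} for $\tau$ supplies \emph{\'etale} morphisms $U_\alpha \to u(Y)$ in $R'$, and since $U_\alpha \to u(Y) = Y$ with $Y$ \'etale over $X$ the composites $U_\alpha \to X$ are \'etale, so $U_\alpha \in \Et_X$, and these $U_\alpha \to Y$ generate exactly the restricted sieve, which is $\tau_{\small}$-covering by construction.

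Next I would derive (A), (B), (C). The adjunction $u^{\pre}_! \dashv u_{\pre}^* \dashv u^{\pre}_*$ at the presheaf level is the standard Kan-extension adjunction (with our Warning~\ref{warn:convention} conventions). Since $u$ is a morphism of sites (part (1)), Lemma~\ref{lem:fin-lim}'s ambient discussion --- specifically the fact cited before it that restriction along a morphism of sites preserves sheaves --- gives that $u_{\pre}^*$ sends $\tau$-sheaves to $\tau_{\small}$-sheaves, i.e.\ $u_{\pre}^*\iota_\tau \simeq \iota_{\tau_{\small}} u^*$ for the induced $u^* \colon \Shv_\tau(\Sch'_X,\Escr) \to \Shv_{\tau_{\small}}(\Et_X,\Escr)$; its left adjoint is then $u_! := L_\tau u^{\pre}_! \iota_{\tau_{\small}}$, and $L_\tau u^{\pre}_! \simeq u_! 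L_{\tau_{\small}}$ follows formally by comparing left adjoints of $\iota_{\tau_{\small}} u^* \simeq u_{\pre}^* \iota_\tau$. This is (A). For (B): since $\tau$ satisfies \textsf{Small}, part (3) gives that $u$ is topologically cocontinuous, so Lemma~\ref{lem:cocont} applies to show $u_{\pre}^*$ sends $\tau$-local equivalences to $\tau_{\small}$-local equivalences, hence descends to $u^* \colon \Shv_\tau \to \Shv_{\tau_{\small}}$ with $L_{\tau_{\small}} u_{\pre}^* \simeq u^* L_\tau$; its right adjoint is $u_* := u^{\pre}_* \iota_{\tau_{\small}}$ (which automatically lands in $\tau$-sheaves since $u^{\pre}_*$ is a right adjoint and one checks the sheaf condition directly, or again invokes that $u^{\pre}_*$ preserves sheaves because its left adjoint $u_{\pre}^*$ preserves covers), giving $u^{\pre}_* \iota_{\tau_{\small}} \simeq \iota_\tau u_*$. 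Note the $u^*$ of (A) and (B) agree, so the notation is consistent. Finally (C): $u_*$ is fully faithful because $u^{\pre}_*$ is fully faithful at the presheaf level (as $u \colon \Et_X \to \Sch'_X$ is fully faithful --- a full subcategory inclusion --- so $u^{\pre}_*$, right Kan extension along a fully faithful functor, is fully faithful), and full faithfulness is inherited on sheaves since $\iota_\tau u_* \simeq u^{\pre}_* \iota_{\tau_{\small}}$ with both $\iota$'s fully faithful; dually $u_!$ is fully faithful because $u^{\pre}_!$ (left Kan extension along a fully faithful functor) is fully faithful and $L_\tau u^{\pre}_! \simeq u_! L_{\tau_{\small}}$.

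\textbf{Main obstacle.} The genuinely delicate point is not the abstract adjunction bookkeeping but verifying that $u_{\pre}^*$ and $u^{\pre}_*$ \emph{actually preserve sheaves} for the specific pair $(\tau_{\small}, \tau)$ --- equivalently, that $\tau_{\small}$ has been defined correctly so that the site-morphism axiom ($\ast$) of \S\ref{subsec:hyp} holds and its cocontinuous counterpart (3) holds under \textsf{Small}. Concretely, one must be careful that the topology $\tau_{\small}$ generated by restrictions of $\tau$-covers is not too fine (else $u$ fails to be a morphism of sites and $u_{\pre}^*$ fails to preserve sheaves) nor too coarse (else cocontinuity and hence (B) fails); the tension is resolved precisely by the hypothesis that \'etale $X$-schemes form a pullback-stable full subcategory and by condition \textsf{Small}, and the proof should foreground exactly where each of these enters. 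A secondary subtlety is the interaction with an arbitrary presentable coefficient $\Escr$: one invokes \cite{gepner-heller}*{Lemma 2.23} and its evident $\Escr$-linear generalization (as in Lemma~\ref{lem:cocont}), together with the fact that $\Shv_\tau(-,\Escr) \simeq \Shv_\tau(-) \otimes \Escr$ and that $u_{\pre}^*, u^{\pre}_*, u^{\pre}_!$ are all compatible with this tensoring, so no new phenomena arise beyond the case $\Escr = \Spc$.
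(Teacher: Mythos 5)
Your definition of $\tau_{\small}$ is not the one the paper uses, and the discrepancy is precisely the source of a gap in your argument for (1) and (2). The paper (following \cite{sag}*{Proposition 20.6.1.1}) declares a sieve $R \hookrightarrow h_Y$ on $\Et_X$ to be $\tau_{\small}$-covering if and only if $u_!^{\pre}R \hookrightarrow h_{u(Y)}$ is a $\tau$-covering sieve on $\Sch'_X$. You instead take the topology \emph{generated} by the restrictions $u^{\ast}_{\pre}R'$ of $\tau$-covering sieves $R'$. Since $u$ is fully faithful and $\Et_X$ is pullback-stable in $\Sch'_X$, one has $u^{\ast}_{\pre}(u_!^{\pre}R) = R$, so every cover in the paper's $\tau_{\small}$ is also a cover in yours; but the converse fails in general. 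Concretely, if $R \supseteq u^{\ast}_{\pre}R'$ for a $\tau$-cover $R'$, then $u_!^{\pre}R$ contains only the subsieve of $R'$ generated by those morphisms in $R'$ with \'etale source over $X$, and there is no reason this subsieve should be $\tau$-covering for an arbitrary $\tau$: that is essentially the content of condition {\sf Small}, which is a \emph{hypothesis} only in parts (3) and (B), not in the existence statement or in (1). Your parenthetical in (1) --- that $u$ ``tautologically sends $\tau_{\small}$-covering families to $\tau$-covering families by the very definition'' --- therefore does not hold with your definition. What is tautological for your $\tau_{\small}$ is the \emph{opposite} direction: that pulled-back $\tau$-covers are $\tau_{\small}$-covers, which is the cocontinuity-type condition, not the morphism-of-sites condition (that $u_!^{\pre}$ carries covers to covers). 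Your argument for (2) confuses the same two directions: ``$u$ topologically continuous for $\tau'$'' asserts that $\tau'$-covers push forward to $\tau$-covers; it does not force restrictions of $\tau$-covers to be $\tau'$-covers, so the inclusion you assert does not follow. (Incidentally, the word ``coarsest'' in the statement of (2) appears to be a slip for ``finest'': the topology defined by ``$R$ covering iff $u_!^{\pre}R$ covering'' is evidently the \emph{finest} making $u$ continuous, since any topology $\tau'$ making $u$ continuous has all its covers contained in this one.)

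A secondary, more minor gap: your proof of (C) for $u_!$ is incomplete as written. For $u_*$, you correctly compose fully faithful functors $\iota_\tau u_* \simeq u^{\pre}_* \iota_{\tau_{\small}}$. But the dual phrasing ``$u_!$ is fully faithful because $u^{\pre}_!$ is fully faithful and $L_\tau u^{\pre}_! \simeq u_! L_{\tau_{\small}}$'' does not immediately give the conclusion, because $L_\tau$ is not fully faithful and $u_!$ is not a composite of fully faithful functors. One must instead show the unit $\id \to u^* u_!$ is an equivalence, which the paper does by the chain
\[
u^*u_! \simeq u^*L_{\tau}u_!^{\pre}\iota_{\tau_{\small}} \simeq L_{\tau_{\small}}u^*_{\pre}u_!^{\pre}\iota_{\tau_{\small}} \simeq \id,
\]
using the identity $u^*L_\tau \simeq L_{\tau_{\small}}u^*_{\pre}$ from (B) and the presheaf-level full faithfulness. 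The rest of your outline --- invoking Lemma~\ref{lem:cocont} for (B) once cocontinuity is established via {\sf Small}, the presheaf-level Kan extension adjunctions, and the observation that the $\Escr$-coefficient case reduces to $\Escr = \Spc$ --- is consonant with the paper. But you should adopt the paper's definition of $\tau_{\small}$ via $u_!^{\pre}$; with your definition the foundational claims (1) and (2) do not go through.
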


\begin{proof} The Grothendieck topology $\tau_{\small}$ is the one given by \cite{sag}*{Proposition 20.6.1.1}. Explictly, for each \'etale morphism $Y \rightarrow X$, a sieve $R \hookrightarrow h_Y$ is a $\tau_{\small}$-covering sieve if and only if $u_!R \hookrightarrow h_{u(Y)}$ is a $\tau$-covering sieve. (A) then follows from \cite{sag}*{Proposition 20.6.1.3}. Now, {\sf Small} ensures that for $\Et_X$ equipped with the Grothendieck topology $\tau_{\small}$, the inclusion functor is topologically cocontinuous (which is (3)). Hence, Lemma~\ref{lem:cocont} gives (B). 

We now prove (C). Clearly, $u^{\pre}_!$ (resp. $u^{\pre}_*$) is fully faithful as it is computed as left (resp. right) Kan extension along a fully faithful functor. To see that $u_!$ is fully faithful, it suffices to show that the unit map $\id \rightarrow u^*u_!$ is an equivalence. But this follows from the computation:
\[
u^*u_! \simeq u^*L_{\tau}u_!^{\pre}\iota_{\small} \simeq L_{\tau_{\small}}u^*_{\pre}u_!^{\pre}\iota_{\small} \simeq \id,
\]
where the last equivalence follows from the presheaf-level statement. The argument for $u_*$ is similar. 
\end{proof}

\begin{construction} \label{construct:laxbigsmall} Let $X$ be a scheme. Then we have the following lax commutative square

\[
\begin{tikzcd}[column sep=6ex]
\widetilde{X}_{\et} \ar[swap]{d}{u_{\et!}} \ar{r}{i_{\et_{\small}}} \ar[phantom]{rd}{\SWarrow} & \widetilde{X}_{\Nis} \ar{d}{u_{\Nis!}} \\
\Shv_{\et}(\Sm_X) \ar{r}{i_{\et}}  & \Shv_{\Nis}(\Sm_X),
\end{tikzcd}
\]
obtained as
\[
u_{\Nis!}i_{\et_{\small}} \Rightarrow u_{\Nis!}i_{\et_{\small}}u^*_{\et}u_{!\et} \simeq u_{\Nis!}u^*_{\Nis}i_{\et}u_{!\et} \Rightarrow i_{\et}u_{!\et},
\]
where the middle equivalence is furnished by Lemma~\ref{lem:big-to-small}. Composing with equivalence $L_{\ret}u^{\Nis}_! \simeq u_{\Nis!}L_{\ret_{\small}}$ from the same lemma then gives us a lax commutative square
\[
\begin{tikzcd}[column sep=6ex]
\widetilde{X}_{\et} \ar[swap]{d}{u_{\et!}} \ar{r}{L_{\ret}i_{\et_{\small}}} \ar[phantom]{rd}{\SWarrow} & \widetilde{X}_{\ret} \ar{d}{u_{\ret!}} \\
\Shv_{\et}(\Sm_X) \ar{r}{L_{\ret}i_{\et}}  & \Shv_{\ret}(\Sm_X),
\end{tikzcd}
\]
comparing big versus small sites.


\end{construction}

\begin{remark} \label{rem:ushriek} Since $u$ is a morphism of sites, the functor $u_{\tau!}: \Shv_{\tau_{\small}}(\Et_X) \rightarrow \Shv_{\tau}(\Sch'_X)$ is the left-exact left adjoint of a geometric morphism of $\infty$-topoi. Therefore, its stabilization is strong symmetric monoidal with respect to the induced smash product symmetric monoidal structure on the stabilization of an $\infty$-topos.
\end{remark}

%

\begin{bibdiv}
\begin{biblist}

\bib{stacks}{book}{
  Title                    = {Stacks Project},
  Author                   = {The {Stacks Project Authors}},
  HowPublished             = {\url{http://stacks.math.columbia.edu}},
  Year                     = {2015},

}

\bib{sga4-1}{book}{
     TITLE = {Th\'{e}orie des topos et cohomologie \'{e}tale des sch\'{e}mas. {T}ome 1:
              {T}h\'{e}orie des topos},
    SERIES = {Lecture Notes in Mathematics, Vol. 269},
      NOTE = {S\'{e}minaire de G\'{e}om\'{e}trie Alg\'{e}brique du Bois-Marie 1963--1964
              (SGA 4),
              Dirig\'{e} par M. Artin, A. Grothendieck, et J. L. Verdier. Avec
              la collaboration de N. Bourbaki, P. Deligne et B. Saint-Donat},
 PUBLISHER = {Springer-Verlag, Berlin-New York},
      YEAR = {1972},
     PAGES = {xix+525},
   MRCLASS = {14-06},
  MRNUMBER = {0354652},
}
 \bib{sga4-3}{book}{
title={Th\'eorie des topos et cohomologie \'etale des sch\'emas. Tome 3},
   series={Lecture Notes in Mathematics, Vol. 305},
    note={S\'eminaire de G\'eom\'etrie Alg\'ebrique du Bois-Marie 1963--1964 (SGA 4);
   Dirig\'e par M. Artin, A. Grothendieck et J. L. Verdier. Avec la collaboration de P. Deligne et B. Saint-Donat},
    publisher={Springer-Verlag},
   place={Berlin},
     date={1973},
    pages={vi+640}
}

\bib{strictreal}{article}{
    AUTHOR = {Alonso, M. E.},
    AUTHOR = {Roy, M.-F.},
     TITLE = {Real strict localizations},
   JOURNAL = {Math. Z.},
  FJOURNAL = {Mathematische Zeitschrift},
    VOLUME = {194},
      YEAR = {1987},
    NUMBER = {3},
     PAGES = {429--441},
      ISSN = {0025-5874},
   MRCLASS = {14G30},
  MRNUMBER = {879943},
MRREVIEWER = {J. Bochnak},
       DOI = {10.1007/BF01162248},
       URL = {https://doi.org/10.1007/BF01162248},
}

\bib{asai-shah}{article}{
Author = {Asai, Ryo},
Author = {Shah, Jay}
Title = {Algorithmic canonical stratifications of simplicial complexes},
Year = {2018},
Eprint = {arXiv:1808.06568},
}

\bib{naive}{article}{
Author = {Ayala, David},
Author = {Mazel-Gee, Aaron}
Author = {Rozenblyum, Nick},
Title = {A naive approach to genuine $G$-spectra and cyclotomic spectra},
Year = {2017},
Eprint = {arXiv:1710.06416},
}

\bib{ayoub}{article}{
    AUTHOR = {Ayoub, J.},
     TITLE = {Les six op\'erations de {G}rothendieck et le formalisme des cycles \'evanescents dans le monde motivique. {I}},
   JOURNAL = {Ast\'erisque},
  FJOURNAL = {Ast\'erisque},
    NUMBER = {314},
      YEAR = {2007},
     PAGES = {x+466 pp. (2008)},
      ISSN = {0303-1179},
      ISBN = {978-2-85629-244-0},
   MRCLASS = {14F20 (14C25 14F42 18A40 18F10 18F20 18G55 19E15)},
  MRNUMBER = {2423375},
MRREVIEWER = {Christian Haesemeyer},
}

\bib{bachmann-ret}{article}{
    AUTHOR = {Bachmann, Tom},
     TITLE = {Motivic and real \'{e}tale stable homotopy theory},
   JOURNAL = {Compos. Math.},
  FJOURNAL = {Compositio Mathematica},
    VOLUME = {154},
      YEAR = {2018},
    NUMBER = {5},
     PAGES = {883--917},
      ISSN = {0010-437X},
   MRCLASS = {14F42 (55P42)},
  MRNUMBER = {3781990},
MRREVIEWER = {Florence Lecomte},
       DOI = {10.1112/S0010437X17007710},
       URL = {https://doi.org/10.1112/S0010437X17007710},
}

\bib{bachmann-et}{article}{
Author = {Bachmann, Tom},
Title = {Rigidity in etale motivic stable homotopy theory},
Year = {2018},
Eprint = {arXiv:1810.08028},
Note = {To appear in Alg. Geom. Top.}
}

\bib{beo}{article}{
Author = {Bachmann, Tom},
Author = {Elmanto, Elden},
Author = {\O stva\ae r, Paul Arne},
Title = {Stable motivic invariants are eventually etale local},
Year = {2020},
Eprint = {arXiv:2003.04006}
}

\bib{bachmann-hoyois}{article}{
Author = {Bachmann, Tom},
Author = {Hoyois, Marc}
Title = {Norms in motivic homotopy theory},
Year = {2018},
Eprint = {arXiv:1711.0306},
Note = {To appear in Asterisque}
}

\bib{barwick}{article}{
    AUTHOR = {Barwick, Clark},
     TITLE = {Spectral {M}ackey functors and equivariant algebraic
              {$K$}-theory ({I})},
   JOURNAL = {Adv. Math.},
  FJOURNAL = {Advances in Mathematics},
    VOLUME = {304},
      YEAR = {2017},
     PAGES = {646--727},
      ISSN = {0001-8708},
   MRCLASS = {19L47 (19D99)},
  MRNUMBER = {3558219},
       URL = {https://doi.org/10.1016/j.aim.2016.08.043},
}

\bib{param}{article}{
Author = {Barwick, Clark}
Author = {Dotto, Emanuele}
Author = {Glasman, Saul}
Author = {Nardin, Denis}
Author = {Shah, Jay},
Title = {Parametrized higher category theory and higher algebra: A general introduction},
Year = {2016},
Eprint = {arXiv:1608.03654},
}

\bib{barwick-glasman}{article}{
Author = {Barwick, Clark}
Author = {Glasman, Saul},
Title = {A note on stable recollements},
Year = {2016},
Eprint = {arXiv:1607.02064},
}

\bib{dualizing}{article}{
    AUTHOR = {Barwick, Clark},
    AUTHOR = {Glasman, Saul},
    AUTHOR = {Nardin, Denis},
     TITLE = {Dualizing cartesian and cocartesian fibrations},
   JOURNAL = {Theory Appl. Categ.},
  FJOURNAL = {Theory and Applications of Categories},
    VOLUME = {33},
      YEAR = {2018},
     PAGES = {Paper No. 4, 67--94},
      ISSN = {1201-561X},
   MRCLASS = {18D15},
  MRNUMBER = {3746613},
MRREVIEWER = {Josep Elgueta},
}

\bib{behrens-shah}{article}{
Author = {Behrens, Mark}
Author = {Shah, Jay}
     TITLE = {{$C_2$}-equivariant stable homotopy from real motivic stable
              homotopy},
   JOURNAL = {Ann. K-Theory},
  FJOURNAL = {Annals of K-Theory},
    VOLUME = {5},
      YEAR = {2020},
    NUMBER = {3},
     PAGES = {411--464},
      ISSN = {2379-1683},
   MRCLASS = {14F42 (55N91 55P91 55Q91)},
  MRNUMBER = {4132743},
       DOI = {10.2140/akt.2020.5.411},
       URL = {https://doi.org/10.2140/akt.2020.5.411},
}

\bib{realcycle1}{article}{
Author = {Benoist, Olivier},
Author = {Wittenberg, Olivier},
     TITLE = {On the integral {H}odge conjecture for real varieties, {I}},
   JOURNAL = {Invent. Math.},
  FJOURNAL = {Inventiones Mathematicae},
    VOLUME = {222},
      YEAR = {2020},
    NUMBER = {1},
     PAGES = {1--77},
      ISSN = {0020-9910},
   MRCLASS = {14C30 (14J32 14M22 14P25)},
  MRNUMBER = {4145787},
       DOI = {10.1007/s00222-020-00965-8},
       URL = {https://doi.org/10.1007/s00222-020-00965-8},
}

\bib{geom-rel}{book}{
    AUTHOR = {Bochnak, J.},
    AUTHOR = {Coste, M.},
    AUTHOR  =  {Roy, M.-F.},
     TITLE = {G\'{e}om\'{e}trie alg\'{e}brique r\'{e}elle},
    SERIES = {Ergebnisse der Mathematik und ihrer Grenzgebiete (3) [Results
              in Mathematics and Related Areas (3)]},
    VOLUME = {12},
 PUBLISHER = {Springer-Verlag, Berlin},
      YEAR = {1987},
     PAGES = {x+373},
      ISBN = {3-540-16951-2},
   MRCLASS = {14G30 (11E25 12D15 32C05 58A07)},
  MRNUMBER = {949442},
MRREVIEWER = {Jes\'{u}s M. Ruiz},
}

\bib{nlab}{article}{
author={Carchedi, David},
title={Cech model structure on simplicial sheaves},
eprint={nforum.ncatlab.org/discussion/4457/cech-model-structure-on-simplicial-sheaves/}
}
\bib{cisinski-london}{article}{
Author = {Cisinski, Denis-Charles},
Title = {Cohomological Methods in Intersection Theory},
Year = {2019},
Eprint = {arXiv:1905.03478},
}

\bib{cisinski-deglise}{book}{
  doi = {10.1007/978-3-030-33242-6},
  url = {https://doi.org/10.1007/978-3-030-33242-6},
  year = {2019},
  series = {Springer Monographs in Mathematics},
  AUTHOR = {Cisinski, Denis-Charles}
  AUTHOR = {D\'eglise, Fr\'ed\'eric},
  title = {Triangulated Categories of Mixed Motives}
}

\bib{etalemotives}{article}{
    AUTHOR = {Cisinski, Denis-Charles}
    AUTHOR = {D\'eglise, Fr\'ed\'eric},
     TITLE = {\'{E}tale motives},
   JOURNAL = {Compos. Math.},
  FJOURNAL = {Compositio Mathematica},
    VOLUME = {152},
      YEAR = {2016},
    NUMBER = {3},
     PAGES = {556--666},
      ISSN = {0010-437X},
   MRCLASS = {14F20 (14F42)},
  MRNUMBER = {3477640},
MRREVIEWER = {Matthias Wendt},
       DOI = {10.1112/S0010437X15007459},
       URL = {http://dx.doi.org/10.1112/S0010437X15007459},
}


\bib{clausen-mathew}{article}{
Author = {Clausen, Dustin}
Author = {Mathew, Akhil},
Title = {Hyperdescent and etale K-theory},
Year = {2019},
Eprint = {arXiv:1905.06611},
}

\bib{costes}{article}{
    AUTHOR = {Coste, Michel},
    AUTHOR = {Coste-Roy, Marie-Fran\c{c}oise},
     TITLE = {Le topos \'{e}tale r\'{e}el d'un anneau},
      NOTE = {Third Colloquium on Categories (Amiens, 1980), Part II},
   JOURNAL = {Cahiers Topologie G\'{e}om. Diff\'{e}rentielle},
  FJOURNAL = {Cahiers de Topologie et G\'{e}om\'{e}trie Diff\'{e}rentielle},
    VOLUME = {22},
      YEAR = {1981},
    NUMBER = {1},
     PAGES = {19--24},
      ISSN = {0008-0004},
   MRCLASS = {14F20 (18B25)},
  MRNUMBER = {609155},
MRREVIEWER = {G. Horrocks},
}
\bib{cox}{article}{
    AUTHOR = {Cox, David A.},
     TITLE = {The \'{e}tale homotopy type of varieties over {${\bf R}$}},
   JOURNAL = {Proc. Amer. Math. Soc.},
  FJOURNAL = {Proceedings of the American Mathematical Society},
    VOLUME = {76},
      YEAR = {1979},
    NUMBER = {1},
     PAGES = {17--22},
      ISSN = {0002-9939},
   MRCLASS = {14F20},
  MRNUMBER = {534381},
MRREVIEWER = {Daniel R. Grayson},
       DOI = {10.2307/2042908},
       URL = {https://doi.org/10.2307/2042908},
}


\bib{delfs-homotopy}{article}{
    AUTHOR = {Delfs, Hans},
     TITLE = {The homotopy axiom in semialgebraic cohomology},
   JOURNAL = {J. Reine Angew. Math.},
  FJOURNAL = {Journal f\"{u}r die Reine und Angewandte Mathematik. [Crelle's
              Journal]},
    VOLUME = {355},
      YEAR = {1985},
     PAGES = {108--128},
      ISSN = {0075-4102},
   MRCLASS = {14G30 (11E81 55N40)},
  MRNUMBER = {772485},
MRREVIEWER = {Michel Coste},
       DOI = {10.1515/crll.1985.355.108},
       URL = {https://doi.org/10.1515/crll.1985.355.108},
}

\bib{salg}{article}{
    AUTHOR = {Delfs, Hans}
    AUTHOR = {Knebusch, Manfred},
     TITLE = {Semialgebraic topology over a real closed field},
 BOOKTITLE = {Ordered fields and real algebraic geometry ({S}an {F}rancisco,
              {C}alif., 1981)},
    SERIES = {Contemp. Math.},
    VOLUME = {8},
     PAGES = {61--78},
 PUBLISHER = {Amer. Math. Soc., Providence, R.I.},
      YEAR = {1982},
   MRCLASS = {14G30 (12D15)},
  MRNUMBER = {653175},
}

\bib{dk-semi}{article}{
    AUTHOR = {Delfs, Hans}
    AUTHOR = {Knebusch, Manfred},
     TITLE = {An introduction to locally semialgebraic spaces},
      NOTE = {Ordered fields and real algebraic geometry (Boulder, Colo.,
              1983)},
   JOURNAL = {Rocky Mountain J. Math.},
  FJOURNAL = {The Rocky Mountain Journal of Mathematics},
    VOLUME = {14},
      YEAR = {1984},
    NUMBER = {4},
     PAGES = {945--963},
      ISSN = {0035-7596},
   MRCLASS = {12D15 (11E81 14A99)},
  MRNUMBER = {773141},
MRREVIEWER = {Lou van den Dries},
       DOI = {10.1216/RMJ-1984-14-4-945},
       URL = {https://doi.org/10.1216/RMJ-1984-14-4-945},
}

\bib{dugger-isaksen}{article}{
    AUTHOR = {Dugger, Daniel}
    AUTHOR = {Isaksen, Daniel C.},
     TITLE = {Topological hypercovers and {$\Bbb A^1$}-realizations},
   JOURNAL = {Math. Z.},
  FJOURNAL = {Mathematische Zeitschrift},
    VOLUME = {246},
      YEAR = {2004},
    NUMBER = {4},
     PAGES = {667--689},
      ISSN = {0025-5874},
   MRCLASS = {55U35 (14F20 14F42)},
  MRNUMBER = {2045835},
MRREVIEWER = {Elias Gabriel Minian},
       DOI = {10.1007/s00209-003-0607-y},
       URL = {https://doi.org/10.1007/s00209-003-0607-y},
}

\bib{dror-farjoun}{book}{
   AUTHOR = {Farjoun, Emmanuel Dror},
    TITLE = {Cellular spaces, null spaces and homotopy localization},
   SERIES = {Lecture Notes in Mathematics},
   VOLUME = {1622},
PUBLISHER = {Springer-Verlag, Berlin},
     YEAR = {1996},
    PAGES = {xiv+199},
     ISBN = {3-540-60604-1},
  MRCLASS = {55P60 (55-02 55P65)},
 MRNUMBER = {1392221},
MRREVIEWER = {Carles Casacuberta},
      DOI = {10.1007/BFb0094429},
      URL = {https://doi.org/10.1007/BFb0094429},
}
%


%
\bib{e-kolderup}{article}{
AUTHOR = {Elmanto, E.},
AUTHOR = {Kolderup, H. K.},
     TITLE = {On modules over motivic ring spectra},
   JOURNAL = {Ann. K-Theory},
  FJOURNAL = {Annals of K-Theory},
    VOLUME = {5},
      YEAR = {2020},
    NUMBER = {2},
     PAGES = {327--355},
      ISSN = {2379-1683},
   MRCLASS = {14F42 (14F40 19E15 55P42 55P43 55U35)},
  MRNUMBER = {4113773},
       DOI = {10.2140/akt.2020.5.327},
       URL = {https://doi.org/10.2140/akt.2020.5.327},
}

\bib{elso}{article}{
Author = {Elmanto, Elden},
Author = {Levine, Marc},
Author = {Spitzweck, Markus},
Author = {{\O}stv{\ae}r, Paul Arne},
Title = {Algebraic Cobordism and \'Etale Cohomology},
Year = {2017},
Eprint = {arXiv:1711.06258},
}

\bib{elmendorf}{article}{
    AUTHOR = {Elmendorf, A. D.},
     TITLE = {Systems of fixed point sets},
   JOURNAL = {Trans. Amer. Math. Soc.},
  FJOURNAL = {Transactions of the American Mathematical Society},
    VOLUME = {277},
      YEAR = {1983},
    NUMBER = {1},
     PAGES = {275--284},
      ISSN = {0002-9947},
   MRCLASS = {57S99 (55N25)},
  MRNUMBER = {690052},
MRREVIEWER = {J. P. May},
       DOI = {10.2307/1999356},
       URL = {https://doi.org/10.2307/1999356},
}

\bib{ghn}{article}{
    AUTHOR = {Gepner, David},
    AUTHOR = {Haugseng, Rune},
    AUTHOR = {Nikolaus, Thomas},
     TITLE = {Lax colimits and free fibrations in {$\infty$}-categories},
   JOURNAL = {Doc. Math.},
  FJOURNAL = {Documenta Mathematica},
    VOLUME = {22},
      YEAR = {2017},
     PAGES = {1225--1266},
      ISSN = {1431-0635},
   MRCLASS = {18D30 (18A30)},
  MRNUMBER = {3690268},
MRREVIEWER = {Josep Elgueta},
}

\bib{gepner-heller}{article}{
Author = {Gepner, David}
Author = {Heller, Jeremiah}
Title = {The tom Dieck splitting theorem in equivariant motivic homotopy theory},
Year = {2019},
Eprint = {arXiv:1910.11485},
}

\bib{glasman}{article}{
Author = {Glasman, Saul},
Title = {Stratified categories, geometric fixed points and a generalized Arone-Ching theorem},
Year = {2015},
Eprint = {arXiv:1507.01976},
}

\bib{glasmanDay}{article}{
	doi = {10.4310/mrl.2016.v23.n5.a6},
  url = {https://doi.org/10.4310/mrl.2016.v23.n5.a6},
  year = {2016},
  publisher = {International Press of Boston},
  volume = {23},
  number = {5},
  pages = {1369--1385},
  author = {Glasman, Saul},
  title = {Day convolution for {$\infty$}-categories},
  journal = {Mathematical Research Letters}
}

\bib{guillou-may}{article}{
Author = {Guillou, Bertrand}
Author = {May, J.P.},
Title = {Models of G-spectra as presheaves of spectra},
Year = {2011},
Eprint = {arXiv:1110.3571},
}

\bib{greenlees-may}{article}{
   AUTHOR = {Greenlees, J. P. C.},
   AUTHOR = {May, J. P.},
     TITLE = {Equivariant stable homotopy theory},
 BOOKTITLE = {Handbook of algebraic topology},
     PAGES = {277--323},
 PUBLISHER = {North-Holland, Amsterdam},
      YEAR = {1995},
   MRCLASS = {55P91 (55N91 55P42 55P60)},
  MRNUMBER = {1361893},
MRREVIEWER = {Haruo Minami},
       DOI = {10.1016/B978-044481779-2/50009-2},
       URL = {https://doi.org/10.1016/B978-044481779-2/50009-2},
}

\bib{hahn-wilson}{article}{
Author = {Hahn, Jeremy},
Author = {Wilson, Dylan},
Title = {Real topological Hochschild homology and the Segal conjecture},
Year = {2019},
Eprint = {arXiv:1911.05687},
}


\bib{heller-ormsby}{article}{
    AUTHOR = {Heller, J.},
    AUTHOR = {Ormsby, K.},
     TITLE = {Galois equivariance and stable motivic homotopy theory},
   JOURNAL = {Trans. Amer. Math. Soc.},
  FJOURNAL = {Transactions of the American Mathematical Society},
    VOLUME = {368},
      YEAR = {2016},
    NUMBER = {11},
     PAGES = {8047--8077},
      ISSN = {0002-9947},
   MRCLASS = {14F42 (11E81 19E15 55P91)},
  MRNUMBER = {3546793},
MRREVIEWER = {Daniel C. Isaksen},
       DOI = {10.1090/tran6647},
       URL = {https://doi.org/10.1090/tran6647},
}

\bib{heller-ormsby2}{article}{
    AUTHOR = {Heller, J.},
    AUTHOR = {Ormsby, K.},
     TITLE = {The stable {G}alois correspondence for real closed fields},
 BOOKTITLE = {New directions in homotopy theory},
    SERIES = {Contemp. Math.},
    VOLUME = {707},
     PAGES = {1--9},
 PUBLISHER = {Amer. Math. Soc., Providence, RI},
      YEAR = {2018},
   MRCLASS = {14F42 (11E81 19E15 55P91)},
  MRNUMBER = {3807738},
       DOI = {10.1090/conm/707/14250},
       URL = {https://doi.org/10.1090/conm/707/14250},
}

\bib{HHR}{article}{
  doi = {10.4007/annals.2016.184.1.1},
  url = {https://doi.org/10.4007/annals.2016.184.1.1},
  year = {2016},
  month = {jul},
  publisher = {Annals of Mathematics, Princeton U},
  volume = {184},
  number = {1},
  pages = {1--262},
  author = {Hill, Michael}
  author = {Hopkins, Michael}
  author = {Ravenel, Douglas},
  title = {On the nonexistence of elements of {K}ervaire invariant one},
  journal = {Annals of Mathematics}
}

\bib{realcycle2}{article}{
Author = {Hornbostel, Jens},
Author = {Wendt, Matthias},
Author = {Xie, Heng},
Author = {Zibrowius, Marcus},
Title = {The real cycle class map},
Year = {2019},
Eprint = {arXiv:1911.04150},
Note = {To appear in Ann. K-Theory}
}

\bib{hoyois-glv}{article}{
    AUTHOR = {Hoyois, Marc},
     TITLE = {A quadratic refinement of the
              {G}rothendieck-{L}efschetz-{V}erdier trace formula},
   JOURNAL = {Algebr. Geom. Topol.},
  FJOURNAL = {Algebraic \& Geometric Topology},
    VOLUME = {14},
      YEAR = {2014},
    NUMBER = {6},
     PAGES = {3603--3658},
      ISSN = {1472-2747},
   MRCLASS = {14F42 (11E81 55M20)},
  MRNUMBER = {3302973},
MRREVIEWER = {Daniel C. Isaksen},
       DOI = {10.2140/agt.2014.14.3603},
       URL = {https://doi.org/10.2140/agt.2014.14.3603},
}

\bib{hoyois-cdh}{article}{
Author = {Hoyois, Marc},
Title = {Equivariant classifying spaces and cdh descent for the homotopy K-theory of tame stacks},
Year = {2016},
Eprint = {arXiv:1604.06410}
}

\bib{hoyois-sixops}{article}{
    AUTHOR = {Hoyois, Marc},
     TITLE = {The six operations in equivariant motivic homotopy theory},
   JOURNAL = {Adv. Math.},
  FJOURNAL = {Advances in Mathematics},
    VOLUME = {305},
      YEAR = {2017},
     PAGES = {197--279},
      ISSN = {0001-8708},
   MRCLASS = {14F42 (55P91)},
  MRNUMBER = {3570135},
MRREVIEWER = {Jeremiah Ben Heller},
       DOI = {10.1016/j.aim.2016.09.031},
       URL = {https://doi.org/10.1016/j.aim.2016.09.031},
}

\bib{framed-loc}{article}{
Author = {Hoyois, Marc},
Title = {The localization theorem for framed motivic spaces},
Year = {2018},
Eprint = {arXiv:1807.04253},
Note = {To appear in Compositio}
}
 \bib{khan-thesis}{article}{
    author={Khan, Adeel},
  title={Motivic homotopy theory in derived algebraic geometry},
 note={PhD Thesis},
 eprint={https://www.preschema.com/thesis/thesis.pdf}
}

\bib{tenyears}{article}{
    AUTHOR = {Knebusch, Manfred},
     TITLE = {Semialgebraic topology in the last ten years},
 BOOKTITLE = {Real algebraic geometry ({R}ennes, 1991)},
    SERIES = {Lecture Notes in Math.},
    VOLUME = {1524},
     PAGES = {1--36},
 PUBLISHER = {Springer, Berlin},
      YEAR = {1992},
   MRCLASS = {14P05 (14G27 14P10)},
  MRNUMBER = {1226239},
MRREVIEWER = {N. V. Ivanov},
       DOI = {10.1007/BFb0084606},
       URL = {https://doi.org/10.1007/BFb0084606},
}

\bib{lewis}{book}{
    AUTHOR = {Lewis, Jr., L. G. and May, J. P. and Steinberger, M. and
              McClure, J. E.},
     TITLE = {Equivariant stable homotopy theory},
    SERIES = {Lecture Notes in Mathematics},
    VOLUME = {1213},
      NOTE = {With contributions by J. E. McClure},
 PUBLISHER = {Springer-Verlag, Berlin},
      YEAR = {1986},
     PAGES = {x+538},
      ISBN = {3-540-16820-6},
   MRCLASS = {55-02 (55Nxx 55Pxx 57S99)},
  MRNUMBER = {866482},
MRREVIEWER = {T. tom Dieck},
       DOI = {10.1007/BFb0075778},
       URL = {https://doi.org/10.1007/BFb0075778},
}

\bib{lin-1}{article}{
    AUTHOR = {Lin, Wen Hsiung},
     TITLE = {On conjectures of {M}ahowald, {S}egal and {S}ullivan},
   JOURNAL = {Math. Proc. Cambridge Philos. Soc.},
  FJOURNAL = {Mathematical Proceedings of the Cambridge Philosophical
              Society},
    VOLUME = {87},
      YEAR = {1980},
    NUMBER = {3},
     PAGES = {449--458},
      ISSN = {0305-0041},
   MRCLASS = {55Q10},
  MRNUMBER = {556925},
MRREVIEWER = {Donald M. Davis},
       DOI = {10.1017/S0305004100056887},
       URL = {https://doi.org/10.1017/S0305004100056887},
}

\bib{lin}{article}{
    AUTHOR = {Lin, W. H.},
    AUTHOR = {Davis, D. M.},
    AUTHOR = {Mahowald, M. E.},
    AUTHOR = {Adams, J.
              F.},
     TITLE = {Calculation of {L}in's {E}xt groups},
   JOURNAL = {Math. Proc. Cambridge Philos. Soc.},
  FJOURNAL = {Mathematical Proceedings of the Cambridge Philosophical
              Society},
    VOLUME = {87},
      YEAR = {1980},
    NUMBER = {3},
     PAGES = {459--469},
      ISSN = {0305-0041},
   MRCLASS = {55S10},
  MRNUMBER = {569195},
MRREVIEWER = {Harvey Margolis},
       DOI = {10.1017/S0305004100056899},
       URL = {https://doi.org/10.1017/S0305004100056899},
}
 \bib{htt}{book}{
       author={Lurie, Jacob},
        title={Higher topos theory},
       series={Annals of Mathematics Studies},
    publisher={Princeton University Press},
      address={Princeton, NJ},
         date={2009},
       volume={170},
         ISBN={978-0-691-14049-0; 0-691-14049-9},
       review={\MR{MR2522659}},
 }
 \bib{DAGV}{article}{
 author={Lurie, Jacob}
 title={Derived Algebraic Geometry V: Structured Spaces}
 eprint={http://www.math.harvard.edu/~lurie/papers/DAG-VII.pdf}
}

\bib{higheralgebra}{article}{
    author={Lurie, Jacob},
    title={Higher algebra},
    date={2017},
    eprint={http://www.math.harvard.edu/~lurie/},
    note={Version dated 10 August 2017},
}
 \bib{sag}{article}{
     author={Lurie, Jacob},
     title={Spectral algebraic geometry},
     date={2017},
     eprint={http://www.math.harvard.edu/~lurie/},
     note={Version dated 20 June 2017},
 }
 \bib{mnn-descent}{article}{
    author={Mathew, Akhil},
    author={Naumann, Niko},
    author={Noel, Justin},
    title={Nilpotence and descent in equivariant stable homotopy theory},
    journal={Adv. Math.},
    volume={305},
    date={2017},
    pages={994--1084},
    issn={0001-8708},
 }

\bib{Miller}{article}{
    AUTHOR = {Miller, Haynes},
     TITLE = {Finite localizations},
      NOTE = {Papers in honor of Jos\'{e} Adem (Spanish)},
   JOURNAL = {Bol. Soc. Mat. Mexicana (2)},
  FJOURNAL = {Bolet\'{\i}n de la Sociedad Matem\'{a}tica Mexicana. Segunda Serie},
    VOLUME = {37},
      YEAR = {1992},
    NUMBER = {1-2},
     PAGES = {383--389},
   MRCLASS = {55P60},
  MRNUMBER = {1317588},
}
\bib{denis-stab}{article}{
Author = {Nardin, Denis},
Title = {Parametrized higher category theory and higher algebra: Exposé IV -- Stability with respect to an orbital {$\infty$}-category},
Year = {2016},
Eprint = {arXiv:1608.07704}}

\bib{denis-thesis}{thesis}{
	Author = {Nardin, Denis},
	Title = {Stability and distributivity over orbital {$\infty$}-categories},
	Year = {2017},
	URL = {http://hdl.handle.net/1721.1/112895}
}


\bib{thomas-yoneda}{article}{
Author = {Nikolaus, Thomas},
Title = {Stable $\infty$-Operads and the multiplicative Yoneda lemma},
Year = {2016},
Eprint = {arXiv:1608.02901},
}

\bib{nikolaus-scholze}{article}{
   author={Nikolaus, Thomas},
   author={Scholze, Peter},
   title={On topological cyclic homology},
   journal = {Acta Math.},
   pages = {203--409},
   volume = {221},
   month = {12},
   number = {2},
   year = {2018},
   doi = {10.4310/ACTA.2018.v221.n2.a1},
   url = {https://doi.org/10.4310/ACTA.2018.v221.n2.a1},
}
\bib{quigley-shah}{article}{
Author = {Quigley, J. D.},
Author = {Shah, Jay},
Title = {On the parametrized Tate construction and two theories of real $p$-cyclotomic spectra},
Year = {2019},
Eprint = {arXiv:1909.03920},
}


\bib{robalo}{article}{
    AUTHOR = {Robalo, M.},
     TITLE = {{$K$}-theory and the bridge from motives to noncommutative motives},
   JOURNAL = {Adv. Math.},
  FJOURNAL = {Advances in Mathematics},
    VOLUME = {269},
      YEAR = {2015},
     PAGES = {399--550},
      ISSN = {0001-8708},
   MRCLASS = {14F42 (14A22 18D10 19E15)},
  MRNUMBER = {3281141},
MRREVIEWER = {Jens Hornbostel},
       DOI = {10.1016/j.aim.2014.10.011},
       URL = {http://dx.doi.org/10.1016/j.aim.2014.10.011},
}

\bib{yang-zhao-levine}{article}{
  title={Algebraic elliptic cohomology theory and flops, I},
  author={Levine, M.},
  author={Yang, Y.},
  author={Zhao, G.},
   JOURNAL = {Math. Ann.},
  FJOURNAL = {Mathematische Annalen},
    VOLUME = {375},
      YEAR = {2019},
    NUMBER = {3-4},
     PAGES = {1823--1855},
      ISSN = {0025-5831},
   MRCLASS = {14E30 (55N22 55N34)},
  MRNUMBER = {4023393},
MRREVIEWER = {Lennart Meier},
       DOI = {10.1007/s00208-019-01880-x},
       URL = {https://doi.org/10.1007/s00208-019-01880-x},
}

\bib{sander}{article}{
    AUTHOR = {Sander, Tomas},
     TITLE = {Existence and uniqueness of the real closure of an ordered
              field without {Z}orn's lemma},
   JOURNAL = {J. Pure Appl. Algebra},
  FJOURNAL = {Journal of Pure and Applied Algebra},
    VOLUME = {73},
      YEAR = {1991},
    NUMBER = {2},
     PAGES = {165--180},
      ISSN = {0022-4049},
   MRCLASS = {12J15 (04A25 12D15)},
  MRNUMBER = {1122323},
MRREVIEWER = {Guang Xing Zeng},
       DOI = {10.1016/0022-4049(91)90110-N},
       URL = {https://doi.org/10.1016/0022-4049(91)90110-N},
}

\bib{scheiderer}{book}{
    AUTHOR = {Scheiderer, Claus},
     TITLE = {Real and \'etale cohomology},
    SERIES = {Lecture Notes in Mathematics},
    VOLUME = {1588},
 PUBLISHER = {Springer-Verlag, Berlin},
      YEAR = {1994},
     PAGES = {xxiv+273},
      ISBN = {3-540-58436-6},
   MRCLASS = {14F20 (14F25)},
  MRNUMBER = {1321819},
MRREVIEWER = {Gennady Lyubeznik},
       URL = {https://doi.org/10.1007/BFb0074269},
}



\bib{jay-thesis}{article}{
author={Shah, Jay},
title={Parametrized higher category theory and higher algebra: Expos{\'e} {II} - {I}ndexed homotopy limits and colimits},
Eprint = {arXiv:1809.05892},
Year={2018}
}

\end{biblist}
\end{bibdiv}

\vspace{20pt}
\scriptsize
\noindent
Elden Elmanto\\
Harvard University\\
Department of Mathematics\\
1 Oxford Street\\
Cambridge, MA 02138\\
USA\\
\texttt{elmanto@math.harvard.edu}

\vspace{10pt}
\noindent
Jay Shah\\
University of M\"{u}nster\\
Department of Mathematics\\
Orl\'{e}ans-Ring\\
M\"{u}nster 48149\\
Germany\\
\texttt{jayhshah@gmail.com}

\end{document}